\title{Invariant Gibbs measures for the three-dimensional wave equation with a Hartree nonlinearity II: Dynamics}
\author{Bjoern Bringmann}
\date{\today}
\newcommand*{\medcap}{\mathbin{\raisebox{1pt}{\scalebox{0.8}{\ensuremath{\bigcap}}}}}
\newcommand*{\medcup}{\mathbin{\raisebox{1pt}{\scalebox{0.8}{\ensuremath{\bigcup}}}}}
\newcommand*{\scriptmedcup}{\mathbin{\raisebox{1pt}{\scalebox{0.6}{\ensuremath{\bigcup}}}}}
\newtheorem{theorem}{Theorem}
\newtheorem{proposition}[theorem]{Proposition} 
\newtheorem{corollary}[theorem]{Corollary}
\newtheorem{lemma}[theorem]{Lemma}
\newtheorem{definition}[theorem]{Definition}
\newtheorem{remark}[theorem]{Remark}
\newtheorem{example}[theorem]{Example}
\numberwithin{equation}{section} 
\numberwithin{theorem}{section} 
\newtheorem*{maintheorem}{Main result}
\newtheorem*{definitionnotag}{Definition}
\newenvironment{assumptions}[1]
  {\innerassumptions}
  {\endinnerassumptions}
\newcommand{\lcol}{:\hspace{-0.5ex}}
\newcommand{\rcol}{\hspace{-0.5ex}:}
\newcommand{\defe}{\overset{\text{def}}{=}} 
\newcommand{\cA}{\mathcal{A}}
\newcommand{\bP}{\mathbb{P}}
\newcommand{\bC}{\mathbb{C}}
\newcommand{\bZ}{\mathbb{Z}}
\newcommand{\bE}{\mathbb{E}}
\newcommand{\bR}{\mathbb{R}}
\newcommand{\bT}{\mathbb{T}}
\newcommand{\cT}{\mathcal{T}}		
\newcommand{\scrP}{\mathscr{P}} 		
\DeclareMathOperator{\Law}{Law}
\newcommand{\MN}{\mathcal{M}_N}
\newcommand{\MNparald}{\MN^{\scalebox{0.7}{\parald}}}
\newcommand{\MNnparald}{\MN^{\hspace{-0.5ex}\scalebox{0.7}{\nparaldalt}}}
\newcommand{\mNparald}{\mathscr{m}_N^{\scalebox{0.7}{\parald}}}
\newcommand{\mNnparald}{\mathscr{m}_N^{\hspace{-0.5ex}\scalebox{0.7}{\nparaldalt}}}
\newcommand{\blue}{\textup{blue}}
\newcommand{\purple}{\textup{pur}}
\newcommand{\red}{\textup{red}}
\newcommand{\grn}{\textup{grn}}
\newcommand{\cub}{\textup{cub}}
\newcommand{\stime}{\textup{time}}
\newcommand{\stab}{\textup{stab}}
\newcommand{\gwp}{\textup{gwp}}
\newcommand{\tsp}{\textup{sp}}
\newcommand{\ms}{\textup{ms}}
\newcommand{\bil}{\textup{bil}}
\DeclareMathOperator{\So}{\mathbf{So}}
\DeclareMathOperator{\CPara}{\mathbf{CPara}}
\DeclareMathOperator{\RMT}{\mathbf{RMT}}
\DeclareMathOperator{\Phy}{\mathbf{Phy}}
\newcommand{\cJ}{\mathcal{J}}
\DeclareMathOperator{\modulation}{\mathscr{L}\hspace{-0.2ex}\mathscr{M}}
\newcommand{\LM}{\mathscr{L}\hspace{-0.2ex}\mathscr{M}}
\newcommand{\X}[2]{\mathscr{X}^{#1,#2}}
\newcommand{\Z}{\mathscr{Z}}
\newcommand{\chs}{\mathcal{H}_x^s}
\newcommand{\cC}{\mathcal{C}}
\newcommand{\cQ}{\mathcal{Q}}
\newcommand{\cE}{\mathcal{E}}
\DeclareMathOperator{\NL}{\mathscr{N}\hspace{-0.2ex}\mathscr{L}}
\newcommand{\LdagM}{\mathcal{L}^{\text{amb}}_M(A,\tau)}
\newcommand{\LdagMB}{\mathcal{L}^{\text{amb}}_M(B,\tau)}
\newcommand{\LAtau}{\mathcal{L}(A,\tau)}
\newcommand{\LAtautil}{\widetilde{\mathcal{L}}(A,\tau)}
\newcommand{\cL}{\mathcal{L}}
\newcommand{\etype}{\overset{\text{type}}{=}} 
\newcommand{\type}{\text{type}} 
\newcommand{\netype}{\overset{\text{type}}{\neq}} 
\newcommand{\wblue}{w^{\bluescript}}
\newcommand{\Xblue}{X^{\bluescript}}
\newcommand{\wpurple}{w^{\purplescript}}
\newcommand{\Xpurple}{X^{\purplescript}}
\DeclareMathOperator{\Duh}{I}
\DeclareMathOperator{\Para}{PCtrl}
\DeclareMathOperator{\PCtrl}{PCtrl}
\DeclareMathOperator{\supp}{supp}
\DeclareMathOperator{\med}{med}			
\newcommand{\util}{\widetilde{u}_N}
\newcommand{\wtil}{\widetilde{w}_N}
\newcommand{\Zcirc}{Z^{\scalebox{1.1}{$\circ$}}}
\newcommand{\Zbox}{Z^{\scalebox{1.1}{$\square$}}} 
\newcommand{\cs}{\ensuremath{\mathscr{s}}}
\newcommand{\Wcos}[2]{W^{(\cos),n_{#1}}_{#2}}
\newcommand{\Wsin}[2]{W^{(\sin),n_{#1}}_{#2}}
\newcommand{\Wpm}[2]{W^{(\pm_{#1}),n_{#1}}_{#2}}
\newcommand{\Was}[2]{W_{\cs_{#2}}^{(a),#1}}
\newcommand{\Wbs}[2]{W_{\cs_{#2}}^{(b),#1}}
\newcommand{\dWs}[1]{\mathrm{d}W_\mathscr{s}^{#1}}
\newcommand{\cI}{\mathcal{I}}
\newcommand{\cH}{\mathscr{H}}
\newcommand{\cHk}{\mathcal{H}_k}
\newcommand{\cHl}{\mathcal{H}_l}
\newcommand{\dW}[2]{\mathrm{d}W^{{#1}_{#2}}_{\cs_{#2}}}
\newcommand{\C}{\mathcal{C}}		
\newcommand{\cG}{\mathscr{G}}
\newcommand{\HNbp}{H_N[ \bluedot \rightarrow \purpledot]}
\newcommand{\HNpb}{H_N[ \purpledot \rightarrow \bluedot]}
\newcommand{\YNbp}{Y_N[ \bluedot \rightarrow \purpledot]}
\newcommand{\YNpb}{Y_N[ \purpledot \rightarrow \bluedot]}
\newcommand{\dtprime}{\mathrm{d}t^\prime}
\newcommand{\ds}{\mathrm{d}\cs}
\newcommand{\dx}{\mathrm{d}x}
\newcommand{\dy}{\mathrm{d}y}
\newcommand{\dn}{\mathrm{d}n}
\newcommand{\dlambda}{\mathrm{d}\lambda}
\newcommand{\cS}{\ensuremath{\mathcal{S}}}
\newcommand{\mup}{\mu^{\scalebox{0.7}{$\otimes$}}}
\newcommand{\nup}{\nu^{\scalebox{0.7}{$\otimes$}}}
\newcommand{\mupscript}{\mu^{\scalebox{0.6}{$\otimes$}}}
\newcommand{\cgp}{\mathscr{g}^{\scalebox{0.7}{$\otimes$}}}
\newcommand{\cg}{\mathscr{g}}
\newcommand{\cEj}{\mathcal{E}_K(B,j\tau)}
\newcommand{\cEjminus}{\mathcal{E}_K(B,(j-1)\tau)}
\begin{document}

\maketitle

\begin{abstract}
In this two-paper series, we prove the invariance of the Gibbs measure for a three-dimensional wave equation with a Hartree nonlinearity. The novelty lies in the singularity of the Gibbs measure with respect to the Gaussian free field. 

\noindent In this paper, we focus on the dynamical aspects of our main result. The local theory is based on a para-controlled approach, which combines ingredients from dispersive equations, harmonic analysis, and random matrix theory. The main contribution, however, lies in the global theory. We develop a new globalization argument, which addresses the singularity of the Gibbs measure and its consequences. 
\end{abstract}

\tableofcontents


\section*{Continuation of the series}
This paper is the second part of a two-paper series and we refer to the first part \cite{BB20a} for a more detailed introduction to the series. 

We study the renormalized wave equation with a Hartree nonlinearity and random initial data given by
\begin{equation}\label{intro_series:eq_nlw} \tag{a}
\begin{cases}
-\partial_{tt}^2 u - u +\Delta   u = \, \lcol  (V*u^2) u \rcol  \qquad (t,x)\in \bR \times \bT^3,  \\
u|_{t=0} = \phi_0, \quad \partial_t u |_{t=0} = \phi_1. 
\end{cases}
\end{equation}
Here, the three-dimensional torus $\bT^3$ is understood as $[-\pi,\pi]^3$ with periodic boundary conditions. The interaction potential $V \colon \bT^3 \rightarrow \bR$ satisfies $V(x)= c_\beta |x|^{-(3-\beta)}$ for all $x\in \bT^3$ close to the origin, where $0<\beta<3$, satisfies $V(x)\gtrsim 1$ for all $x\in \bT^3$, is even, and is smooth away from the origin. The nonlinearity $\lcol (V*u^2) u \rcol $ is a renormalization of $(V\ast u^2) u$ and defined in \eqref{intro:eq_renormalized_nonlinearity} below. 

The nonlinear wave equation \eqref{intro_series:eq_nlw} is corresponding to the Hamiltonian $H$ given by
\begin{equation*}
H[u,\partial_t u](t) = \frac{1}{2} \Big( \| u(t) \|_{L_x^2}^2  + \| \nabla u(t) \|_{L_x^2}^2   + \| \partial_t u(t) \|_{L_x^2}^2   \Big) + \frac{1}{4} \int_{\bT^3} \lcol (V\ast u^2)(t,x) u(t,x)^2 \rcol \dx,
\end{equation*}
where $L^2_x=L_x^2(\bT^3)$. 
The formal Gibbs measure $\mup$ corresponding to the Hamiltonian has been rigorously constructed in the first paper of this series. All necessary properties of this construction will be recalled in Theorem \ref{theorem:measures} below. 

The main result of this series is the invariance of the Gibbs measure $\mup$ under the flow of the nonlinear wave equation \eqref{intro_series:eq_nlw}. We first state a formal version of our main result and postpone a rigorous version until Theorem \ref{theorem:measures} and Theorem \ref{theorem:gwp_invariance} below. 

\begin{maintheorem}[Global well-posedness and invariance, formal version] 
The formal Gibbs measure $\mup$ exists and, for $0<\beta<1/2$, is singular with respect to the Gaussian free field $\cgp$. The renormalized wave equation with Hartree nonlinearity \eqref{intro_series:eq_nlw} is globally well-posed on the support of $\mup$ and the dynamics leave $\mup$ invariant. 
\end{maintheorem}

\section{Introduction}

The second paper in this series deals with the dynamical aspects of our argument. As a result, it is inspired by recent advances in random dispersive equations. The interest in random dispersive equations stems from their connections to several areas of research, such as analytic number theory, harmonic analysis, random matrix theory, and stochastic partial differential equations (cf. \cite{Nahmod16}). In fact, much of the recent progress have been fueled through similar advances in singular stochastic partial differential equations, such as Hairer's \emph{regularity structures} \cite{Hairer14} or Gubinelli, Imkeller, and Perkowski's \emph{para-controlled calculus} \cite{GIP15}. \\

The most classical problem in random dispersive equations is the construction of invariant measures for (periodic and defocusing) nonlinear wave and Schr\"{o}dinger equations. This has been an active area of research since the 1990s, and we refer the reader to Figure \ref{figure:literature} for an overview of some of the most important contributions. 

\begin{figure}
\begin{tabular}{ll|>{\centering\arraybackslash}p{4cm}|>{\centering\arraybackslash}p{4.5cm}}
\multicolumn{2}{c|}{Dimension \& Nonlinearity} & Wave  & Schr\"{o}dinger \\ \hline &&  \\[-9pt]
$d=1$ , 	&  $ |u|^{p-1} u $									&  \cite{F85,Zhidkov94}					 & 	\cite{Bourgain94} 	\\ \hline &&  \\[-9pt]
$d=2$,	&$|u|^2 u$  								&  \multirow{3}{*}{\vspace{2ex} \cite{OT18}}		 & 		\cite{Bourgain96}		\\ \cline{1-2} \cline{4-4} &&  \\[-9pt]
$d=2$, 	&$|u|^{p-1} u$ 							& 							 & 			\cite{DNY19}		\\ \hline &&  \\[-9pt]
$d=3$, 	&$(|x|^{-(3-\beta)} \ast |u|^2) \cdot u$ 			&				
\begin{tabular}{rl}
$\beta>1:$ & \cite{OOT20} \\
$\beta>0:$ & \emph{This paper.}
\end{tabular}			 & 
\begin{tabular}{rl}
$\beta >2\colon$ & \cite{Bourgain97} \\ 
$2 \geq \beta > 1/2\colon $ & Feasible. \\ 
$1/2 \geq \beta > 0 \colon$ & Open.
\end{tabular}		
 \\ \hline &&  \\[-9pt]
$d=3$,  	&$|u|^2 u$ 								&		Open 					 &  Open 
\end{tabular}
\caption{\small{Invariant Gibbs measures for defocusing nonlinear wave and Schr\"{o}dinger equations.}}
\label{figure:literature}
\end{figure}

The first results in this direction were obtained in one-spatial dimension by Friedlander \cite{F85}, Zhidkov \cite{Zhidkov94} and Bourgain \cite{Bourgain94}. Friedlander \cite{F85} and Zhidkov \cite{Zhidkov94} proved the invariance of the Gibbs measure for the one-dimensional nonlinear wave equation. Inspired by earlier work of Lebowitz, Rose, and Speer \cite{LRS88}, Bourgain \cite{Bourgain94} proved the invariance of the Gibbs measure for the one-dimensional nonlinear  Schr\"{o}dinger equations
\begin{equation*}
i \partial_t u + \partial_x^2 u = |u|^{p-1} u, \qquad (t,x) \in \bR \times \bT. 
\end{equation*}
In this seminal paper, Bourgain introduced his famous \emph{globalization argument}, which will be described in detail below. Even though Friedlander \cite{F85}, Zhidkov \cite{Zhidkov94} and Bourgain \cite{Bourgain94} consider random initial data (drawn from the Gibbs measure), the local theory is entirely deterministic. The reason is that the Gibbs measure is supported at spatial regularity $1/2-$, which is above the (deterministic) critical regularities $s_{\text{det}}= \frac{1}{2}- \frac{1}{p}$ (cf. \cite{CCT03}) and $s_{\text{det}}= \frac{1}{2}-\frac{2}{p-1}$ for the one-dimensional wave and  Schr\"{o}dinger equations (in $H^s$), respectively. 

The first result in two spatial dimensions was obtained by Bourgain in \cite{Bourgain96}. He proved the invariance of the Gibbs measure for the renormalized cubic nonlinear Schr\"{o}dinger equation
\begin{equation}\label{intro:eq_cubic_NLS}
i \partial_t u + \Delta  u = \lcol |u|^{2} u \rcol  \qquad (t,x) \in \bR \times \bT^2. 
\end{equation}
In \eqref{intro:eq_cubic_NLS}, the renormalized (or Wick-ordered) nonlinearity is given  by $ |u|^2 u -2 \| u \|_{L^2}^2 u$. In this specific case, the renormalized equation \ref{intro:eq_cubic_NLS} is related to the cubic nonlinear   Schr\"{o}dinger equation through a gauge transformation. In contrast to the one-dimensional setting, the Gibbs measure is supported at spatial regularity $0-$, which is just below the (deterministic) critical regularity $s_c=0$. To overcome this obstruction, the local theory in \cite{Bourgain96} exhibits probabilistic cancellations in several multi-linear estimates. Very recently, Fan, Ou, Staffilani, and Wang \cite{FOSW19} extended Bourgain's result from the square torus $\bT^2$ to irrational tori. 

The situation for two-dimensional nonlinear wave equations is easier than for  two-dimensional nonlinear Schr\"{o}dinger equations. While the Gibbs measure is still supported at spatial regularity $0-$, this is partially compensated by the smoothing effect of the Duhamel integral. In \cite{OT18}, Oh and Thomann prove the invariance of the Gibbs measure for
\begin{equation}\label{intro:eq_2d_NLW}
- \partial_t^2 u - u + \Delta u = \lcol u^p \rcol  \qquad (t,x) \in \bR \times \bT^2,
\end{equation}
where $p \geq 3$ is an odd integer. The renormalized nonlinearity $\lcol u^p \rcol$ in \eqref{intro:eq_2d_NLW} is the Wick-ordering of $u^p$, see e.g. \cite[(1.9)]{OT18}.  In contrast to the nonlinear Schr\"{o}dinger equation \eqref{intro:eq_cubic_NLS}, it cannot be obtained from the original equation via a gauge transformation. However, the renormalization is likely necessary to obtain non-trivial dynamics for random low-regularity data (see e.g. \cite{OOR20,ORSW21}). We emphasize that their argument for the cubic ($p=3$) and higher-order ($p\geq 5$) nonlinearity is essentially identical. Due to its clear and detailed exposition, we highly recommend \cite{OT18} as a starting point for any beginning researcher in random dispersive equations. 

In a recent work \cite{DNY19}, Deng, Nahmod, and Yue proved the invariance of the Gibbs measure for the nonlinear Schr\"{o}dinger equations
\begin{equation}\label{intro:eq_NLS}
i \partial_t u + \Delta  u = \lcol |u|^{p-1} u \rcol  \qquad (t,x) \in \bR \times \bT^2,
\end{equation}
where $p\geq 5$ is an odd integer. In contrast to the situation for the two-dimensional nonlinear wave equations, this result is much harder than its counterpart for the cubic nonlinear Schr\"{o}dinger equation \eqref{intro:eq_cubic_NLS}. The main difficulty is that all high$\times$low$\times$ $\hdots$ $\times$low-interactions between the random initial data with itself or smoother remainders only have spatial regularity $1/2-$, which is strictly below the (deterministic) critical regularity $s_{\text{det}} = 1 - \frac{2}{p-1}$. To overcome this difficulty, Deng, Nahmod, and Yue worked with random averaging operators, which are related to the adapted linear evolutions in \cite{Bringmann18}. Their framework was recently generalized through the \emph{theory of random tensors} \cite{DNY20}, which will be further discussed below. 

Unfortunately, much less is known in three spatial dimensions. The reason is that the Gibbs measure is supported at spatial regularity $-1/2-$, which is far below the deterministic critical regularity $s_{\text{det}}= \frac{3}{2} - \frac{2}{p-1}$. In fact, the invariance of the Gibbs measure for both the cubic nonlinear wave and Schr\"{o}dinger equation are famous open problems. Previous research has instead focused on simpler models, which are obtained either through additional symmetry assumptions, a (slight) regularization of the random initial data, or a (slight) regularization of the nonlinearity. 

In the radially-symmetric setting, the invariance of the Gibbs measure for the three-dimensional cubic wave and  Schr\"{o}dinger equation has been proven in \cite{BB14,S11,Xu14} and \cite{BB14b}, respectively. The radially-symmetry setting was also studied in earlier work on the two-dimensional nonlinear Schr\"{o}dinger equation \cite{Deng12,Tzvetkov06,Tzvetkov08}.

 In \cite{OPT19}, Oh, Pocovnicu, and Tzvetkov studied the cubic nonlinear wave equation with Gaussian initial data.   While the Gaussian initial data in \cite{OPT19} does not directly correspond to a Gibbs measure, the local theory in \cite{OPT19} still yields partial progress towards the (local aspects of) the Gibbs-measure problem.   The Gaussian initial data in \cite{OPT19} has regularity  $s>-1/4$ and, as a result, is  more than $1/4$-derivatives smoother than the Gibbs measure. Using some of the methods in this paper, Oh, Wang, and Zine \cite{OWZ21} very recently improved the regularity condition from $s>-1/4$ to $s>-1/2$. In particular, the Gaussian data in \cite{OWZ21} is only an $\epsilon$-derivative smoother than the support of the Gibbs measure.

 In \cite{Bourgain97}, Bourgain studied the defocusing and focusing three-dimensional Schr\"{o}dinger equation with a Hartree nonlinearity given by 
\begin{equation}\label{intro:eq_nls_hartree}
i \partial_t u + \Delta u = \pm \lcol ( V \ast |u|^2) u \rcol\qquad (t,x) \in \bR \times \bT^3,
\end{equation}
where the interaction potential $V$ behaves like $+|x|^{-(3-\beta)}$. He proved the invariance of the Gibbs measure for $\beta>2$, which corresponds to a relatively smooth interaction potential. In the focusing case, this is optimal (up to the endpoint $\beta=2$), since the Gibbs measure is not normalizable for $\beta <2$ (cf. \cite{OOT20}). From a physical perspective, the most relevant cases are the Coulomb potential $|x|^{-1}$ (corresponding to $\beta=2$) and the Newtonian potential $|x|^{-2}$ (corresponding to $\beta=1$). Since the cubic nonlinear Schr\"{o}dinger equation formally corresponds to \eqref{intro:eq_nls_hartree} with the interaction potential  $V$ given by the Dirac-measure, it is also interesting (and challenging) to take $\beta$ close to zero. After the first version of this manuscript appeared, Deng, Nahmod, and Yue \cite{DNY21} used random averaging operators (as in \cite{DNY19}) to cover the regime $\beta>1-\epsilon$ in the defocusing case, where $\epsilon>0$ is a small unspecified constant. As discussed in \cite{DNY21}, it is likely possible to use the more sophisticated theory of random tensors from \cite{DNY20} to cover the regime $\beta>1/2$.  In the regime $0<\beta=1/2$, the Gibbs measure becomes singular with respect to the Gaussian free field (see Theorem \ref{theorem:measures}).  As described in \cite[Section 1.2.1]{DNY21}, the extension of the theory of random tensors to singular Gibbs measures remains a challenging open problem (see also Remark \ref{intro:remark_DNY}). \\
 
After the completion of this series, the author learned of independent work by Oh, Okamoto, and Tolomeo \cite{OOT20}. The authors study (the stochastic analogue of) the focusing and defocusing three-dimensional nonlinear wave equation with a Hartree nonlinearity given by
\begin{equation*}
-\partial_t^2 u - u + \Delta u = \pm \lambda \lcol  ( V \ast u^2) u  \rcol\qquad (t,x) \in \bR \times \bT^3,
\end{equation*}
where $\lambda >0$. The main focus of \cite{OOT20} lies on the construction and properties of the Gibbs measures, which are discussed in the first part of the series (cf. \cite[Remark 1.2]{BB20a}). Regarding the dynamical results of \cite{OOT20}, the authors prove the invariance of the Gibbs measure in the following cases:
\begin{enumerate}[(i)]
\item focusing ($-$):  \hspace{2.4ex}$\beta>2$ or $\beta=2$ in the weakly nonlinear regime.
\item defocusing ($+$): $\beta>1$. 
\end{enumerate}
In light of the non-normalizability of the focusing Gibbs measure for $\beta<2$ and $\beta=2$ in the strongly nonlinear regime (cf. \cite{OOT20}), the result is optimal in the focusing case. In the defocusing case, however, the restriction $\beta>1$ excludes all Gibbs measures which are singular with respect to the Gaussian free field. In contrast, Theorem \ref{theorem:gwp_invariance} below covers the complete range
 $\beta>0$, which includes singular Gibbs measures. In fact, this is the main motivation behind our two-paper series.    \\
 
 \begin{figure}
\scalebox{0.9}{
\begin{tabular}{ll|>{\centering\arraybackslash}p{1cm}>{\centering\arraybackslash}p{2.5cm}>{\centering\arraybackslash}p{2cm}|>{\centering\arraybackslash}p{1cm}>{\centering\arraybackslash}p{2.5cm}>{\centering\arraybackslash}p{2cm}}
\multicolumn{2}{c|}{Dimension} & \multicolumn{3}{c|}{Wave}  &\multicolumn{3}{c}{ Schr\"{o}dinger} 
 \\ \cline{3-8} &&&&&&&  \\[-9pt]
 \multicolumn{2}{c|}{\& Nonlinearity} & $s_{\text{G}}$ &  $s_{\text{prob}}$ & $s_{\text{det}}$ & $s_{\text{G}}$ & $s_{\text{prob}}$ & $s_{\text{det}}$
  \\ \hline &&&&&&&  \\[-9pt]
$d=1$ , 	&  $ |u|^{p-1} u $							
& $\frac{1}{2}$- & $-\frac{1}{2p}$ & $\frac{1}{2}-\frac{1}{p}$  
& $\frac{1}{2}$- & $-\frac{1}{p-1}$ & $\frac{1}{2}-\frac{2}{p-1}$ 
 \\ \hline &&&&&&&  \\[-9pt]
$d=2$, 	&$|u|^{p-1} u$ 							
& $0$- & $-\frac{3}{2p}$ & $1-\frac{2}{p-1}$
& $0$- & $-\frac{1}{p-1}$ & $1-\frac{2}{p-1}$ 
 \\ \hline &&&&&&&  \\[-9pt]
$d=3$, 	&$(V  \ast |u|^2) \cdot u$ 			
& $-\frac{1}{2}$- & $-\min(\frac{2+\beta}{3},\frac{3}{2})$ & $\max(\frac{1-2\beta}{2},0)$
& $-\frac{1}{2}$- & $-\min( \frac{1+\beta}{2}, 1)$ & $\max(\frac{1-2\beta}{2},0)$	
 \\ \hline &&&&&&&  \\[-9pt]
$d=3$,  	&$|u|^2 u$ 								
& $-\frac{1}{2}$- & $-\frac{2}{3}$ & $\frac{1}{2}$
& $-\frac{1}{2}$- &  $-\frac{1}{2}$ & $\frac{1}{2}$ 
\end{tabular}
}
\caption{\protect{\small{Relevant spatial regularities for the invariance of the Gibbs measure:   $s_\text{G}$ (support of the Gibbs measure), $s_{\text{prob}}$ (probabilistic scaling), $s_{\text{det}}$ (deterministic scaling). The value of $s_{\text{prob}}$ for power-type nonlinearities can be found in \cite{DNY19}. The probabilistic critical regularity \(s_{\text{prob}}\) for the wave equation with a Hartree nonlinearity is a result of high\(\times\)high\(\times\)high\(\rightarrow\)low and (high\(\times\)high\(\rightarrow\)low)\(\times\)high\(\rightarrow\)high-interactions. For the Schr\"{o}dinger equation with a Hartree nonlinearity, \(s_{\text{prob}}\) is a result of (high\(\times\)high\(\rightarrow\)high)\(\times\)high\(\rightarrow\)high and (high\(\times\)high\(\rightarrow\)low)\(\times\)high\(\rightarrow\)high-interactions.}}}
\label{figure:regularities}
\end{figure}

In the preceding discussion, we have seen several examples of invariant Gibbs measures supported at regularities even below the deterministic critical regularity. In \cite{DNY19,DNY20}, Deng, Nahmod, and Yue describe a probabilistic scaling heuristic, which takes into account the expected probabilistic cancellations. We denote the critical regularity with respect to the probabilistic scaling by  $s_{\text{prob}}$ and the spatial regularity of the support of the Gibbs measure $s_{\text{G}}$. Based on the probabilistic scaling heuristic, we then expect probabilistic local well-posedness as long as $s_{\text{G}} > s_{\text{prob}}$. We record the relevant quantities for nonlinear wave and Schr\"{o}dinger equations in Figure \ref{figure:regularities}. For comparison, we also include the deterministic critical regularity $s_{\text{det}}$.   The probabilistic scaling heuristic, however, does not address any obstructions related to the global theory, renormalizations, or measure-theoretic aspects. As a result, it does not capture some of the difficulties for dispersive equations with singular Gibbs measures, such as the cubic nonlinear wave equation in three dimensions.

Our discussion so far has been restricted to invariant Gibbs measures for nonlinear wave and Schr\"{o}dinger equations. While this is the most classical problem in random dispersive equations, there exist many more active directions of research. Since a full overview of the field is well-beyond the scope of the introduction, we only mention a few directions and refer to the given references for more details. 

\begin{enumerate}
\item Invariance of white noise \cite{KMV19,Oh09,QV08},
\item Invariant measures (at high regularity) for completely integrable  equations \cite{TV14,TV15,DTV15},
\item Quasi-invariant Gaussian measures for non-integrable equations \cite{GOTW18,OT20,T15},
\item Non-invariance methods related to scattering, solitons, and blow-up \cite{Bringmann18b,Bringmann20,DLM17,KM19,Pocovnicu17},
\item Wave turbulence \cite{BGHS19,CG19,CG20,DH19}, 
\item Stochastic dispersive equations \cite{BD99,BD03,DW18,GKO18a,GKO18}. 
\end{enumerate}

After this overview of the relevant literature, we now turn to a more detailed description of the most relevant methods. Our discussion will be split into two parts separating the local and global aspects. As a teaser for the reader, we already mention that our contributions to the local theory will be of an intricate but technical nature, while our contributions to the global theory will be conceptual. \\

As mentioned above, the first local well-posedness result for dispersive equations relying on probabilistic methods was proven by Bourgain \cite{Bourgain96}. He considered the renormalized cubic nonlinear Schr\"{o}dinger equation 
\begin{equation}\label{intro:eq_cubic_NLS_IVP}
\begin{cases}
 i \partial_t u - u + \Delta  u = \lcol |u|^{2} u \rcol  \qquad (t,x) \in \bR \times \bT^2, \\
 u|_{t=0} = \phi 
\end{cases}
\end{equation}
The additional $-u$-term has been introduced for convenience, but can be easily removed through a gauge transformation. The random initial data $\phi$ is drawn from the corresponding Gibbs measure, which coincides with the (complex) $\Phi^4_2$-model. Since the $\Phi^4_2$-model is absolutely continuous with respect to the Gaussian free field and the local theory does not rely on the invariance of the Gibbs measure, we can represent $\phi$ through the random Fourier series
\begin{equation}\label{intro:eq_random_Fourier}
\phi = \sum_{n\in \bZ^2} \frac{g_n}{\langle n\rangle} e^{i \langle n ,x \rangle}. 
\end{equation}
Here, $\langle n \rangle \defe \sqrt{1+|n|^2}$ and $(g_n)_{n\in \bZ^2}$ is a sequence of independent and standard complex-valued Gaussians. The independence of the Fourier coefficients, and more generally the simple structure of \eqref{intro:eq_random_Fourier}, is an essential ingredient for many arguments in \cite{Bourgain96}. A direct calculation yields almost surely that $\phi\in H^s(\bT^2)\backslash L^2(\bT^3)$ for all $s<0$. Since \eqref{intro:eq_cubic_NLS_IVP} is mass-critical, $\phi$ lives below the (deterministic) critical regularity. To overcome this obstruction, Bourgain decomposed the solution by writing
\begin{equation*}
u(t) = e^{ i t (-1+\Delta)} \phi + v(t). 
\end{equation*}
This decomposition is commonly referred to as Bourgain's trick, but is also known in the stochastic PDE literature as the Da Prato-Debussche trick \cite{DPD03}. Using this decomposition, we see that the nonlinear remainder $v$ satisfies the evolution equation 
\begin{equation*}
 i \partial_t v - v + \Delta  v = \lcol |e^{ i t (-1+\Delta)} \phi + v|^{2} (e^{ i t (-1+\Delta)} \phi + v) \rcol  \qquad (t,x) \in \bR \times \bT^2. 
\end{equation*}
Through a combination of probabilistic and PDE arguments, Bourgain proved that the Duhamel integral 
\begin{equation*}
I\Big[ \lcol |e^{ i t (-1+\Delta)} \phi |^{2} e^{ i t (-1+\Delta)} \phi  \rcol \Big] 
\end{equation*}
lives at spatial regularity $1/2-$ (see also \cite{CLS19}). This opens the door to a contraction argument for $v$ at a positive (and hence sub-critical) regularity. The contraction argument requires further ingredients from random matrix theory to handle mixed terms, but can in fact be closed. We emphasize that the nonlinear remainder $v$ is treated purely deterministically and is not shown to exhibit any random structure. \\

We now discuss the more recent work of Gubinelli, Koch, and Oh \cite{GKO18a}, which covers the stochastic wave equation
\begin{equation*}
\begin{cases}
-\partial_t^2 u - u + \Delta u = \lcol u^2 \rcol + \xi \qquad (t,x) \in \bR \times \bT^3, \\
u[0]=0. 
\end{cases}
\end{equation*}
Here, $\xi$ denotes space-time white noise. Inspired by a (higher-order version of) Bourgain's trick, we decompose
\begin{equation*}
u = \<1black> + \<2Dblack> + v. 
\end{equation*}
The linear stochastic object $ \<1black>$ solves the forced wave equation
\begin{equation*}
(-\partial_t^2 - 1 + \Delta ) \,  \<1black> = \xi. 
\end{equation*}
The black dot represents the stochastic noise $\xi$ and the arrow represents the Duhamel integral. An elementary arguments shows that $\<1black>$ has spatial regularity $-1/2-$. The quadratic stochastic object $\<2Dblack>$ is the solution of the forced wave equation
\begin{equation*}
(-\partial_t^2 - 1 + \Delta ) \, \<2Dblack> = \lcol \big( \, \<1black> \, \big)^2 \rcol. 
\end{equation*}
Based on similar arguments for stochastic heat equations, one may expect that $\<2Dblack>$ has spatial regularity $2 \cdot (-1/2-) +1= 0-$, where the gain of one spatial derivative comes from the Fourier multiplier $\langle \nabla \rangle^{-1}$ in the Duhamel integral. Using multilinear dispersive estimates, however, Gubinelli, Koch, and Oh proved that $\<2Dblack>$ has spatial regularity $1/2-$. Using the definition of our stochastic objects, we obtain the evolution equation 
\begin{equation*}
(-\partial_t^2 - 1 + \Delta ) \, v = 2 \Big( \<2Dblack> + v \Big) \cdot \<1black> + \Big( \<2Dblack> + v \Big)^2
\end{equation*} 
for the nonlinear remainder $v$. In the following discussion, we let $\parall$ and $\paraeq$ be the low$\times$high and high$\times$high-paraproducts from Definition \ref{lwp:def_paraproduct}. Due to low$\times$high-interactions such as $v \parall \<1black>$, we expect $v$ to have spatial regularity at most $(-1/2-)+1=1/2-$. We emphasize that, unlike high$\times$high to high-interactions, the low$\times$high-interactions are not affected by multi-linear dispersive effects. However, this implies that the spatial regularities of $v$ and $\<1black>$ do not add up to a positive number, which means that the high$\times$high-term $v \paraeq \<1black>$ cannot even be defined (without additional information on $v$). This problem cannot be removed through a direct higher-order expansion of $u$ and persists through all orders of the Picard iteration scheme. Instead, Gubinelli, Koch, and Oh \cite{GKO18a} utilize ideas from the para-controlled calculus for singular stochastic PDEs \cite{GIP15}. We write $v=X+Y$, where $X$ and $Y$ solve 
\begin{equation}\label{intro:eq_GKO_X}
(-\partial_t^2 - 1 + \Delta ) \, X = 2 \Big( \<2Dblack> + X + Y \Big) \parall \<1black> 
\end{equation}
and
\begin{equation}\label{intro:eq_GKO_Y}
(-\partial_t^2 - 1 + \Delta ) \, Y = 2 \Big( \<2Dblack> + X+Y \Big) \parageq \<1black> + \Big( \<2Dblack> + X+ Y\Big)^2.
\end{equation}
The para-controlled component $X$ only has spatial regularity $1/2-$, but exhibits a random structure. In the analysis of the high$\times$high-interactions $X \paraeq \<1black>$, this random structure can be exploited by replacing $X$ with the Duhamel integral of the right-hand side in \eqref{intro:eq_GKO_X}. Since this leads to a double Duhamel integral in the expression for $Y$, this approach is often called the ``double Duhamel trick''. In contrast to $X$, $Y$ lives at a higher spatial regularity and can be controlled through deterministic arguments. The local theory in this paper will follow a similar approach, but relies on more intricate estimates, which will be further discussed below. \\

After this discussion of the local theory, we now turn to the global theory. We discuss Bourgain's globalization argument \cite{Bourgain94}, which uses the invariance of the truncated Gibbs measures as a substitute for a conservation law. We first recall the definition of the different modes of convergence for a sequence of probability measures, which will be needed below.

\begin{definitionnotag}[Convergence of measures]
Let $\cH$ be a Hilbert space and let $B(\cH)$ be the Borel $\sigma$-algebra on $\cH$. Furthermore, let $(\mu_N)_{N\geq 1}$ and $\mu$ be Borel probability measures on $\cH$. Then, we say that
\begin{enumerate}[(i)]
\item $\mu_N$ converges in total variation to $\mu$ if 
\begin{equation*}
\lim_{N\rightarrow \infty} \sup_{A \in B(\cH)} | \mu(A) - \mu_N(A)| = 0,
\end{equation*}
\item $\mu_N$ converges strongly to $\mu$ if 
\begin{equation*}
\lim_{N\rightarrow \infty} \mu_N(A) = \mu(A)  \qquad \text{for all } A \in B(\cH), 
\end{equation*}
\item $\mu_N$ converges weakly to $\mu$ if 
\begin{equation*}
\lim_{N\rightarrow \infty} \mu_N(A) = \mu(A)  \qquad \text{for all } A \in B(\cH) \text{ satisfying } \mu(\partial A)=0. 
\end{equation*}
\end{enumerate}
\end{definitionnotag} 
To isolate the key features of the argument, we switch to an abstract setting. Let $\cH$ be a Hilbert space and let $\Phi_N \colon \bR \times \cH \rightarrow \cH$ be a sequence of jointly continuous flow maps. Let $\mu_N$ be a sequence of Borel probability measures on $\cH$. Most importantly, we assume that $\mu_N$ is invariant under $\Phi_N$ for all $N$, i.e., 
\begin{equation*}
\mu_N( \Phi_N(t)^{-1} A ) = \mu_N(A) \qquad \text{for all } t\in \bR \text{ and } A \in B(\cH). 
\end{equation*}
In our setting, $\Phi_N$ will be the flow for a frequency-truncated nonlinear wave equation and $\mu_N$ will be the corresponding truncated Gibbs measure. Our main interest lies in the removal of the truncation, i.e., the limit of the dynamics $\Phi_N$ and measure $\mu_N$ as $N$ tends to infinity. Let $\mu$ be a limit of the sequence $\mu_N$, where the mode of convergence will be specified below. In order to construct the limiting dynamics on the support of $\mu$, we need uniform bounds on $\Phi_N$ on the support of $\mu$. At the very least, we require an estimate of the form 
\begin{equation}\label{intro:eq_bourgain_global_estimate}
\limsup_{N\rightarrow \infty} \mu\Big( \sup_{t\in [0,1]} \| \Phi_N(t) \phi \|_{\cH} \leq \epsilon^{-1} \Big) \geq 1 - o_\epsilon(1),
\end{equation}
where $0<\epsilon<1$ and $o$ is the small Landau symbol. Bourgain's globalization argument \cite{Bourgain94} proves \eqref{intro:eq_bourgain_global_estimate} in two steps. 

In a first measure-theoretic part, we use that 
\begin{equation*}
 \Big| \mu\Big( \sup_{t\in [0,1]} \| \Phi_N(t) \phi \|_{\cH} \leq \epsilon^{-1} \Big) - \mu_N\Big( \sup_{t\in [0,1]} \| \Phi_N(t) \phi \|_{\cH} \leq \epsilon^{-1} \Big) \Big| \leq  \sup_{A \in B(\cH)} | \mu(A) - \mu_N(A)| . 
\end{equation*}
As long as $\mu_N$ converges in \emph{total variation} to $\mu$, we can reduce \eqref{intro:eq_bourgain_global_estimate} to 
\begin{equation}\label{intro:eq_bourgain_global_estimate_reduced}
\limsup_{N\rightarrow \infty} \mu_N\Big( \sup_{t\in [0,1]} \| \Phi_N(t) \phi \|_{\cH} \leq \epsilon^{-1} \Big) \geq 1 - o_\epsilon(1),
\end{equation}
In a second dynamical part, we use the invariance of $\mu_N$ under $\Phi_N$ and the probabilistic local well-posedness. Let $J\geq1$ be a large integer and define the step-size $\tau= J^{-1}$. Then, 
\begin{align*}
\mu_N\Big( \sup_{t\in [0,1]} \| \Phi_N(t) \phi \|_{\cH} >  \epsilon^{-1} \Big) 
&\leq \sum_{j=0}^{J-1}\mu_N\Big( \sup_{t\in [j \tau ,(j+1)\tau]} \| \Phi_N(t) \phi \|_{\cH} >  \epsilon^{-1} \Big)  \\ 
&= \sum_{j=0}^{J-1}\mu_N\Big( \sup_{t\in [0 ,\tau]} \| \Phi_N(t)  \Phi_N(j\tau) \phi \|_{\cH} >  \epsilon^{-1} \Big) . 
\end{align*}
Using the invariance of $\mu_N$ under $\Phi_N(j\tau)$, we obtain that 
\begin{equation}\label{intro:eq_bourgain_global_estimate_reduced_2}
\mu_N\Big( \sup_{t\in [0,1]} \| \Phi_N(t) \phi \|_{\cH} >  \epsilon^{-1} \Big) \leq \tau^{-1} \mu_N\Big( \sup_{t\in [0 ,\tau]} \| \Phi_N(t)   \phi \|_{\cH} >  \epsilon^{-1} \Big) . 
\end{equation}
The right-hand side of \eqref{intro:eq_bourgain_global_estimate_reduced_2} can then be controlled through an appropriate choice of $\tau$ and the local theory (as well as tail estimates for $\mu_N$). 

In (this sketch of) Bourgain's globalization argument, the convergence in total variation played an essential role. In all previous results on the invariance of (defocusing) Gibbs measures \cite{Bourgain94,Bourgain96,Bourgain97,DNY19,OOT20,OT18,Zhidkov94}, the truncated Gibbs measures converge in total variation, so that this assumption does not pose any problems. In our case, however, the truncated Gibbs measures $\mu_N$ only converge weakly to the Gibbs measure $\mu$. The weak mode of convergence is related to the singularity of the Gibbs measure $\mu$ with respect to the Gaussian free field $\cg$, which necessitates softer arguments in the construction of $\mu$. Using the weak convergence of $\mu_N$ to $\mu$, we can only reduce \eqref{intro:eq_bourgain_global_estimate} to 
\begin{equation}\label{intro:eq_bourgain_global_estimate_reduced_3}
\limsup_{N\rightarrow \infty} \Big[ \limsup_{M\rightarrow \infty} \mu_M\Big( \sup_{t\in [0,1]} \| \Phi_N(t) \phi \|_{\cH} \leq \epsilon^{-1} \Big) \Big] \geq 1 - o_\epsilon(1),
\end{equation}
In \eqref{intro:eq_bourgain_global_estimate_reduced_3}, we will typically have $M>N$, and hence we cannot (directly) use the invariance of the truncated Gibbs measures. \\
In \cite{NORS12}, Nahmod, Oh, Rey-Bellet, and Staffilani prove the invariance of a Wiener measure for the periodic derivative nonlinear Schr\"{o}dinger equation. The truncated Wiener measures in \cite{NORS12} are defined using a frequency-truncation not only in the interaction but also in the Gaussian free field (cf. \cite[(5.13)]{NORS12}. As a consequence, the truncated Wiener measures only converge weakly (cf. \cite[Proposition 5.13]{NORS12}). In order to prove \eqref{intro:eq_bourgain_global_estimate_reduced_3}, the authors rely on the (quantitative) mutual absolute continuity of the (truncated) Wiener measure with respect to the (truncated) Gaussian free field (cf. \cite[(6.7)]{NORS12}). Unfortunately, the singularity of the Gibbs measure in this work (as stated in Theorem \ref{theorem:measures}) prevents us from using a similar approach. 

\subsection{Main results and methods}
Before we can state our main results, we need to define the renormalized and frequency-truncated Hamiltonians, wave equations, and Gibbs measures. For any dyadic $N\geq 1$, we define the renormalized and frequency-truncated potential energy by
\begin{align*}
&\frac{1}{4} \int_{\bT^3} \lcol ( V \ast ( P_{\leq N} \phi)^2 ) (P_{\leq N} \phi)^2 \rcol \dx \\
\defe& \frac{1}{4} \int_{\bT^3} \Big[  ( V \ast ( P_{\leq N} \phi)^2 ) (P_{\leq N} \phi)^2 - 2 a_N (P_{\leq N} \phi)^2 - 4 ( \MN P_{\leq N} \phi ) P_{\leq N} \phi +\widehat{V}(0) a_N^2 + 2 b_N \Big]  \dx  + c_N.
\end{align*}
Here, the renormalization constants $a_N$, $b_N$, $c_N$ are given by Definition 2.6, Definition 2.8, and Proposition 3.2 in the first paper of this series \cite{BB20a}, but their precise values are not needed in this paper. The renormalization multiplier $\MN$ is defined by
\begin{equation}\label{intro:eq_MN}
\widehat{\MN f}(n) \defe \Big( \sum_{k \in \bZ^3} \frac{\widehat{V}(n+k)}{\langle  k \rangle^2} \rho_N(k)^2 \Big) \widehat{f}(n),
\end{equation}
where $\rho_N$ is a truncation to frequencies of size $\lesssim N$. The Hamiltonian $H_N$ is then defined as 
\begin{equation}
H_N[\phi_0,\phi_1] \defe \frac{1}{2} \Big( \| \phi_0 \|_{L^2}^2 + \| \langle \nabla \rangle \phi_0  \|_{L^2}^2 + \| \phi_1 \|_{L^2}^2 \Big) + \frac{1}{4} \int_{\bT^3} \lcol ( V \ast ( P_{\leq N} \phi)^2 ) (P_{\leq N} \phi)^2 \rcol \dx. 
\end{equation}
The renormalized and frequency-truncated nonlinear wave equation corresponding to $H_N$ is given by
\begin{equation}\label{intro:eq_truncated_nlw}
\begin{cases}
(-\partial_t^2 - 1 + \Delta ) u = P_{\leq N} \Big( \lcol ( V \ast (P_{\leq N} u )^2 ) P_{\leq N} u \rcol \Big) \qquad (t,x) \in \bR \times \bT^3, \\
u|_{t=0} = \phi_0, \quad \partial_t u |_{t=0} = \phi_1,
\end{cases}
\end{equation}
where the renormalized nonlinearity is given by
\begin{equation}\label{intro:eq_renormalized_nonlinearity}
 \lcol ( V \ast (P_{\leq N} u )^2 ) P_{\leq N} u \rcol \defe  ( V \ast (P_{\leq N} u )^2 ) P_{\leq N} u - a_N \widehat{V}(0) P_{\leq N} u - 2 \MN P_{\leq N} u. 
\end{equation}
We remark that frequencies much larger than $N$ are not affected by the nonlinearity \eqref{intro:eq_renormalized_nonlinearity}. As a result, the nonlinear component of the solution of \eqref{intro:eq_renormalized_nonlinearity} is always smooth. For a fixed $N\geq 1$, the coercivity of $H_N$ implies the global well-posedness of \eqref{intro:eq_truncated_nlw}. We also define the renormalized square 
\begin{equation}
\lcol (P_{\leq N} u )^2 \rcol \defe (P_{\leq N} u )^2 - a_N, 
\end{equation}
which will simplify the notation below. The Gibbs measure $\mup_N$ corresponding to $H_N$ is given by $\mup_N = \mu_N \otimes (\langle \nabla \rangle)_{\#} \cg$, where $\mu_N$ is defined in \cite[(1.10)]{BB20a} and $ (\langle \nabla \rangle)_{\#} \cg$ is the pushforward of the three-dimensional Gaussian field (defined in the introduction of \cite{BB20a}) under $\langle \nabla \rangle$. Before we state the properties of the truncated Gibbs measures $\mup_N$, we recall the assumptions on the interaction potential from the first paper of the series. In these assumptions, $0<\beta<3$ is a fixed parameter.

\begin{assumptions}{A}\label{assumptions:V}
We assume that the interaction potential $V$ satisfies 
\begin{enumerate}
\item $V(x)= c_\beta |x|^{-(3-\beta)}$ for some $c_\beta>0$ and all $x\in \bT^3$ satisfying $\| x\|\leq 1/10$,
\item $V(x) \gtrsim_\beta 1$ for all $x\in \bT^3$, 
\item $V(x)=V(-x)$ for all $x\in \bT^3$,
\item $V$ is smooth away from the origin. 
\end{enumerate}
\end{assumptions}

The following properties of the Gibbs measures $\mup_N$ are a direct consequence of \cite[Theorem 1.1]{BB20a}, which is phrased in terms of $\mu_N$. For notational reasons related to the weak convergence instead of convergence in total variation, we use a second parameter $M$ for the frequency-truncation. Our notation for the random variables, which is based on dots, will be discussed below the theorem. 
\begin{theorem}[Gibbs measures]\label{theorem:measures}
Let $\kappa>0$ be a fixed positive parameter, let $0<\beta<3$ be a parameter, and let the  interaction potential $V$ be as in the Assumptions \ref{assumptions:V}. Then, the truncated Gibbs measures $(\mup_M)_{M\geq 1}$ weakly converge to a limiting measure $\mup_\infty$ on $\cH_{x}^{-1/2-\kappa}(\bT^3)$, which is called the Gibbs measure. If in addition $0<\beta < \frac{1}{2}$, then the Gibbs measure $\mup_\infty$ is singular with respect to the Gaussian free field $\cgp$. \\
Furthermore, there exists a sequence of reference measures  $(\nup_M)_{M\geq 1}$  on $\cH_{x}^{-1/2-\kappa}(\bT^3)$ and an ambient probability space $(\Omega,\mathcal{F},\bP)$ satisfying the following two properties: 
\begin{enumerate}
\item (Absolute continuity and $L^q$-bounds) The truncated Gibbs measure $\mup_M$ is absolutely continuous with respect to the reference measure $\nup_M$. More quantitatively, there exists a parameter $q>1$ and a constant $C\geq 1$ independent of $M$ such that
\begin{equation*}
\mup_M(A) \leq C \nup_M(A)^{1-\frac{1}{q}}
\end{equation*}
for all Borel sets $A \subseteq \cH_{x}^{-1/2-\kappa}(\bT^3)$. 
\item (Representation of $\nup_M$) Let $\gamma=\min(1/2+\beta,1)$. Then, there exist two random variables $\, \bluedot, \reddotM \colon (\Omega,\mathcal{F}) \rightarrow \cH_{x}^{-1/2-\kappa}(\bT^3)$ and a large integer $k=k(\beta) \geq 1$ satisfying for all $p\geq 2$ that
\begin{equation*}
\nup_M = \Law_\bP( \, \bluedot + \reddotM), \quad  \cgp = \Law_\bP( \,  \bluedot \, ), \quad \text{and} \quad   
\Big( \bE_{\bP} \| \, \reddotM \|_{\cH_x^{\gamma-\kappa}(\bT^3)}^p \Big)^{\frac{1}{p}} \leq p^{\frac{k}{2}}. 
\end{equation*}
\end{enumerate}
\end{theorem}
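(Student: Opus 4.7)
My plan is to derive this theorem as a direct consequence of \cite[Theorem 1.1]{BB20a}, which provides the analogous statements for the position-component measure $\mu_M$. Because $\mup_M = \mu_M \otimes (\langle \nabla \rangle)_{\#}\cg$ is a product in which only the first factor depends on $M$, every claim in the theorem lifts from the scalar setting by tensoring with the fixed velocity-component law $(\langle \nabla \rangle)_{\#}\cg$ on an enlarged probability space. There are no new analytical difficulties relative to the companion paper; the work is essentially bookkeeping.

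For weak convergence I would start from $\mu_M \rightharpoonup \mu_\infty$ on $\mathcal{H}_x^{-1/2-\kappa}(\bT^3)$, which is established in \cite{BB20a}. Since $(\langle \nabla \rangle)_{\#}\cg$ is a fixed Borel probability measure, the product measures $\mup_M$ converge weakly to $\mup_\infty \defe \mu_\infty \otimes (\langle \nabla \rangle)_{\#}\cg$, as verified by testing against tensor products of bounded continuous functions and invoking density in $C_b$. For singularity in the range $0<\beta<1/2$, I would invoke the scalar singularity $\mu_\infty \perp \cg$ from \cite{BB20a}: transferring to the two-component setting through the projection onto the position component immediately yields $\mup_\infty \perp \cgp$.

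For the reference measures, I would take the $\nu_M$ from \cite[Theorem 1.1]{BB20a} together with its ambient probability space $(\Omega_0,\mathcal{F}_0,\bP_0)$, the absolute continuity bound $\mu_M(A) \leq C\, \nu_M(A)^{1-1/q}$, and the representation $\nu_M = \Law_{\bP_0}(\, \bluedot + \reddotM\,)$ with $\bluedot \sim \cg$. Define $\nup_M \defe \nu_M \otimes (\langle \nabla \rangle)_{\#}\cg$ and enlarge the probability space to $(\Omega,\mathcal{F},\bP) = (\Omega_0 \times \Omega_1, \mathcal{F}_0 \otimes \mathcal{F}_1, \bP_0 \otimes \bP_1)$, where $(\Omega_1,\mathcal{F}_1,\bP_1)$ carries an independent Gaussian realization of the velocity component. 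Reinterpret $\bluedot$ and $\reddotM$ as two-component objects by appending the independent velocity to $\bluedot$ and appending zero to $\reddotM$; the $L^p$-bound on $\reddotM$ is inherited verbatim since it involves only the spatial component.

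The one step requiring a brief argument is the promotion of the absolute continuity inequality from product sets to arbitrary Borel sets. Slicing over the velocity coordinate and applying Jensen's inequality to the concave map $x \mapsto x^{1-1/q}$, for any Borel $A$ one obtains
\begin{equation*}
\mup_M(A) = \int \mu_M(A_{\phi_1}) \, \mathrm{d}(\langle \nabla \rangle)_{\#}\cg(\phi_1) \leq C \int \nu_M(A_{\phi_1})^{1-1/q} \, \mathrm{d}(\langle \nabla \rangle)_{\#}\cg(\phi_1) \leq C\, \nup_M(A)^{1-1/q},
\end{equation*}
where $A_{\phi_1}$ denotes the $\phi_1$-slice of $A$. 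This manipulation, together with the direct quotations above, completes the proof.
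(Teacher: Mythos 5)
Your argument is correct and is essentially the one the paper has in mind: the paper states that Theorem~\ref{theorem:measures} is ``a direct consequence of \cite[Theorem 1.1]{BB20a}, which is phrased in terms of $\mu_N$,'' and your write-up simply supplies the tensoring bookkeeping that the paper leaves implicit, including the correct choice of $\nup_M = \nu_M \otimes (\langle \nabla\rangle)_\# \cg$, the enlargement of the ambient space so that $\bluedot$ carries the velocity component (consistent with the construction recalled in Section~\ref{section:gaussian_processes}), and the slicing-plus-Jensen step to promote the $L^q$ absolute-continuity bound from cylinder sets to arbitrary Borel sets.
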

\begin{remark}
After the completion of this series, the author learned of independent work by Oh, Okamoto, and Tolomeo \cite{OOT20}, which yields an analogue of Theorem \ref{theorem:measures}. We refer to Remark 1.2 in the first part of the series \cite{BB20a} for a more detailed comparison. 
\end{remark}
 We will require that the ambient probability space $(\Omega,\mathcal{F},\bP)$ is rich enough to contain a family of independent Brownian motions, which is clear from the definition of  $(\Omega,\mathcal{F},\bP)$ in \cite{BB20a} and detailed in Section \ref{section:gaussian_processes}.

Let us further explain the notation in Theorem \ref{theorem:measures}. We use dots to represent the random data, since they can be used as building blocks in more complicated stochastic objects. We already saw this graphical notation in our discussion of \cite{GKO18a} and we refer the reader to \cite{MWX17} for a detailed discussion of similar diagrams. We use the blue dot $\, \bluedot\,$ for the Gaussian random data, since it lives at low spatial regularities and is primarily viewed as a high-frequency term. We use the red dot $\, \reddotM\,$ to denote the more regular component of the random data, since we primarily view it as a low-frequency term. Furthermore, the blue dot $\,\bluedot\,$ is filled while the red dot $\, \reddotM\,$ is not filled. The reason is that the manuscript should be accessible to colorblind readers and also readable as a black and white copy. 

In the following, we often write $\, \purpledot \,$ for a generic element $\phi \in \cH_x^{-1/2-\kappa}(\bT^3)$. The purple diamond will be used as a building block for further stochastic objects. When working with the reference measure $\nup_M$, we have that 
\begin{equation*}
\Law_{\nup_M}(\, \purpledot\,) = \Law_\bP( \, \bluedot + \reddotM).
\end{equation*}
Naturally, we chose the color purple since it is a mixture of blue and red. The change in shape, i.e., from a dot to a diamond, is primarily made for colorblind readers. We also only use diamonds for intrinsic objects in $\cH_x^{-1/2-\kappa}(\bT^3)$, while dots are used for objects defined on the ambient probability space $(\Omega,\mathcal{F},\bP)$. The significance of this distinction will be further discussed in Sections \ref{section:local} and \ref{section:global}. 

While Theorem \ref{theorem:measures} already contains the measure-theoretic results of this series, we now state the dynamical results.

\begin{theorem}[Global well-posedness \& invariance] \label{theorem:gwp_invariance}
There exists a Borel-measurable set $\cS \subseteq \cH^{-1/2-\kappa}(\bT^3)$ satisfying $\mup_\infty(\cS)=1$ and such that the following two properties hold: 
\begin{enumerate}
\item (Global well-posedness) Let $\Phi_N$ be the flow of the renormalized and frequency-truncated wave equation \eqref{intro:eq_truncated_nlw}. Then, the limit
\begin{equation*}
\Phi_\infty[t] \, \purpledot \defe \lim_{N\rightarrow \infty} \Phi_N[t] \, \purpledot  
\end{equation*}
exists in $\cH^{-1/2-\kappa}(\bT^3)$ for all $t\in \bR$ and $\, \purpledot\, \in \cS$.  \label{intro:item_gwp}
\item (Invariance)  The Gibbs measure $\mup_\infty$ is invariant under $\Phi_\infty$, i.e., it holds for all $t\in \bR$ that
\begin{equation*}
\Phi_\infty[t]_\# \mup_\infty = \mup_\infty 
\end{equation*}
\end{enumerate}
\end{theorem}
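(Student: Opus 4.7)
The plan has two main parts: a probabilistic local well-posedness theory uniform in the frequency cutoff $N$, and a new globalization argument tailored to the weak (rather than total-variation) convergence of $\mup_M$ to $\mup_\infty$. Once these are in hand, the set $\cS$ is constructed as the intersection, over $T\in\bN$, $\epsilon = 2^{-k}$, and $N_0\in\bN$, of the sets on which $\sup_{|t|\leq T}\|\Phi_N[t]\purpledot\|_{\cH^{-1/2-\kappa}}\leq \epsilon^{-1}$ for every $N\geq N_0$ and on which $(\Phi_N[t]\purpledot)_N$ is Cauchy in $\cH^{-1/2-\kappa}$. The flow $\Phi_\infty[t]$ is then the pointwise limit on $\cS$, and invariance follows from the exact invariance of $\mup_M$ under $\Phi_M$ combined with the weak convergence $\mup_M\rightharpoonup\mup_\infty$.

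For the local theory, I would exploit the representation $\purpledot = \bluedot + \reddotM$ under $\nup_M$ and linearize around the rough Gaussian component: set up the stochastic tree objects (the linear evolution of $\bluedot$, its Wick square, and the cubic diagrams obtained by convolving with $V$), then decompose the nonlinear remainder as $v = X + Y$, where $X$ is a paracontrolled low$\times$high piece satisfying an equation of the schematic form $(-\partial_t^2 - 1 + \Delta) X \sim (\textup{stochastic object} + X + Y)\parall (\textup{rough Gaussian evolution})$ and $Y$ is smooth enough to close a contraction at positive Sobolev regularity. The delicate interactions are the high$\times$high pairings of $X$ with the rough Gaussian, which I would treat via the double-Duhamel trick together with multilinear dispersive estimates sharpened by random matrix/counting bounds in the style of \cite{DNY20}; the red component $\reddotM \in \cH^{\gamma-\kappa}$ is absorbed as an essentially deterministic low-regularity forcing. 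The output is, for each $\epsilon>0$, a local-theory-good set $\cA_N(\epsilon)\subset \cH^{-1/2-\kappa}$ and a time $\tau(\epsilon)>0$ with $\nup_M(\cA_N(\epsilon)^c)\lesssim \epsilon^c$ uniformly in $M\geq N$, on which the flows $\Phi_{N'}$ with $N'\geq N$ are uniformly bounded on $[0,\tau]$ and Cauchy in $N$.

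For the globalization, set $\cB_{N}(\epsilon,\tau) = \{\,\purpledot : \sup_{t\in [0,\tau]} \|\Phi_N[t]\purpledot\|_{\cH^{-1/2-\kappa}} \leq \epsilon^{-1}\,\}$. The quantitative $L^q$-domination $\mup_M(A)\leq C\,\nup_M(A)^{1-1/q}$ from Theorem \ref{theorem:measures}(1) transforms the local tail bound into $\mup_M(\cB_N(\epsilon,\tau)^c)\lesssim \epsilon^{c'}$, still uniformly in $M\geq N$; this quantitative domination is precisely what replaces the total-variation reduction of Bourgain's argument and survives the singularity of $\mup_\infty$. To propagate to $[0,1]$ I would partition into $J=\tau^{-1}$ steps and, at step $j$, apply the invariance of $\mup_M$ under $\Phi_M[j\tau]$ — which is exact — and then use a stability estimate $\|\Phi_N[t]\purpledot - \Phi_M[t]\purpledot\|_{\cH^{-1/2-\kappa}}\to 0$, valid on a further subset of large $\nup_M$-measure (hence of large $\mup_M$-measure), to swap $\Phi_M$ for $\Phi_N$ inside the indicator. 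This yields $\mup_M(\cB_N(\epsilon,1)^c)\lesssim J\,\epsilon^{c'}$ uniformly in $M\geq N$; weak convergence $\mup_M\rightharpoonup \mup_\infty$ (after approximating the indicator of $\cB_N$ by continuous functions whose discontinuity sets are $\mup_\infty$-null up to $\epsilon$-exceptions) transfers the same bound to $\mup_\infty$, and $\mup_\infty(\cS)=1$ follows.

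For invariance, testing against $F\in C_b(\cH^{-1/2-\kappa})$, dominated convergence on $\cS$ gives $\int F(\Phi_\infty[t]\purpledot)\,\mathrm{d}\mup_\infty = \lim_N \int F(\Phi_N[t]\purpledot)\,\mathrm{d}\mup_\infty$; weak convergence with continuity of $\purpledot \mapsto F(\Phi_N[t]\purpledot)$ $\mup_\infty$-a.e.\ gives $\lim_N \lim_M \int F(\Phi_N[t]\purpledot)\,\mathrm{d}\mup_M$; the same $\Phi_N\leftrightarrow \Phi_M$ swap together with exact invariance of $\mup_M$ under $\Phi_M$ collapses this to $\lim_M \int F\,\mathrm{d}\mup_M = \int F\,\mathrm{d}\mup_\infty$. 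The main obstacle throughout is exactly this double limit in $N$ and $M$ with $M\geq N$: the singularity of $\mup_\infty$ with respect to $\cgp$ rules out Bourgain's total-variation reduction, so all uniform-in-$(M,N)$ control must instead be extracted from the $L^q$-domination of $\mup_M$ by $\nup_M$ combined with multi-scale stability estimates for the paracontrolled solution — an interplay that has no analogue in the absolutely continuous settings of \cite{Bourgain94,Bourgain96,Bourgain97,DNY19,OOT20,OT18} and which is the primary reason the present argument is substantially more involved than its predecessors.
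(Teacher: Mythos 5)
Your local theory and invariance step are essentially the same as the paper's, and your identification of the double limit in $N$ and $M$ (with $M\geq N$) as the central difficulty is correct. The gap is in the globalization step, and it is substantial.

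You define $\cB_N(\epsilon,\tau)$ in terms of $\cH^{-1/2-\kappa}$ norms of $\Phi_N[t]\purpledot$, apply invariance of $\mup_M$ under $\Phi_M$, and then invoke ``a stability estimate $\|\Phi_N[t]\purpledot - \Phi_M[t]\purpledot\|_{\cH^{-1/2-\kappa}}\to 0$, valid on a further subset of large $\nup_M$-measure, to swap $\Phi_M$ for $\Phi_N$ inside the indicator.'' This is circular and cannot be repaired without changing the objects you are propagating. To prove a stability estimate on $[0,j\tau]$ for $j\geq 2$ you would already need to iterate the local theory out to time $j\tau$, which is precisely the global statement you are trying to establish. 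More fundamentally, after one step of the iteration, knowing only that $\|\Phi_N[j\tau]\purpledot\|_{\cH^{-1/2-\kappa}}\leq \epsilon^{-1}$ is useless: the local theory at the next step cannot be restarted from an $\cH^{-1/2-\kappa}$ bound alone, because the nonlinearity pairs $w_N$ against the rough linear evolution (a high$\times$high product that is not defined at this regularity) unless $w_N$ carries the full para-controlled decomposition with bounds in $\X{s_1}{b}$ and on the bilinear resonance $\sum_{L_1\sim L_2}\|P_{L_1}\<1p>\cdot P_{L_2}w_N\|$. The set you iterate must therefore encode this structure, not just a norm bound, and your stability estimate must be a \emph{para-controlled} stability theory (Propositions \ref{local:prop_stability} and \ref{global:prop_stability} in the paper) that preserves it.

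Two further missing ideas. First, the lowest-regularity discrepancy between $\Phi_N(\tau)\purpledot$ and $\Phi_M(\tau)\purpledot$ is not a smooth perturbation: it is a piece of the cubic stochastic object, of regularity only $\beta-$, which must be explicitly added back to the translated solution to make it an approximate solution of the $N$-truncated equation (Proposition \ref{global:prop_structure_cubic}); without this correction the error term in the stability argument is too large. Second, all of the above concerns stochastic objects and para-controlled structures built from $\bluedot$, but the invariance is a statement about $\purpledot$; since the decomposition $\purpledot=\bluedot+\reddotM$ lives only on the ambient space and has no intrinsic meaning under $\mup_M$, one has to convert the entire structured local and stability theory from $\bluedot$-based to $\purpledot$-based objects (Section \ref{section:ftg}) before the invariance of $\mup_M$ can be applied. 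Finally, your framing of the $L^q$-domination $\mup_M(A)\leq C\nup_M(A)^{1-1/q}$ as ``precisely what replaces the total-variation reduction'' overstates its role: it transfers tail bounds from $\nup_M$ to $\mup_M$, but it does nothing to reconcile the $N$ in the flow with the $M$ in the invariant measure. That reconciliation is the heart of the globalization argument and is where the almost-conservation-law structure, as opposed to Bourgain's exact-conservation-law structure, actually enters.
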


\begin{remark}\label{intro:rem_restriction_beta}
In the proof of Theorem \ref{theorem:gwp_invariance}, we restrict ourselves to the  case $\beta\in (0,1/2)$. The purpose of this restriction is purely notational. The same argument also works for $\beta\in [1/2,3)$, as long as $\beta$ in each estimate is replaced by $\min(\beta,1/2)$. 
\end{remark}

\begin{remark}
While Theorem \ref{theorem:gwp_invariance} shows that the limiting dynamics $\Phi_\infty[t]$ are well-defined, we do not obtain that  $\Phi_\infty[t]$ satisfies the group property. The author believes that the estimates in this paper (from Sections \ref{section:stochastic_object}-\ref{section:physical}) are strong enough to prove the group property, but the stability theory (Section \ref{section:stability_gaussian} and Section \ref{section:stability}) would need to be modified. Instead of working with a single flow $\Phi_N[t]$, one needs similar statements for the mixed flows $\Phi_{N_1}[t_1] \Phi_{N_2}[t_2]$. We refer the reader to \cite{ST19} for a more detailed discussion of the group property and its relation to the recurrence properties of the flow. 
\end{remark}

We now describe individual aspects of our argument. As in our discussion of the previous literature, we separate the local and global aspects. As mentioned above, our contributions to the local theory are of an intricate but technical nature, whereas our contributions to the global theory are conceptual. \\

In the local theory, we use the absolute continuity $\mup_M \ll \nup_M$ and the representation of $\nup_M$ from Theorem \ref{theorem:measures}. As a result, the reference measure $\nu_M$ serves the same purposes as the Gaussian free field in earlier results on invariant Gibbs measures. We then follow the para-controlled approach of \cite{GKO18a} and decompose the solution  $u_N(t)$ of \eqref{intro:eq_truncated_nlw} as 
\begin{equation}\label{intro:eq_decomposition}
u_N = \<1b> + \<3DN> + X_N + Y_N, 
\end{equation}
where the stochastic objects $\<1b>$ and $\<3DN>$, the para-controlled component $X_N$, and the smoother nonlinear remainder $Y_N$ are defined in Section \ref{section:local}. The smoother component $\, \reddotM\,$ in the representation of $\nup_M$ will be placed inside $Y_N$. In comparison to \cite{GKO18a}, however, there is an increase in the complexity of the evolution equation for $Y_N$. We split the terms into four different categories, which correspond to the methods used in their estimates.
\begin{itemize}
\item[$\bullet$] \emph{Stochastic objects:} These terms are explicit and include 
\begin{equation*}
\<131N>  \qquad \text{and} \qquad \<313N>. 
\end{equation*}
In contrast to the previous literature, we use multiple stochastic integrals for the non-resonant/resonant-decompositions, which significantly decreases the algebraic complexities. We also use counting estimates related to the dispersive symbol of the wave equation. 
\item[$\bullet$] \emph{Random matrix terms:} The terms include 
\begin{equation*}
\Big( V \ast \<2N>\Big) P_{\leq N} Y_N. 
\end{equation*}
They will be controlled through a recent random matrix estimates of Deng, Nahmod, and Yue \cite[Proposition 2.8]{DNY20}, which is based on the moment method. 
\item[$\bullet$] \emph{Contributions of para-controlled terms:} These terms include 
\begin{equation*}
V \ast \Big( P_{\leq N} \<1b> \paraeq P_{\leq N} X_N \Big) P_{\leq N} Y_N. 
\end{equation*}
We use the double Duhamel trick to exploit stochastic cancellations between $\<1b>$ and $X_N$. In our definition of $X_N$, we use the paradifferential operators $\parald$ and $\paraboxld$ introduced in Section \ref{section:local}, which form a technical novelty.
\item[$\bullet$] \emph{Physical terms:} These terms include 
\begin{equation*}
V \ast \Big( P_{\leq N} \<1b> \cdot P_{\leq N} Y_N \Big) P_{\leq N} \<3DN> \qquad \text{and} \qquad \Big( V \ast (P_{\leq N} Y_N)^2 \Big) P_{\leq N} Y_N. 
\end{equation*} 
The first term should be viewed as a random operator in $Y_N$, but is mainly treated through physical-space arguments. We believe that our approach is of independent interest, since it provides an alternative to the more Fourier-analytic estimates in \cite{Bourgain96,GKO18a,DNY19,DNY20}. The second term is treated deterministically and we rely on the refined Strichartz-estimates of Klainerman and Tataru \cite{KT99}. 
\end{itemize}
As we mentioned before, all stochastic objects have been based on $\, \bluedot \,$ and the smoother component $\, \reddotM \,$ is simply placed inside $Y_N$. This approach yields the convergence of the flows $\Phi_N$ on the support of $\mup_\infty$ for a short time interval (see Corollary \ref{lwp:cor_unstructured}). The structural information in the decomposition \eqref{intro:eq_decomposition}, however, cannot (directly) be carried over to the support of $\mup_\infty$, since $\,\bluedot\,$ is only defined on the ambient probability space $(\Omega,\mathcal{F},\bP)$. This defect will be addressed below, since the structural information is required for the global theory. 

\begin{remark}\label{intro:remark_DNY}
As was already mentioned in our overview of the literature, Deng, Nahmod, and Yue recently developed a theory of random tensors \cite{DNY20}, which forms a comprehensive framework for the local theory of random dispersive equations. 
The theory of random tensors (and its precursor \cite{DNY19}) rely more intricately on the independence of the Fourier coefficients than the para-controlled approach. Even under the reference measure $\nup_M$, however, the random data  $\purpledot = \bluedot + \reddotM$ has dependent Fourier coefficient. This presents a challenge for the theory of random tensors, which was already mentioned in \cite[Section 9.1]{DNY20}.
 In addition, there are further technical problems related to the switch from  Schr\"{o}dinger to wave equations, which are described in Section \ref{section:counting}. As a result, the author views the extension of the theory of random tensors to a local theory even for singular Gibbs measures and/or nonlinear wave equations as an interesting open problem. \\
\end{remark}

After this discussion of the local theory, we turn to the global dynamics on the support of the Gibbs measure $\mup_\infty$. As we have seen in our earlier discussion of Bourgain's globalization argument, its original version requires the convergence of the truncated Gibbs measures in total variation. Unfortunately, Theorem \ref{theorem:measures} only yields the weak convergence of the truncated Gibbs measures $\mup_M$ to $\mup_\infty$. We now give an informal description of our new globalization argument, but postpone a rigorous discussion until Section \ref{section:global}. 

We let $T\geq 1$ be a large time, $B\geq 1$ be a large parameter describing the size of the evolution, $K\geq 1$ be a large frequency scale, and $\tau>0$ be a small step-size. For any $j\geq 1$, we let $\cEj\subseteq \cH^{-1/2-\kappa}(\bT^3)$ be the set of initial data $\purpledot$ satisfying for all $t\in [0,j\tau]$ and $N\geq K$  that
\begin{equation}\label{intro:eq_global_decomposition}
\Phi_N(t)\, \purpledot = \<1p>(t) + \<3DNp>(t) + w_N(t),
\end{equation}
where $w_N$ has size at most $B$ in ``structured high-regularity'' norms. In our rigorous argument, $B$ will depend on $j$, but we ignore this during our informal discussion. We also omit a smallness condition for the difference of $\Phi_N(t)\, \purpledot$ and  $\Phi_K(t)\, \purpledot$. The goal is to prove by induction over $j\leq T/\tau$ that 
\begin{equation*}
\limsup_{M\rightarrow \infty} \mup_M(\, \purpledot \in \cEj) 
\end{equation*}
is close to one as long as $B$, $K$, and $\tau$ are chosen appropriately. The proof relies on four separate ingredients:
\begin{enumerate}[(i)]
\item \emph{(Structured local well-posedness)} This is the base case $j=1$. Using our local theory, we only have to convert the stochastic objects in \eqref{intro:eq_decomposition}, which are based on $\, \bluedot\,$, into stochastic objects based on $\, \purpledot\,$.
\item \emph{(Structure and time-translation)} Using the induction hypothesis, we now assume that the probability ${\mup_M(\, \purpledot \in \cEjminus)} $ is close to one. In order to increase the time-interval, we let $\, \greendot \defe \Phi_M[\tau] \, \purpledot$. Using the invariance of $\mup_M$ under $\Phi_M$, we obtain that 
\begin{equation*}
 \qquad \mup_M(\, \greendot \in \cEjminus) =  \mup_M(\, \Phi_M[\tau] \,\purpledot \, \in \cEjminus) = \mup_M(\, \purpledot \, \in \cEjminus),
\end{equation*} 
which is close to one. After unpacking the definitions, we obtain information on the mixed flow $\Phi_N[t-\tau] \Phi_M[\tau] \, \purpledot$ for $t\in [\tau,j\tau]$. It therefore remains to analyze the difference between  $\Phi_N[t-\tau] \Phi_M[\tau] \, \purpledot$ and $\Phi_N[t-\tau] \Phi_N[\tau] \, \purpledot$. 
\item \emph{(Structure and the cubic stochastic object)} The lowest regularity term in $\Phi_N(\tau) \, \purpledot- \Phi_M(\tau) \, \purpledot$ is given by a portion of the cubic stochastic object. In this step, we add the linear evolution of this portion to the mixed flow $\Phi_N[t-\tau] \Phi_M[\tau] \, \purpledot$, which yields a function $\widetilde{u}_N$. It is then shown that $\widetilde{u}_N(t)$ is an approximate solution of the nonlinear wave equation \eqref{intro:eq_truncated_nlw} for $t\in [\tau,j\tau]$. 
\item \emph{(Stability theory)} We develop a para-controlled stability theory and construct a solution $u_N$ close to the approximate solution $\widetilde{u}_N$, which also accounts for the remaining portion of  $\Phi_N(\tau) \, \purpledot- \Phi_M(\tau) \, \purpledot$. Since our stability theory preserves the structure of $\widetilde{u}_N$, this yields  \eqref{intro:eq_global_decomposition} on the time-interval $[\tau,j\tau]$. Since the base case already yields the desired structure on $[0,\tau]$, this completes the induction step. 
\end{enumerate}
As is evident from this sketch, the proof of global well-posedness is much more involved than in Bourgain's original setting \cite{Bourgain94,Bourgain96}.  While not perfectly accurate, the author finds the following comparison with the deterministic global theory of dispersive equations illustrative. Bourgain's globalization argument \cite{Bourgain94,Bourgain96} is the probabilistic version of a deterministic global theory using a (sub-critical) conservation law. The conservation law is replaced by the invariance, which implies that $t\mapsto \mu_N(\Phi_N(t)\phi \in \mathcal{E})$ is constant. In both cases, the global well-posedness is obtained by iterating the local well-posedness, but the estimates used in the local theory are no longer needed. In contrast, the new globalization argument is the probabilistic version of a deterministic global theory using almost conservation laws (cf. \cite{CKSTT02}). The place of the almost conserved quantities is taken by the functions $t\mapsto \mu_M(\Phi_N(t) \phi \in \mathcal{E})$, which should be close to a constant function. In addition, the proof of global well-posedness often intertwines the local estimates and the choice of the almost conserved quantities. For entirely different reasons, the similarity with almost conserved quantities also appears in the globalization argument of \cite{NORS12}, which proves the invariance of a Wiener measure for the periodic derivative nonlinear Schr\"{o}dinger equation. The truncated dynamics in \cite[(3.1)]{NORS12} only approximately conserve the energy (cf. \cite[Theorem 4.2]{NORS12}). Even with the same truncation parameter in the measure and the dynamics, the truncated Wiener measure is then only almost invariant (cf. \cite[Proof of Lemma 6.1]{NORS12}). 

Our globalization argument for the nonlinear wave equation also differs from the globalization argument for the parabolic stochastic quantization equation as in \cite{HM18}. While the invariant measure is singular in both situations, the dependence on the initial data  in the parabolic setting is continuous even at spatial singularity $-1/2-$. As a result, it is possible to iterate the local theory over the time-intervals $\{ [(j-1)\tau,j\tau]\}_{j=1,\hdots,J}$ using only bounds in the $C^{-1/2-}(\mathbb{T}^3)$-norm.  As can be seen from the sketch above, iterating the local theory for the nonlinear wave equation \eqref{intro:eq_truncated_nlw} requires more detailed information on the solution.

Once the global well-posedness has been proven, the proof of invariance is essentially the same as in \cite{Bourgain94}. 

\begin{remark}
A paper of this length creates both mathematical challenges and different options for the exposition. The author does not claim to have found the perfect solutions or made the best expository choice in every single instance. While we postpone a more detailed discussion to Remark \ref{intro:remark_DNY}, Remark \ref{local:rem_paracontrolled}, Remark \ref{global:rem_organization_lwp}, Remark \ref{tools:rem_why_multiple}, Remark \ref{phy:rem_lorentzian_vs_galilean}, and Remark \ref{ftg:rem_drawback}, the author wanted to make this point in a central location of the paper. The author hopes that this encourages the reader to think more about our result and related open problems. 
\end{remark}

\textbf{Acknowledgements:} The author thanks his advisor Terence Tao for his patience and invaluable guidance. The author also thanks Nikolay Barashkov, Yu Deng, Martin Hairer, Redmond McNamara, Dana Mendelson, Andrea Nahmod, Tadahiro Oh, Felix Otto, Nikolay Tzvetkov, Haitian Yue, and Guangqu Zheng for  helpful comments and discussions.

\subsection{Overview} 
Due to the excessive length of this paper, we include a few suggestions for the reader. We also display the (main) relationship between the sections in Figure \ref{figure:organization}. 

The local and global theory are described in Section \ref{section:local} and \ref{section:global}, respectively. These sections contain the main novelties of this paper and should be interesting to most readers. As long as the reader believes several estimates, these sections are also self-contained. We therefore encourage the expert to focus on these sections. 

Section \ref{section:tools} contains a collection of tools from dispersive equations, harmonic analysis, and probability theory. The reader should be familiar with the content of each subsection before moving on, but the expert should be able to only skim most content. 

The Sections \ref{section:stochastic_object}-\ref{section:physical} contain the main technical aspects of this paper. They are concerned with separate terms in the evolution equation and rely on different methods. As a result, they can (essentially) be read independently. 

In Section \ref{section:ftg}, we extend the multi-linear estimates from Sections  \ref{section:stochastic_object}-\ref{section:physical}, which have been phrased in terms of the Gaussian initial data $\,\bluedot\,$, to random initial data $\, \purpledot \,$ drawn from the Gibbs measure. Each proof consists of a concatenation of previous results, and hence this section can safely be skipped on first reading.

\tikzset{rect/.style={rectangle, rounded corners, minimum width=4cm, minimum
height=1cm,text centered,align=center, draw=black, fill=white!10},
arrow/.style={thick,->,>=stealth}}
\tikzset{title/.style={rectangle,  minimum width=3cm, minimum
height=1cm,text centered,align=center, draw=black, fill=orange!10},
arrow/.style={thick,->,>=stealth}}
\tikzset{theorem/.style={rectangle, minimum width=4cm, minimum
height=1cm,text centered,align=center, draw=black, fill=red!10},
arrow/.style={thick,->,>=stealth}}

\tikzstyle{vecArrow} = [thick, decoration={markings,mark=at position
   1 with {\arrow[semithick]{open triangle 60}}},
   double distance=1.4pt, shorten >= 5.5pt,
   preaction = {decorate},
   postaction = {draw,line width=1.4pt, white,shorten >= 4.5pt}]
\tikzstyle{innerWhite} = [semithick, white,line width=1.4pt, shorten >= 4.5pt]

\begin{figure}[t]
\begin{center}
\begin{tikzpicture}[node distance=2cm,scale=0.55, every node/.style={scale=0.55}]

\node (thm)[theorem] at (0,2) {\textbf{\large \begin{NoHyper}Theorem \ref{theorem:gwp_invariance}:\end{NoHyper}} \\ \textbf{\large Global well-posedness  and invariance}};

\node (local)[title]  at (-4,-0.5) {\textbf{Local Theory}};
\node (para)[rect]  at (-4,-2) {Section 2.1:\\ Para-controlled ansatz};
\node (ms)[rect]  at (-4,-4) {Section 2.2:\\ Multi-linear master \\ estimate};
\node (lwp)[rect]  at (-4,-6) {Section 2.3:\\ Local well-posedness};
\node (lstab)[rect]  at (-4,-8) {Section 2.4:\\ Stability theory};
\begin{scope}[on background layer]
\draw [fill=blue!5] (-7,0.5) rectangle (-1,-9); 
\end{scope}

\node (global)[title]  at (4,-0.5) {\textbf{Global Theory}};
\node (gwp)[rect]  at (4,-2) {Section 3.1:\\ Global well-posedness};
\node (inv)[rect]  at (4,-4) {Section 3.2:\\ Invariance};
\node (stab)[rect]  at (4,-6) {Section 3.3:\\ Structure and stability \\ theory};
\begin{scope}[on background layer]
\draw [fill=blue!5] (7,0.5) rectangle (1,-7.5); 
\end{scope}

\node (est)[title]  at (-4,-11) {\textbf{Main estimates}};
\node (tools)[rect]  at (-4,-12.5) {Section 4:\\ Tools};
\node (so)[rect]  at (-4,-14) {Section 5:\\ Stochastic objects};
\node (rmt)[rect]  at (-4,-15.5) {Section 6:\\ Random matrix theory};
\node (parae)[rect]  at (-4,-17) {Section 7:\\ \small{Para-controlled estimates}};
\node (phy)[rect]  at (-4,-18.5) {Section 8:\\ Physical-space methods};
\begin{scope}[on background layer]
\draw [fill=blue!5] (-7,-10) rectangle (-1,-19.5); 
\end{scope}

\node (struc)[title]  at (4,-11) {\textbf{Structural change}};
\node (ftg)[rect]  at (4,-12.7) {Section 9:\\ From free to Gibbsian\\ random structures};
\begin{scope}[on background layer]
\draw [fill=blue!5] (7,-10) rectangle (1,-15); 
\end{scope}

\draw[arrow] (ms) -- (lwp);
\draw[arrow] (lwp) -- (lstab);
\draw[arrow] (gwp) -- (inv);
\draw[thick] (stab) -- ($(stab)+(2.52,0)$);
\draw[thick] ($(stab)+(2.5,0)$) -- ($(gwp)+(2.5,0)$);
\draw[arrow] ($(gwp)+(2.52,0)$) --(gwp); 

\draw[arrow] (-2,-2) -- (-1.5,-2) -- (-1.5,-5.9) -- (-2,-5.9);

\draw[arrow] (-2,-6.1)-- (2,-6.1);
\draw[arrow] (-2,-8) -- (3,-8) -- (3,-6.75);

\draw[thick] (tools) -- ($(tools)+(2.52,0)$);
\draw[arrow]  ($(so)+(2.52,0)$) -- (so); 
\draw[arrow]  ($(rmt)+(2.52,0)$) -- (rmt); 
\draw[arrow]  ($(parae)+(2.52,0)$) -- (parae); 
\draw[arrow]  ($(phy)+(2.52,0)$) -- (phy); 
\draw[thick] ($(tools)+(2.5,0)$) -- ($(phy)+(2.5,0)$);

\draw[arrow] (-1,-11) -- (1,-11); 
\draw[arrow] (4,-10) -- (4,-6.75); 

\draw[arrow] (-7,-11) -- (-7.5,-11) -- (-7.5,-4) -- (-6,-4); 

\draw[arrow] (0,-4) -- (0,1.5); 
\draw[thick] (0,-4) -- (2,-4); 
\draw[thick] (0,-2) -- (2,-2); 

\end{tikzpicture}
\end{center}
\caption{\small{This figure illustrates the main dependencies between the different sections. The heart of the paper lies in the local and global theory (Section \ref{section:local} and \ref{section:global}), which, as long as the reader believes certain estimates, can be read independently from the rest of the paper. A few minor dependencies between the different sections are not included in this illustration. For instance, basic properties of $\X{s}{b}$-spaces, which are recalled in Section \ref{section:tools}, will also be used in Section \ref{section:local} and \ref{section:global}.}}\label{figure:organization}
\end{figure}

\subsection{Notation}\label{section:notation}
We recall and introduce notation that will be used throughout the rest of the paper. \\

\emph{Dyadic numbers:} Throughout this paper, we denote dyadic integers by $K,L,M$, and $N$. In limits or sums, such as $\lim_{M\rightarrow \infty}$ or $\sum_N$, we implicitly restrict ourselves to dyadic integers. \\

\emph{Parameters:} We first introduce several parameters which are used in our function spaces, in the paradifferential operators, and our estimates. We fix 
\begin{equation}\label{intro:eq_parameters}
\epsilon>0, \quad \delta_1,\delta_2>0, \quad \kappa>0, \quad  \eta,\eta^\prime>0, \quad \text{and} \quad b_+>b>1/2>b_->0. 
\end{equation}
We use $\epsilon>0$ in our para-differential operators, $\kappa>0$ to capture small losses in probabilistic estimates, $\eta,\eta^\prime>0$ to capture gains in the highest frequency-scale, and $\delta_1,\delta_2,b_+,b,b_-$ in the definition of our function spaces. 
We impose the condition
\begin{equation}\label{intro:eq_parameter_condition}
1/2 - b_- \ll b- 1/2 \ll b_+-1/2 \ll \eta^\prime \ll \eta \ll \kappa \ll \delta_2 \ll \epsilon \ll \delta_1. 
\end{equation}
In \eqref{intro:eq_parameter_condition}, the implicit constant in each ``$\ll$'' is allowed to depend on all parameters appearing to its right. We also define
\begin{equation*}
s_1 = \frac{1}{2} - \delta_1 \qquad \text{and} \qquad s_2 = \frac{1}{2}+\delta_2. 
\end{equation*}
In several statements of this paper, we will also use $0<\zeta<1$ and $C\geq 1$ as parameters. However, they may change their values between different lines and are allowed to depend on all parameters in \eqref{intro:eq_parameters}. \\

\emph{Wave equation and flows:} We denote the solution of the nonlinear wave equation \eqref{intro:eq_truncated_nlw} by $u_N(t)$. We also write 
\begin{equation*}
u_N[t] \defe \begin{pmatrix} u_N(t) , \partial_t u_N(t) \end{pmatrix}, 
\end{equation*}
which is standard in the literature on nonlinear wave equations. If $\purpledot \in \cH_x^{-1/2-\kappa}(\bT^3)$, we also write $\Phi_N(t) \, \purpledot$ and $\Phi_N[t]\, \purpledot$ for the solution with initial data $\, \purpledot$. When working with the flows $\Phi_N[t]$ and the Gibbs measures $\mup_M$, we write $\Phi_N[t]_{\#} \mup_M$ for the pushforward of $\mup_M$ under $\Phi_N[t]$. 

Furthermore, we denote the Duhamel integral operator of the wave equation by $\Duh$. More precisely, we define
\begin{equation*}
\Duh \big[ F\big] (t) \defe \int_0^t \frac{\sin((t-t^\prime) \langle \nabla \rangle)}{\langle \nabla \rangle} F(t^\prime) \dtprime. 
\end{equation*}

\emph{Fourier transform:} With a slight abuse of notation, we write $\mathrm{d}x$ for the normalized Lebesgue measure on $\bT^3 = \bR^3 / (2\pi \bZ)^3$, i.e., we require that 
\begin{equation*}
\int_{\bT^3} 1 \dx = 1. 
\end{equation*}
We then define the Fourier transform of a function $f\colon \bT^3 \rightarrow \bC$ by 
\begin{equation}
\widehat{f}(n) \defe \int_{\bT^3} f(x) e^{-inx} \dx. 
\end{equation}
For any $k\in \mathbb{N}$ and any $n_1,n_2,\hdots,n_k \in \bZ^3$, we define
\begin{equation*}
n_{12\hdots k}\defe \sum_{j=1}^k n_j . 
\end{equation*}
For example, $n_{12}=n_1+n_2$ and $n_{123}=n_1+n_2+n_3$. \\

\emph{Interaction potential:} For a given interaction potential $V$ satisfying the Assumptions \ref{assumptions:V}, we define
\begin{equation*}
\widehat{V}_S(n_1,n_2,n_3) \defe \frac{1}{6} \sum_{\pi \in S_3} \widehat{V}(n_{\pi_1}+n_{\pi_2}). 
\end{equation*}

\emph{Truncations and Littlewood-Paley operators:} For each $t\geq 0$, we let $\rho_t \colon \bZ^3 \rightarrow [0,1]$ be the same truncation to frequencies $n \in \bZ^3$ satisfying $|n|\lesssim \langle t \rangle$ as in \cite[Section 1.3]{BB20a}. For each dyadic $N\geq 1$, we define the Littlewood-Paley multiplier $P_{\leq N}$ by 
\begin{equation*}
\widehat{P_{\leq N} f}(n) = \rho_N(n) \widehat{f}(n). 
\end{equation*}
We further set 
\begin{equation*}
P_1 f = P_{\leq 1} f \qquad \text{and} \qquad P_{N} f = P_{\leq N} f - P_{\leq N/2} f \quad \text{for all } N\geq 2. 
\end{equation*}
The corresponding Fourier multipliers are denoted by 
\begin{equation*}
\chi(n) = \chi_1(n) = \rho_1(n) \qquad \text{and} \qquad \chi_N(n) = \rho_N(n) - \rho_{N/2}(n) \quad \text{for all } N\geq 2. 
\end{equation*}
We also define fattened Littlewood-Paley multipliers by 
\begin{equation*}
\widetilde{P}_N = \sum_{N/16\leq K \leq 16K} P_K.  
\end{equation*}

\emph{Function spaces:}
For any \( s \in \bR \), the \( \cC_x^s(\bT^3) \)-norm is defined as 
\begin{equation}
\| f \|_{\cC_x^s(\bT^3)} \defe \sup_{N\geq 1} N^s \| P_N f \|_{L^\infty_x(\bT^3)}.
\end{equation}
We then define the corresponding space \( \cC_x^s(\bT^3) \) by 
\begin{equation}\label{notation:eq_Cx}
\cC_x^s(\bT^3) \defe \big\{ f \colon \bT^3 \rightarrow \bR |~  \| f \|_{\cC_x^s} < \infty, \lim_{N\rightarrow \infty} N^s \| P_N f \|_{L^\infty_x(\bT^3)} = 0 \big\}. 
\end{equation}
We let $H_x^s(\bT^3)$ be the usual $L^2$-based Sobolev space. More precisely, for any $f\colon \bT^3 \rightarrow \bC$, we define the corresponding norm by
\begin{equation*}
\| f \|_{H_x^s(\bT^3)} \defe \| \langle n \rangle^s \widehat{f}(n) \|_{\ell^2_n(\bZ^3)}. 
\end{equation*}
Furthermore, we define $\cH_x^{s}(\bT^3)\defe H_x^{s}(\bT^3) \times H_x^{s-1}(\bT^3)$. In this paper, we will also use the Bourgain spaces $\X{s}{b}(\cJ)$ and the low-frequency modulation space $\LM(\cJ)$, which are defined in Definition \ref{tools:def_xsb} and Definition \ref{para:def_LM}, respectively.

\section{Local theory}\label{section:local}

In this section, we show that the truncated and renormalized nonlinear wave equations
\begin{equation}\label{eq:nlw_N}
\begin{cases}
(- \partial_t^2-1+\Delta) u_N  = P_{\leq N} \Big( \lcol ( V \ast (P_{\leq N} u_N )^2 ) P_{\leq N} u_N \rcol \Big) \\
u_N[0] = \mathbf{\phi}
\end{cases}
\end{equation}
are locally well-posed on the support of the Gibbs measures $\mup_M$ uniformly in $M$. It is important in the definition of the limiting dynamics and the globalization argument that the truncation parameter $N$ in the dynamics and the truncation parameter $M$ in the Gibbs measure $\mup_M$ are allowed to be different. \\

 Due to the truncation, a soft argument based on the coercivity of the Hamiltonian shows that \eqref{eq:nlw_N} is globally well-posed for a fixed truncation parameter $N$. We denote the corresponding flow by $\Phi_N(t)$.

\subsection{Para-controlled ansatz}\label{section:ansatz}

We now introduce our para-controlled approach. As discussed in the introduction, we will use a graphical notation for the several stochastic objects appearing in this paper. We denote the random initial data by \( \purpledot \). In the local theory, we can work with the reference measure \( \nup_M \) and, more precisely, the representation of the reference measure with respect to the ambient measure \( \bP \). 

Based on Theorem \ref{theorem:measures}, we have that \( \nup_M = \Law_\bP(\bluedot+\reddotM) \), where $\,\bluedot\,$ is the Gaussian low-regularity component and $\,\reddotM\,$ is has regularity $\min(1/2+\beta,1)-$. Naturally, we chose the color purple for the random initial data $\purpledot$ since it is a mixture of the blue and red random initial data. We emphasize that $\bluedot$ and $\reddotM$ are probabilistically dependent! Fortunately, this does not introduce any major difficulties in our treatment of the wave equation with a Hartree nonlinearity. We believe, however, that the proof of the invariance of the Gibbs measure for both the cubic wave equation and the three-dimensional Schrödinger equation with cubic or Hartree nonlinearity will require a more detailed understanding of the relationship between $\bluedot$ and $\reddotM$. This additional information is provided in the first part of the series \cite{BB20a}. \\

Before we introduce our stochastic and para-controlled objects, we discuss the following question: Should we define our stochastic objects based on 
$\,\bluedot\,$ or based on $\,\purpledot\,$? Due to the independence of the Fourier coefficient under \( \bP\) and its simple structure, it is much more convenient to work with $\,\bluedot\,$. However, the decomposition \(\, \purpledot= \bluedot+\reddotM \, \) of the samples of \( \nup_M \) is based on the ambient measure \( \bP\). It cannot be performed intrinsically on the samples of \( \nup_M \) and has \emph{no meaning} for the Gibbs measure \( \, \mup_M \). In particular, if we want to examine the probability of an event  under \( \mup_M \), we must phrase the event in terms of the full initial data 
\(\purpledot\,\). Fortunately, there is a convenient solution to our conundrum: We first carry out most of our (local) analysis in terms of $\bluedot$ and with respect to the ambient measure \( \bP \). Once all the estimates in terms of \( \, \bluedot \, \) are available, we can convert the stochastic objects and para-controlled structures from $\,\bluedot\,$ into $\,\purpledot\,$ (see Section \ref{section:ftg}). Then, the absolute continuity of \( \mup_M \) with respect to the reference measure \( \nup_M \) allows us to obtain the same stochastic objects and para-controlled structures on the support of the Gibbs measure \( \mup_M \). \\

We now begin with the construction of the stochastic objects and para-controlled structures, which were briefly discussed in the introduction. We define \( \<1b> \) as the linear evolution of the random initial data \( \bluedot\,\). More precisely, \( \<1b> \) solves the evolution equation
\begin{equation}\label{eq:1b}
(-\partial_t^2 - 1 + \Delta )\, \<1b> = 0, \quad \<1b>[0]=\bluedot. 
\end{equation}
The black line in the stochastic object reflects the linear propagator of the wave equation. For future use, we define the frequency-truncated and renormalized square of $\<1b>$ by  
\begin{equation}
\<2N> \defe \lcol ( P_{\leq N} \<1b> )^2 \rcol . 
\end{equation}
The multiplication is reflected by the joining of the two lines and the frequency-truncation is reflected in the subscript $N$. We then define the renormalized nonlinearity $\<3N>$ by
\begin{equation}
\<3N> \defe P_{\leq N} \Big( \lcol ( V \ast ( P_{\leq N} \<1b>)^2 ) ( P_{\leq N} \<1b>) \rcol \Big).
\end{equation}
The orange asterisk reflects the convolution with the interaction potential. The color orange has no significance and we only chose it for aesthetic reasons. As before, the nonlinearity is reflected in the joining of the three lines and the truncation parameter $N$ in the nonlinearity appears as a subscript. Finally, we define the Duhamel integral of $\<3N>$  by
\begin{equation}\label{local:eq_3DN}
(-\partial_t^2 - 1 + \Delta )  \<3DN> =  \<3N>, \quad  \<3DN>[0]=0. 
\end{equation}
The line with an arrow reflects the integration in the Duhamel operator. In contrast to $\<1b>$, we note that the distribution of $\<3DN>$ is not stationary in time. Naively, one may expect that  $\<3DN>$ has spatial regularity \( -1/2+\beta-\). Namely, one would expect spatial regularity \( 3\cdot (-1/2)- \) from the cube of the random initial data \( \, \bluedot \, \), a gain of one spatial derivative from the multiplier \( \langle \nabla \rangle^{-1}\) in the Duhamel operator, and a gain of \( \beta \) derivatives from the convolution with the interaction potential.  In Proposition \ref{so3:prop}, however, we will see that  $\<3DN>$ actually has spatial regularity $\beta-$, which is half of a derivative better. The additional gain is a result of multi-linear dispersive effects. We now decompose our solution $u_N$ by writing
\begin{equation}
u_N = \<1b> +  \<3DN> + w_N.
\end{equation}
The remainder $w_N$ has initial data $w_N[0]=\reddotM$ and solves the forced nonlinear wave equation
\begin{align}
&(-\partial_t^2 - 1 + \Delta ) w_N  \notag \\
=& P_{\leq N} \bigg[ \lcol  \Big( V \ast \Big( P_{\leq N} \big( \<1b> + \<3DN> + w_N \big )\Big)^2 \Big) P_{\leq N} \big( \<1b> + \<3DN> + w_N \big) \rcol - \<3N>   \bigg] \notag  \allowdisplaybreaks[3]\\
=&  P_{\leq N} \bigg[ 2 \Big( V \ast \Big( P_{\leq N} \<1b> \cdot P_{\leq N} \big( \<3DN>+w_N \big) \Big)  P_{\leq N} \<1b> - \MN P_{\leq N} \big( \<3DN>+w_N \big) \Big) \label{lwp:eq_wN_1}\\
&\qquad+ \Big( V \ast \Big( P_{\leq N} \big( \<3DN>+w_N \big) \Big)^2 \Big) P_{\leq N} \<1b>\label{lwp:eq_wN_2} \allowdisplaybreaks[3] \\
&\qquad+  2 V \ast \Big( P_{\leq N} \<1b> \cdot  P_{\leq N} \big( \<3DN>+w_N \big) \Big)  P_{\leq N} \big( \<3DN>+w_N \big) \\
&\qquad+ \Big( V \ast \<2N> \Big) P_{\leq N} \big( \<3DN>+w_N \big) \\
&\qquad+\Big( V \ast \Big( P_{\leq N} \big(  \<3DN> + w_N \big )\Big)^2 \Big) P_{\leq N} \big(  \<3DN> + w_N \big) \bigg].
\end{align}
If we intend to construct (or control) $w_N$ via a ``direct" contraction argument, we would need the following conditions on the regularity of $w_N$ (uniformly in $N$): 
\begin{enumerate}
\item Due to the high$\times$high$\rightarrow$low-interactions in factors such as $P_{\leq N} \<1b> \cdot  P_{\leq N} w_N  $, the regularity of $w_N$ needs to be greater than $1/2$. 
\item Due to ``deterministic" nonlinear terms such as $(V\ast (P_{\leq N} w_N)^2) P_{\leq N} w_N$, the regularity of $w_N$ needs to be greater than or equal to the deterministic critical regularity, which is given by $1/2-\beta$. 
\end{enumerate}
Clearly, the first regularity condition is more restrictive. Unfortunately, the contribution of the first two summands \eqref{lwp:eq_wN_1} and \eqref{lwp:eq_wN_2} has regularity at most \(1/2-\). The low$\times$low$\times$high-interaction gains one derivative from the multiplier \( \langle \nabla \rangle^{-1}\) in the Duhamel operator, but does not benefit from the convolution with $V$ and does not experience any multi-linear dispersive effects. Thus, we are ``$\epsilon$-away" from a working contraction argument. As was observed in \cite{GIP15,GKO18a}, the term responsible for the low-regularity exhibits a para-controlled structure. Even though $P_{\leq N} \<1b> \cdot  P_{\leq N} w_N  $ is not well-defined for a general $w_N$ at spatial regularity $1/2-$, we will see in Proposition \ref{para:prop_quadratic_object} below that it is well-defined for a para-controlled $w_N$ at the same regularity! We therefore decompose the solution $w_N$ into two components: A para-controlled component $X_N$ at regularity $1/2-$ and a smoother nonlinear remainder $Y_N$ at a regularity greater than $1/2$. 

Before we can define the decomposition, we need to introduce our para-product operators.
\begin{definition}[Para-product operators] \label{lwp:def_paraproduct} 
Let $\epsilon>0$ be the fixed parameter from Section \ref{section:notation} and let  $f,g,h\colon \bT^3 \rightarrow \mathbb{R} $. We define the low$\times$high, high$\times$high, and high$\times$low-paraproducts by 
\begin{align*}
f \paral g \defe & \sum_{N_1 \leq N_2/8} P_{N_1} f \cdot P_{N_2} g, \\
f \paraeq g \defe & \sum_{N_2/4 \leq N_1 \leq 4N_2} P_{N_1} f \cdot P_{N_2} g, \\
f \parag g \defe & \sum_{N_1 \geq 8 N_2} P_{N_1} f \cdot P_{N_2} g. 
\end{align*}
We also define
\begin{equation*}
f \parageq g \defe f \parag g + f \paraeq g \qquad \text{and} \qquad f \paraleq g \defe f \paral g + f \paraeq g. 
\end{equation*}
In most of this paper, it  will be convenient to replace ``low'' frequencies by ``very low'' frequencies. To this end, we define the bilinear operator
\begin{equation}
f \parald g \defe \sum_{\substack{ N_1,N_2 \colon \\ N_1 \leq N_2^\epsilon}} P_{N_1} f \cdot P_{N_2} g
\end{equation}
and the trilinear operator 
\begin{equation}\label{lwp:eq_paraboxld}
\paraboxld \Big( V \ast (fg) h \Big) \defe \sum_{\substack{N_1,N_2,N_3\colon \\ N_1,N_2\leq N_3^\epsilon}}  V \ast \big( P_{N_1} f \cdot P_{N_2} g \big) P_{N_3} h. 
\end{equation}
Furthermore, we define the negations of $\parald$ and $\paraboxld$ by 
\begin{equation*}
f \nparald g \defe f g - f \parald g \quad \text{and} \quad \nparaboxld \Big( V \ast (fg) h \Big) \defe  V \ast (fg) h -  \paraboxld \Big( V \ast (fg) h \Big). 
\end{equation*}
\end{definition}

\begin{remark} The notation ``$\lessdot$" is seldom used in the mathematical literature, which is precisely the reason why we use it in Definition \ref{lwp:def_paraproduct}. Its meaning would otherwise easily be confused with projections to $N_1\leq N_2, N_1\lesssim N_2$, or $N_1 \ll N_2$, which are again more common, but less suitable in our situation than $N_1 \leq N_2^\epsilon$. 
Comparing our notation for the operators $\parald$ and $\paraboxld$, it may seem more natural to write 
\begin{equation*}
 V \ast (fg)  \paraboxld  h 
\end{equation*}
instead of \eqref{lwp:eq_paraboxld}. We found, however, that the notation in \eqref{lwp:eq_paraboxld} is much cleaner once it is combined with the stochastic objects. We also point out that the negation of $\parald$ is not $\paragd$. 
\end{remark}

We are now ready to define $X_N$ and $Y_N$. We define the para-controlled component $X_N$ by $X_N[0]=0$ and 
\begin{equation}\label{lwp:eq_XN}
\begin{aligned}
&(-\partial_t^2 -1 + \Delta)  X_N \\
&=  P_{\leq N}\bigg[ 2\, \paraboxld \Big(  V \ast \Big( P_{\leq N} \<1b> \cdot P_{\leq N} \big( \<3DN>+X_N \big) \Big)  P_{\leq N} \<1b> \Big)\\
&+ 2 \Big( V \ast \Big( P_{\leq N} \<1b> \cdot P_{\leq N} \big(Y_N \big) \Big)  \parald P_{\leq N} \<1b> \Big) \\
&+ \Big( V \ast \Big( P_{\leq N} \big( \<3DN>+w_N \big) \Big)^2 \Big) \parald P_{\leq N} \<1b>\bigg].
\end{aligned}
\end{equation}

\begin{remark}\label{local:rem_paracontrolled}
As far as the author is aware, the operator $\paraboxld$ has not been used in previous work on random dispersive equations. The reason for introducing the operator lies in the first term in \eqref{lwp:eq_XN}, which contains 
$ P_{\leq N} \<1b> \cdot P_{\leq N}X_N$.  In order to define this term (uniformly in $N$), the spatial regularity of $X_N$ alone is not sufficient. It is also difficult to use the structure of $X_N$, since this term appears in the evolution equation for $X_N$ (and not for $Y_N$), and hence one (may) run into a circular argument. By using $\paraboxld$, however, this problem does not occur, since we can borrow a small amount of regularity from the third argument in $ \paraboxld \big(  V \ast ( P_{\leq N} \<1b> \cdot P_{\leq N} X_N ) \,   P_{\leq N} \<1b>\big)$. We mention, however, that using $\paraboxld$ has a small drawback, which is explained in Remark \ref{ftg:rem_drawback}. 

We also did not include any component of \( \MN P_{\leq N} Y_N \) in the second term of \eqref{lwp:eq_XN}. It turns out that the contribution coming from the $\parald$-portion of the renormalization can be controlled at regularities bigger than \( 1/2 \) and is therefore placed in the evolution equation for $Y_N$ below. \\
\end{remark}

As determined by our choice of $X_N$, the nonlinear remainder $Y_N$ satisfies  $Y_N[0]=\reddotM$ and 
\begin{align}
&(-\partial_t^2 - 1 + \Delta)  Y_N  \notag \\
&= 2 P_{\leq N} \bigg[  \Big( \hspace{-1ex} \nparaboxld \hspace{-0.8ex} \Big( V \ast \Big( P_{\leq N} \<1b> \cdot P_{\leq N} \big( \<3DN>\hspace{-0.4ex}+ \hspace{-0.4ex} X_N \big) \Big)  P_{\leq N} \<1b> \Big) - \MN P_{\leq N} \big( \<3DN>+X_N \big) \Big) \bigg] \allowdisplaybreaks[3]\\
&+ P_{\leq N} \bigg[2  \Big( V \ast \Big( P_{\leq N} \<1b> \cdot P_{\leq N} \big(Y_N \big) \Big)  \nparald P_{\leq N} \<1b> \Big)- \MN P_{\leq N} \big( Y_N \big) \Big)\allowdisplaybreaks[3]\\
&+ \Big( V \ast \Big( P_{\leq N} \big( \<3DN>+w_N \big) \Big)^2 \Big) \nparald P_{\leq N} \<1b> \allowdisplaybreaks[3]\\
&+   2 V \ast \Big( P_{\leq N} \<1b> \cdot  P_{\leq N} \big( \<3DN>+w_N \big) \Big)  P_{\leq N} \big( \<3DN>+w_N \big) \\
&+ \Big( V \ast \<2N> \Big) P_{\leq N} \big( \<3DN>+w_N \big) \\
&+\Big( V \ast \Big( P_{\leq N} \big(  \<3DN> + w_N \big )\Big)^2 \Big) P_{\leq N} \big(  \<3DN> + w_N \big) \bigg].
\end{align}

To facilitate the analysis in the body of this paper, we further organize the terms in the evolution equation for $Y_N$. We write 
\begin{equation}
(-\partial_t^2 -1 + \Delta)  Y_N  = \So + \CPara + \RMT + \Phy,
\end{equation}
where the stochastic objects $\So$, the contributions of the para-controlled terms $\CPara$, the random-matrix terms $\RMT$, and the physical terms $\Phy$ are defined as follows: \\

We define the individual stochastic objects by
\begin{align}
\<131N> &\defe    \Big( V \ast \Big( P_{\leq N} \<1b> \cdot P_{\leq N} \big( \<3DN> \big) \Big)  P_{\leq N} \<1b> \Big) - \MN P_{\leq N} \big( \<3DN> \big) \Big), \\
\nparaboxld \<131N> &\defe   \nparaboxld  \Big( V \ast \Big( P_{\leq N} \<1b> \cdot P_{\leq N} \big( \<3DN> \big) \Big)  P_{\leq N} \<1b> \Big) - \MN P_{\leq N} \big( \<3DN> \big) \Big), \label{local:eq_so_term2} \\
\<113N> &\defe \Big( V \ast \<2N> \Big) P_{\leq N} \<3DN>, \\
\<313N> &\defe  V \ast \Big(  P_{\leq N} \<3DN>  \cdot P_{\leq N} \<1b> \Big)  P_{\leq N}  \<3DN>,\\
\<331N> &\defe  \Big( V \ast \Big( P_{\leq N} \<3DN>\Big)^2   \Big) P_{\leq N} \<1b>, \\
 \nparald \<331N> &\defe  V \ast \Big( P_{\leq N} \<3DN>\Big)^2 \nparald P_{\leq N} \<1b>.
\end{align}
We then define 
\begin{equation}\label{lwp:eq_so}
\So = \So_N\defe P_{\leq N} \bigg[ 2 \nparaboxld \<131N> + \<113N> +  \nparald \<331N> + 2 \<313N> \bigg].
\end{equation}
In works on singular SPDEs, such as \cite{MWX17}, the para-differential operators are usually placed at the joints of the different lines. The advantage is that it works for arbitrary ``trees'' and can accommodate multiple para-differential operators. Since this level of generality will not be needed here, we prefer our notation, since it is slightly easier to read. \\

We define 
\begin{equation}\label{lwp:eq_CPara}
\begin{aligned}
\CPara &= \CPara_N(X_N,w_N)\\
&\defe 2 P_{\leq N} \bigg[  \Big( \hspace{-1ex} \nparaboxld \hspace{-0.8ex} \Big( V \ast \Big( P_{\leq N} \<1b> \cdot P_{\leq N}   X_N  \Big)  P_{\leq N} \<1b> \Big) - \MN P_{\leq N} X_N  \Big) \bigg]\\
&+   2 P_{\leq N} \bigg[  V \ast \Big( P_{\leq N} \<1b> \paraeq  P_{\leq N} X_N \Big)  P_{\leq N} w_N  \bigg] \\
&+   2 P_{\leq N} \bigg[  V \ast \Big( P_{\leq N} \<1b> \paraeq  P_{\leq N} X_N \Big)  P_{\leq N}  \<3DN> \bigg]. 
\end{aligned}
\end{equation}
In our analysis of $\CPara$, we will use the double Duhamel-trick, i.e., we will replace $X_N$ by the Duhamel-integral of the right-hand side in \eqref{lwp:eq_XN}. \\
The random matrix term is defined as 
\begin{align}\label{lwp:eq_RMT}
\RMT &=\RMT_N(Y_N,w_N) \notag \\
&\defe  P_{\leq N} \bigg[  \Big( V \ast \<2N> \Big) P_{\leq N} w_N  \bigg]\\
&+ 2 P_{\leq N} \bigg[   \Big( V \ast \Big( P_{\leq N} \<1b> \cdot P_{\leq N} \big(Y_N \big) \Big)  \nparald P_{\leq N} \<1b> \Big)- \MN P_{\leq N} \big( Y_N \big) \Big) \bigg]  .
\end{align}
Our reason for calling \eqref{lwp:eq_RMT} the random matrix term lies in the method used in its estimate. We will view the summands as random operators in $w_N$ and $Y_N$, respectively, and estimate the operator norm using the moment method (as in \cite[Proposition 2.8]{DNY20}). \\
Finally, we define the physical term by 
\begin{align}
\Phy = &\Phy_N(X_N,Y_N,w_N) \notag \\
&\defe P_{\leq N} \bigg[   2 V \ast \Big( P_{\leq N} \<1b> \cdot  P_{\leq N}  \<3DN>\Big)  P_{\leq N} w_N \\
&+ 2  \Big( V \ast \Big( P_{\leq N} \<3DN> \cdot P_{\leq N} w_N \Big) \nparald P_{\leq N} \<1b> \label{lwp:eq_phy_2} \\
&+   2 V \ast \Big( P_{\leq N} \<1b> \paraneq  P_{\leq N} w_N\Big)  P_{\leq N} \<3DN> \label{lwp:eq_phy_3} \\
&+   2 V \ast \Big( P_{\leq N} \<1b> \paraeq  P_{\leq N} Y_N\Big)  P_{\leq N} \<3DN>  \\
&+2 V \ast \Big( P_{\leq N} \<1b> \paraneq  P_{\leq N}  w_N  \Big)  P_{\leq N} w_N \\
&+2 V \ast \Big( P_{\leq N} \<1b> \paraeq  P_{\leq N}  Y_N  \Big)  P_{\leq N} w_N \\
&+ \Big( V \ast \Big( P_{\leq N} w_N  \Big)^2 \Big) \nparald P_{\leq N} \<1b> \\
&+\Big( V \ast \Big( P_{\leq N} \big(  \<3DN> + w_N \big )\Big)^2 \Big) P_{\leq N} \big(  \<3DN> + w_N \big) \bigg].
\end{align}
Similar as for $\RMT$, we call $\Phy$ the physical term due to the methods used in its estimate. We point out, however, that \eqref{lwp:eq_phy_2} and \eqref{lwp:eq_phy_3} are ``hybrid" terms and their estimates rely on both random matrix techniques and physical methods. In the estimates of the other terms in $\Phy$, we also make use of the refined Strichartz estimates by Klainerman-Tataru \cite{KT99}. \\

\subsection{Multi-linear master estimate}\label{section:master_estimate}
In this subsection, we combine all multi-linear estimates from Section \ref{section:stochastic_object}-\ref{section:physical} into a single proposition, which we refer to as the multi-linear master estimate (Proposition \ref{local:prop_master}). In particular, the multi-linear master estimate will include estimates of $\So$, $\CPara$, $\RMT$, and $\Phy$, even though the proofs of the individual estimates are quite different. Before we can state the multi-linear master estimate, however,  we require additional notation. For the definition of the function spaces $\X{s}{b}$ and $\LM$, we refer to Definition \ref{tools:def_xsb} and Definition \ref{para:def_LM}.

\begin{definition}[Types]\label{local:def_types}
Let $\cJ \subseteq [0,\infty)$ be a bounded interval and let $\varphi \colon J \times \bT^3 \rightarrow \bR$. We say that $\varphi$ is of type
\begin{itemize}
\setlength\itemsep{1ex}
\item $\,\<1b>\,$ if $\varphi=\<1b>\,$, 
\item $\, \<3D>\,$ if $\varphi=\<3DN>\,$ for some $N\geq 1$, 
\item $w$ if $\| \varphi \|_{\X{s_1}{b}(\cJ)}\leq 1$ and  $ \sum_{L_1\sim L_2} \| P_{L_1} \<1b> \cdot P_{L_2} w \|_{L_t^2 H_x^{-4\delta_1}(\cJ\times \bT^3)} \leq 1$, 
\item $X$ if $\varphi = P_{\leq N} \Duh \big[1_{\cJ_0} \PCtrl(H, P_{\leq N} \, \<1b>\, )\big] $ for a dyadic integer $N\geq 1$, a sub-interval $\cJ_0\subseteq \cJ$,  and a function $H\in \LM(\cJ_0)$ satisfying $\| H \|_{\LM(\cJ_0)}\leq 1$, 
\item $Y$ if $\| \varphi \|_{\X{s_2}{b}(\cJ)} \leq 1$. 
\end{itemize}
Let $\varphi_1,\varphi_2,\varphi_3 \colon \cJ \times \bT^3 \rightarrow \bR$ and $\cT_1,\cT_2 ,\cT_3 \in \Big\{ \,\<1b>, \, \<3D>, w , X ,Y \Big\}$. We write
\begin{equation*}
(\varphi_1,\varphi_2) \etype (\cT_1,\cT_2)
\end{equation*}
if either $\varphi_1$ is of type $\cT_1$ and $\varphi_2$ is of type $\cT_2$ or $\varphi_1$ is of type $\cT_2$ and $\varphi_2$ is of type $\cT_1$. Furthermore, we write 
\begin{equation*}
(\varphi_1,\varphi_2;\varphi_3) \etype (\cT_1,\cT_2;\cT_3)
\end{equation*}
if $(\varphi_1,\varphi_2) \etype (\cT_1,\cT_2)$ and $\varphi_3$ is of type $\cT_3$. 
\end{definition}
\begin{remark} \label{local:rem_types}
The types $w$, $X$, and $Y$ are designed for the functions $w_N$, $X_N$, and $Y_N$ from Section \ref{section:ansatz}. Our notation for the type of $(\varphi_1,\varphi_2;\varphi_3)$ respects the symmetry in the first two arguments of the nonlinearity $\big(V \ast ( \varphi_1 \varphi_2 ) \big) \varphi_3$. We also mention that the types $w$ and $X$ implicitly depend on $\,\bluedot\,$. In Section \ref{section:ftg}, we will therefore refer to the types $w$ and $X$ as $\wblue$ and $\Xblue$, respectively. 
\end{remark}
 In the next lemma, we show that functions of type $X$ and $Y$ are multiples of functions of type $w$. This allows us to prove several estimates for functions of type $X$ and $Y$ simultaneously. 

\begin{lemma}\label{local:lem_type_conversion}
Let $A\geq 1$, $T\geq 1$, and let $\zeta=\zeta(\epsilon,s_1,s_2,\kappa,\eta,\eta^\prime,b_+,b)>0$ be sufficiently small. Then, there exists a Borel set $\Theta_{\type}(A,T)\subseteq \cH_x^{-\frac{1}{2}-\kappa}(\bT^3)$ satisfying 
\begin{equation*}
\bP(\bluedot \in \Theta^{\type}_{\blue}(A,T)) \geq 1 - \zeta^{-1} \exp(-\zeta A^\zeta)
\end{equation*}
and such that the following holds for all $\, \bluedot \in \Theta^{\type}_{\blue}(A,T)$: If $\varphi\colon  \cJ \times \bT^3 \rightarrow \bR$ is of type $X$ or $Y$, then $T^{-4}A^{-1} \varphi$ is of type $w$. 
\end{lemma}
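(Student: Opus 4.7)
The plan is to construct $\Theta^{\type}_{\blue}(A,T)$ as the intersection of a small number of high-probability events controlling $\,\bluedot\,$ and derived Gaussian quantities, and then verify the two defining conditions of type $w$ (the $\X{s_1}{b}$-bound and the resonant bilinear bound) deterministically on this set, for any $\varphi$ of type $X$ or $Y$. Concretely, I would take $\Theta^{\type}_{\blue}(A,T)$ to be the event on which (i) $\|\<1b>\|_{\X{-1/2-\kappa}{b_+}([-T,T])} \leq A$ and (ii) a family of bilinear/multilinear Gaussian quantities appearing in the para-controlled and random-matrix estimates of Sections \ref{section:stochastic_object}--\ref{section:physical} are bounded by $A$. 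Since each of these quantities lies in a fixed Wiener chaos of order $k=k(\beta)$, hypercontractivity and Chebyshev yield the claimed probability bound $1-\zeta^{-1}\exp(-\zeta A^\zeta)$.

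For $\varphi$ of type $Y$, the first condition is immediate: since $s_1 < s_2$ one has $\|\varphi\|_{\X{s_1}{b}} \leq \|\varphi\|_{\X{s_2}{b}} \leq 1$, which after multiplying by $T^{-4}A^{-1}$ is well below $1$. For the resonant bilinear condition, the parameter ordering \eqref{intro:eq_parameter_condition} gives $s_2+(-1/2-\kappa)-4\delta_1 > 0$, so a standard high$\times$high product estimate controls each dyadic piece $\|P_{L_1}\<1b>\cdot P_{L_2}\varphi\|_{L_t^2 H_x^{-4\delta_1}(\cJ)}$ in terms of $\|P_{L_1}\<1b>\|$ and $\|P_{L_2}\varphi\|$ in appropriate spaces, with a summable gain in $L_1\sim L_2$. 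Passing from $\X{s_i}{b}$-norms to the $L_t^2 H_x^{s_i}$-norm used in the product estimate costs at most a factor of $T^\alpha$ for some small $\alpha$. For $\varphi$ of type $X$, by definition $\varphi = P_{\leq N}\Duh[1_{\cJ_0}\PCtrl(H, P_{\leq N}\<1b>)]$ with $\|H\|_{\LM(\cJ_0)}\leq 1$; combining a standard Duhamel $\X{s}{b}$-estimate with the para-controlled product estimate to be established in Section \ref{section:ftg} (or Section 7) bounds $\|\varphi\|_{\X{s_1}{b}}$ by $T^\alpha\|H\|_{\LM}\|\<1b>\|_{\X{-1/2-\kappa}{b_+}} \lesssim T^\alpha A$, and analogous bilinear para-controlled estimates give the corresponding resonant product bound. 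In both cases, after multiplication by $T^{-4}A^{-1}$, the resulting constants are absorbed into the unit ball, so $T^{-4}A^{-1}\varphi$ is of type $w$.

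The main obstacle is the resonant bilinear bound in the type $X$ case. Here $\varphi$ has regularity only $s_1 = 1/2-\delta_1$, and naively summing $s_1+(-1/2-\kappa)-4\delta_1 < 0$ fails, so one cannot treat $\varphi$ as a generic Sobolev function against $\<1b>$. Instead, one must exploit the specific para-controlled structure of $\varphi$ and the resulting stochastic cancellation in the high$\times$high product with $\<1b>$—essentially the same mechanism as the double Duhamel trick used for $\CPara$. This is precisely what the para-controlled bilinear estimates in Section 7 deliver; the $\LM$-norm of $H$ is the correct quantity to chain together the para-controlled structure of $\varphi$ with the bilinear Gaussian estimate. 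Once these inputs are in place, verifying the two conditions defining type $w$ is a bookkeeping exercise in the parameters \eqref{intro:eq_parameters}, and the polynomial loss $T^4$ is generous enough to absorb any $T^\alpha$ factor arising from $\X{s}{b}$-restriction and Duhamel bounds.
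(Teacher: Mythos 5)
Your proposal is correct and follows essentially the same approach as the paper: for type $Y$ the $\X{s_1}{b}$ bound is trivial and the resonant bilinear sum is handled by a direct high$\times$high product estimate exploiting $s_2 > \tfrac12 + \kappa$, while for type $X$ the $\X{s_1}{b}$ bound comes from an inhomogeneous Strichartz/Duhamel estimate together with the $\PCtrl$ mapping property, and the resonant bilinear bound is the genuine content, delivered by the para-controlled quadratic estimate of Section 7 (Proposition \ref{para:prop_quadratic_object}). One small slip: the inequality you write as $s_2+(-1/2-\kappa)-4\delta_1>0$ should be $s_2+(-1/2-\kappa)+4\delta_1>0$ (equivalently $\delta_2>\kappa$ suffices, since the output regularity $-4\delta_1$ only needs to lie \emph{below} the sum of the input regularities), as $\delta_1\gg\delta_2$ makes your written inequality false; this is a sign typo and does not affect the argument.
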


\begin{proof}
We treat the types $X$ and $Y$ separately. First, we assume that $\varphi$ is of type $X$, and hence there exists a dyadic integer $N\geq 1$, a sub-interval $\cJ_0\subseteq \cJ$, and a function  $H\in \LM(\cJ_0)$ satisfying $\| H\|_{\LM(\cJ_0)}\leq 1$ such that $\varphi= P_{\leq N} \Duh \big[ 1_{\cJ_0}\PCtrl(H, P_{\leq N} \<1b>)\big]$. Using the inhomogeneous Strichartz estimate (Lemma \ref{tools:lem_inhomogeneous_strichartz}) and Lemma \ref{para:lem_basic}, we have that 
\begin{align*}
\| P_{\leq N} X \|_{\X{s_1}{b}(\cJ)} &\lesssim \| 1_{\cJ_0} \PCtrl( H, \, P_{\leq N} \<1b>\,)\|_{L_t^{2b} H_x^{s_1-1}(\cJ\times \bT^3)} \\
&\lesssim T \| H\|_{\LM(\cJ)} \|  \,  \<1b> \, \|_{L_t^\infty H_x^{s_1-1+8\epsilon}(\cJ\times \bT^3)} \\
&\lesssim T \| \, \bluedot \, \|_{\cH_x^{-1/2-\kappa}(\bT^3)}. 
\end{align*}
This is bounded by $TA$ on a set of acceptable probability. Using Proposition \ref{para:prop_quadratic_object}, we obtain on a set of acceptable probability that 
\begin{equation*}
 \sum_{L_1 \sim L_2} \| P_{L_1} \<1b> \cdot P_{L_2} \varphi \|_{L_t^2 H_x^{-4\delta_1}(\cJ\times \bT^3)}\leq T^4 A  \| H\|_{\LM(\cJ_0)}  \leq T^4 A. 
\end{equation*}
By combining both estimates, we see that $T^{-4} A^{-1} \varphi$ is of type $w$. \\ 

Second, we assume that $\varphi$ is of type $Y$. Then, we have that $\| \varphi \|_{\X{s_1}{b}(\cJ)}\leq \| \varphi \|_{\X{s_2}{b}(\cJ)} \leq 1$. 
This implies
\begin{align*}
\sum_{L_1\sim L_2} \| P_{L_1} \<1b> \cdot P_{L_2} \varphi \|_{L_t^2 H_x^{-4\delta_1}(\cJ\times \bT^3)} 
&\lesssim T^{\frac{1}{2}} \sum_{L_1\sim L_2} L_1^{\kappa-\delta_2} \| P_{L_1}\<1b> \,  \|_{L_t^\infty \cC_x^{-1/2-\kappa}(\cJ\times \bT^3)}
 \| P_{L_2} \varphi \|_{L_t^\infty H_x^{s_2}(\cJ\times \bT^3)} \\
 &\lesssim T^{\frac{1}{2}}  \| \, \<1b> \, \|_{L_t^\infty \cC_x^{-1/2-\kappa}(\cJ\times \bT^3)}
\end{align*}

As above, this is bounded by $T^{\frac{1}{2}} A$ on a set of acceptable probability. By combining both estimates, we see that $T^{-\frac{1}{2}} A^{-1} \varphi$ is of type $w$. 
\end{proof}

In order to state the multi-linear master estimate, we need to introduce a multi-linear version of the renormalization in \eqref{intro:eq_renormalized_nonlinearity}. 

\begin{definition}[Renormalization]\label{local:def_renormalization}
Let $\cJ$ be a compact interval, let $\varphi_1,\varphi_2,\varphi_3$ be as in Definition \ref{local:def_types}, and let $N\geq 1$. Furthermore, assume that 
\begin{equation*}
(\varphi_1,\varphi_2;\varphi_3) \netype \big(\, \<1b>\,, \,\<1b>\,,\,\<1b>\, \big). 
\end{equation*}
Then, we define the renormalized and frequency-truncated nonlinearity by 
\begin{equation}
\begin{aligned}
&\lcol  V \ast \big( P_{\leq N} \varphi_1 \cdot P_{\leq N} \varphi_2 \big)  P_{\leq N} \varphi_3  \rcol \\
&\defe
\begin{cases}
\begin{tabular}{ll}
$\Big( V \ast \<2N> \Big) P_{\leq N} \varphi_3$ &\hspace{2ex} if $(\varphi_1,\varphi_2)\etype \big( \, \<1b>\,, \, \<1b>\,\big)$, \\
$ V \ast\big( P_{\leq N} \<1b> \cdot P_{\leq N} \varphi_2  \Big) P_{\leq N} \, \<1b> - \MN P_{\leq N} \varphi_2 $ &\hspace{2ex} if $(\varphi_1,\varphi_3) \etype \big( \, \<1b>\,, \, \<1b>\,\big)$, \\
$ V \ast\big(  P_{\leq N} \varphi_1 \cdot  P_{\leq N} \<1b> \Big) P_{\leq N} \, \<1b> - \MN P_{\leq N} \varphi_1 $ &\hspace{2ex} if $(\varphi_2,\varphi_3)\etype \big( \, \<1b>\,, \, \<1b>\,\big)$, \\
$ V \ast \big( P_{\leq N} \varphi_1 \cdot P_{\leq N} \varphi_2 \big)  P_{\leq N} \varphi_3 $   &\hspace{2ex} else. 
\end{tabular}
\end{cases}
\end{aligned}
\end{equation}
If  $(\varphi_1,\varphi_2)\netype \big( \, \<1b>\,, \, \<1b>\,\big)$, we define the action of the paradifferential operators $\parald$ and $\paraboxld$ on the renormalized and frequency-truncated nonlinearity by
\begin{align*}
\lcol  V \ast \big( P_{\leq N} \varphi_1 \cdot P_{\leq N} \varphi_2 \big) \parald P_{\leq N} \varphi_3  \rcol 
&\defe V \ast \big( P_{\leq N} \varphi_1 \cdot P_{\leq N} \varphi_2 \big) \parald P_{\leq N} \varphi_3 , \\ 
\paraboxld \Big( \lcol  V \ast \big( P_{\leq N} \varphi_1 \cdot P_{\leq N} \varphi_2 \big) P_{\leq N} \varphi_3  \rcol \Big) 
&\defe \paraboxld \Big(   V \ast \big( P_{\leq N} \varphi_1 \cdot P_{\leq N} \varphi_2 \big)  P_{\leq N} \varphi_3  \Big),
\end{align*}
which does not involve a renormalization. We also define the negated paradifferential operators by 
\begin{align*}
&\lcol  V \ast \big( P_{\leq N} \varphi_1 \cdot P_{\leq N} \varphi_2 \big) \nparald P_{\leq N} \varphi_3  \rcol \\
&\defe \lcol  V \ast \big( P_{\leq N} \varphi_1 \cdot P_{\leq N} \varphi_2 \big)  P_{\leq N} \varphi_3  \rcol  
-\lcol  V \ast \big( P_{\leq N} \varphi_1 \cdot P_{\leq N} \varphi_2 \big) \parald P_{\leq N} \varphi_3  \rcol , \\ 
&\nparaboxld \Big( \lcol  V \ast \big( P_{\leq N} \varphi_1 \cdot P_{\leq N} \varphi_2 \big) P_{\leq N} \varphi_3  \rcol \Big) \\
&\defe V \ast \big( P_{\leq N} \varphi_1 \cdot P_{\leq N} \varphi_2 \big)  P_{\leq N} \varphi_3  \rcol  
 -\paraboxld \Big( \lcol  V \ast \big( P_{\leq N} \varphi_1 \cdot P_{\leq N} \varphi_2 \big) P_{\leq N} \varphi_3  \rcol \Big) ,
\end{align*}
which contains the full renormalization. 
\end{definition}

Equipped with our notion of types and the renormalization, we can now state and prove the multi-linear master estimate.

\begin{proposition}[Multi-linear master estimate]\label{local:prop_master}
Let  $\zeta=\zeta(\epsilon,s_1,s_2,\kappa,\eta,\eta^\prime,b_+,b)>0$ be sufficiently small, let $A\geq 1$, and let $T\geq 1$. Then, there exists a Borel set $\Theta_{\blue}^{\ms}(A,T) \subseteq \cH_x^{-1/2-\kappa}(\bT^3)$ satisfying 
\begin{equation}
\bP(\, \bluedot \in \Theta_{\blue}^{\ms}(A,T) ) \geq 1 - \zeta^{-1} \exp(-\zeta A^{\zeta}) 
\end{equation}
and such that for all $\, \bluedot \in \Theta_{\blue}^{\ms}(A,T)$ the following estimates hold: 

Let $\cJ\subseteq [0,T]$ be an interval and let $N\geq 1$. Let $\varphi_1,\varphi_2,\varphi_3\colon \cJ\times \bT^3 \rightarrow \bR$ be as in Definition 
\ref{local:def_types} and let 
\begin{equation*}
(\varphi_1,\varphi_2;\varphi_3) \netype \big(\, \<1b>\,, \,\<1b>\,;\,\<1b>\, \big), \big( \,\<1b>\, , w ; \, \<1b>\,\big).
\end{equation*}
\begin{enumerate}[(i)]
\item \label{local:item_master_1} If $(\varphi_1,\varphi_2;\varphi_3) \etype \big(\, \<1b>\,, \,\<3D>\,;\,\<1b>\, \big), ~  \big(\, \<1b>\,  ,X;\,\<1b>\, \big)$, then
\begin{equation*}
\Big\| \nparaboxld \Big( \lcol  V \ast \big( P_{\leq N} \varphi_1 \cdot P_{\leq N} \varphi_2 \big) P_{\leq N} \varphi_3  \rcol \Big)  \Big\|_{\X{s_2-1}{b_+-1}(\cJ)}
\leq T^{30} A. 
\end{equation*}
\item \label{local:item_master_2} If $(\varphi_1,\varphi_2;\varphi_3) \etype \big(\, \<1b>\,  ,Y;\,\<1b>\, \big)$ or $\varphi_1,\varphi_2 \netype \, \<1b>$ and $\varphi_3 \etype \, \<1b>$, then 
\begin{equation*}
\Big\|\lcol  V \ast \big( P_{\leq N} \varphi_1 \cdot P_{\leq N} \varphi_2 \big) \nparald P_{\leq N} \varphi_3  \rcol  \Big\|_{\X{s_2-1}{b_+-1}(\cJ)}
\leq T^{30} A. 
\end{equation*}
\item \label{local:item_master_3} In all other cases, 
\begin{equation*}
\Big\|\lcol  V \ast \big( P_{\leq N} \varphi_1 \cdot P_{\leq N} \varphi_2 \big)  P_{\leq N} \varphi_3  \rcol  \Big\|_{\X{s_2-1}{b_+-1}(\cJ)}
\leq T^{30} A. 
\end{equation*}
\end{enumerate}
\end{proposition}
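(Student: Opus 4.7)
The master estimate is fundamentally a bookkeeping statement that collects together the individual multi-linear estimates from Sections \ref{section:stochastic_object}--\ref{section:physical}. My plan is therefore to perform an exhaustive case analysis over the possible type assignments $(\cT_1,\cT_2;\cT_3) \in \big\{\<1b>,\<3D>,w,X,Y\big\}^3$ (modulo the symmetry between $\cT_1$ and $\cT_2$), and in each case invoke the appropriate proposition, with the exception of the two excluded cases $(\<1b>,\<1b>;\<1b>)$ and $(\<1b>,w;\<1b>)$, which correspond to the pure stochastic object $\<3N>$ (absorbed into $\So$ at an earlier stage) and the random matrix term $\RMT$ (handled directly in Section \ref{section:ansatz}). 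The three conclusions (\ref{local:item_master_1})--(\ref{local:item_master_3}) of the proposition correspond precisely to the three mechanisms that make the low$\times$low$\times$high interaction summable: either the outermost paradifferential projection $\nparaboxld$ eliminates the dangerous low$\times$low$\rightarrow$low resonance (case (i)), or the $\nparald$ projection on $\varphi_3$ kills the same resonance after contracting $\varphi_1,\varphi_2$ (case (ii)), or no paradifferential surgery is needed since one of the three inputs is already smoother than $\<1b>$ (case (iii)).

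Concretely, I would sort the cases as follows. The purely stochastic cases (those in which every $\varphi_i$ is of type $\<1b>$ or $\<3D>$) fall under case (i) or (ii) and are resolved by the estimates on $\nparaboxld\,\<131N>\,$, $\,\<113N>\,$, $\,\<313N>\,$, and $\nparald\,\<331N>\,$ from Section \ref{section:stochastic_object}; these define the term $\So$. Cases with one appearance of $X$ and two copies of $\<1b>$ or $\<3D>$ are covered by the para-controlled estimates of Section \ref{section:parae} (the $\CPara$-term), after using the double-Duhamel representation. Cases where exactly one factor is of type $w$ or $Y$ and the remaining two factors are of type $\<1b>$ fall into the random-matrix category and are handled by the operator-norm estimates of Section \ref{section:rmt}. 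All remaining cases, in which at least one factor is of type $\<3D>$ and at least one is of type $w$, $X$, or $Y$, belong to the physical term $\Phy$ and are estimated via the arguments of Section \ref{section:phy}, which rely on Klainerman--Tataru refined Strichartz bounds together with physical-space paraproduct estimates.

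The main technical point in assembling these ingredients is type reduction. Several of the individual estimates in Sections \ref{section:rmt}--\ref{section:phy} are stated for inputs of type $w$, whereas the master estimate allows $X$ and $Y$. Here I invoke Lemma \ref{local:lem_type_conversion}: on the event $\Theta_{\blue}^{\type}(A,T)$, any function of type $X$ or $Y$ is $T^4 A$ times a function of type $w$. Since the nonlinearity is trilinear, this introduces at most a $T^{12} A^3$ factor, which is absorbed by the $T^{30}A$ on the right-hand side (after, if needed, replacing $A$ by $A^{1/3}$ in the subsequent union bound). I define
\begin{equation*}
\Theta_{\blue}^{\ms}(A,T) \defe \Theta_{\blue}^{\type}(A,T) \cap \bigcap_{(\cT_1,\cT_2,\cT_3)} \Theta_{\blue}^{(\cT_1,\cT_2,\cT_3)}(A,T),
\end{equation*}
where the finite intersection runs over all admissible type triples and $\Theta_{\blue}^{(\cT_1,\cT_2,\cT_3)}(A,T)$ denotes the good event supplied by the relevant section. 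Each of these sets carries a Gaussian tail bound of the form $1 - \zeta^{-1}\exp(-\zeta A^\zeta)$; a union bound over the finite collection preserves this shape after shrinking $\zeta$.

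The step I expect to be the most delicate, but which is not an obstacle in this proposition since it is imported, is that the para-controlled type $X$ carries internal information through the defining identity $\varphi = P_{\leq N}\Duh\bigl[1_{\cJ_0}\PCtrl(H,P_{\leq N}\<1b>)\bigr]$ with $\|H\|_{\LM}\leq 1$, and this structure must match the hypotheses of the double-Duhamel estimates of Section \ref{section:parae}. The enumeration has to be done carefully so that in every case where the dangerous high$\times$high$\rightarrow$low interaction $\varphi_i \paraeq \<1b>\,$ arises, the relevant factor $\varphi_i$ is either of type $X$ (in which case we unfold the para-controlled ansatz) or already has regularity strictly above $1/2$ (types $Y$, $\<3D>$, or $w$ combined with another smoothing factor). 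Once this matching is verified case-by-case, the master estimate follows by concatenation, and the uniform $T^{30}$ exponent reflects the worst polynomial-in-$T$ loss across the individual estimates (notably those using Lemma \ref{local:lem_type_conversion} combined with a Gronwall-type absorption from $\X{s}{b}$ time-localization).
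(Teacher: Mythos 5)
Your plan coincides with the paper's proof: enumerate the type triples modulo the symmetry in the first two slots, replace $A$ by $CA^C$ up front, reduce types $X$ and $Y$ to scalar multiples of type $w$ via Lemma~\ref{local:lem_type_conversion}, and for each surviving triple invoke the corresponding estimate from Sections~\ref{section:stochastic_object}--\ref{section:physical}, finishing with a union bound over the finitely many good events. One small imprecision is the claim that every triple containing a factor of type $X$ routes through the para-controlled estimates of Section~\ref{section:paracontrolled}: in the paper only $\big(\<1b>,X;\<1b>\big)$ is handled directly by Proposition~\ref{para:prop_cubic}, while triples such as $\big(\<1b>,X;\<3D>\big)$ are first converted to type $w$ by Lemma~\ref{local:lem_type_conversion} (which already bakes the quadratic para-controlled estimate of Proposition~\ref{para:prop_quadratic_object} into the $w$-type bounds) and then fall under the physical-space estimates of Section~\ref{section:physical}.
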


\begin{remark}\label{local:remark_master}
The frequency-localized versions of each estimate in Proposition \ref{local:prop_master} gain an $\eta^\prime$-power in the maximal frequency-scale. Furthermore, functions of the type $\<3D>$ can be replaced by  $\<3Dtau>$ as defined in \eqref{global:eq_modified_cubic_objects}. For more details on these minor modifications, we refer the reader to the proof of the individual main estimates (Section \ref{section:stochastic_object}-\ref{section:physical}). 
\end{remark}

\begin{proof}
It suffices to prove the estimates with $A$ on the right-hand side replaced by $CA^C$, where $C=C(s_1,s_2,b,b_+,\epsilon)$. Then, the desired estimate follows by replacing $A$ with a small power of itself and adjusting the constant $\zeta$. In the following, we freely restrict to events with acceptable probabilities.

\emph{Proof of \eqref{local:item_master_1}:}  
 If $(\varphi_1,\varphi_2;\varphi_3)$ has type
\begin{itemize}
\item[$\bullet$] $\big( \, \<1b> \,  , \<3D> ; \, \<1b>\,    \big)$, we use Proposition \ref{so5i:prop}, 
\item[$\bullet$] $\big( \, \<1b> \,  , X ; \, \<1b>\,    \big)$, we use Proposition \ref{para:prop_cubic} . 
\end{itemize}

\emph{Proof of \eqref{local:item_master_2}:} 
If $(\varphi_1,\varphi_2;\varphi_3) \etype \big(\, \<1b>\,  ,Y,\,\<1b>\, \big)$, this follows from  Proposition \ref{rmt:prop2}. Using Lemma \ref{local:lem_type_conversion}, we may assume in all remaining cases that $\varphi_1$ and $\varphi_2$ have type $\<3D> $ or $w$, as long as we obtain the estimate with $T^{18}$ instead of $T^{30}$. If $(\varphi_1,\varphi_2;\varphi_3)$ has type 
\begin{itemize}
\item[$\bullet$] $\big(  \<3D>  , \<3D> ;  \, \<1b> \,   \big)$, we use Lemma \ref{para:lemma_obj_1} and Proposition \ref{so7:prop},
\item[$\bullet$] $\big(    w  ,    \<3D> ; \, \<1b> \,   \big)$, we use  Lemma \ref{para:lemma_obj_2} and Proposition \ref{phy:prop_hybrid},
\item[$\bullet$] $\big(  w , w ;  \, \<1b> \,     \big)$, we use  Lemma \ref{para:lemma_obj_2} and Proposition \ref{phy:prop_1b}. 
\end{itemize}

\emph{Proof of \eqref{local:item_master_3}:} Using Lemma \ref{local:lem_type_conversion}, we may assume that all functions $\varphi_j$ are of type 
$\,\<1b>\,$, $\,\<3D>\,$, or $w$, as long as we prove the estimate with $T^{18}$ instead of $T^{30}$. If no factor is of type $\,\<1b>\,$, the desired estimate follows from Proposition \ref{so3:prop} and Proposition \ref{phy:prop_3b}. The remaining cases can be estimated as follows: If $(\varphi_1,\varphi_2;\varphi_3)$ has type 
\begin{itemize}
  \setlength\itemsep{1ex}
\item[$\bullet$] $\big( \, \<1b> \,   , \, \<1b>\,    ; \<3D>\big)$, we use  Proposition \ref{so5ii:prop},
\item[$\bullet$] $\big( \, \<1b> \,   , \, \<1b>\,    ; w \big)$, we use Proposition \ref{rmt:prop1}, 
\item[$\bullet$] $\big( \, \<1b> \,  , \<3D> ;  \<3D>    \big)$, we use Proposition \ref{so7:prop},
\item[$\bullet$] $\big( \, \<1b> \,  , \<3D> ; w     \big)$, we use Proposition \ref{phy:prop_hybrid}, 
\item[$\bullet$] $\big( \, \<1b> \,  , w  ;    \<3D>   \big)$, we use	 Lemma \ref{phy:lem_bilinear_tool} and Proposition \ref{phy:prop_hybrid},
\item[$\bullet$] $\big( \, \<1b> \,  , w ; w     \big)$, we use   Proposition \ref{phy:prop_neq} and Lemma \ref{phy:lem_bilinear_tool}. 
\end{itemize}
\end{proof}

\subsection{Local well-posedness}

In this subsection, we obtain our first local well-posedness result. It is phrased in terms of the ambient measure $\bP$ and the random structure is based on the Gaussian initial data $\, \bluedot$. 

\begin{proposition}[Structured local well-posedness w.r.t. the ambient measure]\label{lwp:prop}
Let $M\geq 1$, let $A\geq 1$, let $0<\tau\leq 1$, and  let $\zeta=\zeta(\epsilon,s_1,s_2,\kappa,\eta,\eta^\prime,b_+,b)>0$ be sufficiently small. Denote by $\LdagM$ the event in the ambient space $(\Omega,\mathcal{F})$ defined by the following conditions:
\begin{enumerate}[(i)]
\item \label{lwp:item_P1} For any $N\geq 1$, the solution of \eqref{eq:nlw_N} with initial data $\purpledot= \bluedot + \reddotM$ exists on $[0,\tau]$. 
\item \label{lwp:item_P2} For all  $N\geq 1$, there exist $w_N\in \X{s_1}{b}([0,\tau])$, $H_N \in \modulation([0,\tau])$, and $Y_N \in \X{s_2}{b}([0,\tau])$ such that
\begin{equation*}
\Phi_N(t)\,  \purpledot = \<1b>(t) + \<3DN>(t)+ w_N(t)  \quad \text{and} \quad  w_N(t) = P_{\leq N} \Duh\big[ \Para(H_N,  P_{\leq N} \<1b>)\big](t) + Y_N(t)
\end{equation*}
for all $t\in [0,\tau]$. Furthermore, we have the bounds
\begin{align*}
   \| w_N \|_{\X{s_1}{b}([0,\tau])}, \| H_N \|_{\modulation([0,\tau])}, \| Y_N \|_{\X{s_2}{b}([0,\tau])}  &\leq A \quad \text{and} \\
\sum_{L_1 \sim L_2} \| P_{L_1} \<1b> \cdot P_{L_2} w_N \|_{L_t^2 H_x^{-4\delta_1}([0,\tau]\times \bT^3)} &\leq A. 
\end{align*}
\item\label{lwp:item_P3} It holds for all \( N,K \geq 1 \) that 
\begin{equation*}
 \| \Phi_N[t]\,  \purpledot - \Phi_K[t]\,  \purpledot\, \|_{L_t^\infty \cH_x^{\beta-\kappa}([0,\tau]\times \bT^3)} \leq  A \min(N,K)^{-\eta^\prime}. 
\end{equation*}
We further require that 
\begin{equation*}
\| H_N - H_K \|_{\LM([0,\tau])}, ~
\| Y_N - Y_K \|_{\X{s_2}{b}([0,\tau])} \leq   A \min(N,K)^{-\eta^\prime}. 
\end{equation*}
\end{enumerate}
If $A \tau^{b_+-b} \leq 1 $, then \( \LdagM \) has a high probability and it holds that 
\begin{equation}\label{lwp:eq_high_probability}
\bP(\LdagM) \geq 1 - \zeta^{-1} \exp(-\zeta  A^{\zeta}). 
\end{equation}
\end{proposition}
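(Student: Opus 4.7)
The plan is to run a fixed-point argument for the triple $(w_N,H_N,Y_N)$ in the Banach space
\begin{equation*}
\cZ_{N,\tau} \defe \X{s_1}{b}([0,\tau]) \times \LM([0,\tau]) \times \X{s_2}{b}([0,\tau]),
\end{equation*}
restricted to the ball of radius $A$ and subject to the auxiliary bilinear bound $\sum_{L_1\sim L_2}\|P_{L_1}\<1b>\cdot P_{L_2} w\|_{L^2_t H^{-4\delta_1}_x}\leq A$. Given a candidate $(w,H,Y)\in\cZ_{N,\tau}$, I set $X \defe P_{\leq N}\Duh[\Para(H,P_{\leq N}\<1b>)]$, impose consistency $w=X+Y$, and define the map $\Psi_N$ by: (a) $H'$ equal to the low-frequency modulation appearing on the right-hand side of \eqref{lwp:eq_XN} when $(X,Y,w)$ are inserted; (b) $Y'$ equal to the linear evolution of $\reddotM$ plus the Duhamel integral of $\So_N+\CPara_N+\RMT_N+\Phy_N$ evaluated at the inputs; (c) $w'\defe X'+Y'$ with $X'\defe P_{\leq N}\Duh[\Para(H',P_{\leq N}\<1b>)]$.

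For the invariance of the ball, every summand in $\So_N+\CPara_N+\RMT_N+\Phy_N$ is a trilinear expression of the form covered by one of the three cases of Proposition \ref{local:prop_master}, so on the event $\Theta_{\blue}^{\ms}(A,T)\cap\Theta_{\type}^{\blue}(A,T)\cap\{\|\reddotM\|_{\cH^{s_2}}\leq A\}$ (which by Theorem \ref{theorem:measures}(2) and the stochastic bounds from Sections \ref{section:stochastic_object}--\ref{section:physical} has probability $\geq 1-\zeta^{-1}\exp(-\zeta A^{\zeta})$), the nonlinearity is bounded in $\X{s_2-1}{b_+-1}([0,\tau])$ by $CT^{30}A$. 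Combining with the Duhamel bound $\X{s_2-1}{b_+-1}\to \X{s_2}{b_+}$ and the short-time interpolation $\X{s_2}{b_+}([0,\tau])\hookrightarrow \X{s_2}{b}([0,\tau])$ gives the factor $\tau^{b_+-b}$, so $\|Y'\|_{\X{s_2}{b}}\leq C(1+A\tau^{b_+-b})\leq A$ under $A\tau^{b_+-b}\leq 1$. The low-frequency modulation $H'$ is controlled in $\LM$ by $\|\,\<1b>\,\|_{L^\infty_t\cC^{-1/2-\kappa}_x}$, $\|\,\<3DN>\,\|_{\X{\beta-\kappa}{b}}$, $\|X\|_{\X{s_1}{b}}$, $\|Y\|_{\X{s_2}{b}}$, and the smoothness of $V$; the paracontrolled Strichartz bound (Lemma \ref{para:lem_basic}) then yields $\|X'\|_{\X{s_1}{b}}\lesssim \tau\|H'\|_{\LM}\|\,\<1b>\,\|$. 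The auxiliary bilinear estimate on $P_{L_1}\<1b>\cdot P_{L_2}w'$ splits as $w'=X'+Y'$: for $X'$ it is exactly the content of Proposition \ref{para:prop_quadratic_object}, and for $Y'$ it reduces to an elementary Sobolev product bound since $s_2>1/2$.

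Running the same estimates on differences of two candidates gives a Lipschitz constant $\leq CA\tau^{b_+-b}\leq 1/2$, which produces a unique fixed point $(w_N,H_N,Y_N)$, establishing \eqref{lwp:item_P1} and \eqref{lwp:item_P2}. For property \eqref{lwp:item_P3}, I use the frequency-localized refinement of Proposition \ref{local:prop_master} (Remark \ref{local:remark_master}): the difference $(w_N-w_K,H_N-H_K,Y_N-Y_K)$ satisfies a linear fixed-point equation whose source is bounded by $A\min(N,K)^{-\eta'}$ thanks to the $\eta'$-gain on the maximal frequency-scale, and another application of the contraction yields the claimed convergence rate. The main obstacle is the self-referential character of the paracontrolled loop: $H_N$ contains $X_N$, which is defined from $H_N$, so the contraction must close in $H$ and $X$ simultaneously. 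This is precisely why $\paraboxld$ was introduced in Definition \ref{lwp:def_paraproduct}---borrowing an $\epsilon$-derivative from its third slot allows the $X$-contribution to $H$ to be controlled by a small fractional power of a high frequency, and the resulting loss is absorbed by $\tau^{b_+-b}$. Verifying this delicate bookkeeping, and combining it with the probability estimate on the intersection of the events above, completes the proof of \eqref{lwp:eq_high_probability}.
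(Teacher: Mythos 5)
Your proposal follows the same general architecture as the paper's proof: restrict to an event where the multi-linear master estimate and the type-conversion lemma are available, run a para-controlled contraction organized around $(X,Y)$ (equivalently $(H,Y)$), extract $H_N$ via Lemma~\ref{para:lemma_obj_1}/\ref{para:lemma_obj_2}, and deduce property~(iii) from the frequency-localized $\eta'$-gain applied to the difference equation. Your ``impose $X=P_{\leq N}\Duh[\Para(H,P_{\leq N}\<1b>)]$ a priori'' step is an equivalent packaging of the paper's double Duhamel trick of feeding $\Gamma_{N,X}(X_N,Y_N)$ into $\CPara_N$.

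However, there is a genuine quantitative gap in the invariance and contraction steps. The nonlinearity $\So+\CPara+\RMT+\Phy$ is trilinear, so on a ball of radius $R$ in $(X,Y)$ (with the event parameter also $R$), the master estimate gives
\begin{equation*}
\big\|\So+\CPara+\RMT+\Phy\big\|_{\X{s_2-1}{b_+-1}([0,\tau])}\lesssim R^3,
\end{equation*}
not $\lesssim R$. Consequently you get $\|Y'\|_{\X{s_2}{b}}\lesssim R+\tau^{b_+-b}R^3$ and a Lipschitz constant $\lesssim \tau^{b_+-b}R^2$. Your claim ``$\|Y'\|\leq C(1+A\tau^{b_+-b})\leq A$'' and ``Lipschitz constant $\leq CA\tau^{b_+-b}\leq 1/2$'' drop the cubic dependence. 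With $R=A$, the condition $A\tau^{b_+-b}\leq 1$ only gives $\tau^{b_+-b}A^3\leq A^2$ and $\tau^{b_+-b}A^2\leq A$, neither of which yields the invariance of the ball nor a contraction. The paper avoids this by choosing the auxiliary parameter $B=cA^c$ with $c$ small, restricting to $\Theta_{\blue}^{\ms}(B,1)\cap\Theta_{\blue}^{\type}(B,1)\cap\cdots$, and closing the contraction in a ball of radius $2CB$; then $\tau^{b_+-b}B^2\lesssim \tau^{b_+-b}A\leq 1$ does give invariance and a Lipschitz constant $\leq 1/2$, and the final bound $\leq A$ is recovered from $4CB\leq A$. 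This parameter split is essential and missing from your sketch. A secondary but minor point: the role of $\paraboxld$ in Remark~\ref{local:rem_paracontrolled} is really to make the quadratic product $P_{\leq N}\<1b>\cdot P_{\leq N}X_N$ (appearing in the $X$-equation itself, where the double Duhamel trick is unavailable) well-defined by borrowing regularity from the third argument, rather than to close a feedback loop in $H$ per se; and the $\epsilon$-derivative loss is paid for by $s_1<1/2$ rather than by $\tau^{b_+-b}$.
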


\begin{remark}\label{lwp:remark_lwp}
The superscript ``$\text{amb}$'' in $\LdagM$ emphasizes that the event lives in the ambient probability space. The first item \eqref{lwp:item_P1} is only stated for expository purposes. Indeed, since \eqref{lwp:item_P1} is a soft statement and does not contain any uniformity in the frequency-truncation parameter, it follows from the global well-posedness of \eqref{eq:nlw_N} (which is also not uniform in $N$). The interesting portions of the proposition are included in \eqref{lwp:item_P2} and \eqref{lwp:item_P3}, which contain uniform structural information about the solution and allow us to locally define the limiting dynamics. 
\end{remark}

By combining Theorem \ref{theorem:measures} and  Proposition \ref{lwp:prop}, we easily obtain the local well-posedness of the renormalized nonlinear wave equation on the support of the Gibbs measure.

\begin{corollary}[Local well-posedness for Gibbsian initial data]\label{lwp:cor_unstructured}
Let $0<\tau<1$ and let $\zeta=\zeta(\epsilon,s_1,s_2,\kappa,\eta,\eta^\prime,b_+,b)>0$ be sufficiently small. Then, there exists a Borel set $\cL(\tau)\subseteq \cH_x^{-1/2-\kappa}(\bT^3)$ such that $\Phi_N[t] \, \purpledot$ converges in $C_t^0 \cH_x^{-1/2-\kappa}([0,\tau]\times \bT^3)$ as $N\rightarrow \infty$ and such that 
\begin{equation}\label{lwp:eq_unstructured}
\mup_\infty(\cL(\tau)) \geq 1 - \zeta^{-1} \exp(-\zeta \tau^{-\zeta}).  
\end{equation}
\end{corollary}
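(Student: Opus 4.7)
The plan is to transport the probabilistic information from Proposition \ref{lwp:prop}, which lives on the ambient space $(\Omega,\mathcal{F},\bP)$, to the Gibbs measure $\mup_\infty$ by going through the reference measures $\nup_M$ and combining the absolute continuity from Theorem \ref{theorem:measures} with the weak convergence $\mup_M \to \mup_\infty$. Given $\tau\in(0,1)$, I would choose the parameter $A=\tau^{-(b_+-b)}$ so that the hypothesis $A\tau^{b_+-b}\leq 1$ of Proposition \ref{lwp:prop} is satisfied, and then define
\begin{equation*}
\cL(\tau) \defe \bigcap_{N,K\geq 1} \Big\{ \, \purpledot \in \cH_x^{-1/2-\kappa}(\bT^3) \colon \| \Phi_N[t]\,\purpledot - \Phi_K[t]\,\purpledot \|_{L_t^\infty \cH_x^{\beta-\kappa}([0,\tau]\times\bT^3)} \leq A \min(N,K)^{-\eta^\prime} \Big\}.
\end{equation*}
The crucial feature of this definition is that $\cL(\tau)$ is specified intrinsically on $\cH_x^{-1/2-\kappa}(\bT^3)$ and only invokes the deterministic item (iii) of Proposition \ref{lwp:prop}, avoiding any reference to the ambient decomposition $\purpledot = \bluedot + \reddotM$ or the para-controlled structure of item (ii).

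The first substantive step is to verify that $\cL(\tau)$ is closed. For fixed $N,K$, I would write $\Phi_N[t]\,\purpledot = S[t]\,\purpledot + V_N[t]\,\purpledot$ where $S[t]$ is the free wave propagator. The linear parts cancel in the difference $\Phi_N-\Phi_K$, leaving $V_N - V_K$. Because the nonlinearity in \eqref{eq:nlw_N} is projected by $P_{\leq N}$ and $P_{\leq N}$ maps $\cH_x^{-1/2-\kappa}(\bT^3)$ boundedly into every $\cH_x^{M}(\bT^3)$, the smooth correction $V_N[t]$ satisfies a finite-dimensional ODE (on the low modes) whose right-hand side is Lipschitz in $V_N$ and continuous in the parameter $P_{\leq N}\,\purpledot$. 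Hence $\, \purpledot \mapsto \Phi_N[t]\,\purpledot - \Phi_K[t]\,\purpledot\,$ is continuous from $\cH_x^{-1/2-\kappa}(\bT^3)$ into $C_t^0 \cH_x^{\beta-\kappa}([0,\tau]\times\bT^3)$, so each set in the intersection is closed, and $\cL(\tau)$ itself is closed.

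Next, I would assemble the measure estimates. By item (iii) of Proposition \ref{lwp:prop}, the ambient event $\LdagM$ is contained in $\{\bluedot + \reddotM \in \cL(\tau)\}$, which via $\nup_M = \Law_\bP(\bluedot+\reddotM)$ yields $\nup_M(\cL(\tau)) \geq 1 - \zeta^{-1}\exp(-\zeta A^{\zeta})$. The $L^q$-type absolute continuity in Theorem \ref{theorem:measures} then gives
\begin{equation*}
\mup_M(\cL(\tau)^c) \leq C \nup_M(\cL(\tau)^c)^{1-1/q} \leq C' \exp(-\zeta^\prime A^{\zeta^\prime})
\end{equation*}
uniformly in $M$, for some adjusted constant $\zeta^\prime>0$. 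Since $\cL(\tau)$ is closed and $\mup_M \to \mup_\infty$ weakly, the Portmanteau theorem produces $\mup_\infty(\cL(\tau)) \geq \limsup_M \mup_M(\cL(\tau)) \geq 1 - C' \exp(-\zeta^\prime A^{\zeta^\prime})$, and substituting $A=\tau^{-(b_+-b)}$ gives the desired bound \eqref{lwp:eq_unstructured} after relabelling $\zeta$. Finally, for every $\, \purpledot \in \cL(\tau)$, the Cauchy bounds make $(\Phi_N[t]\,\purpledot)_{N\geq 1}$ Cauchy in $L_t^\infty \cH_x^{\beta-\kappa}$, and hence, via the embedding $\cH_x^{\beta-\kappa}\hookrightarrow \cH_x^{-1/2-\kappa}$, uniformly Cauchy in $\cH_x^{-1/2-\kappa}$; combined with the continuity in $t$ of each $\Phi_N[t]\,\purpledot$, this yields convergence in $C_t^0 \cH_x^{-1/2-\kappa}([0,\tau]\times \bT^3)$.

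The delicate point I expect will require the most care is the last inequality above: the weak (rather than total-variation) convergence $\mup_M \to \mup_\infty$ forces $\cL(\tau)$ to be closed, not just Borel, in order to apply Portmanteau. This constraint is what dictates the choice to define $\cL(\tau)$ using only the deterministic Cauchy bound of item (iii) of Proposition \ref{lwp:prop}, since the structural decomposition in item (ii) is phrased in terms of the $\bluedot$-based objects $\<1b>$ and $\<3DN>$ and therefore has no intrinsic meaning on $\cH_x^{-1/2-\kappa}(\bT^3)$; rescuing any form of that additional structure at the level of $\mup_\infty$ is deferred to Section \ref{section:ftg} and the global argument.
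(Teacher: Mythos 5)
Your proof is correct and follows essentially the same route as the paper: define the admissible set intrinsically via the Cauchy bounds from item (iii) of Proposition \ref{lwp:prop}, check closedness, transfer the probability estimate from $\bP$ to $\nup_M$ via the representation $\nup_M=\Law_\bP(\bluedot+\reddotM)$, then to $\mup_M$ via the quantitative absolute continuity of Theorem \ref{theorem:measures}, and finally to $\mup_\infty$ via weak convergence and Portmanteau applied to a closed set. Two minor remarks: the paper's set $\LAtautil$ also includes the existence condition of item (i), which you drop; this is harmless, since for each fixed $N$ the truncated flow $\Phi_N$ is globally defined, so that condition imposes no constraint on $\purpledot$. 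Also, the paper simply asserts closedness, whereas you give an explicit justification via the finite-dimensional ODE satisfied by the low-frequency nonlinear correction $V_N[t]$; this is essentially the right soft argument, though one should note in passing that continuity of the truncated flow $\Phi_N[t]$ in $\cH_x^{-1/2-\kappa}$ on the fixed time interval $[0,\tau]$ ultimately rests on the global bounds from the coercivity of $H_N$ on the low-frequency modes, which the paper uses implicitly.
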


Corollary \ref{lwp:cor_unstructured} shows that the limiting dynamics $\Phi(t)\,\purpledot = \lim_{N\rightarrow \infty} \Phi_N(t)\, \purpledot$ are locally well-defined on the support of the Gibbs measure. However, it does not contain any structural information about the solution, which will be essential in the globalization argument (Section \ref{section:global}). The main difficulty, which was described in detail in Section \ref{section:ansatz}, is that the free component of the initial data $\, \bluedot\,$ is only defined on the ambient space. Nevertheless, in Proposition \ref{global:prop_lwp} below, we obtain a structured local well-posedness theorem in terms of $\purpledot$.

We first use the structured local well-posedness result for the ambient measure (Proposition \ref{lwp:prop}) to prove the unstructured local well-posedness for Gibbsian random data (Corollary \ref{lwp:cor_unstructured}). Then, we present the proof of Proposition \ref{lwp:prop}.

\begin{proof}[Proof of Corollary \ref{lwp:cor_unstructured}:]
Let $M\geq 1$ and let $A$ satisfy $A \tau^{b+-b} \leq 1$. We define a closed set $\LAtautil \subseteq \cH_{x}^{-\frac{1}{2}-\kappa}(\bT^3)$ by requiring that $\purpledot \, \in \LAtautil$ if and only if
\begin{enumerate}[(a)]
\item  For any $N\geq 1$, the solution of \eqref{eq:nlw_N} with initial data $\purpledot$ exists on $[0,\tau]$. 
\item  It holds for all \( N,K \geq 1 \) that 
\begin{equation*}
\| \Phi_N(t)\,  \purpledot - \Phi_K(t)\,  \purpledot \|_{L_t^\infty \cH_x^{\beta-\kappa}([0,\tau]\times \bT^3)} \leq  A \min(N,K)^{-\eta^\prime}. 
\end{equation*}
\end{enumerate}
It is clear from the definition that $\cL(\tau) \subseteq \LAtautil $. We emphasize that $\LAtautil$ is defined intrinsically through $\, \purpledot\,$ and does not refer to the ambient probability space $(\Omega,\mathcal{F},\bP)$. From the definition of $\LdagM$ in Proposition \ref{lwp:prop}, it follows that 
\begin{equation*}
\LdagM \subseteq \{ \, \bluedot + \reddotM \in \LAtautil  \} .
\end{equation*}
By using the representation of the reference measure in Theorem \ref{theorem:measures}, we have that $\Law_{\bP}( \, \bluedot + \reddotM\,)= \nup_M$. This yields
\begin{equation*}
\nup_M(\LAtautil ) = \bP(  \, \bluedot + \reddotM \in \LAtautil  ) \geq \bP( \LdagM) \geq 1 - \zeta^{-1} \exp(-\zeta A^{\zeta}). 
\end{equation*}
By using the quantitative version of the absolute continuity $\mup_M \ll  \nup_M$ in Theorem \ref{theorem:measures}, we obtain that 
\begin{equation*}
\mup_M( \cH_x^{-\frac{1}{2}-\kappa}(\bT^3)\backslash \LAtautil ) \lesssim \nup_M( \cH_x^{-\frac{1}{2}-\kappa}(\bT^3)\backslash \LAtautil)^{1-\frac{1}{q}} \lesssim   \zeta^{-1} \exp\big(-\zeta  \big( 1 - q^{-1}\big) A^{\zeta}\big).
\end{equation*}
After adjusting the value of $\zeta$, this yields the desired estimate \eqref{lwp:eq_unstructured} with $\mup_\infty$ replaced by $\mup_M$. 
Since $\LAtautil$ is closed in $\cH_x^{-1/2-\kappa}(\bT^3)$ and a subsequence of $\mup_M$ weakly converges to $\mup_\infty$, we obtain the same probabilistic estimate for the limiting measure $\mup_\infty$. 
\end{proof}

\begin{proof}[Proof of Proposition \ref{lwp:prop}:] 
As discussed in Remark \ref{lwp:remark_lwp}, \eqref{lwp:item_P1} follows from a soft argument. We now turn to the proof of \eqref{lwp:item_P2}, which is the heart of the proposition. We let $B=c A^{c}$, where $c=c(\epsilon,s_1,s_2,b_+,b)$ is a sufficiently small constant.  

Using Theorem \ref{theorem:measures}, Lemma \ref{local:lem_type_conversion}, Proposition \ref{local:prop_master}, and Proposition \ref{so3:prop}, we may restrict to the event 
\begin{equation}\label{local:eq_lwp_event_restriction}
\begin{aligned}
&\Big\{ \, \bluedot \in \Theta_{\blue}^{\ms}(B,1) \Big\} \, \medcap  \,  \Big\{ \, \bluedot \in \Theta^{\type}_{\blue}(B,1)\Big\} \, \medcap  \,  \Big\{ \big\| \, \<1b>\, \big\|_{L_t^\infty \cC_x^{-1/2-\kappa}([0,1]\times \bT^3)}\leq B \Big \} \,
  \\
&\medcap  \, \Big\{ \sup_{N} \big\| \, \<3DN>\, \big\|_{L_t^\infty \cC_x^{\beta-\kappa}([0,1]\times \bT^3)}\leq B \Big \} \, \medcap \, \Big\{ \big\| \, \reddotM  \big\|_{\cH_x^{1/2+\beta-\kappa}(\bT^3)} \leq B \Big\}. 
\end{aligned}
\end{equation}
We now define a map
\begin{equation*}
\Gamma_N = (\Gamma_{N,X}, \Gamma_{N,Y}) \colon \X{s_1}{b}([0,\tau]) \times \X{s_2}{b}([0,\tau]) \rightarrow  \X{s_1}{b}([0,\tau]) \times \X{s_2}{b}([0,\tau])  
\end{equation*}
by 
\begin{align*}
\Gamma_{N,X}(X_N,Y_N) 
&\defe  P_{\leq N} \Duh\bigg[ 2\, \paraboxld \Big(  V \ast \Big( P_{\leq N} \<1b> \cdot P_{\leq N} \big( \<3DN>+X_N\big) \Big)  P_{\leq N} \<1b> \Big)\\
&+ 2 \Big( V \ast \Big( P_{\leq N} \<1b> \cdot P_{\leq N} \big(Y_N \big) \Big)  \parald P_{\leq N} \<1b> \Big) 
+ \Big( V \ast \Big( P_{\leq N} \big( \<3DN>+w_N \big) \Big)^2 \Big) \parald P_{\leq N} \<1b>\bigg]
\end{align*}
and
\begin{align*}
&\Gamma_{N,Y}(X_N,Y_N) \\
&\defe \, \<1r>\,+ \Duh\big[\So_N + \CPara_N(\Gamma_{N,X}(X_N,Y_N),w_N) + \RMT_N(Y_N,w_N) + \Phy_N(X_N,Y_N,w_N)\big],
\end{align*}
where $w_N=X_N+Y_N$. We emphasize our use of the double Duhamel trick, which is manifested in the argument $\Gamma_{N,X}(X_N,Y_N)$ of $\CPara_N$. Our goal is to show that $\Gamma_N$ is a contraction on a ball in $\X{s_1}{b}([0,\tau]) \times \X{s_2}{b}([0,\tau])$, 
where the radius remains to be chosen. \\

Using Lemma \ref{para:lemma_obj_1} and Lemma \ref{para:lemma_obj_2}, it follows that there exists a (canonical) $H_N=H_N(X_N,Y_N)$ satisfying the identity

\begin{equation*}
\Gamma_{N,X}(X_N,Y_N) =  P_{\leq N} \Duh \big[ \PCtrl( H_N , P_{\leq N} \<1b>)\big] 
\end{equation*}
and the estimate
\begin{equation}\label{local:eq_contraction_H}
\| H_N \|_{\LM([0,\tau])} \lesssim B^2 + \| X_N \|_{\X{s_1}{b}([0,\tau])}^2 + \| Y_N \|_{\X{s_2}{b}([0,\tau])}^2. 
\end{equation}
Using the energy estimate (Lemma \ref{tools:lem_energy}), the inhomogeneous Strichartz estimate (Lemma \ref{tools:lem_inhomogeneous_strichartz}), Lemma \ref{para:lem_basic}, and $s_1- 1 +8\epsilon < - 1/2-\kappa$, we obtain that 
\begin{equation}\label{local:eq_contraction_1}
\begin{aligned}
\big\| \Gamma_{N,X}(X_N,Y_N) \big\|_{\X{s_1}{b}([0,\tau])} &\lesssim \big\| \PCtrl( H_N , P_{\leq N} \<1b>\,) \big\|_{\X{s_1-1}{b-1}([0,\tau])} \\
&\lesssim  \big\| \PCtrl( H_N , P_{\leq N} \<1b>\,) \big\|_{L_t^{2b}H_x^{s_1-1}([0,\tau]\times \bT^3)}\\
&\lesssim \tau^{\frac{1}{2b}}  \big\| \PCtrl( H_N , P_{\leq N} \<1b>\,) \big\|_{L_t^{\infty}H_x^{s_1-1}([0,\tau]\times \bT^3)}\\
&\lesssim  \tau^{\frac{1}{2b}} \| H_N \|_{\LM([0,\tau])}  \big\| \, \<1b>\, \big \|_{L_t^{\infty}H_x^{s_1-1+8\epsilon}([0,\tau]\times \bT^3)}\\
&\lesssim \tau^{\frac{1}{2b}} B \big(  B^2 + \| X_N \|_{\X{s_1}{b}([0,\tau])}^2 + \| Y_N \|_{\X{s_2}{b}([0,\tau])}^2 \big). 
\end{aligned}
\end{equation}
Using the multi-linear estimates from Proposition \ref{local:prop_master}, which are available due to our restriction to the event \eqref{local:eq_lwp_event_restriction}, and the time-localization lemma (Lemma \ref{tools:lem_localization}), we similarly obtain 
\begin{equation}\label{local:eq_contraction_2}
\begin{aligned}
&\big\| \Gamma_{N,Y}(X_N,Y_N) \big\|_{\X{s_2}{b}([0,\tau])} \\
&\lesssim \big\| \, \<1r>  \big\|_{\X{s_2}{b}([0,\tau])} +\big\| \So + \CPara + \RMT + \Phy  \big\|_{\X{s_2-1}{b-1}([0,\tau])} \\
&\lesssim B + \tau^{b_+-b} \big\| \So + \CPara + \RMT + \Phy  \big\|_{\X{s_2-1}{b_+-1}([0,\tau])} \\
&\lesssim B + \tau^{b_+-b}  \big(  B^3 + \| X_N \|_{\X{s_1}{b}([0,\tau])}^3 + \| Y_N \|_{\X{s_2}{b}([0,\tau])}^3 \big)
\end{aligned}
\end{equation}
By combining \eqref{local:eq_contraction_1} and \eqref{local:eq_contraction_2}, we obtain for a constant $C=C(\epsilon,s_1,s_2,b_+,b)$ that 
\begin{equation}
\big\| \Gamma_{N}(X_N,Y_N) \big\|_{\X{s_1}{b}([0,\tau])\times \X{s_2}{b}([0,\tau])} \leq CB + C \tau^{b_+-b}  \big(  B^3 + \| X_N \|_{\X{s_1}{b}([0,\tau])}^3 + \| Y_N \|_{\X{s_2}{b}([0,\tau])}^3 \big). 
\end{equation}
Since  $C^4 \tau^{b_+-b} B^2 \leq 1/100$, which follows from $\tau^{b_+-b} A\leq 1$ and our choice of $B$, we see that $\Gamma_N$ maps the ball in $\X{s_1}{b}([0,\tau]) \times \X{s_2}{b}([0,\tau])$ of radius $2CB$ to itself. A minor modification of the above argument also yields that $\Gamma_N$ is a contraction, which implies the existence of a unique fixed point $(X_N,Y_N)$ of $\Gamma_N$ satisfying
\begin{equation}\label{local:eq_contraction_3}
 \| X_N \|_{\X{s_1}{b}([0,\tau])}, \| Y_N \|_{\X{s_2}{b}([0,\tau])}\leq 2CB. 
\end{equation}
Using \eqref{local:eq_contraction_H}, we obtain that $X_N=P_{\leq N} \Duh \big[ \PCtrl( H_N , P_{\leq N} \<1b>)\big]$ with $H_N$ satisfying $\| H_N \|_{\LM([0,\tau])}\lesssim B^2$. Finally, using the triangle inequality and the condition $\,\bluedot \in  \Theta^{\type}_{\blue}(B,1)$ from \eqref{local:eq_lwp_event_restriction}, we obtain that $w_N=X_N+Y_N$ satisfies
\begin{equation}\label{local:eq_contraction_4}
\| w_N \|_{\X{s_1}{b}([0,\tau])} \leq 4CB \qquad \text{and} \qquad  \sum_{L_1\sim L_2} \| P_{L_1} \<1b>  \cdot P_{L_2} w_N  \|_{L_t^2 H_x^{-4\delta_1}([0,\tau])} \lesssim B^2. 
\end{equation}

Using that $B=c A^{c}$, \eqref{local:eq_contraction_3} and \eqref{local:eq_contraction_4} yield the desired estimates in \eqref{lwp:item_P2}. \\

We now turn to \eqref{lwp:item_P3}. This is a notationally extremely tedious but  mathematically minor modification of the arguments leading to \eqref{lwp:item_P2}. Similar modifications are usually omitted in the literature and we only outline the argument. In the frequency-localized versions of our estimates leading to \eqref{lwp:item_P2}, we always had an additional decaying factor $N_{\operatorname{max}}^{-\eta^\prime}$, where $N_{\operatorname{max}}$ was the maximal frequency-scale (see Remark \ref{local:remark_master} and Sections \ref{section:stochastic_object}-\ref{section:physical}). So far, this was only used to sum over all dyadic scales, but it also yields the smallness conditions in \eqref{lwp:item_P3}. Indeed, one only has to apply the same estimates as above to the difference equation
\begin{equation*}
(X_N-X_K,Y_N-Y_K)= \Gamma_N(X_N,Y_N)-\Gamma_K(X_K,Y_K). 
\end{equation*} 

\end{proof}

\subsection{Stability theory}\label{section:stability_gaussian}

In this subsection, we prove a stability estimate (Proposition \ref{local:prop_stability}) on large time-intervals. Strictly speaking, the stability estimate is part of the global instead of the local theory, but the argument is closely related to the proof of local well-posedness (Proposition \ref{lwp:prop}). While the stability estimate in this section is phrased in terms of $\,\bluedot\,$, it can be used to obtain a similar estimate in terms of $\, \purpledot\,$(Proposition \ref{global:prop_stability}). This second stability estimate will then be used in the globalization argument. \\

In order to state the stability result, we introduce the function space $\Z$, which captures the admissible perturbations of the initial data.

\begin{definition}[Structured perturbations]\label{global:definition_structured}
Let $T\geq 1$, $t_0\in [0,T]$, $N\geq 1$, and $K\geq 1$. For any $\, \purpledot \in \cH_x^{-1/2-\kappa}(\bT^3)$ and $Z[t_0]\in \cH_x^{s_1}(\bT^3)$, we define 
\begin{align*}
&\| Z[t_0] \|_{\mathscr{Z}([0,T], \scriptp;t_0,N,K)} \\
&= \inf_{Z^{\scalebox{0.9}{$\circ$}},Z^{\scalebox{0.9}{$\square$}}}  \max\Big( \| \Zbox[t_0] \|_{\mathcal{H}_x^{s_1}(\bT^3)}, ~ 
\| \Zcirc[t_0] \|_{\mathcal{H}_x^{s_2}(\bT^3)}, ~ 
 \sum_{L_1\sim L_2} \| P_{L_1} \<1p> \cdot  P_{L_2} Z \|_{L_t^2 H_x^{-4\delta_1}([O,T]\times \bT^3)}, ~ \\
&\hspace{13ex}\| \lcol V \ast \big( P_{\leq N} \<1p>  \cdot P_{\leq N} \Zcirc \big) \nparald P_{\leq N} \<1p> \rcol  \|_{\X{s_2-1}{b_+-1}([0,T])}, \\
&\hspace{13ex}\| \nparaboxld \Big( \lcol  V \ast \big( P_{\leq N} \<1p>  \cdot P_{\leq N} \Zbox \big) \, P_{\leq N} \<1p>  \rcol \Big) \|_{\X{s_2-1}{b_+-1}([0,T])} \Big),
\end{align*}
where the infimum is taken over all $\Zbox[t_0]\in \cH_x^{s_1}(\bT^3)$ and $\Zcirc[t_0]\in \cH^{s_2}_x(\bT^3)$ satisfying the identity $Z[t_0]= \Zbox[t_0]+ \Zcirc[t_0]$ and the Fourier support condition $\supp \widehat{\Zbox}[t_0](n) \subseteq  \{ n \in \bZ^3 \colon |n|\leq 8\max(N,K) \}$. Furthermore, we wrote $\Zbox$, $\Zcirc$, and $Z$ for the corresponding solutions to the linear wave equation. 
\end{definition}

The notation $\Zcirc_N$ and $\Zbox_N$ is motivated by the paradifferential operators used in their treatment. The contributions of $\Zcirc_N$ and $\Zbox_N$ are estimated using $\parald$ and $\paraboxld$, respectively.

It is clear that, for a fixed parameters $T,t_0,N$, and $K$, the maximum is jointly continuous in $\Zcirc[t_0]\in \cH_x^{s_1}(\bT^3)$ (satisfying the frequency-support condition), $\Zbox [t_0] \in \cH_x^{s_2}(\bT^3)$,  and $\purpledot \in \cH_x^{-1/2-\kappa}$. This is the primary reason for including the frequency support condition, since the sum in $L_1$ and $L_2$ would otherwise not be continuous in $\Zcirc[t_0]$. In particular, the norm
$\| Z[t_0] \|_{\mathscr{Z}([0,T], \scriptp;t_0,N,K)}$ is Borel-measurable in $Z[t_0] \in \cH_x^{s_2}(\bT^3)$ and  $\purpledot \in \cH_x^{-1/2-\kappa}$. 

\begin{proposition}[Stability estimate]\label{local:prop_stability}
Let $T\geq 1$, let $A\geq 1$, and let  $\zeta=\zeta(\epsilon,s_1,s_2,\kappa,\eta,\eta^\prime,b_+,b)>0$ be sufficiently small. There exists a constant $C=C(\epsilon,s_1,s_2,b_+,b_-)$ and a Borel set $\Theta^{\stab}_{\blue}(A,T) \subseteq \cH_x^{-1/2-\kappa}(\bT^3)$ satisfying
\begin{equation*}
\bP(\, \bluedot \in \Theta^{\stab}_{\blue}(A,T)) \geq 1 - \zeta^{-1} \exp(-\zeta A^{\zeta}) 
\end{equation*}
such that the following holds for all  $\, \bluedot \in \Theta^{\stab}_{\blue}(A,T)$: \\
Let $N\geq 1$, $B\geq 1$, $0<\theta<1$, $\cJ\subseteq [0,T]$ be a compact interval, and $t_0 \defe \min \cJ$. Let  $\util\colon \cJ \times \bT^3 \rightarrow \bR$ be an approximate solution of \eqref{eq:nlw_N} satisfying the following assumptions.
\begin{itemize}
\item[(A1)] Structure: We have the decomposition
\begin{equation*}
\util = \<1b> + \<3DNb> + \wtil. 
\end{equation*}
\item[(A2)] Global bounds: It holds that
\begin{equation*}
\| \wtil \|_{\X{s_1}{b}(J)} \leq B \quad \text{and} \quad \sum_{L_1\sim L_2} \|  P_{L_1} \<1b> \cdot P_{L_2} \wtil \|_{L_t^2 H_x^{-4\delta_1}(\cJ\times \bT^3)} \leq B. 
\end{equation*}
\item[(A3)] Approximate solution: There exists $H_N\in \LM(\cJ)$ and $F_N \in \X{s_2-1}{b_+-1}(\cJ)$ satisfying the identity
\begin{align*}
(-\partial_t^2 - 1 + \Delta) \util  = P_{\leq N} \lcol \big( V \ast ( P_{\leq N} \util )^2  \big) P_{\leq N} \util \rcol - P_{\leq N} \PCtrl(H_N , P_{\leq N} \<1b>\,) - F_N 
\end{align*}
and the estimates
\begin{equation*}
\| H_N \|_{\LM(\cJ)} \leq \theta \qquad \text{and} \qquad \| F_N \|_{\X{s_2-1}{b_+-1}(\cJ)} \leq \theta.  
\end{equation*}
\end{itemize}
Furthermore, let $Z_N[t_0] \in H_x^{s_1}(\bT^3)$ be a perturbation satisfying the following assumption.
\begin{itemize}
\item[(A4)] Structured perturbation: There exists a $K\geq 1$ such that
\begin{equation*}
\| Z[t_0] \|_{\Z(\cJ,\scriptb;t_0,N,K)}\leq \theta. 
\end{equation*}
\end{itemize}
Finally, assume that 
\begin{itemize}
\item[(A5)] Parameter condition: $C \exp\Big( C (A+B)^{\frac{2}{b_+-b}} T^{\frac{40}{b_+-b}} \Big) \theta \leq 1$. 
\end{itemize}
Then, there exists a solution $u_N\colon \cJ \times \bT^3\rightarrow \bR$ of \eqref{eq:nlw_N} satisfying the initial value condition $u_N[t_0]= \util[t_0] + Z_N[t_0]$ and  the following conclusions. 
\begin{itemize}
\item[(C1)] Preserved structure: We have the decomposition 
\begin{equation*}
u_N= \<1b> + \<3DN> + w_N. 
\end{equation*}
\item[(C2)] Closeness: The difference $u_N-\util=w_N-\wtil$ satisfies
\begin{align*}
\| u_N - \util \|_{\X{s_1}{b}(\cJ)} &\leq C \exp\big( C (A+B)^{\frac{2}{b_+-b}} T^{\frac{40}{b_+-b}} \big) \theta, \\
 \sum_{L_1\sim L_2} \|  P_{L_1} \<1b> \cdot P_{L_2} (u_N-\util) \|_{L_t^2 H_x^{-4\delta_1}(\cJ\times \bT^3)} &\leq  C \exp\big( C (A+B)^{\frac{2}{b_+-b}} T^{\frac{40}{b_+-b}} \big) \theta.
\end{align*}
\item[(C3)] Preserved global bounds: It holds that
\begin{equation*}
\| w_N \|_{\X{s_1}{b}(\cJ)} \leq B_\theta \quad \text{and} \quad \sum_{L_1\sim L_2} \| P_{L_1} \<1b> \cdot  P_{L_2} w_N \|_{L_t^2 H_x^{-4\delta_1}(\cJ\times \bT^3)} \leq B_\theta,
\end{equation*}
where $ B_\theta \defe B+ C \exp\big( C (A+B)^{\frac{2}{b_+-b}} T^{\frac{40}{b_+-b}} \big) \theta$. 
\end{itemize}
\end{proposition}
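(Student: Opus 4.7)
The plan is to recast the stability problem as a fixed-point problem for the difference $v_N \defe u_N - \util$ and to run the contraction argument of Proposition \ref{lwp:prop} on a short sub-interval, iterating it across $\cJ$. Writing $u_N = \<1b> + \<3DNb> + \wtil + v_N$, the conclusion (C1) is automatic with $w_N \defe \wtil + v_N$. Using (A4) choose a near-optimal splitting $Z_N[t_0] = \Zbox[t_0] + \Zcirc[t_0]$ with the Fourier-support constraint, and make the para-controlled ansatz
\begin{equation*}
v_N \, = \, \Zbox + \Zcirc + X_N^{\mathrm{diff}} + Y_N^{\mathrm{diff}},
\end{equation*}
where $\Zbox, \Zcirc$ are the free linear evolutions of the two pieces of $Z_N[t_0]$ and the correctors $X_N^{\mathrm{diff}} \in \X{s_1}{b}$, $Y_N^{\mathrm{diff}} \in \X{s_2}{b}$ have vanishing data at $t_0$. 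Subtracting the equations for $\util$ and $u_N$, the forcing for $v_N$ equals $P_{\leq N}[\mathcal{N}(\util + v_N) - \mathcal{N}(\util)] - P_{\leq N}\PCtrl(H_N, P_{\leq N}\<1b>) - F_N$, where $\mathcal{N}$ denotes the renormalized Hartree nonlinearity. Splitting this further into a para-controlled component for $X_N^{\mathrm{diff}}$ and a higher-regularity component for $Y_N^{\mathrm{diff}}$ (as in the definitions of \eqref{lwp:eq_XN} and the $\So, \CPara, \RMT, \Phy$ decomposition in Section \ref{section:ansatz}) defines a map $\Gamma$ on a ball in $\X{s_1}{b}(\cJ_0) \times \X{s_2}{b}(\cJ_0)$.

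Expanding $\mathcal{N}(\util + v_N) - \mathcal{N}(\util)$ by trilinearity produces a finite sum of terms of the form $\lcol V\ast(P_{\leq N}\varphi_1 \cdot P_{\leq N}\varphi_2) P_{\leq N}\varphi_3\rcol$ where each $\varphi_j$ is of one of the five types in Definition \ref{local:def_types}: the entries of $\util$ give $\<1b>$, $\<3D>$, or $w$ (using (A1)--(A2) and the type-conversion of Lemma \ref{local:lem_type_conversion} with $A \leftrightarrow B$), while $\Zbox$ and $X_N^{\mathrm{diff}}$ are of type $X$ and $\Zcirc, Y_N^{\mathrm{diff}}$ are of type $Y$, all with implicit scales proportional to $\theta$ or to the iteration radius. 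Restricting to the event $\Theta^{\ms}_{\blue}(C(A+B)^C, T) \cap \Theta^{\type}_{\blue}(C(A+B)^C, T)$, the multi-linear master estimate (Proposition \ref{local:prop_master}) bounds each term in $\X{s_2-1}{b_+-1}(\cJ_0)$ by a polynomial of the sizes of the factors. Combining this with the time-localization lemma to trade $b_+$ for $b$ at cost $|\cJ_0|^{b_+-b}$, the energy estimate, the inhomogeneous Strichartz estimate, and the structural bounds in the $\Z$-norm (which precisely control the dangerous terms $\lcol V\ast(\<1b> \cdot \Zcirc)\nparald \<1b>\rcol$ and its $\nparaboxld$-analogue that would otherwise be obstructed by the $1/2$-regularity barrier), we obtain that on $\cJ_0 = [t_0, t_0+\tau]$ with $C_0\tau^{b_+-b}(A+B)^2 \leq 1/100$, the map $\Gamma$ is a contraction on a ball of radius $\lesssim \theta$, yielding $v_N$ on $\cJ_0$ with $\|v_N\|_{\X{s_1}{b}(\cJ_0)} \leq C_1 \theta$ and the corresponding high$\times$high bilinear bound.

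Now iterate across $\lceil T/\tau \rceil$ consecutive sub-intervals $\cJ_k = [t_0 + k\tau, t_0 + (k+1)\tau]\cap \cJ$. The output $X_N^{\mathrm{diff}}$ is constructed by definition as $P_{\leq N}\Duh[\PCtrl(\widetilde H_N, P_{\leq N}\<1b>)]$ with $\|\widetilde H_N\|_{\LM} \lesssim \theta$, so at the terminal time of each sub-interval the new data $v_N[t_0+(k+1)\tau]$ admits a structured splitting $\widetilde\Zbox + \widetilde\Zcirc$ with $\widetilde\Zbox[\cdot] = \Zbox[\cdot] + X_N^{\mathrm{diff}}[\cdot]$ (of para-controlled type $X$) and $\widetilde\Zcirc[\cdot] = \Zcirc[\cdot] + Y_N^{\mathrm{diff}}[\cdot]$ (of type $Y$). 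Consequently the $\Z$-norm of the new initial data is bounded by a factor $(1+C_2)$ times the previous one, and repeating the short-time contraction on each $\cJ_k$ gives a geometric compounding $(1+C_2)^{T/\tau} \theta$. Since $\tau^{-1} \lesssim (A+B)^{2/(b_+-b)}$ and the master-estimate constants are polynomial in $T$ (of degree at most $30$, hence $T^{40}$ after absorbing related Strichartz and energy factors), this yields $\|v_N\|_{\X{s_1}{b}(\cJ)} \leq C\exp\bigl(C(A+B)^{2/(b_+-b)} T^{40/(b_+-b)}\bigr)\theta$, which together with the bilinear bound gives (C2); (C3) follows by adding this to (A2).

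The main obstacle is precisely the propagation of the $\Z$-structure across the iteration: a purely $\X{s_1}{b}$-bound on $v_N$ cannot be re-fed into the contraction on the next sub-interval, because the high$\times$high interaction of the new data with $\<1b>$ would then be ill-defined (we are at regularity $1/2-$). Resolving this requires keeping the $X$-corrector in its intrinsic Duhamel–$\PCtrl$ form at every step so that Proposition \ref{local:prop_master}(i) applies to it on the next sub-interval, and verifying that the $Y$-corrector remains at regularity $s_2 > 1/2$, where the standard master-estimate bound of item (iii) closes. This bookkeeping, together with the Fourier-support constraint on $\Zbox$ (which guarantees that all paraboxld-estimates involving the initial perturbation remain frequency-localized at scale $\lesssim \max(N,K)$), is exactly what the $\Z$-norm was engineered to propagate, and is the key structural ingredient distinguishing this argument from a naive Gronwall iteration of the $\X{s_1}{b}$-norm alone.
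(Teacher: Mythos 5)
Your overall structure — ansatz $v_N = u_N - \util$, free evolution of the perturbation plus Duhamel correctors $X_N^{\mathrm{diff}} \in \X{s_1}{b}$ and $Y_N^{\mathrm{diff}} \in \X{s_2}{b}$ with vanishing data, decomposition of the forcing according to types, and application of the multi-linear master estimate — matches the paper's proof. The divergence is in the time-iteration mechanism, and there is a genuine gap in the re-feeding step.

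Your iteration restarts at each sub-interval endpoint $t_0 + k\tau$, absorbing $X_N^{\mathrm{diff}}[t_0+k\tau]$ and $Y_N^{\mathrm{diff}}[t_0+k\tau]$ into new structured data $\widetilde\Zbox, \widetilde\Zcirc$ and invoking (A4) on the remaining interval. But the $\Z$-norm of Definition \ref{global:definition_structured} is evaluated on the \emph{free linear evolutions} $\Zbox$, $\Zcirc$, $Z$ of the data; the relevant quantity for the next sub-interval is
\begin{equation*}
\Big\| \nparaboxld \Big( \lcol V \ast \big( P_{\leq N} \<1b> \cdot P_{\leq N}\, \widetilde\Zbox \big) P_{\leq N} \<1b>\rcol\Big)\Big\|_{\X{s_2-1}{b_+-1}}, \qquad \widetilde\Zbox(t) = \text{free evolution of } \Zbox[t_0+\tau] + X_N^{\mathrm{diff}}[t_0+\tau],
\end{equation*}
and the free evolution of $X_N^{\mathrm{diff}}[t_0+\tau]$ has \emph{no} para-controlled structure: it is merely a function of type $w$. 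The master estimate, Proposition \ref{local:prop_master}, explicitly excludes the type $(\, \<1b>\,, w; \,\<1b>\,)$, and this exclusion is not a cosmetic restriction — this term is exactly the high$\times$high interaction that fails at regularity $s_1 < 1/2$. Your final paragraph identifies the obstacle correctly ("keeping the $X$-corrector in its intrinsic Duhamel--$\PCtrl$ form at every step"), but the restart mechanism you propose destroys that form: once you pass to the data $X_N^{\mathrm{diff}}[t_0+\tau]$ and take its free evolution, the Duhamel expression is gone, and $\Z$-norm propagation fails.

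The paper resolves this by \emph{not restarting}: it fixes a single global para-controlled decomposition $v_N = X_N + Y_N$ on all of $\cJ$, solving \eqref{local:eq_stability_X}--\eqref{local:eq_stability_Y} with zero data at $t_0$, so that $X_N = P_{\leq N}\Duh[\cdots]$ retains its intrinsic form throughout. The growth of the norms is then controlled by a Gronwall-type argument: one introduces a dominating \emph{continuous} quantity $g_N(t)$ built from $\|1_{[t_0,t]}(-\partial_t^2-1+\Delta)X_N\|_{\X{s_1-1}{b-1}(\bR)}$ and its $Y_N$-analogue (continuity being the content of Lemma \ref{tools:lem_restricted_continuity}, since $t\mapsto \|u\|_{\X{s}{b}([t_0,t])}$ itself need not be continuous for $b>1/2$), establishes a bound of the form $g_N(t') \leq g_N(t) + C\tau^{b_+-b}T^{30}((A+B)^2+f_N(t')^2)(\theta+f_N(t'))$, and closes via the continuity argument of Lemma \ref{tools:lem_continuity}. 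The geometric compounding in $T/\tau$ that you describe is present in the paper as well (it enters through $\exp(T/\tau)$ in \eqref{local:eq_stability_f_bound}), but it is realized as an iterated a priori bound on a single solution, not as a concatenation of contraction arguments with re-fed data. If you want to salvage the discrete viewpoint, you would have to extend the contraction domain from $[t_0, t_0+k\tau]$ to $[t_0,t_0+(k+1)\tau]$ with the \emph{same} ansatz (no restart), at which point you are effectively running a continuation argument and would face the same $\X{s}{b}$-continuity subtleties the paper handles via $g_N$.
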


As mentioned above, the proof of Proposition \ref{local:prop_stability} is close to the proof of local well-posedness. The most important additional ingredient is a  Gronwall-type argument in $\X{s}{b}$-spaces, which is slightly technical due to their non-local nature in the time-variable. 

\begin{proof}
Let  $N,B,\theta,\cJ,t_0,\util, \wtil, H_N, F_N, Z_N, \Zbox_N$, and $\Zcirc_N$ be as in the statement of the proposition and assume that (A1)-(A5) are satisfied. We make the Ansatz
\begin{equation*}
u_N(t) = \util(t)+ v_N(t)+Z_N(t),
\end{equation*}
where the nonlinear component $v_N(t)$ will be decomposed into a para-controlled and a smoother component below. Based on  the condition $u_N[t_0]= \util[t_0] + Z_N[t_0]$, we require that $v_N[t_0]=0$. Using the assumption (A3) and that $Z_N$ solves the linear wave equation, we obtain the evolution equation
\begin{align*}
(-\partial_t^2 - 1 + \Delta) v_N  
=& P_{\leq N} \lcol \big( V \ast \big( P_{\leq N} (\util+v_N+Z_N )^2  \big) \big)P_{\leq N} (\util+v_N+Z_N) \rcol \\
& -P_{\leq N} \lcol \big( V \ast ( P_{\leq N} \util )^2  \big) P_{\leq N} \util \rcol   \\
&+P_{\leq N} \PCtrl(H_N , P_{\leq N} \<1b>\,) + F_N.
\end{align*}
Inserting the structural assumption (A1) and using the binomial formula, we obtain that 
\begin{align*}
(-\partial_t^2 - 1 + \Delta) v_N  
=&  P_{\leq N} \bigg[  2 \lcol V \ast \Big( P_{\leq N} \Big( \<1b>+ \<3DN> +w_N \Big) \cdot P_{\leq N} \Big( v_N + Z_N \Big) \Big) \, P_{\leq N} \<1b> \rcol  \\
&+   V \ast \Big(  \Big( P_{\leq N} \big( v_N + Z_N \big) \Big)^2 \Big)  \, P_{\leq N} \<1b>  \allowdisplaybreaks[4]  \\
&+ \PCtrl(H_N , P_{\leq N} \<1b>\,) \allowdisplaybreaks[4] \\
&+ 2  V \ast \Big( P_{\leq N} \Big( \<1b>+ \<3DN> +w_N \Big) \cdot P_{\leq N} \Big( v_N + Z_N \Big) \Big) \,  P_{\leq N} \Big( \<3DN> +w_N \Big) \allowdisplaybreaks[4]\\
&+ P_{\leq N}  \big( V \ast\big( \lcol  P_{\leq N} (\util+v_N+Z_N )^2 \rcol \big)  \big) P_{\leq N} (v_N+Z_N)\bigg] + F_N. 
\end{align*}
We then decompose $v_N=X_N+Y_N$, where $X_N$ is the para-controlled component and $Y_N$ is the smoother component. Since $v_N[t_0]=0$, we impose the initial value conditions $X_N[t_0]=0$ and $Y_N[t_0]=0$. Similar as in Section \ref{section:ansatz}, we define $X_N$ and $Y_N$ through the evolution equations
\begin{equation}\label{local:eq_stability_X}
\begin{aligned}
(-\partial_t^2 -1 +\Delta) X_N =&  P_{\leq N} \bigg[ 2  \paraboxld \Big(  V \ast \Big( P_{\leq N}  \<1b> \cdot P_{\leq N} \Big( X_N + \Zbox_N  \Big) \Big) \, P_{\leq N} \<1b> \Big) \allowdisplaybreaks[4]\\
		 &+  2 V \ast \Big( P_{\leq N}   \<1b> \cdot P_{\leq N} \Big( Y_N + \Zcirc_N \Big) \Big) \parald P_{\leq N} \<1b> \allowdisplaybreaks[4]\\
		 &+   2\,  V \ast \Big( P_{\leq N} \Big(  \<3DN> +w_N \Big) \cdot P_{\leq N} \Big( X_N+Y_N+Z_N \Big) \Big) \, \parald P_{\leq N} \<1b>\allowdisplaybreaks[4] \\
		 &+   V \ast \Big(  \Big( P_{\leq N} \big( v_N + Z_N \big) \Big)^2 \Big)  \parald  \, P_{\leq N} \<1b> + \PCtrl(H_N , P_{\leq N} \<1b>\,)  \bigg]   
\end{aligned}
\end{equation}
and 
\begin{equation}\label{local:eq_stability_Y}
\begin{aligned}
(-\partial_t^2 -1 +\Delta) Y_N =&  P_{\leq N} \bigg[ 2 \nparaboxld \Big(  \lcol  V \ast \Big( P_{\leq N}  \<1b> \cdot P_{\leq N} \Big( X_N + \Zbox_N  \Big) \Big) \, P_{\leq N} \<1b> \rcol \Big) \allowdisplaybreaks[4]\\
		 &+  2 \lcol V \ast \Big( P_{\leq N}   \<1b> \cdot P_{\leq N} \Big( Y_N + \Zcirc_N  \Big) \Big) \nparald P_{\leq N} \<1b>  \rcol \allowdisplaybreaks[4]\\
		 &+   2\,  V \ast \Big( P_{\leq N} \Big(  \<3DN> +w_N \Big) \cdot P_{\leq N} \Big( v_N + Z_N \Big) \Big) \, \nparald P_{\leq N} \<1b>\allowdisplaybreaks[4] \\
		 &+   V \ast \Big(  \Big( P_{\leq N} \big(  v_N +Z_N \big) \Big)^2 \Big)  \nparald \, P_{\leq N} \<1b> \allowdisplaybreaks[4] \\   
		 &+ 2  V \ast \Big( P_{\leq N} \Big( \<1b>+ \<3DN> +w_N \Big) \cdot P_{\leq N} \Big( v_N+Z_N \Big) \Big) \,  P_{\leq N} \Big( \<3DN> +w_N \Big)\allowdisplaybreaks[4] \\
		&+   \big( V \ast\big( \lcol  P_{\leq N} (\util+v_N+Z_N )^2 \rcol \big)  \big) P_{\leq N} (v_N+Z_N)\bigg] + F_N.
\end{aligned}
\end{equation}
Since the nonlinearity in \eqref{local:eq_stability_X} and \eqref{local:eq_stability_Y} is frequency-truncated, a soft argument yields the local existence and uniqueness of $X_N$ and $Y_N$ in $C_t^0 \cH_x^{s_1}$ and $C_t^0 \cH_x^{s_2}$, respectively. Since $\X{s}{b}(\cJ)$ embeds into $C_t^0 \cH_x^{s}(\cJ\times \bT^3)$ for all $s\in \bR$, the solutions exist as long as the restricted $\X{s_1}{b}$ and $\X{s_2}{b}$-norms  stay bounded. 

In order to prove that $X_N$ and $Y_N$ exist on the full interval $\cJ$ and satisfy the desired bounds, we let $T_\ast$ be the maximal time of existence of $X_N$ and $Y_N$ on $\cJ$. We now proceed through a Gronwall-type argument in $\X{s}{b}$-spaces.  We first define
\begin{equation*}
f_N\colon [t_0,T^\ast) \rightarrow [0,\infty), ~ t \mapsto \| X_N \|_{\X{s_1}{b}([t_0,t])} + \| Y_N \|_{\X{s_2}{b}([t_0,t])}.
\end{equation*}
We emphasize that we neither rely on nor prove the continuity of $f_N$. Using Lemma \ref{tools:lem_restricted_continuity} and Lemma \ref{tools:lem_energy}, there exists an implicit constant $C_{\text{En}}=C_{\text{En}}(s_1,s_2,b)$ such that 
\begin{equation*}
g_N(t) \defe C_{\text{En}}  \Big( \big\| 1_{[t_0,t]} (-\partial_t^2 -1 +\Delta) X_N \big\|_{\X{s_1-1}{b-1}(\bR)} +  \big\| 1_{[t_0,t]} (-\partial_t^2 -1 +\Delta) Y_N \big\|_{\X{s_2-1}{b-1}(\bR)} \Big)
\end{equation*}
satisfies $f_N(t) \leq g_N(t)$ for all $t\in [t_0,T_\ast)$. Due to Lemma \ref{tools:lem_restricted_continuity}, $g_N(t)$ is continuous. Now, let $\tau>0$ be a step-size which remains to be chosen and assume that $t,t^\prime\in [t_0,T_\ast)$ satisfy $t\leq t^\prime \leq t +\tau$. Using Lemma \ref{tools:lem_localization}, we obtain that for an implicit constant $C=C(s_1,s_2,b,b_+)$ that 
\begin{align*}
&g_N(t^\prime) \\
&\leq  C_{\text{En}}  \Big( \big\| 1_{[t_0,t]} (-\partial_t^2 -1 +\Delta) X_N \big\|_{\X{s_1-1}{b-1}(\bR)} +  \big\| 1_{[t_0,t]} (-\partial_t^2 -1 +\Delta) Y_N \big\|_{\X{s_2-1}{b-1}(\bR)} \Big) \\
&+ C_{\text{En}}  \Big( \big\| 1_{(t,t^\prime]} (-\partial_t^2 -1 +\Delta) X_N \big\|_{\X{s_1-1}{b-1}(\bR)} +  \big\| 1_{(t,t^\prime]} (-\partial_t^2 -1 +\Delta) Y_N \big\|_{\X{s_2-1}{b-1}(\bR)} \Big) \\
&\leq g_N(t) + C \tau^{b_+-b}  \big( \big\| (-\partial_t^2 -1 +\Delta) X_N \big\|_{\X{s_1-1}{b_+-1}((t,t^\prime])}  + \big\| (-\partial_t^2 -1 +\Delta) Y_N \big\|_{\X{s_2-1}{b_+-1}((t,t^\prime])} \big)  \\
&\leq g_N(t) + C \tau^{b_+-b}  \big( \big\| (-\partial_t^2 -1 +\Delta) X_N \big\|_{\X{s_1-1}{b_+-1}([t_0,t^\prime])}  + \big\| (-\partial_t^2 -1 +\Delta) Y_N \big\|_{\X{s_2-1}{b_+-1}([t_0,t^\prime])} \big) . 
\end{align*}
Similar as in the proof of local well-posedness (Proposition \ref{lwp:prop}), we can use Lemma \ref{local:lem_type_conversion}, Proposition \ref{local:prop_master}, and Proposition \ref{so3:prop} to restrict to the event 
\begin{equation}\label{local:eq_stability_event_restriction}
\begin{aligned}
&\Big\{ \, \bluedot \in \Theta_{\blue}^{\ms}(A,T) \Big\} \, \medcap  \,  \Big\{ \, \bluedot \in \Theta^{\type}_{\blue}(A,T)\Big\} \, \medcap  \,  \Big\{ \big\| \, \<1b>\, \big\|_{L_t^\infty \cC_x^{-1/2-\kappa}([0,1]\times \bT^3)}\leq A \Big \} \,
  \\
&\medcap  \, \Big\{ \sup_{N} \big\| \, \<3DN>\, \big\|_{L_t^\infty \cC_x^{\beta-\kappa}([0,1]\times \bT^3)}\leq T^3 A \Big \}. 
\end{aligned}
\end{equation}
By combining the assumption (A2), (A3), (A4), and the multi-linear master estimate, a similar argument as in the proof of Proposition \ref{lwp:prop} yields 
\begin{align*}
 &\tau^{b_+-b}  \big( \big\| (-\partial_t^2 -1 +\Delta) X_N \big\|_{\X{s_1-1}{b_+-1}([t_0,t^\prime])}  + \big\| (-\partial_t^2 -1 +\Delta) Y_N \big\|_{\X{s_2-1}{b_+-1}([t_0,t^\prime])} \big)  \\
 &\lesssim T^{30}\tau^{b_+-b} ((A+B)^2+f_N(t^\prime)^2) (\theta+f_N(t^\prime)). 
\end{align*}
All together, we have proven for all $t,t^\prime \in [t_0,T_\ast)$ satisfying $t\leq t^\prime \leq t+\tau$ the estimate
\begin{equation*}
f(t^\prime)\leq g(t^\prime) \leq g(t) + C  T^{30} \tau^{b_+-b} ((A+B)^2+f_N(t^\prime)^2) (\theta+f_N(t^\prime)). 
\end{equation*}
Using $g(t_0)=0$, using a continuity argument (Lemma \ref{tools:lem_continuity}), iterating the resulting bounds, and assuming the conditions 
\begin{equation}\label{local:eq_stability_condition}
C (A+B)^2 e^{\frac{T}{\tau}} \theta \leq 1/2 \qquad \text{and} \qquad 2  C T^{30} \tau^{b_+-b} ((A+B)^2+6) \leq 1/4,
\end{equation}
we obtain that 
\begin{equation}\label{local:eq_stability_f_bound}
\sup_{t\in [t_0,T_\ast)} f(t) \leq \sup_{t\in [t_0,T_\ast)} g(t) \leq C (A+B)^2 e^{\frac{T}{\tau}} \theta. 
\end{equation}
Using the case of equality in the second condition in \eqref{local:eq_stability_condition} as a definition for $\tau$, the first condition follows from our assumption (A5). Recalling the definition of $f$, we obtain that
\begin{equation*}
\sup_{t\in [t_0,T_\ast)} \Big(  \| X_N \|_{\X{s_1}{b}([t_0,t])} + \| Y_N \|_{\X{s_2}{b}([t_0,t])} \Big) \leq C \exp\big( C (A+B)^{\frac{2}{b_+-b}} T^{\frac{40}{b_+-b}}  \big)
\end{equation*}
This estimate rules out finite-time blowup on $\cJ$ and implies that $T_\ast=\sup \cJ$. Together with a soft argument, which is based on the integral equation for $X_N$ and $Y_N$ as well as the time-localization lemma (Lemma \ref{tools:lem_localization}), we obtain that
\begin{equation}\label{local:eq_stability_bound}
\| X_N \|_{\X{s_1}{b}(\cJ)} + \| Y_N \|_{\X{s_2}{b}(\cJ)}  \leq C \exp\big( C (A+B)^{\frac{2}{b_+-b}} T^{\frac{40}{b_+-b}}  \big). 
\end{equation}
With this uniform estimates in hand, we now easily obtain the desired conclusions (C1), (C2), and (C3). In order to obtain (C1), we (are forced to) choose 
\begin{equation*}
w_N = \widetilde{w}_N + X_N + Y_N + Z_N. 
\end{equation*}
The conclusions (C2) and (C3) follow from (A4), \eqref{local:eq_stability_bound}, and the condition $\,\bluedot \in \Theta^{\type}_{\blue}(A,T)$ in our event \eqref{local:eq_stability_event_restriction}. 
\end{proof}

\section{Global theory}\label{section:global}

In this section, we prove the global well-posedness of the renormalized nonlinear wave equation and the invariance of the Gibbs measure. As mentioned in the introduction, the heart of this section is a new form of Bourgain's globalization argument. In Section \ref{section:gwp}, we prove the global well-posedness for Gibbsian initial data. We focus on the overall strategy and postpone several individual steps to Section \ref{section:stability} below. In Section \ref{section:invariance}, we prove the invariance of the Gibbs measure. Using the global well-posedness from Section \ref{section:gwp}, the proof of invariance is similar as in Bourgain's seminal paper \cite{Bourgain94}. 

\subsection{Global well-posedness}\label{section:gwp}

We now prove the (quantitative) global well-posedness of the renormalized nonlinear wave equation for Gibbsian initial data. In particular, we show that the structure 
\begin{equation*}
\Phi_N[t] \, \purpledot = \<1p> + \<3DNp> + w_N
\end{equation*}
from the local theory (see Proposition \ref{global:prop_lwp}) is preserved by the global theory. Here, the linear and cubic stochastic objects are defined exactly as in \eqref{eq:1b} and \eqref{local:eq_3DN}, but with $\,\bluedot\,$ replaced by $\,\purpledot\,$. 

\begin{proposition}[Global well-posedness]\label{global:prop_gwp}
Let $A\geq1$, let $T\geq 1$, let $C=C(\epsilon,s_1,s_2,\kappa,\eta,\eta^\prime,b_+,b)\geq 1$ be sufficiently large, and let $\zeta=\zeta(\epsilon,s_1,s_2,\kappa,\eta,\eta^\prime,b_+,b)>0$ be sufficiently small. We assume that $B,D\geq 1$ satisfy 
\begin{equation}\label{global:eq_BD_condition}
B\geq B(A,T)\defe C \exp( C (A+T)^C) \quad \text{and} \quad D\geq D(A,T) \defe C \exp( \exp( C (A+T)^C)). 
\end{equation}
Furthermore, let $K\geq 1$ satisfy the condition
\begin{equation}\label{global:eq_gwp_parameter}
C \exp( C (A+B+T)^C) K^{-\eta^\prime} \leq 1.
\end{equation}
Then, the Borel set 
\begin{alignat*}{3}
\cE_K(B,D,T) &= \bigcap_{N\geq K}
 \bigg( && \Big\{ \, \purpledot \in \cH_x^{-1/2-\kappa}(\bT^3)\big| ~ w_N(t)= \Phi_N(t)\, \purpledot - \,  \<1p> - \, \<3DNp> \text{ satisfies } \\
&&& \|w_N\|_{\X{s_1}{b}([0,T])}\leq B ~ \text{and} ~  \sum_{L_1\sim L_2} \| P_{L_1} \<1p> \cdot P_{L_2} w_N \|_{L_t^2 \cH_x^{-4\delta_1}([0,T]\times \bT^3)} \leq B \Big\}  \\
&&&\medcap \Big\{ \, \purpledot \in \cH_x^{-1/2-\kappa}(\bT^3)\big|
 \| \Phi_N[t]\, \purpledot\,-  \Phi_K[t]\, \purpledot \|_{C_t^0 \cH_x^{\beta-\kappa}([0,T] \times \bT^3)} \leq D K^{-\eta^\prime} \Big\}  \bigg)
\end{alignat*}
satisfies the estimate 
\begin{equation}\label{global:eq_gwp_probabilistic}
\inf_{M\geq K} \mup_M( \cE_K(B,D,T)) \geq 1 - T \zeta^{-1}  \exp(-\zeta A^\zeta).
\end{equation}
\end{proposition}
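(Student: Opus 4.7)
The plan is to prove Proposition~\ref{global:prop_gwp} by induction on the number of time-steps $j = 1,\dots,J$, where $\tau>0$ is a step-size chosen small enough that the local well-posedness condition $A\tau^{b_+-b}\leq 1$ of Proposition~\ref{lwp:prop} holds, and $J\defe\lceil T/\tau\rceil$. I will introduce an increasing family of Borel sets $\cE_K(B_j,D_j,j\tau)\subseteq\cH_x^{-1/2-\kappa}(\bT^3)$ defined analogously to $\cE_K(B,D,T)$ but with the temporal interval $[0,j\tau]$ and parameters $(B_j,D_j)$, and prove
\begin{equation*}
\inf_{M\geq K}\mup_M\bigl(\cE_K(B_j,D_j,j\tau)\bigr)\geq 1-j\zeta^{-1}\exp(-\zeta A^\zeta),
\end{equation*}
with $(B_j,D_j)$ tracked so that $B_J\leq B$ and $D_J\leq D$ in \eqref{global:eq_BD_condition}. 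Since $\tau$ is polynomial in $A^{-1}$, the number of steps $J$ is polynomial in $A$ and $T$, and summing the per-step failure probability produces the extra factor $T$ in \eqref{global:eq_gwp_probabilistic}.

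The base case $j=1$ on $[0,\tau]$ is the ``Gibbsian'' version of structured local well-posedness. Starting from Proposition~\ref{lwp:prop} on the ambient space $(\Omega,\mathcal{F},\bP)$, where the random structure is phrased in terms of $\,\bluedot\,$, one passes to the reference measure via $\nup_M=\Law_\bP(\,\bluedot+\reddotM\,)$, converts the $\bluedot$-based stochastic objects into their $\purpledot$-based counterparts via the machinery of Section~\ref{section:ftg}, and finally transfers the probability estimate to $\mup_M$ through the quantitative absolute continuity $\mup_M(\cdot)\leq C\nup_M(\cdot)^{1-1/q}$ of Theorem~\ref{theorem:measures}, exactly as in the proof of Corollary~\ref{lwp:cor_unstructured}. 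The Borel measurability of the structured set $\cE_K(B_1,D_1,\tau)$ is ensured by the continuity of the $\Z$-norm noted after Definition~\ref{global:definition_structured}.

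For the inductive step, assume the claim for $j-1$ and set $\,\greendot\defe\Phi_M[\tau]\,\purpledot$. The invariance of $\mup_M$ under $\Phi_M[\tau]$ yields
\begin{equation*}
\mup_M\bigl(\,\greendot\in\cE_K(B_{j-1},D_{j-1},(j-1)\tau)\bigr)=\mup_M\bigl(\,\purpledot\in\cE_K(B_{j-1},D_{j-1},(j-1)\tau)\bigr),
\end{equation*}
which by hypothesis is close to one. Unpacking $\cE_K$ at the time-translated datum $\,\greendot\,$ provides, for each $N\geq K$, a structured decomposition and quantitative bounds for the mixed flow $\Phi_N[t-\tau]\Phi_M[\tau]\,\purpledot$ on $[\tau,j\tau]$. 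Comparing this with the actual flow $\Phi_N[t]\,\purpledot=\Phi_N[t-\tau]\Phi_N[\tau]\,\purpledot$ on the same interval, the discrepancy at time $\tau$ equals $\Phi_N[\tau]\,\purpledot-\Phi_M[\tau]\,\purpledot$. Its lowest-regularity component is the cubic-stochastic piece $\<3DNp>(\tau)-\<3DMp>(\tau)$, which following step~(iii) of the sketch is absorbed into the linear part of an approximate solution $\wtil$ (so that the structural decomposition continues across the junction); the remaining part of the discrepancy has structured $\Z$-norm of order $A\,K^{-\eta'}$. Proposition~\ref{local:prop_stability} applied on $[\tau,j\tau]\subseteq[0,T]$ with $\theta\approx D_{j-1}K^{-\eta'}$ then produces $\Phi_N[t]\,\purpledot$ on $[\tau,j\tau]$ with the structural decomposition of~(C1), the closeness bound of~(C2), and the norm bound $B_j\leq B_{j-1}+1$ of~(C3). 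Concatenated with the base case on $[0,\tau]$, this places $\,\purpledot\,$ into $\cE_K(B_j,D_j,j\tau)$ outside a bad event whose $\mup_M$-measure is, via the same absolute continuity transfer as in the base case, at most $\zeta^{-1}\exp(-\zeta A^\zeta)$.

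The main obstacle, and the source of the asymmetry $B$ singly exponential vs.\ $D$ doubly exponential in \eqref{global:eq_BD_condition}, is keeping the bookkeeping consistent across the $J$ iterations of the stability theorem. The parameter condition~(A5) demands $\theta\lesssim\exp(-C(A+B_{j-1})^{2/(b_+-b)}T^{40/(b_+-b)})$, so fixing $B$ singly exponential in $A+T$ makes the inner exponential singly exponential, and the hypothesis~\eqref{global:eq_gwp_parameter} on $K$ is precisely what allows $\theta\approx D_{j-1}K^{-\eta'}$ to satisfy this bound at \emph{every} step. Conclusion~(C3) then shows $B_j$ grows by at most $1$ per step, so $B_J\leq B_1+J\leq B$ once $B$ dominates this polynomial growth. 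In contrast, conclusion~(C2) amplifies the difference bound by a factor $\exp(C(A+B)^CT^C)$ at each application, which compounded over the $J$ steps yields the doubly-exponential $D$. Once this accounting is set up, the induction closes and \eqref{global:eq_gwp_probabilistic} follows after redefining $\zeta$ to absorb the factor $J\lesssim T\cdot\mathrm{poly}(A)$.
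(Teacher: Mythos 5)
Your overall structure matches the paper's: set up a discrete induction over time-steps $j\tau$, pass from $\mup_M$ to $\nup_M$ via quantitative absolute continuity, use the invariance of $\mup_M$ under $\Phi_M[\tau]$ for time-translation, isolate the partial cubic stochastic object, and close with the stability theorem. However, your accounting of how $B_j$ grows across the iteration is wrong in a way that matters for the conclusion.

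You claim ``conclusion (C3) then shows $B_j$ grows by at most $1$ per step, so $B_J\leq B_1+J\leq B$.'' This misreads how the inductive step works. Conclusion (C3) of Proposition \ref{local:prop_stability} (or Proposition \ref{global:prop_stability}) is applied not to the previous bound $B_{j-1}$ directly, but to the bound that the \emph{time-translated} decomposition satisfies. Unpacking $\,\greendot=\Phi_M[\tau]\,\purpledot\,\in\cE_K(B_{j-1},D_{j-1},(j-1)\tau)$ and rewriting the structure in terms of $\,\purpledot\,$ (this is Proposition \ref{global:prop_structure_time}) already inflates the $\X{s_1}{b}$-bound by a multiplicative factor $T^\alpha A$, because one must replace $\,\<1g>\,$ by $\,\<1p>\,+\,\<3DKptau>\,+Z_K$ and control the resulting cross terms. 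The stability step then returns $\leq 2T^\alpha A B_{j-1}$ on $[\tau,j\tau]$. Moreover, you propose to ``concatenate'' with the base case on $[0,\tau]$, but $[0,\tau]$ and $[\tau,j\tau]$ intersect only in a single point, and the gluing lemma for $\X{s_1}{b}$ with $b>1/2$ (Lemma \ref{tools:lem_gluing}) requires overlapping intervals and pays a factor $\tau^{\frac12-b}>1$. The paper avoids the degenerate overlap by keeping local well-posedness on $[0,2\tau]$, so $[0,2\tau]$ and $[\tau,j\tau]$ overlap on $[\tau,2\tau]$, and gluing contributes the extra factor $C\tau^{\frac12-b}$. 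The net recursion is therefore multiplicative, $B_j \defe C\tau^{\frac12-b}T^\alpha A\,B_{j-1}$, not additive. Iterated $J\sim TA^{1/(b_+-b)}$ times, this produces the single exponential $B_J\leq C\exp(C(A+T)^C)$ and is the actual reason $B$ must be singly exponential in the statement; under your additive bookkeeping, $B$ would only need to be polynomial in $A$ and $T$, contradicting \eqref{global:eq_BD_condition}. Your explanation of the $B$ vs.~$D$ asymmetry therefore also does not hold up: $D$ is doubly exponential because the per-step difference bound already involves $\exp(C(A+B_{j-1}+T)^C)$ and $B_{j-1}$ is itself singly exponential, not because (C2) alone compounds over the steps.
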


In the proof below, we need two modifications of the cubic stochastic object. We define
\begin{equation}\label{global:eq_modified_cubic_objects}
\<3DNtaup> \defe \Duh \big[ 1_{[0,\tau]}(t) \<3Np>\big]  \qquad \text{and} \qquad
 \<3DNMtaup> \defe \Duh \big[ 1_{[0,\tau]}(t)\big( \<3Np> - \<3Mp>\big) \big].
\end{equation}

\begin{proof}[Proof of Proposition \ref{global:prop_gwp}:] 
We encourage the reader to review the informal discussion of the argument in the introduction before diving into the details of this proof.\\
Let $\tau \in (0,1)$ be such that  $ 1/2\leq  A \tau^{b_+-b} \leq 1 $ and $J \defe T/\tau \in \mathbb{N}$. We let $B_j,D_j$, where $1\leq j \leq J$, be  increasing sequences which remain to be chosen.  We will prove below that our choice satisfies $B_j\leq B$ and $D_j\leq D$ for all $1\leq j \leq J$. We then have that 
\begin{alignat*}{3}
\cE_K(B_j,D_j,j\tau) &= \bigcap_{N\geq K}
 \bigg( && \Big\{ \, \purpledot \in \cH_x^{-1/2-\kappa}(\bT^3)\big| ~ w_N(t)= \Phi_N(t)\, \purpledot - \,  \<1p> - \, \<3DNp> \text{ satisfies } \\
&&& \|w_N\|_{\X{s_1}{b}([0,j\tau])}\leq B_j ~ \text{and} ~  \sum_{L_1\sim L_2} \| P_{L_1} \<1p> \cdot P_{L_2} w_N \|_{L_t^2 \cH_x^{-4\delta_1}([0,j\tau]\times \bT^3)} \leq B_j \Big\}  \\
&&&\medcap \Big\{ \, \purpledot \in \cH_x^{-1/2-\kappa}(\bT^3)\big|
 \| \Phi_N[t]\, \purpledot\,-  \Phi_K[t]\, \purpledot \|_{C_t^0 \cH_x^{\beta-\kappa}([0,j\tau] \times \bT^3)} \leq D_j K^{-\eta^\prime} \Big\}  \bigg). 
\end{alignat*}
We now claim for all $M\geq K$  that, under certain constraints on the sequences $B_j$ and $D_j$ detailed below, 
\begin{equation}\label{global:eq_gwp_p_base}
 \mup_M \Big( \cE_K(B_1,D_1,\tau)\Big) \geq 1- \zeta^{-1} \exp(\zeta A^\zeta) 
\end{equation}
and 
\begin{equation}\label{global:eq_gwp_p_step}
\mup_M \Big( \cE_K(B_j,D_j, j\tau)\Big) \geq \mup_M \Big( \cE_K(B_{j-1},D_{j-1},(j-1)\tau)\Big) -  \zeta^{-1} \exp(\zeta A^\zeta) . 
\end{equation}
We refer to \eqref{global:eq_gwp_p_base} as the base case and to \eqref{global:eq_gwp_p_step} as the induction step. We split the rest of the argument into several steps.

\emph{Step 1: The base case $\eqref{global:eq_gwp_p_base}$.} We set $B_1\defe A$ and $D_1\defe A$. If $\cL(A,\tau)$ is as in Proposition \ref{global:prop_lwp}, we obtain that $\cL(A,\tau)\subseteq \cE_K(B_1,D_1,\tau)$. This implies
\begin{equation*}
 \mup_M \Big( \cE_K(B_1,D_1,\tau)\Big) \geq  \mup_M \Big( \cL(A,\tau) \Big)  \geq 1- \zeta^{-1} \exp(\zeta A^\zeta).
\end{equation*}

\emph{Step 2: The induction step $\eqref{global:eq_gwp_p_step}$.} We first restrict to the event
\begin{equation}\label{global:eq_gwp_p_event}
\cS^{\gwp}(A,T,\tau) \defe \cL(A,\tau) \medcap \cL(A,2\tau) \medcap \cS^{\stime}(A,T,\tau) \medcap \cS^{\cub}(A,T,\tau) \medcap \cS^{\stab}(A,T,\tau). 
\end{equation} 
Using Proposition \ref{global:prop_lwp}, Proposition \ref{global:prop_structure_time}, Proposition \ref{global:prop_structure_cubic}, and Proposition \ref{global:prop_stability}, which also contain the definitions of the sets in \eqref{global:eq_gwp_p_event}, we obtain that 
\begin{equation*}
\mup_M( \cS^{\gwp}(A,T,\tau)) \geq 1 -\zeta^{-1} \exp(\zeta A^\zeta). 
\end{equation*}
Using the invariance of $\mup_M$ under $\Phi_M$, we also obtain that 
\begin{equation*}
\mup_M\Big( \Phi_M[\tau]^{-1} \cE_K\big(B_{j-1},D_{j-1},(j-1)\tau\big) \Big) = \mup_M\Big(\cE_K\big(B_{j-1},D_{j-1},(j-1)\tau\big) \Big).
\end{equation*}
In order to obtain the probabilistic estimate \eqref{global:eq_gwp_p_step}, it therefore suffices to prove the inclusion 
\begin{equation}
\cS^{\gwp}(A,T,\tau) \medcap  \Phi_M[\tau]^{-1} \cE_K\big(B_{j-1},D_{j-1},(j-1)\tau\big) \subseteq \cE_K(B_j,D_j,j\tau). 
\end{equation}
For the rest of this proof, we assume that  $\, \purpledot \in \cS^{\gwp}(A,T,\tau) \medcap  \Phi_M[\tau]^{-1} \cE_K\big(B_{j-1},D_{j-1},(j-1)\tau\big)$ and $N,M\geq K$. To clarify the structure of the proof, we divide our argument into further substeps. \\

\emph{Step 2.1: Time-translation.} We rephrase the condition $\,\greendot = \Phi_M[\tau]\, \purpledot \in \cE_K(B_{j-1},D_{j-1},(j-1)\tau)$ in terms of $\purpledot$. 

Since $\, \greendot \in \cE_K(B_{j-1},D_{j-1},(j-1)\tau)$, we obtain for all $t\in [\tau,j\tau]$ that 
\begin{equation*}
\Phi_N(t-\tau) \Phi_M[\tau] \, \purpledot = \Phi_N(t-\tau) \, \greendot  = \<1g>(t-\tau) + \<3DNg>(t-\tau) + w^\grn_{N,M}(t-\tau), 
\end{equation*}
where $w^{\grn}_{N,M}\colon [0,(j-1)\tau]\times \bT^3 \rightarrow \bR$ satisfies 
\begin{equation*}
\| w^{\grn}_{N,M}\|_{\X{s_1}{b}([0,(j-1)\tau])} \leq B_{j-1} \quad \text{and} \quad
 \sum_{L_1\sim L_2} \| P_{L_1} \<1g>  \cdot P_{L_2} w^{\grn}_{N,M} \|_{L_t^{2} H_x^{-4\delta_1}([0,(j-1)\tau]\times \bT^3)} \leq B_{j-1}. 
\end{equation*}
The superscript ``grn'' emphasizes that $w^{\grn}_{N,M}$ appears in the structure involving $\,\greendot\,$. Furthermore, we also have that 
\begin{equation}\label{global:eq_gwp_p_d1}
\| \Phi_N[t-\tau] \Phi_M[\tau] \, \purpledot - \Phi_K[t-\tau] \Phi_M[\tau] \, \purpledot \|_{C_t^0 \cH_x^{\beta-\kappa}([\tau,j\tau]\times \bT^3)} \leq D_{j-1} K^{-\eta^\prime}. 
\end{equation}
Since $\, \purpledot \in \cS^{\stime}(A,T,\tau)$ (as in Proposition \ref{global:prop_structure_time}), it follows for all $t\in [\tau,j\tau]$ that 
\begin{equation}\label{global:eq_gwp_p_mixed}
\Phi_N(t-\tau) \Phi_M[\tau] \, \purpledot =   \<1p>(t)+ \<3DNp>(t) - \<3DNMtaup>(t) + w_{N,M}(t),
\end{equation}
where $w_{N,M}\colon [\tau,j\tau] \times \bT^3 \rightarrow \bR$ satisfies
\begin{equation}\label{global:eq_gwp_p_wNM_bounds}
\| w_{N,M}\|_{\X{s_1}{b}([\tau,j\tau])}, ~
 \sum_{L_1\sim L_2} \| P_{L_1} \<1p>  \cdot P_{L_2} w_{N,M} \|_{L_t^{2} H_x^{-4\delta_1}([\tau,j\tau]\times \bT^3)} \leq T^\alpha A B_{j-1}. 
\end{equation}
Our next goal is to replace $\Phi_M[\tau]$ in \eqref{global:eq_gwp_p_mixed} by $\Phi_N[\tau]$, which is done in Step 2.2 and Step 2.3. \\

\emph{Step 2.2: The cubic stochastic object.} In this step, we correct the structure of  $\Phi_N(t-\tau) \Phi_M[\tau] \, \purpledot$, as stated in \eqref{global:eq_gwp_p_mixed}, by adding the ``partial'' cubic stochastic object. 

We define
$\widetilde{u}_N\colon [\tau,j\tau] \times \bT^3 \rightarrow \bR$ by 
\begin{equation}\label{global:eq_gwp_p_util}
\widetilde{u}_N(t) = \Phi_N(t-\tau) \Phi_M[\tau] \, \purpledot +  \<3DNMtaup>(t)  = \<1p>(t)+ \<3DNp>(t) + w_{N,M}(t).
\end{equation}
While $\widetilde{u}_N$ depends on $M$, this is not reflected in our notation. The reason is that, as will be shown below, $\widetilde{u}_N$ is a close approximation of $u_N(t) = \Phi_N(t)\, \purpledot$, which does not directly depend on $M$. In order to match the notation of $\widetilde{u}_N$, we also define $\widetilde{w}_N= w_{N,M}$, which leads to 
\begin{equation*}
\widetilde{u}_N(t) =  \<1p>(t)+ \<3DNp>(t) + \widetilde{w}_N(t).
\end{equation*}
Using $\,\purpledot \in \cS^{\cub}(A,T,\tau)$ (as in Proposition \ref{global:prop_structure_cubic}), it follows that there exist $H_N \in \LM([\tau,j\tau]) $ and $F_N\in \X{s_2-1}{b_+-1}([\tau,j\tau])$ satisfying the identity
\begin{equation}\label{global:eq_gwp_approx_1}
\begin{aligned}
&(-\partial_t^2 - 1 + \Delta) \util 
- P_{\leq N} \lcol \big( V \ast ( P_{\leq N} \util )^2  \big) P_{\leq N} \util \rcol \\
&= -P_{\leq N} \PCtrl( H_N, P_{\leq N} \<1p>) - F_N 
\end{aligned}
\end{equation}
and the estimate
\begin{equation}\label{global:eq_gwp_approx_2}
\| H_N  \|_{\LM([\tau,j\tau])}, \| F_N \|_{\X{s_2-1}{b_+-1}([\tau,j\tau])} \leq T^{4\alpha} A^4 B_{j-1}^3 K^{-\eta^\prime}. 
\end{equation}
Thus, $\widetilde{u}_N$ is an approximate solution to the nonlinear wave equation on $[\tau,j\tau]\times \bT^3$. Furthermore, it holds that 
\begin{equation}\label{global:eq_gwp_p_d2}
\| \widetilde{u}_N[t] - \Phi_N[t-\tau] \Phi_M[\tau] \, \purpledot \|_{C_t^0 \cH_x^{\beta-\kappa}([\tau,j\tau]\times \bT^3)} \leq  T^{4\alpha} A^4 B_{j-1}^3 K^{-\eta^\prime}.
\end{equation}
\emph{Step 2.3: Stability estimate.} In this step, we turn the approximate solution $\widetilde{u}_N$ into an honest solution and fully correct the initial data at $t=\tau$. 

We now verify the assumptions (A1)-(A5) in Proposition \ref{global:prop_stability}, where we replace $B$ by $T^\alpha A B_{j-1}$ and set $\theta = T^{4\alpha} A^4 B_{j-1}^3 K^{-\eta^\prime}$. The first assumption (A1) holds with $\widetilde{w}_N=w_{N,M}$ due to \eqref{global:eq_gwp_p_util}. The second assumption (A2) coincides with the bounds \eqref{global:eq_gwp_p_wNM_bounds}. The third assumption (A3) coincides with \eqref{global:eq_gwp_approx_1} and \eqref{global:eq_gwp_approx_2}. 

For the fourth assumption (A4), we rely on $\, \purpledot \in \cL(A,\tau)$ (as in Proposition \ref{global:prop_lwp}). First, we have that 
\begin{equation*}
\widetilde{u}_N[\tau] = \Phi_M[\tau]\, \purpledot +  \<3DNMtaup>[\tau] = \<1p>[\tau] + \<3DMp>[\tau] +  \<3DNMtaup>[\tau] + w_M[\tau] =  \<1p>[\tau] + \<3DNp>[\tau] + w_M[\tau]. 
\end{equation*}
Second, we have that 
\begin{equation*}
\Phi_N[\tau] \, \purpledot = \<1p>[\tau] + \<3DNp>[\tau] + w_N[\tau]. 
\end{equation*}
Using \eqref{lwp:item_P4_Gibbs} in Proposition \ref{global:prop_lwp}, this implies that $Z_N[\tau]\defe \Phi_N[\tau] - \widetilde{u}_N[\tau]$ satisfies 
\begin{equation}
\| Z_N[\tau] \|_{\mathscr{Z}([0,T], \scriptp;\tau,N,M)} \leq A T^{\alpha} K^{-\eta^\prime},
\end{equation}
which yields (A4). Finally, as long as $B_j \leq B$, the fifth assumption (A5) follows from the parameter condition \eqref{global:eq_gwp_parameter}. Thus, the assumptions (A1)-(A5) in Proposition \ref{global:prop_stability} hold. Since $\, \purpledot \in  \cS^{\stab}(A,T,\tau)$, we obtain for all $t\in [\tau,j\tau]$  that
\begin{equation}
\Phi_N(t) \, \purpledot =  \<1p>(t)+ \<3DNp>(t) + w_N(t),
\end{equation}
where the nonlinear component $w_N$  satisfies
\begin{equation}\label{global:eq_gwp_p_wN_bounds}
\| w_N \|_{\X{s_1}{b}([\tau,j\tau]\times \bT^3)}, ~  \sum_{L_1\sim L_2} \| P_{L_1} \<1p>  \cdot P_{L_2} w_{N,M} \|_{L_t^{2} H_x^{-4\delta_1}([\tau,j\tau]\times \bT^3)} \leq T^\alpha A B_{j-1} +1 \leq 2 T^{\alpha} A B_{j-1}. 
\end{equation}
Furthermore, 
\begin{equation}\label{global:eq_gwp_p_d3}
\| \Phi_N[t] \, \purpledot  - \widetilde{u}_{N}[t] \|_{C_t^0 \cH_x^{\beta-\kappa}([\tau,j\tau]\times \bT^3)} \leq C \exp( C(A+B_{j-1}+T)^C) K^{-\eta^\prime}. 
\end{equation}
By combining \eqref{global:eq_gwp_p_d1}, \eqref{global:eq_gwp_p_d2}, and \eqref{global:eq_gwp_p_d3}, we obtain
\begin{equation}\label{global:eq_gwp_p_d4}
\begin{aligned}
&\| \Phi_N[t] \, \purpledot  - \Phi_K[t-\tau] \Phi_M[\tau] \, \purpledot \|_{C_t^0 \cH_x^{\beta-\kappa}([\tau,j\tau]\times \bT^3)} \\
&\leq \big( D_{j-1} + T^{4\alpha} A^4 B_{j-1}^3 + C \exp( C(A+B_{j-1}+T)^C) \big)K^{-\eta^\prime}. 
\end{aligned}
\end{equation}
By combining the general case $N\geq K$ in \eqref{global:eq_gwp_p_d4} with the special case $N=K$, using the triangle inequality, and increasing $C$ if necessary, we also obtain that
\begin{equation}\label{global:eq_gwp_p_d5}
\begin{aligned}
&\| \Phi_N[t] \, \purpledot  - \Phi_K[t] \, \purpledot \|_{C_t^0 \cH_x^{\beta-\kappa}([\tau,j\tau]\times \bT^3)} \\
&\leq \big( 2 D_{j-1} +  C \exp( C(A+B_{j-1}+T)^C) \big)K^{-\eta^\prime}. 
\end{aligned}
\end{equation}

\emph{Step 2.4: Gluing.} In this step, we ``glue'' together our information on $[0,2\tau]$ (from local well-posedness) and $[\tau,j\tau]$ (from the previous step).

Since $\, \purpledot \in \cL(A,2\tau)$ (as in Proposition \ref{global:prop_lwp}), the function $w_N$ uniquely determined by
\begin{equation*}
\Phi_N(t) \, \purpledot =  \<1p>(t)+ \<3DNp>(t) + w_N(t)
\end{equation*}
satisfies 
\begin{equation*}
\| w_N \|_{\X{s_1}{b}([0,2\tau]\times \bT^3)}, ~  \sum_{L_1\sim L_2} \| P_{L_1} \<1p>  \cdot P_{L_2} w_{N} \|_{L_t^{2} H_x^{-4\delta_1}([0,2\tau]\times \bT^3)} \leq A. 
\end{equation*}
Furthermore, 
\begin{equation*}
\| \Phi_N[t] \, \purpledot  - \Phi_K[t] \, \purpledot \|_{C_t^0 \cH_x^{\beta-\kappa}([0,2\tau]\times \bT^3)} \leq A K^{-\eta^\prime}.
\end{equation*}
Together with \eqref{global:eq_gwp_p_wN_bounds}, \eqref{global:eq_gwp_p_d5}, and the gluing lemma (Lemma \ref{tools:lem_gluing}), which is only needed for the frequency-based $\X{s_1}{b}$-space, we obtain that 
\begin{equation}\label{global:eq_gwp_p_wN_bounds_final}
\| w_N \|_{\X{s_1}{b}([0,j\tau]\times \bT^3)}, ~  \sum_{L_1\sim L_2} \| P_{L_1} \<1p>  \cdot P_{L_2} w_{N} \|_{L_t^{2} H_x^{-4\delta_1}([0,j\tau]\times \bT^3)} \leq C \tau^{\frac{1}{2}-b} T^\alpha AB_{j-1}. 
\end{equation}
and 
\begin{equation}\label{global:eq_gwp_p_d6}
\| \Phi_N[t] \, \purpledot  - \Phi_K[t] \, \purpledot \|_{C_t^0 \cH_x^{\beta-\kappa}([0,j\tau]\times \bT^3)} \leq \big( 2 D_{j-1} +  C \exp( C(A+B_{j-1}+T)^C) \big)K^{-\eta^\prime}. 
\end{equation}
\emph{Step 2.5: Choosing $B_j$ and $D_j$.} 
Based on \eqref{global:eq_gwp_p_wN_bounds_final} and \eqref{global:eq_gwp_p_d6}, we now define 
\begin{equation*}
B_j \defe  C \tau^{\frac{1}{2}-b} T^\alpha AB_{j-1} \quad \text{and} \quad D_j \defe 2 D_{j-1} +  C \exp( C(A+B_{j-1}+T)^C). 
\end{equation*}

\emph{Step 3: Finishing up.} 
We recall that $ 1/2\leq  A \tau^{b_+-b} \leq 1 $, $J=T/\tau \sim TA^{\frac{1}{b_+-b}}$, $B_1=A$, and $D_1=A$. After increasing $C$ if necessary, we obtain that 
\begin{equation}
B_J \leq C \exp(C(A+T)^C)\leq B \quad \text{and} \quad D_J \leq C \exp( C(A+B_J+T)^C) \leq D. 
\end{equation}
This implies $\cE(B_J,D_J,J\tau) \subseteq \cE_K(B,D,T)$. By iterating \eqref{global:eq_gwp_p_step} and using the base case \eqref{global:eq_gwp_p_base}, we obtain (after decreasing $\zeta$) that
\begin{equation*}
\mup_M(\cE_K(B,D,T)) \geq \mup_M(\cE_K(B_J,D_J,J\tau)) \geq 1 - T \zeta^{-1}  \exp(-\zeta A^{\zeta}). 
\end{equation*}
This completes the proof. 
\end{proof}

In Proposition \ref{global:prop_gwp}, we obtained a quantitative global well-posedness result. In particular, we obtained (almost) explicit bounds on the growth of $w_N$, which are of independent interest. In the proof of Theorem \ref{theorem:gwp_invariance}, however, a softer statement is sufficient, which we isolate in Corollary \ref{global:cor_gwp} below. 

\begin{corollary}\label{global:cor_gwp}
Let $T\geq 1$, let $\theta>0$, and $K\geq 1$. Then, we define a closed subset of $\cH_x^{-1/2-\kappa}(\bT^3)$ by 
\begin{equation}\label{global:eq_SK}
\cS_K(T,\theta)\defe \Big\{ \, \purpledot \in \cH_x^{-1/2-\kappa}(\bT^3)\colon \sup_{N_1,N_2\geq K} \| \Phi_{N_1}[t] \, \purpledot- \Phi_{N_2}[t]\, \purpledot \|_{C_t^0 \cH_x^{\beta-\kappa}([-T,T]\times \bT^3)} \leq \theta \Big\}
\end{equation}
Furthermore, we define the event 
\begin{equation}\label{global:eq_S}
\cS \defe  \bigcap_{T\in \mathbb{N}} \bigcap_{\theta \in \mathbb{Q}_{>0}} \bigcup_{K\geq 1} \cS_K(T,\theta).
\end{equation}
Then, it holds that 
\begin{equation}\label{global:eq_S_prob}
\lim_{K,M\rightarrow \infty} \mup_M(\cS_K(T,\theta)) = 1 \qquad \text{and} \qquad \mup_\infty(\cS)=1. 
\end{equation}
\end{corollary}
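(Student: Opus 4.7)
The plan is to deduce both claims directly from Proposition \ref{global:prop_gwp} combined with the triangle inequality, time-reversal symmetry, and the weak convergence $\mup_M \to \mup_\infty$ from Theorem \ref{theorem:measures}.

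First I would establish the inclusion $\cE_K(B,D,T)\cap \cE_K^{-}(B,D,T) \subseteq \cS_K(T,\theta)$ for an appropriate choice of parameters, where $\cE_K^{-}$ denotes the analogue of $\cE_K$ obtained by applying Proposition \ref{global:prop_gwp} to the time-reversed flow on $[-T,0]$. This uses the triangle inequality: on $\cE_K(B,D,T)$, one has $\|\Phi_{N_1}[t]\,\purpledot - \Phi_K[t]\,\purpledot\|_{C_t^0\cH_x^{\beta-\kappa}([0,T]\times\bT^3)} \leq DK^{-\eta'}$ for every $N_1\geq K$, and similarly for $N_2$, so the sup over $N_1,N_2\geq K$ of the pairwise differences is bounded by $2DK^{-\eta'}$. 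The time-reversal argument is routine: since the wave equation \eqref{intro:eq_truncated_nlw} is invariant under $t\mapsto -t$ and $\mup_M$ is invariant under $(\phi_0,\phi_1)\mapsto(\phi_0,-\phi_1)$ (the Gaussian factor being symmetric), Proposition \ref{global:prop_gwp} applies verbatim to the backward time interval. Now, given $T\geq 1$ and $\theta>0$, fix any small $\varepsilon>0$: choose $A$ large so that $T\zeta^{-1}\exp(-\zeta A^\zeta) \leq \varepsilon/4$, set $B=B(A,T)$ and $D=D(A,T)$ per \eqref{global:eq_BD_condition}, and then pick $K_0$ large enough that both \eqref{global:eq_gwp_parameter} and $2DK^{-\eta'}\leq \theta$ hold for all $K\geq K_0$. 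Proposition \ref{global:prop_gwp} applied in both time directions then yields $\mup_M(\cS_K(T,\theta))\geq 1-\varepsilon$ for all $M\geq K\geq K_0$, proving the first limit in \eqref{global:eq_S_prob}.

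For the second claim, I use that $\cS_K(T,\theta)$ is closed in $\cH_x^{-1/2-\kappa}(\bT^3)$ (as noted in the statement, this follows from the continuous dependence of each truncated flow on the initial data and the lower semi-continuity of suprema). By the Portmanteau theorem applied to the weak convergence $\mup_M \to \mup_\infty$ from Theorem \ref{theorem:measures},
\begin{equation*}
\mup_\infty(\cS_K(T,\theta)) \geq \limsup_{M\to\infty} \mup_M(\cS_K(T,\theta)) \geq 1-\varepsilon
\end{equation*}
for all $K\geq K_0$. Since $\varepsilon>0$ was arbitrary and the family $\{\cS_K(T,\theta)\}_K$ is increasing in $K$ (larger $K$ imposes a condition over a smaller index set), monotone continuity of $\mup_\infty$ from below gives $\mup_\infty\bigl(\bigcup_K \cS_K(T,\theta)\bigr) = 1$. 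Finally, because $\cS$ in \eqref{global:eq_S} is a countable intersection of such full-measure sets (indexed by $T\in\bN$ and $\theta\in\bQ_{>0}$), a standard countable intersection argument yields $\mup_\infty(\cS)=1$.

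I do not anticipate a substantial obstacle in this corollary: the hard work sits in Proposition \ref{global:prop_gwp}, and the remaining steps are soft measure-theoretic transfers. The only mildly delicate point is that we only have \emph{weak}, not total-variation, convergence $\mup_M\to \mup_\infty$; this is precisely why $\cS_K(T,\theta)$ is formulated as a closed set, so that Portmanteau applies with the inequality in the correct direction. A secondary minor point is the time-reversal needed to pass from the $[0,T]$-estimate in Proposition \ref{global:prop_gwp} to the $[-T,T]$-estimate required in the definition of $\cS_K(T,\theta)$, which is handled by the symmetry of the equation and of the Gibbs measure under momentum-reversal.
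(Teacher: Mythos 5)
Your proof is correct and takes essentially the same approach as the paper: reduce to $[0,T]$ via the time-reversal symmetry of the equation and of $\mup_M$, pass the estimate from Proposition~\ref{global:prop_gwp} to $\cS_K(T,\theta)$ via the triangle inequality, then use closedness of $\cS_K(T,\theta)$ and the Portmanteau inequality for the weak limit, and finish with monotone continuity over $K$ and countable intersection over $T,\theta$. The only cosmetic difference is that the paper decouples the index $L$ in $\cE_L$ from the index $K$ in $\cS_K^+$ (requiring $K\geq L$ and taking $L$ large), whereas you pivot on $\Phi_K$ directly, which is cleaner and equally valid.
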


\begin{proof}
We first prove the identity $\lim_{K,M\rightarrow \infty} \mup_M(\cS_K(T,\theta)) = 1 $. Using the time-reflection symmetry, it suffices to prove the statement with $\cS_K(T,\theta)$ replaced by 
\begin{equation*}
\cS_K^+(T,\theta)\defe \Big\{ \, \purpledot \in \cH_x^{-1/2-\kappa}(\bT^3)\colon \sup_{N_1,N_2\geq K} \| \Phi_{N_1}[t] \, \purpledot- \Phi_{N_2}[t]\, \purpledot \|_{C_t^0 \cH_x^{\beta-\kappa}([0,T]\times \bT^3)} \leq \theta \Big\}.
\end{equation*}
For any fixed $T,A,B,D\geq 1$ satisfying \eqref{global:eq_BD_condition} and $\theta>0$, we have for all sufficiently large $K,L\geq 1$ satisfying $K\geq L$ that
\begin{equation*}
\cS_K^+(T,\theta) \supseteq \cE_L(B,D,T), 
\end{equation*}
where $\cE_L(B,D,T)$ is as in Proposition \ref{global:prop_gwp}. Thus, 
\begin{equation*}
\lim_{K,M\rightarrow \infty} \mup_M(\cS_K(T,\theta))  \geq \liminf_{M\rightarrow \infty} \mup_M(\cE_L(B,D,T)) \geq 1 - \zeta^{-1} T \exp(\zeta A^\zeta). 
\end{equation*} 
After letting $A\rightarrow \infty$, this yields the first identity in \eqref{global:eq_S_prob}. \\
Using Theorem \ref{theorem:measures}, we have that a subsequence of $\mup_M$ converges weakly to $\mup_\infty$. Since $\cS_K(T,\theta)$ is closed, this implies
\begin{equation*}
1 = \lim_{K,M\rightarrow \infty} \mup_M(\cS_K(T,\theta)) \leq \liminf_{K\rightarrow \infty} \mup_{\infty}(\cS_K(T,\theta)) \leq \mup_\infty\Big( \bigcup_{K\geq 1} \cS_K(T,\theta)\Big). 
\end{equation*} 
This yields the second identity in \eqref{global:eq_S_prob}. 
\end{proof}

\subsection{Invariance}\label{section:invariance}

In this subsection, we complete the proof of Theorem \ref{theorem:gwp_invariance}. The global well-posedness follows from Corollary \ref{global:cor_gwp} and it remains to prove the invariance. Our argument closely resembles the proof of invariance for the one-dimensional  nonlinear Schr\"{o}dinger equation by Bourgain \cite{Bourgain94}. The only difference is that we work with the expectation of test functions instead of probabilities of sets, since they are more convenient for weakly convergent measures. 

\begin{proof}[Proof of Theorem \ref{theorem:gwp_invariance}:]
The global well-posedness follows directly from Corollary \ref{global:cor_gwp}. Thus, it remains to prove the invariance of the Gibbs measure $\mup_\infty$.

Let $t\in \bR$ be arbitrary. In order to prove that $\Phi_\infty[t]_\# \mup_\infty = \mup_\infty$, it suffices to prove for all bounded Lipschitz functions $f \colon \cH_x^{-1/2-\kappa}(\bT^3)\rightarrow \bR$ that 
 \begin{equation}\label{global:eq_inv_p1}
 \bE_{\mupscript_\infty}\big[ f( \Phi_\infty[t]\, \purpledot\,)\big] =  \bE_{\mupscript_\infty}\big[f(\, \purpledot\,) \big].
 \end{equation}
 We first rewrite the left-hand side of \eqref{global:eq_inv_p1}. Using the global well-posedness and dominated convergence, we have that 
 \begin{equation*}
  \bE_{\mupscript_\infty}\big[ f( \Phi_\infty[t]\, \purpledot\,)\big]  = \lim_{N\rightarrow \infty}   \bE_{\mupscript_\infty}\big[ f( \Phi_N[t]\, \purpledot\,)\big].
 \end{equation*}
 Using the weak convergence of $\mup_M$ to $\mup_\infty$ (from Theorem \ref{theorem:measures}) and the continuity of $\Phi_N[t]$ (for a fixed $N$), we have that 
 \begin{equation*}
  \lim_{N\rightarrow \infty}   \bE_{\mupscript_\infty}\big[ f( \Phi_N[t]\, \purpledot\,)\big] =   \lim_{N\rightarrow \infty}   \Big( \lim_{M\rightarrow \infty}\bE_{\mupscript_M}\big[ f( \Phi_N[t]\, \purpledot\,)\big] \Big). 
 \end{equation*}
 We now turn to the right-hand side of \eqref{global:eq_inv_p1}. Using the weak convergence of $\mup_M$ to $\mup_\infty$ and the invariance of $\mup_M$ under $\Phi_M[t]$, we obtain that 
 \begin{equation*}
 \bE_{\mupscript_\infty}\big[f(\, \purpledot\,) \big] = \lim_{M\rightarrow \infty} \bE_{\mupscript_M}\big[f(\, \purpledot\,) \big] = \lim_{M\rightarrow \infty } 
  \bE_{\mupscript_M}\big[ f( \Phi_M[t]\, \purpledot\,)\big]. 
 \end{equation*}
 Combining the last three identities, we can reduce \eqref{global:eq_inv_p1} to 
 \begin{equation}\label{global:eq_inv_p2}
 \limsup_{N,M\rightarrow \infty} \Big| \bE_{\mupscript_M}\big[ f( \Phi_N[t]\, \purpledot\,)\big] - \bE_{\mupscript_M}\big[ f( \Phi_M[t]\, \purpledot\,)\big] \Big| = 0 . 
 \end{equation}
 We now let $T\geq 1$ be such that $t\in [-T,T]$, let $\theta>0$, and let $K\geq 1$. We also let $\cS_K(T,\theta)$ be as in Corollary \ref{global:cor_gwp}. Then, we have that
 \begin{align*}
  &\limsup_{N,M\rightarrow \infty} \Big| \bE_{\mupscript_M}\big[ f( \Phi_N[t]\, \purpledot\,)\big] - \bE_{\mupscript_M}\big[ f( \Phi_M[t]\, \purpledot\,)\big] \Big| \\
  \leq&  \sup_{N,M\geq K} \Big| \bE_{\mupscript_M}\big[ f( \Phi_N[t]\, \purpledot\,)\big] - \bE_{\mupscript_M}\big[ f( \Phi_M[t]\, \purpledot\,)\big] \Big| \\
  \leq& 
  \sup_{N,M\geq K}  \bE_{\mupscript_M}\Big[
   1\big\{ \, \purpledot \in \cS_K(T,\theta)\big\} \Big|  f( \Phi_N[t]\, \purpledot\,)  -  f( \Phi_M[t]\, \purpledot\,)\Big|  \Big] \\
&+    \sup_{N,M\geq K}  \bE_{\mupscript_M}\Big[
      1\big\{ \, \purpledot \not \in \cS_K(T,\theta)\big\} \Big|  f( \Phi_N[t]\, \purpledot\,) -  f( \Phi_M[t]\, \purpledot\,)\Big| \Big] \\
\leq &\operatorname{Lip}(f) \cdot \theta + 2 \| f\|_{\infty} \sup_{M\geq K} \mup_M( \cH_x^{-1/2-\kappa}\backslash \cS_K(T,\theta)). 
 \end{align*}
 In the last line, $\operatorname{Lip}(f)$ is the Lipschitz-constant of $f$ and $\| f\|_\infty$ is the supremum of $f$. Using Corollary \ref{global:cor_gwp}, we obtain the estimate \eqref{global:eq_inv_p2} by first letting $K\rightarrow \infty$ and then letting $\theta\rightarrow 0$. 
\end{proof}

\subsection{Structure and stability theory}\label{section:stability}

In this subsection, we provide the ingredients used in the proof of global well-posedness (Proposition \ref{global:prop_gwp}). As described in the introduction, we will further split this subsection into four parts. 
\subsubsection{Structured local well-posedness}

In Proposition \ref{lwp:prop}, we obtained a structured local well-posedness result in terms of $\, \bluedot\, $ and  $\bP$. In Corollary \ref{lwp:cor_unstructured}, we already used Proposition \ref{lwp:prop} to prove the local existence of the limiting dynamics on the support of the Gibbs measure $\mup_\infty$, but did not obtain any structural information on the solution. We now remedy this defect, and obtain a structured local well-posedness result even on the support of the Gibbs measure. \\

The statement of the proposition differs slightly from the earlier Proposition \ref{lwp:prop} for two reasons: First, we formulate the result closer to the assumptions in the stability theory (Proposition \ref{local:prop_stability} and Proposition \ref{global:prop_stability}), which is useful in the globalization argument. Second, using the organization of this paper, it would be cumbersome to define the para-controlled component of $\Phi_N(t)\, \purpledot$ intrinsically through $\,\purpledot\,$, i.e., without relying on the ambient objects.

\begin{proposition}[Structured local well-posedness w.r.t. the Gibbs measure]\label{global:prop_lwp}
Let $A\geq 1$, let $0<\tau<1$, let $\alpha>0$ be a sufficiently large absolute constant, and let  $\zeta=\zeta(\epsilon,s_1,s_2,\kappa,\eta,\eta^\prime,b_+,b)>0$ be sufficiently small. We denote by $\purpledot\,$ a generic element of $\cH_x^{-1/2-\kappa}(\bT^3)$ and by $\LAtau$ the Borel subset of $\cH_x^{-1/2-\kappa}(\bT^3)$ defined by the following conditions:
\begin{enumerate}[(I)]
\item \label{lwp:item_P1_Gibbs} For any $N\geq 1$, the solution of \eqref{eq:nlw_N} with initial data $\purpledot$ exists on $[-\tau,\tau]$. 
\item \label{lwp:item_P2_Gibbs} For all  $N\geq 1$, there exist (a unique) $w_N\in \X{s_1}{b}([0,\tau])$ such that 
\begin{equation*}
\Phi_N(t)\,  \purpledot = \<1p>(t)+ \<3DNp>(t) + w_N(t). 
\end{equation*}
Furthermore, we have the bounds
\begin{align*}
   \| w_N \|_{\X{s_1}{b}([0,\tau])}\leq A \quad \text{and} \quad 
\sum_{L_1\sim L_2} \| P_{L_1} \, \<1p> \cdot  P_{L_2} w_N \|_{L_t^2 H_x^{-4\delta_1}([0,\tau]\times \bT^3)} \leq A. 
\end{align*}
\item\label{lwp:item_P3_Gibbs} It holds for all \( N,K \geq 1 \) that 
\begin{equation*}
\|  \Phi_N[t]\,  \purpledot - \Phi_K[t]\,  \purpledot\, \|_{C_t^0 \cH_x^{\beta-\kappa}([0,\tau]\times \bT^3)} \leq A \min(N,K)^{-\eta^\prime}. 
\end{equation*}
\item\label{lwp:item_P4_Gibbs}  It holds for all \( N,K \geq 1 \) and $T\geq 1$ that 
\begin{equation*}
\| w_K[\tau] \|_{\mathscr{Z}([0,T], \scriptp;\tau,N,K)} \leq A T^{\alpha}  \\
\end{equation*}
and 
\begin{equation*}
\| w_N[\tau] - w_K[\tau] \|_{\mathscr{Z}([0,T], \scriptp;\tau,N,K)} \leq A T^{\alpha} \min(N,K)^{-\eta^\prime}. 
\end{equation*}
\end{enumerate}
If  $A \tau^{b_+-b} \leq  1 $, then \( \LAtau \) has high probability under $\mup_M$ for all $M\geq 1$ and it holds that 
\begin{equation}\label{global:eq_lwp_probabilistic}
\mup_M(\LAtau) \geq 1 -  \zeta^{-1}\exp(-\zeta A^{\zeta}). 
\end{equation}
\end{proposition}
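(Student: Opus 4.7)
\textbf{Proof plan for Proposition \ref{global:prop_lwp}.} The overall strategy is to lift the ambient-space local well-posedness result (Proposition \ref{lwp:prop}) to the support of the Gibbs measure by exploiting the quantitative absolute continuity $\mup_M\ll \nup_M$ from Theorem \ref{theorem:measures}, and to convert all $\,\bluedot\,$-based stochastic objects into $\,\purpledot\,$-based ones. First, the set $\LAtau$ is closed (hence Borel) in $\cH_x^{-1/2-\kappa}(\bT^3)$ and is defined purely intrinsically in $\,\purpledot\,$. Property \eqref{lwp:item_P1_Gibbs} is automatic from the coercivity of $H_N$ which gives global well-posedness for each fixed $N$. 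For the remaining properties the plan is to construct an ambient event $\widetilde{\cL}^{\text{amb}}_M(A,\tau)\subseteq \LdagM$ such that $\,\bluedot+\reddotM\in \LAtau$ whenever $\widetilde{\cL}^{\text{amb}}_M(A,\tau)$ occurs. Since $\Law_\bP(\,\bluedot+\reddotM\,)=\nup_M$, this gives $\nup_M(\LAtau)\geq 1-\zeta^{-1}\exp(-\zeta A^\zeta)$, and then
\begin{equation*}
\mup_M(\cH_x^{-1/2-\kappa}\setminus \LAtau)\leq C\nup_M(\cH_x^{-1/2-\kappa}\setminus \LAtau)^{1-\tfrac{1}{q}}\leq C\zeta^{-1}\exp(-\zeta(1-\tfrac{1}{q})A^\zeta),
\end{equation*}
which, after adjusting $\zeta$, yields \eqref{global:eq_lwp_probabilistic}.

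The core of the argument is the translation between the two random structures. Writing $\,\purpledot=\bluedot+\reddotM\,$, one has $\<1p>=\<1b>+\,\<1r>$, where $\<1r>$ is the linear evolution of $\,\reddotM\,$, and $\<1r>$ lies in $L^\infty_t\cH^{\gamma-\kappa}_x$ with $\gamma=\min(1/2+\beta,1)$ by the $L^q$-bound on $\,\reddotM\,$ from Theorem \ref{theorem:measures}. Similarly, $\<3DNp>-\<3DN>$ expands into a sum of Duhamel integrals of polynomials in $P_{\leq N}\<1b>$ and $P_{\leq N}\<1r>$ where at least one factor is $P_{\leq N}\<1r>$, and each such term is smoother than $\<3DN>$; the relevant estimates are exactly the conversion lemmas developed in Section \ref{section:ftg}. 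If $(\wblue_N,H_N,Y_N)$ denotes the decomposition provided by Proposition \ref{lwp:prop}, then the $\,\purpledot\,$-nonlinear remainder is
\begin{equation*}
w_N=\wblue_N+\big(\<3DN>-\<3DNp>\big)+\<1r>,
\end{equation*}
and the bounds in \eqref{lwp:item_P2_Gibbs} follow from the $\X{s_1}{b}$- and bilinear $L^2_tH^{-4\delta_1}_x$-bounds on $\wblue_N$ given by Proposition \ref{lwp:prop}, together with the trivial $\X{s_2}{b}$-bound on $\<1r>$ and the estimate of $\<3DN>-\<3DNp>$ provided by Section \ref{section:ftg}. Property \eqref{lwp:item_P3_Gibbs} is immediate from the corresponding bound in Proposition \ref{lwp:prop} on $\Phi_N[t]\,\purpledot-\Phi_K[t]\,\purpledot$, since both sides are intrinsic and the ambient bound is written in terms of $\,\purpledot\,$ already.

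The remaining, and technically heaviest, part is property \eqref{lwp:item_P4_Gibbs}, which controls the $\mathscr{Z}([0,T],\,\purpledot\,;\tau,N,K)$-norm of $w_K[\tau]$ and of $w_N[\tau]-w_K[\tau]$ on an \emph{arbitrarily large} time window $[0,T]$ even though the local flow has only been constructed on $[0,\tau]$. Here the plan is to use the splitting $w_K[\tau]=\Zbox_K[\tau]+\Zcirc_K[\tau]$ dictated by the decomposition $\wblue_K=X_K+Y_K$ from the proof of Proposition \ref{lwp:prop}, with $\Zcirc_K$ absorbing $Y_K+\<1r>+(\<3DN>-\<3DNp>)$ and $\Zbox_K$ absorbing the para-controlled piece $P_{\leq K}\Duh[\operatorname{PCtrl}(H_K,P_{\leq K}\<1b>)]$ restricted to frequencies $\lesssim\max(N,K)$. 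The two $\X{s_2-1}{b_+-1}$-norms inside the $\mathscr{Z}$-norm, involving $\nparald$ and $\nparaboxld$, are then controlled by the multilinear master estimate (Proposition \ref{local:prop_master}) applied with one argument of type $X$ or $Y$, which costs only $T^{30}A$. The bilinear $L^2_tH^{-4\delta_1}_x$-term in the $\mathscr{Z}$-norm is handled by Proposition \ref{para:prop_quadratic_object} after again converting the $\,\purpledot\,$-based object to a $\,\bluedot\,$-based one. Finally, the difference bound in \eqref{lwp:item_P4_Gibbs} is handled term-by-term using the frequency-localized versions of the master estimate and Proposition \ref{lwp:prop}\eqref{lwp:item_P3}, both of which carry the decaying factor $\min(N,K)^{-\eta^\prime}$.

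The main obstacle I expect is the bookkeeping for property \eqref{lwp:item_P4_Gibbs}: the $\mathscr{Z}$-norm is evaluated on $[0,T]$ while the underlying structural information on $w_K$ comes from $[0,\tau]$, so one must propagate the para-controlled structure of $w_K[\tau]$ across a much longer time window without ever solving the equation on that window. This is precisely what the definition of $\mathscr{Z}$ is designed to allow — the norm measures only the linear evolution of a data profile against the stochastic objects — but verifying it requires carefully aligning the frequency-support condition $\supp\widehat{\Zbox}[\tau]\subseteq\{|n|\lesssim\max(N,K)\}$ with the natural Littlewood--Paley decomposition of $X_K$, and losing a factor of $T^\alpha$ (with $\alpha$ absolute) from the long-time tail estimates of the stochastic objects. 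Once this alignment is in place, the remaining inputs reduce to concatenating Proposition \ref{lwp:prop}, Proposition \ref{local:prop_master}, and the conversion results of Section \ref{section:ftg}.
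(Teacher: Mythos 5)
Your proposal matches the paper's argument in all essential respects: reduce to $\nup_M$ via the quantitative absolute continuity, use the representation $\nup_M=\Law_\bP(\,\bluedot+\reddotM\,)$, convert the ambient $\,\bluedot\,$-based decomposition from Proposition \ref{lwp:prop} into a $\,\purpledot\,$-based one using Proposition \ref{ftg:prop_cubic} and the type-conversion lemmas of Section \ref{section:ftg}, and handle property \eqref{lwp:item_P4_Gibbs} by first invoking Lemma \ref{ftg:lem_structured_perturbation} to pass between the $\Z(\,\cdot\,,\scriptp)$- and $\Z(\,\cdot\,,\scriptb)$-norms and then splitting $w_N[\tau]-w_K[\tau]$ into $\Zbox$- and $\Zcirc$-components. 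The only flaw is a sign slip in the displayed formula for $w_N$: since $\<1b>-\<1p>=-\<1r>$, the correct relation is $w_N=-\<1r>+(\<3DNb>-\<3DNp>)+\wblue_N$, not $+\<1r>$; this does not affect any of the norm estimates.
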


\begin{remark}\label{global:rem_organization_lwp}
Since we prove multilinear estimates for $\,\purpledot\,$ instead of $\,\bluedot\,$ in Section \ref{section:ftg}, a different incarnation of this paper may omit Proposition \ref{lwp:prop} and instead proof Proposition \ref{global:prop_lwp} directly. The author believes that our approach illustrates an interesting conceptual point: The singularity of the Gibbs measure does not enter heavily into the construction of the local limiting dynamics (see Corollary \ref{lwp:cor_unstructured}), but does affect the global theory. We believe, however, that this would be different for the cubic nonlinear wave equation. The reason is an additional renormalization in the construction of the $\Phi^4_3$-model (see e.g. \cite[Lemma 5: Step 3]{BG18}). 
\end{remark}

We recall that the $\Z$-norm appearing in (IV) is defined in Definition \ref{global:definition_structured}.

\begin{proof}[Proof of Proposition \ref{global:prop_lwp}:] 
By using  Theorem \ref{theorem:measures} and adjusting the value of $\zeta$, it suffices to prove the probabilistic estimate \eqref{global:eq_lwp_probabilistic} with the Gibbs measure $\mup_M$ replaced by the reference measure $\nup_M$. Using the representation of the reference measure from Theorem \ref{theorem:measures}, it holds that 
\begin{equation*}
\nup_M = \Law_{\bP} \big( \bluedot + \reddotM \big) . 
\end{equation*}
By applying this identity to the Borel set $\LAtau$, we obtain that 
\begin{equation*}
\nup_M(\LAtau) = \bP(  \bluedot + \reddotM \in \LAtau). 
\end{equation*}
Let $B=c A^{c}\leq A$, where $c= c(\epsilon,s_1,s_2,\kappa,\eta,\eta^\prime,b_+,b)>0$ is sufficiently small.  Let $\LdagMB \subseteq \Omega$ be as in Proposition \ref{lwp:prop}. We now show that 
\begin{equation}\label{global:eq_lwp_p1}
\bP\Big( \big\{ \, \bluedot + \reddotM \not \in \LAtau \big\} \medcap \LdagMB \Big) \leq \frac{1}{2} \zeta^{-1} \exp(-\zeta A^{\zeta}). 
\end{equation}
The property \eqref{lwp:item_P1} in Proposition \ref{lwp:prop} directly implies its counterpart. The main part of the argument lies in proving  \eqref{lwp:item_P2_Gibbs}. Instead of \eqref{lwp:item_P2_Gibbs}, we currently only have the property
\begin{itemize}
\item[(ii):] \label{lwp:item_global_p2} For all  $N\geq 1$, there exist $w_N^\prime \in \X{s_1}{b}([0,\tau])$, $H_N^\prime \in \modulation([0,\tau])$, and $Y_N^\prime \in \X{s_2}{b}([0,\tau])$, such that for all $t\in [0,\tau]$
\begin{equation*}
\Phi_N(t)\,  \purpledot = \<1b>(t) + \<3DN>(t) + w_N^\prime(t) ~~  \text{and}~~   w_N^\prime(t) = P_{\leq N} \Duh\big[ \Para(H_N^\prime ,  P_{\leq N} \<1b>)\big](t) + Y_N^\prime(t). 
\end{equation*}
Furthermore, we have the bounds
\begin{align*}
   \| w_N^\prime \|_{\X{s_1}{b}([0,\tau])}, \| H_N^\prime \|_{\modulation([0,\tau])}, \| Y_N^\prime \|_{\X{s_2}{b}([0,\tau])}  &\leq B \quad \text{and} \\
\sum_{L_1\sim L_2} \| P_{L_1} \, \<1b> \cdot P_{L_2} w_N^\prime \|_{L_t^2 H_x^{-4\delta_1}([0,\tau]\times \bT^3)} &\leq B. 
\end{align*}
\end{itemize}
Comparing \eqref{lwp:item_P2} and \eqref{lwp:item_P2_Gibbs}, this forces us to take
\begin{equation}\label{global:eq_lwp_p2}
w_N = \, \<1b>\, - \, \<1p> \, + \<3DNb> - \<3DNp> + w_N^\prime. 
\end{equation}
We now have to prove that the right-hand side of \eqref{global:eq_lwp_p2} satisfies the estimates in \eqref{lwp:item_P2_Gibbs}. Due to the decomposition $\, \purpledot = \, \bluedot + \reddotM$, we have that 
\begin{equation*}
\<1b> - \<1p> = - \<1r>. 
\end{equation*}
Using Theorem \ref{theorem:measures}, we have outside a set of probability $\lesssim \exp(-c B^{\frac{2}{k}}) $ under $\bP$ that 
\begin{equation*}
\big\| \, \<1r>\, \big\|_{\X{s_2}{b}([0,\tau])} \lesssim B. 
\end{equation*}

 Using Proposition \ref{ftg:prop_cubic}, we have outside an event with probability $\leq \zeta^{-1} \exp(-\zeta B^{\zeta})$ under $\bP$ that 
\begin{equation}\label{global:eq_temp1}
\<3DN> = \<3DNp> + I[\Para(H^{(3)}_N, P_{\leq N} \<1p>)] + Y_N^{(3)},
\end{equation}
where  $H_N^{(3)} \in \modulation([0,\tau])$ and $Y_N^{(3)} \in \X{s_2}{b}([0,\tau])$ satisfy
\begin{equation*}
\| H^{(3)}_N \|_{\modulation([0,\tau])} \leq B  \qquad \text{and} \qquad \| Y^{(3)}_N \|_{\X{s_2}{b}([0,\tau])} \leq B. 
\end{equation*}
To ease the reader's mind, we mention that the proof of Proposition \ref{ftg:prop_cubic} is based on the algebraic identity
\begin{align*}
 \<3DNp>  &=  \<3DN> +  2 \<3DNbrb> + \<3DNrrb> + \<3DNbbr> + 2 \<3DNbrr> + \<3DNrrr>,
\end{align*}
which uses mixed cubic stochastic objects. Finally, we have that 
\begin{align*}
w_N^\prime &= P_{\leq N} \Duh \big[ \PCtrl( H_N^\prime, P_{\leq N} \, \<1b>)\big] + Y_N^\prime \\
&= P_{\leq N} \Duh \big[ \PCtrl( H_N^\prime, P_{\leq N} \, \<1p>)\big] - P_{\leq N} \Duh \big[ \PCtrl( H_N^\prime, P_{\leq N} \, \<1r>)\big] + Y_N^\prime. 
\end{align*}
Using the inhomogeneous Strichartz estimate (Lemma \ref{tools:lem_inhomogeneous_strichartz}) and Lemma \ref{para:lem_basic},  we have that
\begin{align*}
 &\| P_{\leq N} \Duh \big[ \PCtrl( H_N^\prime, P_{\leq N} \<1r>) \big] \|_{\X{s_2}{b}([0,\tau])} 
 \lesssim  \| \PCtrl( H_N^\prime, P_{\leq N}  \<1r>\,)  \|_{L_t^\infty H_x^{s_2-1}([0,\tau] \times \bT^3)} \\ 
 &\lesssim \| H_N^\prime \|_{\LM([0,\tau])}  \| \, \<1r> \, \|_{L_t^\infty H_x^{s_2-1+8\epsilon}([0,\tau] \times \bT^3)} 
 \lesssim  B^2. 
\end{align*}
Thus, 
\begin{equation}\label{global:eq_temp2}
w_N = P_{\leq N} \Duh \big[ \PCtrl( H_N, P_{\leq N} \<1p>\,)\big] +Y_N,
\end{equation}
where 
\begin{equation*} 
H_N=H_N^\prime+ H_N^{(3)} \qquad \text{and} \qquad Y_N = Y_N^\prime -\, \<1r>\, + Y_N^{(3)} - P_{\leq N} \Duh \big[ \PCtrl(H_N^\prime, P_{\leq N} \<1r>\,) \big]
\end{equation*}
satisfy $\|H_N \|_{\LM([0,\tau])}, \|Y_N\|_{\X{s_2}{b}([0,\tau])} \lesssim B^2$. 
Using Lemma \ref{ftg:lem_type_conversion}, we also obtain that
\begin{equation*}
   \| w_N \|_{\X{s_1}{b}([0,\tau])} \lesssim B^5 \qquad \text{and} \qquad 
\sum_{L_1\sim L_2} \| P_{L_1} \, \<1p> \cdot  P_{L_2} w_N \|_{L_t^2 H_x^{-4\delta_1}([0,\tau]\times \bT^3)} \lesssim B^5. 
\end{equation*}
Inserting our choice of $B$, this completes the proof of \eqref{lwp:item_P2_Gibbs}. \\

The statement \eqref{lwp:item_P3_Gibbs} directly follows from \eqref{lwp:item_P3} in Proposition \ref{lwp:prop}. It now remains to prove \eqref{lwp:item_P4_Gibbs}. We emphasize that $T\geq 1$ is arbitrary, which will be useful in the stability theory below. We focus on the estimate for the difference, since the proof of the estimate for $w_K[\tau]$ is easier (but similar). Using Lemma \ref{ftg:lem_structured_perturbation}, we may restrict to $\, \bluedot \in \Theta_{\blue}^{\tsp}(B,T)$ and $\, \reddot \in \Theta_{\red}^{\tsp}(B,T)$. Then, we can replace the estimates in $\Z([0,T], \Zp; t_0,N,K)$ by estimates in $ \Z([0,T], \Zb; t_0,N,K)$. After rearranging \eqref{global:eq_temp2}, we have that 
\begin{equation*}
w_N = P_{\leq N} \Duh \big[ \PCtrl( H_N^\prime + H_N^{(3)}, P_{\leq N} \, \<1b>)\big] + Y_N^\prime - \<1r> + Y_N^{(3)} + P_{\leq N} \Duh \big[ \PCtrl(H_N^{(3)}, P_{\leq N} \<1r>)\big]. 
\end{equation*}
Thus, we obtain that 
\begin{equation*}
w_N[\tau]-w_K[\tau] = \Zbox_{N,K}[\tau] + \Zcirc_{N,K}[\tau],
\end{equation*}
where 
\begin{equation*}
\Zbox_{N,K}[\tau]
\defe P_{\leq N} \Duh \big[ \PCtrl( H_N^\prime + H_N^{(3)}, P_{\leq N} \, \<1b>)\big][\tau] 
- P_{\leq K} \Duh \big[ \PCtrl( H_K^\prime + H_K^{(3)}, P_{\leq K} \, \<1b>)\big][\tau] . 
\end{equation*}
and
\begin{align*}
\Zcirc_{N,K}[\tau] &\defe  Y_N^\prime - Y_K^\prime
+ Y_N^{(3)} - Y_K^{(3)} 
+ P_{\leq N} \Duh \big[ \PCtrl(H_N^{(3)}, P_{\leq N} \<1r>)\big]- P_{\leq K} \Duh \big[ \PCtrl(H_K^{(3)}, P_{\leq K} \<1r>)\big]. 
\end{align*}
The desired estimate then follows from the frequency-localized version of the multi-linear master estimate (Prop \ref{local:prop_master}), \eqref{lwp:item_P3} in Proposition \ref{lwp:prop}, and Proposition \ref{ftg:prop_cubic}. 
\end{proof}

\subsubsection{Structure and time-translation}

In the globalization argument, we use the invariance of the truncated Gibbs measures under the truncated flows to transform our bounds from the time-interval $[0,(j-1)\tau]$ to the time-interval $[\tau,j\tau]$. As the reader saw in the proof of Proposition \ref{global:prop_gwp}, however, the structural bounds are now phrased in terms of $\, \greendot = \Phi_M[\tau]\, \purpledot$. The next proposition translates the structural bounds back into $\purpledot$. 

\begin{proposition}[Structure and time-translation]\label{global:prop_structure_time}
Let $A\geq 1$, let $T\geq 1$, let $0<\tau\leq 1$, let $j \in \mathbb{N}$ satisfy $j\tau\leq T$, let $\alpha>0$ be a sufficiently large absolute constant, and let $\zeta=\zeta(\epsilon,s_1,s_2,\kappa,\eta,\eta^\prime,b_+,b)>0$ be sufficiently small. There exists a Borel set $\cS^{\stime}(A,T,\tau)\subseteq \cL(A,\tau)$ satisfying
\begin{equation}\label{global:eq_time_probabilistic_estimate}
\mup_M(\cS^{\stime}(A,T,\tau)) \geq 1 - \zeta^{-1} \exp( - \zeta A^\zeta) 
\end{equation}
for all $M\geq 1$ and such that the following holds for all $\, \purpledot \in \cS^{\stime}(A,T,\tau)$: 

Let $N, K \geq 1$, let $B\geq 1$, and define $\, \greendot = \Phi_K[\tau]\, \purpledot$. Let $w^{\grn}_{N,K}\in \X{s_1}{b}([0,(j-1)\tau])$ satisfy
\begin{itemize}
\item[(A1)] Global structured bounds in $\greendot$: 
\begin{equation*}
\| w^{\grn}_{N,K}\|_{\X{s_1}{b}([0,(j-1)\tau])} \leq B \quad \text{and} \quad
 \sum_{L_1\sim L_2} \| P_{L_1} \<1g>  \cdot P_{L_2} w^{\grn}_{N,K} \|_{L_t^{2} H_x^{-4\delta_1}([0,(j-1)\tau]\times \bT^3)} \leq B. 
\end{equation*}
\end{itemize} 
Define $w_{N,K}\colon [\tau,j\tau] \times \bT^3\rightarrow \bR$ through the identity
\begin{equation}\label{global:eq_green_to_purple}
\<1g>(t-\tau) + \<3DNg>(t-\tau) + w^{\grn}_{N,K}(t-\tau) =  \<1p>(t)+ \<3DNp>(t) - \<3DNKtaup>(t) + w_{N,K}(t). 
\end{equation}
Then, we obtain the following conclusion regarding $w_{N,K}$. 
\begin{itemize}
\item[(C1)] Incomplete structured global bounds in $\, \purpledot\,$: 
\begin{equation*}
\| w_{N,K}\|_{\X{s_1}{b}([\tau,j\tau])} \leq T^\alpha A B \quad \text{and} \quad
 \sum_{L_1\sim L_2} \| P_{L_1} \<1p>  \cdot P_{L_2} w_{N,K} \|_{L_t^{2} H_x^{-4\delta_1}([\tau,j\tau]\times \bT^3)} \leq T^\alpha A B. 
 \end{equation*}
\end{itemize}

\end{proposition}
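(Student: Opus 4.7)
\textbf{Proof proposal for Proposition \ref{global:prop_structure_time}.}

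The plan is to expand the defining identity \eqref{global:eq_green_to_purple} until every term is expressed in terms of objects based on $\,\purpledot\,$, exploit a key algebraic cancellation between the cubic stochastic objects, and then bound the remaining pieces term-by-term. The starting point is Proposition \ref{global:prop_lwp} applied at time $\tau$, which gives the structured decomposition $\,\greendot\, = \Phi_K[\tau]\,\purpledot\, = \<1p>[\tau] + \<3DKp>[\tau] + w_K[\tau]$ together with the estimate $\|w_K[\tau]\|_{\mathscr{Z}([0,T],\scriptp;\tau,N,K)} \leq A T^\alpha$ (which is where the $T^\alpha$ loss in the conclusion enters). Propagating by the free flow for time $t-\tau \geq 0$ and using $S(t-\tau)\<3DKp>[\tau] = \<3DKtaup>(t)$, one obtains the identity
\begin{equation*}
\<1g>(t-\tau) = \<1p>(t) + \<3DKtaup>(t) + S(t-\tau) w_K[\tau], \qquad t \in [\tau, j\tau].
\end{equation*}
Set $\psi(t) \defe \<3DKtaup>(t) + S(t-\tau) w_K[\tau]$ for brevity, so $\<1g>(t-\tau) = \<1p>(t) + \psi(t)$.

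Substituting into the definition of $\<3Ng>$ and expanding the (renormalized) cubic nonlinearity multilinearly yields $\<3Ng>(t-\tau) = \<3Np>(t) + R_{N,K}(t)$, where $R_{N,K}$ is the sum of all mixed terms involving at least one factor of $\psi$ together with the necessary adjustments to the renormalization. Taking Duhamel integrals (changing variables $s \mapsto t-\tau$) gives $\<3DNg>(t-\tau) = \<3DNp>(t) - \<3DNtaup>(t) + \widetilde{R}_{N,K}(t)$, where $\widetilde{R}_{N,K}$ is the Duhamel integral of $R_{N,K}$ over $[\tau,t]$. Inserting these expansions into \eqref{global:eq_green_to_purple} and using the key algebraic cancellation
\begin{equation*}
\<3DKtaup>(t) - \<3DNtaup>(t) + \<3DNKtaup>(t) = 0,
\end{equation*}
which is immediate from $\<3DNKtaup> = \<3DNtaup> - \<3DKtaup>$, one arrives at the clean identity
\begin{equation*}
w_{N,K}(t) = S(t-\tau) w_K[\tau] + \widetilde{R}_{N,K}(t) + w^{\grn}_{N,K}(t-\tau).
\end{equation*}

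It then remains to bound each of the three summands in $\X{s_1}{b}([\tau,j\tau])$ and in the paraproduct norm. The free evolution $S(t-\tau) w_K[\tau]$ is handled by decomposing $w_K[\tau] = Z^{\scalebox{0.9}{$\square$}}[\tau] + Z^{\scalebox{0.9}{$\circ$}}[\tau]$ as in the definition of the $\mathscr{Z}$-norm and applying the energy estimate together with the embedding $\cH_x^{s_2} \hookrightarrow \cH_x^{s_1}$; the paraproduct bound with $\,\<1p>\,$ is built directly into the $\mathscr{Z}$-norm. The term $\widetilde{R}_{N,K}$ is a sum of (at most quintic) multilinear expressions in $\,\<1p>\,$, $\<3DKtaup>$, and $S(t-\tau)w_K[\tau]$; each resulting multilinear object has at least one factor of elevated regularity ($\<3DKtaup>$ is of type $\<3D>$ up to the modification in Remark \ref{local:remark_master}, and $S(t-\tau)w_K[\tau]$ is of a type compatible with the stability framework via the $\mathscr{Z}$-norm), so its $\X{s_2-1}{b_+-1}$-norm, and hence via Duhamel its $\X{s_2}{b}$-norm, is controlled by the multi-linear master estimate (Proposition \ref{local:prop_master}) in its Gibbsian formulation (Section \ref{section:ftg}) together with Proposition \ref{global:prop_lwp}. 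The translated remainder $w^{\grn}_{N,K}(t-\tau)$ inherits its $\X{s_1}{b}$-bound from the assumption (A1) by time-translation invariance of $\X{s}{b}$-spaces; for the paraproduct bound we write $\,\<1p>(t) = \<1g>(t-\tau) - \psi(t)$, use (A1) for the $\<1g>$-contribution after time-translation, and estimate the $\psi\cdot w^{\grn}_{N,K}$ contribution by Sobolev, exploiting that $\psi$ has regularity $\min(\beta,s_1)- \geq s_1 - $ while $w^{\grn}_{N,K}$ is in $\X{s_1}{b}$.

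The set $\cS^{\stime}(A,T,\tau)$ is defined as the intersection of $\cL(A,\tau)$ with the events on which the Gibbsian versions of Proposition \ref{local:prop_master}, Proposition \ref{global:prop_lwp}, and the bounds on $\<3DKtaup>$ and the mixed cubic stochastic objects all hold with constant $A$; the probability estimate \eqref{global:eq_time_probabilistic_estimate} then follows by a union bound. The main technical obstacle is the bookkeeping of the mixed cubic terms in $\widetilde{R}_{N,K}$: each requires invoking the appropriate case of the multi-linear master estimate, with $\<3DKtaup>$ treated as a type-$\<3D>$ object and $S(t-\tau)w_K[\tau]$ treated through its $\mathscr{Z}$-decomposition into a para-controlled $\Zbox$-part and a smoother $\Zcirc$-part. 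Since the Gibbsian multi-linear estimates accept either $\bluedot$- or $\purpledot$-based building blocks (Section \ref{section:ftg}), this bookkeeping is conceptually straightforward, and the resulting polynomial-in-$T$ losses are absorbed into the factor $T^\alpha$.
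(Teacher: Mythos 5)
Your proof follows the same route as the paper: use Proposition~\ref{global:prop_lwp} to write $\greendot = \<1p>[\tau] + \<3DKp>[\tau] + w_K[\tau]$ with a $\mathscr{Z}$-bound, propagate by the free flow, expand $\<3Ng>(\cdot-\tau)$ around $\<3Np>$, exploit the cancellation among the partial cubic objects, and bound the resulting identity $w_{N,K}(t)=S(t-\tau)w_K[\tau]+\widetilde{R}_{N,K}(t)+w^{\grn}_{N,K}(t-\tau)$ term-by-term using the Gibbsian multi-linear master estimate (Proposition~\ref{ftg:prop_master}), Lemma~\ref{ftg:lem_multilinear_Z}, Corollary~\ref{ftg:cor_cubic}, and Lemma~\ref{phy:lem_bilinear_tool}. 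One small slip worth fixing: the claim $\min(\beta,s_1)- \geq s_1-$ is false when $\beta<s_1$ (which is allowed), but the paraproduct estimate you need only requires $\psi$ to have positive regularity, so that $\beta+s_1>0$ yields convergence of the dyadic sum; similarly, $\widetilde{R}_{N,K}$ is trilinear (cubic) in the building blocks $\<1p>$, $\<3DKtaup>$, $S(t-\tau)w_K[\tau]$, not "at most quintic," though this is only a mislabeling and the master estimate applies as you say.
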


\begin{remark}
The superscript ``$\grn$'' in $w_{N,K}^{\grn}$ stands for ``green'', which is motivated by the identity \eqref{global:eq_green_to_purple}. We refer in the conclusion to ``incomplete structured global bounds" since the right-hand side in \eqref{global:eq_green_to_purple} does not yet have the desired form. The partial cubic stochastic object
\begin{equation*}
\<3DNKtaup>
\end{equation*}
is subtracted from it and hence we regard the structure as incomplete. 
\end{remark}

\begin{proof}
Before we turn to the analytical and probabilistic estimates, we discuss the definition and Borel measurability of $\cS^{\stime}(A,T,\tau)$. 
We let $\cS^{\stime}(A,T,\tau)$ be the intersection of $\cL(A,\tau)$ with the set of $\purpledot\in \cH^{-1/2-\kappa}_x$ satisfying the implication (A1)$\rightarrow$(C1) for all $N,K,B$, and $w^{\grn}_{N,K}$. For fixed parameters and  a fixed function $w^{\grn}_{N,K}$, the set of $\purpledot \in \cH_x^{-1/2-\kappa}$ satisfying (A1) and/or (C1) is closed and hence Borel measurable. Using a separability argument, it suffices to require the implication (A1)$\rightarrow$(C1) for countably many $w^{\grn}_{N,K}$, which yields the measurability of $\cS^{\stime}(A,T,\tau)$.

We now turn to the analytical and probabilistic estimates. If $\, \purpledot \in \cL(A,\tau)$, it follows from \eqref{lwp:item_P2_Gibbs} and \eqref{lwp:item_P4_Gibbs} from Proposition \ref{global:prop_lwp} that 
\begin{equation*}
\greendot = \<1p>[\tau] + \<3DKp>[\tau] + Z_K[\tau],
\end{equation*}
where the remainder $Z_K[\tau]$ satisfies
\begin{equation*}
\| Z_K[\tau]\|_{\Z([0,T],\scriptp;\tau,N,K)} \leq A T^\alpha. 
\end{equation*}
By applying the linear propagator to $\,\greendot$, we obtain for all $t\geq \tau$ that 
\begin{equation}\label{global:eq_time_linear}
\<1g>(t-\tau) = \<1p>(t) + \<3DKptau>(t)+Z_K(t),
\end{equation}
where we recall from \eqref{global:eq_modified_cubic_objects} that 
\begin{equation*}
 \<3DKptau>(t) = \Duh\Big[ 1_{[0,\tau]} \<3Kp>\Big](t)
\end{equation*}
Regarding the cubic stochastic object, we have that 
\begin{equation}\label{global:eq_time_cubic}
\begin{aligned}
\<3DNg>(t-\tau)&= \Duh \Big[ 1_{[\tau,\infty)} \<3Ng>(\cdot -\tau)\Big] (t) \\
			&= \Duh \Big[ 1_{[\tau,\infty)} \<3Np>\Big](t)  + \Duh \Big[ 1_{[\tau,\infty)} \Big(\<3Ng>(\cdot -\tau)- \<3Np>(\cdot)\Big)\Big] (t)
\end{aligned}
\end{equation}
Combining the algebraic identity 
\begin{equation*}
\Duh\Big[ 1_{[0,\tau]} \<3Kp>\Big](t) +  \Duh \Big[ 1_{[\tau,\infty)} \<3Np>\Big](t)   = \<3DNp>(t) - \<3DNKtaup>(t)
\end{equation*}
with \eqref{global:eq_time_linear} and \eqref{global:eq_time_cubic}, it follows that 
\begin{equation}\label{global:eq_time_w}
\begin{aligned}
w_{N,K}(t) &= w^{\grn}_{N,K}(t) + Z_K(t)  + \Duh \Big[ 1_{[\tau,\infty)} \Big(\<3Ng>(\cdot -\tau)- \<3Np>(\cdot)\Big)\Big] (t). 
\end{aligned}
\end{equation}
Equipped with the identity \eqref{global:eq_time_w} for $w_{N,K}$, it remains to prove the conclusion (C1) on an event satisfying \eqref{global:eq_time_probabilistic_estimate}. The second and third summand in \eqref{global:eq_time_w} can be treated using Lemma \ref{ftg:lem_type_conversion}, Proposition \ref{ftg:prop_master} (combined with \eqref{global:eq_time_linear}), and Lemma \ref{ftg:lem_multilinear_Z}.
 Thus, it remains to prove (C1) for the first summand in \eqref{global:eq_time_w}. Using \eqref{global:eq_time_linear}, we have that
 \begin{align}
  &\sum_{L_1\sim L_2} \| P_{L_1} \<1p>(t) \cdot P_{L_2} w^{\grn}_{N,K}(t-\tau) \|_{L_t^{2} H_x^{-4\delta_1}([\tau,j\tau]\times \bT^3)} \notag \\
  &\leq \sum_{L_1\sim L_2} \| P_{L_1} \<1g>(t) \cdot P_{L_2} w^{\grn}_{N,K}(t) \|_{L_t^{2} H_x^{-4\delta_1}([0,(j-1)\tau]\times \bT^3)} 
  \label{global:eq_time_p1}\\
  &+ \sum_{L_1\sim L_2} \| P_{L_1}  \<3DKptau>(t) \cdot P_{L_2} w^{\grn}_{N,K}(t-\tau) \|_{L_t^{2} H_x^{-4\delta_1}([\tau,j\tau]\times \bT^3)} 
  \label{global:eq_time_p2}\\
  &+ \sum_{L_1\sim L_2} \| P_{L_1}  Z_K(t) \cdot P_{L_2} w^{\grn}_{N,K}(t-\tau) \|_{L_t^{2} H_x^{-4\delta_1}([\tau,j\tau]\times \bT^3)}. 
  \label{global:eq_time_p3}
 \end{align}
 The first term \eqref{global:eq_time_p1} can be bounded using assumption (A1). The second term \eqref{global:eq_time_p2} is bounded by Corollary \ref{ftg:cor_cubic}, and the third term \eqref{global:eq_time_p3} is bounded by Lemma \ref{phy:lem_bilinear_tool}. 
\end{proof}

\subsubsection{Structure and the cubic stochastic object}

In Proposition \ref{global:prop_structure_time} above, the right-hand side of  \eqref{global:eq_green_to_purple} does not have the desired structure. In the next proposition, we will show that adding the ``partial'' cubic stochastic object $\<3DNKtaup>$ only leads to a small error in the nonlinear wave equation.  

\begin{proposition}[Structure and the cubic stochastic object:]\label{global:prop_structure_cubic}
Let $T\geq 1$, let $A\geq 1$, let $0<\tau<1$, let $\alpha>0$ be a sufficiently large absolute constant, and let $\zeta=\zeta(\epsilon,s_1,s_2,\kappa,\eta,\eta^\prime,b_+,b)>0$ be sufficiently small. Then, there exists a Borel set $\cS^{\cub}(A,T,\tau)\subseteq \cH_x^{-1/2-\kappa}(\bT^3)$ satisfying 
\begin{equation*}
\mup_M(\cS^{\cub}(A,T,\tau)) \geq 1 - \zeta^{-1} \exp(-\zeta A^{\zeta}) 
\end{equation*}
for all $M\geq 1$ and such that the following holds for all $\, \purpledot\, \in \cS^{\cub}(A,T,\tau)$:\\
Let $N,K\geq1$, let $B\geq 1$, let $j\in \mathbb{N}$, let  $\cJ=[\tau,j\tau]\subseteq [0,T]$, and let $u_{N,K}\colon \cJ \times \bT^3 \rightarrow \bR$. Furthermore, we make the following assumptions:
\begin{itemize}
\item[(A1)] Incomplete structure: There exists a $w_{N,K}(t)\in \X{s_1}{b}(\cJ)$ satisfying all $t\in \cJ$ the identity
\begin{equation*}
u_{N,K}(t)=\<1p>(t)+ \<3DNp>(t) - \<3DNKtaup>(t) + w_{N,K}(t).
\end{equation*}
\item[(A2)] Incomplete structured global bounds: 
\begin{equation*}
\| w_{N,K} \|_{\X{s_1}{b}(\cJ)} \leq B  \quad \text{and} \quad \sum_{L_1\sim L_2} \| P_{L_1} \<1p>\, \cdot P_{L_2} w_{N,K} \|_{L_t^2 H_x^{-4\delta_1}(\cJ \times \bT^3)} \leq B. 
\end{equation*}
\end{itemize}
We define a function $\util \colon \cJ \times \bT^3 \rightarrow \bR$ by
\begin{equation*}
\util(t) = u_{N,K}(t) +  \<3DNKtaup>(t). 
\end{equation*}
Then, $\util$ satisfies the following three properties.
\begin{itemize}
\item[(C1)] \label{global:item_cubic_C1} Structure: For all $t\in \cJ$,  it holds that 
\begin{equation*}
\util(t)=\<1p>(t)+ \<3DNp>(t)+ \widetilde{w}_N(t),
\end{equation*}
where $\widetilde{w}_N = w_{N,K}$. 
\item[(C2)] Approximate solution: There exist $H_N\in \LM(\cJ)$ and $F_N \in \X{s_2-1}{b_+-1}(\cJ)$ satisfying
\begin{align*}
&(-\partial_t^2 - 1 + \Delta) \util 
- P_{\leq N} \lcol \big( V \ast ( P_{\leq N} \util )^2  \big) P_{\leq N} \util \rcol \\
&=(-\partial_t^2 - 1 + \Delta) u_{N,K}
- P_{\leq N} \lcol \big( V \ast ( P_{\leq N} u_{N,K} )^2  \big) P_{\leq N} u_{N,K} \rcol \\
&-P_{\leq N} \PCtrl( H_N, P_{\leq N} \<1p>) - F_N 
\end{align*}
and 
\begin{equation*}
\| H_N \|_{\LM(\cJ)}, \| F_N \|_{\X{s_2-1}{b_+-1}(\cJ)} < T^\alpha A B^3 \min(N,K)^{-\eta^\prime}. 
\end{equation*}
\item[(C3)] Closeness: It holds that 
\begin{equation*}
\| \widetilde{u}_N[t] - u_{N,K}[t]  \|_{C_t^0 \cH_x^{\beta-\kappa}(\cJ \times \bT^3)} < T^\alpha A B^3 \min(N,K)^{-\eta^\prime}. 
\end{equation*}
\end{itemize}
\end{proposition}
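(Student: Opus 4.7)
The plan is to extract (C1) from pure algebra, reduce (C3) to a bound on the ``partial'' cubic object $\<3DNKtaup>$, and obtain (C2) by expanding the difference of the two cubic nonlinearities into a controllable number of trilinear pieces, each of which carries a factor of $\<3DNKtaup>$ and therefore inherits a $\min(N,K)^{-\eta^\prime}$ smallness gain. The set $\cS^{\cub}(A,T,\tau)$ will be defined as the intersection of the ambient/Gibbsian master-estimate event of Proposition \ref{local:prop_master} (converted to $\, \purpledot\,$ via Section \ref{section:ftg}), the analogous event yielding the frequency-localized version of the master estimate with the $\eta^\prime$-power in the maximal frequency (see Remark \ref{local:remark_master}), and the natural tail event controlling $\|\<1p>\|_{L_t^\infty\cC_x^{-1/2-\kappa}}$, $\sup_N\|\<3DNp>\|_{L_t^\infty\cC_x^{\beta-\kappa}}$ and $\sup_{N,K}\min(N,K)^{\eta^\prime}\|\<3DNKtaup>\|_{L_t^\infty\cC_x^{\beta-\kappa}}$ on $[0,T]$. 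All of these have probability $\geq 1-\zeta^{-1}\exp(-\zeta A^\zeta)$ uniformly in $M$.

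Conclusion (C1) is immediate from the definition $\util = u_{N,K} + \<3DNKtaup>$ and the assumption (A1). For (C3) one observes that on $\cJ=[\tau,j\tau]$ the function $\<3DNKtaup>$ coincides with the free evolution of its data at time $\tau$, since the forcing $1_{[0,\tau]}(\<3Np>-\<3Kp>)$ vanishes past $\tau$; therefore it suffices to bound $\<3DNKtaup>$ in $L_t^\infty\cH_x^{\beta-\kappa}$ on $[0,T]$, which is part of the defining event of $\cS^{\cub}$ and supplies the $\min(N,K)^{-\eta^\prime}$ factor (this bound in turn follows from the frequency-localized stochastic-object estimates of Section \ref{section:stochastic_object} together with their Gibbsian counterparts in Section \ref{section:ftg}).

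For (C2), the linear parts cancel: since $\util-u_{N,K}=\<3DNKtaup>$ is a free wave on $\cJ$, the operator $(-\partial_t^2-1+\Delta)$ applied to the difference vanishes there. Hence the error equals the difference of the two cubic renormalized nonlinearities. Writing $a=P_{\leq N}\util$, $b=P_{\leq N}u_{N,K}$, and $c=P_{\leq N}\<3DNKtaup>=a-b$, the unrenormalized difference expands telescopically as
\begin{equation*}
(V\ast a^2)\,a-(V\ast b^2)\,b = (V\ast a^2)\,c + \bigl(V\ast((a+b)c)\bigr)\,b,
\end{equation*}
and after inserting the decompositions $\util=\<1p>+\<3DNp>+w_{N,K}$ and $u_{N,K}=\<1p>+\<3DNp>-\<3DNKtaup>+w_{N,K}$ we obtain a finite sum of trilinear terms, each of which is a product of three factors chosen among $\<1p>$, $\<3DNp>$, $\<3DNKtaup>$, $w_{N,K}$ (convolved via $V$ in two of the three arguments), with at least one factor equal to $\<3DNKtaup>$. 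The renormalization terms from the two $\lcol\,\rcol$-symbols exactly cancel except on the same pieces, so what remains is genuinely a trilinear expansion. I then classify each of these pieces according to the ``type'' scheme of Definition \ref{local:def_types}, regarding $\<3DNKtaup>$ as a function of type $\<3D>$ for which Remark \ref{local:remark_master} is applicable, regarding $w_{N,K}$ as a function of type $w$ (after dividing by $B$), and placing the $\parald$ or $\paraboxld$ paradifferential operators around the $\<1p>$-factor in precisely those triples that match the cases \eqref{local:item_master_1}--\eqref{local:item_master_2} of Proposition \ref{local:prop_master}. The $\PCtrl$-component generated in the cases requiring a paradifferential operator is collected into $H_N$; everything else is placed in $F_N$. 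Applying the frequency-localized multi-linear master estimate (Remark \ref{local:remark_master}) to each piece and summing over the dyadic frequency scales of $\<3DNKtaup>$ yields a total bound of the form $T^\alpha A B^3\min(N,K)^{-\eta^\prime}$ in $\LM(\cJ)$ and $\X{s_2-1}{b_+-1}(\cJ)$ respectively; the constants are controlled by the assumptions (A2), the defining bounds of $\cS^{\cub}$, and Lemma \ref{local:lem_type_conversion} (in its Gibbsian avatar from Section \ref{section:ftg}) to turn $w_{N,K}/B$ into a function of type $w$.

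The main obstacle is organizational rather than analytic: correctly bookkeeping the roughly dozen trilinear pieces produced by the telescopic expansion, matching each piece with the correct case of Proposition \ref{local:prop_master}, and verifying that the renormalization counterterms on the two sides of the identity cancel exactly on the pieces that do not contain a factor of $\<3DNKtaup>$ (which are the only pieces present on both sides). Once that matching is done, the estimates themselves are a direct application of the frequency-localized master estimate in its Gibbsian form, and the exponent $\eta^\prime$ in the maximal frequency provides the required smallness $\min(N,K)^{-\eta^\prime}$ uniformly in the remaining dyadic sums.
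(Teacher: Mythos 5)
Your proposal is correct and follows essentially the same route as the paper's own proof. The paper defines $\cS^{\cub}(A,T,\tau)$ as the set where the implication (A1),(A2)$\to$(C1),(C2),(C3) holds (using a separability argument for measurability), obtains (C1) trivially, reduces (C2) to the difference of the two cubic nonlinearities after noting that $\<3DNKtaup>$ is a free wave on $\cJ$, and then cites the frequency-localized Gibbsian master estimate (Proposition \ref{ftg:prop_master}) plus the observation that $\<1p>$ enters $\<3DNKtaup>$ at frequency $\gtrsim\min(N,K)$ in at least one argument; (C3) is attributed to the frequency-localized version of Proposition \ref{ftg:prop_cubic}. Your telescopic expansion $(V\ast a^2)a-(V\ast b^2)b=(V\ast a^2)c+(V\ast((a+b)c))b$, the type classification, and the bookkeeping of renormalization counterterms are exactly the unpacked content behind the paper's appeal to Proposition \ref{ftg:prop_master}; your construction of $\cS^{\cub}$ by intersecting explicit events is an equivalent, slightly less economical alternative to the paper's implication-based definition.
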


\begin{proof} 
We simply choose $\cS^{\cub}(A,T,\tau)$ as the set of all $\purpledot\in \cH_x^{-1/2-\kappa}$ where the implication (A1),(A2) $\rightarrow$(C1),(C2),(C3) holds for all $N,K,B,j$, and $w_{N,K}$. Similar as in the proof of Proposition \ref{global:prop_structure_time}, a separability argument yields the Borel measurability of $\cS^{\cub}(A,T,\tau)$. \\
We now show that $\cS^{\cub}(A,T,\tau)$ satisfies the desired probabilistic estimate. The first conclusion (C1) follows directly from the definition of $\widetilde{u}_N$. We now turn to the second conclusion, which is the main part of the argument. First, we recall that  $\<3DNKtaup>$ solves the linear wave equation on  $\cJ = [\tau,j\tau]$. Together with the definition of $\util$, this implies 
\begin{align*}
&(-\partial_t^2 - 1 + \Delta) \util 
- P_{\leq N} \lcol \big( V \ast ( P_{\leq N} \util )^2  \big) P_{\leq N} \util \rcol \\
&- \Big( (-\partial_t^2 - 1 + \Delta) u_{N,K}
- P_{\leq N} \lcol \big( V \ast ( P_{\leq N} u_{N,K} )^2  \big) P_{\leq N} u_{N,K} \rcol  \Big)\\
&=  P_{\leq N} \lcol \Big( V \ast \Big( P_{\leq N} u_{N,K} + P_{\leq N} \<3DNKtaup> \Big)^2  \Big) P_{\leq N} \Big(u_{N,K} + \<3DNKtaup>\Big) \rcol   \\
&- P_{\leq N} \lcol \big( V \ast ( P_{\leq N} u_{N,K} )^2  \big) P_{\leq N} u_{N,K} \rcol. \\
\end{align*}
We emphasize that in the cubic stochastic object $\<3DNKtaup>$, the linear evolution $\<1p>$ enters at a frequency $\gtrsim \min(N,K)$ in at least one of the arguments. Using the frequency-localized version of the multi-linear master estimate for Gibbsian initial data (Proposition \ref{ftg:prop_master}), we obtain the conclusion (C2). 

Finally, (C3) directly follows from the frequency-localized version of Proposition \ref{ftg:prop_cubic}.
\end{proof}

\subsubsection{Stability theory}

The last ingredient for the globalization argument is a stability estimate. The proof will rely on our previous stability estimate for Gaussian random data from Proposition \ref{local:prop_stability}. As a result, the argument closely resembles a similar step in the local theory, where we proved Proposition \ref{global:prop_lwp} through Proposition \ref{lwp:prop}. 

\begin{proposition}[Stability estimate]\label{global:prop_stability}
Let $T\geq 1$, let $A\geq 1$, let $0<\tau\leq 1$, and let  $\zeta=\zeta(\epsilon,s_1,s_2,\kappa,\eta,\eta^\prime,b_+,b)>0$ be sufficiently small. There exists a constant $C=C(\epsilon,s_1,s_2,b_+,b_-)$ and a Borel set $\cS^{\stab}(A,T,\tau) \subseteq \cH_x^{-1/2-\kappa}(\bT^3)$ satisfying
\begin{equation}\label{global:eq_stability_probabilistic}
\mup_M(\, \purpledot \in \cS^{\stab}_{\blue}(A,T,\tau)) \geq 1 - \zeta^{-1} \exp(-\zeta A^{\zeta}) 
\end{equation}
such that the following holds for all  $\, \purpledot \in \cS^{\stab}(A,T,\tau)$: \\
Let $N\geq 1$, $B\geq 1$, $0<\theta<1$, and let $\cJ = [t_0,t_1]\subseteq [0,T]$, where $t_0 ,t_1 \in \tau \bZ$.  Let  $\util\colon J \times \bT^3 \rightarrow \bR$ be an approximate solution of \eqref{eq:nlw_N} satisfying the following assumptions.
\begin{itemize}
\item[(A1)] Structure: We have the decomposition
\begin{equation*}
\util = \<1p> + \<3DNp> + \wtil. 
\end{equation*}
\item[(A2)] Global bounds: It holds that
\begin{equation*}
\| \wtil \|_{\X{s_1}{b}(\cJ)} \leq B \quad \text{and} \quad \sum_{L_1\sim L_2} \|  P_{L_1} \<1p>\, \cdot  P_{L_2} \wtil \|_{L_t^2 H_x^{-4\delta_1}(\cJ\times \bT^3)} \leq B. 
\end{equation*}
\item[(A3)] Approximate solution: There exists $H_N\in \LM(\cJ)$ and $F_N \in \X{s_2-1}{b_+-1}(\cJ)$ satisfying the identity
\begin{align*}
(-\partial_t^2 - 1 + \Delta) \util  = P_{\leq N} \lcol \big( V \ast ( P_{\leq N} \util )^2  \big) P_{\leq N} \util \rcol - P_{\leq N} \PCtrl(H_N , P_{\leq N} \<1p>\,) - F_N 
\end{align*}
and the estimates
\begin{equation*}
\| H_N \|_{\LM(\cJ)} < \theta \qquad \text{and} \qquad \| F_N \|_{\X{s_2-1}{b_+-1}(\cJ)} < \theta.  
\end{equation*}
\end{itemize}
Furthermore, let $Z_N[t_0] \in H_x^{s_1}(\bT^3)$ be a perturbation satisfying the following assumption.
\begin{itemize}
\item[(A4)] Structured perturbation: There exists a $K\geq 1$ such that
\begin{equation*}
\| Z[t_0] \|_{\Z(\cJ,\scriptp;t_0,N,K)}\leq \theta. 
\end{equation*}
\end{itemize}
Finally, assume that 
\begin{itemize}
\item[(A5)] Parameter condition: $C \exp\Big( C (A+B+T)^C \Big) \theta \leq 1$. 
\end{itemize}
Then, there exists a solution $u_N\colon J \times \bT^3\rightarrow \bR$ of \eqref{eq:nlw_N} satisfying the initial value condition $u_N[t_0]= \util[t_0] + Z_N[t_0]$ and  the following conclusions. 
\begin{itemize}
\item[(C1)] Preserved structure: We have the decomposition 
\begin{equation*}
u_N= \<1p> + \<3DNp> + w_N. 
\end{equation*}
\item[(C2)] Closeness: The difference $u_N-\util=w_N-\wtil$ satisfies 
\begin{align*}
\| u_N - \util \|_{\X{s_1}{b}(\cJ)} &\leq C \exp\big( C (A+B+T)^C \big) \theta, \\
 \sum_{L_1\sim L_2} \|  P_{L_1} \<1p>\, \cdot P_{L_2} (u_N-\util) \|_{L_t^2 H_x^{-4\delta_1}(\cJ\times \bT^3)} &\leq  C \exp\big( C (A+B+T)^C \big) \theta.
\end{align*}
\item[(C3)] Preserved global bounds: It holds that
\begin{equation*}
\| w_N \|_{\X{s_1}{b}(\cJ)} \leq B_\theta \quad \text{and} \quad \sum_{L_1\sim L_2} \| P_{L_1} \<1p> \cdot  P_{L_2} w_N \|_{L_t^2 H_x^{-4\delta_1}(\cJ\times \bT^3)} \leq B_\theta,
\end{equation*}
where $ B_\theta \defe B+ C \exp\big( C (A+B+T)^C \big) \theta$. 
\end{itemize}
\end{proposition}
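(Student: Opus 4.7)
The plan is to reduce the Gibbsian stability estimate to its Gaussian counterpart (Proposition \ref{local:prop_stability}), in the same spirit as the reduction of Proposition \ref{global:prop_lwp} to Proposition \ref{lwp:prop}. First I would define $\cS^{\stab}(A,T,\tau)$ as the set of $\, \purpledot\, \in \cH_x^{-1/2-\kappa}(\bT^3)$ for which the implication (A1)--(A5)$\,\Rightarrow\,$(C1)--(C3) holds for every choice of $N,K,B,\theta,\cJ,\util,\wtil,H_N,F_N,Z_N$, and establish Borel measurability by the same separability argument used for $\cS^{\stime}(A,T,\tau)$ and $\cS^{\cub}(A,T,\tau)$. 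Because the conclusions (C1)--(C3) are stated intrinsically in terms of $\purpledot$, the probabilistic bound \eqref{global:eq_stability_probabilistic} will follow from Theorem \ref{theorem:measures} (absolute continuity $\mup_M \leq C \, \nup_M^{1-1/q}$ and the representation $\nup_M=\Law_\bP(\,\bluedot+\reddotM\,)$) once the deterministic implication is established outside a set of $\bP$-probability $\leq \zeta^{-1}\exp(-\zeta A^\zeta)$. After adjusting $\zeta$, it therefore suffices to carry out the reduction on the ambient space with $\purpledot = \bluedot + \reddotM$.

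Having fixed $\, \purpledot = \bluedot + \reddotM\,$, the core of the argument consists in rewriting the purple decompositions as blue decompositions so that Proposition \ref{local:prop_stability} applies. Setting
\begin{equation*}
\widetilde{w}'_N \defe \wtil + \,\<1r>\, + \big( \<3DNp> - \<3DN>\big),
\end{equation*}
we get $\util = \,\<1b>\, + \<3DN> + \widetilde{w}'_N$. By the algebraic expansion of $\<3DNp>$ recalled in the proof of Proposition \ref{global:prop_lwp}, the difference $\<3DNp>-\<3DN>$ decomposes into finitely many mixed cubic stochastic objects of the shape $\<3DNbrb>,\<3DNrrb>,\<3DNbbr>,\<3DNbrr>,\<3DNrrr>$, each of which can be controlled in $\X{s_2}{b}([0,T])$ and in the bilinear norm $\sum_{L_1 \sim L_2} \|P_{L_1}\<1b>\cdot P_{L_2}(\cdot)\|_{L_t^2 H_x^{-4\delta_1}}$ by an application of Proposition \ref{ftg:prop_cubic} and Lemma \ref{ftg:lem_type_conversion}. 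Hence assumption (A2) upgrades, outside an event of $\bP$-probability $\leq \zeta^{-1}\exp(-\zeta A^\zeta)$, to the analogous blue bound with $B$ replaced by a constant multiple of $B+T^{\alpha} A$. Similarly, substituting $\util = \,\<1b>\, + \<3DN> + \widetilde{w}'_N$ into the renormalized nonlinearity and using Proposition \ref{local:prop_master} (in its frequency-localized form) to expand every trilinear interaction, we absorb the extra low-regularity contributions into a new para-controlled driver $H_N'$ and a new smooth remainder $F_N'$ satisfying $\|H_N'\|_{\LM(\cJ)} + \|F_N'\|_{\X{s_2-1}{b_+-1}(\cJ)} \lesssim \theta + T^{30}A^C$. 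Finally, assumption (A4) can be converted using Lemma \ref{ftg:lem_structured_perturbation}, which replaces the $\Z(\cJ,\scriptp;t_0,N,K)$-norm by the $\Z(\cJ,\scriptb;t_0,N,K)$-norm at the cost of a factor $T^\alpha$ and a small power of $A$.

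With these blue versions of (A1)--(A4) in hand, I would verify that Proposition \ref{local:prop_stability} is applicable: the parameter condition (A5) there reads $C \exp(C(A+B')^{2/(b_+-b)} T^{40/(b_+-b)})\theta \leq 1$ with $B'$ the enlarged blue size, which is weaker than our (A5) once $C$ is increased appropriately. The conclusion yields a true solution $u_N$ of \eqref{eq:nlw_N} with $u_N[t_0]=\util[t_0]+Z_N[t_0]$ decomposing as $u_N = \,\<1b>\, + \<3DN> + w'_N$ with closeness bounds on $w'_N - \widetilde{w}'_N$. Setting $w_N \defe w'_N - \,\<1r>\, - (\<3DNp}-\<3DN>)$ restores the purple structure (C1), and the closeness bounds (C2) and the preserved global bounds (C3) follow because $u_N-\util = w'_N - \widetilde{w}'_N = w_N-\wtil$, so the extra purple/blue discrepancy terms cancel in the difference. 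The bilinear bound in (C2) then follows by pairing the difference $w_N-\wtil$ against $P_{L_1}\,\<1p>\,$ and exchanging $\,\<1p>\,$ for $\,\<1b>\,-\,\<1r>\,$ at an $O(A)$ cost, again using Lemma \ref{ftg:lem_structured_perturbation}.

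The main obstacle is bookkeeping: the conversion from purple to blue introduces cross terms at each of the four assumption levels, and one must verify that the resulting enlargement of $B$ by polynomial factors in $(A,T)$ is absorbed into the tolerances of Proposition \ref{local:prop_stability} under the parameter condition (A5). This is ultimately a routine but tedious accounting, analogous to Steps 1--3 in the proof of Proposition \ref{global:prop_lwp}, with Proposition \ref{ftg:prop_master}, Lemma \ref{ftg:lem_type_conversion}, and Lemma \ref{ftg:lem_structured_perturbation} doing the heavy lifting. The probabilistic losses are under control because each invocation of Section 9 removes an event of $\bP$-probability at most $\zeta^{-1}\exp(-\zeta A^\zeta)$, and the absolute continuity transfers this bound to $\mup_M$ at the cost of the exponent $1-1/q$, which can be absorbed by shrinking $\zeta$ one final time.
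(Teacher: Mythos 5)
Your overall reduction strategy matches the paper's: define $\cS^{\stab}(A,T,\tau)$ by the implication, prove measurability by separability, transfer the probabilistic estimate through $\nup_M$, and convert the purple assumptions into blue ones so that Proposition~\ref{local:prop_stability} applies. Your identification $\widetilde w'_N = \wtil + \<1r> + (\<3DNp>-\<3DN>)$ is exactly the paper's $\wtil + w_N[\purpledot\rightarrow\bluedot]$, and the handling of (A2) and (A4) via Proposition~\ref{ftg:prop_cubic}, Lemma~\ref{ftg:lem_type_conversion_blue_purple}, and Lemma~\ref{ftg:lem_structured_perturbation} is correct.

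The genuine gap is in your conversion of (A3). You propose to substitute the blue decomposition of $\util$ into the nonlinearity, re-expand every trilinear interaction with Proposition~\ref{local:prop_master}, and absorb what you call ``the extra low-regularity contributions'' into new $H_N',F_N'$ with bound $\lesssim \theta + T^{30}A^C$. This step is both unnecessary and, as stated, fatal. It is unnecessary because the renormalized nonlinearity $P_{\leq N}\lcol (V*(P_{\leq N}\util)^2)P_{\leq N}\util\rcol$ in (A3) depends only on $\util$ itself, not on its decomposition; the identity in (A3) therefore holds verbatim after the change of decomposition, with the sole modification that the driver $\PCtrl(H_N,P_{\leq N}\<1p>)$ must be rewritten as $\PCtrl(H_N,P_{\leq N}\<1b>) + \PCtrl(H_N,P_{\leq N}\<1r>)$; the second piece lands in $\X{s_2-1}{b_+-1}(\cJ)$ with norm $\lesssim \|H_N\|_{\LM(\cJ)}\,\|\<1r>\|_{L^\infty_t H^{s_2-1+8\epsilon}_x} \lesssim A\theta$ by Lemma~\ref{para:lem_basic}, and is simply absorbed into $F_N$. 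There is no new trilinear term to estimate, and so nothing that can produce a contribution of size $T^{30}A^C$ independent of $\theta$. Your additive bound $\theta + T^{30}A^C$ is a real problem: to invoke Proposition~\ref{local:prop_stability} you need $C\exp\big(C(A+B')^{2/(b_+-b)}T^{40/(b_+-b)}\big)\theta'\leq 1$, which fails identically if $\theta'$ contains an $\mathcal{O}(T^{30}A^C)$ term that does not vanish as $\theta\to 0$. The correct statement, as in the paper's proof, is $\theta' \lesssim T^\alpha A^D\theta$, multiplicative in $\theta$, and then (A5) here (after enlarging $C$) does give (A5) of Proposition~\ref{local:prop_stability}. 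Once that bound is corrected, the rest of your argument---applying Proposition~\ref{local:prop_stability}, restoring the purple structure by $w_N = w'_N - \<1r> - (\<3DNp>-\<3DN>)$, and observing that the conversion terms cancel in $u_N - \util$---is sound.
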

\begin{proof} Similar as in the proof of Proposition \ref{global:prop_structure_cubic}, we can define $\cS^{\stab}(A,T,\tau)$ through the implications (A1)-(A5) $\rightarrow$ (C1)-(C3) and prove its measurability using a separability argument. \\

It remains to prove the probabilistic estimate \eqref{global:eq_stability_probabilistic}. Using Theorem \ref{theorem:measures}, it suffices to prove that 
\begin{equation*}
\bP\big( \, \bluedot + \reddotM \in \cS^{\stab}(A,T,\tau)\big) \geq 1 - \zeta^{-1} \exp(\zeta A^{\zeta}). 
\end{equation*}
Using Lemma \ref{local:lem_type_conversion}, Corollary \ref{ftg:cor_cubic}, Proposition \ref{local:prop_stability}, Lemma \ref{local:lem_type_conversion}, and Lemma \ref{ftg:lem_structured_perturbation}, which also contain the definitions of the sites below, we may restrict to the event
\begin{equation}
\begin{aligned}
&\Big\{ \bluedot \in  \Theta_{\blue}^{\type}(A,T) \medcap \Theta^{\stab}_{\blue}(A,T) \medcap \Theta_{\blue}^{\cub}(A,T) \medcap \Theta_{\blue}^{\tsp}(A,T) \Big\} \\
&\medcap \Big\{ \reddotM \in \Theta_{\red}^{\type}(A,T) \medcap  \Theta_{\red}^{\tsp}(A,T) \Big\}. 
\end{aligned}
\end{equation}

Our goal is to use Proposition \ref{local:prop_stability} (with slightly adjusted parameters). To this end, we need to convert the assumptions (A1)-(A5) involving $\purpledot$ into similar statements based on $\bluedot$. We let $D>0$ be  a large implicit (but absolute) constant, which may change its value between different lines. We now let $N,B,\theta,\cJ,\widetilde{u}_N,\widetilde{w}_N, H_N, F_N$, and $Z_N[t_0]$ be as in (A1)-(A5). We then define $w_N[\purpledot\rightarrow \bluedot]$ by 
\begin{equation*}
\<1p> + \<3DNp>= \<1b> + \<3DNb> + w_N[\purpledot\rightarrow \bluedot],
\end{equation*}
which implies
\begin{equation*}
\util = \<1b> + \<3DNb> +  w_N[\purpledot\rightarrow \bluedot]+\wtil. 
\end{equation*}
Using Corollary \ref{ftg:cor_cubic} and Lemma \ref{ftg:lem_type_conversion_blue_purple}, we obtain that 
\begin{equation*}
\| \wtil \|_{\X{s_1}{b}(\cJ)} \leq B  \quad \text{and} \quad \sum_{L_1\sim L_2} \|  P_{L_1} \<1b>\, \cdot  P_{L_2} \wtil \|_{L_t^2 H_x^{-4\delta_1}(\cJ\times \bT^3)} \leq T^\alpha A^D B 
\end{equation*}
as well as  
\begin{equation*}
\| w_N[\purpledot\rightarrow \bluedot] \|_{\X{s_1}{b}(\cJ)} \leq T^\alpha A^D \quad \text{and} \quad \sum_{L_1\sim L_2} \|  P_{L_1} \<1b>\, \cdot  P_{L_2}w_N[\purpledot\rightarrow \bluedot] \|_{L_t^2 H_x^{-4\delta_1}(\cJ\times \bT^3)} \leq T^\alpha A^D. 
\end{equation*}
Thus, (A2) in Proposition \ref{local:prop_stability} is satisfied with $B^\prime = 2 T^\alpha A^D B$. A similar argument based on Lemma \ref{ftg:lem_type_conversion_blue_purple} and Lemma \ref{ftg:lem_structured_perturbation} also yields (A3) and (A4) in Proposition \ref{local:prop_stability} with $\theta^\prime= 2 T^\alpha A^D B$. Furthermore, the stronger assumption (A5) in this proposition implies (as long as $C$ is sufficiently large) that 
\begin{equation*}
C \exp\Big( C (A+B^\prime)^{\frac{2}{b_+-b}} T^{\frac{40}{b_+-b}} \Big) \theta^\prime \leq 1. 
\end{equation*}
Thus, Proposition \ref{local:prop_stability} implies that 
\begin{align*}
\| u_N - \util \|_{\X{s_1}{b}(\cJ)} &\leq C \exp\big( C (A+B^\prime)^{\frac{2}{b_+-b}} T^{\frac{40}{b_+-b}} \big) \theta^\prime, \\
\sum_{L_1\sim L_2} \|  P_{L_1} \<1b>\, \cdot  P_{L_2} (u_N-\util) \|_{L_t^2 H_x^{-4\delta_1}(\cJ\times \bT^3)} &\leq  C \exp\big( C (A+B^\prime)^{\frac{2}{b_+-b}} T^{\frac{40}{b_+-b}} \big) \theta^\prime.
\end{align*}
Arguing similarly as above to replace $\<1b>$ by $\<1p>$, this proves the conclusion (C2). The conclusion (C3) then follows from the triangle inequality and assumption (A2). 
\end{proof}

\section{Ingredients, tools, and methods}\label{section:tools}

In this section we provide tools that will be used throughout the rest of this paper. In order to make this section accessible to readers with a primary background in either dispersive or stochastic partial differential equations, our exposition will be detailed. We encourage the reader to skip sections covering areas of his or her expertise. 

In Section \ref{section:bourgain}, we cover $\X{s}{b}$-spaces, which are also called Bourgain spaces. The $\X{s}{b}$-spaces will allow us to utilize multi-linear dispersive effects. In Section \ref{section:continuity}, we present a continuity argument. In Section \ref{section:sin_cancellation}, we prove an oscillatory sum estimate for a series involving the $\operatorname{sine}$-function. While the proof is standard, its relevance to dispersive equations is surprising and the cancellation was first used by Gubinelli, Koch, and Oh in \cite{GKO18a}. In Section \ref{section:counting}, we state several counting estimates related to the dispersive symbol of the wave equation. The counting estimate play an important role in the estimates of our stochastic objects. In Section \ref{section:gaussian_processes}, we recall elementary properties of Gaussian processes, which have been heavily used in the first part of the series \cite{BB20a}. In Section \ref{section:multiple_integrals}, we provide background regarding multiple stochastic integrals. This section has an algebraic flavor and the multiple stochastic integrals will be used to separate the non-resonant and resonant components of our stochastic object. In Section \ref{section:hypercontractivity}, we discuss Gaussian hypercontractivity and its implications for random matrices. 

\subsection{Bourgain spaces and transference principles}\label{section:bourgain}
In this subsection, we recall the definitions and elementary properties of $\X{s}{b}$-spaces, which are often also called Bourgain spaces. Heuristically, $\X{s}{b}$-spaces contain space-time functions $u$ which behave like solutions to the linear wave equation. This principle will be made more precise through the transference principles below. We refer the reader to \cite[Section 2.6]{Tao06} and \cite[Section 3.3]{ET16} for a more detailed introduction. 

\begin{definition}[$\X{s}{b}$-spaces]\label{tools:def_xsb}
For any $s,b\in \bR$ and $u\colon \bR \times \bT^3 \rightarrow \bR$, we define the $\X{s}{b}$-norm by 
\begin{equation}\label{tools:eq_xsb}
\| u \|_{\X{s}{b}} \defe \| \langle n \rangle^s \langle |\lambda|- \langle n \rangle \rangle^b \widehat{u}(\lambda,n) \|_{L_\lambda^2 \ell_n^2(\bR\times \bZ^3)}.
\end{equation}
If $\cJ \subseteq \bR$ is any interval, we define the restricted norm by 
\begin{equation}
\| u \|_{\X{s}{b}(\cJ)} \defe \inf \{ \| v \|_{\X{s}{b}} \colon v(t,x)|_\cJ = u \}. 
\end{equation}
We denote the corresponding function spaces by $\X{s}{b}$ and $\X{s}{b}(\cJ)$, respectively. 
\end{definition}
In \eqref{tools:eq_xsb}, we could have used the symbol $\langle |\lambda|- |n| \rangle$ instead of $\langle |\lambda| - \langle n \rangle \rangle$. Since $\langle n \rangle = |n| + \mathcal{O}(1)$, this would yield an equivalent definition. Our first lemma shows the connection between the $\X{s}{b}$-spaces and the half-wave operators. 

\begin{lemma}[\protect{Characterization of $\X{s}{b}$}]\label{tools:lem_xsb_characterization}
Let $s,b\in \bR$ and let $u\colon \bR \times \bT^3 \rightarrow \bR$. Then, it holds that 
\begin{equation}\label{tools:eq_xsb_characterization_1}
\| u \|_{\X{s}{b}(\bR)} \lesssim \min_{\pm} \| \langle \nabla \rangle^{s} \exp( \mp i t \langle \nabla \rangle) u \|_{L_x^2 H_t^b(\bT^3 \times \bR)}.
\end{equation}
Furthermore, we have the equivalence
\begin{equation}\label{tools:eq_xsb_characterization_2}
\| u \|_{\X{s}{b}(\bR)} \sim  \min_{\substack{u_+,u_- \in \X{s}{b}(\bR)\colon \\ u = u_+ + u_-}} \max_{\pm} \| \langle \nabla \rangle^{s} \exp( \mp i t \langle \nabla \rangle) u_\pm \|_{L_x^2 H_t^b(\bT^3 \times \bR)}.
\end{equation}
\end{lemma}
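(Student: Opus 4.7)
The plan is to prove both inequalities by a direct Fourier-side computation, using the fact that conjugation by the half-wave propagator simply shifts the temporal Fourier variable.

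First I would compute, for either sign $\pm$, the Fourier transform in time of $\exp(\mp i t \langle \nabla \rangle) u(t,x)$: it equals $\widehat{u}(\lambda \pm \langle n \rangle, n)$ at space-time frequency $(\lambda,n)$. After a change of variables $\lambda \mapsto \lambda \mp \langle n \rangle$, Plancherel gives
\begin{equation*}
\| \langle \nabla \rangle^s \exp(\mp it\langle \nabla \rangle) u \|_{L_x^2 H_t^b}^2 = \sum_{n\in \bZ^3} \langle n \rangle^{2s} \int_\bR \langle \lambda \mp \langle n \rangle \rangle^{2b} |\widehat{u}(\lambda,n)|^2 \dlambda.
\end{equation*}
For \eqref{tools:eq_xsb_characterization_1}, I would then invoke the pointwise comparison $\langle |\lambda| - \langle n \rangle \rangle \leq \langle \lambda \mp \langle n \rangle \rangle$, which is immediate on each of the half-lines $\{\lambda \geq 0\}$ and $\{\lambda < 0\}$ after choosing the sign appropriately (in fact the right-hand side coincides with $\langle |\lambda| - \langle n \rangle \rangle$ on exactly one of the two half-lines, depending on the sign).

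For the equivalence \eqref{tools:eq_xsb_characterization_2}, the ``$\lesssim$'' direction is trivial: any admissible decomposition $u = u_+ + u_-$ combined with \eqref{tools:eq_xsb_characterization_1} applied to each summand and the triangle inequality yields the desired bound. For the reverse ``$\gtrsim$'' direction I would exhibit a specific decomposition by splitting Fourier support in $\lambda$: set $\widehat{u_+}(\lambda,n) \defe \widehat{u}(\lambda,n) \mathbf{1}_{\lambda \geq 0}$ and $\widehat{u_-}(\lambda,n) \defe \widehat{u}(\lambda,n) \mathbf{1}_{\lambda < 0}$. By the computation above, $\| \langle \nabla \rangle^s \exp(-it\langle \nabla \rangle) u_+ \|_{L_x^2 H_t^b}$ reduces on $\supp \widehat{u_+}$ to an integral involving $\langle \lambda - \langle n \rangle \rangle = \langle |\lambda| - \langle n \rangle \rangle$ (since $\lambda \geq 0$ there), and hence equals $\| u_+ \|_{\X{s}{b}}$; symmetrically for $u_-$ with the $+$ propagator.

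The only ``obstacle'' — really a minor bookkeeping point — is keeping track of which sign to pair with which half-line of $\lambda$ to turn $\langle \lambda \mp \langle n \rangle \rangle$ into $\langle |\lambda| - \langle n \rangle \rangle$; once the case split in $\lambda$ is made this is automatic. Finally, by the disjoint Fourier supports of $u_+$ and $u_-$ one has the orthogonality identity $\| u \|_{\X{s}{b}}^2 = \| u_+\|_{\X{s}{b}}^2 + \| u_-\|_{\X{s}{b}}^2$, so taking the max over the two summands only loses a factor of $\sqrt{2}$, which is absorbed into the equivalence constant.
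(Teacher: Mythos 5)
Your proposal is correct and follows essentially the same route as the paper's proof: Plancherel to compute $\| \langle \nabla \rangle^{s} \exp( \mp i t \langle \nabla \rangle) u \|_{L_x^2 H_t^b}$ as a weighted $L^2_\lambda \ell^2_n$ norm with weight $\langle \pm\lambda - \langle n \rangle \rangle^b$, the pointwise bound $\langle |\lambda| - \langle n \rangle \rangle \leq \langle \pm\lambda - \langle n \rangle \rangle$ for \eqref{tools:eq_xsb_characterization_1}, the triangle inequality for one direction of \eqref{tools:eq_xsb_characterization_2}, and the half-line truncation $\widehat{u_\pm} = \mathbf{1}_{\pm\lambda \geq 0}\,\widehat{u}$ for the other. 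The orthogonality remark at the end is harmless but slightly more than needed; the one-sided inequality $\|u_\pm\|_{\X{s}{b}} \leq \|u\|_{\X{s}{b}}$ from disjoint supports already closes the $\gtrsim$ direction.
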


\begin{proof}
Using Plancherel's identity, it holds that 
\begin{equation*}
\| \langle \nabla \rangle^{s} \exp( \mp i t \langle \nabla \rangle) u \|_{L_x^2 H_t^b(\bT^3 \times \bR)} =
 \| \langle n \rangle^s \langle \pm \lambda - \langle n \rangle \rangle^b \widehat{u}(\lambda,n) \|_{L_\lambda^2 \ell_n^2(\bR \times \bZ^3)}. 
\end{equation*}
The first estimate \eqref{tools:eq_xsb_characterization_1} then follows from $||\lambda|-\langle n \rangle| \leq |\pm \lambda - \langle n \rangle|$. The inequality ``$\lesssim$" in the identity \eqref{tools:eq_xsb_characterization_2} follows from the triangle inequality and \eqref{tools:eq_xsb_characterization_1}. The inequality ``$\gtrsim$" follows by defining $u_\pm$ as
\begin{equation*}
\widehat{u}(\lambda,n) = 1\big\{ \pm \lambda \geq 0  \big\} \cdot \widehat{u}(\lambda,n). 
\end{equation*}
\end{proof}

Our next lemma plays an important role in the local theory. It yields the required smallness of the nonlinearity on a small time-interval. 

\begin{lemma}[Time-localization lemma]\label{tools:lem_localization}
Let $-1/2 < b_1 \leq b_2 < 1/2$ and let $1/2< b <1$. Let $\psi \in \mathcal{S}(\bR)$ be a Schwartz-function and let $0 < \tau \leq 1$. Then, it holds for all $F\in \X{s}{b_2}(\bR)$ that 
\begin{equation}\label{tools:eq_local_1}
\big\| \psi( t/\tau) F \big\|_{\X{s}{b_1}(\bR)} \lesssim \tau^{b_2 -b_1} \| F \|_{\X{s}{b_2}(\bR)} \qquad \text{and} \qquad \| F \|_{\X{s}{b_1}([0,\tau])} \lesssim \tau^{b_2 -b_1} \| F \|_{\X{s}{b_2}([0,\tau])}.
\end{equation}
Furthermore, we have for all $u \in \X{s}{b}(\bR)$ that 
\begin{equation}\label{tools:eq_local_2}
\|  \psi( t/\tau) u \|_{\X{s}{b}(\bR)} \lesssim \tau^{\frac{1}{2}-b} \| u \|_{\X{s}{b}(\bR)}. 
\end{equation}
\end{lemma}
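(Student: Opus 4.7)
The plan is to reduce all three estimates to one-dimensional (time only) multiplication estimates in $H^b(\mathbb{R})$ via the characterization of $\X{s}{b}$ in Lemma \ref{tools:lem_xsb_characterization}. The crucial algebraic point is that multiplication by $\psi(t/\tau)$ commutes with the half-wave operator $\exp(\mp i t \langle \nabla \rangle)$, since the latter acts only on the spatial Fourier side. Consequently, if we decompose $F = F_+ + F_-$ as in \eqref{tools:eq_xsb_characterization_2} and set $v_\pm(t,x) \defe \langle \nabla \rangle^s \exp(\mp i t \langle \nabla \rangle) F_\pm(t,x)$, then $\psi(t/\tau) F = \psi(t/\tau) F_+ + \psi(t/\tau) F_-$ is an admissible decomposition, and $\langle \nabla \rangle^s \exp(\mp it \langle \nabla \rangle)[\psi(t/\tau) F_\pm] = \psi(t/\tau) v_\pm$. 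It therefore suffices to control $\|\psi(t/\tau) v_\pm\|_{L_x^2 H_t^{b_1}}$ in terms of $\|v_\pm\|_{L_x^2 H_t^{b_2}}$, and by Minkowski/Fubini this reduces, for almost every fixed $x$, to a scalar estimate of the form $\|\psi(t/\tau) f\|_{H_t^{b_1}(\mathbb{R})} \lesssim \tau^{b_2-b_1} \|f\|_{H_t^{b_2}(\mathbb{R})}$.

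For the first estimate in \eqref{tools:eq_local_1}, the required scalar bound is the standard fact that, for $-1/2<b_1\leq b_2<1/2$ and $0<\tau\leq 1$,
\begin{equation*}
\|\psi(t/\tau) f\|_{H^{b_1}(\mathbb{R})} \lesssim \tau^{b_2-b_1} \|f\|_{H^{b_2}(\mathbb{R})}.
\end{equation*}
I would prove this by a Littlewood–Paley decomposition in time: use $\widehat{\psi(\cdot/\tau)}(\lambda) = \tau\, \widehat{\psi}(\tau\lambda)$, which is concentrated in $\{|\lambda|\lesssim 1/\tau\}$ with Schwartz tails, split $f$ into $P_{\leq 1/\tau}^{\rm (t)} f$ and $P_{>1/\tau}^{\rm (t)} f$, and estimate each piece separately. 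The restriction $|b_1|, |b_2|<1/2$ is what makes the endpoint symbols $\langle \lambda \rangle^{b_j}$ integrate against the convolution kernel without logarithmic divergence; for $b_1<0$ one passes through a duality argument using the same bound with $-b_2, -b_1$ in place of $b_1,b_2$. The restricted-norm version in the second half of \eqref{tools:eq_local_1} then follows by choosing, once and for all, a Schwartz $\psi$ with $\psi \equiv 1$ on $[0,1]$, extending any $F$ on $[0,\tau]$ to $\widetilde F \in \X{s}{b_2}(\mathbb{R})$ with $\|\widetilde F\|_{\X{s}{b_2}} \leq 2 \|F\|_{\X{s}{b_2}([0,\tau])}$, and applying the first estimate to $\psi(t/\tau)\widetilde F$, which agrees with $F$ on $[0,\tau]$.

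For the third estimate \eqref{tools:eq_local_2}, the situation is different: $H^b$ is a Banach algebra for $b>1/2$, so instead of an $H^{b_2}\to H^{b_1}$ smoothing we use the pointwise-in-$x$ product bound
\begin{equation*}
\|\psi(t/\tau) g\|_{H^b(\mathbb{R})} \lesssim \|\psi(t/\tau)\|_{H^b(\mathbb{R})} \|g\|_{H^b(\mathbb{R})}.
\end{equation*}
A direct Fourier computation using $\widehat{\psi(\cdot/\tau)}(\lambda)=\tau\widehat{\psi}(\tau\lambda)$ gives $\|\psi(t/\tau)\|_{H^b(\mathbb{R})} \sim \tau^{1/2-b}$ (the derivative piece dominates for $\tau<1$ and $b>1/2$). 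Applying this with $g = v_\pm(\cdot,x)$, squaring, and integrating in $x$ yields $\|\psi(t/\tau) v_\pm\|_{L_x^2 H_t^b}\lesssim \tau^{1/2-b}\|v_\pm\|_{L_x^2 H_t^b}$, which combined with Lemma \ref{tools:lem_xsb_characterization} produces \eqref{tools:eq_local_2}.

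The only mildly technical step is the scalar multiplication estimate for $H^{b_j}$ with $|b_j|<1/2$; every other step is an exercise in the definitions and in the commutation between $\psi(t/\tau)$ and $\exp(\mp it\langle \nabla\rangle)$. I do not anticipate any serious obstacle beyond bookkeeping.
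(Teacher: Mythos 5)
Your proposal is correct and follows essentially the same route as the paper's proof: commute $\psi(t/\tau)$ past the half-wave operator using Lemma \ref{tools:lem_xsb_characterization} and the decomposition $F = F_+ + F_-$, reduce to the scalar one-dimensional bound $\|\psi(t/\tau)f\|_{H_t^{b_1}}\lesssim \tau^{b_2-b_1}\|f\|_{H_t^{b_2}}$, and use the algebra property of $H^b(\bR)$ together with $\|\psi(\cdot/\tau)\|_{H^b}\sim\tau^{1/2-b}$ for \eqref{tools:eq_local_2}. The only cosmetic difference lies in how the scalar estimate is established: the paper interpolates between the $b_1=0$ endpoint (H\"older plus Sobolev embedding) and the $b_1=b_2$ endpoint (fractional product rule/paraproduct), while you sketch a Littlewood--Paley decomposition in time --- these are two presentations of the same standard argument. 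One small caveat: your reduction to $b_1\geq 0$ ``by duality'' alone is slightly incomplete in the mixed-sign case $b_1<0<b_2$, where, as the paper says, one also needs to compose through $L^2$ (factoring $\psi$ as a product of two Schwartz functions); this is a minor bookkeeping point and does not affect the substance of the argument.
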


A proof of Lemma \ref{tools:lem_localization} or a similar result can be found in many textbooks on dispersive PDE, such as  \cite[Section 2.6]{Tao06} or \cite[Section 3.3]{ET16}. Since the second estimate \eqref{tools:eq_local_2} is not usually found in the literature, we present a self-contained proof. 

\begin{proof}
By using duality and a composition, we may assume that $0 \leq b_1 \leq b_2 < 1/2$. Let $F_+,F_- \in \X{s}{b_2}(\bR)$ satisfying $F=F_++ F_-$. Using Lemma \ref{tools:lem_xsb_characterization}, we obtain that
\begin{equation}\label{tools:eq_local_p1}
\| \psi( t/\tau) F \|_{\X{s}{b_1}(\bR)} \lesssim    \max_{\pm} \|  \psi( t/\tau) \langle \nabla \rangle^{s} \exp( \mp i t \langle \nabla \rangle) F_\pm \|_{L_x^2 H_t^{b_1}(\bT^3 \times \bR)}.
\end{equation}
Using interpolation between the $b_1=0$ and $b_1=b_2$ as well as the fractional product rule (or a simple para-product estimate), one has for all $f\in H_t^{b_2}(\bR)$ the estimate 
\begin{equation}\label{tools:eq_local_p2}
\| \psi( t/\tau)  f \|_{H_t^{b_1}(\bR)} \lesssim \tau^{b_2 -b_1}  \| f \|_{H_t^{b_2}(\bR)}. 
\end{equation}
Combining \eqref{tools:eq_local_p1} and \eqref{tools:eq_local_p2} yields the first estimate in \eqref{tools:eq_local_1}. The second estimate in \eqref{tools:eq_local_p1} then follows from the first estimate and the definition of the restricted norms. Finally, the second estimate \eqref{tools:eq_local_p2} follows from the same argument, except that \eqref{tools:eq_local_p2} is replaced by 
\begin{equation}\label{tools:eq_local_p3}
\| \psi( t/\tau)  f \|_{H_t^{b}(\bR)} \lesssim \| \psi( t/\tau)  \|_{H_t^{b}(\bR)}  \| f \|_{H_t^{b}(\bR)} \lesssim  \tau^{\frac{1}{2}-b}  \| f \|_{H_t^{b}(\bR)},
\end{equation}
which follows from the algebra property of $H_t^b(\bR)$. 
\end{proof}

\begin{lemma}[Restricted norms and continuity]\label{tools:lem_restricted_continuity}
Let $s\in \bR$ and let $-1/2< b^\prime <1/2$. Then, we have for any interval $\cJ \subseteq \bR$ and any $F\in \X{s}{b^\prime}(\bR)$ that 
\begin{equation}\label{tools:eq_rc_1}
\| 1_{\cJ} F \|_{\X{s}{b^\prime}(\bR)} \lesssim \|  F \|_{\X{s}{b^\prime}(\bR)}.
\end{equation}
Furthermore, if $G\in \X{s}{b^\prime}(\cJ)$, then
\begin{equation}\label{tools:eq_rc2}
\| G \|_{\X{s}{b^\prime}(\cJ)} \sim \| 1_{\cJ} G \|_{\X{s}{b^\prime}(\bR)} 
\end{equation}
Finally, if $t_0\defe \inf \cJ$, then the map
\begin{equation}\label{tools:eq_rc3}
t \in \cJ \mapsto \| 1_{[t_0,t]} G \|_{\X{s}{b^\prime}(\bR)} 
\end{equation}
is continuous. 
\end{lemma}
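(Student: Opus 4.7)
The plan is to handle the three assertions in order, reducing everything via the transference principle (Lemma \ref{tools:lem_xsb_characterization}) to the scalar sharp-cutoff lemma: for $-1/2 < b' < 1/2$, the characteristic function $1_\cJ$ is a bounded pointwise multiplier on $H_t^{b'}(\bR)$, with operator norm uniform in $\cJ$. This scalar statement is classical; it can be derived by writing $1_{[t_0,t_1]} = H(t-t_0)\,H(t_1-t)$ with $H$ the Heaviside, checking that multiplication by $H$ is bounded on $H_t^{b'}$ for $|b'|<1/2$ (via Plancherel, since $\widehat{H}$ is a principal-value distribution of Calder\'on--Zygmund type), and interpolating with the trivial $L^2$ bound.

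Granting the scalar cutoff lemma, I prove \eqref{tools:eq_rc_1} as follows. Using \eqref{tools:eq_xsb_characterization_2}, decompose $F = F_+ + F_-$ with
\begin{equation*}
\max_\pm \| \langle \nabla \rangle^s v_\pm \|_{L_x^2 H_t^{b'}(\bT^3\times \bR)} \lesssim \| F\|_{\X{s}{b'}(\bR)}, \quad v_\pm \defe e^{\mp i t \langle\nabla\rangle} F_\pm.
\end{equation*}
Since $1_\cJ(t)$ is a function of $t$ alone, $1_\cJ F_\pm = e^{\pm it \langle\nabla\rangle}(1_\cJ v_\pm)$, and applying the scalar cutoff lemma to $\langle \nabla\rangle^s v_\pm$ in $H_t^{b'}(\bR)$ fiberwise in $x$ gives the bound. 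For \eqref{tools:eq_rc2}, the inequality $\|G\|_{\X{s}{b'}(\cJ)} \leq \|1_\cJ G\|_{\X{s}{b'}(\bR)}$ is immediate, since $1_\cJ G$ restricts to $G$ on $\cJ$ and is a candidate extension. Conversely, choose an extension $\widetilde G \in \X{s}{b'}(\bR)$ with $\|\widetilde G\|_{\X{s}{b'}(\bR)} \leq 2\|G\|_{\X{s}{b'}(\cJ)}$; then $1_\cJ G = 1_\cJ \widetilde G$ unambiguously, and \eqref{tools:eq_rc_1} gives $\|1_\cJ G\|_{\X{s}{b'}(\bR)} \lesssim \|G\|_{\X{s}{b'}(\cJ)}$.

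The substance lies in the continuity statement \eqref{tools:eq_rc3}. By the triangle inequality, for $t \leq t'$ in $\cJ$,
\begin{equation*}
\big| \|1_{[t_0,t']}G\|_{\X{s}{b'}(\bR)} - \|1_{[t_0,t]}G\|_{\X{s}{b'}(\bR)} \big| \leq \|1_{(t,t']}G\|_{\X{s}{b'}(\bR)},
\end{equation*}
and symmetrically for $t' < t$, so it suffices to prove $\|1_A G\|_{\X{s}{b'}(\bR)} \to 0$ as $|A| \to 0$ for $A$ an interval contained in a fixed bounded set. Fix a global extension $\widetilde G \in \X{s}{b'}(\bR)$ of $G$ and, given $\varepsilon>0$, use density of smooth compactly supported functions in $L_x^2 H_t^{b'}(\bT^3\times\bR)$ (combined with the transference of Lemma \ref{tools:lem_xsb_characterization}, applied to each of $v_+$ and $v_-$) to produce $\widetilde G_n$ smooth and compactly supported in $t$ with $\|\widetilde G - \widetilde G_n\|_{\X{s}{b'}(\bR)} < \varepsilon$. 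Then
\begin{equation*}
\|1_A G\|_{\X{s}{b'}(\bR)} \lesssim \|\widetilde G - \widetilde G_n\|_{\X{s}{b'}(\bR)} + \|1_A \widetilde G_n\|_{\X{s}{b'}(\bR)}
\end{equation*}
by \eqref{tools:eq_rc_1}, so it remains to show that for each fixed smooth $\widetilde G_n$ the second term tends to zero with $|A|$.

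The hard part is this final smallness statement, which is the only place where we go beyond the uniform boundedness of sharp cutoffs. For $b' \in [0,1/2)$, a direct Fourier computation gives $\|1_A\|_{H_t^{b'}(\bR)} \lesssim |A|^{1/2-b'}$, and the Sobolev product/Moser estimate $\|fg\|_{H^{b'}} \lesssim \|f\|_{H^{b'}} \|g\|_{W^{1,\infty}_{\mathrm{loc}}}$ applied fiberwise (after invoking Lemma \ref{tools:lem_xsb_characterization} to pass from $\X{s}{b'}$ to $L_x^2 H_t^{b'}$) yields $\|1_A \widetilde G_n\|_{\X{s}{b'}(\bR)} \to 0$. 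For $b' \in (-1/2,0)$, I would argue by duality: since $1_A$ is bounded on $H_t^{-b'}(\bR)$ by the same cutoff lemma and $-b' \in (0,1/2)$, the previous case gives $\|1_A \varphi\|_{H_t^{-b'}} \to 0$ for smooth $\varphi$, and a standard dualization combined with the already-established uniform boundedness on the $b'$ side upgrades this to $\|1_A \widetilde G_n\|_{H_t^{b'}} \to 0$. Letting $\varepsilon \to 0$ after $|A| \to 0$ completes the argument.
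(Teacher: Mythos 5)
Your parts \eqref{tools:eq_rc_1} and \eqref{tools:eq_rc2} match the paper's argument: the paper reduces to $H_t^{b'}(\bR)$ exactly as you do, writes $1_\cJ$ as a superposition of Heaviside-type cutoffs, and cites the scalar boundedness of $1_{(0,\infty)}$ on $H^{b'}$ for $|b'|<1/2$ (referred to there as a ``fractional product rule or simple paraproduct estimate''); you are merely spelling out the underlying Hilbert-transform/$A_2$ mechanism that the paper leaves implicit.

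For \eqref{tools:eq_rc3} you take a genuinely different route. The paper reduces by density (using the uniform bound from \eqref{tools:eq_rc_1}) to $G\in \X{s}{1/2}(\bR)$ and then invokes Lemma~\ref{tools:lem_localization}, obtaining directly the quantitative H\"older estimate $\|1_{(t_1,t_2]}G\|_{\X{s}{b'}(\bR)}\lesssim |t_1-t_2|^\delta\|G\|_{\X{s}{1/2}(\bR)}$. You instead approximate by smooth, $t$-compactly-supported functions and establish the smallness of $\|1_A\widetilde G_n\|_{\X{s}{b'}}$ from scratch via $\|1_A\|_{H_t^{b'}}\lesssim |A|^{1/2-b'}$ and a Moser-type estimate $\|fg\|_{H^{b'}}\lesssim\|f\|_{H^{b'}}\|g\|_{W^{1,\infty}}$, with a case split on the sign of $b'$. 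This works, but is more effort than needed once Lemma~\ref{tools:lem_localization} is available, and the paper's route avoids the sign split entirely. One small caveat on your $b'<0$ case: the sketched duality upgrade (``$\|1_A\varphi\|_{H^{-b'}}\to 0$ for smooth $\varphi$, then dualize'') does not close as stated, because the smallness for smooth $\varphi$ is not uniform over the unit ball of $H^{-b'}$. You don't actually need duality there: for $b'<0$ one simply uses $\|1_A\widetilde G_n\|_{H_t^{b'}}\leq\|1_A\widetilde G_n\|_{L^2_t}\lesssim |A|^{1/2}\|\widetilde G_n\|_{L^\infty_t}$, which is elementary. With that correction your argument is complete.
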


\begin{proof}
We begin with the proof of \eqref{tools:eq_rc_1}. By using a similar reduction as in the proof of Lemma \ref{tools:lem_localization}, it suffices to prove that 
\begin{equation}\label{tools:eq_gluing_rc_p1}
\| 1_{\cJ} g(t) \|_{\cH_t^{b^\prime}(\bR)} \lesssim \| g(t) \|_{\cH_t^{b^\prime}(\bR)}. 
\end{equation} 
 By writing $1_\cJ$ as a superposition of different indicator functions, it suffices to prove the estimate for $(-\infty,a)$ and $(a,\infty)$, where $a\in \bR$, instead of $\cJ$. Using the time-reflection and time-translation symmetry of $H_t^{b^\prime}(\bR)$, it suffices to prove the estimate for $\cJ$ replaced by $(0,\infty)$. Thus, it remains to prove 
 \begin{equation}\label{tools:eq_gluing_rc_p2}
\| 1_{(0,\infty)} g(t) \|_{\cH_t^{b^\prime}(\bR)} \lesssim \| g(t) \|_{\cH_t^{b^\prime}(\bR)}. 
\end{equation} 
This follows from (a modification of) the fractional product rule or a simple paraproduct estimate. 
We now turn to the proof of \eqref{tools:eq_rc2}. By the definition of the restricted norms, we clearly have the upper-bound $\| G \|_{\X{s}{b^\prime}(\cJ)} \lesssim \| 1_{\cJ} G \|_{\X{s}{b^\prime}(\bR)} $. Now, let $\widetilde{G}\in \X{s}{b}(\bR)$ satisfies $\widetilde{G}|_{\cJ} = G$. Using \eqref{tools:eq_rc_1}, we obtain that 
\begin{equation*}
\| 1_{\cJ} G \|_{\X{s}{b^\prime}(\bR)} = \| 1_{\cJ} \widetilde{G} \|_{\X{s}{b^\prime}(\bR)} \lesssim  \|  \widetilde{G} \|_{\X{s}{b^\prime}(\bR)}.
\end{equation*}
After taking the infimum in $\widetilde{G}$, this yields the other lower-bound in \eqref{tools:eq_rc2}. \\
Finally, we prove the continuity of \eqref{tools:eq_rc3}. By a density argument, it suffices to take $G\in \X{s}{1/2}(\bR)$. For any $0<\delta<1/2-b$ and any $t_1,t_2\in \cJ$, we obtain from Lemma \ref{tools:lem_localization} that 
\begin{equation*}
\Big| \| 1_{[t_0,t_1]} G \|_{\X{s}{b^\prime}(\bR)} -\| 1_{[t_0,t_2]} G \|_{\X{s}{b^\prime}(\bR)} \Big| \leq \| 1_{(t_1,t_2]} G \|_{\X{s}{b^\prime}(\bR)}
\lesssim |t_1-t_2|^\delta \| G\|_{\X{s}{1/2}(\bR)}. 
\end{equation*}
This implies the H\"{o}lder-continuity. 
\end{proof}

The next gluing lemma will be used to combine $\X{s}{b}$-bounds on different intervals. While such a result is trivial for purely physical function spaces, such as $L_t^q L_x^p$, it is slightly more complicated for the $\X{s}{b}$-spaces, since they rely on the time-frequency variable. 

\begin{lemma}[Gluing lemma]\label{tools:lem_gluing}
Let  $s\in \bR$, let $-1/2 < b^\prime <1/2$, let $1/2 < b <1$, and  let $\cJ,\cJ_1,\cJ_2 $ be  bounded intervals satisfying $\cJ_1 \medcap \cJ_2 \not = \emptyset$. Then, we have for
 all $F\colon (\cJ_1 \medcup \cJ_2) \times \bT^3 \rightarrow \bR $ that
\begin{equation}\label{tools:eq_gluing_1}
\| F \|_{\X{s}{b^\prime}(\cJ_1\scriptmedcup\cJ_2)} \lesssim \| F \|_{\X{s}{b^\prime}(\cJ_1)} +  \| F \|_{\X{s}{b^\prime}(\cJ_2)} . 
\end{equation}
Furthermore, let $\tau \defe | \cJ_1 \medcap \cJ_2|$. Then, it holds for all 
$ u \colon (\cJ_1 \medcup \cJ_2) \times \bT^3 \rightarrow \bR $
that 
\begin{equation}\label{tools:eq_gluing_2}
\| u \|_{\X{s}{b}(\cJ_1\scriptmedcup\cJ_2)} \lesssim \tau^{\frac{1}{2}-b} \big( \| u \|_{\X{s}{b}(\cJ_1)} +  \| u \|_{\X{s}{b}(\cJ_2)} \big). 
\end{equation}
\end{lemma}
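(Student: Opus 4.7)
The plan is to treat the two estimates separately, exploiting the different behaviors of $\X{s}{b'}$ for $|b'|<1/2$ and $\X{s}{b}$ for $b>1/2$. For \eqref{tools:eq_gluing_1}, indicator cutoffs are bounded multipliers on $\X{s}{b'}$ by Lemma \ref{tools:lem_restricted_continuity}\eqref{tools:eq_rc_1}, so an indicator-function gluing works directly. For \eqref{tools:eq_gluing_2}, indicators fail and one needs a smooth partition of unity adapted to the overlap; the factor $\tau^{1/2-b}$ will emerge from the $H^b_t$-norm of a smoothed step at scale $\tau$, which is the same scaling underlying Lemma \ref{tools:lem_localization}\eqref{tools:eq_local_2}.

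For \eqref{tools:eq_gluing_1}, I would assume WLOG that $\cJ_1=(a_1,b_1)$ and $\cJ_2=(a_2,b_2)$ with $a_1\leq a_2\leq b_1\leq b_2$ (the case of one interval contained in the other being trivial), take near-optimal extensions $v_i\in \X{s}{b'}(\bR)$ of $u|_{\cJ_i}$, and set
\begin{equation*}
\widetilde{u} := \mathbbm{1}_{(-\infty,b_1]}\,v_1 + \mathbbm{1}_{(b_1,\infty)}\,v_2.
\end{equation*}
Since $\widetilde{u}|_{\cJ_1\scriptmedcup\cJ_2}=u$ directly from the construction, the bound follows from Lemma \ref{tools:lem_restricted_continuity}\eqref{tools:eq_rc_1} applied to each indicator, the triangle inequality, and passing to the infimum in the extensions.

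For \eqref{tools:eq_gluing_2}, I would keep the same interval setup, fix a non-increasing $\eta\in C^\infty(\bR;[0,1])$ with $\eta\equiv 1$ on $(-\infty,-1/3]$ and $\eta\equiv 0$ on $[1/3,\infty)$, let $m:=(a_2+b_1)/2$ be the midpoint of the overlap, and set $\rho(t):=\eta((t-m)/\tau)$. Picking $\psi_i\in C_c^\infty(\bR)$ with $\psi_i\equiv 1$ on $\cJ_i$, I define
\begin{equation*}
\widetilde{u} := \rho\psi_1\,v_1 + (1-\rho)\psi_2\,v_2.
\end{equation*}
The identity $\widetilde{u}|_{\cJ_1\scriptmedcup\cJ_2}=u$ is checked by inspecting $\cJ_1\setminus\cJ_2$, $\cJ_2\setminus\cJ_1$, and the overlap separately; the last case uses $v_1=v_2=u$ to eliminate the $\rho$-dependence. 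To bound $\|\widetilde{u}\|_{\X{s}{b}(\bR)}$, I would invoke the half-wave characterization in Lemma \ref{tools:lem_xsb_characterization}\eqref{tools:eq_xsb_characterization_2}, pass to decompositions $v_i = v_{i,+}+v_{i,-}$, use that the scalar multipliers $\rho\psi_1$ and $(1-\rho)\psi_2$ commute with $\langle\nabla\rangle^s e^{\mp i t\langle\nabla\rangle}$, and apply the $H^b_t$-algebra property (available since $b>1/2$). This reduces matters to the Fourier-side bound $\|\rho\psi_1\|_{H^b_t(\bR)},\ \|(1-\rho)\psi_2\|_{H^b_t(\bR)}\lesssim \tau^{1/2-b}$, which is a routine calculation for smoothed-step profiles of transition scale $\tau$.

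The main conceptual obstacle I expect to navigate is that the extensions $v_1,v_2$ \emph{automatically agree} on $\cJ_1\cap\cJ_2$ (both restrict to $u$ there), so a naive smooth interpolation $\rho v_1+(1-\rho)v_2$ is identically equal to the indicator gluing $\mathbbm{1}_{(-\infty,m]}v_1+\mathbbm{1}_{(m,\infty)}v_2$, whose $\X{s}{b}$-norm is uncontrolled for $b>1/2$. The role of the outer cutoffs $\psi_1,\psi_2$ is precisely to break this degeneracy: combined, $\rho\psi_1$ and $(1-\rho)\psi_2$ are genuine compactly supported smooth multipliers whose $H^b_t$-norm captures the cost of the scale-$\tau$ transition, independently of the behavior of $v_1,v_2$ on the overlap.
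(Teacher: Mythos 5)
Your proof is correct and follows essentially the same route as the paper. For \eqref{tools:eq_gluing_1} the paper uses the equivalence $\|G\|_{\X{s}{b'}(\cJ)}\sim\|1_{\cJ}G\|_{\X{s}{b'}(\bR)}$ from Lemma~\ref{tools:lem_restricted_continuity} and splits $1_{\cJ_1\cup\cJ_2}=1_{\cJ_1}+1_{\cJ_2\setminus\cJ_1}$, which is the same mechanism as your explicit extension $\mathbbm{1}_{(-\infty,b_1]}v_1+\mathbbm{1}_{(b_1,\infty)}v_2$; for \eqref{tools:eq_gluing_2} the paper only gives a one-sentence hint (smooth cutoff at scale $\tau$, variant of \eqref{tools:eq_local_p3}), and your partition $\widetilde{u}=\rho\psi_1 v_1+(1-\rho)\psi_2 v_2$ together with the half-wave reduction and the $H^b_t$-algebra property is a correct and more explicit realization of exactly that hint, including the observation that the compactly supported outer cutoffs $\psi_i$ are needed so that the multipliers actually lie in $H^b_t$ (a point the paper leaves implicit).

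One small remark: the bound $\|\rho\psi_1\|_{H^b_t}\lesssim\tau^{1/2-b}$ is not a pure rescaling of a fixed Schwartz profile (the function has both a $\tau$-scale transition near $m$ and an $O(1)$-scale transition at the edge of $\supp\psi_1$), but it does hold for $0<\tau\lesssim 1$ — e.g.\ since $(\rho\psi_1)'=\rho'\psi_1+\rho\psi_1'$ with $\widehat{\rho'\psi_1}(\xi)$ rapidly decaying past $|\xi|\sim\tau^{-1}$ and $\rho\psi_1'$ a fixed compactly supported smooth function, one gets $|\widehat{\rho\psi_1}(\xi)|\lesssim |\xi|^{-1}\langle\tau\xi\rangle^{-N}+\langle\xi\rangle^{-N}$ and hence $\|\rho\psi_1\|_{H^b_t}^2\lesssim\tau^{1-2b}+1\lesssim\tau^{1-2b}$. (In the paper's applications one always has $\tau\leq 1$, which is what makes the constant term negligible.)
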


\begin{proof}
We begin with the proof of  \eqref{tools:eq_gluing_1}. Using Lemma \ref{tools:lem_restricted_continuity}, we have that 
\begin{align*}
\| F \|_{\X{s}{b^\prime}(\cJ_1\scriptmedcup\cJ_2)} &\lesssim \| 1_{\cJ_1\scriptmedcup\cJ_2}  F \|_{\X{s}{b^\prime}(\bR)} \\
&\lesssim   \| 1_{\cJ_1}  F \|_{\X{s}{b^\prime}(\bR)} + \| 1_{\cJ_2\backslash \cJ_1}  F \|_{\X{s}{b^\prime}(\bR)} \\
&\lesssim \| F \|_{\X{s}{b^\prime}(\cJ_1)} + \| F \|_{\X{s}{b^\prime}(\cJ_2\backslash \cJ_1)} \\
&\lesssim  \| F \|_{\X{s}{b^\prime}(\cJ_1)} +  \| F \|_{\X{s}{b^\prime}(\cJ_2)}. 
\end{align*}

The proof of the second estimate \eqref{tools:eq_gluing_2} is similar. Instead of working with an actual indicator function, we use a smooth cut-off function on the spatial scale $\sim \tau$ and a variant of \eqref{tools:eq_local_p3} instead of \eqref{tools:eq_gluing_rc_p2}. 
\end{proof}

Our last two lemmas where concerned with the behavior of $\X{s}{b}$-spaces over small or overlapping time-intervals. In this respect, the $\X{s}{b}$-spaces are more complicated than purely physical function spaces. We now turn to transference principles, which do not have a direct analog in purely physical function spaces.

\begin{lemma}[{Linear transference principle (cf. \cite[Lemma 2.9]{Tao06})}]\label{tools:lem_linear_transference}
Let $b>1/2$, let $s\in \bR$, and assume that the norm $\|\cdot\|_Y$ satisfies
\begin{equation}
\| e^{i\alpha t} e^{\pm it \langle \nabla \rangle} u_0 \|_Y \leq C \| u_0 \|_{H_x^s}
\end{equation}
for all $\alpha \in \bR$ and all $ u_0  \in H_x^s$. Then, it holds for all $u \in \X{s}{b}$ that 
\begin{equation}
\| u \|_Y \lesssim C \| u \|_{\X{s}{b}}. 
\end{equation}
\end{lemma}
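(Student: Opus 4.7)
The plan is to follow the classical transference strategy: decompose $u$ into two pieces concentrated near each half-wave characteristic, expand each piece via Fourier inversion as a superposition of modulated free solutions, and then use the hypothesis on $Y$ pointwise in the modulation frequency.

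First, I would apply the second part of Lemma \ref{tools:lem_xsb_characterization} to write $u = u_+ + u_-$ with
\begin{equation*}
\|u_+\|_{\X{s}{b}} + \|u_-\|_{\X{s}{b}} \lesssim \|u\|_{\X{s}{b}},
\end{equation*}
where $\widehat{u_\pm}(\lambda,n)$ is supported on $\pm\lambda \geq 0$; on this support, $||\lambda|-\langle n\rangle| = |\pm\lambda - \langle n\rangle|$, so after the change of variables $\lambda = \pm\langle n\rangle + \sigma$ one has
\begin{equation*}
\|u_\pm\|_{\X{s}{b}}^2 = \int_{\bR} \sum_{n} \langle n\rangle^{2s}\langle\sigma\rangle^{2b} \bigl|\widehat{u_\pm}(\pm\langle n\rangle+\sigma,n)\bigr|^2 \, \mathrm{d}\sigma.
\end{equation*}
It therefore suffices to prove $\|u_\pm\|_Y \lesssim C \|u_\pm\|_{\X{s}{b}}$ separately.

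Next, for the $+$ piece (the $-$ piece is identical), I would use Fourier inversion in time together with the change of variables above to obtain the representation
\begin{equation*}
u_+(t,x) = \int_{\bR} e^{i\sigma t}\, e^{it\langle \nabla\rangle} f_\sigma(x) \, \mathrm{d}\sigma, \qquad \widehat{f_\sigma}(n) \defe \widehat{u_+}(\langle n\rangle + \sigma, n).
\end{equation*}
Applying Minkowski's inequality in the $Y$-norm and then the hypothesis with $\alpha = \sigma$ gives
\begin{equation*}
\|u_+\|_Y \leq \int_{\bR} \bigl\| e^{i\sigma t} e^{it\langle\nabla\rangle} f_\sigma \bigr\|_Y \, \mathrm{d}\sigma \leq C \int_{\bR} \|f_\sigma\|_{H^s_x} \, \mathrm{d}\sigma.
\end{equation*}

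Finally, I would insert the weight $\langle\sigma\rangle^{b}\langle\sigma\rangle^{-b}$ and apply Cauchy–Schwarz in $\sigma$:
\begin{equation*}
\int_{\bR}\|f_\sigma\|_{H^s_x}\,\mathrm{d}\sigma \leq \Bigl(\int_{\bR}\langle\sigma\rangle^{-2b}\,\mathrm{d}\sigma\Bigr)^{1/2}\Bigl(\int_{\bR}\langle\sigma\rangle^{2b}\|f_\sigma\|_{H^s_x}^2\,\mathrm{d}\sigma\Bigr)^{1/2}.
\end{equation*}
The first factor is finite precisely because $b>1/2$, and by the change-of-variable identity recorded above, the second factor equals $\|u_+\|_{\X{s}{b}}$. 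Summing the $+$ and $-$ contributions yields the desired estimate. There is no real obstacle here beyond tracking the constants; the only place the hypothesis $b>1/2$ enters is in ensuring the integrability of $\langle\sigma\rangle^{-2b}$, which is the well-known reason this principle fails at the endpoint.
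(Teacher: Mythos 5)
The paper does not prove this lemma---it simply cites \cite[Lemma 2.9]{Tao06}---and your argument is precisely the standard proof of that cited result: split $u$ into half-wave pieces via the characterization in Lemma \ref{tools:lem_xsb_characterization}, use Fourier inversion in the temporal modulation variable $\sigma$ to write each piece as a superposition of modulated free evolutions, pull the $Y$-norm inside by Minkowski's integral inequality, invoke the hypothesis at each $\sigma$, and close with Cauchy--Schwarz using $\langle\sigma\rangle^{-2b}\in L^1$ since $b>1/2$. The argument is correct and essentially identical to the intended one; the only implicit assumption worth making explicit is that $Y$ is a Banach space so that Minkowski's integral inequality applies, which is standard and clearly intended from the way the lemma is used in the paper.
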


The linear transference principle allows us to reduce linear estimates for functions in $\X{s}{b}$-spaces to estimates for the half-wave operators. 

\begin{corollary}\label{tools:cor_strichartz}
For any $b>1/2$, $s\in \bR$, any $4\leq p \leq \infty$, any compact interval $J\subseteq \bR$, and any $u\colon J \times \bT^3 \rightarrow \bC$,  we have that 
\begin{align}
\| u[t] \|_{C_t^0  \cH_x^s(J\times \bT^3)} &\lesssim \| u \|_{\X{s}{b}(J)}, \label{tools:eq_strichartz_1} \\
\| \langle \nabla \rangle^{s+\frac{4}{p}-\frac{3}{2}} u(t)\|_{L_t^p L_x^p(J\times \bT^3)} &\lesssim (1+|J|)^{1/p} \| u \|_{\X{s}{b}(J)},\label{tools:eq_strichartz_2}  \\
\| \langle \nabla \rangle^{s-1-} u(t)\|_{L_t^2 L_x^\infty(J\times \bT^3)} &\lesssim (1+|J|)^{1/2} \| u \|_{\X{s}{b}(J)}.\label{tools:eq_strichartz_3} 
\end{align}
\end{corollary}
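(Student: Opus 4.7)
All three estimates will follow by applying the linear transference principle (Lemma \ref{tools:lem_linear_transference}) with an appropriate choice of target norm $Y$, combined with standard inputs for the linear (half-)wave propagator on $\bT^3$. Where necessary, I would also invoke the characterization \eqref{tools:eq_xsb_characterization_2} of the $\X{s}{b}$-norm via the half-wave decomposition $u=u_+ + u_-$, in which each $w_\pm:=e^{\mp it\langle\nabla\rangle} u_\pm$ satisfies $\|\langle\nabla\rangle^s w_\pm\|_{L^2_x H^b_t}\lesssim \|u\|_{\X{s}{b}}$.

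For the energy estimate \eqref{tools:eq_strichartz_1}, the bound on $\|u(t)\|_{H^s_x}$ is a direct application of transference with $Y=C^0_t H^s_x$, since $\|e^{i\alpha t} e^{\pm it\langle\nabla\rangle} u_0\|_{C^0_t H^s_x}=\|u_0\|_{H^s_x}$ uniformly in $\alpha\in\bR$. For the $H^{s-1}_x$-bound on $\partial_t u$ a direct transference would fail because $\partial_t(e^{i\alpha t} e^{\pm it\langle\nabla\rangle}u_0)=i(\alpha\pm\langle\nabla\rangle)(\cdots)$ grows linearly in $\alpha$. Instead, I would unpack the half-wave decomposition: since $u_\pm=e^{\pm it\langle\nabla\rangle}w_\pm$, one has $\partial_t u_\pm=\pm i\langle\nabla\rangle u_\pm + e^{\pm it\langle\nabla\rangle}\partial_t w_\pm$. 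The first summand contributes $\|u_\pm(t)\|_{H^s_x}$ in $H^{s-1}_x$ (already controlled), while the second is handled using the temporal Sobolev embedding $H^b_t\hookrightarrow C^0_t$ (valid for $b>1/2$) applied after trading one spatial derivative.

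The estimates \eqref{tools:eq_strichartz_2} and \eqref{tools:eq_strichartz_3} are cleaner applications of transference. For \eqref{tools:eq_strichartz_2}, I would take $Y=L^p_{t,x}([0,1]\times\bT^3)$ and invoke the standard wave Strichartz estimate on $\bT^3$, $\|e^{\pm it\langle\nabla\rangle} u_0\|_{L^p_{t,x}([0,1]\times\bT^3)}\lesssim \|u_0\|_{H^{3/2-4/p}_x}$ for $p\geq 4$, which is uniform in $\alpha$ since $|e^{i\alpha t}|=1$; the derivative matching $s+4/p-3/2\mapsto s$ is exact, and the factor $(1+|J|)^{1/p}$ arises by tiling $J$ with unit-length subintervals and summing the $L^p_{t,x}^p$-bounds. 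For \eqref{tools:eq_strichartz_3}, I would take $Y=L^2_t L^\infty_x$ and combine either the sharp wave Strichartz bound $\|e^{\pm it\langle\nabla\rangle} u_0\|_{L^2_t L^\infty_x}\lesssim \|u_0\|_{H^{1+}_x}$ (with a harmless $\epsilon$-loss at the torus endpoint) or the cruder Sobolev embedding $H^{3/2+}_x\hookrightarrow L^\infty_x$; either input fits within the $s-1-$ derivative budget, and the $(1+|J|)^{1/2}$ factor again comes from tiling.

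The main obstacle is the $\partial_t u$ bound in \eqref{tools:eq_strichartz_1}, since the transference hypothesis is not uniform in $\alpha$ for $Y=C^0_t H^{s-1}_x$; the argument must instead exploit the half-wave structure so that the $\alpha$-growth is absorbed into the modulation weight $\langle|\lambda|-\langle n\rangle\rangle^b$. The remaining Strichartz estimates are routine once the correct linear inputs are in hand.
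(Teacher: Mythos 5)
Your treatment of \eqref{tools:eq_strichartz_2}, \eqref{tools:eq_strichartz_3}, and the first component of \eqref{tools:eq_strichartz_1} is correct and matches the paper's intent: the paper simply remarks that the corollary ``follows directly from the linear transference principle (Lemma \ref{tools:lem_linear_transference}) and the Strichartz estimates for the linear wave equation.''

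Where you go further is in worrying about the $\partial_t u$ component of $u[t]$, and you are right that a direct transference fails there; but the fix you propose does not close the gap. The term $e^{\pm it\langle\nabla\rangle}\partial_t w_\pm$ contributes $\|\partial_t w_\pm\|_{C_t^0 H_x^{s-1}}$, and the embedding $H_t^b\hookrightarrow C_t^0$ reduces this to $\|\partial_t w_\pm\|_{H_t^b H_x^{s-1}}$, whose symbol is $\langle\lambda\rangle^b\,|\lambda|\,\langle n\rangle^{s-1}$. The $\X{s}{b}$-norm only supplies $\langle\lambda\rangle^b\langle n\rangle^{s}\,\widehat{w}_\pm\in L^2_\lambda\ell^2_n$, so ``trading one spatial derivative'' would require $|\lambda|\lesssim\langle n\rangle$ on the support of $\widehat{w}_\pm$, and no such restriction is available. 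In fact, for $1/2<b<3/2$ the bound $\|\partial_t u\|_{C_t^0 H_x^{s-1}}\lesssim\|u\|_{\X{s}{b}(J)}$ is false for a general $u\in\X{s}{b}(J)$: take $J=[0,1]$ and $u(t,x)=\psi(Kt)$ for a fixed bump $\psi\in C_c^\infty((0,1))$ and $K\gg 1$; then $\|\partial_t u\|_{C_t^0 H_x^{s-1}}\sim K$ while $\|u\|_{\X{s}{b}(J)}\lesssim K^{b-\frac{1}{2}}$, so the ratio $K^{\frac{3}{2}-b}$ diverges. The estimate is safe in the way it is actually used: the paper invokes \eqref{tools:eq_strichartz_1} either only for the first component (as in the duality step for Lemma \ref{tools:lem_inhomogeneous_strichartz}) or for $u$ solving a forced wave equation, in which case $\partial_t u$ is controlled through the Duhamel representation rather than through the $\X{s}{b}$-norm of $u$ alone. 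The clean fix is to state \eqref{tools:eq_strichartz_1} as $\|u\|_{C_t^0 H_x^s(J\times\bT^3)}\lesssim\|u\|_{\X{s}{b}(J)}$ and recover the $\partial_t u$ bound from the equation whenever it is needed.
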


The corollary follows directly from the linear transference principle (Lemma \ref{tools:lem_linear_transference}) and the Strichartz estimates for the linear wave equation.

The next lemma is the most basic ingredient for any contraction argument based on $\X{s}{b}$-spaces.

\begin{lemma}[{Energy-estimate (cf. \cite[Lemma 2.12]{Tao06} and \cite[Lemma 3.2]{ET16})}]\label{tools:lem_energy}
Let $1/2<b<1$, let $s\in \bR$, let $\cJ\subseteq \bR$ be a compact interval, let $t_0 \in \cJ$, and let 
\begin{equation}
(-\partial_t^2-1+\Delta) u = F.
\end{equation}
Then, it holds that 
\begin{equation}\label{tools:eq_energy}
 \|  u \|_{\X{s}{b}(\cJ)} \lesssim \big(1+|\cJ|\big)^2  \big( \| u[t_0] \|_{\chs}+ \| F\|_{\X{s-1}{b-1}(\cJ)} \big). 
\end{equation}
\end{lemma}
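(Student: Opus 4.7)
The plan is to reduce the estimate on $\mathcal{J}$ to a global estimate on $\mathbb{R}$ via a smooth cutoff in time, and to split the problem into the contributions of the initial data and of the forcing $F$.

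First, by the definition of the restricted norm, I extend $F$ to $\widetilde{F}\colon \mathbb{R}\times \mathbb{T}^3 \rightarrow \mathbb{R}$ satisfying $\|\widetilde{F}\|_{\X{s-1}{b-1}(\bR)} \leq 2 \|F\|_{\X{s-1}{b-1}(\cJ)}$. Let $\widetilde{u}$ solve $(-\partial_t^2 - 1 + \Delta)\widetilde{u} = \widetilde{F}$ on $\bR\times \bT^3$ with $\widetilde{u}[t_0] = u[t_0]$. Since $\widetilde{u}|_{\cJ} = u$, it suffices to bound $\|\psi \widetilde{u}\|_{\X{s}{b}(\bR)}$, where $\psi \in C_c^\infty(\bR)$ equals $1$ on $\cJ$ and has support of length $\lesssim 1 + |\cJ|$. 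A direct computation shows $\|\psi\|_{H_t^b(\bR)} \lesssim (1+|\cJ|)^{1/2}$ since $b<1$.

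By Duhamel's formula, $\widetilde{u}(t) = U_0(t-t_0)\, u[t_0] + \widetilde{v}(t)$, where $U_0$ is the linear wave propagator and
\begin{equation*}
\widetilde{v}(t) = \int_{t_0}^t \frac{\sin((t-t^\prime)\langle \nabla \rangle)}{\langle \nabla \rangle}\, \widetilde{F}(t^\prime)\, \mathrm{d}t^\prime.
\end{equation*}
For the homogeneous part, I decompose $U_0(t-t_0)\, u[t_0] = e^{it\langle\nabla\rangle} f_+ + e^{-it\langle\nabla\rangle} f_-$ with $\|f_\pm\|_{H_x^s} \lesssim \|u[t_0]\|_{\chs}$. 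Applying Lemma \ref{tools:lem_xsb_characterization} together with the fact that multiplication by $\psi$ is bounded on $H_t^b$ (since $H_t^b$ is an algebra for $b>1/2$), I obtain
\begin{equation*}
\bigl\| \psi(t)\, e^{\pm it\langle\nabla\rangle} f_\pm \bigr\|_{\X{s}{b}(\bR)} \lesssim \|\psi\|_{H_t^b}\, \|f_\pm\|_{H_x^s} \lesssim (1+|\cJ|)^{1/2}\, \|u[t_0]\|_{\chs}.
\end{equation*}

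For the Duhamel part, expanding the sine gives
\begin{equation*}
\widetilde{v}(t) = \frac{1}{2i}\sum_{\pm} (\pm 1)\, e^{\pm it\langle\nabla\rangle} \int_{t_0}^t e^{\mp is\langle\nabla\rangle} \frac{\widetilde{F}(s)}{\langle \nabla\rangle}\, \mathrm{d}s.
\end{equation*}
By Lemma \ref{tools:lem_xsb_characterization}, the estimate reduces to a bound of the form
\begin{equation*}
\Bigl\| \psi(t) \int_{t_0}^t g(s)\, \mathrm{d}s \Bigr\|_{L_x^2 H_t^b(\bT^3\times\bR)} \lesssim (1+|\cJ|)^{3/2}\, \| g \|_{L_x^2 H_t^{b-1}(\bT^3\times \bR)},
\end{equation*}
applied to $g(s) = \langle\nabla\rangle^{-1} e^{\mp is\langle\nabla\rangle} \widetilde{F}(s)$, whose $L_x^2 H_t^{b-1}$-norm equals $\|\widetilde{F}\|_{\X{s-1}{b-1}(\bR)}$ by Plancherel. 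This last inequality is the standard one-dimensional Duhamel bound in $H_t^b$: decompose $g$ in the temporal frequency variable into the piece with $|\lambda|\geq 1$ (where one writes $\int_{t_0}^t g = \mathcal{F}^{-1}(\widehat{g}(\lambda)/(i\lambda)) + (\text{boundary terms at }\pm\infty)$ and uses that $|\lambda|^{-1} \langle \lambda \rangle^b \lesssim \langle\lambda\rangle^{b-1}$) and the piece with $|\lambda|<1$ (which is estimated by H\"older on the support of $\psi$, producing an extra factor $\lesssim (1+|\cJ|)$). Multiplication by $\psi$ then contributes another $(1+|\cJ|)^{1/2}$ from $\|\psi\|_{H_t^b}$. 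Combining the two pieces and summing all contributions yields the factor $(1+|\cJ|)^2$, which is a comfortable overestimate.

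The only genuinely delicate point is tracking the dependence on $|\cJ|$ in the Duhamel step, since the time cutoff $\psi$ is not Schwartz and its Sobolev norms grow with the support size. Fortunately, the required polynomial loss $(1+|\cJ|)^2$ is generous enough that a direct Fourier-analytic computation in the time variable, as sketched above, suffices; no multilinear or dispersive information is used, only the definition of $\X{s}{b}$ and Lemma \ref{tools:lem_xsb_characterization}.
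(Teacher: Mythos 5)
Your proof is correct, but it follows a genuinely different route from the paper. The paper does not re-derive the estimate from scratch: it takes the energy estimate on \emph{unit-length} intervals as a black box from \cite[Lemma 2.12]{Tao06} and \cite[Lemma 3.2]{ET16}, covers $\cJ$ by $O(1+|\cJ|)$ overlapping unit intervals, and iterates, using the embedding $\X{s}{b}(\cJ_j)\hookrightarrow C_t^0\cH_x^s$ (i.e.\ \eqref{tools:eq_strichartz_1}) to pass the data across the interval boundaries. The paper also remarks that the exponent $2$ on the prefactor is probably not optimal and is irrelevant for the application, since the stability theory loses exponentially in $T$ anyway. Your argument instead reproves the full estimate directly on $\cJ$: extend $F$, solve globally, multiply by a cutoff $\psi$ of width $\sim 1+|\cJ|$, reduce to one-dimensional estimates in $H_t^b$ via Lemma~\ref{tools:lem_xsb_characterization}, and carry out the Duhamel bound by splitting temporal frequencies at $|\lambda|\sim 1$. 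This is essentially the \emph{internal mechanism} of the textbook lemma, but carried out at scale $|\cJ|$ instead of unit scale, so you keep the dependence on $|\cJ|$ polynomial and completely explicit; your accounting gives $(1+|\cJ|)^{3/2}$, which is indeed a slight sharpening. The trade-off is length and self-containedness: the paper's two-line reduction delegates all the work to references, while yours is standalone but longer. Two small points to tidy: (i) where you say the $L_x^2 H_t^{b-1}$-norm of $g$ ``equals'' $\|\widetilde F\|_{\X{s-1}{b-1}(\bR)}$, it is in fact only $\lesssim$, since $\langle |\lambda|-\langle n\rangle\rangle^{b-1}\geq \langle \pm\lambda-\langle n\rangle\rangle^{b-1}$ for $b-1<0$ (this is the right direction, so nothing breaks); (ii) when you invoke the algebra property of $H_t^b$ for the low-frequency piece of the Duhamel term, you should say explicitly that you bound $\|\psi h\|_{H_t^b}$ by interpolating between $L_t^2$ and $H_t^1$ using $h'=g_{<1}$, since $h$ itself does not lie in $H_t^b(\bR)$ globally; this is what you gestured at with ``estimated by H\"older on the support of $\psi$'' but it is worth making the interpolation step explicit.
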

The statement of Lemma \ref{tools:lem_energy} in \cite{ET16,Tao06} only includes intervals of size $\sim 1$. The more general version follows by using the triangle inequality, iterating the bound on unit intervals, and \eqref{tools:eq_strichartz_1}. The square in the pre-factor can likely be improved but is inessential in our argument, since the stability theory already loses exponential factors in the final time $T$. 

The most important terms in the nonlinearity can only be estimated through multi-linear dispersive effects and hence require a direct analysis of the $\X{s-1}{b-1}$-norm. However, several more minor terms can be estimated more easily through physical methods. In order to pass back from the frequency-based $\X{s-1}{b-1}$-space into purely physical spaces, we provide the following inhomogeneous Strichartz estimate. 

\begin{lemma}[Inhomogeneous Strichartz estimate in $\X{s}{b}$-spaces]\label{tools:lem_inhomogeneous_strichartz}
Let $1/2<b<1$, let $s\in \bR$, let $\cJ\subseteq \bR$ be a compact interval, and let $F\colon \cJ \times \bT^3 \rightarrow \bR$. 
Then, we have the two estimates 
\begin{align}
 \|  F \|_{\X{s-1}{b-1}(\cJ)} &\lesssim  \| F\|_{L_t^{2b} H_x^{s-1}(\cJ\times \bT^3)}, \label{tools:eq_inhomogeneous_strichartz_1}\\
 \| F \|_{\X{s-1}{b-1}(\cJ)} &\lesssim   (1+|\cJ|) \| \langle \nabla \rangle^{s-\frac{1}{2}+ \frac{2b-1}{b} s} F\|_{L_t^{4/3} L_x^{4/3}(\cJ\times \bT^3)}. \label{tools:eq_inhomogeneous_strichartz_2}
\end{align}
\end{lemma}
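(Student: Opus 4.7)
The plan is to prove the two inequalities separately, since they arise from quite different mechanisms. In both cases, I will first reduce to the global analogue on $\bR \times \bT^3$ by multiplying by the indicator $\mathbf{1}_\cJ$. This operation preserves both of the physical-space norms $\|F\|_{L^{2b}_t H^{s-1}_x(\cJ)}$ and $\|\langle\nabla\rangle^\sigma F\|_{L^{4/3}_{t,x}(\cJ)}$, while the bound $\|F\|_{\X{s-1}{b-1}(\cJ)} \leq \|\mathbf{1}_\cJ F\|_{\X{s-1}{b-1}(\bR)}$ is immediate from the extension definition of the restricted norm, which is legitimate for $b-1 \in (-1/2, 0)$ by Lemma \ref{tools:lem_restricted_continuity}.

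For \eqref{tools:eq_inhomogeneous_strichartz_1}, I will use the characterization from Lemma \ref{tools:lem_xsb_characterization}: decompose $F = F_+ + F_-$ along positive and negative time-frequencies so that
\[\|F\|_{\X{s-1}{b-1}(\bR)} \sim \max_{\pm} \|\langle\nabla\rangle^{s-1} e^{\mp i t \langle\nabla\rangle} F_\pm\|_{L^2_x H^{b-1}_t(\bT^3 \times \bR)}.\]
The one-dimensional Sobolev embedding $L^{2b}_t(\bR) \hookrightarrow H^{b-1}_t(\bR)$ holds for every $b \in (1/2, 1)$, as follows from its dual embedding $H^{1-b}(\bR) \hookrightarrow L^{(2b)'}(\bR)$ with $(2b)' = 2b/(2b-1)$. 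Applying this embedding pointwise in $x$ reduces the problem to bounding $\|\langle\nabla\rangle^{s-1} e^{\mp i t\langle\nabla\rangle} F_\pm\|_{L^2_x L^{2b}_t}$, which by Minkowski's integral inequality (valid since $2b \leq 2$) is controlled by $\|\langle\nabla\rangle^{s-1} e^{\mp i t\langle\nabla\rangle} F_\pm\|_{L^{2b}_t L^2_x}$. Unitarity of the half-wave operators in $x$ removes the propagator, and the $L^{2b}$-boundedness of the Riesz projection in $\lambda$ (since $1 < 2b < \infty$) allows me to replace $F_\pm$ by $F$, completing the proof of \eqref{tools:eq_inhomogeneous_strichartz_1}.

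For \eqref{tools:eq_inhomogeneous_strichartz_2}, I will dyadically decompose $F = \sum_M F_M$ by modulation, where $F_M$ has space-time Fourier support in $\langle |\lambda| - \langle n \rangle\rangle \sim M$. Then
\[\|F\|_{\X{s-1}{b-1}}^2 \sim \sum_M M^{2(b-1)} \|F_M\|_{L^2_t H^{s-1}_x}^2.\]
For each $M$, I combine two ingredients: (i) a Bernstein-type estimate in the time variable, using that $F_M$ has time-Fourier support in a set of measure $\sim M$, to pass from $L^{4/3}_t$ to $L^2_t$ at the cost of $M^{1/4}$; and (ii) the Strichartz estimate for the 3D wave equation at $(q,r) = (4,4)$ (admissible at regularity $1/2$), dualized and combined with Bernstein in space to gain spatial derivatives. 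Summing the geometric-type series in $M$ and tracking the weights produces the precise exponent $\sigma = s - \tfrac{1}{2} + \tfrac{2b-1}{b}s$ as the balance point, while the prefactor $(1+|\cJ|)$ arises when applying H\"older in time on the compact interval $\cJ$.

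The main obstacle will be Part 3: verifying that the interplay between the modulation Bernstein estimate and the dual Strichartz estimate yields exactly the stated exponent $\sigma$, and not merely some nearby value. This will require careful bookkeeping of the dyadic weights $M^{b-1}$ in the $\X{s-1}{b-1}$ norm against the $M$-gains from Bernstein, together with the spatial Sobolev/Bernstein factors relating $L^{4/3}_x$ to $H^{s-1}_x$. The factor $(2b-1)/b$ in the exponent in particular looks like it arises from a specific interpolation parameter, and isolating that parameter will be the technically delicate step.
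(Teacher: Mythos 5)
Your argument for the first estimate \eqref{tools:eq_inhomogeneous_strichartz_1} is correct, and it takes a genuinely different route from the paper. The paper interpolates between the duality endpoint $\| F \|_{\X{s-1}{-b}(\cJ)} \lesssim \| F \|_{L^1_t H^{s-1}_x}$ (dualizing the embedding $\X{1-s}{b} \hookrightarrow C^0_t H^{1-s}_x$ from Corollary~\ref{tools:cor_strichartz}) and the Plancherel identity $\| F \|_{\X{s-1}{0}} = \| F \|_{L^2_t H^{s-1}_x}$; interpolating in the modulation weight with parameter $\theta = (1-b)/b$ produces $\X{s-1}{b-1}$ with $L^{2b}_t$ on the right. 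Your route instead uses the half-wave characterization of Lemma~\ref{tools:lem_xsb_characterization} together with the one-dimensional Sobolev embedding $L^{2b}_t(\bR) \hookrightarrow H^{b-1}_t(\bR)$, Minkowski's integral inequality (valid since $2b\leq 2$), unitarity of the half-wave group on $L^2_x$, and $L^{2b}$-boundedness of the Riesz projections. All of these steps are sound; the needed Sobolev embedding $H^{1-b}_t(\bR) \hookrightarrow L^{2b/(2b-1)}_t(\bR)$ is in the admissible subcritical range $2 \leq 2b/(2b-1) \leq 2/(2b-1)$. Your argument is more self-contained; the paper's has the advantage that the same interpolation skeleton applies verbatim to the second estimate.

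For the second estimate \eqref{tools:eq_inhomogeneous_strichartz_2}, your modulation-decomposition plan has genuine gaps. First, the Bernstein-in-time step does not apply to a modulation piece $F_M$ as you state: the set $\{ \lambda : \langle |\lambda| - \langle n \rangle \rangle \sim M \}$ has measure $\sim M$ only for each \emph{fixed} spatial frequency $n$, so the time-Fourier support of $F_M(\cdot,x)$ (the union over all $n$ in the support of $\widehat{F}$) is unbounded, and no Bernstein estimate in $t$ applies directly. To make the support genuinely compact one must conjugate by $e^{\pm i t \langle \nabla \rangle}$, exactly as in Lemma~\ref{tools:lem_xsb_characterization}; but this propagator is not bounded on $L^{4/3}_x$, so the $L^{4/3}_{t,x}$ structure on the right is then lost. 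Second, even granting a Bernstein gain of $M^{1/4}$, the modulation weight $M^{2(b-1)}$ combines with $M^{1/2}$ to give $M^{2b-3/2}$, which fails to sum for $b > 3/4$; the plan to ``combine'' the two ingredients must produce an additional $M$-gain from the $(4,4)$-Strichartz step that you have not isolated, and you yourself flag the resulting bookkeeping as unverified. The paper's route for \eqref{tools:eq_inhomogeneous_strichartz_2} is the same interpolation framework as for \eqref{tools:eq_inhomogeneous_strichartz_1}: the dual of the $L^4_{t,x}$ Strichartz estimate \eqref{tools:eq_strichartz_2} gives the endpoint $\| F \|_{\X{s-1}{-b}(\cJ)} \lesssim (1+|\cJ|)^{1/4}\| \langle \nabla \rangle^{s-1/2} F \|_{L^{4/3}_{t,x}(\cJ)}$, and this is paired with a Plancherel endpoint at modulation weight $0$ and interpolated. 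I recommend abandoning the modulation decomposition and carrying out that interpolation, tracking both the spatial derivative weight and the $(1+|\cJ|)$ prefactor.
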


\begin{remark}
 For $0\leq s \leq 1$, we will often simplify the right-hand side of \eqref{tools:eq_inhomogeneous_strichartz_1} by using that
\begin{equation*}
\frac{2b-1}{b} s \leq 4 (b-1/2). 
\end{equation*}
\end{remark}

\begin{proof}
We first prove \eqref{tools:eq_inhomogeneous_strichartz_1}. Using \eqref{tools:eq_strichartz_1} and duality, we have that 
\begin{equation*}
\| F \|_{\X{s-1}{-b}(\cJ)} \lesssim \| F \|_{L_t^1 H_x^{s-1}(\cJ\times \bT^3)}. 
\end{equation*}
By Plancherel, we also have that 
\begin{equation*}
\| F \|_{\X{s-1}{0}(\cJ)} \lesssim \| F \|_{L_t^2 H_x^{s-1}(\cJ\times \bT^3)}. 
\end{equation*}
Using interpolation, this implies \eqref{tools:eq_inhomogeneous_strichartz_1}. The proof of the second estimate \eqref{tools:eq_inhomogeneous_strichartz_2} is similar and relies on duality, \eqref{tools:eq_strichartz_2}, Plancherel, and interpolation.
\end{proof}

When utilizing multilinear dispersive effects, we will often use the following lemma to estimate the $\X{s-1}{b_--1}$-norm. 

\begin{lemma}
Let $s\in \bR$ and let $T\geq 1$. Let $\cA$ be a finite index set and let $(n_\alpha)_{\alpha \in \cA} \subseteq \bZ^3$, 
$(\theta_\alpha)_{\alpha \in \cA} \subseteq \bR$, and $(c_\alpha)_{\alpha \in \cA} \subseteq \bC$. Define
\begin{equation}
F(t,x) \defe \sum_{\alpha \in \cA} c_\alpha \exp(i \langle n_\alpha, x \rangle + i t \theta_\alpha). 
\end{equation}
Then, it holds that 
\begin{equation}
\begin{aligned}
&\| F \|_{\X{s-1}{b_--1}([0,T])} \\
&\lesssim T \max_{\pm} 
\Big\| \langle \lambda \rangle^{b_--1} \langle n \rangle^{s-1} \sum_{\alpha \in \cA}  1\big\{ n=n_\alpha\big\} \, c_\alpha 
\widehat{\chi}\big( T (\lambda \mp \langle n \rangle - \theta_\alpha) \big) \Big\|_{L_\lambda^2 \ell_n^2(\bR \times \bZ^3)}. 
\end{aligned}
\end{equation}
\end{lemma}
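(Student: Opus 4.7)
The plan is to extend $F$ to all of $\bR$ via a smooth time cutoff, compute the resulting space–time Fourier transform explicitly, and then unpack the $\X{s-1}{b_--1}$-norm directly from its definition. Concretely, I will fix a smooth, even bump function $\chi \in C_c^\infty(\bR)$ with $\chi \equiv 1$ on $[0,1]$ (say $\supp \chi \subseteq [-1,2]$) and set $G(t,x) \defe \chi(t/T)\, F(t,x)$. Since $G \equiv F$ on $[0,T]$, we have $\|F\|_{\X{s-1}{b_--1}([0,T])} \leq \|G\|_{\X{s-1}{b_--1}(\bR)}$, so it suffices to bound the latter.

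Next I will compute $\widehat{G}$ in both variables. The spatial integral gives $\delta_{n,n_\alpha}$ (by orthogonality on $\bT^3$), while
\begin{equation*}
\int_\bR \chi(t/T)\, e^{it(\theta_\alpha - \lambda)}\, \mathrm{d}t = T\,\widehat{\chi}\bigl(T(\lambda-\theta_\alpha)\bigr),
\end{equation*}
so that
\begin{equation*}
\widehat{G}(\lambda,n) = T \sum_{\alpha:\, n_\alpha = n} c_\alpha\, \widehat{\chi}\bigl(T(\lambda-\theta_\alpha)\bigr).
\end{equation*}
Combined with Plancherel's identity and the definition of $\X{s-1}{b_--1}$, this yields
\begin{equation*}
\|G\|_{\X{s-1}{b_--1}}^2 = \sum_{n\in\bZ^3} \langle n \rangle^{2(s-1)} \int_\bR \langle |\lambda|-\langle n \rangle \rangle^{2(b_--1)} \, |\widehat{G}(\lambda,n)|^2 \, \mathrm{d}\lambda.
\end{equation*}

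The only genuine step is removing the absolute value on $\lambda$. I will use the elementary pointwise bound
\begin{equation*}
\langle |\lambda|-\langle n \rangle \rangle^{2(b_--1)} \leq \langle \lambda-\langle n \rangle \rangle^{2(b_--1)} + \langle \lambda+\langle n \rangle \rangle^{2(b_--1)},
\end{equation*}
valid because the first term on the right dominates on $\{\lambda \geq 0\}$ and the second on $\{\lambda < 0\}$. Substituting this into the integral, translating by $\mu = \lambda - \langle n \rangle$ in the first piece and $\mu = \lambda + \langle n \rangle$ in the second, and relabeling $\mu \to \lambda$, leads to
\begin{equation*}
\|G\|_{\X{s-1}{b_--1}}^2 \leq T^2 \sum_{\pm} \Big\| \langle \lambda \rangle^{b_--1} \langle n \rangle^{s-1}\!\! \sum_{\alpha \in \cA} 1\{n = n_\alpha\} c_\alpha\, \widehat{\chi}\bigl(T(\lambda \mp \langle n \rangle - \theta_\alpha)\bigr)\Big\|_{L^2_\lambda \ell^2_n}^2.
\end{equation*}
Bounding the sum of two nonnegative quantities by twice the maximum and taking the square root gives the claimed estimate (up to an absolute constant absorbed into $\lesssim$).

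The argument is essentially a direct calculation, so I expect no real obstacle. The only subtlety is the pointwise inequality used to decouple $|\lambda|$ into $\pm \lambda$, which is what produces the $\max_\pm$ on the right-hand side and lets each term be translated so that the weight $\langle \lambda \rangle^{b_--1}$ in the final bound sits around the dispersive curves $\lambda = \theta_\alpha \pm \langle n \rangle$ characteristic of the linear wave propagator.
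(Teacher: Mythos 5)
Your argument is correct and follows the same route as the paper's proof: extend $F$ to $\bR$ by the cutoff $\chi(\cdot/T)$, compute the space–time Fourier transform explicitly, use the pointwise comparison between $\langle|\lambda|-\langle n\rangle\rangle^{b_--1}$ and $\langle\lambda\pm\langle n\rangle\rangle^{b_--1}$ (since $b_--1<0$ and $\langle|\lambda|-\langle n\rangle\rangle=\min_\pm\langle\lambda\pm\langle n\rangle\rangle$), and translate in $\lambda$. Your version spells out the pointwise inequality and the translation change of variables in more detail than the paper, but there is no substantive difference.
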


\begin{proof}
For any $G \colon \bR \times \bT^3 \rightarrow \C$, we have that 
\begin{align*}
\| G \|_{ \X{s-1}{b_--1}(\bR)} &=  \| \langle |\lambda|- \langle n \rangle \rangle^{b_--1} \langle n \rangle^{s-1}  \widehat{G}(\lambda,n) \|_{L_\lambda^2 \ell_n^2(\bR\times \bZ^3)} \\
&\lesssim \max_{\pm} \|  \langle \lambda \pm  \langle n \rangle \rangle^{b_--1} \langle n \rangle^{s-1} \widehat{G}(\lambda,n) \|_{L_\lambda^2 \ell_n^2(\bR\times \bZ^3)} \\
&= \max_{\pm} \|  \langle \lambda  \rangle^{b_--1} \langle n \rangle^{s-1} \widehat{G}(\lambda \mp \langle n\rangle,n) \|_{L_\lambda^2 \ell_n^2(\bR\times \bZ^3)} .
\end{align*}
We then apply this inequality to $G(t,x)= \chi(t/T) F(t,x)$. 
\end{proof}

Finally, we present an estimate for the Fourier-transform of a (localized) time-integral. 

\begin{lemma}\label{tools:lem_estimate_fouriertransform}
Let $T\geq 1$ and let $\lambda,\lambda_1,\lambda_2 \in \bR$. Then, it holds that 
\begin{equation}\label{tools:eq_estimate_fouriertransform_1}
\Big|\mathcal{F}_t\Big( \chi(t/T) \exp(i\lambda_1 t) \int_0^t \exp\big( i \lambda_2 t^\prime) \dtprime \Big)(\lambda) \Big| \lesssim T^2 
\Big( \langle \lambda - \lambda_1 - \lambda_2\rangle^{-10}  + \langle \lambda - \lambda_1  \rangle^{-10} \Big) \langle \lambda_2 \rangle^{-1}. 
\end{equation}
Furthermore, if $\cJ\subseteq [0,T]$ is an interval, then
\begin{equation}\label{tools:eq_estimate_fouriertransform_2}
\Big|\mathcal{F}_t\Big( \chi(t/T) \exp(i\lambda_1 t) \int_0^t 1_{\cJ}(t^\prime)  \exp\big( i \lambda_2 t^\prime) \dtprime  \Big)(\lambda) \Big| \lesssim T^2 
\Big( \langle \lambda - \lambda_1 - \lambda_2\rangle^{-1}  + \langle \lambda - \lambda_1  \rangle^{-1} \Big) \langle \lambda_2 \rangle^{-1}. 
\end{equation}
\end{lemma}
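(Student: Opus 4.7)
The plan is to compute each inner time-integral explicitly, apply the Fourier transform, and then split into the two regimes $|\lambda_2|\ge 1$ (where we can absorb $\lambda_2$ into a denominator) and $|\lambda_2|\le 1$ (where $\langle\lambda_2\rangle\sim 1$ and a Taylor-type expansion or single integration by parts supplies the missing factor of $\lambda_2$). The Schwartz decay of $\widehat{\chi}$ accounts for the rapid decay in \eqref{tools:eq_estimate_fouriertransform_1}, while the mere $\langle\cdot\rangle^{-1}$ decay in \eqref{tools:eq_estimate_fouriertransform_2} reflects the loss of smoothness caused by the characteristic function $1_{\cJ}$.

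For \eqref{tools:eq_estimate_fouriertransform_1} I would first evaluate the inner integral as $(e^{i\lambda_2 t}-1)/(i\lambda_2)$ (for $\lambda_2\neq 0$; the case $\lambda_2=0$ follows by continuity), so that
\[
\mathcal{F}_t\Big(\chi(t/T)e^{i\lambda_1 t}\int_0^t e^{i\lambda_2 t'}\dtprime\Big)(\lambda) = \frac{T}{i\lambda_2}\Big(\widehat{\chi}\big(T(\lambda-\lambda_1-\lambda_2)\big) - \widehat{\chi}\big(T(\lambda-\lambda_1)\big)\Big).
\]
Since $\widehat{\chi}$ is Schwartz, $|\widehat{\chi}(Ty)|\lesssim \langle Ty\rangle^{-10}\le \langle y\rangle^{-10}$ for $T\ge 1$. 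For $|\lambda_2|\ge 1$ the triangle inequality together with $|\lambda_2|\sim \langle\lambda_2\rangle$ already yields the claim (with $T$ in place of $T^2$, which suffices since $T\ge 1$). For $|\lambda_2|\le 1$ I would instead apply the fundamental theorem of calculus, writing
\[
\widehat{\chi}\big(T(\lambda-\lambda_1-\lambda_2)\big) - \widehat{\chi}\big(T(\lambda-\lambda_1)\big) = -T\lambda_2\int_0^1 \widehat{\chi}{}'\big(T(\lambda-\lambda_1-s\lambda_2)\big)\,ds,
\]
so that the factor of $\lambda_2$ cancels the denominator and the remaining integrand is bounded by $\langle T(\lambda-\lambda_1-s\lambda_2)\rangle^{-10}$. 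Since $|\lambda_2|\le 1$ forces $\langle\lambda-\lambda_1-s\lambda_2\rangle\sim \langle\lambda-\lambda_1\rangle\sim \langle\lambda-\lambda_1-\lambda_2\rangle$, the bound follows.

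For \eqref{tools:eq_estimate_fouriertransform_2}, write $\cJ=[a,b]\subseteq [0,T]$ and evaluate
\[
\int_0^t 1_{\cJ}(t')e^{i\lambda_2 t'}\dtprime = \frac{1}{i\lambda_2}\Big[e^{i\lambda_2 t}1_{[a,b]}(t) + e^{i\lambda_2 b}1_{[b,\infty)}(t) - e^{i\lambda_2 a}1_{[a,\infty)}(t)\Big].
\]
The function inside $\mathcal{F}_t$ then decomposes into three pieces of the form $\chi(t/T)e^{i\mu t}1_E(t)$ with $\mu\in\{\lambda_1+\lambda_2,\lambda_1\}$ and $E$ an interval in $[0,T]$. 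A single integration by parts on each piece, using $|\chi(t/T)|\le 1$ together with $\int |\chi'(t/T)|/T\,dt\lesssim 1$, yields $|\mathcal{F}_t(\chi(t/T)e^{i\mu t}1_E)(\lambda)|\lesssim \min(T,1/|\lambda-\mu|)\lesssim T/\langle\lambda-\mu\rangle$, and combined with the prefactor $1/|\lambda_2|\sim 1/\langle\lambda_2\rangle$ this handles the regime $|\lambda_2|\ge 1$. For $|\lambda_2|\le 1$ I would instead work directly with $h(t)\defe\int_0^t 1_{\cJ}(t')e^{i\lambda_2 t'}\dtprime$, noting $\|h\|_{L^\infty}\le |\cJ|\le T$ and $\|h'\|_{L^\infty}\le 1$. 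Integrating by parts once in $\int \chi(t/T)e^{i(\lambda_1-\lambda)t}h(t)\,dt$ moves the derivative onto $\chi(t/T)h(t)$, producing two terms with $L^1$-norm $\lesssim T$; this yields $\lesssim T/|\lambda-\lambda_1|$, and together with the trivial estimate $\lesssim T^2$ for $|\lambda-\lambda_1|\le 1$ we obtain $\lesssim T^2/\langle\lambda-\lambda_1\rangle$. In this regime $\langle\lambda_2\rangle\sim 1$ and $\langle\lambda-\lambda_1\rangle\sim\langle\lambda-\lambda_1-\lambda_2\rangle$, so the claimed bound follows.

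The main (and only) obstacle is organising the case analysis cleanly: the splits $|\lambda_2|\lessgtr 1$ and, in the second estimate, the three pieces that arise from the boundary of the indicator function $1_{\cJ}$. Once these are fixed, the entire proof is a bookkeeping exercise built on the Schwartz decay of $\widehat{\chi}$ and one integration by parts per term.
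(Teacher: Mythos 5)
Your proof is correct and follows essentially the same route as the paper's. For \eqref{tools:eq_estimate_fouriertransform_1} you derive exactly the same identity $\frac{T}{i\lambda_2}\big(\widehat{\chi}(T(\lambda-\lambda_1-\lambda_2))-\widehat{\chi}(T(\lambda-\lambda_1))\big)$ and handle the two regimes $|\lambda_2|\ge 1$ and $|\lambda_2|\le 1$ precisely as the paper does (decay of $\widehat{\chi}$ in the first regime, fundamental theorem of calculus plus decay of $\widehat{\chi}'$ in the second). For \eqref{tools:eq_estimate_fouriertransform_2} your identity $\frac{1}{i\lambda_2}\big[e^{i\lambda_2 t}1_{[a,b]}(t)+e^{i\lambda_2 b}1_{[b,\infty)}(t)-e^{i\lambda_2 a}1_{[a,\infty)}(t)\big]$ is the same as the paper's $\frac{1}{i\lambda_2}\big(e^{i\lambda_2(t\wedge t_+)}-e^{i\lambda_2(t\wedge t_-)}\big)$ written out in indicator form, and the case split on $|\lambda_2|$ together with a single integration by parts in each piece is the same single-derivative argument the paper invokes. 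The only cosmetic differences are the notation for the truncated antiderivative and the exact bookkeeping of the case distinction; no new ideas are needed and none are missing.
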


\begin{proof}
We first prove \eqref{tools:eq_estimate_fouriertransform_1}. A direct calculation yields
\begin{equation}
\mathcal{F}_t\Big( \chi(t/T) \exp(i\lambda_1 t) \int_0^t \exp\big( i \lambda_2 t^\prime) \dtprime \Big )(\lambda)
= \frac{T}{i\lambda_2} \Big( \widehat{\chi}\big(T (\lambda - \lambda_1 - \lambda_2)\big)- \widehat{\chi}\big(T (\lambda - \lambda_1)\big) \Big). 
\end{equation}
For $|\lambda_2|\gtrsim 1$, the estimate follows from the decay of $\widehat{\chi}$. For $|\lambda_2|\lesssim 1$, the estimate follows from the fundamental theorem of calculus and the decay of $\widehat{\chi}^\prime$. We also used $T\geq 1$, which implies that $\langle T \cdot \rangle^{-10} \lesssim \langle \cdot \rangle^{-10}$. 

We now turn to \eqref{tools:eq_estimate_fouriertransform_2}. Since the restriction to $\cJ$ only appears in the integral, we can replace $\cJ$ by its closure. We now let $\cJ=[t_-,t_+]\subseteq [0,T]$. By integrating the exponential, we have that 
\begin{equation*}
 \int_0^t 1_{\cJ}(t^\prime)  \exp\big( i \lambda_2 t^\prime) \dtprime  = \frac{1}{i\lambda_2} 
 \big( \exp( i \lambda_2 (t\wedge t_+)) - \exp( i \lambda_2 (t\wedge t_-)) \big) ,
\end{equation*}
where $x \wedge y$ denotes the minimum of $x$ and $y$. This implies
\begin{align*}
&\mathcal{F}_t\Big( \chi(t/T) \exp(i\lambda_1 t) \int_0^t 1_{\cJ}(t^\prime)  \exp\big( i \lambda_2 t^\prime) \dtprime\Big)(\lambda)\\
&= \frac{1}{i\lambda_2} \int_{\bR} \chi(t/T) \exp( i (\lambda + \lambda_1 ) t ) \big( \exp( i \lambda_2 (t\wedge t_+)) - \exp( i \lambda_2 (t\wedge t_-)) \big) \mathrm{d}t. 
\end{align*}
The estimate then follows by distinguishing the cases $|\lambda_1|\lesssim 1$, $|\lambda_1|\gg 1 \gtrsim |\lambda_2|$, and $|\lambda_1|,|\lambda_2|\gg 1$, together with the triangle inequality and a simple integration by parts. 
\end{proof}

\subsection{Continuity argument}\label{section:continuity}
In this short subsection, we present a modification of the standard continuity argument. The modification is a result of the possibile discontinuity of $t\in [0,T] \mapsto \| u \|_{\X{s}{b}([0,t])}$, where $u \in \X{s}{b}([0,T])$ and $b>1/2$. As a replacement, we will rely on the continuity statement in Lemma \ref{tools:lem_restricted_continuity}. A different approach to this problem was obtained in \cite[Theorem 3]{Tao01}, which yields the quasi-continuity, and may even yield the continuity (see the discussion in \cite[Section 12]{Tao01}). 

\begin{lemma}[Continuity argument]\label{tools:lem_continuity}
Let $\cJ =[t_0,t_1)$, let $f\colon \cJ \rightarrow [0,\infty)$ be a nonnegative function, and let $g\colon \cJ \rightarrow [0,\infty)$ be a continuous, nonnegative function. Let $A\geq 1$, $0<\theta,\delta<1$, and assume that 
\begin{equation}\label{tools:eq_continuity_inequality}
f(t) \leq g(t) \leq g(t_0) + \delta (A^2+f(t)^2) (f(t)+\theta)
\end{equation}
for all $t \in [t_0,t_1)$. Furthermore, assume that 
\begin{equation}\label{tools:eq_continuity_parameter}
g(t_0)+ \delta^2 A \theta \leq 1 \qquad \text{and} \qquad \delta (A^2+6) \leq 1/4. 
\end{equation}
Then, it holds that 
\begin{equation*}
f(t) \leq g(t) \leq 2 (g(t_0)+\delta A^2 \theta) 
\end{equation*}
for all $t \in [t_0,t_1)$. 
\end{lemma}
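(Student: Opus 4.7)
My plan is to run a standard bootstrap/continuity argument, using the continuity of $g$ as a substitute for the (possibly absent) continuity of $f$. Concretely, set $B \defe 2(g(t_0) + \delta A^2 \theta)$ and introduce the set
\[
S \defe \big\{ t \in \cJ \colon g(t) \leq B \big\}.
\]
Since $g$ is continuous, $S$ is relatively closed in $\cJ$; and $t_0 \in S$ holds trivially since $g(t_0) \leq B$. The interval $\cJ = [t_0,t_1)$ is connected, so once I show $S$ is also relatively open in $\cJ$, it will follow that $S = \cJ$, which is exactly the desired conclusion $g(t) \leq 2(g(t_0) + \delta A^2 \theta)$ for all $t$; the bound $f(t) \leq g(t)$ is built into the hypothesis.

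The core of the argument is therefore to establish self-improvement: if $t \in S$, then actually $g(t) \leq \tfrac{3}{4} B$ strictly, after which openness follows immediately from continuity of $g$. To this end I would first use the two parameter conditions to bound $B$ by an absolute constant: from $\delta(A^2+6)\leq 1/4$ together with $\theta < 1$ we get $\delta A^2 \theta \leq 1/4$, and from $g(t_0) + \delta^2 A \theta \leq 1$ we get $g(t_0) \leq 1$, so $B$ is bounded by a small absolute constant (at most $5/2$, but in particular at most $2$ once one absorbs the slack appropriately). With such an a priori bound on $B$ one has $B^2 + B\theta \leq 6$, and the hypothesis \eqref{tools:eq_continuity_inequality} then gives, for $t \in S$,
\[
g(t) \leq g(t_0) + \delta(A^2 + B^2)(B + \theta)
\leq g(t_0) + \delta A^2 \theta + \delta\,B\,(A^2 + 6)
\leq g(t_0) + \delta A^2 \theta + \tfrac{1}{4} B,
\]
where the last step uses $\delta(A^2+6) \leq 1/4$. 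Since $g(t_0) + \delta A^2 \theta = B/2$, this yields $g(t) \leq \tfrac{3}{4} B < B$, the strict improvement required for openness.

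The only real obstacle is bookkeeping of the parameters: I have to verify that the two hypotheses in \eqref{tools:eq_continuity_parameter} are exactly tuned so that (a) $B$ is bounded above by a universal constant (in particular, so that $B^2$ and $B\theta$ can be absorbed into the $6$ in the second condition) and (b) the nonlinear terms $\delta B^3$ and $\delta B^2 \theta$ arising from expanding $(A^2 + B^2)(B+\theta)$ can be combined with $\delta A^2 B$ into a single term $\delta B (A^2 + 6)$ bounded by $B/4$. Once this linear algebra is carried out the continuity argument is purely formal, and the conclusion $f(t) \leq g(t) \leq 2(g(t_0) + \delta A^2 \theta)$ follows for all $t \in \cJ$.
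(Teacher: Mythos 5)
Your overall strategy — a clopen bootstrap driven by continuity of $g$ — is exactly the ``standard continuity method'' the paper invokes (with a reference to Tao's textbook), so the approach is the same. However, there is an arithmetic slip in your self-improvement step. The parameter hypotheses as stated give $g(t_0)\leq 1$ and $\delta A^2\theta\leq \delta A^2\leq 1/4$, hence $B=2(g(t_0)+\delta A^2\theta)\leq 5/2$; they do \emph{not} give $B\leq 2$, and your ``absorb the slack'' remark is not justified. With $B$ as large as $5/2$ and $\theta$ near $1$ one has $B^2+B\theta$ up to $35/4>6$, so your claimed inequality $B^2+B\theta\leq 6$ — and consequently $g(t)\leq \tfrac34 B$ — can fail.

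The bootstrap still closes, but you must use the second parameter condition more carefully: from $\delta(A^2+6)\leq 1/4$ write $\delta A^2\leq \tfrac14-6\delta$ rather than just $\delta A^2\leq \tfrac14$. Then
\[
\delta\bigl(A^2+B^2+B\theta\bigr)\;\leq\;\tfrac14-6\delta+\delta\cdot\tfrac{35}{4}\;=\;\tfrac14+\tfrac{11}{4}\delta\;\leq\;\tfrac14+\tfrac{11}{112}\;<\;\tfrac12,
\]
using $\delta\leq 1/28$ (which follows from $\delta(A^2+6)\leq 1/4$ and $A\geq 1$). This gives $g(t)\leq \tfrac{B}{2}+\delta B(A^2+B^2+B\theta)<B$ strictly on $S$, which is all that is needed for openness — just with a weaker improvement factor (roughly $0.85B$) than the $\tfrac34 B$ you claimed. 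Otherwise your argument is correct and coincides in structure with the paper's.
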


\begin{proof}
The estimate \eqref{tools:eq_continuity_inequality} implies that 
\begin{equation*}
g(t) \leq g(t_0) + \delta (A^2+g(t)^2) (g(t)+\theta)
\end{equation*}
for all $t \in [t_0,t_1)$. Using the condition \eqref{tools:eq_continuity_parameter}, we also have that 
\begin{equation*}
 g(t_0) + \delta (A^2+4  (g(t_0)+\delta A^2 \theta)^2)  ( g(t_0)+\delta A^2 \theta +\theta) \leq \frac{3}{2}  (g(t_0)+\delta A^2 \theta) . 
\end{equation*}
Using the standard continuity method (see e.g. \cite[Section 1.3]{Tao06}), this implies 
\begin{equation*}
g(t) \leq 2  (g(t_0)+\delta A^2 \theta) 
\end{equation*}
for all $t \in [t_0,t_1)$. 
\end{proof}

\subsection{Sine-cancellation lemma}\label{section:sin_cancellation}

In this subsection, we prove an oscillatory sum estimate which critically relies on the fact that the $\operatorname{sine}$-function is odd. The same cancellation was exploited in earlier work of Gubinelli-Koch-Oh \cite[Section 4]{GKO18a} and we present a slight generalization of their argument.

\begin{lemma}\label{tools:lem_sin_cancellation}
Let $f\colon \bR\times \bR \times \bZ^3 \rightarrow \bC$, $ a \in \bZ^3$, $T \geq 1$, let $\cJ\subseteq [0,T]$ be an interval, and  let $A,N\geq 1$. Assume that $|a|\lesssim A \ll N$. Furthermore, assume that $f$ satisfies for all $|t|,|t^\prime|\leq T$  that 
\begin{equation*}
|f(t,t^\prime,n)|\leq A \langle n \rangle^{-3}, \quad |f(t,t^\prime,n)-f(t,t^\prime,-n)|\leq A \langle n \rangle^{-4}, \quad \text{and}\quad |\partial_{t^\prime}f(t,t^\prime,n)|\leq A \langle n \rangle^{-4}.
\end{equation*}
Then, it holds that 
\begin{equation}
\begin{aligned}
&\sup_{\lambda \in \bR} \sup_{|t|\leq T} \Big| \sum_{n\in \bZ^3} \chi_N(n) \int_0^t 1_{\cJ}(t^\prime) \sin((t-t^\prime) \langle a + n \rangle) \cos((t-t^\prime) \langle n \rangle) \exp(i\lambda t^\prime) f(t,t^\prime,n) \dtprime\Big| \\
&\lesssim T^2 A^3 \log(2+N) N^{-1}. 
\end{aligned}
\end{equation}
\end{lemma}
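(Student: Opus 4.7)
My plan is to apply the product-to-sum identity
\begin{equation*}
2\sin((t-t^\prime)\langle a+n\rangle)\cos((t-t^\prime)\langle n\rangle) = \sin((t-t^\prime)\mu_n^+) + \sin((t-t^\prime)\mu_n^-),
\end{equation*}
where $\mu_n^\pm \defe \langle a+n\rangle \pm \langle n\rangle$, and treat the two resulting sums separately. The key asymmetry is that $\mu_n^+ \sim 2\langle n \rangle \sim N$ is large while $\mu_n^- = O(A)$ is small; the two regimes need different tools.

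For the small-phase piece involving $\sin((t-t^\prime)\mu_n^-)$ I would exploit the oddness of sine via the symmetrization $n \leftrightarrow -n$, which preserves $\chi_N$ and $\cos((t-t^\prime)\langle n\rangle)$. Splitting $f = f_+ + f_-$ with $f_\pm(n) = (f(n)\pm f(-n))/2$, the hypothesis $|f(n)-f(-n)|\lesssim A\langle n\rangle^{-4}$ gives $|f_-(n)|\lesssim A\langle n\rangle^{-4}$, so the $f_-$ contribution is controlled trivially by $\sum_{|n|\sim N} T\cdot A\langle n\rangle^{-4} \lesssim TA/N$. For the even piece $f_+$, the substitution $n\mapsto -n$ in half the sum turns $\sin(u\mu_n^-)+\sin(u\mu_{-n}^-)$ into $2\sin\!\bigl(u\tfrac{\mu_n^-+\mu_{-n}^-}{2}\bigr)\cos\!\bigl(u\tfrac{\mu_n^--\mu_{-n}^-}{2}\bigr)$, whose central frequency $\tfrac{\langle a+n\rangle+\langle a-n\rangle}{2}-\langle n\rangle$ is $O(|a|^2/\langle n\rangle) = O(A^2/N)$ by a second-order Taylor expansion in $a$. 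The bound $|\sin x|\leq |x|$ then yields $O(TA^2/N)$ pointwise, so summing against $f_+\lesssim A\langle n\rangle^{-3}$ gives $\lesssim T^2A^3/N$.

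For the large-phase piece $\sin((t-t^\prime)\mu_n^+)$ I would integrate by parts in $t^\prime$. Expanding the sine into exponentials and folding in $e^{i\lambda t^\prime}$ gives oscillatory integrals of the form $\int_0^t 1_\cJ(t^\prime)e^{i\alpha t^\prime}f(t,t^\prime,n)\,\mathrm{d}t^\prime$ with $\alpha = \lambda \mp \mu_n^+$. Integration by parts, combined with the trivial bound when $|\alpha|T\leq 1$, yields
\begin{equation*}
\Bigl|\int_0^t 1_\cJ(t^\prime)e^{i\alpha t^\prime} f\,\mathrm{d}t^\prime\Bigr| \lesssim (\|f\|_\infty + T\|\partial_{t^\prime}f\|_\infty)\min(T, 1/|\alpha|) \lesssim A\langle n\rangle^{-3}\min(T,1/|\alpha|).
\end{equation*}
The hard part is the resonance: when $|\lambda|\sim N$, the frequency $\alpha = \lambda\mp\mu_n^+$ can vanish on an entire two-dimensional shell of $n\in\bZ^3$, so the naive IBP gain collapses precisely on an $O(N^2)$-sized lattice set. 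I would manage this by a dyadic decomposition of $|\alpha|$ together with the counting estimate
\begin{equation*}
\#\{n\in\bZ^3 : |n|\sim N,\ |\mu_n^+ - \lambda|\leq \delta\} \lesssim N^2(1+\delta),
\end{equation*}
which follows from $|\nabla_n\mu_n^+|\sim 1$ on $|n|\sim N \gg A$. Summing $AN^{-3}\min(T,1/|\alpha|)$ dyadically produces exactly a logarithmic loss, yielding $\lesssim (T+\log N)A/N$, which sits comfortably inside the target $T^2A^3\log(2+N)/N$. The dyadic/counting tradeoff in this last step is the true crux; the rest is bookkeeping.
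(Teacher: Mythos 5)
Your argument is correct and follows essentially the same route as the paper's proof: the product-to-sum identity splitting into the $\mu_n^\pm$ pieces, the $n\leftrightarrow -n$ symmetrization plus Taylor expansion of $\langle n+a\rangle+\langle n-a\rangle-2\langle n\rangle=O(A^2/N)$ for the small-phase term, and integration by parts plus a shell-counting estimate for the large-phase term. The only differences are cosmetic bookkeeping: for the even part you apply sum-to-product again and use $|\sin x|\leq |x|$, whereas the paper applies $|\sin u-\sin(-v)|\leq |u+v|$ directly to the symmetrized pair; and for the resonance you phrase the counting as a dyadic decomposition in $|\alpha|$, whereas the paper converts $\sum_n\chi_N(n)(1+|\langle a+n\rangle+\langle n\rangle\pm\lambda|)^{-1}$ to an integral and passes to polar coordinates, but both produce the same $N^2\log(2+N)$ and both hinge on the radial bi-Lipschitz property of $\mu_n^+$ on $|n|\sim N\gg A$. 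One small loose end worth noting: in your IBP estimate you drop the $T\|\partial_{t'}f\|_\infty$ term against $\|f\|_\infty$, which tacitly assumes $T\lesssim N$; since this is not hypothesized you should retain the factor, giving $\lesssim TA\langle n\rangle^{-3}\min(T,1/|\alpha|)$, but this is absorbed by the $T^2$ in the target bound and does not affect the conclusion.
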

The dependence on $A$ is not essential and can likely be improved. In all our applications of this lemma, $A$ is negligible compared to $N$. We emphasize that the estimate fails if we only assume that $|f(t,t^\prime,n)|\leq A \langle n \rangle^{-3}$. Indeed, after removing the truncation $\chi_N$, the corresponding sum could diverge logarithmically. 

\begin{proof}
Using trigonometric identities, we have that 
\begin{align}
&2  \sum_{n\in \bZ^3} \chi_N(n) \int_0^t  1_{\cJ}(t^\prime) \sin((t-t^\prime) \langle a + n \rangle) \cos((t-t^\prime) \langle n \rangle) \exp(i\lambda t^\prime) f(t,t^\prime,n) \dtprime \notag \\
& =  \sum_{n\in \bZ^3} \chi_N(n) \int_0^t  1_{\cJ}(t^\prime)\sin\big((t-t^\prime) (\langle a + n \rangle - \langle n \rangle) \big) \exp(i\lambda t^\prime) f(t,t^\prime,n) \dtprime \label{tools:eq_sin_p1} \\
& +  \sum_{n\in \bZ^3} \chi_N(n) \int_0^t  1_{\cJ}(t^\prime)\sin\big((t-t^\prime) (\langle a + n \rangle  + \langle n \rangle) \big) \exp(i\lambda t^\prime) f(t,t^\prime,n) \dtprime. \label{tools:eq_sin_p2}
\end{align}
We estimate the terms \eqref{tools:eq_sin_p1} and \eqref{tools:eq_sin_p2} separately. We begin with \eqref{tools:eq_sin_p1}, which is the more difficult term. Since $|\langle a + n \rangle - \langle n \rangle|\lesssim A$, we do not expect to gain in $N$ through the integration in $t^\prime$. Instead, we utilize a pointwise cancellation. By using the symmetry $n \leftrightarrow -n$ in the summation, we obtain 
\begin{align*}
& 2 \Big| \sum_{n\in \bZ^3} \chi_N(n)  \sin\big((t-t^\prime) (\langle a + n \rangle - \langle n \rangle) \big)  f(t,t^\prime,n) \Big| \\
&= \Big| \sum_{n\in \bZ^3} \chi_N(n)  
\Big( \sin\big((t-t^\prime) (\langle n+ a \rangle - \langle n \rangle) \big)  f(t,t^\prime,n) +
\sin\big((t-t^\prime) (\langle n- a \rangle - \langle n \rangle) \big)  f(t,t^\prime,-n)
\Big) \Big| \\
&\lesssim  \sum_{n\in \bZ^3} \chi_N(n)  
\Big| \sin\big((t-t^\prime) (\langle n+ a \rangle - \langle n \rangle) \big) + \sin\big((t-t^\prime) (\langle n - a  \rangle - \langle n \rangle) \big)  \Big| \cdot |f(t,t^\prime,n)| \\
&+  \sum_{n\in \bZ^3} \chi_N(n)  |f(t,t^\prime,n)-f(t,t^\prime,-n)|. 
\end{align*}
Using the assumptions on $f$, the second summand is easily bounded by $A N^{-1}$.  We now concentrate on the first summand. Using a Taylor expansion, we have that
\begin{equation}
\langle n \pm a \rangle - \langle n \rangle = \pm \frac{n \cdot a}{\langle n \rangle} + \mathcal{O}\Big( A^2 N^{-1}\Big). 
\end{equation}
Using that the $\operatorname{sine}$-function is odd, we obtain that
\begin{align*}
&\Big|  \sin\big((t-t^\prime) (\langle n+ a \rangle - \langle n \rangle) \big) + \sin\big((t-t^\prime) (\langle n - a  \rangle - \langle n \rangle)\Big| \\
&=\Big|  \sin\big((t-t^\prime) (\langle n+ a \rangle - \langle n \rangle) \big) - \sin\big(-(t-t^\prime) (\langle n - a  \rangle - \langle n \rangle)\Big| \\
&\leq T \Big| \langle n+ a \rangle - \langle n \rangle + \langle n-  a \rangle - \langle n \rangle\Big| \\
&\lesssim T A^2 N^{-1}. 
\end{align*}
Putting both estimates together and integrating in $t^\prime$, we see that the first term \eqref{tools:eq_sin_p1} is bounded by $T^2 A^3 N^{-1}$, which is acceptable.\\

We now turn to the estimate of \eqref{tools:eq_sin_p2}. Since $\langle n + a \rangle + \langle n\rangle \gtrsim N$, we expect to gain a factor of $N$ through integration by parts. We have that 
\begin{align*}
&\Big| \sum_{n\in \bZ^3} \chi_N(n) \int_0^t   1_{\cJ}(t^\prime) \sin\big((t-t^\prime) (\langle a + n \rangle  + \langle n \rangle) \big) \exp(i\lambda t^\prime) f(t,t^\prime,n) \dtprime\Big|\\
&\lesssim \max_{\pm} \Big|   \sum_{n\in \bZ^3} \chi_N(n) \int_0^t  1_{\cJ}(t^\prime)  \exp\Big( i\lambda t^\prime\pm i t^\prime (\langle a + n \rangle  + \langle n \rangle)\Big) f(t,t^\prime,n) \dtprime\Big| \\
&\lesssim  \max_{\pm}   \sum_{n\in \bZ^3} \chi_N(n) \frac{1}{1+ | \langle a + n \rangle  + \langle n \rangle \pm \lambda |} 
\Big( \sup_{0 \leq t^\prime\leq  t} \big| f(t,t^\prime,n) \big| + T \sup_{0 \leq t^\prime\leq  t} \big| \partial_{t^\prime} f(t,t^\prime,n) \big| \Big) \\
&\lesssim  T A N^{-3}  \max_{\pm}   \sum_{n\in \bZ^3} \chi_N(n) \frac{1}{1+ | \langle a + n \rangle  + \langle n \rangle \pm \lambda |}. 
\end{align*}
In order to finish the estimate, it only remains to prove that 
\begin{equation*}
 \sum_{n\in \bZ^3} \chi_N(n) \frac{1}{1+ | \langle a + n \rangle  + \langle n \rangle \pm \lambda |} \lesssim \log(2+N) N^2 
\end{equation*}
Since the function $x \mapsto \langle x\rangle$ is $1$-Lipschitz, we can estimate the sum by an integral and obtain that
\begin{equation*}
 \sum_{n\in \bZ^3} \chi_N(n) \frac{1}{1+ | \langle a + n \rangle  + \langle n \rangle \pm \lambda |} \lesssim 
\int_{\bR^3}  1\{ |\xi|\sim N \}  \frac{1}{1+ | \langle \xi + a  \rangle  + \langle \xi \rangle \pm \lambda |} \mathrm{d}\xi. 
\end{equation*}
Due to the rotation invariance of the Lebesgue measure, we can then reduce to $a=(0,0,|a|)$. To estimate the integral, we first switch into polar coordinates $(r,\theta,\varphi)$. Since $A\ll N$, we have for fixed angles $\theta$ and $\varphi$ that $r \mapsto \langle \xi + a  \rangle  + \langle \xi \rangle $ is bi-Lipschitz on $r \sim N$. After a further change of variables, this yields
\begin{equation*}
\int_{\bR^3}  1\{ |\xi|\sim N \}  \frac{1}{1+ | \langle \xi + a  \rangle  + \langle \xi \rangle \pm \lambda |} \mathrm{d}\xi 
\lesssim N^2 \int_0^\infty 1 \{  r\sim N \} \frac{1}{1+|r \pm \lambda|} \mathrm{d}r \lesssim N^2 \log(2+N). 
\end{equation*}

\end{proof}

\subsection{Counting estimates}\label{section:counting}

In this subsection, we record several counting estimates. The counting estimates are the most technical part of our treatment of $\So$, $\CPara$, and $\RMT$. Fortunately, they can be used as a black-box, and we encourage the reader to only skim this section during first reading. 

Before we state our counting estimates, we discuss the main ingredients and the differences between the nonlinear wave and Schr\"{o}dinger equations. In contrast to the counting estimates for the nonlinear Schr\"{o}dinger equation, the counting estimates for the wave equation require no analytic number theory. The reason is that the mapping $n \mapsto \langle n \rangle$ is globally $1$-Lipschitz, whereas the Lipschitz constant of $n \mapsto |n|^2$ grows linearly. This allows us to reduce all (discrete) counting estimates to estimates of the volume of (continuous) sets. More specifically, we will use that the intersection of (most) thin annuli has a smaller volume than the individual annuli. 

Another difference between the wave and Schr\"{o}dinger equation is related to the symmetries of the equation. The Schr\"{o}dinger equation enjoys the Galilean symmetry, which is useful in obtaining  ``shifted" versions of several estimates. For instance, it yields that frequency-localized Strichartz estimates for the Schr\"{o}dinger equation are the same for cubes centered either at or away from the origin. On the frequency-side, it is related to the Galilean transform
\begin{equation*}
(n,\lambda) \mapsto (n-a, \lambda - 2 a \cdot n + |a|^2),
\end{equation*}
which preserves the discrete paraboloid and plays an important role in decoupling theory (cf. \cite[Section 4]{Demeter20}). It often allows us to replace conditions such as $|n|\sim N$ in counting estimates by the more general restriction $|n-a|\sim N$ for some fixed $a\in \bZ^3$. In contrast, the Lorentzian symmetry of the wave equation on Euclidean space does not even preserve the periodicity of $u\colon \bR \times \bT^3 \rightarrow \bR$. As illustrated by the Klainerman-Tataru-Strichartz estimates (cf. \cite{KT99} and Lemma \ref{phy:lem_KT}), the frequency-shifted Strichartz estimates are more complicated for the wave equation than for the Schr\"{o}dinger equation. As will be clear from this section, similar difficulties arise in the counting estimates.\\
The last difference between the Schr\"{o}dinger  and wave equation we mention here is a result of the multiplier $\langle \nabla \rangle^{-1}$ in the Duhamel integral for the wave equation. Together with multilinear dispersive effects, we therefore obtain two separate smoothing effects in the nonlinear wave equation, which are related to the elliptic symbol $\langle n \rangle$ and the dispersive symbol $\langle |\lambda| - |n| \rangle$. In contrast, the Schr\"{o}dinger only exhibits a single smoothing effect related to the dispersive symbol $\lambda - |n|^2$. In most situations, we expect that the combined smoothing effects in the wave equation are stronger than the single smoothing effect in the Schr\"{o}dinger equation. However, it may be more difficult to capture the combined smoothing effect in a single proposition, as has been done in  \cite[Proposition 4.9]{DNY20} for the Schr\"{o}dinger equation. 

In Section \ref{section:basic_counting}, we prove basic counting estimates which form the foundation of the rest of this section. In Section \ref{section:counting_cubic}-\ref{section:counting_septic}, we state several cubic, quartic, quintic, and septic counting estimates. In order to not interrupt the flow of the main argument, we placed their (standard) proofs in the appendix. In Section \ref{section:tensor}, we present estimates for the operator norm of (deterministic) tensors. The tensor estimates are not (yet) standard in the literature on random dispersive equations, so we  include their proofs in the body of the paper. 

\subsubsection{Basic counting estimates}\label{section:basic_counting}

\begin{lemma}[Basic counting lemma]\label{tools:lem_basic_counting}
Let $a\in \bZ^3$, let $A,N\geq 1$, and assume that $|a|\sim A$. Then, it holds that 
\begin{equation}\label{tools:eq_basic_counting}
\sup_{m\in  \bZ}  \# \big\{ n \in \bZ^3\colon |n|\sim N, | \langle a + n \rangle \pm \langle n \rangle -m |\lesssim 1 \big\} \lesssim \min(A,N)^{-1} N^3. 
\end{equation}
\end{lemma}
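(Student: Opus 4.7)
My strategy is to pass from the discrete count to a continuous volume estimate and then exploit spherical coordinates aligned with the vector $a$. Since the function $\Phi_\pm(\xi) := \langle a + \xi\rangle \pm \langle \xi\rangle$ is $2$-Lipschitz on $\bR^3$, each lattice point $n$ satisfying the two constraints contributes a unit cube of points $\xi \in \bR^3$ satisfying $|\xi| \sim N$ (with adjusted constants) and $|\Phi_\pm(\xi) - m| \leq C'$. Hence the count on the left-hand side of \eqref{tools:eq_basic_counting} is bounded by the volume of the set
\begin{equation*}
S_{\pm, m} := \{ \xi \in \bR^3 : |\xi| \sim N,\ |\Phi_\pm(\xi) - m| \leq C' \}.
\end{equation*}

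After rotating so that $a = A e_3$, I would parametrize $\xi = r\,(\sin\theta\cos\phi, \sin\theta\sin\phi, \cos\theta)$ and set $c := \cos\theta \in [-1, 1]$. A direct computation gives
\begin{equation*}
\Phi_\pm(r, c) = \sqrt{1 + r^2 + A^2 + 2 A r c} \pm \sqrt{1 + r^2},
\end{equation*}
which is independent of $\phi$. Differentiating in $c$,
\begin{equation*}
\partial_c \Phi_\pm(r, c) = \frac{A r}{\sqrt{1 + r^2 + A^2 + 2 A r c}},
\end{equation*}
where the $\pm$ drops out since the $\sqrt{1+r^2}$ term is $c$-independent. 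Using the identity $1 + r^2 + A^2 + 2 A r c = 1 + (A + r c)^2 + r^2 (1 - c^2) \leq 1 + (A + r)^2$ gives the pointwise lower bound $|\partial_c \Phi_\pm(r, c)| \gtrsim A r / (1 + A + r) \gtrsim \min(A, N)$ on the shell $r \sim N$.

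Consequently, for each fixed $r \sim N$ and $\phi \in [0, 2\pi)$, the set of $c$ with $|\Phi_\pm(r, c) - m| \leq C'$ has Lebesgue measure $\lesssim \min(A, N)^{-1}$. Using the volume element $dV = r^2 \, dr \, dc \, d\phi$, I would conclude
\begin{equation*}
\mathrm{Vol}(S_{\pm, m}) \lesssim \int_0^{2\pi} d\phi \int_{r \sim N} r^2 \, dr \cdot \min(A, N)^{-1} \lesssim \min(A, N)^{-1} N^3,
\end{equation*}
which yields \eqref{tools:eq_basic_counting}. The argument treats both signs simultaneously and requires no split between $A \leq N$ and $A > N$; the only point where one must think is the unified derivative formula for $\partial_c \Phi_\pm$, which is why aligning the polar axis with $a$ is crucial.
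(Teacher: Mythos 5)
Your proof is correct and takes essentially the same approach as the paper: pass from the lattice count to a volume bound via the Lipschitz property, rotate so $a$ is along the polar axis, and observe that for fixed radius the constraint pins $\cos\theta$ to an interval of length $\lesssim \min(A,N)^{-1}$. The only difference is cosmetic: you extract the $\cos\theta$ localization from a lower bound on $\partial_c\Phi_\pm$, while the paper first replaces $\langle\cdot\rangle$ by $|\cdot|$, decomposes into annular shells indexed by $(m_1,m_2)$, and solves the constraint for $\cos\theta$ explicitly with an $O(\min(A,N)^{-1})$ error; your derivative computation is a slightly cleaner way to get the same estimate and avoids the double shell decomposition.
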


We emphasize that the upper bound in \eqref{tools:eq_basic_counting} cannot be improved to $N^2$. The reason is that $|\langle a + n \rangle - \langle n \rangle|\lesssim A$, which implies that 
\begin{equation*}
\sup_{m\in  \bZ}  \# \big\{ n \in \bZ^3\colon |n|\sim N, | \langle a + n \rangle - \langle n \rangle -m |\lesssim 1 \big\} \gtrsim A^{-1} N^3. 
\end{equation*}
As already mentioned above, the main step in the proof converts the discrete estimate \eqref{tools:eq_basic_counting} into a continuous analogue. After this reduction, the estimate boils down to multi-variable calculus.

\begin{proof}
Since $\langle \xi \rangle = |\xi|+ \mathcal{O}(1)$, we may replace $\langle \cdot \rangle$ in \eqref{tools:eq_basic_counting} by $|\cdot|$ after increasing the implicit constant. Furthermore, since $\xi \mapsto \langle \xi + a \rangle \pm \langle \xi \rangle$ is globally Lipschitz, we see that the $1$-neighborhood of the set on the left-hand side of \eqref{tools:eq_basic_counting} is contained in 
\begin{equation*}
 \big\{ \xi \in \bR^3\colon |\xi|\sim N, | | a + \xi | \pm |\xi  |  -m |\lesssim 1 \big\}. 
\end{equation*}
Since the integer vectors are $1$-separated, it follows that 
\begin{equation*}
\# \big\{ n \in \bZ^3\colon |n|\sim N, |  | a + n  | \pm  | n |-m |\lesssim 1 \big\} \lesssim 
 \operatorname{Leb}\Big(\big\{ \xi \in \bR^3\colon |\xi|\sim N, |  | a + \xi  | \pm  | \xi | -m |\lesssim 1 \big\}\Big). 
\end{equation*}
We now decompose 
\begin{align*}
& \operatorname{Leb}\Big(\big\{ \xi \in \bR^3\colon |\xi|\sim N, |  |a + \xi  |\pm | \xi  | -m |\lesssim 1 \big\}\Big)\\
&\lesssim \sum_{\substack{m_1,m_2\in \bZ\colon \\ | m_1 \pm m_2 - m |\lesssim 1 }} 
\operatorname{Leb}\Big(\big\{ \xi \in \bR^3\colon |\xi|\sim N,   |a + \xi | = m_1 + \mathcal{O}(1) , |\xi| = m_2 + \mathcal{O}(1) \big\}\Big) \\
&\lesssim N \sup_{m_1,m_2\in \bZ} \operatorname{Leb}\Big(\big\{ \xi \in \bR^3\colon |\xi|\sim N,   |a + \xi | = m_1 + \mathcal{O}(1) , |\xi| = m_2 + \mathcal{O}(1) \big\}\Big) . 
\end{align*}
In the last line, we used that there are at most $\sim N$ non-trivial choices of $m_2$. Once $m_2$ is fixed, the condition $ | m_1 \pm m_2 - m |\lesssim 1$ implies that there are at most $\sim 1 $ non-trivial choices for $m_1$. Thus, it remains to prove for $|m_1|\lesssim \max(A,N)$ and $|m_2|\sim N$ that 
\begin{equation}
 \operatorname{Leb}\Big(\big\{ \xi \in \bR^3\colon |\xi|\sim N,   |a + \xi | = m_1 + \mathcal{O}(1) , |\xi| = m_2 + \mathcal{O}(1) \big\}\Big) \lesssim \min(A,N)^{-1} N^2. 
\end{equation}
Using the rotation invariance of the Lebesgue measure, we may assume that $a=|a| e_3$, i.e., $a$ points in the direction of the $z$-axis. By switching into polar coordinates, we obtain that 
\begin{align*}
& \operatorname{Leb}\Big(\big\{ \xi \in \bR^3\colon |\xi|\sim N,   |a + \xi | = m_1 + \mathcal{O}(1) , |\xi| = m_2 + \mathcal{O}(1) \big\}\Big)\\
&\lesssim  N^2 \int_0^\infty \int_0^\pi 1\big\{ r=m_2 + \mathcal{O}(1) \big\} 1 \big\{ \sqrt{|a|^2 + 2 r |a| \cos(\theta) + r^2} = m_1 + \mathcal{O}(1) \big\} \sin(\theta) \mathrm{d}\theta\, \mathrm{d}r.   
\end{align*}
The condition $\sqrt{|a|^2 + 2 r |a| \cos(\theta) + r^2} = m_1 + \mathcal{O}(1) $ together with $|m_1|\lesssim \max(A,N)$ implies that 
\begin{equation}
\cos(\theta) = 1 - \frac{(|a|+r)^2}{2|a|r} + \frac{m_1^2}{2|a|r} + \mathcal{O}\big( \max(A,N) A^{-1} N^{-1}\big). 
\end{equation}
For a fixed $r$, this shows that $\cos(\theta)$ is contained in an interval of size $\sim \min(A,N)^{-1}$. After a change of variables from $\theta$ to $\cos(\theta)$, this yields 
\begin{equation}\label{tools:eq_basic_counting_p1}
\begin{aligned}
& N^2 \int_0^\infty \int_0^\pi 1\big\{ r=m_2 + \mathcal{O}(1) \big\} 1 \big\{ \sqrt{|a|^2 + 2 r |a| \cos(\theta) + r^2} = m_1 + \mathcal{O}(1) \big\} \sin(\theta) \mathrm{d}\theta\, \mathrm{d}r\\
&\lesssim  \min(A,N)^{-1} N^2  \int_0^\infty 1\big\{ r=m_2 + \mathcal{O}(1) \big\} \mathrm{d}r \\
&\lesssim \min(A,N)^{-1} N^2. 
\end{aligned}
\end{equation}
\end{proof}

\begin{remark}\label{tools:rem_2d_counting}
Our proof of the basic counting lemma (Lemma \ref{tools:lem_basic_counting}) easily generalizes to spatial dimensions $d\geq 3$. In two spatial dimensions, however, only weaker estimates are available. The reason lies in the absence of the $\operatorname{sine}$-function in the area element for polar coordinates, which breaks \eqref{tools:eq_basic_counting_p1}. From a PDE perspective, the parallel interactions in two-dimensional wave equations are stronger than the planar interactions in three-dimensional wave equations. Ultimately, this requires a modification in the probabilistic scaling heuristic and we encourage the reader to compare \cite[Section 1.3.2]{DNY19} and \cite[Proposition 1.5]{OO19}. 
\end{remark}

We now present a minor modification of the basic counting lemma (Lemma \ref{tools:lem_basic_counting}). The condition $|n|\sim N$ is augmented by $|n+a|\sim B$. We emphasize that the vector $a\in \bZ^3$ in this constraint is the same vector as in the dispersive symbol. 

\begin{lemma}[``Two-ball" basic counting lemma]\label{tools:lem_two_balls}
Let $N,A,B\geq 1$. Let $a\in \bZ^3$ satisfy $|a|\sim A$. Then, it holds that 
\begin{equation}\label{tools:eq_two_balls}
\sup_{m\in  \bZ}  \# \big\{ n \in \bZ^3\colon |n|\sim N, |n+a|\sim B, | \langle a + n \rangle \pm \langle n \rangle -m |\lesssim 1 \big\} \lesssim \min(A,B,N)^{-1} \min(B,N)^3. 
\end{equation}
\end{lemma}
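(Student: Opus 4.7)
The plan is to derive Lemma \ref{tools:lem_two_balls} directly from the basic counting lemma (Lemma \ref{tools:lem_basic_counting}), using nothing beyond a symmetry that interchanges the roles of the two balls. The key observation is that adding the extra ball constraint $|n+a|\sim B$ can only decrease the count, so it suffices to combine the basic bound with its image under this symmetry.

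First I would set up the symmetry. Consider the involution $n \mapsto n' := -(n+a)$ of $\bZ^3$. Under this substitution the conditions transform as
\begin{align*}
|n|\sim N \ &\Longleftrightarrow\ |n'+a|\sim N,\\
|n+a|\sim B \ &\Longleftrightarrow\ |n'|\sim B,\\
\langle a+n\rangle \pm \langle n\rangle \ &=\ \langle n'\rangle \pm \langle n'+a\rangle,
\end{align*}
where the last identity uses $\langle -\xi \rangle = \langle \xi\rangle$. For the $+$ sign the final expression equals $\langle a+n'\rangle + \langle n'\rangle$, and for the $-$ sign it equals $-(\langle a+n'\rangle - \langle n'\rangle)$. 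Since the supremum over $m\in\bZ$ is insensitive to an overall sign change, the counting problem with parameters $(N,B)$ is equivalent to the one with parameters $(B,N)$.

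Next I would reduce and apply the basic lemma. By the symmetry just established, I may assume without loss of generality that $N\leq B$. In this regime $\min(B,N)=N$ and $\min(A,B,N)=\min(A,N)$, so the desired right-hand side is $\min(A,N)^{-1}N^3$. Dropping the constraint $|n+a|\sim B$ and applying Lemma \ref{tools:lem_basic_counting} yields precisely
\begin{equation*}
\sup_{m\in\bZ}\#\bigl\{n\in\bZ^3 : |n|\sim N,\ \lvert\langle a+n\rangle\pm\langle n\rangle - m\rvert\lesssim 1\bigr\}\lesssim \min(A,N)^{-1}N^3,
\end{equation*}
which is the bound we want.

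I do not anticipate any serious obstacle: the proof reduces to Lemma \ref{tools:lem_basic_counting} together with a bookkeeping observation. The only mildly delicate point is verifying that the sign issue in $\langle a+n\rangle \pm \langle n\rangle$ does not break the symmetry, which is handled by absorbing the sign into the supremum over $m$. Everything else is purely notational.
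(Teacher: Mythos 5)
Your proof is correct and follows essentially the same approach as the paper: both arguments drop one of the two ball constraints and apply Lemma \ref{tools:lem_basic_counting}, using a change of variables to interchange the roles of $N$ and $B$. The paper states this as proving two bounds (one with $b := n+a$) and taking the minimum, whereas you package it as a WLOG reduction via the involution $n\mapsto -(n+a)$; your version is perhaps slightly cleaner in that it avoids having to note that $a$ may be replaced by $-a$, and you are more explicit about why the $\pm$ sign is harmless, but the underlying argument is identical.
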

\begin{proof}
Using the basic counting lemma (Lemma \ref{tools:lem_basic_counting}), we have that 
\begin{align*}
&\sup_{m\in  \bZ}  \# \big\{ n \in \bZ^3\colon |n|\sim N, |n+a|\sim B, | \langle a + n \rangle \pm \langle n \rangle -m |\lesssim 1 \big\} \\
&\leq \sup_{m\in  \bZ}  \# \big\{ n \in \bZ^3\colon |n|\sim N, | \langle a + n \rangle \pm \langle n \rangle -m |\lesssim 1 \big\} \\
&\lesssim \min(A,N)^{-1} N^3. 
\end{align*}
After using a change of variables $b\defe n+a$, we obtain similarly that 
\begin{equation*}
\sup_{m\in  \bZ}  \# \big\{ n \in \bZ^3\colon |n|\sim N, |n+a|\sim B, | \langle a + n \rangle \pm \langle n \rangle -m |\lesssim 1 \big\} \lesssim \min(A,B)^{-1} B^3. 
\end{equation*}
By combining both estimates we obtain \eqref{tools:eq_two_balls}.
\end{proof}

\subsubsection{Cubic counting estimate}\label{section:counting_cubic}

As mentioned in the beginning of this section, we only discuss and state the remaining counting estimates, but  postpone the proofs until the appendix. 

The cubic counting estimates play an important role in our analysis of the nonlinearity $\<3N>$. In the following, we use $\max$, $\med$, and $\min$ for the maximum, median, and minimum of three frequency-scales. 

\begin{proposition}[Main cubic counting estimate]\label{tools:prop_cubic_counting}
Let $\pm_{123},\pm_1,\pm_2,\pm_3 \in \{ +,-\}$ and define the phase 
\begin{equation*}
\varphi(n_1,n_2,n_3)\defe \pm_{123} \langle n_{123} \rangle \pm_1 \langle n_1 \rangle  \pm_2 \langle n_2 \rangle  \pm_3 \langle n_3 \rangle.
\end{equation*}
Let $N_1,N_2,N_3,N_{12},N_{123}\geq 1$ and let $m\in \bZ$. Then, we have the following counting estimates:
\begin{enumerate}[(i)]
\item \label{tools:item_cubic_1} In the variables $n_1,n_2$, and $n_3$, we have that 
\begin{align*}
&\# \{ (n_1,n_2,n_3) \colon |n_1|\sim N_1,|n_2|\sim N_2, |n_3|\sim N_3, |\varphi-m|\leq 1\} \\
&\lesssim \med(N_1,N_2,N_3)^{-1} (N_1 N_2 N_3)^3,  \notag
\end{align*}
\item \label{tools:item_cubic_2} In the variables $n_{123},n_1$, and $n_2$, we have that 
\begin{align*}
&\# \{ (n_{123},n_1,n_2) \colon |n_{123}|\sim N_{123},|n_1|\sim N_1, |n_2|\sim N_2, |\varphi-m|\leq 1\} \\
&\lesssim \med(N_{123},N_1,N_2)^{-1} (N_{123} N_1 N_2)^3.  \notag
\end{align*}
\item \label{tools:item_cubic_3} In the variables $n_{123},n_{12}$, and $n_1$, we have that 
\begin{align*}
&\# \{ (n_{123},n_{12},n_1) \colon |n_{123}|\sim N_{123},|n_{12}|\sim N_{12}, |n_1|\sim N_1, |\varphi-m|\leq 1\} \\
&\lesssim \min\big(N_{12},\max(N_{123},N_1)\big)^{-1} (N_{123} N_{12} N_1)^3.  \notag
\end{align*}
\item \label{tools:item_cubic_4} In the variables $n_{12},n_1$, and $n_3$, we have that 
\begin{align*}
&\# \{ (n_{12},n_1,n_3) \colon |n_{12}|\sim N_{12},|n_1|\sim N_1, |n_3|\sim N_3, |\varphi-m|\leq 1\} \\
&\lesssim  \min\big(N_{12},\max(N_{1},N_3)\big)^{-1} ( N_{12} N_1 N_3)^3. \notag
\end{align*}
\end{enumerate}
\end{proposition}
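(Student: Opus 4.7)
The overall plan is to reduce each of the four counting estimates to iterated applications of the basic counting lemma (Lemma \ref{tools:lem_basic_counting}) and its two-ball refinement (Lemma \ref{tools:lem_two_balls}). The key observation is that the phase $\varphi$ is a signed sum of four factors of the form $\langle \cdot \rangle$, so whenever we fix two of the three counting variables, the remaining variable $w$ appears in the phase only through an expression of the shape $\pm \langle a + w \rangle \pm \langle w \rangle$, where $a \in \bZ^{3}$ is a linear combination of the fixed variables. This is exactly the structure to which Lemma \ref{tools:lem_basic_counting} applies, yielding a bound $\min(|a|, N)^{-1} N^{3}$ in the innermost sum. The price $|a|^{-1}$ gets absorbed when summing over the frozen variables by means of the elementary 3D estimate $\sum_{|b| \lesssim R,\, b \neq 0} |b|^{-1} \lesssim R^{2}$.

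First I would treat \eqref{tools:item_cubic_1}. Relabel so $N_{1} \leq N_{2} \leq N_{3}$, so the median is $N_{2}$. Fixing $n_{1}$ and $n_{2}$ (which costs $(N_{1}N_{2})^{3}$) and writing $a = n_{1}+n_{2}$, Lemma \ref{tools:lem_basic_counting} bounds the count over $n_{3}$ by $\min(|a|, N_{3})^{-1} N_{3}^{3} \leq |a|^{-1}N_{3}^{3}$. Freezing $n_{1}$ and summing $n_{2}$ with $n_{2}' = n_{1} + n_{2}$, $|n_{2}'|\lesssim N_{2}$, gives a factor $\sim N_{2}^{2}$; freezing and summing $n_{1}$ gives the extra $N_{1}^{3}$. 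Multiplying yields the claimed bound $N_{2}^{-1}(N_{1}N_{2}N_{3})^{3}$. Part \eqref{tools:item_cubic_2} follows by the same pattern after identifying $n_{3} = n_{123} - n_{1} - n_{2}$: the variable one freezes depends on whether $N_{1}, N_{2}$, or $N_{123}$ plays the role of the outermost (summed) index, and the argument is symmetric by a simple case analysis on which of the three scales is largest.

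Parts \eqref{tools:item_cubic_3} and \eqref{tools:item_cubic_4} use the two-ball refinement. For \eqref{tools:item_cubic_3}, I would fix $n_{123}$ and $n_{1}$ and count $n_{12}$. Substituting $w = n_{12}$ and using $\langle x \rangle = \langle -x \rangle$, the phase reduces to $\pm_{2}\langle w - n_{1}\rangle \pm_{3} \langle w - n_{123}\rangle + \mathrm{const}$. After translating $w \mapsto w + n_{1}$, one needs to count $w$ in the annulus $|w + n_{1}|\sim N_{12}$ subject to a phase of the form $\pm_{2}\langle w\rangle \pm_{3} \langle w - b\rangle$ with $b = n_{123}-n_{1}$, $|b| \lesssim \max(N_{123},N_{1})$. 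A dyadic decomposition $|w|\sim U$ combined with Lemma \ref{tools:lem_two_balls} gives $\min(|b|, U, N_{12})^{-1}\min(N_{12}, U)^{3}$ for each $U$; summing the dyadic slices is dominated by the largest scale and yields $\min(|b|, N_{12})^{-1} N_{12}^{3}$. Summing $\min(|b|, N_{12})^{-1}$ over $(n_{1}, n_{123})$ with $|n_{1}+(-n_{123})| = |b|$ then gives the desired factor $\min(N_{12}, \max(N_{123}, N_{1}))^{-1}$ by splitting the $b$-sum at the threshold $|b| = N_{12}$.

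Part \eqref{tools:item_cubic_4} is the main obstacle: here we must count $n_{12}$ after fixing $n_{1}, n_{3}$, and the variable $n_{12}$ appears in \emph{both} $\langle n_{12}+n_{3}\rangle$ and $\langle n_{12}-n_{1}\rangle$, while the bound should read $\min(N_{12}, \max(N_{1},N_{3}))$. I would translate $w' = n_{12}+n_{3}$, reducing the phase to $\pm_{123}\langle w'\rangle \pm_{2}\langle w' - (n_{1}+n_{3})\rangle + \mathrm{const}$, and again use Lemma \ref{tools:lem_two_balls} with $b = n_{1}+n_{3}$, $|b| \lesssim \max(N_{1},N_{3})$, combined with a dyadic decomposition in $|w'|$. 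The subsequent sum over $(n_{1},n_{3})$ with $b = n_{1}+n_{3}$ is the delicate point: the permissible $b$-region now depends on the separation $|N_{3}-N_{1}|$, and three sub-cases ($N_{12} \leq \min(N_{1},N_{3})$, middle, and $N_{12} \geq \max(N_{1},N_{3})$) must be handled, in the last of which one must exploit the improved estimate $\sum_{|n_{1}|\sim N_{1}}|n_{1}+n_{3}|^{-1}\lesssim N_{1}^{3}/N_{3}$ when $N_{3}\gg N_{1}$. In every case the final arithmetic boils down to $N_{12}^{-1}$ or $\max(N_{1},N_{3})^{-1}$, reproducing the stated bound. The overall proof is thus conceptually a repetition of Lemma \ref{tools:lem_basic_counting}, with the main technical challenge being the bookkeeping in \eqref{tools:item_cubic_4}, where a case analysis on the relative sizes of $N_{12}$, $N_{1}$, and $N_{3}$ is unavoidable.
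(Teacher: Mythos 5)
Parts (i) and (ii) of your proposal are essentially correct and use the same overall method as the paper (apply the basic counting lemma in one variable after freezing the other two, then sum), though you freeze a different pair of variables — you count the variable carrying the largest frequency, whereas the paper counts the one carrying the median. Both choices close for (i)–(ii). Your reduction of (ii) to (i) is in the same spirit as the paper's observation that the phase takes the identical form under $(n_1,n_2)\mapsto(-n_1,-n_2)$, so that (ii) is literally a relabelling of (i).

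Your treatment of (iii) and (iv) has a genuine gap. In both parts you freeze the two outer variables and count $n_{12}$, so the $n_{12}$-dependent part of the phase has the form $\pm\langle n_{12}-c_1\rangle \pm\langle n_{12}-c_2\rangle$ with \emph{neither} term centred at the origin of the counting variable, while the constraint annulus $|n_{12}|\sim N_{12}$ \emph{is} centred at the origin. After your translation $w=n_{12}-n_1$ (or $w'=n_{12}+n_3$) the phase shift is $b=n_{13}$ (resp.\ $b=n_1+n_3$), but the annulus constraint becomes $|w+n_1|\sim N_{12}$ (resp.\ $|w'-n_3|\sim N_{12}$), whose centre is $-n_1$ (resp.\ $n_3$), \emph{not} $\pm b$. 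Lemma \ref{tools:lem_two_balls} requires the shift in the second annulus to coincide with the shift in the phase; here they are different lattice points, so the lemma does not apply, and the claimed per-pair bound $\min(|b|,U,N_{12})^{-1}\min(N_{12},U)^3$ is not justified.

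In fact that intermediate bound is false in general. Take $N_1\sim N_3\gg N_{12}^2$, $n_3\approx -n_1$, $|b|=|n_{13}|$ with $N_{12}<|b|<N_1/N_{12}$, and choose signs $\pm_{123}=+$, $\pm_2=-$. Then over $|n_{12}|\sim N_{12}$ the varying part of the phase is $\approx n_{12}\cdot\big(\tfrac{n_3}{|n_3|}+\tfrac{n_1}{|n_1|}\big)$, whose range is $\lesssim N_{12}|b|/N_1<1$; the phase is effectively constant on the whole annulus and the count is $\sim N_{12}^3$, whereas your claimed bound would give $\min(|b|,N_{12})^{-1}N_{12}^3 = N_{12}^2$, smaller by a factor $N_{12}$. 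The final summed estimate happens to come out right, but only because the error for these exceptional $(n_1,n_3)$ is absorbed by overcounting elsewhere; the step you invoke Lemma \ref{tools:lem_two_balls} for is not a valid deduction. The paper's route avoids this entirely: it never counts $n_{12}$, but instead counts $n_1$ (where the phase contains $\langle n_1\rangle$ itself, so the basic counting lemma applies with shift $n_{12}$, gain $\min(N_1,N_{12})$), and separately counts $n_{123}$ or $n_3$ (gain $\min(N_{123},N_{12})$ or $\min(N_3,N_{12})$); taking the better of the two gives $\min(N_{12},\max(\cdot,\cdot))^{-1}$ via the elementary $\max$–$\min$ identity. Adopting that choice of counting variable is the fix you need.
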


\begin{remark}
The four estimates in Proposition \ref{tools:prop_cubic_counting} are sharp. In our analysis of the cubic nonlinearity, the frequencies $n_1,n_2$, and $n_3$ represent the frequencies of the three individual factors. The frequency $n_{12}$ appears through the convolution with the interaction potential $V$. Finally, the frequency $n_{123}$, which is the frequency of the full nonlinearity, appears through the multiplier $\langle \nabla \rangle^{-1}$ in the Duhamel integral and in estimates of the $H_x^s$ and $\X{s}{b}$-norms. 
\end{remark}

Since we postpone the proof, let us ease the reader's mind with the heuristic argument behind \eqref{tools:item_cubic_1}. Without the restriction due to the phase $\varphi$, the combined frequency variables $(n_1,n_2,n_3)$ live in a set of cardinality $(N_1 N_2 N_3)^3$. As long as the level sets of $\varphi$ have comparable cardinalities, we expect to gain a factor corresponding to the possible values of $\varphi$ on the set $\{ (n_1,n_2,n_3) \colon |n_1|\sim N_1,|n_2|\sim N_2, |n_3|\sim N_3\}$. Since $\varphi$ is globally Lipschitz, one may ideally hope for a gain of the form $\max(N_1,N_2,N_3)$. Unfortunately, since 
\begin{equation}\label{tools:eq_heuristic_hilolo}
| \langle n_{123} \rangle - \langle n_1 \rangle + \langle n_2 \rangle + \langle n_3 \rangle | \lesssim \max(N_2,N_3), 
\end{equation}
the high$\times$low$\times$low-interactions rule out a gain in $\max(N_1,N_2,N_3)$. As it turns out, however, our basic counting estimates allows us to obtain a gain of the form $\med(N_1,N_2,N_3)$, which is consistent with \eqref{tools:eq_heuristic_hilolo}.

\begin{proposition}[Cubic sum estimate]\label{tools:prop_cubic_sum}
Let $0<s \leq 1/2$, $0\leq \gamma < s + 1/2$, and let $N_1,N_2,N_3 \geq 1$. Let the signs $\pm_{123},\pm_1,\pm_2,\pm_3\in \{+,-\}$ be given and define the phase 
\begin{equation}\label{tools:eq_phase_varphi}
\varphi(n_1,n_2,n_3)\defe \pm_{123} \langle n_{123} \rangle \pm_1 \langle n_1 \rangle  \pm_2 \langle n_2 \rangle  \pm_3 \langle n_3 \rangle.
\end{equation}
Then, it holds that 
\begin{equation}
\begin{aligned}
&\sup_{m\in \bZ}  \sum_{n_1,n_2,n_3\in \bZ^3} \bigg[ \Big( \prod_{j=1}^{3} \chi_{N_j}(n_j) \Big) \langle n_{123} \rangle^{2(s-1)} \langle n_{12} \rangle^{-2\gamma} \Big( \prod_{j=1}^{3} \langle n_j \rangle^{-2} \Big) 1\big\{ |\varphi-m|\leq 1\big\}\bigg] \\
&\lesssim \max(N_1,N_2,N_3)^{2(s-\gamma)} + \max(N_1,N_2)^{1-2\gamma} \max(N_1,N_2,N_3)^{2s-1}. 
\end{aligned}
\end{equation}
\end{proposition}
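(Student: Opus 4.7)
The plan is to dyadically decompose in the ``derived'' frequencies $n_{12}=n_1+n_2$ and $n_{123}=n_1+n_2+n_3$, count the admissible triples in each slice by first counting $(n_1,n_2)$ and then $n_3$, and then sum the resulting dyadic series. Without loss of generality I assume $N_1\geq N_2$, since the weight $\langle n_{12}\rangle^{-2\gamma}$ is symmetric in $n_1,n_2$.

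On a dyadic slice $|n_{12}|\sim N_{12}$ and $|n_{123}|\sim N_{123}$ (with $N_{12}\lesssim N_1$ and $N_{123}\lesssim N_{\max}\defe\max(N_1,N_2,N_3)$ by the triangle inequality), the weights reduce to $N_{123}^{2(s-1)}$, $N_{12}^{-2\gamma}$, and $N_j^{-2}$. The number of pairs $(n_1,n_2)$ satisfying the three shell constraints is at most $N_1^3\min(N_2,N_{12})^3$, obtained by fixing $n_1$ and intersecting the shells $|n_2|\sim N_2$ and $|n_1+n_2|\sim N_{12}$. For each such pair, the two-ball basic counting lemma (Lemma \ref{tools:lem_two_balls}) with $a=n_{12}$ bounds the number of $n_3$ with $|n_3|\sim N_3$, $|n_{12}+n_3|\sim N_{123}$, and $|\varphi-m|\leq 1$ by $\min(N_{12},N_3,N_{123})^{-1}\min(N_{123},N_3)^3$. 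Multiplying these counts and substituting into the original sum produces an expression of the form
\begin{equation*}
\frac{N_1}{N_2^2 N_3^2}\sum_{N_{12}\lesssim N_1}\sum_{N_{123}\lesssim N_{\max}} N_{123}^{2(s-1)}N_{12}^{-2\gamma}\min(N_2,N_{12})^3\min(N_{12},N_3,N_{123})^{-1}\min(N_{123},N_3)^3.
\end{equation*}

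The final step is to evaluate this double dyadic sum by splitting into subcases according to which term attains each of the three minima. The two summands in the target correspond to distinct regimes: the contribution with $N_{123}\geq N_3$ collapses $\min(N_{123},N_3)$ to $N_3$ and, after summing the resulting series in $N_{12}$ up to $N_1$ and in $N_{123}$ up to $N_{\max}$, produces the second term $\max(N_1,N_2)^{1-2\gamma}N_{\max}^{2s-1}$; the contribution with $N_{123}<N_3$ gives a convergent dyadic series in $N_{123}$ whose top value yields the first term $N_{\max}^{2(s-\gamma)}$. The condition $0\leq\gamma<s+\tfrac{1}{2}\leq 1$ is precisely what ensures that each intermediate dyadic series either converges geometrically or telescopes to its top value, with no logarithmic loss. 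The main obstacle will be the dyadic bookkeeping: several subcases arise from the interactions of the three minima and from the asymmetry between the regimes $N_3\leq N_1$ and $N_3>N_1$. However, the phase $\varphi$ enters only through Lemma \ref{tools:lem_two_balls}, so no further cancellation arguments are required and the remaining work reduces to careful summation of geometric series.
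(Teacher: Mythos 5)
Your reduction to the double dyadic sum
\[
\frac{N_1}{N_2^2 N_3^2}\sum_{N_{12}\lesssim N_1}\sum_{N_{123}\lesssim N_{\max}} N_{123}^{2(s-1)}N_{12}^{-2\gamma}\min(N_2,N_{12})^3\min(N_{12},N_3,N_{123})^{-1}\min(N_{123},N_3)^3
\]
is a valid upper bound, but it is \emph{strictly larger} than the target in the regime $N_3\ll N_1\sim N_2$, so the proposed ``careful summation of geometric series'' cannot close the argument. The issue is that you sum over all pairs $(N_{12},N_{123})$ subject only to $N_{12}\lesssim N_1$ and $N_{123}\lesssim N_{\max}$, but the additional compatibility constraints coming from $n_{123}=n_{12}+n_3$ (in particular $N_{123}\sim N_{12}$ whenever $N_3\ll N_{12}$, and more generally $|N_{12}-N_{123}|\lesssim N_3$) are essential. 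Lemma \ref{tools:lem_two_balls} returns a \emph{nonzero} bound even on vacuous dyadic slices, so nothing in your estimate detects that those slices contribute nothing.

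Concretely, take $s=1/2$, $\gamma=0$, $N_1=N_2=N$, $N_3=1$. The target is $\sim N^{2(s-\gamma)}+N^{1-2\gamma}N^{2s-1}\sim N$. In your bound, $\min(N,N_{12})=N_{12}$, $\min(N_{12},1,N_{123})=\min(N_{123},1)=1$, so the dyadic sum becomes $N^{-1}\sum_{N_{12}\lesssim N}N_{12}^3\sum_{N_{123}\lesssim N}N_{123}^{-1}\sim N^{-1}\cdot N^3\cdot 1=N^2$, a full power of $N$ worse than the target. Restricting the inner sum to $N_{123}\sim N_{12}$ (the only slices that are actually populated) recovers $N^{-1}\sum_{N_{12}}N_{12}^{-1}N_{12}^3\sim N$, so the fix is exactly to impose the compatibility. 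This is what the paper's proof is doing implicitly in its case distinction; for example in the paper's ``Case 1.b.i'' ($N_1\sim N_2$, $N_1\gtrsim N_3$, $N_3\ll N_{12}$), it explicitly notes $N_{123}\sim N_{12}$ before applying the relevant counting estimate, which collapses the $N_{123}$-sum to a single scale. Your approach is otherwise structurally similar to the paper's (dyadic decomposition plus the two-ball counting lemma), but the dyadic bookkeeping must track all the triangle-inequality constraints among $N_{12},N_{123},N_3$, not just the two one-sided constraints you listed; once those are imposed the argument should go through, but as written there is a genuine gap.
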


\begin{remark}
Proposition \ref{tools:prop_cubic_sum} plays an essential role in proving that $\<3DN>$ has regularity $\beta-$. In that argument, we will simply set $\gamma=\beta$. 
\end{remark}

\subsubsection{Cubic sup-counting estimates} 

We now present cubic counting estimates involving suprema, which will be used in the proof of the tensor estimates in Section \ref{section:tensor}. In turn, the tensor estimates will then be used to prove the  random matrix estimates in Section \ref{section:rmt}. 
\begin{lemma}[Cubic sup-counting estimates]\label{tools:lem_sup}
Let $N_{123},N_1,N_2,N_3\geq 1$ and $m\in\bZ$.  Let the signs $\pm_{123},\pm_1,\pm_2,\pm_3\in \{+,-\}$ be given and define the phase
\begin{equation*}
\varphi(n_1,n_2,n_3)\defe \pm_{123} \langle n_{123} \rangle \pm_1 \langle n_1 \rangle  \pm_2 \langle n_2 \rangle  \pm_3 \langle n_3 \rangle.
\end{equation*}
Then, the following estimates hold:
\begin{enumerate}[(i)]
\item \label{tools:item_sup_1} Taking the  supremum in $n$ and counting $n_1,n_2,n_3$, we have 
\begin{align*}
&\sup_{n\in \bZ^3} \# \Big\{ (n_1,n_2,n_3)\colon |n_1|\sim N_1, |n_2|\sim N_2, |n_3|\sim N_3, n=n_{123}, |\varphi-m|\leq 1\Big\}\\
&\lesssim \med(N_1,N_2,N_3)^3  \min(N_1,N_2,N_3)^2. 
\end{align*}
\item \label{tools:item_sup_2} Taking the  supremum in $n_2$ and counting $n,n_1,n_3$, we have 
\begin{align*}
&\sup_{n_2\in \bZ^3} \# \Big\{ (n,n_1,n_3)\colon |n|\sim N_{123}, |n_1|\sim N_1, |n_3|\sim N_3, n=n_{123}, |\varphi-m|\leq 1\Big\}\\
&\lesssim \med(N_{123},N_1,N_3)^3  \min(N_{123},N_1,N_3)^2. 
\end{align*}
\item \label{tools:item_sup_3} Taking the  supremum in $n$ and counting $n_1,n_{12},n_3$, we have 
\begin{align*}
&\sup_{n\in \bZ^3} \# \Big\{ (n_{12},n_2,n_3)\colon |n_{12}|\sim N_{12}, |n_2|\sim N_2, |n_3|\sim N_3, n=n_{123}, |\varphi-m|\leq 1\Big\}\\
&\lesssim \min(N_{12},N_1)^{-1} (N_{12} N_2)^3. 
\end{align*}
\item \label{tools:item_sup_4} Taking the  supremum in $n_3$ and counting $n,n_{12},n_2$, we have 
\begin{align*}
&\sup_{n\in \bZ^3}  \# \Big\{ (n,n_{12},n_2)\colon |n|\sim N_{123}, |n_{12}|\sim N_{12}, |n_2|\sim N_2, n=n_{123}, |\varphi-m|\leq 1\Big\}\\
&\lesssim \min(N_{12},N_1)^{-1} (N_{12} N_2)^3. 
\end{align*}
\end{enumerate}
\end{lemma}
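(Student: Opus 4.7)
The plan is to reduce all four estimates to repeated applications of the basic counting lemmas (Lemma \ref{tools:lem_basic_counting} and Lemma \ref{tools:lem_two_balls}), combined with elementary summation bounds. The common strategy is to parameterize the counted variables by a pair of free variables (with the remaining ones determined by the relation $n_{123} = n_1+n_2+n_3$), fix one element of the pair, apply the two-ball basic counting to the other subject to the phase constraint, and then sum over the fixed variable using the pointwise inequality $\min(a,b)^{-1}\leq a^{-1}+b^{-1}$ together with the elementary volume estimate $\sum_{|n_0|\sim N_0} |n-n_0|^{-1}\lesssim N_0^2$, which holds uniformly in $n\in \bZ^3$ (as can be seen from a dyadic decomposition of $|n-n_0|$ and the fact that the intersection of two annuli of thickness $\lesssim N_0$ has volume controlled by $N_0^2$ times the smaller radius).

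For (i), I first relabel so that $N_1\geq N_2\geq N_3$. I then fix $n$ and parameterize $(n_1, n_2, n_3)$ by the pair $(n_2, n_3)$, with $n_1 = n - n_2 - n_3$ determined. Fixing $n_3$ and writing $a = n - n_3$, Lemma \ref{tools:lem_two_balls} applied to the constraints $|n_2|\sim N_2$, $|a - n_2|\sim N_1$ together with the phase yields at most $\min(|a|, N_2)^{-1} N_2^3$ admissible $n_2$. Summing over $n_3$ via the volume estimate above then produces the desired bound $N_2^3 N_3^2 = \med(N_1,N_2,N_3)^3 \min(N_1,N_2,N_3)^2$. The estimate (ii) follows from (i) by the measure-preserving relabeling $(n, n_1, n_2, n_3)\mapsto (-n_2, -n, n_1, n_3)$, which interchanges the roles of $n$ (now a free variable) and $n_2$ (now the sup-variable) while preserving the linear relation $\sum n_j = n$ and the form of the phase $\varphi$ up to sign changes that can be absorbed into the choice of $\pm_j$.

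For (iii) and (iv), I work with the variables $(n_2, n_{12})$, noting that $n_1 = n_{12} - n_2$ is always determined and that either $n_3 = n - n_{12}$ (in (iii), with $n$ fixed) or $n = n_{12}+n_3$ (in (iv), with $n_3$ fixed) reduces the counting to a two-parameter problem. Imposing the implicit constraint $|n_1|\sim N_1$, I choose between two substitutions to recast the phase in the two-ball form required by Lemma \ref{tools:lem_two_balls}: either fix $n_{12}$ and count $n_2$ using the shift $a = n_{12}$ (which gives the per-slice bound $\min(N_{12}, N_1, N_2)^{-1}\min(N_1, N_2)^3$), or fix $n_2$ and count $n_{12}$ via the substitution $\tilde m = n_{12}-n_2$ and shift $a = n - n_2$ (which gives $\min(|n - n_2|, N_1, N_3)^{-1}\min(N_1, N_3)^3$). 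Summing via the same volume argument as in (i), together with the triangle inequality $N_{12}\lesssim N_1 + N_2$ forced by $n_{12} = n_1+n_2$, then delivers the claimed bound $\min(N_{12}, N_1)^{-1} (N_{12} N_2)^3$ after optimizing the choice of parameterization against the relative sizes of $N_1, N_2, N_{12}$.

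The main obstacle is managing the degenerate regimes where the shift $|a|$ entering Lemma \ref{tools:lem_two_balls} becomes much smaller than the ball radii, so that the two-ball counting loses its gain. Geometrically these configurations correspond to nearly parallel interactions between $n$ and the shift vector. Because the lemma takes a supremum in one variable instead of summing over it, the worst-case shift cannot simply be averaged away, and such regimes have to be handled either by switching to a different pair of ball-constraints with a larger guaranteed shift, or by absorbing the loss using the volume estimate, which shows that small-$|a|$ configurations cost a negligible amount of volume in the summation over the fixed variable. This case analysis is the only delicate part of the argument, and it is also the main source of asymmetry between the four bounds in the statement.
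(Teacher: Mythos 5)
Your treatment of (i) and (ii) matches the paper's approach and is correct: for (i) you fix $n$, parameterize by $(n_2,n_3)$, apply the basic counting lemma in $n_2$ with shift $a = n - n_3$ (under the normalization $N_1\geq N_2\geq N_3$), and sum over $n_3$ via the elementary estimate $\sum_{|n_3|\sim N_3}\min(\langle n-n_3\rangle, N_2)^{-1}\lesssim N_3^2$; and your relabeling $(n,n_1,n_2,n_3)\mapsto(-n_2,-n,n_1,n_3)$, which preserves the linear constraint and the form of $\varphi$ up to a permutation of signs, makes precise the paper's remark that (ii) is ``essentially the same as (i).''

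For (iii) and (iv), however, you are chasing a bound that does not hold. The displayed right-hand side $\min(N_{12},N_1)^{-1}(N_{12}N_2)^3$ contains a typo: $N_1$ is not among the constraints in the counted set, and the bound is false. For a concrete failure, take $N_2\sim 1$, $N_{12}=N_3=N$ large, $|n|\sim 1$, and $\pm_1 = -\pm_3$ (with the implicit constraint $|n_1|\sim N_1$ one then has $N_1\sim N$ as well). Writing $a = n_{12}-n_2$ and $c = n - 2n_2$ so that $n_3 = c - a$, the relevant phase contribution $\langle a\rangle - \langle a - c\rangle \approx c\cdot a/|a|$ has modulus $\lesssim |c|\lesssim 1$, so for a suitable $m$ a positive fraction of the $\sim N^3$ admissible $n_{12}$ satisfy $|\varphi-m|\leq 1$; the count is $\gtrsim N^3$, while the claimed bound evaluates to $N^2$. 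The correct bound --- the one the paper's proof actually delivers, and the one invoked in Step~1 of Lemma~\ref{tools:lem_first_tensor} --- is $\min(N_{12},N_2)^{-1}(N_{12}N_2)^3 = \max(N_{12},N_2)\,N_{12}^2 N_2^2$, which equals $N^3$ in the example above. The paper obtains it by fixing $n_{12}$, applying the one-ball basic counting estimate in $n_2$ with shift $a=n_{12}$ (giving $\min(N_{12},N_2)^{-1}N_2^3$ per slice), and summing trivially over the $\lesssim N_{12}^3$ values of $n_{12}$; no case analysis, two-ball lemma, implicit $N_1$-constraint, or volume averaging is needed. Your ``Option~1'' in fact recovers this $N_2$-version (the two-ball per-slice bound $\min(N_{12},N_1,N_2)^{-1}\min(N_1,N_2)^3$ is never worse than $\min(N_{12},N_2)^{-1}N_2^3$), so the technique you deploy is sound; but the passage where you propose to rescue the degenerate small-shift regime by ``absorbing the loss using the volume estimate'' cannot upgrade to the $N_1$-version, because in the bad configuration the shift $|n-n_2|$ is small for essentially every $n_2$ in the sum, so there is nothing to average away. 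The fix is simply to state and prove the $N_2$-version.
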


\subsubsection{Para-controlled cubic counting estimate}

We now present our final cubic counting estimate. It will be used to control 
\begin{equation*}
 \nparaboxld \hspace{-0.8ex} \lcol  \Big( V \ast \Big( P_{\leq N} \<1b> \cdot P_{\leq N}   X_N  \Big)  P_{\leq N} \<1b> \Big) \rcol ~, 
\end{equation*}
which appears in $\CPara$. 

\begin{lemma}[Para-controlled cubic sum estimate]\label{tools:lem_paracontrolled_counting}
Let $N_{123},N_1,N_2,N_3\geq 1$ and $m\in\bZ$.  Let the signs $\pm_{123},\pm_1,\pm_2,\pm_3\in \{+,-\}$ be given and define the phase 
\begin{equation*}
\varphi(n_1,n_2,n_3)\defe \pm_{123} \langle n_{123} \rangle \pm_1 \langle n_1 \rangle  \pm_2 \langle n_2 \rangle  \pm_3 \langle n_3 \rangle.
\end{equation*}
Then, it holds that for all $0<\gamma<\beta$ that 
\begin{equation}
\begin{aligned}
& \sup_{\substack{n_2 \in \bZ^3 \colon \\ |n_2|\sim N_2}}
\sum_{n_1,n_3\in \bZ^3} \Big( \prod_{j=1,3} 1\big\{ |n_j| \sim N_j \big\} \Big) \langle n_{123} \rangle^{2(s_2-1)} \langle n_{12} \rangle^{-2\beta} \langle n_1 \rangle^{-2} \langle n_3 \rangle^{-2} \, 1\big\{ |\varphi-m|\leq 1\big\} \\
&\lesssim \max(N_1,N_2,N_3)^{2\delta_2} N_1^{-2\gamma} N_2^{2\gamma}. 
\end{aligned}
\end{equation}
\end{lemma}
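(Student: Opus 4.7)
The plan is to convert the weighted sum into a pure counting problem and then carry out a careful dyadic bookkeeping. The first step is to absorb the slack in both exponents. Since $|n_{123}|\lesssim N_{\max}\defe \max(N_1,N_2,N_3)$, the trivial bound $\langle n_{123}\rangle^{2(s_2-1)}=\langle n_{123}\rangle^{-1+2\delta_2}\leq N_{\max}^{2\delta_2}\langle n_{123}\rangle^{-1}$ together with the monotonicity $\langle n_{12}\rangle^{-2\beta}\leq \langle n_{12}\rangle^{-2\gamma}$ (permitted since $\gamma<\beta$) and the replacement $\langle n_j\rangle^{-2}\sim N_j^{-2}$ on the Littlewood--Paley support reduces the estimate to
\[
\sup_{|n_2|\sim N_2}\,\sum_{n_1,n_3\in\bZ^3}\frac{1\{|n_1|\sim N_1\}\,1\{|n_3|\sim N_3\}\,1\{|\varphi-m|\leq 1\}}{\langle n_{123}\rangle\,\langle n_{12}\rangle^{2\gamma}}\lesssim N_1^{2-2\gamma}N_2^{2\gamma}N_3^2,
\]
after multiplying back the extracted factor $N_{\max}^{2\delta_2}N_1^{-2}N_3^{-2}$.

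Second, I would decompose dyadically according to the scales $N_{12}\sim|n_{12}|$ and $N_{123}\sim|n_{123}|$. With $n_2$ and $n_1$ (and hence $n_{12}=n_1+n_2$) fixed, the inner count over $n_3$ satisfying $|n_3|\sim N_3$, $|n_3+n_{12}|\sim N_{123}$, and the resonance $|\varphi-m|\leq 1$ is directly furnished by the two-ball counting Lemma \ref{tools:lem_two_balls} (applied with $a=n_{12}$, $n=n_3$, $N=N_3$, $B=N_{123}$) as $\min(N_{12},N_{123},N_3)^{-1}\min(N_{123},N_3)^3$. The outer count of $n_1$'s subject to the two shell constraints $|n_1|\sim N_1$ and $|n_1+n_2|\sim N_{12}$ is dominated by the elementary volume bound $\min(N_1,N_{12})^3$ (and is moreover vacuous unless $N_{12}$ is compatible with $N_1,N_2$ in the triangle sense). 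This reduces the problem to the purely scalar inequality
\[
\sum_{N_{12},N_{123}}\frac{\min(N_{12},N_{123},N_3)^{-1}\min(N_{123},N_3)^3\,\min(N_1,N_{12})^3}{N_{123}\,N_{12}^{2\gamma}}\lesssim N_1^{2-2\gamma}N_2^{2\gamma}N_3^2,
\]
where the dyadic scales are constrained by $N_{12}\lesssim N_1+N_2$ and, crucially, $N_{123}\lesssim N_{12}+N_3$.

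The main obstacle is the dyadic bookkeeping in the last display, which I would organize by cases on $N_1$ versus $N_2$. In the regime $N_1\ll N_2$, the shell constraints force $N_{12}\sim N_2$, which immediately furnishes the factor $N_2^{-2\gamma}$ from $\langle n_{12}\rangle^{-2\gamma}$; the remaining sum over $N_{123}$ ranges only over boundedly-many dyadic scales thanks to $N_{123}\lesssim N_{12}+N_3$, and the arithmetic closes. The regime $N_1\gg N_2$ is symmetric and yields $N_{12}\sim N_1$. The delicate regime is $N_1\sim N_2$ (where the target simplifies to $\sim N_3^2$ modulo the $N_{\max}^{2\delta_2}$ slack): here $N_{12}$ ranges freely up to $\sim N_1$, the inner sum over $N_{123}$ evaluates to $\sim N_3^2/\min(N_{12},N_3)$ using the upper bound $N_{123}\lesssim N_{12}+N_3$ to cut the tail, and the ensuing sum $\sum_{N_{12}}N_{12}^{-2\gamma-1}\min(N_1,N_{12})^3 \min(N_{12},N_3)^{-1}N_3^2$ matches the target once one verifies that both of the resulting geometric series (in the two regimes $N_{12}\lessgtr N_3$) have a finite ratio for $\gamma<1$. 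Any residual logarithmic losses that appear at borderline exponents are absorbed by the $N_{\max}^{2\delta_2}$ slack extracted in the first step, which is equivalent to using the strict inequality $\gamma<\beta$.
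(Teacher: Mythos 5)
Your decomposition strategy (dyadic in $N_{12}$ and $N_{123}$, two-ball count for $n_3$, volume count for $n_1$, then a scalar dyadic sum) is a legitimate route, but the bookkeeping in the delicate regime $N_1\sim N_2$ has a genuine gap. You use only the upper bound $N_{123}\lesssim N_{12}+N_3$ to control the sum over $N_{123}$; the equally important lower bound $N_{123}\gtrsim |N_{12}-N_3|$ (coming from the triangle inequality for $n_{123}=n_{12}+n_3$) is dropped. With only the upper bound, your inner sum indeed evaluates to $\sim N_3^2/\min(N_{12},N_3)$, but this is too lossy: plugging it back into the correctly-reduced outer sum $\sum_{N_{12}}N_{12}^{-2\gamma}\min(N_1,N_{12})^3\cdot N_3^2/\min(N_{12},N_3)$ and taking $N_1\sim N_2$, the range $N_3\leq N_{12}\leq N_1$ contributes $\sim N_1^{3-2\gamma}N_3$, which exceeds the target $\sim N_1^2 N_3^2$ whenever $N_3\ll N_1^{1-2\gamma}$ (e.g.\ $N_3=1$, $\gamma<1/2$). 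Your displayed final sum contains $N_{12}^{-2\gamma-1}$ rather than $N_{12}^{-2\gamma}$; the extra $N_{12}^{-1}$ does not follow from the steps you laid out, and it is precisely this unexplained factor that makes your written formula happen to close. The fix is to retain the lower bound $N_{123}\gtrsim|N_{12}-N_3|$: then for $N_{12}\not\sim N_3$ there is a single nontrivial scale $N_{123}\sim\max(N_{12},N_3)$, the inner sum is $\sim N_3^2/N_{12}$ uniformly, and the outer sum $N_3^2\sum_{N_{12}\lesssim N_1}N_{12}^{2-2\gamma}\sim N_1^{2-2\gamma}N_3^2$ closes cleanly.

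For comparison, the paper avoids all of this case analysis. It first uses the pointwise inequality $\langle n_{12}\rangle^{-2\gamma}\lesssim \langle n_1\rangle^{-2\gamma}\langle n_2\rangle^{2\gamma}$ (a form of $\langle n_1\rangle\lesssim\langle n_{12}\rangle\langle n_2\rangle$) to decouple the interaction-potential factor and extract $N_1^{-2\gamma}N_2^{2\gamma}$ immediately; what remains is the sup-counting estimate in the variables $(n_{123},n_1,n_3)$ with $n_2$ frozen, which is exactly item \eqref{tools:item_sup_2} of Lemma~\ref{tools:lem_sup}, and the median--minimum bound $\med^3\min^2\lesssim N_{123}N_1^2N_3^2$ finishes after a single geometric sum over $N_{123}$. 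Your approach buys nothing over this; the decoupling inequality is both shorter and more robust, since it never needs the $N_{12}$ dyadic decomposition at all.
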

\subsubsection{Quartic counting estimates}

Our expansion of the solution $u_N$ and  $\So$ only contain cubic, quintic, and septic stochastic object. The quartic counting estimates will be used to control products such as 
\begin{equation*}
P_{\leq N} \<1b> \cdot  P_{\leq N}  \<3DN>,
\end{equation*}
which occur as factors in the physical term $\Phy$. We present two estimates which control the non-resonant (Lemma \ref{tools:lemma_nonresonant_quartic}) and resonant portions (Lemma \ref{tools:lemma_resonant_quartic}) of the product, respectively. On our way to the resonant estimate, we also prove the basic resonance estimate (Lemma \ref{tools:lem_basic_resonance}). 

\begin{lemma}[Non-resonant quartic sum estimate]\label{tools:lemma_nonresonant_quartic}
Let  $s<-1/2-\eta$ and let  $N_1,N_2,N_3,N_4\geq 1$.  Let the signs $\pm_{123},\pm_1,\pm_2,\pm_3\in \{+,-\}$ be given and define 
\begin{equation*}
\varphi(n_1,n_2,n_3)\defe \pm_{123} \langle n_{123} \rangle \pm_1 \langle n_1 \rangle  \pm_2 \langle n_2 \rangle  \pm_3 \langle n_3 \rangle.
\end{equation*}
Then, it holds that 
\begin{align*}
&\sup_{m\in \bZ} \sum_{n_1,n_2,n_3,n_4\in \bZ^3} \Big( \prod_{j=1}^{4}1\big\{ |n_j|\sim N_j \big\} \Big) \langle n_{1234} \rangle^{2s} \langle n_{123} \rangle^{-2} 
|\widehat{V}_S(n_1,n_2,n_3)|^2 
\Big(\prod_{j=1}^4 \langle n_j \rangle^{-2}\Big) 1\big\{ |\varphi-m|\leq 1\big\} \\
&\lesssim \max(N_1,N_2,N_3)^{-2\beta+2\eta} N_4^{-2\eta}. 
\end{align*}
\end{lemma}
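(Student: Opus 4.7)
The plan is to first peel off the $n_4$-summation, which is decoupled from the phase constraint, and then to reduce the remaining cubic sum to Proposition \ref{tools:prop_cubic_sum}.

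Since the phase $\varphi$ and the factor $\widehat{V}_S(n_1,n_2,n_3)\langle n_{123}\rangle^{-2}\prod_{j=1}^{3}\langle n_j\rangle^{-2}$ are independent of $n_4$, one may perform the sum over $n_4$ first and only afterwards take the supremum in $m$. For fixed $n_{123}$, the inner sum is controlled by
\begin{equation*}
N_4^{-2}\sum_{|n_4|\sim N_4}\langle n_{123}+n_4\rangle^{2s}\lesssim N_4^{-2}\sum_{|n_4|\sim N_4}\langle n_{123}+n_4\rangle^{-1-2\eta},
\end{equation*}
using $\langle n_4\rangle\sim N_4$ and $s<-1/2-\eta$. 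After the change of variables $m=n_{123}+n_4$ and comparison with an integral in polar coordinates, splitting into the subcases $|n_{123}|\lesssim N_4$ and $|n_{123}|\gg N_4$ yields the uniform bound $\sum_{|n_4|\sim N_4}\langle n_{123}+n_4\rangle^{-1-2\eta}\lesssim N_4^{2-2\eta}$. Hence the $n_4$-summation contributes at most a factor of $N_4^{-2\eta}$.

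It therefore remains to bound the cubic sum
\begin{equation*}
T\defe\sup_{m\in\bZ}\sum_{n_1,n_2,n_3}\Big(\prod_{j=1}^{3}1\big\{|n_j|\sim N_j\big\}\Big)\langle n_{123}\rangle^{-2}|\widehat{V}_S(n_1,n_2,n_3)|^2\Big(\prod_{j=1}^{3}\langle n_j\rangle^{-2}\Big)1\big\{|\varphi-m|\leq 1\big\}
\end{equation*}
by $N_{\max}^{-2\beta+2\eta}$, where $N_{\max}=\max(N_1,N_2,N_3)$. Since $\widehat{V}_S$ is a symmetric average of $\widehat{V}(n_{ij})$ over pairs, and $|\widehat{V}(n)|\lesssim\langle n\rangle^{-\beta}$ by the assumptions on $V$, one has $|\widehat{V}_S|^2\lesssim\langle n_{12}\rangle^{-2\beta}+\langle n_{13}\rangle^{-2\beta}+\langle n_{23}\rangle^{-2\beta}$. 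By the symmetry of the phase $\varphi$ under permutations of $(n_1,n_2,n_3)$, up to relabeling of the signs which is immaterial for the counting bound, it suffices to treat the $\langle n_{12}\rangle^{-2\beta}$ contribution.

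For this contribution, the plan is to apply Proposition \ref{tools:prop_cubic_sum} with $s=\eta$ and $\gamma=\beta$. The hypothesis $0<s\leq 1/2$ is satisfied, and $\gamma<s+1/2$ becomes $\beta<1/2+\eta$, which holds by the standing restriction $\beta\in(0,1/2)$ (cf.\ Remark \ref{intro:rem_restriction_beta}). Using the trivial inequality $\langle n_{123}\rangle^{-2}\leq\langle n_{123}\rangle^{2(\eta-1)}$, the proposition yields an upper bound of $N_{\max}^{2(\eta-\beta)}+\max(N_1,N_2)^{1-2\beta}N_{\max}^{2\eta-1}$. Since $1-2\beta\geq 0$ we have $\max(N_1,N_2)^{1-2\beta}\leq N_{\max}^{1-2\beta}$, so the second term is dominated by $N_{\max}^{2\eta-2\beta}$, matching the first. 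Combining this cubic estimate with the $n_4$-reduction produces the claimed bound $N_{\max}^{-2\beta+2\eta}N_4^{-2\eta}$. The one nontrivial step in the plan is the uniform control of the $n_4$-sum in terms of $n_{123}$, but the two-case split described above handles it cleanly.
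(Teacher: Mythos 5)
Your proof is correct and follows essentially the same two-step route as the paper: first peel off the $n_4$-sum to extract the $N_4^{-2\eta}$ factor (using $s<-1/2-\eta$), then reduce the remaining cubic sum to Proposition~\ref{tools:prop_cubic_sum}. You have simply spelled out the details that the paper leaves implicit — the two-case verification that $\sum_{|n_4|\sim N_4}\langle n_{123}+n_4\rangle^{-1-2\eta}\lesssim N_4^{2-2\eta}$ uniformly in $n_{123}$, the decomposition $|\widehat{V}_S|^2\lesssim\langle n_{12}\rangle^{-2\beta}+\langle n_{13}\rangle^{-2\beta}+\langle n_{23}\rangle^{-2\beta}$ with the symmetry reduction, and the explicit choice $(s,\gamma)=(\eta,\beta)$ in the cubic sum estimate with the observation that $\max(N_1,N_2)^{1-2\beta}\leq N_{\max}^{1-2\beta}$ absorbs the second term.
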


\begin{lemma}[Basic resonance estimate]\label{tools:lem_basic_resonance}
Let $n_1,n_2 \in \bZ^3$ be arbitrary, let $N_3\geq 1$, let the signs $\pm_{123},\pm_1,\pm_2,\pm_3\in \{+,-\}$ be given, and define 
\begin{equation*}
\varphi(n_1,n_2,n_3)\defe \pm_{123} \langle n_{123} \rangle \pm_1 \langle n_1 \rangle  \pm_2 \langle n_2 \rangle  \pm_3 \langle n_3 \rangle.
\end{equation*}
 Then, it holds that 
 \begin{equation}\label{tools:eq_basic_resonance}
  \sum_{m\in \bZ} \sum_{n_3 \in \bZ^3} \langle m \rangle^{-1} 1\big\{ |n_3|\sim N_3\big\} 
 \langle n_{123} \rangle^{-1} \langle n_{3} \rangle^{-2} 1\big\{ |\varphi-m| \leq 1 \big\} \lesssim \log(2+N_3) \langle n_{12} \rangle^{-1}. 
 \end{equation}
\end{lemma}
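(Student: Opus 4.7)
The plan is to carry out the sum over $m$ first, using the elementary estimate $\sum_{m\in\bZ}\langle m\rangle^{-1}\mathbf{1}\{|\varphi-m|\leq 1\}\lesssim \langle\varphi\rangle^{-1}$, and to absorb $\langle n_3\rangle^{-2}$ into $N_3^{-2}$ since $|n_3|\sim N_3$. This reduces the claim to
\[
N_3^{-2}\sum_{|n_3|\sim N_3}\langle n_{123}\rangle^{-1}\langle\varphi\rangle^{-1}\;\lesssim\;\log(2+N_3)\,\langle n_{12}\rangle^{-1}.
\]
I will then dyadically decompose according to $\langle n_{123}\rangle\sim L$ and $\langle\varphi\rangle\sim K$.

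The next step is to count, for each $(L,K)$, the number of $n_3$ with $|n_3|\sim N_3$, $|n_{123}|\sim L$, and $\langle\varphi\rangle\sim K$. Writing $\varphi=\varphi_0+\psi$, where $\psi:=\pm_{123}\langle n_{123}\rangle\pm_3\langle n_3\rangle$ and $\varphi_0:=\pm_1\langle n_1\rangle\pm_2\langle n_2\rangle$ is independent of $n_3$, the two-ball counting lemma (Lemma \ref{tools:lem_two_balls}) applied with $a=n_{12}$, combined with a sum over the $\sim K$ integer shifts $m$ in $\{\langle m\rangle\sim K\}$, gives
\[
\#\big\{n_3:|n_3|\sim N_3,\,|n_{123}|\sim L,\,\langle\varphi\rangle\sim K\big\}\;\lesssim\;K\cdot\min(\langle n_{12}\rangle,L,N_3)^{-1}\min(L,N_3)^{3}.
\]
The factor $K$ is designed to cancel the weight $K^{-1}$, so that the summand becomes $K$-independent.

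The crucial new observation is a width estimate: as $n_3$ ranges over $|n_3|\sim N_3$, the value of $\psi$ sweeps an interval of length $\lesssim N_3$. For $\pm_{123}=\pm_3$, this follows from $1$-Lipschitz continuity of $\langle\cdot\rangle$, since $\langle n_{123}\rangle$ and $\langle n_3\rangle$ each vary by $\lesssim N_3$. For $\pm_{123}=-\pm_3$, the non-trivial subcase is $\langle n_{12}\rangle\gg N_3$, where the identity
\[
\langle n_{123}\rangle-\langle n_3\rangle\;=\;\frac{|n_{12}|^{2}+2\,n_{12}\cdot n_3}{\langle n_{123}\rangle+\langle n_3\rangle}+O(1)
\]
forces the variation of the quotient to be $\lesssim |n_{12}|N_3/\langle n_{12}\rangle=N_3$. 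Since an interval of length $N_3$ meets at most $\lesssim \log(2+N_3)$ dyadic scales, the $K$-sum contributes only the allowed logarithmic factor.

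Finally, I will sum over $L$ by elementary means: when $\langle n_{12}\rangle\leq N_3$, splitting at $L=\langle n_{12}\rangle$ gives contributions $\lesssim \langle n_{12}\rangle N_3^{-2}$ and $\lesssim \langle n_{12}\rangle^{-1}$, both absorbed into $\langle n_{12}\rangle^{-1}$; when $\langle n_{12}\rangle>N_3$ only the dyadic shell $L\sim\langle n_{12}\rangle$ survives and directly yields $\langle n_{12}\rangle^{-1}$. The main obstacle throughout is precisely the width estimate in the mixed-sign case with $\langle n_{12}\rangle$ large, where the naive bound $|\psi|\leq|n_{12}|$ would produce a $\log\langle n_{12}\rangle$ loss instead of the claimed $\log(2+N_3)$; the algebraic identity above is what restores sharpness.
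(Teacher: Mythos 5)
Your proof is correct, and after unwinding the reorganization it is essentially the paper's argument. Both hinge on the same two ingredients: (a) the fact that $\varphi$ sweeps an interval of length $\lesssim N_3$ as $n_3$ ranges over $|n_3|\sim N_3$, which is what produces the $\log(2+N_3)$; and (b) a dyadic decomposition in $|n_{123}|$ together with the two-ball counting estimate (Lemma \ref{tools:lem_two_balls}). The paper first uses (a) to reduce the weighted $m$-sum to a supremum over $m$ and then applies (b); you instead collapse the $m$-sum into a $\langle\varphi\rangle^{-1}$ weight, dyadically decompose in $\langle\varphi\rangle\sim K$, and use (a) to bound the number of contributing $K$-shells. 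These are equivalent bookkeeping choices leading to the same $L$-sum.

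The one place where your reasoning goes astray — though without affecting the conclusion — is the treatment of the mixed-sign case. The algebraic identity is not needed, and the concern that motivates it reflects a confusion between the \emph{size} of $\psi$ and its \emph{variation}. What controls the number of nonempty dyadic $K$-shells is the oscillation of $\psi$ over $\{|n_3|\sim N_3\}$, not the magnitude of $|\psi|$. Since $n_3\mapsto\langle n_{12}+n_3\rangle-\langle n_3\rangle$ is a difference of two $1$-Lipschitz functions and hence globally $2$-Lipschitz, its variation over any set of diameter $\lesssim N_3$ is automatically $\lesssim N_3$, regardless of how large $|n_{12}|$ is. The "naive bound $|\psi|\leq|n_{12}|$" is a bound on the value, not the variation, and so is irrelevant to the $K$-count; there is no $\log\langle n_{12}\rangle$ threat to ward off. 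This is precisely the one-line "globally Lipschitz" observation the paper uses, so the entire sign-case split and the identity can be deleted. (Incidentally, the identity $\langle n_{123}\rangle-\langle n_3\rangle=\frac{|n_{12}|^2+2n_{12}\cdot n_3}{\langle n_{123}\rangle+\langle n_3\rangle}$ holds exactly, since $\langle n_{123}\rangle^2-\langle n_3\rangle^2=|n_{12}|^2+2n_{12}\cdot n_3$; the $O(1)$ you wrote is spurious.)
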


\begin{lemma}[Resonant quartic sum estimate]\label{tools:lemma_resonant_quartic}
Let $N_1,N_2,N_3\geq 1$ and let $-1/2 <s < 0 $. Let the signs $\pm_{123},\pm_1,\pm_2,\pm_3\in \{+,-\}$ be given and define 
\begin{equation*}
\varphi(n_1,n_2,n_3)\defe \pm_{123} \langle n_{123} \rangle \pm_1 \langle n_1 \rangle  \pm_2 \langle n_2 \rangle  \pm_3 \langle n_3 \rangle.
\end{equation*}
 Then, it holds that 
\begin{align*}
&\sum_{n_1,n_2 \in \bZ^3} \bigg[  \Big( \prod_{j=1}^2 1 \big\{ |n_j| \sim N_j \big\} \langle n_{12} \rangle^{2s} \langle n_1 \rangle^{-2} \langle n_2 \rangle^{-2} \\
&\times \Big( \sum_{m\in \bZ} \sum_{n_3 \in \bZ^3} \langle m \rangle^{-1} 1\big\{ |n_3|\sim N_3\big\} 
 \langle n_{123} \rangle^{-1} \langle n_{3} \rangle^{-2} 1\big\{ |\varphi-m| \leq 1 \big\} \Big)^2  \bigg] \\
&\lesssim  \log(2+ N_3)^2 \max(N_1,N_2)^{2s}. 
\end{align*}
\end{lemma}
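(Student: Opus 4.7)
The plan is to reduce the estimate to a two-step calculation: first apply the already-established basic resonance estimate (Lemma \ref{tools:lem_basic_resonance}) to the inner sum over $m$ and $n_3$, and then bound the remaining bilinear sum in $n_1,n_2$ by an elementary dyadic decomposition.

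First, I would invoke Lemma \ref{tools:lem_basic_resonance} with the (external) variables $n_1,n_2$ in place of the dummy variables there. This yields, for every fixed $n_1,n_2 \in \bZ^3$,
\begin{equation*}
\sum_{m\in \bZ} \sum_{n_3 \in \bZ^3} \langle m \rangle^{-1} 1\big\{ |n_3|\sim N_3\big\}
 \langle n_{123} \rangle^{-1} \langle n_{3} \rangle^{-2} 1\big\{ |\varphi-m| \leq 1 \big\} \lesssim \log(2+N_3) \langle n_{12} \rangle^{-1}.
\end{equation*}
Squaring this bound and inserting it into the left-hand side of the claimed inequality reduces matters to showing
\begin{equation}\label{eq:reduced_bilinear}
\sum_{\substack{n_1,n_2\in \bZ^3\\ |n_j|\sim N_j}}  \langle n_{12} \rangle^{2s-2} \langle n_1 \rangle^{-2} \langle n_2 \rangle^{-2} \lesssim \max(N_1,N_2)^{2s}.
\end{equation}

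For \eqref{eq:reduced_bilinear}, I would first perform the inner sum in $n_2$ for fixed $n_1$ using the change of variables $m = n_1+n_2$. Writing $\sum_{|m-n_1|\sim N_2}\langle m\rangle^{2s-2}$ and splitting dyadically in $|m|\sim M$, I would consider the three regimes $N_1\gg N_2$, $N_1\ll N_2$, and $N_1\sim N_2$ separately. In the first two regimes, $|m|$ is essentially pinned to $\max(N_1,N_2)$, yielding $N_2^3\max(N_1,N_2)^{2s-2}$. In the balanced regime $N_1\sim N_2$, the ball of radius $N_2$ around $n_1$ meets all dyadic scales $M\lesssim N_1$; since $s>-1/2$ ensures $2s+1>0$, the resulting sum $\sum_{M\lesssim N_1} M^{2s+1}$ is dominated by the top scale $N_1^{2s+1}$. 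Performing the outer sum over $|n_1|\sim N_1$ (contributing $N_1^3$) and absorbing the $\langle n_1\rangle^{-2}\langle n_2\rangle^{-2}\sim N_1^{-2}N_2^{-2}$ factors gives in all three regimes a bound by $\max(N_1,N_2)^{2s}$, as claimed.

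The only potential obstacle is the balanced case $N_1\sim N_2$, where the kernel $\langle n_{12}\rangle^{2s-2}$ is only marginally integrable (since $2s-2>-3$ holds precisely because $s>-1/2$). Both endpoints $s=0$ and $s=-1/2$ are excluded in the hypothesis for exactly this reason: the former would require a logarithmic gain from the $m$-sum that is already spent, and the latter would produce a divergent $n_{12}$-integral. Once the case analysis above is carried out, the combination of the resonance estimate with \eqref{eq:reduced_bilinear} immediately yields the desired $\log(2+N_3)^2\max(N_1,N_2)^{2s}$ bound.
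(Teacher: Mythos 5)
Your proof is correct and takes essentially the same approach as the paper: apply Lemma \ref{tools:lem_basic_resonance} to the inner $(m,n_3)$-sum and then bound the remaining bilinear sum $\sum_{|n_j|\sim N_j}\langle n_{12}\rangle^{2s-2}\langle n_1\rangle^{-2}\langle n_2\rangle^{-2}\lesssim \max(N_1,N_2)^{2s}$, which the paper states without spelling out the dyadic case analysis you supply. One minor quibble: your aside that $s=0$ would fail is not actually correct here, since $\sum_{M\lesssim N_1}M^{2s+1}$ converges geometrically for every $s>-1/2$ without needing a logarithm; the hypothesis $s<0$ is there for the applications, not because this estimate breaks at $s=0$.
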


\subsubsection{Quintic counting estimates}

In order to estimate  the quintic stochastic objects
\begin{equation*}
 \<113N> \qquad \text{and} \qquad \<131N>,
\end{equation*}
we require quintic sum estimates. Even at the quintic level, we need to make full use of dispersive effects. This is in contrast to the septic counting effects, which only rely on dispersive effects for cubic sub-objects but do not require dispersive effects at the full septic level. \\

We present three separate quintic sum estimates, which correspond to zero, one, or two probabilistic resonances.

\begin{lemma}[Non-resonant quintic sum estimate]\label{tools:lem_counting_quintic}
Let  $s\leq 1/2-2\eta$ and $N_1,N_2,N_3,N_4,N_5\geq1$. Furthermore, we define three phase-functions by 
\begin{align*}
\psi(n_3,n_4,n_5) &\defe \pm_{345} \langle n_{345} \rangle \pm_3 \langle n_3 \rangle \pm_4 \langle n_4 \rangle \pm_5 \langle n_5 \rangle, \\ 
\varphi(n_1,\hdots,n_5)
 &\defe \pm_{12345} \langle n_{12345}\rangle \pm_{345} \langle n_{345} \rangle \pm_1 \langle n_1 \rangle \pm_2 \langle n_2 \rangle,\\
\widetilde{\varphi}(n_1,\hdots,n_5) &\defe \pm_{12345} \langle n_{12345} \rangle \mp_{345} \langle n_{345} \rangle +\sum_{j=1}^5 (\pm_j ) \langle n_j \rangle . 
\end{align*} 
Then, it holds that 
\begin{align*}
&\sup_{m,m^\prime\in\bZ} \sum_{n_1,\hdots,n_5\in \bZ^3} \bigg[ \Big( \prod_{j=1}^{5} 1\big\{|n_j|\sim N_j\} \big) \langle n_{12345} \rangle^{2(s-1)} \langle n_{1345} \rangle^{-2\beta} 
\langle n_{345} \rangle^{-2} \langle n_{34} \rangle^{-2\beta} \Big( \prod_{j=1}^5 \langle n_j \rangle^{-2} \Big) \\
&\times 1\big\{ |\psi-m|\leq 1 \big\} \cdot \Big( 1\big\{ |\varphi-m^\prime|\leq 1\} +  1\big\{ |\widetilde{\varphi}-m^\prime|\leq 1\} \Big) \bigg] \\
&\lesssim \max(N_1,N_3,N_4,N_5)^{-2\beta+4\eta} N_2^{-2\eta}. 
\end{align*}
\end{lemma}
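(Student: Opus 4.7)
My plan is to decompose the five-fold sum into an inner cubic structure over $(n_3,n_4,n_5)$, governed by the phase $\psi$, and an outer cubic structure over $(n_1,n_2,n_{345})$, governed by $\varphi$ (resp.\ $\widetilde{\varphi}$). The two phases decouple once $n_{345}$ is held fixed, since $\psi$ depends only on $(n_3,n_4,n_5,n_{345})$ while $\varphi$ and $\widetilde{\varphi}$ depend only on $(n_1,n_2,n_{345},n_{12345})$. The cases of $\varphi$ and $\widetilde{\varphi}$ are identical for counting purposes, since the sign of $\langle n_{345}\rangle$ is irrelevant, so I treat only $\varphi$.

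I would dyadically localize $|n_{345}|\sim N_{345}$ with $N_{345}\lesssim\max(N_3,N_4,N_5)$ and split the weight into an outer piece
$$\langle n_{12345}\rangle^{2(s-1)}\langle n_{1345}\rangle^{-2\beta}\langle n_{345}\rangle^{-2}\langle n_1\rangle^{-2}\langle n_2\rangle^{-2}$$
and an inner piece $\langle n_{34}\rangle^{-2\beta}\prod_{j=3,4,5}\langle n_j\rangle^{-2}$. For each fixed $n_{345}$ with $|n_{345}|\sim N_{345}$ and each $m\in\bZ$, I would first establish a uniform bound on the inner sum
$$I(n_{345}):=\sum_{\substack{n_3+n_4+n_5=n_{345}\\ |n_j|\sim N_j}}\langle n_{34}\rangle^{-2\beta}\prod_{j=3,4,5}\langle n_j\rangle^{-2}\,1\{|\psi-m|\le 1\}\lesssim \max(N_3,N_4,N_5)^{-2\beta+\eta}.$$
This is proved by fixing $n_5$ (so $n_3+n_4=n_{345}-n_5$ is fixed) and applying the basic counting lemma (Lemma \ref{tools:lem_basic_counting}) to $n_4$; the weight $\langle n_{34}\rangle^{-2\beta}=\langle n_{345}-n_5\rangle^{-2\beta}$ is constant in $n_3,n_4$ given $n_5$, so it combines cleanly with the subsequent sum over $n_5$. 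A case analysis on which of $N_3,N_4,N_5$ dominates produces the desired $\max^{-2\beta+\eta}$.

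With the inner sum bounded uniformly, I would then treat $(n_1,n_2,n_{345})$ as three independent $\bZ^3$-variables (with $|n_{345}|\sim N_{345}$) and apply Proposition \ref{tools:prop_cubic_sum} via the identification $(m_1,m_2,m_3)=(n_1,n_{345},n_2)$, so that $m_{123}=n_{12345}$ and $m_{12}=n_{1345}$. With $s$ as in the hypothesis and $\gamma=\beta$, this yields
$$\max(N_1,N_{345},N_2)^{2(s-\beta)}+\max(N_1,N_{345})^{1-2\beta}\max(N_1,N_{345},N_2)^{2s-1}.$$
Using $s\le 1/2-2\eta$ converts both terms into $\max(N_1,N_{345},N_2)^{-2\beta+\varepsilon}N_2^{-2\eta}$-type expressions. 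Combining with the inner bound and summing over the $O(\log)$ dyadic values of $N_{345}\lesssim\max(N_3,N_4,N_5)$ then absorbs into the $4\eta$ budget and produces the claimed estimate $\max(N_1,N_3,N_4,N_5)^{-2\beta+4\eta}N_2^{-2\eta}$.

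The main obstacle is the inner bound: we need to extract a clean $\max(N_3,N_4,N_5)^{-2\beta}$ from $\langle n_{34}\rangle^{-2\beta}$ \emph{uniformly in} $n_{345}$, in the regime where $N_5$ (the frequency absent from $n_{34}$) is the largest. In that regime the $\langle n_{34}\rangle^{-2\beta}$ weight alone does not produce the gain in $N_5$, and one must instead exploit the dispersive constraint $|\psi-m|\le 1$ via Lemma \ref{tools:lem_basic_counting} to save an additional factor. A careful split according to the largest frequency among $N_3,N_4,N_5$, together with the reduction of each case to the basic counting lemma, is the main technical step.
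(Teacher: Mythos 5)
Your decoupling idea — fix $n_{345}$, bound the inner $(n_3,n_4,n_5)$-sum uniformly, then run the outer $(n_1,n_{345},n_2)$-sum through Proposition~\ref{tools:prop_cubic_sum} — is natural, but the second step does not close. The sticking point is the first term of that proposition's conclusion: with $(\widetilde N_1,\widetilde N_2,\widetilde N_3)=(N_1,N_{345},N_2)$ and $\gamma=\beta$, it gives
\begin{equation*}
\max(N_1,N_{345},N_2)^{2(s-\beta)},
\end{equation*}
and since $\beta\in(0,1/2)$ while $s$ is just below $1/2$, the exponent $2(s-\beta)$ is \emph{positive}. This term therefore grows with $\max(N_1,N_{345},N_2)$ and cannot be converted into a ``$\max^{-2\beta+\varepsilon}N_2^{-2\eta}$-type expression'' as you assert; the inequality $s\le 1/2-2\eta$ only forces $2(s-\beta)\le 1-4\eta-2\beta$, which is still positive for any $\beta<1/2-2\eta$.

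A concrete counterexample to your chain of bounds: take $N_1=N_2=1$, $N_3=N_4=N_5=N$, so $N_{345}\lesssim N$ and the $N_{345}$-sum is dominated by $N_{345}\sim N$. Your inner bound yields $N^{-2\beta+\eta}$, your outer bound yields $\max(1,N,1)^{2(s-\beta)}\sim N^{2(s-\beta)}$, and the product is $N^{2s-4\beta+\eta}$, whereas the lemma's target is $\max(N_1,N_3,N_4,N_5)^{-2\beta+4\eta}N_2^{-2\eta}=N^{-2\beta+4\eta}$. The required inequality $2s-4\beta+\eta\le -2\beta+4\eta$ reduces to $s\le\beta+\tfrac{3}{2}\eta$, which fails for $s$ near $1/2$ unless $\beta$ is within $O(\eta)$ of $1/2$. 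So for a generic $\beta\in(0,1/2)$ your combined bound overshoots the target by a positive power of $N$.

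The loss is structural: once you treat the outer three variables as a free cubic sum and invoke Proposition~\ref{tools:prop_cubic_sum} as a black box, you give up the information that, when $N_{345}$ is large relative to $N_1,N_2$, the intermediate frequency $n_{1345}=n_1+n_{345}$ is automatically large. That largeness is exactly what makes the basic counting in $n_2$ (via the constraint $|\varphi-m'|\le 1$) produce the decisive additional gain. The paper keeps this information by first localizing $|n_{12345}|\sim N_{12345}$ and $|n_{1345}|\sim N_{1345}$, then applying the two-ball counting lemma (Lemma~\ref{tools:lem_two_balls}) in $n_2$ to extract the factor $\min(N_{12345},N_{1345},N_2)^{-1}\min(N_2,N_{12345})^3$, and only afterwards summing in $n_1$ (directly, giving $\min(N_1,N_{1345})^3$) and in $(n_3,n_4,n_5)$ via Proposition~\ref{tools:prop_cubic_sum}; the final reduction to $\max(N_1,N_3,N_4,N_5)^{-2\beta+4\eta}N_2^{-2\eta}$ then goes through a frequency-scale inequality such as Lemma~\ref{appendix:lem_freq_scale}. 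If you want to salvage your decoupling, you would need to track the dyadic scales $N_{1345}$ and $N_{12345}$ explicitly and replace the appeal to Proposition~\ref{tools:prop_cubic_sum} on the outer sum with the two-ball counting lemma in $n_2$.
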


\begin{lemma}[Single-resonance quintic sum estimate]\label{tools:lem_counting_single_resonance_quintic}
Let $n_4,n_5\in \bZ^3$, $N_{45}\geq 1$, and $|n_{45}|\sim N_{45}$. Furthermore, let $\pm_3 \in \{+,-\}$ . Then, it holds that 
\begin{align*}
&\sup_{m\in \bZ^3} \sum_{n_3\in \bZ^3} \bigg[ 1\big\{ |n_3|\sim N_3\big\} \langle n_{345} \rangle^{-1} \langle n_3 \rangle^{-2}
1\big\{ \langle n_{345} \rangle \pm_3 \langle n_3 \rangle \in [m,m+1)\big\} \bigg] \\
&\lesssim N_{45}^{-1}. 
\end{align*}
\end{lemma}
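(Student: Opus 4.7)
The plan is to reduce the claim to the two-ball basic counting lemma (Lemma \ref{tools:lem_two_balls}) applied with the shift vector $a = n_{45}$. Since $n_{345} = n_3 + n_{45}$, the phase $\langle n_{345}\rangle \pm_3 \langle n_3\rangle$ is exactly of the form $\langle n_3 + a\rangle \pm \langle n_3\rangle$ handled by the basic counting estimate. The presence of the weights $\langle n_{345}\rangle^{-1}\langle n_3\rangle^{-2}$ forces a dyadic decomposition in $|n_{345}|$, so the first step is to write
\begin{equation*}
S(m) \defe \sum_{n_3} 1\{|n_3|\sim N_3\}\,\langle n_{345}\rangle^{-1}\langle n_3\rangle^{-2}\, 1\{\langle n_{345}\rangle \pm_3 \langle n_3\rangle \in [m,m+1)\} = \sum_{N_{345}} S_{N_{345}}(m),
\end{equation*}
where in $S_{N_{345}}(m)$ we additionally impose $|n_{345}|\sim N_{345}$. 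Applying Lemma \ref{tools:lem_two_balls} with $A=N_{45}$, $N=N_3$, and $B=N_{345}$, I would obtain
\begin{equation*}
S_{N_{345}}(m) \lesssim N_{345}^{-1} N_3^{-2}\, \min(N_{45},N_{345},N_3)^{-1}\, \min(N_{345},N_3)^3.
\end{equation*}

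Next I would carry out a short case analysis on the relative size of $N_3$ and $N_{45}$. When $N_3 \ll N_{45}$, the triangle inequality forces $N_{345}\sim N_{45}$, so only one scale contributes and the right-hand side collapses to $N_{45}^{-1}N_3^{-2}\cdot N_3^{-1}\cdot N_3^3 = N_{45}^{-1}$. When $N_3 \gg N_{45}$, we must have $N_{345}\sim N_3$, and the bound becomes $N_3^{-1}N_3^{-2}\cdot N_{45}^{-1}\cdot N_3^{3}=N_{45}^{-1}$. The only genuinely dyadic case is $N_3\sim N_{45}$, where $N_{345}$ can range over all scales $\lesssim N_{45}$; there the per-scale estimate reads $\lesssim N_{45}^{-2}\, N_{345}$, and the geometric series $\sum_{N_{345}\lesssim N_{45}} N_{345}\lesssim N_{45}$ gives once again $S(m)\lesssim N_{45}^{-1}$.

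The main (modest) obstacle is ensuring the two-ball counting lemma is applicable uniformly in the sign $\pm_3$ and uniformly in $m$: the estimate in Lemma \ref{tools:lem_two_balls} is stated as a supremum over $m$, which is exactly what is needed here, and the argument is insensitive to $\pm_3$. Note that the statement of the lemma has $m\in\bZ^3$, which is a typographical slip; the argument above gives the bound with $\sup_{m\in\bZ}$, and taking a supremum over a (at most) three-dimensional parameter only affects the use of Lemma \ref{tools:lem_two_balls} through its own uniform-in-$m$ formulation, so no additional work is required.
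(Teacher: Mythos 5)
Your proof is correct and takes essentially the same route as the paper. The paper itself remarks that Lemma \ref{tools:lem_counting_single_resonance_quintic} is, after a renaming of variables, the intermediate claim established in the proof of Lemma \ref{tools:lem_basic_resonance}, which is proved by exactly the combination you use: a dyadic decomposition in $|n_{345}|$ followed by the two-ball basic counting lemma (Lemma \ref{tools:lem_two_balls}) with shift $a=n_{45}$ and a short case analysis (the paper organizes the cases by comparing $N_{345}$ to $N_3$ rather than $N_3$ to $N_{45}$, but this is a cosmetic difference). Your observation that $m\in\bZ^3$ is a typo for $m\in\bZ$ is also correct.
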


After renaming the variables, Lemma \ref{tools:lem_counting_single_resonance_quintic} is essentially the same as Lemma \ref{tools:lem_basic_resonance}. Our reason for restating Lemma \ref{tools:lem_counting_single_resonance_quintic} is to make it easier for the reader to refer back to this section.

\begin{lemma}[Double-resonance quintic sum estimate]\label{tools:lem_counting_double_resonance_quintic}
Let $N_3,N_4,N_5\geq 1$ and let $\pm_3,\pm_4,\pm_5\in \{+,-\}\). Then, it holds that  
\begin{equation}
\begin{aligned}
& \sup_{m\in \bZ^3} \sup_{|n_5|\sim N_5} \sum_{n_3,n_4\in \bZ^3} \bigg[ \Big( \prod_{j=3}^4 1\big\{ |n_j|\sim N_j \big \}\Big) \langle n_{345} \rangle^{-1} \langle n_{45} \rangle^{-\beta} \langle n_3 \rangle^{-2} \langle n_4 \rangle^{-2} \\
&\times  1\big\{ \langle n_{345} \rangle \pm_3 \langle n_3 \rangle \pm_4 \langle n_4 \rangle \pm_5 \langle n_5 \rangle \in [m,m+1) \big\} \bigg] \\
&\lesssim \max(N_4,N_5)^{-\beta+\eta}.  
\end{aligned}
\end{equation}
\end{lemma}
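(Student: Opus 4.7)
The plan is to reduce the double-resonance quintic estimate to the single-resonance quintic estimate (Lemma \ref{tools:lem_counting_single_resonance_quintic}) by performing the summation in $n_3$ first, and then to bound the resulting weighted sum in $n_4$ through elementary geometric counting.

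Fix $n_5$ with $|n_5|\sim N_5$ and $m\in\bZ$. First, treat $n_4$ as fixed. Then the phase constraint
\begin{equation*}
\langle n_{345}\rangle\pm_3 \langle n_3\rangle\pm_4 \langle n_4\rangle\pm_5 \langle n_5\rangle\in[m,m+1)
\end{equation*}
becomes $\langle n_{345}\rangle\pm_3 \langle n_3\rangle\in [m_0,m_0+1)$ for some real $m_0=m_0(n_4,n_5,m)$. Covering this real unit interval by at most two unit intervals with integer endpoints and applying Lemma \ref{tools:lem_counting_single_resonance_quintic} to each, I obtain, uniformly in $n_4,n_5,m$,
\begin{equation*}
\sum_{n_3\in\bZ^3}\mathbf{1}\{|n_3|\sim N_3\}\,\langle n_{345}\rangle^{-1}\langle n_3\rangle^{-2}\,\mathbf{1}\{\text{phase}\in[m,m+1)\}\lesssim |n_{45}|^{-1}.
\end{equation*}

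After the $n_3$ summation, it remains to estimate
\begin{equation*}
N_4^{-2}\sup_{|n_5|\sim N_5}\sum_{\substack{n_4\in\bZ^3\\|n_4|\sim N_4}}\langle n_{45}\rangle^{-\beta-1}.
\end{equation*}
A dyadic decomposition in $N_{45}\sim|n_{45}|$ and the elementary geometric bound
\begin{equation*}
\#\{n_4\in\bZ^3: |n_4|\sim N_4,\ |n_4+n_5|\sim N_{45}\}\lesssim \min(N_4,N_{45})^3
\end{equation*}
(intersection of two dyadic annuli) reduce the problem to controlling
\begin{equation*}
N_4^{-2}\sum_{N_{45}}N_{45}^{-\beta-1}\min(N_4,N_{45})^{3}.
\end{equation*}

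The final step splits into three regimes according to the relative sizes of $N_4$ and $N_5$. If $N_5\gg N_4$, then $|n_4+n_5|\sim N_5$ forces $N_{45}\sim N_5$, giving $N_4\cdot N_5^{-\beta-1}\leq N_5^{-\beta}$. If $N_4\gg N_5$, then $N_{45}\sim N_4$, giving $N_4^{-\beta}$. In the balanced case $N_4\sim N_5$, the full range $1\leq N_{45}\lesssim N_4$ contributes, but summing $N_{45}^{2-\beta}$ for $N_{45}\leq N_4$ and $N_{45}^{-\beta-1}N_4^{3}$ for $N_4<N_{45}\lesssim N_5$ still yields $\lesssim N_4^{-\beta}$ (up to a harmless logarithm when $\beta=2$). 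In each case the bound is dominated by $\max(N_4,N_5)^{-\beta+\eta}$, absorbing any logarithmic losses into the $\eta$-slack. The main (but minor) obstacle is this final case analysis of the $N_{45}$-sum, which requires care in the balanced regime to ensure that the geometric constraint $|n_4+n_5|\lesssim \max(N_4,N_5)$ is used effectively.
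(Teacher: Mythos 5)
Your proof is correct, and it takes a genuinely different organizational route from the paper's argument. The paper localizes dyadically in both $|n_{345}|\sim N_{345}$ and $|n_{45}|\sim N_{45}$ simultaneously, pays a $\log(2+\max(N_4,N_5))^2$ factor to replace the dyadic sums by suprema, runs the two-ball basic counting lemma (Lemma~\ref{tools:lem_two_balls}) directly on the $n_3$-sum, then crudely counts $n_4$ and combines the resulting dyadic parameters through a variant of Lemma~\ref{appendix:lem_freq_scale}. You instead invoke the already-stated single-resonance quintic estimate (Lemma~\ref{tools:lem_counting_single_resonance_quintic}) as a black box to collapse the $n_3$-sum to $\lesssim\langle n_{45}\rangle^{-1}$ uniformly in $m$ (the cover-by-two-integer-intervals trick handles the real shift $\pm_4\langle n_4\rangle\pm_5\langle n_5\rangle$ correctly), after which the $n_4$-sum becomes an elementary weighted annulus count. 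Since the underlying counting in the single-resonance lemma is again the two-ball lemma, the two proofs rest on the same foundation, but yours is more modular and avoids the logarithmic loss in the dyadic decomposition: your sum in $N_{45}$ has a strict geometric slope in all three regimes, which is what makes the proof close with the sharper exponent $-\beta$ rather than $-\beta+\eta$.

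Two cosmetic remarks. After summing $n_3$ you should write $\langle n_{45}\rangle^{-1}$ rather than $|n_{45}|^{-1}$, both for consistency with the Japanese-bracket notation in the rest of the display and to cover the degenerate case $n_{45}=0$ (where Lemma~\ref{tools:lem_counting_single_resonance_quintic} is applied with $N_{45}=1$). And the parenthetical remark about a logarithm ``when $\beta=2$'' is vacuous here: as explained in Remark~\ref{intro:rem_restriction_beta} and at the start of Section~\ref{section:stochastic_object}, the counting estimates are only needed in the regime $0<\beta<1/2$, so the exponent $2-\beta$ in the balanced case stays bounded away from zero and the dyadic sum never degenerates.
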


\subsubsection{Septic counting estimates}\label{section:counting_septic}

In order to state our septic counting estimates, we need to introduce pairings, where our definition is motivated by a similar notion in \cite[Section 1.9]{DNY19}. The pairings are designed to capture the resonances in the septic stochastic objects 
\begin{equation*}
\<313N> \qquad \text{and} \qquad \<331N>. 
\end{equation*}

\begin{definition}[Pairings]\label{tools:def_pairings}
Let $J\geq 1$. We call a relation $\scrP \subseteq \{ 1,\hdots,J\}^2 $ a pairing if 
\begin{enumerate}[(i)]
\item $\scrP$ is anti-reflexive, i.e, $(j,j)\not \in \scrP$ for all $1\leq j\leq J$, 
\item $\scrP$ is symmetric, i.e., $(i,j)\in \scrP$ if and only if $(j,i)\in \scrP$, 
\item $\scrP$ is univalent, i.e., for each $1\leq i \leq J$, $(i,j) \in \scrP$ for at most one $1\leq j \leq J$. 
\end{enumerate}
If $(i,j)\in \scrP$, the tuple $(i,j)$ is called a pair (or $\scrP$-pair). If $1\leq j \leq J$ is contained in a pair, we call $j$ paired (or $\scrP$-paired). With a slight abuse of notation, we also write $j\in \scrP$ if $j$ is paired. If $j$ is not paired, we also say that $j$ is unpaired and write $j\not \in \scrP$. 

Furthermore, let $\mathcal{A}=(\mathcal{A}_l)_{l=1,\hdots,L}$ be a partition of $\{1,\hdots,J\}$. We say that $\scrP$ respects $\mathcal{A}$ if $i,j\in \mathcal{A}_l$ for some $1\leq l \leq L$ implies that $(i,j)\not \in \scrP$. In other words, $\scrP$ does not pair elements of the same set inside the partition.

Finally, we call a vector $(n_1,\hdots,n_J)\in (\bZ^3)^J$ of frequencies admissible (or $\scrP$-admissible) if $(i,j)\in \scrP$ implies that $n_i=-n_j$.  
\end{definition}

Using Definition \ref{tools:def_pairings}, we can now state the septic sum estimate.

\begin{lemma}[Septic sum estimate]\label{tools:lem_counting_septic}
Let $1/2<s<1$ and let $N_{1234567},N_{1234},N_{567},N_4 \geq 1$. For any $\pm_1,\pm_2,\pm_3\in \{ +,-\}$, we define the phase
\begin{equation*}
\varphi(n_j,\pm_j\colon 1 \leq j \leq 3) \defe \langle n_{123} \rangle \pm_1 \langle n_1 \rangle \pm_2 \langle n_2 \rangle \pm_3 \langle n_3 \rangle. 
\end{equation*}
Furthermore, we define
\begin{equation*}
\Phi(n_1,n_2,n_3) = \sum_{\pm_1,\pm_2,\pm_3} \sum_{m\in \bZ} \langle m \rangle^{-1} |\widehat{V}_S(n_1,n_2,n_3)| \langle n_{123} \rangle^{-1} \Big( \prod_{j=1}^3 \langle n_j \rangle^{-1} \Big) 1\big\{ |\varphi-m|\leq 1 \big\}. 
\end{equation*}
Finally, let $\scrP$ be a pairing of $\{1,\hdots,7\}$ which respects the partition $\{1,2,3\},\{4\},\{5,6,7\}$ and define the non-resonant frequency $n_{\text{nr}}\in \bZ^3$ by 
\begin{equation*}
n_{\text{nr}} \defe \sum_{j\not \in \scrP} n_j. 
\end{equation*}
Then, it holds that 
\begin{align*}
&\sum_{(n_j)_{j\not \in \scrP}} \hspace{-1ex} \langle n_{\text{nr}} \rangle^{2(s-1)}
 \bigg(  \hspace{-0.2ex} \sum_{\substack{(n_j)_{j \in \scrP}}}^{\ast}  \hspace{-1ex}1\big\{ |n_{1234567}|\sim N_{1234567}\big\} 1\big\{ |n_{1234}|\sim N_{1234}\big\} 1\big\{|n_{567}| \sim N_{567} \big\} 1\big\{|n_4|\sim N_4\big\}  \\
 &\times |\widehat{V}(n_{1234})| \Phi(n_1,n_2,n_3) \langle n_4 \rangle^{-1} \Phi(n_5,n_6,n_7) \bigg)^2 \\
 &\lesssim \log(2+N_4)^2 \Big( N_{1234567}^{2(s-\frac{1}{2})}   N_{567}^{-2(\beta-\eta)} + N_{1234567}^{-2 (1-s-\eta)} \Big) N_{1234}^{-2\beta},
\end{align*}
where $ \sum_{\substack{(n_j)_{j \in \scrP}}}^{\ast}$ denotes the sum over admissible frequencies. 
\end{lemma}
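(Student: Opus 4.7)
The plan is to reduce the septic sum to a quartic sum via two applications of the basic resonance estimate (Lemma \ref{tools:lem_basic_resonance}) followed by Cauchy--Schwarz, after which the non-resonant quartic sum estimate (Lemma \ref{tools:lemma_nonresonant_quartic}) will produce the two terms on the right-hand side. First, I would apply Lemma \ref{tools:lem_basic_resonance} inside each $\Phi$ factor. Writing out $\Phi(n_1,n_2,n_3)$ and summing out the pair $(m,n_j)$ (for a well-chosen index $j\in\{1,2,3\}$ that is \emph{unpaired} by $\scrP$, if one exists, or else contributing the full sum directly), one bounds $\Phi(n_1,n_2,n_3)\lesssim \log(2+N_4)\,|\widehat V_S(n_1,n_2,n_3)|\,\langle n_{12}\rangle^{-1}\prod_{j'\neq j}\langle n_{j'}\rangle^{-1}$, and analogously for $\Phi(n_5,n_6,n_7)$. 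Since the only logarithmic loss arising from either application is bounded by $\log(2+N_4)$ after taking into account that each paired index carries a frequency already constrained by an unpaired partner, the total logarithmic factor is $\log(2+N_4)^2$. (If index $4$ is itself paired, an essentially identical estimate is obtained by summing out a paired frequency; the argument then proceeds with constants rearranged.)

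Second, I would invoke admissibility of the pairing $\scrP$ to eliminate the paired variables. Because $\scrP$ respects the partition $\{1,2,3\},\{4\},\{5,6,7\}$, every pair crosses the partition and therefore cancels out of both $n_{1234567}=n_{\text{nr}}$ and the product structure, while each pair $(i,j)\in\scrP$ identifies one variable with the negation of the other. The inner $\ast$-sum thus runs over at most three independent paired-frequency variables. Expanding the square in the inner sum and interchanging the order of summation converts the entire expression into a sum of at most doubled paired frequencies, and I would then apply Cauchy--Schwarz in the paired variables to decouple the two copies of the inner sum. After this step the sum takes the form of a quartic sum in four frequencies (three paired representatives plus one of the unpaired ones that collects the remaining weight $\langle n_{\text{nr}}\rangle^{2(s-1)}$), with weight $|\widehat V(n_{1234})|^{2}\lesssim \langle n_{1234}\rangle^{-2\beta}$ and the usual $\prod_{j}\langle n_j\rangle^{-2}$ decay.

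Third, I would perform a case analysis on the structure of $\scrP$ in order to match the output of Lemma \ref{tools:lemma_nonresonant_quartic}. The two cases correspond to whether, after pairing, the maximum of the remaining unpaired frequencies is achieved inside the $\{5,6,7\}$-block (giving $N_{1234567}^{2(s-1/2)}N_{567}^{-2(\beta-\eta)}$, where $N_{1234567}^{2(s-1/2)}$ arises from summing the free variable $n_{\text{nr}}$ against $\langle n_{\text{nr}}\rangle^{2(s-1)}$ over a ball of size $N_{1234567}$), or outside it (giving $N_{1234567}^{-2(1-s-\eta)}$, arising from a direct $\langle n_{\text{nr}}\rangle^{2(s-1)}$-integration when the maximum sits elsewhere and $N_{567}^{-2\beta}$ is absorbed into $N_{\max}^{-2\beta+2\eta}$). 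The weight $N_{1234}^{-2\beta}$ factors out in both cases from $|\widehat V(n_{1234})|^{2}$. Summing over the admissible configurations of $\scrP$ (a finite combinatorial set), multiplying by $\log(2+N_4)^{2}$, and keeping only the dominant term in each case produces exactly the stated bound.

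The main obstacle will be the bookkeeping in the case analysis of paragraph~3, since different pairings distribute the weights $\langle n_{1234567}\rangle^{2(s-1)}$ and $\langle n_{1234}\rangle^{-2\beta}$ differently over the surviving unpaired variables, and one must verify in each case that Lemma \ref{tools:lemma_nonresonant_quartic} applies with the correct assignment of frequency scales. A second (more minor) technical point is that when $4\in\scrP$, the variable $n_4$ is no longer free and the factor $\langle n_4\rangle^{-1}$ is absorbed into the paired-frequency weight; here one must re-derive the logarithm $\log(2+N_4)^{2}$ by summing the free variable inside $\Phi$ that carries the constraint $|n_4|\sim N_4$ through the identification $n_4=-n_j$. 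Both issues are resolved by a careful but essentially mechanical enumeration.
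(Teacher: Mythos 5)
Your proof plan has a genuine gap, traceable to a misreading of Lemma~\ref{tools:lem_basic_resonance}. That estimate bounds a sum whose summand carries $\langle n_3\rangle^{-2}$, whereas $\Phi$ only carries a single power $\langle n_3\rangle^{-1}$: the missing power comes from the extra factor $\langle n_4\rangle^{-1}$ in the septic sum, which becomes a second power of $\langle n_3\rangle^{-1}$ precisely when $n_3=-n_4$ through a pairing. There is exactly one such factor $\langle n_4\rangle^{-1}$ available, so the basic resonance estimate can be invoked at most once, not ``inside each $\Phi$ factor'' as you propose. Furthermore, it yields $\log(2+N_3)$ in the dyadic scale of the variable being summed, which is $\log(2+N_4)$ only when that variable is the partner of $4$; summing a variable from $\{1,2,3\}$ or $\{5,6,7\}$ that is not paired with $4$ would produce an uncontrolled logarithm, and without the extra decay the sum would not even converge. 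The factor $\log(2+N_4)^2$ in the statement does not arise from two applications of the basic resonance estimate, but from a single application placed \emph{inside} the square.

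The case analysis the paper actually performs is on whether $4$ is unpaired, paired with $\{1,2,3\}$, or paired with $\{5,6,7\}$ --- not on where the maximal frequency sits, as in your third step. When $4$ is unpaired there is no use of the basic resonance estimate at all: after Cauchy--Schwarz, the key input is the $L^2$-type bound $\sum\Phi^2(n_1,n_2,n_3)\lesssim N_{123}^{-2(\beta-\eta)}$ (a consequence of Proposition~\ref{tools:prop_cubic_sum}), which is much stronger than the basic resonance estimate and carries no logarithmic loss; the factor $N_{1234567}^{2(s-1/2)}$ is then obtained by summing the free variable $n_4$ against $\langle n_4\rangle^{-2}$ over a ball of radius $N_{1234567}$. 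When $4$ is paired, the single admissible application of Lemma~\ref{tools:lem_basic_resonance} eliminates the partner variable, and the $\Phi^2$-estimate handles the remaining block; this gives the term $N_{1234567}^{-2(1-s-\eta)}$. Finally, the factor $N_{1234}^{-2\beta}$ is peeled off at the very start via $|\widehat{V}(n_{1234})|\lesssim\langle n_{1234}\rangle^{-\beta}$, and the non-resonant quartic sum estimate (Lemma~\ref{tools:lemma_nonresonant_quartic}) plays no role at all: after the $\Phi^2$-sums and the resonance estimate the dispersive phase constraint has already been spent, so there is nothing left for that lemma to act on.
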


While the septic sum estimate (Lemma \ref{tools:lem_counting_septic}) may appear complicated, its proof is much easier than the cubic sum estimate (Lemma \ref{tools:prop_cubic_sum}) or the quintic sum estimate (Lemma \ref{tools:lem_counting_quintic}). The reason is that we do not rely on dispersive effects at the (full) septic level, and only use the dispersive effects in the cubic stochastic sub-objects.

\subsubsection{Tensor estimates}\label{section:tensor}

The counting estimates from Section \ref{section:counting_cubic}-\ref{section:counting_septic} will be combined with Wiener chaos estimates to control stochastic objects such as $\<3N>$. The estimates of the random matrix terms will follow a similar spirit. However, the Wiener chaos estimates will be replaced by the moment method (see Proposition \ref{tools:prop_moment_method}) and the counting estimates will be replaced by deterministic tensor estimates. The tensor estimates, which partially rely on the counting estimates, are the main goal of this subsection. 

We first recall the tensor notation from \cite[Section 2.1]{DNY20}. 

\begin{definition}[Tensors and tensor norms]\label{tool:def_tensor}
Let $\cJ \subseteq \mathbb{N}_0$ be a finite set. A tensor $h=h_{n_\cJ}$ is a function from $(\bZ^3)^{|\cJ|}$ into $\bC$, where the input variables are given by $n_{\cJ}$. A partition of $\cJ$ is a pair of sets $(\mathcal{A},\mathcal{B})$ such that $\mathcal{A} \medcup \mathcal{B}=\cJ$ and $\mathcal{A} \medcap \mathcal{B} = \emptyset$. For any partition $(\mathcal{A},\mathcal{B})$, we define the tensor norm
\begin{equation}
\| h\|_{n_{\mathcal{A}}\rightarrow n_{\mathcal{B}}}^2 = \sup \Big\{ \sum_{n_\mathcal{B}} \Big| \sum_{n_{\mathcal{A}}} h_{n_\cJ} z_{n_{\mathcal{A}}} \Big|^2 \colon \sum_{n_{\mathcal{A}}} | z_{n_{\mathcal{A}}}|^2 = 1 \Big\}. 
\end{equation}
\end{definition}

For example, if $h=h_{nn_1n_2n_3}$, then
\begin{equation*}
\| h \|_{n_1 n_2 n_3 \rightarrow n}^2 = \sup \Big\{ \sum_{n \in \bZ^3} \Big| \sum_{n_1,n_2,n_3\in \bZ^3} h_{n n_1 n_2 n_3} z_{n_1 n_2 n_3} \Big|^2 \colon \sum_{n_1,n_2,n_3\in \bZ^3} |z_{n_1 n_2 n_3}|^2 =1 \Big\}. 
\end{equation*}

\begin{lemma}[First deterministic tensor estimate]\label{tools:lem_first_tensor}
Let $s<1/2+\beta-2\delta_1-6\eta$, $N_1,N_2,N_3,N_{12},N_{123}\geq 1$, $m\in \bZ$, and $\pm_1,\pm_2,\pm_3,\pm_{123}\in \{ +,-\}$. Define the phase-function $\varphi$ by
\begin{equation*}
\varphi(n_1,n_2,n_3)\defe \pm_{123} \langle n_{123} \rangle \pm_1 \langle n_1 \rangle  \pm_2 \langle n_2 \rangle  \pm_3 \langle n_3 \rangle.
\end{equation*}
and the truncated tensor $h$ by 
\begin{equation}\label{tools:eq_first_tensor}
\begin{aligned}
h_{nn_1n_2n_3} \defe& \chi_{N_{123}}(N_{123}) \chi_{N_{12}}(n_{12}) \Big( \prod_{j=1}^3 \rho_{\leq N}(n_j)\chi_{N_j}(n_j) \Big)  \\
&1\big\{n=n_{123} \big\} 1\{ |\varphi-m|\leq 1\} \langle n \rangle^{s-1} \widehat{V}(n_{12}) \langle n_1 \rangle^{-1} \langle n_2 \rangle^{-1} \langle n_3 \rangle^{-s_1}. 
\end{aligned}
\end{equation}
Then, we have the estimate
\begin{equation}
\max\big( \| h \|_{n_1n_2n_3\rightarrow n},\| h \|_{n_3 \rightarrow nn_1 n_2},\| h \|_{n_1 n_3\rightarrow nn_2} , \| h \|_{n_2n_3\rightarrow nn_1} \big) 
\lesssim \max(N_1,N_2,N_3)^{-\eta}. 
\end{equation}
\end{lemma}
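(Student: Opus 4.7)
The plan is to bound each of the four tensor norms in Lemma~\ref{tools:lem_first_tensor} by combining the pointwise size estimate
\begin{equation*}
|h|^2 \lesssim N_{123}^{2(s-1)} N_{12}^{-2\beta} N_1^{-2} N_2^{-2} N_3^{-2s_1}
\end{equation*}
(from $|\widehat{V}(n_{12})|\lesssim \langle n_{12}\rangle^{-\beta}$ on the support of $\chi_{N_{12}}$, together with $s_1 = 1/2 - \delta_1$) with the sup-counting estimates of Lemma~\ref{tools:lem_sup}. For each norm $\|h\|_{A\to B}$ I would reduce to a weighted cardinality via a Gram matrix diagonalization or an $L^2$-Schur inequality, insert the size estimate, and then take the minimum over the applicable counting bounds; the assumption $s < 1/2 + \beta - 2\delta_1 - 6\eta$ is exactly what is needed to verify the resulting exponent inequality with an $\eta$-margin.

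For the two \emph{pure} norms, the linear constraint $n = n_{123}$ diagonalizes the relevant Gram matrix. Specifically, for $\|h\|_{n_1n_2n_3\to n}$ the product $hh^*$ (viewed as a matrix on the output space indexed by $n$) is diagonal, since fixing $(n_1,n_2,n_3)$ determines $n$; this gives
\begin{equation*}
\|h\|_{n_1n_2n_3\to n}^2 \;=\; \sup_n \sum_{(n_1,n_2,n_3)\colon n = n_{123}}|h|^2,
\end{equation*}
and symmetrically for $\|h\|_{n_3\to nn_1n_2}$ the product $h^*h$ is diagonal in $n_3$, yielding $\|h\|_{n_3\to nn_1n_2}^2 = \sup_{n_3}\sum_{n_1,n_2}|h|^2$. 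After inserting the size bound, the remaining cardinality is controlled in each case by taking the minimum of Lemma~\ref{tools:lem_sup}\eqref{tools:item_sup_1} (bound $\med(N_1,N_2,N_3)^3 \min(N_1,N_2,N_3)^2$) and Lemma~\ref{tools:lem_sup}\eqref{tools:item_sup_3}--\eqref{tools:item_sup_4} (bound $\min(N_{12},N_1)^{-1}(N_{12}N_2)^3$). For the two \emph{mixed} norms $\|h\|_{n_1n_3\to nn_2}$ and $\|h\|_{n_2n_3\to nn_1}$ the Gram matrix is no longer diagonal, so I would apply the $L^2$-Schur inequality
\begin{equation*}
\|h\|_{A\to B}^2 \;\leq\; \Big( \sup_B \sum_A |h|^2 \Big) \Big( \sup_A \# \{B \colon h \neq 0\} \Big),
\end{equation*}
take the minimum with the analogous bound in which $A$ and $B$ are swapped, and reduce the $L^2$-sum either to Proposition~\ref{tools:prop_cubic_sum} or to a sum controlled by one of the four counting bounds, while the cardinality count is controlled directly by Lemma~\ref{tools:lem_sup}\eqref{tools:item_sup_1}--\eqref{tools:item_sup_4}.

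Combining size and counting in each configuration, the worst exponent that arises is of the form $2s - 2 + 2\delta_1 - 2\beta$ up to losses of order $\max(N_1,N_2,N_3)^{C\eta}$ from the dyadic logarithmic sums in $N_{12}$ and $N_{123}$, so the strict inequality $s < 1/2 + \beta - 2\delta_1 - 6\eta$ provides exactly the $\eta$-margin required for the stated decay $\max(N_1,N_2,N_3)^{-\eta}$. The main obstacle will be bookkeeping the case analysis: the optimal counting estimate depends both on which $N_j$ attains $\max(N_1,N_2,N_3)$ and on the size of $N_{12}$ relative to $\min(N_1,N_2)$, and the asymmetric weight $\langle n_3 \rangle^{-s_1}$ (versus $\langle n_j\rangle^{-1}$ for $j=1,2$) forces $n_3$ to play a distinct role in the mixed norms, in particular in the choice of which index to place on each side of Schur. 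None of the individual sub-cases is difficult on its own; the challenge is to verify all of them against the stated exponent simultaneously.
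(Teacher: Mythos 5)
Your outline follows essentially the same strategy as the paper: a Schur/Gram-matrix reduction for each of the four tensor norms, followed by the sup-counting estimates of Lemma~\ref{tools:lem_sup}, with the assumption on $s$ serving as the exponent margin. The Gram-matrix observation for the pure norms (that $hh^*$ and $h^*h$ diagonalize via the linear constraint $n=n_{123}$) is a clean equivalent of the paper's Schur-test argument, in which the second Schur factor $\sup_{n_1,n_2,n_3}\sum_n(\cdot)$ trivializes to~$1$ for the same reason; for $|h|$ constant on its support the two give identical bounds. Your $L^2$-Schur inequality is also correct.

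That said, a few inaccuracies would need to be fixed in a full write-up. First, there are no dyadic sums over $N_{12}$ or $N_{123}$ in this lemma — all five frequency scales are fixed parameters, so the remark about ``losses from dyadic logarithmic sums'' is misplaced; what actually controls $N_{12}$ and $N_{123}$ is the geometric constraint $N_{12},N_{123}\lesssim\max(N_1,N_2,N_3)\cdot N_3$ on the support of $h$, which is how the paper converts gains in $N_{12}$ or $N_{123}$ into decay in $\max(N_1,N_2,N_3)$. Second, for the two mixed norms $\|h\|_{n_1n_3\to n n_2}$ and $\|h\|_{n_2n_3\to n n_1}$, the paper does \emph{not} use Lemma~\ref{tools:lem_sup} or Proposition~\ref{tools:prop_cubic_sum}: it discards the dispersive constraint $1\{|\varphi-m|\le1\}\le1$ entirely and runs a pure volume argument, bounding $\sup_{n,n_2}\#\{(n_1,n_3)\}\lesssim\min(N_1,N_{12},N_3)^3$ and $\sup_{n_1,n_3}\#\{(n,n_2)\}\lesssim\min(N_2,N_{12},N_{123})^3$ (each a single free variable after $n=n_{123}$ is used), followed by an AM-GM split of the minima, for which one must first reduce to $s\geq1/2$. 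Neither Proposition~\ref{tools:prop_cubic_sum} (which sums over all three frequencies with a $\sup_m$) nor Lemma~\ref{tools:lem_sup} (whose items fix a single frequency and count the rest) applies directly to those two-variable-fixed sums, so as written your reduction for the mixed norms does not quite close; replacing it by the elementary shell-intersection volume bounds repairs it. Finally, your quoted ``worst exponent'' $2s-2+2\delta_1-2\beta$ does not quite match what comes out (the paper gets $N_{12}^{2s-1-2\beta+2\delta_1+6\eta}$ in the mixed case), though the hypothesis on $s$ comfortably absorbs either version.
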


\begin{remark}
The first deterministic tensor estimate (Lemma \ref{tools:lem_first_tensor}) is the main ingredient in the estimate of 
\begin{equation*}
w_N \mapsto \Big( V \ast \<2N> \Big) P_{\leq N} w_N,
\end{equation*}
which is the first term in $\RMT$. In contrast to the second tensor estimate below, we only impose $s<1/2+\beta$ instead of $s<1/2$ (up to small corrections). The reason is that both instances of $\<1b>$ are part of the convolution with $V$. 
\end{remark}
\begin{proof}
The main ingredients are Schur's test and the sup-counting estimate (Lemma \ref{tools:lem_sup}). 

\emph{Step 1: $\| h\|_{n_1 n_2 n_3 \rightarrow n}$}. Due to the symmetry $n_1 \leftrightarrow n_2$, we may assume that $N_1 \geq N_2$. Using Schur's test, we have that 
\begin{align*}
& \| h \|_{n_1 n_2  n_3 \rightarrow n}^2 \\
&\lesssim N_{123}^{2(s-1)} N_{12}^{-2\beta} N_1^{-2} N_2^{-2} N_3^{-2s_1} \\
&\times \sup_{n\in \bZ^3} \sum_{n_1,n_2,n_3 \in \bZ^3}  \Big( \prod_{j=1}^3 1 \big\{ |n_j| \sim N_k \big \} \Big)  1\big\{ |n_{12}| \sim N_{12}| \big\} 1\big\{ |n| \sim N_{123} \big\} 1 \big \{ n=n_{123} \big\} 1 \big \{ |\varphi-m|\leq 1 \big\} \\
&\times \sup_{n_1,n_2,n_3\in \bZ^3} \sum_{n  \in \bZ^3}  \Big( \prod_{j=1}^3 1 \big\{ |n_j| \sim N_k \big \} \Big)  1\big\{ |n_{12}| \sim N_{12}| \big\} 1\big\{ |n| \sim N_{123} \big\} 1 \big \{ n=n_{123} \big\} 1 \big \{ |\varphi-m|\leq 1 \big\}
\end{align*}
Since $n$ is uniquely determined by $n_1,n_2$, and $n_3$, the last factor can easily be bounded by one. By using \eqref{tools:item_sup_3} in Lemma \ref{tools:lem_sup} and $\max(N_{12},N_2)\lesssim\max(N_1,N_2) = N_1$, we obtain that 
\begin{align*}
\| h \|_{n_1 n_2  n_3 \rightarrow n}^2 
&\lesssim N_{123}^{2(s-1)} N_{12}^{-2\beta} N_1^{-2} N_2^{-2} N_3^{-2s_1}  \max(N_{12},N_2) N_{12}^2 N_2^2 \\
&\lesssim N_{123}^{2(s-1)} N_{12}^{2-2\beta} N_1^{-1} N_3^{-2s_1}  \\
&\lesssim N_{123}^{2(s-1)} N_{12}^{1-2\beta+2\eta} N_1^{-2\eta} N_3^{-2s_1}.
\end{align*}
Furthermore, we have that $N_{12} \lesssim \max(N_{123},N_3) \lesssim N_{123} \cdot N_3$. Inserting this into the last inequality yields
\begin{equation*}
\| h \|_{n_1 n_2  n_3 \rightarrow n}^2   \lesssim N_{123}^{2s-1-2\beta+2\eta} N_1^{-2\eta}N_3^{1-2s_1-2\beta +2 \eta} \lesssim (N_1 N_3)^{-2\eta}. 
\end{equation*}

\emph{Step 2: $\| h\|_{n_3 \rightarrow n_1 n_2 n}$}. The argument follow Step 1 nearly verbatim, except that we use \eqref{tools:item_sup_4} in Lemma \ref{tools:lem_sup} instead of \eqref{tools:item_sup_3}. \\

\emph{Step 3: $\| h \|_{n_1 n_3\rightarrow n_2 n}$}. In this step, we ignore the dispersive effects, i.e., we simply bound
\begin{equation*}
 1 \big \{ |\varphi-m|\leq 1 \big\} \leq 1. 
\end{equation*}
By increasing $s$ if necessary, we may assume $s \geq 1/2$. Using Schur's test and a simple volume argument, we have that 
\begin{align*}
& \| h \|_{n_1 n_3 \rightarrow n_2 n }^2 \\
&\lesssim N_{123}^{2(s-1)} N_{12}^{-2\beta} N_1^{-2} N_2^{-2} N_3^{-2s_1} \allowdisplaybreaks[4]\\
&\times \sup_{n,n_2\in \bZ^3} \sum_{n_1,n_3 \in \bZ^3}  \Big( \prod_{j=1}^3 1 \big\{ |n_j| \sim N_k \big \} \Big)  1\big\{ |n_{12}| \sim N_{12}| \big\} 1\big\{ |n| \sim N_{123} \big\} 1 \big \{ n=n_{123} \big\}  \allowdisplaybreaks[4]\\
&\times \sup_{n_1,n_3\in \bZ^3} \sum_{n_2,n \in \bZ^3}  \Big( \prod_{j=1}^3 1 \big\{ |n_j| \sim N_k \big \} \Big)  1\big\{ |n_{12}| \sim N_{12}| \big\} 1\big\{ |n| \sim N_{123} \big\} 1 \big \{ n=n_{123} \big\}  \allowdisplaybreaks[4]\\
&\lesssim  N_{123}^{2(s-1)} N_{12}^{-2\beta} N_1^{-2} N_2^{-2} N_3^{-2s_1} \min(N_1,N_{12},N_3)^3 \min(N_2,N_{12},N_{123})^3 \allowdisplaybreaks[4]\\
&\lesssim   N_{123}^{2(s-1)} N_{12}^{-2\beta} N_1^{-2} N_2^{-2} N_3^{-2s_1} N_1^{2-2\eta} N_{12}^{1+4\eta-2s_1} N_3^{2s_1-2\eta} N_2^{2-2\eta} N_{12}^{2s-1+2\eta} N_{123}^{2(s-1)} \\
&\lesssim N_{12}^{2s-1-2\beta+2\delta_1 + 6\eta} (N_1 N_2 N_3)^{-2\eta}. 
\end{align*}
In the second last inequality, we used $s \geq 1/2$. Since $2s-1-2\beta+2\delta_1 + 6\eta\leq 0$, this is acceptable. \\

\emph{Step 4: $\| h \|_{n_2 n_3\rightarrow n_1 n}$}. Due to the symmetry $n_1 \leftrightarrow n_2$, the estimate follows from Step 3. 
\end{proof}

We now turn to the second tensor estimate. 

\begin{lemma}[Second deterministic tensor estimate]\label{tools:lem_second_tensor}
Let $s<1/2-\eta$, $N_1,N_2,N_3,N_{12},N_{123}\geq 1$, $m\in \bZ$, and $\pm_1,\pm_2,\pm_3,\pm_{123}\in \{ +,-\}$. Define the phase-function $\varphi$ by
\begin{equation*}
\varphi(n_1,n_2,n_3)\defe \pm_{123} \langle n_{123} \rangle \pm_1 \langle n_1 \rangle  \pm_2 \langle n_2 \rangle  \pm_3 \langle n_3 \rangle
\end{equation*}
and the truncated tensor $h$ by 
\begin{equation}
\begin{aligned}
h_{nn_1n_2n_3} \defe& \chi_{N_{123}}(N_{123}) \chi_{N_{12}}(n_{12}) \Big( \prod_{j=1}^3 \rho_{\leq N}(n_j)\chi_{N_j}(n_j) \Big)  \\
&1\big\{n=n_{123} \big\} 1\{ |\varphi-m|\leq 1\} \langle n \rangle^{s-1} \widehat{V}(n_{12}) \langle n_1 \rangle^{-1} \langle n_2 \rangle^{-s_2} \langle n_3 \rangle^{-1}. 
\end{aligned}
\end{equation}
Then, we have the estimate
\begin{equation}
\max\big( \| h \|_{n_1n_2n_3\rightarrow n},\| h \|_{n_2\rightarrow nn_1 n_3}, \| h \|_{n_2n_3\rightarrow nn_1}, \| h \|_{n_1 n_2\rightarrow nn_3} \big) 
\lesssim N_{12}^{-\beta} \max(N_1,N_2,N_3)^{-\eta}. 
\end{equation}
\end{lemma}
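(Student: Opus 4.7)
The proof parallels that of Lemma \ref{tools:lem_first_tensor} and splits into four cases corresponding to the four tensor norms. In each case I would apply Schur's test in the form $\|h\|^2 \leq (\sup_\alpha \sum_\beta |h|)(\sup_\beta \sum_\alpha |h|)$, extract from $h$ the pointwise bound
\[
|h_{nn_1n_2n_3}| \lesssim N_{123}^{s-1}\, \langle n_{12} \rangle^{-\beta}\, N_1^{-1}\, N_2^{-s_2}\, N_3^{-1}
\]
valid on the dyadic support of $h$, and then bound each of the resulting counts either by a sup-counting estimate from Lemma \ref{tools:lem_sup} (when the dispersive constraint $|\varphi-m|\leq 1$ is needed) or by a direct volume estimate. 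The explicit factor $N_{12}^{-\beta}$ on the right-hand side of the lemma comes directly from $|\widehat V(n_{12})|\lesssim \langle n_{12}\rangle^{-\beta}$ provided by Assumption \ref{assumptions:V}; it remains only to establish the additional $\max(N_1,N_2,N_3)^{-\eta}$-gain.

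For the norm $\|h\|_{n_1n_2n_3\to n}$, one side of the Schur product is trivial because $n=n_{123}$ is uniquely determined by $(n_1,n_2,n_3)$. The other side is controlled by Lemma \ref{tools:lem_sup}\eqref{tools:item_sup_3}, which gives $\sup_n \#\{(n_1,n_2,n_3):\cdots\} \lesssim \min(N_{12},N_1)^{-1}(N_{12}N_2)^3$; combining with the decay factors and the elementary constraints $N_{12}\lesssim \max(N_{123},N_3)$ and $N_2\gtrsim N_1$ when $N_{12}\ll N_1$, a short dyadic case analysis using $s<1/2-\eta$ and $s_2 = 1/2+\delta_2$ produces the claimed bound. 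The norm $\|h\|_{n_2\to nn_1n_3}$ is handled identically, but using Lemma \ref{tools:lem_sup}\eqref{tools:item_sup_2}, since now it is $n_2 = n-n_1-n_3$ that plays the role of the determined variable.

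For the 2-to-2 norms $\|h\|_{n_2n_3\to nn_1}$ and $\|h\|_{n_1n_2\to nn_3}$, no determining relation is available. Following Step 3 of the proof of Lemma \ref{tools:lem_first_tensor}, I would discard the dispersive constraint $1\{|\varphi-m|\leq 1\}\leq 1$ and use only volume counting: once one pair of variables is fixed, the relation $n=n_{123}$ confines the remaining two to the intersection of dyadic annuli whose volume is bounded by a product of the smaller radii cubed. Here the improved decay $\langle n_2\rangle^{-s_2}$ with $s_2>1/2$ provides the crucial slack that compensates for the absence of dispersive gain, and the hypothesis $s<1/2-\eta$ is used to absorb powers of $N_{123}$ released by the volume estimate.

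The main technical obstacle lies in the 2-to-2 cases: one must carefully enumerate dyadic regimes, in particular distinguishing $N_{12}\lesssim N_{123}$ versus $N_{123}\lesssim N_{12}$ (and the analogous dichotomy with $N_3$), to check that the volume bound combined with the slack provided by $\delta_2,\eta>0$ yields both the factor $N_{12}^{-\beta}$ and the $\max(N_1,N_2,N_3)^{-\eta}$-gain without any logarithmic loss. The calculation is entirely routine and closely mirrors Steps 3 and 4 of the proof of Lemma \ref{tools:lem_first_tensor}, but requires careful bookkeeping to balance the competing contributions from the exponents $s$, $s_2$, and $\beta$.
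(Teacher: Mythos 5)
Your outline for the norms $\| h \|_{n_2\rightarrow nn_1 n_3}$, $\| h \|_{n_2n_3\rightarrow nn_1}$, and $\| h \|_{n_1 n_2\rightarrow nn_3}$ matches the paper's strategy: item \eqref{tools:item_sup_2} of Lemma~\ref{tools:lem_sup} for the $1$-to-$3$ case, and discarding the dispersive constraint in favor of volume bounds for the two $2$-to-$2$ cases, with the slack coming from $s_2>1/2$ and $s<1/2-\eta$.

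However, your treatment of $\| h \|_{n_1n_2n_3\rightarrow n}$ has a genuine gap. You propose to use item \eqref{tools:item_sup_3} of Lemma~\ref{tools:lem_sup}, which bounds the sup-count by $\min(N_{12},N_1)^{-1}(N_{12}N_2)^3$. That estimate is the one used in Step~1 of the proof of Lemma~\ref{tools:lem_first_tensor}, where it is only viable because the first tensor's weights $\langle n_1\rangle^{-1}\langle n_2\rangle^{-1}$ are symmetric in $n_1\leftrightarrow n_2$, so one can assume without loss of generality that $N_1\geq N_2$, forcing $N_{12}\lesssim N_1$ and hence $\min(N_{12},N_1)=N_{12}$. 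The second tensor has weights $\langle n_1\rangle^{-1}\langle n_2\rangle^{-s_2}$, which break the $n_1\leftrightarrow n_2$ symmetry, so that reduction is no longer available. Concretely, take $N_1=N_3=1$ and $N_2=N_{12}=N_{123}=N$: item \eqref{tools:item_sup_3} gives a count $\lesssim N^6$, the decay factors contribute $N^{2(s-1)-2\beta-2s_2}$, and the product is $\sim N^{4-2\eta-2\delta_2-2\beta}$, which blows up and is nowhere near the claimed bound $N_{12}^{-2\beta}\max(N_1,N_2,N_3)^{-2\eta}=N^{-2\beta-2\eta}$. The constraints you invoke ($N_{12}\lesssim\max(N_{123},N_3)$ and $N_2\gtrsim N_1$ when $N_{12}\ll N_1$) are satisfied or vacuous in this configuration and do not rescue the estimate. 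The underlying reason is that the stated item \eqref{tools:item_sup_3} bound does not record the constraint $|n_3|\sim N_3$ — once $n$ is fixed, $n_3=n-n_{12}$ is determined, so a small $N_3$ should drastically limit $n_{12}$, but the right-hand side $\min(N_{12},N_1)^{-1}(N_{12}N_2)^3$ is $N_3$-independent. The paper instead uses item \eqref{tools:item_sup_1}, whose bound $\med(N_1,N_2,N_3)^3\min(N_1,N_2,N_3)^2$ does see the smallness of $N_1$ and $N_3$, and the estimate then closes after a single use of $s_2\leq 1$.
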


\begin{remark}
Lemma \ref{tools:lem_second_tensor} is the main ingredient in the estimate of
\begin{equation*}
Y_N \mapsto \lcol  V \ast \Big( P_{\leq N} \<1b> \cdot P_{\leq N} \big(Y_N \big) \Big)  \nparald P_{\leq N} \<1b> \Big)\rcol ~ ,
\end{equation*}
which is the second term in $\RMT$. 
\end{remark}

\begin{proof}
The argument is similar to the proof of Lemma \ref{tools:lem_first_tensor}.\\

\emph{Step 1: $\| h\|_{n_1 n_2 n_3 \rightarrow n}$}. Using Schur's test, we have that 
\begin{equation*}
\begin{aligned}
\| h\|_{n_1 n_2 n_3 \rightarrow n}^2  &\lesssim N_{123}^{2 (s-1)} N_{12}^{-2\beta} N_1^{-2} N_2^{-2 s_2} N_3^{-2} \\
&\times \sup_{|n|\sim N_{123}} \sum_{n_1,n_2,n_3 \in \bZ^3} \Big( \prod_{j=1}^3 1 \big\{ |n_j| \sim N_k \big \} \Big) 1 \big \{ n=n_{123} \big\} 1 \big \{ |\varphi-m|\leq 1 \big\} \\
&\times \sup_{n_1,n_2,n_3 \in \bZ^3} \sum_{n\in \bZ^3} \Big( \prod_{j=1}^3 1 \big\{ |n_j| \sim N_k \big \} \Big) 1 \big \{ n=n_{123} \big\} 1 \big \{ |\varphi-m|\leq 1 \big\}. 
\end{aligned}
\end{equation*}
The last factor  is easily bounded by one, since $n$ is uniquely determined by $n_1,n_2$, and $n_3$. By using \eqref{tools:item_sup_1} in Lemma \ref{tools:lem_sup} and $s_2 \leq 1$, we obtain that 
\begin{align*}
\| h\|_{n_1 n_2 n_3 \rightarrow n}^2
&\lesssim   N_{123}^{2 (s-1)} N_{12}^{-2\beta}  \med(N_1,N_2,N_3)^3 \min(N_1,N_2,N_3)^2 N_1^{-2} N_2^{-2 s_2} N_3^{-2} \\
&\lesssim  N_{123}^{2 (s-1)} N_{12}^{-2\beta} \max(N_1,N_2,N_3)^{-2s_2}   \med(N_1,N_2,N_3)^{3-2} \min(N_1,N_2,N_3)^{2-2} \\
&\lesssim N_{123}^{2 (s-1)} N_{12}^{-2\beta} \max(N_1,N_2,N_3)^{1-2s_2}. 
\end{align*}
This is acceptable since $s\leq 1$ and $\eta \ll \delta_2$.

\emph{Step 2: $\| h\|_{n_2\rightarrow n_1 n_3 n}$}. This argument is similar to Step 1, but the roles of $n_2$ and $n$ are reversed. Using Schur's test, we obtain that 
\begin{equation*}
\begin{aligned}
\| h\|_{n_2 \rightarrow n_1 n_3 n}^2  &\lesssim N_{123}^{2 (s-1)} N_{12}^{-2\beta} N_1^{-2} N_2^{-2 s_2} N_3^{-2} \\
&\times \sup_{|n_2|\sim N_{2}} \sum_{n_1,n_3,n \in \bZ^3} \Big( \prod_{j=1}^3 1 \big\{ |n_j| \sim N_k \big \} \Big)  1\big\{ |n| \sim N_{123} \big\} 1 \big \{ n=n_{123} \big\} 1 \big \{ |\varphi-m|\leq 1 \big\} \\
&\times \sup_{n_1,n_3,n \in \bZ^3} \sum_{n_2\in \bZ^3} \Big( \prod_{j=1}^3 1 \big\{ |n_j| \sim N_k \big \} \Big) 1\big\{ |n| \sim N_{123} \big\}1 \big \{ n=n_{123} \big\} 1 \big \{ |\varphi-m|\leq 1 \big\}. 
\end{aligned}
\end{equation*}
As before, the last factor is easily bounded by one. By using \eqref{tools:item_sup_2} in Lemma \ref{tools:lem_sup} and $2(s-1)\geq -2$, we obtain that 
\begin{align*}
\| h\|_{n_2 \rightarrow n_1 n_3 n}^2  &\lesssim N_{123}^{2 (s-1)} N_{12}^{-2\beta} \med(N_{123},N_1,N_3)^3 \min(N_{123},N_1,N_3)^2 N_1^{-2} N_2^{-2 s_2} N_3^{-2} \\
&\lesssim N_{12}^{-2\beta} N_2^{-2s_2} \max(N_{123},N_1,N_3)^{2s-1} \\
&\lesssim N_{12}^{-2\beta} N_2^{-2s_2} \max(N_1,N_2,N_3)^{-2\eta}.
\end{align*}
In the last line, we used that $s<1/2-\eta$.

\emph{Step 3: $\| h\|_{ n_1 n_2\rightarrow n_3 n}$}. In this step, we ignore the dispersive effects, i.e., we simply bound
\begin{equation*}
 1 \big \{ |\varphi-m|\leq 1 \big\} \leq 1. 
\end{equation*}
Using Schur's test and a simple volume bound, we obtain that 
\begin{align*}
\| h\|_{ n_1 n_2\rightarrow n_3 n}^2 &\lesssim  N_{123}^{2 (s-1)} N_{12}^{-2\beta} N_1^{-2} N_2^{-2 s_2} N_3^{-2}\\
&\times \sup_{n_3,n\in \bZ^3} \sum_{n_1,n_2\in \bZ^3}  \Big( \prod_{j=1}^3 1 \big\{ |n_j| \sim N_k \big \} \Big)  1\big\{ |n| \sim N_{123} \big\} 1 \big \{ n=n_{123} \big\} \\
&\times \sup_{n_1,n_2\in \bZ^3} \sum_{n_3,n\in \bZ^3}  \Big( \prod_{j=1}^3 1 \big\{ |n_j| \sim N_k \big \} \Big)  1\big\{ |n| \sim N_{123} \big\} 1 \big \{ n=n_{123} \big\} \\
&\lesssim  N_{123}^{2 (s-1)} N_{12}^{-2\beta} N_1^{-2} N_2^{-2 s_2} N_3^{-2} \min(N_1,N_2)^3 \min(N_3,N_{123})^3 \\
&\lesssim  N_{123}^{2 (s-1)} N_{12}^{-2\beta} N_1^{-2} N_2^{-2 s_2} N_3^{-2} N_1^{2-2\eta} N_2^{1+2\eta} N_3^{2-2\eta} N_{123}^{1+2\eta}  \\
&\lesssim N_{12}^{-2\beta} \max(N_1,N_2,N_3)^{-2\eta}. 
\end{align*}

\emph{Step 4: $\| h\|_{ n_2 n_3\rightarrow n_1 n}$} Arguing exactly as in Step 3, we obtain that 
\begin{align*}
\| h\|_{ n_2 n_3\rightarrow n_1 n}^2 &\lesssim   N_{123}^{2 (s-1)} N_{12}^{-2\beta} N_1^{-2} N_2^{-2 s_2} N_3^{-2} \min(N_2,N_3)^3 \min(N_1,N_{123})^3 \\
&\lesssim N_{12}^{-2\beta} \max(N_1,N_2,N_3)^{-2\eta}. 
\end{align*}
\end{proof}

\subsection{Gaussian processes}\label{section:gaussian_processes}
We briefly review the notation from the stochastic control perspective of the first paper in this series \cite{BB20a}, which was used in the proof of Theorem \ref{theorem:measures}. In comparison with the first part of this series, however, we change the notation for the stochastic time variable.  We use $\cs$, which is a calligraphic ``s", to denote the time-variable in the stochastic control perspective. While the chosen font in $\cs$ may be slightly unusual, we hope that this prevents any confusion with the time-variable $t$ in the nonlinear wave equation.

We let \( (B_\cs^n)_{n\in \bZ^3\backslash\{0\}} \)  be a sequence of standard complex Brownian motions such that \( B_\cs^{-n} = \overline{B_\cs^n} \) and \( B_\cs^n, B_\cs^m \) are independent for \( n \neq \pm m \). We let \( B_\cs^0 \) be a standard real-valued Brownian motion independent of  \( (B_\cs^n)_{n\in \bZ^3\backslash\{0\}} \). Furthermore, we let \( B_\cs(\cdot) \) be the Gaussian process with Fourier coefficients \( (B_\cs^n)_{n \in \bZ^3} \), i.e., 
\begin{equation*}
B_\cs(x) \defe \sum_{n\in \bZ^3} e^{i\langle n,x \rangle} B_\cs^n. 
\end{equation*}
For every \( \cs\geq 0 \), the Gaussian process formally satisfies \( \bE[ B_\cs (x)B_\cs(y) ] = \cs \cdot \delta(x-y)  \) and hence \( B_\cs(\cdot)\) is a scalar multiple of spatial white noise.   We also let \( (\mathcal{F}_\cs)_{\cs\geq 0} \) be the filtration corresponding to the family of Gaussian processes \( (B_\cs^n)_{\cs\geq 0} \).

The Gaussian free field \( \cg \), however, has covariance \( (1-\Delta)^{-1} \). To this end, we now introduce the Gaussian process \( W_\cs(x) \). We let \( \sigma_\cs(\xi) = \big( \frac{\mathrm{d}}{\mathrm{d}\cs} \rho_\cs^2(\xi)\big)^{1/2} \), where $\rho_\cs$ is the frequency-truncation from Section \ref{section:notation}. For any $n \in \bZ^3$, we then define
\begin{equation}\label{tools:eq_wtn}
W_\cs^{n} \defe \int_0^\cs \frac{\sigma_{\cs^\prime}(n)}{\langle n\rangle} \,  \mathrm{d} B_{\cs^\prime}^n ~. 
\end{equation}
We note that \( W_\cs^n \) is a complex Gaussian random variable with variance \( \rho_\cs^2(n)/\langle n \rangle^2\). We finally set 
\begin{equation}
W_\cs(x) \defe \sum_{n\in \bZ^3} e^{i \langle n , x \rangle} W_\cs^n. 
\end{equation} 
Since the Gaussian random data $\,\bluedot \in \cH_x^{-1/2-\kappa}(\bT^3)$ in Theorem \ref{theorem:measures} is a tuple of the initial data and initial velocity, we now let $(B^{\cos},W^{\cos})$ and $(B^{\sin},W^{\sin})$ be two independent copies of $(B,W)$. Using this notation, we then take
\begin{equation}\label{tools:eq_bluedot}
\bluedot = \begin{pmatrix} W_\infty^{\cos}(x) , \langle \nabla \rangle W_\infty^{\sin}(x) \end{pmatrix}. 
\end{equation}
Using \eqref{tools:eq_bluedot}, we can represent the linear evolution as 
\begin{equation*}
\<1b>(t) = \cos(t\langle \nabla \rangle) W_\infty^{\cos}+ \sin(t\langle \nabla \rangle) W_\infty^{\sin},
\end{equation*}
which also motivates our notation. 

\subsection{Multiple stochastic integrals}\label{section:multiple_integrals}

In this section, we recall several definitions and results related to multiple stochastic integrals. A similar but shorter section already appeared in the appendix of the first paper of this series \cite{BB20a}. More detailed introductions can be found in the excellent textbook \cite{Nualart06} and lecture notes \cite{M14}. The usefulness of this section is best illustrated by Proposition   \ref{tools:prop_stochastic_representation} below.

We define a Borel measure $\lambda$ on $\bR_{\geq 0} \times \bZ^3$ by 
\begin{equation*}
\mathrm{d}\lambda(\cs,n) = \frac{\sigma_\cs^2(n)}{\langle n \rangle^2} \ds \mathrm{d}n,
\end{equation*}
where $\ds$ is the Lebesgue measure and $\dn$ is the counting measure on $\bZ^3$. We define the corresponding inner product by 
\begin{equation}
\langle f , g \rangle = \sum_{n\in \bZ^3} \int_0^\infty f(\cs,n) \overline{g(\cs,n)} \frac{\sigma_\cs^2(n)}{\langle n \rangle^2} \ds. 
\end{equation}
For any $f\in L^2(\bR_{\geq 0} \times \bZ^3, \mathrm{d}\lambda)$, we define
\begin{equation*}
W[f]= \sum_{n\in \bZ^3} \int_0^\infty f(\cs,n) \dWs{n}.
\end{equation*}

The inner integral can be understood as an It\^{o}-integral. Then, we can identify $W$ with the family of complex-valued Gaussian random variables 
\begin{equation*}
W= \{ W[f] \colon f \in L^2(\bR_{\geq 0} \times \bZ^3, \mathrm{d}\lambda)\}. 
\end{equation*}
For any $f \in L^2(\bR_{\geq 0} \times \bZ^3, \mathrm{d}\lambda)$, we define the reflection operator $\mathcal{R}$ by
\begin{equation*}
\mathcal{R}f(\cs,n) \defe \overline{f(\cs,-n)}. 
\end{equation*}
Clearly, $\mathcal{R}$ is a real-linear isometry. Using It\^{o}'s isometry, a short calculation yields that 
\begin{equation*}
\bE \big[ W[f] \overline{W[g]}\big] = \langle f , g \rangle \quad \text{and} \quad \bE \big[ W[f] {W[g]}\big] = \langle f, \mathcal{R} g \rangle. 
\end{equation*}
Since this will be important below, we note that the second identity reads
\begin{equation}
 \bE \big[ W[f] {W[g]}\big] =  \sum_{n\in \bZ^3} \int_0^\infty f(\cs,n) g(\cs,-n) \frac{\sigma_\cs^2(n)}{\langle n \rangle^2} \ds. 
\end{equation}
To emphasize the integral character of $W[f]$, we now write 
\begin{equation*}
\cI_1[f] \defe W[f]. 
\end{equation*}
In this notation, it becomes evident that we have been working with single-variable stochastic calculus. In order to express the resonances in our stochastic objects, it is more natural to work with multi-variable stochastic calculus. For $k\geq 1$, we define the measure $\lambda_k$ on $(\bR_{\geq 0} \times \bZ^3)^k$ by 
\begin{equation*}
\lambda_k \defe \lambda \otimes \hdots \otimes \lambda. 
\end{equation*}
To simplify the notation, we set $\cHk \defe L^2((\bR\times \bZ^3)^k, \dlambda_k)$. For any $f\in \cHk$, the multiple stochastic integral $\cI_k[f]$ can then be constructed as in \cite[Section 1.1.2]{Nualart06}. We only recall the basic ingredients and refer to \cite{Nualart06} for more details. 

We denote by $\mathcal{E}_k$ the set of elementary functions of the form
\begin{equation*}
f(\cs_1,n_1,\hdots,\cs_k,n_k) = \sum_{\substack{l_1,\hdots, l_k \in \\  \{ \pm 1 , \hdots, \pm L \} } } a_{l_1,\hdots,l_k} 1_{A_{l_1}\times \hdots \times A_{l_k}}(\cs_1,n_1,\hdots,\cs_k,n_k). 
\end{equation*}
Here, $\{A_1, A_{-1}, \hdots, A_L, A_{-L}\}$ is a regular system (cf. \cite[Chapter 4]{M14}), i.e., satisfies 
\begin{equation*}
A_{-l} = \big\{ (\cs, -n) \colon (\cs, n) \in A_l \big\} 
\end{equation*}
for all $1\leq l \leq L$ and $A_{l_1} \medcap A_{l_2} = \emptyset$ for all $l_1 \neq l_2$. Furthermore, we impose that $a_{l_1,\hdots, l_k}=0$ vanishes if $l_{k_1} = \pm l_{k_2}$ for some $k_1 \neq k_2$. 
For an elementary function, we define the multiple stochastic integral by 
\begin{equation}
\cI_k[f] \defe  \sum_{\substack{l_1,\hdots, l_k \in \\  \{ \pm 1 , \hdots, \pm L \} } }  a_{l_1,\hdots,l_k} \prod_{j=1}^k W[A_{l_j}]. 
\end{equation}
Furthermore, we define the symmetrization of $f$ by
\begin{equation}
\widetilde{f}(\cs_1,n_1,\hdots,\cs_k,n_k) = \frac{1}{k!} \sum_{\pi \in S_k} f(\cs_{\pi(1)},n_{\pi(1)},\hdots,\cs_{\pi(k)},n_{\pi(k)}).
\end{equation}

\begin{lemma}[Basic properties]\label{tools:lem_basic_integral}
For any $k,l\geq 1$, $f \in \mathcal{E}_k$, and $g\in \mathcal{E}_l$, it holds that:
\begin{enumerate}[(i)]
\item $\cI_k$ is linear.
\item The integral is invariant under symmetrization, i.e.,  $\cI_k[f]= \cI_k[\widetilde{f}]$.
\item \label{tools:item_integral_3} We have the It\^{o}-isometry formula
\begin{equation*}
\bE\big[ \cI_k[f] \cdot \overline{\cI_l[g]}\big] =\delta_{kl} k! \int \widetilde{f}\,  \overline{\widetilde{g}}~\dlambda_k.
\end{equation*}
\item \label{tools:item_integral_4} We have the formula for the expectation
\begin{equation*}
\begin{aligned}
&\bE\big[ \cI_k[f] \cdot \cI_l[g]\big] \\
&=\delta_{kl} k! \sum_{n_1,\hdots,n_k} \int_0^\infty \hdots \int_0^\infty  
\widetilde{f}(\cs_1,n_1,\hdots,\cs_k,n_k) \cdot   \widetilde{g}(\cs_1,-n_1,\hdots,\cs_k,-n_k) 
\Big( \prod_{j=1}^k  \frac{\sigma_{\cs_j}^2(n_j)}{\langle n_j \rangle^2} \Big) \ds_k \hdots \ds_1.
\end{aligned}
\end{equation*}
\end{enumerate}
\end{lemma}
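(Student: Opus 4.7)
\emph{Plan.} The proof rests on two structural features of the elementary functions $\mathcal{E}_k$: the sets $(A_l)_{l \in \{\pm 1, \dots, \pm L\}}$ form a regular system so that $W[A_{-l}] = \overline{W[A_l]}$, and the coefficient $a_{l_1,\dots,l_k}$ vanishes whenever any two indices satisfy $l_{k_1} = \pm l_{k_2}$. The first feature controls covariances, while the second eliminates all ``diagonal'' contributions and keeps us in the fully off-diagonal regime where Wick-type identities are easiest to apply. My plan is to first prove (i) and (ii) directly from the definition, then compute the mixed moments of the building blocks $W[A_l]$ and assemble them via the combinatorics of pairings to obtain (iii) and (iv).

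For (i), linearity is immediate from the definition, since for a finite linear combination of elementary functions we may choose a common refinement of the underlying regular systems and read off linearity of the coefficient map $a \mapsto \cI_k$. For (ii), fix an elementary $f$ with coefficients $a_{l_1,\dots,l_k}$ and a permutation $\pi \in S_k$. Because the complex Gaussians $W[A_{l_1}], \dots, W[A_{l_k}]$ commute under multiplication, we can relabel the product and obtain $\cI_k[f \circ \pi] = \cI_k[f]$, where $f \circ \pi$ denotes the function with indices permuted. Averaging over $\pi \in S_k$ then yields $\cI_k[f] = \cI_k[\widetilde{f}]$.

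For (iii), the key computation is the covariance table for the building blocks: It\^{o}'s isometry and the reality condition $B_{\mathscr{s}}^{-n} = \overline{B_{\mathscr{s}}^n}$ give
\begin{equation*}
\bE[W[A_l]\,\overline{W[A_{l'}]}] = \delta_{l,l'}\,\lambda(A_l), \qquad \bE[W[A_l]\,W[A_{l'}]] = \delta_{l,-l'}\,\lambda(A_l),
\end{equation*}
and $W[A_l]$ is a centered complex Gaussian (real Gaussian when $A_l = A_{-l}$). For $k \neq l$ the expectation $\bE[\cI_k[f]\overline{\cI_l[g]}]$ vanishes by a standard parity argument applied to the jointly Gaussian vector $(W[A_1], \dots, W[A_L])$, since the product is a sum of monomials of odd total degree in the underlying real Gaussians. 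For $k = l$, Wick's formula expresses each monomial expectation as a sum over perfect matchings of the index set; the off-diagonal restriction on the coefficients $a$ forces every surviving matching to pair one unprimed index with one primed index, and one checks that summing over all such matchings produces exactly $k!$ copies of $\int \widetilde{f}\,\overline{\widetilde{g}}\,\mathrm{d}\lambda_k$ after using (ii). Statement (iv) follows from (iii) by writing $\cI_l[g] = \overline{\cI_l[\mathcal{R}g]}$ (since $\mathcal{R}$ is a real-linear isometry and the stochastic integral respects conjugation through the identity $W[\mathcal{R}h] = \overline{W[h]}$), which swaps $n_j \leftrightarrow -n_j$ in the isometry formula.

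The main obstacle is purely bookkeeping: making sure that in the Wick expansion for (iii) the diagonal-vanishing condition on the coefficients $a_{l_1,\dots,l_k}$ really kills every matching internal to the $f$-block or internal to the $g$-block, so that only cross-matchings survive and produce the factor $k!$. Once this combinatorial step is in place, (iv) is essentially a substitution and the rest of the lemma is formal.
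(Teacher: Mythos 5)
The paper itself does not prove this lemma; it only cites \cite[p.~9]{Nualart06} and \cite[Chapter 4]{M14}. Your reconstruction follows the same standard route (covariance table for the building blocks, Wick's formula plus the off-diagonal constraint, and the reflection identity for part~(iv)), so the overall approach is the right one, and parts~(i), (ii), and (iv) are correct as written. There is, however, one genuine gap in your treatment of orthogonality in part~(iii).

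You claim that for $k \neq l$ the expectation $\bE[\cI_k[f]\,\overline{\cI_l[g]}]$ vanishes ``by a standard parity argument\ldots since the product is a sum of monomials of odd total degree.'' The product $\cI_k[f]\,\overline{\cI_l[g]}$ is homogeneous of degree $k+l$ in the underlying Gaussians, so this claim is only valid when $k+l$ is odd; for instance, $k=1$, $l=3$ gives even total degree and the $\bZ/2$ symmetry $B^n \mapsto -B^n$ says nothing. In the even case the vanishing is not a free lunch from symmetry -- it genuinely uses the off-diagonal restriction on the coefficients. The good news is that the Wick-formula argument you invoke for $k=l$ already settles all of $k\neq l$: after Wick expansion, every nonzero contribution comes from a perfect matching of the $k+l$ indices; any pairing of two $f$-indices (resp.~two $g$-indices) requires $l_{k_1}=-l_{k_2}$ by the covariance table, which is excluded since $a$ (resp.~$b$) vanishes whenever $l_{k_1}=\pm l_{k_2}$; hence every pair must be a cross-pair, and a perfect matching consisting only of cross-pairs forces $k=l$. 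You should simply state this once, for all $k,l$, and drop the parity shortcut.

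Two smaller remarks. First, in a regular system the case $A_l = A_{-l}$ cannot occur for nonempty $A_l$: since $l \neq -l$, disjointness of the regular system forces $A_l \cap A_{-l} = \emptyset$, so $A_l = A_{-l}$ would give $A_l = \emptyset$; consequently every nonzero $W[A_l]$ is a circularly symmetric complex Gaussian and your parenthetical about real Gaussians is vacuous. Second, for part~(iv) it is worth writing out explicitly that $\cI_l[\mathcal{R}g]=\overline{\cI_l[g]}$ at the level of elementary functions (it follows from $1_{A_{m_1}\times\cdots\times A_{m_l}}\circ\mathcal{R}=1_{A_{-m_1}\times\cdots\times A_{-m_l}}$ and $\overline{W[A_m]}=W[A_{-m}]$), and that $\widetilde{\mathcal{R}g}=\mathcal{R}\widetilde{g}$, so the substitution into~(iii) really does produce the stated sum with $n_j\mapsto -n_j$ in the second argument.
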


\begin{proof}
Up to minor modifications, the proof can be found in \cite[p.9]{Nualart06} or \cite[Chapter 4]{M14}.
\end{proof}

Using a density argument (see e.g. \cite[p.10]{Nualart06} or \cite[Lemma 4.1]{M14}), we can extend $\cI_k$ from elementary functions to $\cHk$. In particular, for any fixed $m_1,\hdots,m_k\in \bZ^3$, we have that 
\begin{equation*}
\prod_{j=1}^k \delta_{n_j=m_j} \in \cHk
\end{equation*}
and we can write 
\begin{equation}\label{tools:eq_individual_multiple_integral}
\int_{[0,\infty)^k}  \dW{m}{k} \hdots  \dW{m}{1} \defe \cI_k\Big[ \prod_{j=1}^k \delta_{n_j=m_j}  \Big]. 
\end{equation}
We vehemently emphasize that the stochastic integral \eqref{tools:eq_individual_multiple_integral} does not coincide with the product $\prod_{j=1}^{k} W_\infty^{m_j}$. Instead, as will be clear from the product formula (Lemma \ref{tools:lem_product}) below, the stochastic integral \eqref{tools:eq_individual_multiple_integral} only contains the non-resonant portion of this product. \\

If $f=f(n_1,\hdots,n_k)$ does not depend on the stochastic-time variables $\cs_1,\hdots,\cs_k$, the linearity of the multiple stochastic integral $\cI_k$ and \eqref{tools:eq_individual_multiple_integral} naturally imply that 
\begin{equation}
\cI_k[f] = \sum_{n_1,\hdots,n_k\in \bZ^3} f(n_1,\hdots,n_k)\int_{[0,\infty)^k}  \dW{n}{k} \hdots  \dW{n}{1} .
\end{equation}
Using \eqref{tools:item_integral_3} in Lemma \ref{tools:lem_basic_integral}, it follows that 
\begin{align*}
&\bE\Big[ \Big( \int_{[0,\infty)^k}  \dW{n}{k} \hdots  \dW{n}{1} \Big) \cdot \overline{ \Big( \int_{[0,\infty)^k}  \dW{m}{k} \hdots  \dW{m}{1} \Big)} \Big] \\
&= \frac{1}{k!} \Big( \sum_{\pi,\pi^\prime \in S_k} 1\big\{ n_{\pi(j)} = m_{\pi^\prime(j)} \, \text{for all } 1\leq j \leq k\big\} \Big)   \int_{[0,\infty)^k}  \prod_{j=1}^m \frac{\sigma_{\cs_j}^2(n_j)}{\langle n_j \rangle^2} \ds_k \hdots \ds_1 \\
&=  \Big( \sum_{\pi \in S_k} 1\big\{ n_{\pi(j)} = m_{j} \, \text{for all } 1\leq j \leq k\big\} \Big)  \prod_{j=1}^m \langle n_j \rangle^{-2}.
\end{align*}
Up to permutations, the family of multiple stochastic integrals \eqref{tools:eq_individual_multiple_integral} is therefore orthogonal. 
Naturally, a similar formula holds without the complex conjugate. More generally, if $f$ depends on the stochastic time-variables $\cs_1,\hdots,\cs_k$, we have that 
\begin{equation}
\cI_k[f] = \sum_{n_1,\hdots,n_k\in \bZ^3} \int_{[0,\infty)^k} f(\cs_1,n_1,\hdots,\cs_k,n_k) \dW{n}{k} \hdots  \dW{n}{1} .
\end{equation}
Here, the summands on the right-hand side are understood as multiple stochastic integrals with fixed $n_1,\hdots,n_k$ (by inserting an indicator as in \eqref{tools:eq_individual_multiple_integral}). As is shown in the next lemma, this notation is consistent with iterated It\^{o}-integrals. 

\begin{lemma}\label{tools:lem_iterated_integrals}
Let \( k\geq 1 \) and let \( f \in \cHk \) be symmetric. Then, it holds that 
\begin{equation}
\cI_k[f] = k! \sum_{n_1,\hdots,n_k \in \bZ^3} \int_0^\infty \int_0^{\cs_1} \hdots \int_0^{\cs_{k-1}}f(\cs_1,n_1,\hdots,\cs_k,n_k) \dW{n}{k} \hdots \dW{n}{1},
\end{equation}
where the right-hand side is understood as an iterated Itô integral. 
\end{lemma}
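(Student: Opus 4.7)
The plan is to follow the standard two-step strategy from the theory of multiple Wiener--It\^o integrals (as in \cite[Section 1.1.2]{Nualart06} or \cite[Chapter 4]{M14}), adapted to our slightly non-standard setting in which the driving Gaussians are complex and satisfy the reality constraint $\overline{\dW{n}{}} = \dW{-n}{}$. First I would observe that both sides of the claimed identity define continuous linear maps from the symmetric subspace of $\cHk$ into $L^2(\Omega,\bP)$: the map $f \mapsto \cI_k[f]$ is continuous by the It\^o isometry in \eqref{tools:item_integral_3} of Lemma \ref{tools:lem_basic_integral}, while the iterated It\^o integral on the right-hand side is continuous in $f$ by the It\^o isometry applied iteratively in the time variable (using that the $W_\cs^n$ are $\mathcal{F}_\cs$-martingales with quadratic variation governed by $\mathrm{d}\lambda$). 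Therefore it suffices to verify the identity on a dense subset of symmetric functions.

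The natural dense subset is the space of symmetrized elementary functions. Given a regular system $\{A_1,A_{-1},\ldots,A_L,A_{-L}\}$, by a refinement argument I can further assume that each set $A_l$ has the product form $A_l = I_l \times B_l$, where $I_l \subseteq [0,\infty)$ is a bounded interval and $B_l \subseteq \bZ^3$ is finite, and that the intervals $(I_l)$ are pairwise disjoint and can be linearly ordered (say $I_1 < I_2 < \cdots$ in terms of their left endpoints). For such an $f$, the defining formula
\begin{equation*}
\cI_k[f] = \sum_{l_1,\ldots,l_k} a_{l_1,\ldots,l_k} \prod_{j=1}^{k} W[A_{l_j}]
\end{equation*}
can be expanded and regrouped using the symmetry $a_{l_{\pi(1)},\ldots,l_{\pi(k)}} = a_{l_1,\ldots,l_k}$ for every $\pi \in S_k$. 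Since the coefficients vanish whenever two indices agree up to sign, only tuples $(l_1,\ldots,l_k)$ of pairwise $\neq \pm$ distinct indices contribute, and each unordered such tuple is counted $k!$ times.

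Next I would rewrite the sum over unordered tuples as $k!$ times the sum over tuples ordered so that $I_{l_1} > I_{l_2} > \cdots > I_{l_k}$ (matching the outer-to-inner nesting in the iterated integral). For such an ordered tuple, the factors $W[A_{l_j}] = \sum_{n_j \in B_{l_j}} \int_{I_{l_j}} \dW{n}{j}$ are based on disjoint time intervals, so their product is exactly the iterated It\^o integral
\begin{equation*}
\sum_{n_1,\ldots,n_k} \int_0^\infty \int_0^{\cs_1} \cdots \int_0^{\cs_{k-1}} f(\cs_1,n_1,\ldots,\cs_k,n_k)\,\dW{n}{k}\cdots\dW{n}{1}
\end{equation*}
restricted to that ordering; summing the $k!$ orderings and combining with the prefactor $k!$ from the symmetrization reproduces the full iterated integral. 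The main technical obstacle, and the place where one has to proceed carefully, is the product-over-disjoint-intervals step: because the $W_\cs^n$ are complex and the reality constraint couples the modes $n$ and $-n$, one must verify that the ordinary product of the increments indeed coincides with the nested It\^o integral. This is handled by returning to the underlying real-valued Brownian motions $B_\cs^n$ introduced in Section \ref{section:gaussian_processes} and applying the standard fact for real It\^o integrals that a product of martingale increments over disjoint time intervals equals the corresponding iterated stochastic integral; the reality constraint only enters through the measure $\mathrm{d}\lambda$ and is already built into the definition of $W[\cdot]$. Once the identity is established for such elementary $f$, the continuity argument from the first paragraph extends it to all symmetric $f \in \cHk$, completing the proof.
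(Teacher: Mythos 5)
The paper does not actually prove this lemma: it is dispatched with a single sentence citing the discussion around (1.27) in Nualart's book, i.e., the standard real-Wiener-chaos argument. Your proposal fleshes out that standard argument, and the overall strategy (density reduction, verification on symmetrized elementary functions, regrouping the product of increments by the time ordering of the underlying intervals) is the right one.

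However, there is a gap in the elementary-function step that you should fix. You claim that, after refinement, the sets $A_l$ can be taken of product form $A_l = I_l \times B_l$ with the intervals $(I_l)$ \emph{pairwise disjoint} and hence linearly ordered by left endpoint. This is incompatible with the definition of a regular system used in the paper: the constraint $A_{-l} = \{(\cs,-n) : (\cs,n) \in A_l\}$ forces $I_{-l} = I_l$, so $I_l$ and $I_{-l}$ always coincide, and more generally a refinement yields intervals that are either identical or disjoint, not pairwise disjoint. Consequently the time ordering of a tuple $(l_1,\ldots,l_k)$ has ties: one can have $I_{l_j} = I_{l_{j'}}$ for $j \neq j'$. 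The condition $a_{l_1,\ldots,l_k} = 0$ when two indices are $\pm$-equal guarantees that any such tie occurs between indices with disjoint frequency supports $B_{l_j} \cap (\pm B_{l_{j'}}) = \emptyset$, so the corresponding increments $W[A_{l_j}]$ and $W[A_{l_{j'}}]$ are independent—but they are \emph{not} based on disjoint time intervals, so the ``product of increments over disjoint intervals equals the iterated integral'' fact does not apply verbatim. The correct statement at a tie is an It\^o integration-by-parts identity: for independent martingale increments $X,Y$ over the same interval $[a,b]$ with no cross quadratic variation, $X_b Y_b = \int_a^b X_{\cs-} \, \mathrm{d}Y_\cs + \int_a^b Y_{\cs-} \, \mathrm{d}X_\cs$, and it is only after summing over the $k!$ permutations (which the symmetrization of $f$ supplies) that the product is recovered from the simplex integrals. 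So the subtlety you correctly sensed in the reality constraint actually bites earlier than you placed it: not in passing from complex to real increments, but in the failure of the disjoint-interval refinement itself. Once you replace ``pairwise disjoint'' with ``pairwise disjoint or identical, with $I_{-l}=I_l$'' and handle ties via the integration-by-parts identity, the proof closes.
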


This follows from the discussion of \cite[(1.27)]{Nualart06}. As a consequence of this lemma, we could also work with iterated It\^{o}-integrals instead of multiple stochastic integrals. While the iterated It\^{o}-integrals are more natural whenever martingale properties are utilized, the multiple stochastic integrals have a much simpler product formula, which simplifies many of our computations.

Before we can state the product formula, we need to define the contraction.

\begin{definition}[Contraction]
Let $k,l\geq 1$, let $f\in \cHk$, and let $g\in \cHl$. For any $0\leq r \leq \min(k,l)$, we define the contraction of $r$ indices by
\begin{align*}
&(f \otimes_r g )(\cs_1,n_1,\hdots, \cs_{k+l-2r}, n_{k+l-2r}) \\
\defe&\sum_{m_1,\hdots,m_r \in \bZ^3} \int_0^\infty \hdots \int_0^\infty \bigg[ f(\cs_1,n_1,\hdots,\cs_{k-r},n_{k-r},\mathscr{r}_1,m_1,\hdots,\mathscr{r}_r,m_r)  \\
&\times g(\cs_{k+1-r},n_{k+1-r},\hdots,\cs_{k+l-2r},n_{k+l-2r},\mathscr{r}_1,-m_1,\hdots,\mathscr{r}_r,-m_r) \prod_{j=1}^k \frac{\sigma_{\mathscr{r}_j}^2(m_j)}{\langle m_j \rangle^2} \bigg]\mathrm{d} \mathscr{r}_r \hdots \mathrm{d} \mathscr{r}_1. 
\end{align*}
\end{definition}
We note that even if $f\in \cHk$ and let $g\in \cHl$  are both symmetric, the contraction $f \otimes_r g$ may not be symmetric. 
The reader should note the similarity of the contraction with the formula for the expectation in \eqref{tools:item_integral_4} of Lemma \ref{tools:lem_basic_integral}, which is no coincidence. If $f,g \in \cH_1$, then 
\begin{equation}
\bE\Big[ \cI_1[f] \cdot \cI_1[g] \Big] = f \otimes_1 g. 
\end{equation}
Thus, $f \otimes_1 g$ describes the (full) resonance portion of the product $ \cI_1[f] \cdot \cI_1[g]$. The product formula is a (major) generalization of this simple fact.

\begin{lemma}[{Product formula for multiple stochastic integrals (cf. \cite[Prop 1.1.3]{Nualart06})}]\label{tools:lem_product}
Let $k,l\geq 1$ and let $f\in \cHk$ and $g\in \cHl$ be symmetric. Then, it holds that 
\begin{equation}
\cI_k[f] \cdot \cI_l[g] = \sum_{r=0}^{\min(k,l)} r! {k \choose r} {l \choose r} \cI_{k+l-2r}[f \otimes_r g]. 
\end{equation}
\end{lemma}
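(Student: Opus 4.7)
The plan is to reduce to elementary functions and then prove the identity by a generating-function argument (equivalently, by induction on $k+l$). First, I would observe that both sides of the claimed identity depend continuously on $(f,g)\in\cHk\times\cHl$ in the appropriate Hilbert-space norms: the left-hand side by the It\^{o} isometry of Lemma~\ref{tools:lem_basic_integral}\eqref{tools:item_integral_3} together with hypercontractivity bounds (or simply the isometry after polarization), and each summand on the right-hand side by the Cauchy--Schwarz estimate $\|f\otimes_r g\|_{\cH_{k+l-2r}}\leq \|f\|_{\cHk}\|g\|_{\cHl}$, which follows directly from the definition of the contraction. Hence it suffices to prove the formula when $f$ and $g$ are elementary functions; by passing to a common refinement we may further assume that both are expressed in terms of the same regular system $\{A_{\pm 1},\dots,A_{\pm L}\}$.

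Next, I would establish the base case $k=l=1$: for $f=1_{A_i}$ and $g=1_{A_j}$, the definition of $\cI_2$ on elementary functions together with a direct It\^{o}-calculus computation of $W[A_i]W[A_j]$ (or, for $i=-j$, a limit of quadratic variation sums for $|W[A_i]|^2$) yields
\[
\cI_1[1_{A_i}]\,\cI_1[1_{A_j}] \;=\; \cI_2[1_{A_i}\otimes 1_{A_j}] + \langle 1_{A_i},\mathcal{R}1_{A_j}\rangle,
\]
which is precisely the claim with $k=l=1$, since the inner product equals the zeroth contraction $1_{A_i}\otimes_1 1_{A_j}$.

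For the inductive step, I would pursue the generating-function approach. Introduce the (formal) exponential $E(h)\defe\sum_{k\geq 0}\tfrac{1}{k!}\cI_k[h^{\otimes k}]$ for $h\in\cH_1$ and, by combining the base case with induction on $k$, verify the Wick-exponential identity $E(h)=\exp\bigl(\cI_1[h]-\tfrac{1}{2}\langle h,\mathcal{R}h\rangle\bigr)$. The ordinary exponential law together with the identity $\mathbb{E}[\cI_1[h]\cI_1[g]]=\langle h,\mathcal{R}g\rangle$ then gives
\[
E(h)\,E(g) \;=\; E(h+g)\,\exp\bigl(\langle h,\mathcal{R}g\rangle\bigr).
\]
Matching the coefficients of $h^{\otimes k}g^{\otimes l}$ on both sides, and using that expanding $E(h+g)$ and $\exp(\langle h,\mathcal{R}g\rangle)$ and counting the ways to distribute $r$ contractions among the $k$ indices of $h$ and the $l$ indices of $g$ produces exactly $r!\binom{k}{r}\binom{l}{r}$, yields the product formula when $f=h^{\otimes k}$ and $g=g_0^{\otimes l}$. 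Polarization in each tensor slot extends this to simple symmetric tensors, and a final density argument gives the full statement.

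The main obstacle I anticipate is the careful bookkeeping of the reflection operator $\mathcal{R}$, which enters because our Brownian motions satisfy $B_\cs^{-n}=\overline{B_\cs^n}$ and hence $\mathbb{E}[W[f]W[g]]=\langle f,\mathcal{R}g\rangle$ rather than $\langle f,g\rangle$. This asymmetry must be tracked consistently through the Wick-exponential identity and through the contraction $f\otimes_r g$ (whose definition already incorporates the sign reversal $n_j\mapsto -n_j$ between the two arguments). Once this bookkeeping is in place, the combinatorial identification of coefficients is routine, and the density/continuity reductions are standard Hilbert-space arguments.
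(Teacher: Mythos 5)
The paper gives no proof of this lemma; it simply cites \cite[Prop.~1.1.3]{Nualart06}, whose argument proceeds by induction on the chaos degree via the recurrence $\cI_1[h]\cdot\cI_k[f]=\cI_{k+1}[\widetilde{h\otimes f}]+k\,\cI_{k-1}[h\otimes_1 f]$. Your generating-function route through Wick exponentials is a genuine alternative, and the reduction to elementary functions (via the $L^4$ bound from hypercontractivity, which indeed makes $(f,g)\mapsto\cI_k[f]\cI_l[g]$ continuous into $L^2_\omega$), the coefficient matching $\tfrac{1}{r!(k-r)!(l-r)!}=\tfrac{r!\binom{k}{r}\binom{l}{r}}{k!\,l!}$, and the observation $(h^{\otimes k})\otimes_r (g_0^{\otimes l})=\langle h,\mathcal{R}g_0\rangle^r\, h^{\otimes(k-r)}\otimes g_0^{\otimes(l-r)}$ are all correct, as is the polarization step at the end. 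The bookkeeping of $\mathcal{R}$ you flag is indeed the only place the real-valued setting differs from the textbook complex/real treatments, and your sketch handles it consistently.

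The one genuine gap is the step where you assert that the Wick-exponential identity $E(h)=\exp\bigl(\cI_1[h]-\tfrac12\langle h,\mathcal{R}h\rangle\bigr)$ follows ``by combining the base case with induction on $k$.'' That identity is equivalent to the $l=1$ recurrence above, which is \emph{not} a formal consequence of the $k=l=1$ base case: it requires a separate stochastic-calculus computation showing that $\cI_1[h]\cdot\cI_k[h^{\otimes k}]$ decomposes into the non-diagonal part $\cI_{k+1}$ and exactly $k$ contraction terms of size $\langle h,\mathcal{R}h\rangle$, with the diagonal contributions to the multiple integral vanishing (this is the content of \cite[Prop.~1.1.2]{Nualart06} and is where the elementary-function structure and the convention $a_{l_1\dots l_k}=0$ on $l_{k_1}=\pm l_{k_2}$ are actually used). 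Once you prove that recurrence, your exponential-law argument closes cleanly; but as written, the step you call routine is precisely the technical heart of the lemma, so you have displaced rather than discharged the main burden of proof.
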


Using the product formula (Lemma \ref{tools:lem_product}), we can compute the non-resonant, partially resonant, and fully resonant portions of products such as 
\begin{equation*}
(P_{\leq N}\<1b>)(t,x) \cdot (P_{\leq N}\<1b>)(t,x) \quad \text{and} \quad \<2N>(t,x) \cdot \<3N>(t,x). 
\end{equation*}
Once the Duhamel operator occurs in the expression, however, we also need to consider two different physical times $t$ and $t^\prime$. For instance, in our estimate of the quintic stochastic object 
\begin{equation*}
\<113N>,
\end{equation*} 
we need to control
\begin{equation*}
 \Big( V\ast \<2N>(t,x) \Big)  \cdot \Big( P_{\leq N} \sin((t-t^\prime)\langle \nabla \rangle) \langle \nabla \rangle^{-1} \Big)\Big( \<3N>(t^\prime,x) \Big)
\end{equation*}
In order to consider two different physical times $t$ and $t^\prime$, we need to consider multiple stochastic integrals with respect to two different (correlated) Gaussian processes, which we abstractly denote by $W^{a}$ and $W^{b}$. We will assume that $\Law_\bP(W^{a})=\Law_\bP(W^{b})=\Law_\bP(W)$. Regarding the relationship between the different Gaussian processes $W^{a}$ and $W^{b}$, we assume that $W^{a,n}$ and $W^{b,m}$ are independent for $m \neq \pm n$. Furthermore, let $\mathfrak{C}\colon \bZ^3 \rightarrow [-1,1]$ be an even function. We assume that 

\begin{equation}
\bE\Big[ \Was{n}{1} \Wbs{m}{2}\Big] = \delta_{n=-m} \, \mathfrak{C}(n) \int_0^{\cs_1\wedge \cs_2} \frac{\sigma_\cs^2(n)}{\langle n \rangle^2} \ds  
\end{equation}
and 
\begin{equation}
\bE\Big[ \Was{n}{1} \overline{\Wbs{m}{2}} \Big] = \delta_{n=m} \, \mathfrak{C}(n) \int_0^{\cs_1\wedge \cs_2} \frac{\sigma_\cs^2(n)}{\langle n \rangle^2} \ds  
\end{equation}
Thus, $\mathfrak{C}$ is the (appropriately normalized) correlation of  $W^{a}$ and $W^{b}$. We can then set up the theory of multiple stochastic integrals with respect to a mixture of  $W^{a}$ and $W^{b}$ as before. In order to fit this theory into the same framework as in \cite{Nualart06}, one only has to replace $\bR\times \bZ^3$ by $\bR\times \bZ^3\times \{a,b\}$. A short calculation shows for any bounded and compactly supported $f,g\colon \bR\times \bZ^3\times \{a,b\} \rightarrow \bC$ that 
\begin{equation}
\begin{aligned}
&\bE \Big[ \Big( \sum_{\iota=a,b} \sum_{n\in \bZ^3} \int_0^\infty f(\cs,n,\iota) \mathrm{d}W^{(\iota),n}_{\cs} \Big) \Big( \sum_{\iota=a,b} \sum_{n\in \bZ^3} \int_0^\infty g(\cs,n,\iota) \mathrm{d}W^{(\iota),n}_{\cs} \Big) \Big]\\
&=\sum_{\iota,\iota^\prime=a,b} \sum_{n \in \bZ^3} \Big( 1\big\{ \iota=\iota^\prime\big\} +  \mathfrak{C}(n) 1\big\{ \iota \neq \iota^\prime\big\} \Big)\int_0^\infty f(\cs,n,\iota)\cdot g(\cs,-n,\iota^\prime) \frac{\sigma_\cs^2(n)}{\langle n \rangle^2} \mathrm{d}\cs.
\end{aligned}
\end{equation}
and 
\begin{equation}\label{tools:eq_expectation_mixed_processes}
\begin{aligned}
&\bE \Big[ \Big( \sum_{\iota=a,b} \sum_{n\in \bZ^3} \int_0^\infty f(\cs,n,\iota) \mathrm{d}W^{(\iota),n}_{\cs} \Big) 
\Big( \overline{\sum_{\iota=a,b} \sum_{n\in \bZ^3} \int_0^\infty g(\cs,n,\iota) \mathrm{d}W^{(\iota),n}_{\cs}} \Big) \Big]\\
&=\sum_{\iota,\iota^\prime=a,b} \sum_{n \in \bZ^3} \Big( 1\big\{ \iota=\iota^\prime\big\} +  \mathfrak{C}(n) 1\big\{ \iota \neq \iota^\prime\big\} \Big)
\int_0^\infty f(\cs,n,\iota)\cdot \overline{g(\cs,n,\iota^\prime)} \frac{\sigma_\cs^2(n)}{\langle n \rangle^2} \mathrm{d}\cs.
\end{aligned}
\end{equation}
The sesquilinear form in \eqref{tools:eq_expectation_mixed_processes}, viewed as a function in $f$ and $g$, is no longer positive definite. For instance, if $W^{(a)}=-W^{(b)}$, and hence $\mathfrak{C}=-1$, $f=g$, and $f(\cs,n,a)=f(\cs,n,b)$ for all $\cs \in \bR_{\geq 0}$ and $n \in \bZ^3$, it vanishes identically. Nevertheless, due to the condition $|\mathfrak{C}|\leq 1$ imposed on the correlation function $\mathfrak{C}$, it is bounded by (a scalar multiple of) the inner product 
\begin{equation*}
\sum_{\iota=a,b} \sum_{n \in \bZ^3} 
\int_0^\infty f(\cs,n,\iota)\cdot \overline{g(\cs,n,\iota)} \frac{\sigma_\cs^2(n)}{\langle n \rangle^2} \mathrm{d}\cs.
\end{equation*}
After defining a measure $\widetilde{\lambda}$ on $\bR\times \bZ^3\times \{a,b\}$ by $\mathrm{d}\widetilde{\lambda}= \mathrm{d} \lambda  \mathrm{d}\iota $, where $\mathrm{d}\iota$ is the integration with respect to the counting measure on $\{a,b\}$, this allows us to construct multiple stochastic integrals for functions in
\begin{equation*}
L^2((\bR\times \bZ^3\times \{a,b\})^k, \widetilde{\lambda}_k). 
\end{equation*}
Similar as in \eqref{tools:eq_individual_multiple_integral}, this allows us to define mixed multiple stochastic integrals such as 
\begin{equation}
\int_{[0,\infty)^3} \mathrm{d}\Was{n_3}{3} \mathrm{d}\Was{n_2}{2} \mathrm{d}\Wbs{n_1}{1}. 
\end{equation}
Unfortunately, the general theory now becomes notationally cumbersome. We therefore decided to only state the much simpler special case of the product formula needed in this paper. 
\begin{lemma}[Quadratic-Cubic product formula]\label{tools:lemma_23product}
Let $f\colon (\bZ^3)^2 \rightarrow \bC$ and let $g\colon (\bZ^3)^3 \rightarrow \bC$. We assume that $g$ is symmetric but do not require any symmetry of $f$. Then, it holds that 
\begin{align*}
&\Big( \sum_{n_1,n_2\in \bZ^3}  \hspace{-2ex}f(n_1,n_2) \int_{[0,\infty)^2} \hspace{-2ex} \mathrm{d}\Was{n_2}{2} \mathrm{d}\Was{n_1}{1} \Big)
\times \Big( \sum_{n_3,n_4,n_5\in \bZ^3} \hspace{-2ex} g(n_3,n_4,n_5) \int_{[0,\infty)^3}  \hspace{-2ex} \mathrm{d}\Wbs{n_5}{5} \mathrm{d}\Wbs{n_4}{4}\mathrm{d}\Wbs{n_3}{3} \Big) \\
&=\sum_{n_1,n_2,n_3,n_4,n_5\in \bZ^3} f(n_1,n_2) g(n_3,n_4,n_5)
 \int_{[0,\infty)^5} \mathrm{d}\Wbs{n_5}{5} \mathrm{d}\Wbs{n_4}{4}\mathrm{d}\Wbs{n_3}{3}  \mathrm{d}\Was{n_2}{2} \mathrm{d}\Was{n_1}{1}\\
&+ 3 \sum_{n_2,n_4,n_5\in \bZ^3} \Big( \sum_{n_1\in \bZ^3} f(n_1,n_2) g(-n_1,n_4,n_5) \frac{\mathfrak{C}(n_1)}{\langle n_1 \rangle^2} \Big) 
 \int_{[0,\infty)^3} \mathrm{d}\Wbs{n_5}{5} \mathrm{d}\Wbs{n_4}{4} \mathrm{d}\Was{n_2}{2} \\
&+ 3 \sum_{n_1,n_4,n_5\in \bZ^3} \Big( \sum_{n_2\in \bZ^3} f(n_1,n_2) g(-n_2,n_4,n_5) \frac{\mathfrak{C}(n_2)}{\langle n_2 \rangle^2} \Big) 
 \int_{[0,\infty)^3} \mathrm{d}\Wbs{n_5}{5} \mathrm{d}\Wbs{n_4}{4} \mathrm{d}\Was{n_1}{1} \\
&+6 \sum_{n_5 \in \bZ^3} \Big( \sum_{n_1,n_2\in \bZ^3} f(n_1,n_2) g(-n_1,-n_2,n_5)  \frac{\mathfrak{C}(n_1) \mathfrak{C}(n_2)}{\langle n_1 \rangle^2\langle n_2 \rangle^2} \Big)
\int_0^\infty  \mathrm{d}\Wbs{n_5}{5}. 
\end{align*}
\end{lemma}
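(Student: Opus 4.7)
The plan is to reduce this mixed product formula to the single-process product formula (Lemma \ref{tools:lem_product}) by working in the enlarged state space $\bR_{\geq 0}\times \bZ^3 \times \{a,b\}$ equipped with the product measure $\widetilde{\lambda}$ introduced in Section~\ref{section:multiple_integrals}. First I would embed both factors into this enlarged framework: the degree-$2$ factor becomes $\cI_2[F]$ with kernel $F(\cs_1,n_1,\iota_1,\cs_2,n_2,\iota_2) = f(n_1,n_2)\,1\{\iota_1=\iota_2=a\}$, and the degree-$3$ factor becomes $\cI_3[G]$ with the analogous $b$-indicator and integrand $g(n_3,n_4,n_5)$. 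Because $F$ is supported on $a$-coordinates and $G$ on $b$-coordinates, every surviving contraction below is of the cross type, and by \eqref{tools:eq_expectation_mixed_processes} each such contraction produces exactly the factor $\mathfrak{C}(n)\langle n\rangle^{-2}$ indexed by the contracted momentum $n$.

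Next I would invoke the natural extension of Lemma \ref{tools:lem_product} to this enlarged framework, which is proved identically by the elementary-function-plus-density argument of \cite{Nualart06} since that argument only uses bilinearity and the sesquilinear structure. Since $\min(2,3)=2$, exactly three terms appear, with combinatorial coefficients $0!\binom{2}{0}\binom{3}{0}=1$, $1!\binom{2}{1}\binom{3}{1}=6$, and $2!\binom{2}{2}\binom{3}{2}=6$. The $r=0$ term is the full degree-$5$ mixed integral, matching the first line of the conclusion. The $r=2$ term contracts both $a$-variables of $F$ with two of the three $b$-variables of $G$; using that $g$ is already symmetric (so the required symmetrization of $F$ contributes nothing new once paired against $g$) and the pairing rule $m\leftrightarrow -m$, this produces precisely the $1$-fold integral with coefficient $6$ and kernel $\sum_{n_1,n_2} f(n_1,n_2)\,g(-n_1,-n_2,n_5)\,\mathfrak{C}(n_1)\mathfrak{C}(n_2)\,\langle n_1\rangle^{-2}\langle n_2\rangle^{-2}$.

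The $r=1$ term is where the non-symmetry of $f$ enters. Replacing $F$ by its symmetrization $\widetilde F(n_1,n_2) = \tfrac12(f(n_1,n_2)+f(n_2,n_1))$ inside the product formula, the contraction $\widetilde F \otimes_1 G$ evaluates, after exploiting the symmetry of $g$, to
\[
\frac{1}{2}\sum_{m\in\bZ^3} \bigl[f(n_1,m)+f(m,n_1)\bigr]\,g(-m,n_4,n_5)\,\mathfrak{C}(m)\,\langle m\rangle^{-2}.
\]
Multiplying by the combinatorial factor $6$ splits the output into two pieces each with coefficient $3$; relabeling the contracted dummy $m$ as $n_2$ in the first piece (surviving $a$-variable $n_1$) and as $n_1$ in the second piece (surviving $a$-variable $n_2$) yields precisely the two $\cI_3$-terms in the conclusion. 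The only real obstacle is combinatorial bookkeeping: tracking which symmetrizations are (and are not) performed, verifying that the cross-only pairing structure correctly collapses the $\iota$-indicators, and confirming that the sign convention $-m$ in the contractions is the one inherited from \eqref{tools:eq_expectation_mixed_processes}. No new analytical input beyond the general product formula is required.
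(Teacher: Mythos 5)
Your proof is correct and is essentially the argument the paper intends but does not write out: the paper states the lemma without proof, describing it as the special case of the general product formula in the enlarged state space $\bR_{\geq 0}\times\bZ^3\times\{a,b\}$ that is needed in the sequel. Your reduction to the single-process product formula (Lemma~\ref{tools:lem_product}) via the embedding $F = f\cdot 1\{\iota_1=\iota_2=a\}$, $G = g\cdot 1\{\iota_3=\iota_4=\iota_5=b\}$ is the right move, and your observation that every contraction between $F$ and $G$ is necessarily of cross type (hence always produces the factor $\mathfrak{C}(n)\langle n\rangle^{-2}$) is the key structural point. Your bookkeeping is also correct: the coefficients $1$, $6$, $6$ from $r!\binom{2}{r}\binom{3}{r}$ are right; the symmetrization $\widetilde{F}=\tfrac12(f(n_1,n_2)+f(n_2,n_1))$ is the step that splits the $r=1$ coefficient $6$ into the two coefficients $3$ appearing in the statement, with the two relabelings $m\mapsto n_2$ (surviving $a$-slot $n_1$) and $m\mapsto n_1$ (surviving $a$-slot $n_2$) producing the third and second terms respectively; and for $r=0$ and $r=2$ the symmetrization of $F$ is harmless because of the invariance of $\cI_5$ under symmetrization and the symmetry of $g$. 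No gap.
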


\begin{remark}
Instead of working with the product $ f(n_1,n_2) g(n_3,n_4,n_5)$, the formula has a natural extension to functions $h(n_1,\hdots,n_5)$ which are symmetric in $n_3,n_4$, and $n_5$. To this end, one only has to decompose
\begin{equation*}
h(n_1,n_2,n_3,n_4,n_5) = \sum_{m_1,m_2 \in \bZ^3} 1\big\{ (n_1,n_2)=(m_1,m_2)\big\} \cdot h(m_1,m_2,n_3,n_4,n_5). 
\end{equation*}
We can then apply Lemma \ref{tools:lemma_23product} to the individual summands. 
\end{remark}

\begin{remark}\label{tools:rem_why_multiple}
While the formula in Lemma \ref{tools:lemma_23product} is complicated, it is still an order of magnitude easier than working with products of Gaussians directly. If the reader is not convinced, we encourage him to work out (by hand) the corresponding resonant/non-resonant decomposition of 
\begin{equation*}
\begin{aligned}
&\Big( \sum_{n_1,n_2\in \bZ^3}  \hspace{-2ex}f(n_1,n_2)   \Big(G_{n_1}^{(a)}\cdot G_{n_2}^{(a)} - \frac{\delta_{n_{12}=0}}{\langle n_1\rangle^2}\Big)) \Big) \\
&\times \Big( \sum_{n_3,n_4,n_5\in \bZ^3} \hspace{-2ex} g(n_3,n_4,n_5)
\Big(G_{n_3}^{(b)}\cdot G_{n_4}^{(b)}\cdot G_{n_5}^{(b)} 
- \frac{\delta_{n_{34}=0}}{\langle n_3\rangle^2} G_{n_5}^{(b)}
- \frac{\delta_{n_{35}=0}}{\langle n_3\rangle^2} G_{n_4}^{(b)}
- \frac{\delta_{n_{45}=0}}{\langle n_4\rangle^2} G_{n_3}^{(b)}
\Big)  \Big),
\end{aligned}
\end{equation*}
where $G^{(\iota)}=W^{(\iota)}_{\infty}$ for $\iota=a,b$ are (correlated) families of Gaussian random variables. 
\end{remark}

After establishing the important definitions and properties of multiple stochastic integrals, it only remains to connect them with our stochastic objects. Let $\Wcos{}{\cs}$ and $\Wsin{}{\cs}$ be the Gaussian processes defined in Section \ref{section:gaussian_processes}. We recall that the linear evolution of the random initial data $\bluedot$ is given by
\begin{equation}
\begin{aligned}
\<1b>(t) &= \sum_{n\in \bZ^3} \big( \cos(t \langle  n\rangle) \Wcos{}{\infty} + \sin(t \langle n \rangle) \Wsin{}{\infty} \big) \exp(i \langle n , x \rangle) \\ 
	&= \sum_{n\in \bZ^3} 
	\Big( \int_0^\infty \hspace{-1ex} \mathrm{d}\big(  \cos(t \langle  n\rangle) \Wcos{}{\cs} + \sin(t \langle n \rangle) \Wsin{}{\cs} \big) \Big)
	\exp(i \langle n , x \rangle).
	\end{aligned}
\end{equation}

In order to obtain a similar expression for the stochastic objects $\<2N>$ and $\<3N>$, we define for any $k\geq 1$ and $n_1,\hdots,n_k \in \bZ^3$ the multiple stochastic integral
\begin{equation}\label{tools:eq_Ik}
\begin{aligned}
\cI_k[t,n_1,\hdots,n_k] 
\defe \int_{[0,\infty)^k}  &\mathrm{d}\big(  \cos(t \langle  n_k\rangle) \Wcos{k}{\cs_k} + \sin(t \langle n \rangle) \Wsin{k}{\cs_k} \big) \hdots  \\
&\mathrm{d}\big(  \cos(t \langle  n_1\rangle) \Wcos{1}{\cs_1} + \sin(t \langle n_1 \rangle) \Wsin{1}{\cs_1} \big). 
\end{aligned}
\end{equation}

In the proof of multi-linear dispersive estimates, it is essential to separate the time-variable $t$ from the randomness. To this end, we define the Gaussian processes
\begin{equation}
\Wpm{}{\cs} \defe \Wcos{}{\cs} \pm \Wsin{}{\cs}. 
\end{equation}
Similar as in \eqref{tools:eq_Ik}, we define for any $k\geq 1$, any $\pm_1,\hdots,\pm_k \in \{ +,- \}$, and any $n_1,\hdots,n_k \in \bZ^3$ the multiple stochastic integral
\begin{equation}
\cI_k[n_j;\pm_j \colon 1 \leq j \leq k] \defe \int_{[0,\infty)^k} \mathrm{d} \Wpm{k}{\cs_k}\hdots\mathrm{d} \Wpm{1}{\cs_1}. 
\end{equation}
It then follows that there exists coefficients $c \colon \{ +,-\}^k \rightarrow \bC$  depending only on the signs such that 
\begin{equation}
\cI_k[t,n_1,\hdots,n_k] = \sum_{\pm_1,\hdots,\pm_k} c(\pm_1,\hdots,\pm_k) \, \Big( \prod_{j=1}^k \exp( \pm_j i t \langle n_j \rangle) \Big) 
\cI_k[n_j;\pm_j \colon 1 \leq j \leq k]. 
\end{equation} 
For convenience, we also define the normalized multiple stochastic integrals by 
\begin{equation}\label{tools:eq_normalized_integrals}
\widetilde{\cI}_k[n_j;\pm_j \colon 1 \leq j \leq k] = \Big( \prod_{j=1}^k \langle n_j \rangle \Big) \cdot\cI_k[n_j;\pm_j \colon 1 \leq j \leq k] 
\end{equation}
We close this subsection with the following stochastic representation, which expresses the quadratic and cubic stochastic objects through multiple stochastic integrals.

\begin{proposition}\label{tools:prop_stochastic_representation}
Let $t\in \bR$ and $N\geq 1$. Then, we have for all $n_1,n_2 \in \bZ^3$ that
\begin{equation}\label{tools:eq_stochastic_representation_1}
\widehat{ \<1b>}\, (t,n_1) \cdot \widehat{ \<1b>}\, (t,n_2)  - \frac{1}{\langle n_{12} \rangle^2} \delta_{n_{12}=0} = \cI_2[t,n_1,n_2]. 
\end{equation}
Furthermore, it holds that 
\begin{align}
\<2N>(t,x) &= \sum_{n_1,n_2\in \bZ^3} \Big( \prod_{j=1}^2 \rho_N(n_j) \Big) \, \cI_2[t,n_1,n_2], \\
\<3N>(t,x) &= \sum_{n_1,n_2,n_3\in \bZ^3} \Big( \prod_{j=1}^3 \rho_N(n_j) \Big) \, \widehat{V}(n_{12}) \,  \cI_3[t,n_1,n_2,n_3]. 
\end{align}
\end{proposition}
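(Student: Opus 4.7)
The plan is to first identify the linear evolution $\widehat{\<1b>}(t,n)$ with the single stochastic integral $\cI_1[t,n]$, and then obtain (i), (ii), (iii) as successive applications of the product formula for multiple stochastic integrals (Lemma \ref{tools:lem_product}), with the subtracted contraction terms being matched against the renormalization constants $a_N$ and the multiplier $\MN$.

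First, I would note that by the representation of $\bluedot$ in \eqref{tools:eq_bluedot} and the definition of the Gaussian processes $\Wcos{}{\cs}, \Wsin{}{\cs}$ in Section \ref{section:gaussian_processes}, we have
\begin{equation*}
\widehat{\<1b>}(t,n) = \cos(t\langle n \rangle) W^{(\cos),n}_\infty + \sin(t\langle n\rangle) W^{(\sin),n}_\infty = \cI_1[t,n],
\end{equation*}
which is the case $k=1$ of the definition \eqref{tools:eq_Ik}. This immediately yields the Fourier-series expansions
$P_{\leq N}\<1b>(t,x) = \sum_n \rho_N(n) \cI_1[t,n]\, e^{i\langle n,x\rangle}$, and it reduces (ii) and (iii) to computing $\cI_1 \cdot \cI_1$ and $\cI_1 \cdot \cI_1 \cdot \cI_1$ through Lemma \ref{tools:lem_product}.

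For (i), I would apply the product formula with $k=l=1$ to $\cI_1[t,n_1] \cdot \cI_1[t,n_2]$. This gives
\begin{equation*}
\cI_1[t,n_1]\cdot\cI_1[t,n_2] = \cI_2[t,n_1,n_2] + \langle f_1, \mathcal{R}f_2\rangle,
\end{equation*}
where $f_j$ are the integrands in \eqref{tools:eq_Ik}. The contraction is computed directly: by the independence of $\Wcos{}{}$ and $\Wsin{}{}$ and the covariance structure $\bE[W^{(\iota),n}_\infty W^{(\iota),m}_\infty]=\delta_{n+m=0}\langle n\rangle^{-2}$, it equals
\begin{equation*}
\bigl(\cos(t\langle n_1\rangle)\cos(t\langle n_2\rangle) + \sin(t\langle n_1\rangle)\sin(t\langle n_2\rangle)\bigr)\delta_{n_{12}=0}\langle n_1\rangle^{-2} = \delta_{n_{12}=0}\langle n_1\rangle^{-2},
\end{equation*}
after using $\langle n_1\rangle=\langle n_2\rangle$ on the support of $\delta_{n_{12}=0}$. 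This proves \eqref{tools:eq_stochastic_representation_1} (with the usual identification of the covariance factor with the renormalization constant in the statement).

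For (ii), I would square $P_{\leq N}\<1b>$, use the identity from (i), and observe that the contraction contribution becomes the spatial constant
$\sum_{n} \rho_N(n)^2 \langle n\rangle^{-2} = a_N$, which is precisely the subtraction in the renormalized square $\<2N> = \lcol (P_{\leq N}\<1b>)^2 \rcol$. For (iii), the main step is the product formula applied to the triple product $\cI_1[t,n_1]\cI_1[t,n_2]\cI_1[t,n_3]$, either directly or via the iteration $(\cI_1\cdot\cI_1)\cdot\cI_1 = (\cI_2+\text{contr.})\cdot\cI_1 = \cI_3 + 2\cI_1[\text{contr.}] + \text{contr.}\cdot\cI_1$. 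This produces a $\cI_3$-term (the claimed expression) plus three contraction terms corresponding to the three pairings $(1,2),(1,3),(2,3)$. Inserting $\widehat{V}(n_{12})\prod_j \rho_N(n_j) e^{i\langle n_{123},x\rangle}$ and summing, the $(1,2)$-contraction produces the constant multiplier
\begin{equation*}
\widehat{V}(0)\Big(\sum_{n_1}\rho_N(n_1)^2\langle n_1\rangle^{-2}\Big)\cdot P_{\leq N}\<1b> = a_N\widehat{V}(0)\, P_{\leq N}\<1b>,
\end{equation*}
while the $(1,3)$- and $(2,3)$-contractions both yield, via $n_3 = -n_j$ and summation in the remaining $n_j$, the Fourier multiplier $\MN P_{\leq N}\<1b>$ by the definition \eqref{intro:eq_MN}. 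These three contributions are precisely what is subtracted in the definition of $\<3N>$ in \eqref{intro:eq_renormalized_nonlinearity}, so after renormalization only the $\cI_3$-term survives. The main bookkeeping obstacle throughout will be correctly tracking the sine/cosine prefactors so that each contraction collapses to its time-independent covariance factor (as already seen in the quadratic case), and verifying that the indices of the $\cos/\sin$ processes pair up in the way that produces exactly $a_N$ and $\MN$ rather than some unrelated multiplier.
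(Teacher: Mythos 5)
Your proof is correct in substance, but it takes a genuinely different route than the paper. The paper's own proof is a single sentence: it cites Lemma 2.5 and Proposition 2.9 of the companion paper \cite{BB20a}, combines them with Lemma \ref{tools:lem_iterated_integrals} (which translates between multiple stochastic integrals and iterated It\^o integrals), and appeals to the observation that, for each fixed $t$, the Gaussian process $(\cs,n)\mapsto \cos(t\langle n\rangle)\Wcos{}{\cs}+\sin(t\langle n\rangle)\Wsin{}{\cs}$ has the same law as $W$. Your approach instead unpacks everything from scratch via the product formula (Lemma \ref{tools:lem_product}), identifying $\widehat{\<1b>}(t,n)=\cI_1[t,n]$ and then matching the contraction terms against $a_N$ and $\MN$ directly. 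Your route buys self-containedness and, in the cubic case, makes vivid why the renormalization in \eqref{intro:eq_renormalized_nonlinearity} has exactly the shape $a_N\widehat{V}(0)P_{\leq N}u+2\MN P_{\leq N}u$ (one ``$(1,2)$'' pairing producing $a_N\widehat{V}(0)$, two ``mixed'' pairings producing $\MN$ each); the paper's route buys brevity by delegating the same covariance algebra to \cite{BB20a}.

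Two small points worth tightening. First, the product formula (Lemma \ref{tools:lem_product}) is formulated for multiple stochastic integrals against a single underlying Gaussian process $W$, whereas $\cI_k[t,\cdot]$ integrates against the $t$-dependent combination of $W^{\cos}$ and $W^{\sin}$; the reason you are allowed to treat $\cI_1[t,\cdot]$ and $\cI_2[t,\cdot]$ as bona fide single/double stochastic integrals, to which Lemma \ref{tools:lem_product} applies, is precisely the distributional-equivalence observation the paper makes explicit---for each fixed $t$ the combined process is again a copy of $W$. You use this implicitly but should state it. Second, Lemma \ref{tools:lem_product} is stated for symmetric kernels, while the ``point-mass'' kernels $\delta_{(m_1,\dots,m_k)=(n_1,\dots,n_k)}$ underlying $\cI_k[t,n_1,\dots,n_k]$ are not symmetric; one has to symmetrize first (using $\cI_k[f]=\cI_k[\widetilde f]$) and then track the resulting combinatorial factors, which is exactly the bookkeeping you flag at the end. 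Neither of these is a gap in the idea, but both should appear in a full write-up. (Also, as you note parenthetically, the contraction is $\langle n_1\rangle^{-2}\delta_{n_{12}=0}$, not $\langle n_{12}\rangle^{-2}\delta_{n_{12}=0}$; the paper's statement is an inconsequential typo since $a_N=\sum_n\rho_N(n)^2\langle n\rangle^{-2}$ is what the quadratic contraction must sum to, as in your step (ii).)
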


\begin{proof}
This follows from \cite[Lemma 2.5 and Proposition 2.9]{BB20a}, Lemma \ref{tools:lem_iterated_integrals}, and that the distribution of 
\begin{equation*}
(\cs,n) \mapsto \cos(t\langle n \rangle) \Wcos{}{\cs} + \sin(t \langle n \rangle) \Wsin{}{\cs}
\end{equation*}
is the same for all $t\in \bR$. 
\end{proof}

\subsection{Gaussian hypercontractivity and the moment method}\label{section:hypercontractivity}

In this section, we first review Gaussian hypercontractivity and its consequences. To help the reader with a primary background in dispersive equations, let us first illustrate this phenomenon through a basic example. Let $Z_\sigma$ be a Gaussian random variable with mean zero and variance $\sigma^2$. Using the exact formula for the moments of a Gaussian, we have for all $m\geq 1$ that 
\begin{equation*}
\bE\big[ Z_\sigma^2\big] = \sigma^2 \qquad \text{and} \qquad \bE \big[ Z_\sigma^{2m} \big] = \frac{(2m)!}{2^m m!} \cdot \sigma^{2m}. 
\end{equation*}
A simple estimate now yields that 
\begin{equation*}
\Big( \bE \big[ Z_\sigma^{2m} \big] \Big)^{\frac{1}{2m}} \leq \Big( \frac{(2m)^{2m}}{2^m (m/e)^{m}}\Big)^{\frac{1}{2m}}  \cdot \sigma
= \sqrt{2em} \Big( \bE \big[ Z_\sigma^{2} \big] \Big)^{\frac{1}{2}}.
\end{equation*}
Using H\"{o}lder's inequality, we obtain for all $p \geq 2$ that 
\begin{equation}\label{tools:eq_Gaussian_moments}
\| Z_\sigma \|_{L^p_\omega} \lesssim \sqrt{p} \| Z_\sigma \|_{L^2_\omega}. 
\end{equation}
Thus, higher $L^p_\omega$-norms of Gaussians can be controlled through the lower $L^2_\omega$-norm. The \emph{``hyper''} in Gaussian hypercontractivity refers exactly to this gain of integrability. While \eqref{tools:eq_Gaussian_moments} is not too interesting by itself, its significance lies in its generalizations to polynomials in infinitely many Gaussians! Furthermore, Gaussian hypercontractivity has connections to many different inequalities in analysis and probability theory, such as logarithmic Sobolev inequalities. \\

Our first proposition is also known as a Wiener chaos estimate. A version of this proposition can be found in \cite[Theorem I.22]{Simon74} or \cite[Theorem 1.4.1]{Nualart06}. 

\begin{proposition}[Gaussian hypercontractivity] \label{tools:prop_Gaussian_hypercontractivity}
Let $k\geq 1$, let $\pm_1,\hdots,\pm_k\in \{ +,-\}$, and let $a\colon (\bZ^3)^k \rightarrow \bC$ be a discrete function with finite support. Define the $k$-th order Gaussian chaos $\cG_k$ by 
\begin{equation}
\cG_k \defe  \sum_{n_1,\hdots,n_k \in \bZ^3} a(n_1,\hdots,n_k) \cI_k[\pm_j,n_j \colon 1 \leq j \leq k ]. 
\end{equation}
Then, it holds for all $p\geq 2$ that 
\begin{equation}
\| \cG_k \|_{L^p_\omega(\bP)} \lesssim p^{\frac{k}{2}} \| \cG_k \|_{L^2_\omega(\bP)}. 
\end{equation}
\end{proposition}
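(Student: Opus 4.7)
The plan is to derive this as a direct consequence of Nelson's classical hypercontractivity theorem for the Ornstein--Uhlenbeck semigroup, essentially invoking the references \cite[Theorem I.22]{Simon74} or \cite[Theorem 1.4.1]{Nualart06} that the excerpt already cites. The first step is to verify that $\cG_k$ lies in the $k$-th homogeneous Wiener chaos associated with the underlying Gaussian Hilbert space generated by $\{W[f] : f \in L^2(\mathbb{R}_{\geq 0}\times\mathbb{Z}^3,\mathrm{d}\lambda)\}$ of Section \ref{section:multiple_integrals}. Since each $\cI_k[\pm_j, n_j : 1\leq j\leq k]$ is by construction the multiple stochastic integral of an indicator function (after inserting the $\delta_{n_j=m_j}$-factors as in \eqref{tools:eq_individual_multiple_integral}), and since $a$ has finite support, $\cG_k$ is a finite linear combination of multiple stochastic integrals of order $k$ and hence is an element of the $k$-th chaos.

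Next, I would invoke the Ornstein--Uhlenbeck semigroup $(T_r)_{r\geq 0}$ on $L^2(\bP)$, whose defining spectral property is that $T_r F = e^{-kr} F$ whenever $F$ belongs to the $k$-th Wiener chaos. Nelson's hypercontractivity theorem states that $T_r$ maps $L^p(\bP)$ into $L^q(\bP)$ with operator norm one as soon as $e^{2r} \geq (q-1)/(p-1)$. Specializing to $p=2$ and $q=p$ (the target exponent in the statement) and taking $e^{2r}=p-1$, we get the pointwise scalar identity $T_r \cG_k = e^{-kr}\cG_k = (p-1)^{-k/2}\cG_k$, so that
\begin{equation*}
(p-1)^{-k/2}\|\cG_k\|_{L^p(\bP)} = \|T_r \cG_k\|_{L^p(\bP)} \leq \|\cG_k\|_{L^2(\bP)}.
\end{equation*}
Rearranging and using $(p-1)^{k/2}\leq p^{k/2}$ for $p\geq 2$ gives the desired estimate.

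Essentially there are no hard steps here: the one point that requires a little care is the identification of $\cG_k$ as an element of the $k$-th chaos when the Gaussian process is complex-valued and indexed by a measure space that combines the continuous stochastic-time variable $\cs$ with the discrete Fourier variable $n$ and the sign $\pm$. Once one sets up the Gaussian Hilbert space $\mathfrak H \defe L^2((\mathbb{R}_{\geq 0}\times\mathbb{Z}^3\times\{+,-\}), \mathrm{d}\widetilde\lambda)$ and views the multiple integrals as elements of the symmetric tensor powers $\mathfrak H^{\odot k}$, hypercontractivity applies verbatim. I would therefore present the argument in two short paragraphs, the first reducing the statement to the scalar identity $T_r \cG_k = (p-1)^{-k/2}\cG_k$ on the $k$-th chaos, and the second quoting Nelson's theorem in the form $\|T_r F\|_{L^p}\leq \|F\|_{L^2}$ for $e^{2r}=p-1$, with the dependence $p^{k/2}$ tracked explicitly.
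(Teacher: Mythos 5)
Your proposal is correct and is precisely the standard hypercontractivity argument underlying the references \cite[Theorem I.22]{Simon74} and \cite[Theorem 1.4.1]{Nualart06} that the paper cites; the paper itself does not give a proof of this proposition, only the citation. The one subtlety you rightly flag—that the Gaussian space here is complex and indexed by $\bR_{\geq 0}\times\bZ^3\times\{+,-\}$—is handled by the ``regular system'' construction of Section \ref{section:multiple_integrals}, after which the multiple stochastic integrals are genuine elements of the $k$-th Wiener chaos over the underlying real Gaussian Hilbert space and Nelson's theorem applies verbatim.
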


Proposition \ref{tools:prop_Gaussian_hypercontractivity} will play an important role in the estimates of stochastic objects such as $\<3N>$. While Proposition \ref{tools:prop_Gaussian_hypercontractivity} bounds the moments of the Gaussian chaos, the reader may prefer or be more familiar with a bound on probabilistic tails. As the next lemma shows, the two viewpoints are equivalent.

\begin{lemma}[Moments and tails]\label{tools:lem_moments_and_tails}
Let $Z$ be a random variable and let $\gamma>0$. Then, the following properties are equivalent, where the parameter $K_1,K_2>0$ appearing below differ from each other by at most a constant factor depending only on $\gamma$. 
\begin{enumerate}
\item The tails of $Z$ satisfy for all $\lambda\geq 0$ the inequality
\begin{equation*}
\bP(|Z|\geq \lambda) \leq 2 \exp\big( - (\lambda/K_1)^{\gamma}\big).
\end{equation*}
\item The moments of $Z$ satisfy for all $p\geq 2$ the inequality
\begin{equation*}
\| Z \|_{L^p}\leq K_2 p^{\frac{1}{\gamma}}. 
\end{equation*}
\end{enumerate}
\end{lemma}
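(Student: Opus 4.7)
The statement is a classical equivalence between stretched-exponential tail bounds and polynomial-in-$p$ moment bounds, and I would prove it as a pair of implications, handling constants with care so that $K_1$ and $K_2$ differ only by a factor depending on $\gamma$. The overall strategy is standard: the forward direction uses the layer-cake representation together with a Gamma-integral computation, while the reverse direction uses Markov's inequality optimized in $p$.

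For the implication $(1)\Rightarrow(2)$, I would start from the layer-cake formula
\begin{equation*}
\bE |Z|^p = \int_0^\infty p\lambda^{p-1} \bP(|Z|\geq \lambda)\, \mathrm{d}\lambda,
\end{equation*}
insert the tail bound from (1), and substitute $u=(\lambda/K_1)^\gamma$ to recognize a Gamma function:
\begin{equation*}
\bE |Z|^p \leq \tfrac{2p}{\gamma} K_1^p \, \Gamma(p/\gamma).
\end{equation*}
Applying Stirling's bound $\Gamma(p/\gamma) \leq C_\gamma (p/\gamma)^{p/\gamma} e^{-p/\gamma}$ and taking $p$-th roots then yields $\|Z\|_{L^p} \leq K_2 p^{1/\gamma}$ for $K_2 = C_\gamma' K_1$, where the extra factor $(2p/\gamma)^{1/p}$ is absorbed using $p\geq 2$.

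For the implication $(2)\Rightarrow(1)$, I would apply Markov's inequality to obtain
\begin{equation*}
\bP(|Z|\geq \lambda) \leq \lambda^{-p} \|Z\|_{L^p}^p \leq \left(\tfrac{K_2 p^{1/\gamma}}{\lambda}\right)^p
\end{equation*}
and then optimize over $p$. The natural choice is $p=(\lambda/(eK_2))^\gamma$, which makes the ratio equal to $1/e$ and gives $\bP(|Z|\geq \lambda) \leq \exp\bigl(-(\lambda/(eK_2))^\gamma\bigr)$. Setting $K_1 := eK_2$ then yields the claimed bound (with an even better constant $1$ in front of the exponential).

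The only delicate point, and the main obstacle, is bookkeeping the range of validity. The Markov-based argument requires $p\geq 2$, i.e.\ $\lambda \geq eK_2 \cdot 2^{1/\gamma}$, so one must separately verify that the tail bound holds trivially for small $\lambda$. For $\lambda \leq K_1(\ln 2)^{1/\gamma}$ one has $2\exp(-(\lambda/K_1)^\gamma) \geq 1 \geq \bP(|Z|\geq \lambda)$, so the inequality is automatic; by enlarging $K_1$ if necessary (up to a multiplicative factor depending only on $\gamma$) the two ranges cover $[0,\infty)$. This is why the factor of $2$ in front of the exponential is needed in (1) and why the constants $K_1, K_2$ can only be compared up to a $\gamma$-dependent factor.
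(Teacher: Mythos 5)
Your proof is correct and gives precisely the standard layer-cake/optimized-Markov argument; the paper itself supplies no proof and simply cites Vershynin (Proposition 2.5.2 or 2.7.1), which is exactly this argument. Your bookkeeping of the small-$\lambda$ regime — where the factor $2$ in front of the exponential makes the bound vacuous and the constants are enlarged by a $\gamma$-dependent factor — is the right way to close the gap left by the restriction $p\geq 2$ in the Markov step.
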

The lemma is an easy generalization of \cite[Proposition 2.5.2 or Proposition 2.7.1]{Vershynin18}. As we have seen above, a Gaussian random variable corresponds to $\gamma=2$. 
It is convenient to capture the size of $K_2$ in Lemma \ref{tools:lem_moments_and_tails} (and hence $K_1$) through a norm.

\begin{definition}
Let $\gamma>0$ and let $Z$ be a random variable. We define
\begin{equation*}
\| Z \|_{\Psi_\gamma} = \sup_{p\geq 2} p^{-\frac{1}{\gamma}} \| Z \|_{L^p_\omega}. 
\end{equation*}
\end{definition}
For more information regarding the $\Psi_\gamma$-norms, we refer the reader to the excellent textbook \cite{Vershynin18}. The next lemma shows that the $\Psi_\gamma$-norm is well-behaved under taking maxima of several random variables.  
\begin{lemma}[Maxima and the $\Psi_\gamma$-norm]\label{tools:eq_moments_sups}
Let $\gamma>0$, let $J \in \mathbb{N}$, and let $Z_1,\hdots,Z_J$ be random variables on the same probability space. Then, it holds that 
\begin{equation*}
\| \max(Z_1,\hdots,Z_J) \|_{\Psi_\gamma} \leq e \log(2+J)^{\frac{1}{\gamma}} \max_{j=1,\hdots,J} \| Z_j \|_{\Psi_\gamma}. 
\end{equation*}
\end{lemma}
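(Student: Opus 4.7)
The plan is to reduce the $\Psi_\gamma$-bound for the maximum to the well-known $L^q$-bound for a maximum of $J$ nonnegative random variables, and then optimize the integrability exponent $q$ as a function of $p$ and $J$. I will use the elementary observation that, since the $\Psi_\gamma$-norm involves $\|Z\|_{L^p_\omega} = (\mathbb{E}|Z|^p)^{1/p}$, replacing each $Z_j$ by $|Z_j|$ only increases the maximum; so I may assume $Z_j \geq 0$ throughout. The starting inequality is then $(\max_j Z_j)^q \leq \sum_{j=1}^J Z_j^q$, valid for every $q > 0$, which after taking expectations and $q$-th roots gives
\begin{equation*}
\| \max_{j=1,\ldots,J} Z_j \|_{L^q_\omega} \leq J^{1/q} \max_{j=1,\ldots,J} \|Z_j\|_{L^q_\omega}.
\end{equation*}
Setting $M \defe \max_{j} \|Z_j\|_{\Psi_\gamma}$, the definition of the $\Psi_\gamma$-norm gives $\|Z_j\|_{L^q_\omega} \leq M q^{1/\gamma}$ for every $q \geq 2$.

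Next, given $p \geq 2$, I choose $q \defe \max(p,\log(2+J),2)$, so that in particular $q \geq p$, $q \geq 2$, and $q \geq \log(2+J)$. Because we are on a probability space, $\|\cdot\|_{L^p_\omega} \leq \|\cdot\|_{L^q_\omega}$, and thus
\begin{equation*}
\| \max_{j} Z_j \|_{L^p_\omega} \leq \| \max_{j} Z_j \|_{L^q_\omega} \leq J^{1/q} M q^{1/\gamma} \leq e \, M q^{1/\gamma},
\end{equation*}
where the last step uses $q \geq \log(2+J) \geq \log J$, hence $J^{1/q} \leq e$.

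Finally, I multiply by $p^{-1/\gamma}$ and analyze three cases according to which term achieves the maximum defining $q$. If $q=p$, then $p^{-1/\gamma} q^{1/\gamma} = 1$. If $q = \log(2+J) \geq p$, then $p^{-1/\gamma} \leq 1$ (since $p \geq 2 \geq 1$), so $p^{-1/\gamma} q^{1/\gamma} \leq \log(2+J)^{1/\gamma}$. If $q = 2$, then both $p=2$ and $\log(2+J) \leq 2$, and $p^{-1/\gamma} q^{1/\gamma}=1$. In every case,
\begin{equation*}
p^{-1/\gamma} \| \max_{j} Z_j \|_{L^p_\omega} \leq e\, M \max\bigl(1,\log(2+J)^{1/\gamma}\bigr),
\end{equation*}
and since $J \geq 1$ implies $\log(2+J) \geq \log 3 > 1$, the maximum equals $\log(2+J)^{1/\gamma}$. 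Taking the supremum over $p \geq 2$ yields the desired bound. The only mildly delicate point is the choice $q = \max(p,\log(2+J),2)$, which simultaneously ensures $J^{1/q} \lesssim 1$ and keeps $q^{1/\gamma}/p^{1/\gamma}$ under control; this optimization is the one nontrivial step, but it is routine.
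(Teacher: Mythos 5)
Your proof is correct and follows essentially the same strategy as the paper's: bound the maximum by an $\ell^q$-norm, apply the $\Psi_\gamma$-bound to each $\|Z_j\|_{L^q_\omega}$, and pick $q$ large enough to make $J^{1/q}\lesssim 1$ while keeping $q^{1/\gamma}/p^{1/\gamma}$ controlled by $\log(2+J)^{1/\gamma}$. The only cosmetic difference is your choice $q=\max(p,\log(2+J),2)$ versus the paper's $r=p\log(2+J)$; both do the same job.
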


While this is only a minor generalization of \cite[Exercise 2.5.10]{Vershynin18}, we include the short proof.

\begin{proof}
Let $p\geq 2$. For any $r\geq p$, it follows from the embedding $\ell^r_j \hookrightarrow \ell^\infty_j$ and H\"{o}lder's inequality that 
\begin{gather*}
\| \max(Z_1,\hdots,Z_J) \|_{L^p_\omega} \leq \| Z_j \|_{L^p_\omega \ell_j^\infty} \leq \| Z_j \|_{L^p_\omega \ell_j^r}  \leq \| Z_j \|_{L^r_\omega \ell_j^r} \leq J^{\frac{1}{r}} r^{\frac{1}{\gamma}} \max_{j=1,\hdots,J} \| Z_j \|_{\Psi_\gamma}. 
\end{gather*}
Then, we choose $r=\log(2+J) p$, which yields the desired estimate. 
\end{proof}

We now turn to a combination of Gaussian hypercontractivity and the moment method, which will be essential to our treatment of the random matrix terms $\RMT$. The following proposition, which is easy-to-use, general, and essentially sharp, was recently obtained by Deng, Nahmod, and Yue in \cite[Proposition 2.8]{DNY20}. Before we state the estimate, we need the following definition, which relies on the tensor notation from Definition \ref{tool:def_tensor}. 

\begin{definition}[Contracted random tensor]\label{tools:def_random_tensor}
Let $\cJ\subseteq \mathbb{N}_0$, let $(\pm_j)_{j\in \cJ}$ be given, and let $N_{\max}\geq 1$. Let $h=h_{n_\cJ}$ be a tensor and assume that all vectors in the support of $h$ satisfy $\|n_\cJ\|\leq N_{\max}$. Let $\cS \subseteq \cJ$ and define $k\defe \# \cS$. We then define the contracted random tensor $h_c = (h_c)_{n_{\cJ\backslash \cS}}$ by
\begin{equation}
h_c (n_i \colon i  \not \in \cS) \defe \sum_{\substack{ (n_j)_{j \in \cS}}} h(n_{\cJ}) \cdot
\widetilde{\cI}_k[\pm_j,n_j\colon j   \in \cS] ,
\end{equation}
where the normalized multiple stochastic integrals are as in \eqref{tools:eq_normalized_integrals}.  
\end{definition}

In the next proposition, we use the tensor norms from Definition \ref{tool:def_tensor}. 

\begin{proposition}[{\cite[Proposition 2.8, Proposition 4.14]{DNY20}}]\label{tools:prop_moment_method}
Let $\cJ,\cS,N_{\max},h,h_c$, and $k$ be as in Definition \ref{tools:def_random_tensor}. Let $\mathcal{A},\mathcal{B}$ be a partition of $\{1,\hdots,J\} \backslash \cS$. Then, we have for all $p \geq 2$ and $\theta>0$ that 
\begin{equation}
\| \| h_c \|_{n_\mathcal{A} \rightarrow n_\mathcal{B}} \|_{L^p_\omega(\bP)} \lesssim_\theta N_{\max}^\theta   \Big( \max_{\mathcal{X},\mathcal{Y}} \| h \|_{n_\mathcal{X} \rightarrow n_\mathcal{Y}} \Big) p^{\frac{k}{2}},
\end{equation}
where the maximum is taken over all sets $\mathcal{X},\mathcal{Y}$ which satisfy $\mathcal{A}\subseteq \mathcal{X}$, $\mathcal{B}\subseteq \mathcal{Y}$, and form a partition of $\cJ$. 
\end{proposition}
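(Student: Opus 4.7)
Since this proposition is attributed to Deng--Nahmod--Yue \cite{DNY20}, the plan is to sketch the standard moment-method argument adapted to our setting. The strategy rests on three pillars: (1) reducing operator-norm moments to trace moments, (2) applying Gaussian hypercontractivity (Proposition \ref{tools:prop_Gaussian_hypercontractivity}) to pass from $L^p_\omega$ to $L^2_\omega$ moments, and (3) applying the product formula for multiple stochastic integrals (Lemma \ref{tools:lem_product}) to compute the $L^2_\omega$ moments as a sum over pairings, each of which is controlled by a tensor norm of $h$ over a partition $(\mathcal{X},\mathcal{Y})$ with $\mathcal{A}\subseteq\mathcal{X}$ and $\mathcal{B}\subseteq\mathcal{Y}$.

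The first step will be to replace $p$ by an even integer $2m\geq p$ (absorbing a constant loss) and use the Chebyshev/Markov-type dominance $\|T\|_{n_\mathcal{A}\to n_\mathcal{B}}^{2m}\le \operatorname{tr}\bigl((T T^\ast)^m\bigr)$, applied with $T=h_c$. Expanding the trace gives a sum over $2m$ copies of the index variables $n_{\mathcal{A}\cup \mathcal{B}}$ of products
\[
\prod_{i=1}^{2m}\Bigl(\sum_{(n^{(i)}_j)_{j\in\cS}} h\bigl(n^{(i)}_{\cJ}\bigr)\,\widetilde{\cI}_k\bigl[\pm_j,n^{(i)}_j:j\in\cS\bigr]\Bigr),
\]
with specific identifications between consecutive $\mathcal{A}$ and $\mathcal{B}$ blocks. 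Since we only care about moments, the non-negativity and duality properties let us drop the absolute values by instead controlling $L^{2m}_\omega$ via hypercontractivity applied to the whole degree-$2mk$ Wiener chaos. More precisely, for fixed $n_{\mathcal{A}\cup\mathcal{B}}$ each factor lies in the $k$-th chaos, so the full product lives in chaos of total order at most $2mk$, and Proposition \ref{tools:prop_Gaussian_hypercontractivity} upgrades $L^{2m}_\omega$ bounds from $L^2_\omega$ bounds at the cost of a factor $(2m)^{mk}$, which after extracting a $2m$-th root yields the predicted $p^{k/2}$.

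The $L^2_\omega$ moment is then a Gaussian integral that can be evaluated exactly through the product formula (Lemma \ref{tools:lem_product}), or equivalently through Wick's theorem for iterated It\^{o} integrals (Lemma \ref{tools:lem_iterated_integrals}). Each pairing $\scrP$ between the $2mk$ stochastic variables (which must respect the ``same block'' constraints dictated by the multiple-integral structure) produces a contraction that identifies pairs of $n^{(i)}_j$-indices. Undoing the trace and rewriting the resulting contracted sum, one sees that it is precisely the squared Hilbert--Schmidt inner product of $h$ with itself after folding the $2m$ copies along a pattern determined by $\scrP$. By Cauchy--Schwarz in the contracted variables, this folded sum is bounded by the operator norm $\|h\|_{n_\mathcal{X}\to n_\mathcal{Y}}$ for an appropriate partition $(\mathcal{X},\mathcal{Y})$ with $\mathcal{A}\subseteq\mathcal{X}$, $\mathcal{B}\subseteq\mathcal{Y}$ (the split is determined by which side of the pairing each index lies on). Taking $2m$-th roots yields the stated bound up to combinatorial factors, and the remaining combinatorial loss, coming from the number of admissible pairings and the trivial volume factor $\#\{n_j:|n_j|\lesssim N_{\max}\}$ appearing when an index is unconstrained by $\scrP$, is absorbed into the arbitrary $N_{\max}^\theta$.

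The main obstacle will be the bookkeeping for the pairing-to-partition correspondence. Each $\scrP$ can create a complicated bipartite contraction pattern, and it is not immediately obvious that every pattern reduces to an $\|h\|_{n_\mathcal{X}\to n_\mathcal{Y}}$-norm with $(\mathcal{X},\mathcal{Y})$ satisfying the stated inclusions rather than some more general structure. The key observation here (following \cite[Proof of Prop. 2.8]{DNY20}) is that one repeatedly applies the elementary inequality $\|h\otimes_r h^\ast\|_{n_\mathcal{U}\to n_\mathcal{V}}\le\|h\|_{n_\mathcal{X}\to n_\mathcal{Y}}^2$ for contractions, which inductively reduces the trace expression to a single tensor norm. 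A secondary technical point will be handling the mixture of $+$ and $-$ signs in $\widetilde{\cI}_k[\pm_j,n_j]$, but since the product formula (Lemma \ref{tools:lem_product}, suitably generalized as discussed in the text around \eqref{tools:eq_expectation_mixed_processes}) still produces only admissible pairings, the argument proceeds unchanged.
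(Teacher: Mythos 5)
The paper does not prove this proposition: it cites it directly from \cite{DNY20} and confines itself to remarking on two cosmetic adaptations (non-resonant Gaussian products versus multiple stochastic integrals, and tail bounds versus moment bounds, the latter handled by swapping DNY20's Lemma~4.4 for Proposition~\ref{tools:prop_Gaussian_hypercontractivity}). Your proposal instead attempts a ground-up reconstruction of the DNY20 argument, which is a different and more ambitious undertaking.

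Your high-level scaffolding — dominating the operator-norm moments by trace moments, expanding via the product formula (Lemma~\ref{tools:lem_product}), reducing each Wick pairing to a tensor norm of $h$ by iterated Cauchy--Schwarz, and absorbing combinatorial losses into $N_{\max}^\theta$ — is the correct framework and matches the strategy in \cite{DNY20}. However, the role you give hypercontractivity is misplaced, and as written it does not go through. Once you have $\mathbb{E}\|h_c\|^{2m}\le\mathbb{E}\operatorname{tr}\bigl((h_c h_c^*)^m\bigr)$, the right-hand side is a plain \emph{expectation}; it is exactly the zeroth-chaos component of the random variable $\operatorname{tr}\bigl((h_c h_c^*)^m\bigr)$ and is computed directly and exactly by the Wick/product formula. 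There is no $L^{2m}_\omega$-to-$L^2_\omega$ reduction to perform on the trace: hypercontractivity (Proposition~\ref{tools:prop_Gaussian_hypercontractivity}) would \emph{raise} an $L^2$ bound to an $L^p$ bound, which is the wrong direction for estimating a first moment, and applying it to a degree-$2mk$ chaos does not produce the claimed $(2m)^{mk}$ factor in a useful way here. The $p^{k/2}$ growth in the final statement must come from the combinatorics of the pairing sum (roughly, the number of admissible Wick pairings grows like $(C m)^{mk}$, and one takes a $2m$-th root with $2m\sim p$), together with the crude index-counting that the arbitrary $N_{\max}^\theta$ loss is there to absorb. The place where hypercontractivity legitimately enters the DNY20 proof — and what the paper's remark is actually pointing at — is a separate, later step (their Lemma 4.4) that converts the moment bound just obtained into a tail bound; you have conflated that conversion with the core trace computation. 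To repair your sketch, remove the hypercontractivity step inside the trace expansion, compute $\mathbb{E}\operatorname{tr}\bigl((h_c h_c^*)^m\bigr)$ via the product formula directly, and supply the combinatorial estimate on the number of pairings.
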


In \cite{DNY20}, the proposition is stated in terms of non-resonant products of Gaussians instead of multiple stochastic integrals. Furthermore, the probabilistic estimate is stated in terms of the tail-behavior instead of the moment growth. Both of these modifications can be obtained easily by replacing the large deviation estimate \cite[Lemma 4.4]{DNY20} in the proof by Proposition \ref{tools:prop_Gaussian_hypercontractivity}. \\
We often simply refer to Proposition \ref{tools:prop_moment_method} as the moment method, since it is the main ingredient of the proof (cf. \cite{DNY20}).  While the full generality of Proposition \ref{tools:prop_moment_method} is needed in \cite{DNY20}, we will only rely on the following special case. 

\begin{example}
Let $\pm_1,\pm_2 \in \{ +,-\}$, let $h=h(n,n_1,n_2,n_3)$ be a tensor and assume that $\| (n,n_1,n_2,n_3)\| \lesssim N_{\max}$ on the support of $h$. Define the contracted random tensor $h_c$ by
\begin{equation}
h_c(n,n_3) \defe \sum_{n_1,n_2 \in \bZ^3} h(n,n_1,n_2,n_3) \cdot \cI_2[\pm_j,n_j \colon j=1,2]. 
\end{equation}
Then, we have for all $p \geq 2$ and $\theta>0$ that 
\begin{equation*}
\Big\| \| h_c \|_{n_3 \rightarrow n} \Big\|_{L^p_\omega} \lesssim_\theta N_{\max}^{\theta} \max\big( \| h \|_{n_1n_2n_3\rightarrow n},\| h \|_{n_3 \rightarrow nn_1 n_2},\| h \|_{n_1 n_3\rightarrow nn_2} , \| h \|_{n_2n_3\rightarrow nn_1} \big) \cdot p. 
\end{equation*}
\end{example}

\section{Explicit stochastic objects}\label{section:stochastic_object}

In this section, we estimate the stochastic objects appearing in the expansion of $u_N$ and in the evolution equations for $X_N$ and $Y_N$. The analysis of explicit stochastic objects is necessary for both dispersive and parabolic equations. We refer the interested reader to the treatment of the cubic stochastic heat equation in \cite{CC18,Hairer16} and the quadratic stochastic wave equation in \cite{GKO18a} for illustrative examples. While the algebraic aspects are similar in dispersive and parabolic settings, the analytic aspects are quite different. In the parabolic setting, the regularity of stochastic objects can be determined through simple  ``power-counting''. In contrast, the optimal estimates in the dispersive setting require more complicated multi-linear dispersive estimates. We remind the reader that, as explained in Remark \ref{intro:rem_restriction_beta}, we restrict ourselves to $0<\beta<1/2$.

\subsection{Cubic stochastic objects}

In this subsection, we analyze the cubic stochastic object $\<3N>$ and the corresponding solution to the forced wave equation $\<3DN>$. Ignoring the smoother component $\reddotM$ of the initial data, they correspond to the first Picard iterate of \eqref{eq:nlw_N}. 

\begin{proposition}[Cubic stochastic objects]\label{so3:prop}
Let $T\geq 1$ and let $s<\beta-\eta$. Then, it holds that 
\begin{equation}\label{so3:eq_estimate_1}
\Big\| \sup_{N \geq 1} \| \<3N> \|_{\X{s-1}{b_+-1}([0,T])} \Big\|_{L^p_\omega(\bP)} \lesssim T^2  p^{\frac{3}{2}}. 
\end{equation}
Furthermore, we have that 
\begin{equation}\label{so3:eq_estimate_2}
\Big\| \sup_{N \geq 1} \| \<3DN> \|_{C_t^0 \cC_x^s([0,T]\times \bT^3)} \Big\|_{L^p_\omega(\bP)} \lesssim T^2 p^{\frac{3}{2}}. 
\end{equation}
In the frequency-localized version of \eqref{so3:eq_estimate_1} and \eqref{so3:eq_estimate_2}, which is detailed in the proof, we gain an $\eta^\prime$-power of the maximal frequency-scale. Furthermore, we can replace $\<3DN>$ by $\<3DNtau>= \Duh \big[ 1_{[0,\tau]} \<3N>\big]$. 
\end{proposition}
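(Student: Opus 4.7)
The plan is to represent the cubic stochastic object as a third-order Wiener chaos, reduce $L^p_\omega$ estimates to $L^2_\omega$ estimates via Gaussian hypercontractivity (paying the price $p^{3/2}$ from Proposition \ref{tools:prop_Gaussian_hypercontractivity} for a chaos of order three), and finally reduce the $L^2_\omega$ estimate to a purely combinatorial bound governed by the cubic sum estimate (Proposition \ref{tools:prop_cubic_sum}).

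The first step is to invoke Proposition \ref{tools:prop_stochastic_representation} to write
\begin{equation*}
\widehat{\<3N>}(t,n) = \sum_{\substack{n_1,n_2,n_3 \in \bZ^3 \\ n_{123}=n}} \Big( \prod_{j=1}^3 \rho_N(n_j) \Big) \widehat{V}(n_{12}) \, \cI_3[t,n_1,n_2,n_3],
\end{equation*}
and then further decompose $\cI_3[t,n_1,n_2,n_3]$ as a finite linear combination, with coefficients depending only on signs $\pm_j \in \{+,-\}$, of the normalized multiple integrals $\widetilde{\cI}_3[\pm_j,n_j;1\le j\le 3]$ multiplied by the oscillatory factor $\prod_j \exp(\pm_j i t \langle n_j\rangle)$. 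After a Littlewood--Paley decomposition in the frequencies $n,n_{12},n_1,n_2,n_3$ (with dyadic sizes $N_{123},N_{12},N_1,N_2,N_3$, all $\lesssim N$), it suffices to bound each frequency-localized piece with a decaying factor $N_{\max}^{-\eta'}$ in the largest frequency. The cut-off $1_{[0,T]}$ in the definition of the restricted $\X{s-1}{b_+-1}$-norm can be implemented by multiplication with $\chi(t/T)$ at the cost of a polynomial factor in $T$ (using Lemma \ref{tools:lem_restricted_continuity} and the fact that $b_+-1<0$).

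For the $\X{s-1}{b_+-1}$-bound, I would use the characterization of the $\X{s-1}{b_+-1}$-norm through Plancherel to reduce to the $L^2_{\lambda}\ell^2_n$-norm of
\begin{equation*}
\langle n \rangle^{s-1}\langle |\lambda| - \langle n\rangle\rangle^{b_+-1}\, \mathcal{F}_t(\chi(t/T) \<3N>)(\lambda,n).
\end{equation*}
By Gaussian hypercontractivity, for each fixed $(\lambda,n)$ the $L^p_\omega$-norm of the inner sum is bounded by $p^{3/2}$ times its $L^2_\omega$-norm, and by Minkowski we can pull the $L^p_\omega$-norm inside the $L^2_\lambda \ell^2_n$-norm. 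Now the It\^{o}-isometry (item \ref{tools:item_integral_4} of Lemma \ref{tools:lem_basic_integral}) turns the $L^2_\omega$-norm into the square root of a weighted sum over $(n_1,n_2,n_3)$: orthogonality forces $m_j = \pm n_{\pi(j)}$ for some permutation $\pi$, so that the sum effectively becomes
\begin{equation*}
\sum_{\substack{n_1,n_2,n_3 \\ n=n_{123}}} \Big(\prod_j \chi_{N_j}(n_j)\Big) |\widehat{V}(n_{12})|^2 \, \langle n\rangle^{2(s-1)} \Big(\prod_j \langle n_j\rangle^{-2}\Big) \big|\widehat{\chi}(T(\lambda - \pm_{123}\langle n\rangle - \varphi))\big|^2,
\end{equation*}
with $\varphi = \sum_j \pm_j \langle n_j\rangle$. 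After integrating in $\lambda$ and using $|\widehat{V}(n_{12})|\lesssim \langle n_{12}\rangle^{-\beta}$, this is exactly the quantity estimated by the cubic sum estimate with $\gamma=\beta$; the output is bounded by $\max(N_1,N_2,N_3)^{2(s-\beta)}$ plus a lower order term. Since $s < \beta - \eta$, we obtain the desired decay $N_{\max}^{-2\eta}$, which after square-rooting yields the $\eta'$-gain required for summation over dyadic scales and for the supremum in $N$ (taking $\eta'<\eta$). The argument works identically for the truncated cubic object $\<3DNtau>$, since $1_{[0,\tau]}$ is handled via \eqref{tools:eq_estimate_fouriertransform_2} from Lemma \ref{tools:lem_estimate_fouriertransform}, which preserves all the relevant counting structure.

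For the $C_t^0 \cC_x^s$-estimate of $\<3DN>$, I would first bound, for fixed $(t,x)$ and dyadic $N_{123}$, the $L^p_\omega$-norm of $P_{N_{123}} \<3DN>(t,x)$ by hypercontractivity and the explicit Duhamel formula; the second moment produces the same combinatorial sum as above but now with the extra factor $\langle n\rangle^{-2}$ coming from the multiplier $\langle\nabla\rangle^{-1}$ already absorbed into $\langle n\rangle^{2(s-1)}$, so the cubic sum estimate again yields $\max(N_j)^{2(s-\beta)}$. A union bound in $(t,x)$ using Kolmogorov's continuity criterion in $t$, Sobolev embedding in $x$ (choosing $s$ slightly larger to accommodate the loss), and summation over dyadic scales $N_{123}$ delivers the uniform-in-$N$ supremum with an $\eta'$-gain in the largest frequency.

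The main obstacle is the sharp regularity $\beta-$ rather than the naive $\beta - 1/2$: this is precisely the dispersive smoothing that the cubic sum estimate captures. Everything else is a routine combination of chaos estimates, It\^{o}-isometry, and dyadic summation.
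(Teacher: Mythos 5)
Your overall strategy — stochastic representation via multiple integrals, Littlewood--Paley decomposition, Gaussian hypercontractivity to reduce to $L^2_\omega$, It\^{o}-isometry, and the cubic sum estimate — matches the paper's. There is, however, a real gap in the $\X{s-1}{b_+-1}$ step, and the treatment of the $C_t^0\cC_x^s$ estimate takes a different, less developed route.

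\textbf{Gap in the $\X{s-1}{b_+-1}$ estimate.} You propose to integrate in $\lambda$ directly against the weight $\langle\lambda\rangle^{2(b_+-1)}$ and then feed the result into the cubic sum estimate (a $\sup_m$ statement). But $b_+>1/2$, so $2(b_+-1)>-1$ and the weight $\langle\lambda\rangle^{2(b_+-1)}$ is not integrable. Consequently, when you convert the weight $\langle\varphi\rangle^{2(b_+-1)}$ on the counting sum into the form $\sum_m\langle m\rangle^{2(b_+-1)}\cdot 1\{|\varphi-m|\leq 1\}$, the geometric sum $\sum_m\langle m\rangle^{2(b_+-1)}$ diverges. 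The claim that the resulting sum is ``exactly'' the quantity in Proposition \ref{tools:prop_cubic_sum} is therefore not right. There are two standard fixes, neither of which you carry out: (i) the paper's route, which first proves the frequency-localized bound for $b_-<1/2$ (where $\sum_m\langle m\rangle^{2(b_- -1)}$ converges), obtains decay $N_{\max}^{s-\beta}$, and then interpolates with a crude $\X{0}{0}$ bound (which loses only $N_{\max}^{3/2}$) to upgrade to $b_+$ at the cost of $N_{\max}^{O(b_+-b_-)}$ — absorbed because $s<\beta-\eta$ and $b_+-b_-\ll\eta$; or (ii) restrict the sum over $m$ to the range $|m|\lesssim N_{\max}$ enforced by $|n_j|\sim N_j$, paying $N_{\max}^{2b_+-1}$ and again invoking $b_+-1/2\ll\eta$. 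Without one of these steps the proof does not close.

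\textbf{The $C_t^0\cC_x^s$ estimate.} You propose a Kolmogorov continuity criterion in $t$ combined with Sobolev embedding in $x$ and a union bound. The paper proceeds differently: it applies the energy estimate (Lemma \ref{tools:lem_energy}) to convert the already-proved $\X{s-1}{b_+-1}$ bound for $\<3N>$ into an $L_t^\infty H_x^s$ bound for $\<3DN>$, then uses space-time Sobolev embedding, Minkowski, hypercontractivity, and — crucially — the translation invariance in $x$ of the law of each fixed-time Gaussian chaos to replace $L_x^q$ by $L_x^2$. Your route via Kolmogorov is not incorrect in principle, but it would require separate moment bounds on time increments, care with the $\sup_N$, and does not exploit translation invariance; you would need to supply these details. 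The paper's chaining through the energy estimate is considerably shorter given that the first estimate is already in hand.
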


\begin{remark}
We recall that the parameter $T$ is important for the globalization argument, but does not enter into the local well-posedness theory. In order to achieve smallness on a short interval, we will instead use the time-localization lemma (Lemma \ref{tools:lem_localization}) and $b_+> b$. 
\end{remark}

\begin{proof}
We first prove \eqref{so3:eq_estimate_1}, which forms the main part of the argument. In the end, we follow a standard and short argument to show that \eqref{so3:eq_estimate_1}, Gaussian hypercontractivity, and translation invariance imply \eqref{so3:eq_estimate_2}. 
To simplify the notation, we set $N_{\max}=\max(N_1,N_2,N_3)$. In this argument, we rely on multiple stochastic integrals. Recalling the multiple stochastic integrals from \eqref{tools:eq_Ik} and the stochastic representation formula (Proposition \ref{tools:prop_stochastic_representation}), we have that 
\begin{equation*}
\begin{aligned}
\<3N> (t,x)
&= \sum_{n_1,n_2,n_3 \in \bZ^3} \rho_N(n_{123}) \Big( \prod_{j=1}^{3} \rho_N(n_j) \Big) \widehat{V}(n_{12}) \exp\big( i \langle n_{123}, x \rangle\big) \cI_3[t,n_1,n_2,n_3] \\
&=\sum_{\pm_1,\pm_2,\pm_3} \sum_{n_1,n_2,n_3 \in \bZ^3}  \bigg[ c(\pm_j\colon 1\leq j \leq 3) \Big( \prod_{j=1}^{3} \rho_N(n_j) \Big) \widehat{V}(n_{12}) \exp\big( i \langle n_{123}, x \rangle\big) \\
&\quad \times\Big( \prod_{j=1}^3 \exp(\pm_j i t \langle n_j \rangle) \Big)  \cI_3[\pm_j,n_j\colon 1 \leq j \leq 3] \bigg],
\end{aligned}
\end{equation*}
where $c(\pm_j\colon 1\leq j \leq 3)$ are deterministic coefficients. Using a Littlewood-Paley decomposition, we obtain that 
\begin{align*}
\<3N> &= \sum_{\pm_1,\pm_2,\pm_3} \sum_{N_1,N_2,N_3 \geq 1} c(\pm_j\colon 1\leq j \leq 3) \<3N> [\pm_j,N_j \colon 1\leq j \leq 3],
\end{align*}
where
\begin{align*}
\<3N> [\pm_j,N_j \colon 1\leq j \leq 3](t,x) & \defe  \sum_{n_1,n_2,n_3 \in \bZ^3}  \bigg[  \rho_N(n_{123}) \Big( \prod_{j=1}^{3} \rho_N(n_j) \chi_{N_j}(n_j) \Big) \widehat{V}(n_{12})  \\
&\quad \times \exp\big( i \langle n_{123}, x \rangle\big) \Big( \prod_{j=1}^3 \exp(\pm_j i t \langle n_j \rangle) \Big)  \cI_3[\pm_j,n_j\colon 1 \leq j \leq 3] \bigg].
\end{align*}
We estimate each dyadic block separately. We first prove the desired estimate for $b_-$ instead of $b_+$ and then later upgrade the estimate.
Using Minkowski's integral inequality and Gaussian hypercontractivity (Proposition \ref{tools:prop_Gaussian_hypercontractivity}), we obtain that 
\begin{align}
& \Big\|    \Big\|  \<3N> [\pm_j,N_j \colon 1\leq j \leq 3]  \Big\|_{\X{s-1}{b_--1}([0,T])}  \Big\|_{L^p_\omega}  \notag\\
&\lesssim \max_{\pm_{123}}
 \Big\| \mathcal{F}_{t,x}\Big( \chi(t/T) \<3N> [\pm_j,N_j \colon 1\leq j \leq 3](t,x) \Big)(\lambda \mp_{123} \langle n \rangle, n) \|_{L^p_\omega L_\lambda^2 \ell_n^2 ( \Omega \times \bR \times \bT^3)} \notag \\
 &\lesssim p^{\frac{3}{2}} \max_{\pm_{123}}
 \Big\| \mathcal{F}_{t,x}\Big( \chi(t/T) \<3N> [\pm_j,N_j \colon 1\leq j \leq 3](t,x) \Big)(\lambda \mp_{123} \langle n \rangle, n) \|_{L^2_\omega L_\lambda^2 \ell_n^2 ( \Omega \times \bR \times \bT^3)} . \label{so3:eq_p6}
\end{align}

 For a fixed sign $\pm_{123}$, we define the phase $\varphi$ by 
\begin{equation*}
\varphi(n_1,n_2,n_3)\defe \pm_{123} \langle n_{123} \rangle \pm_1 \langle n_1 \rangle  \pm_2 \langle n_2 \rangle  \pm_3 \langle n_3 \rangle.
\end{equation*}
Using the definition of $\varphi$, we can write the space-time Fourier transform of a dyadic piece in the cubic stochastic object $\chi(t/T) \<3N>$ as 
\begin{equation}\label{so3:eq_p5}
\begin{aligned}
&\mathcal{F}_{t,x}\Big( \chi(t/T) \<3N> [\pm_j,N_j \colon 1\leq j \leq 3](t,x) \Big)(\lambda \mp_{123} \langle n \rangle, n) \\
&= T \sum_{n_1,n_2,n_3 \in \bZ^3} \bigg[ 1\big\{ n= n_{123} \big\} \rho_N(n_{123}) \Big( \prod_{j=1}^{3} \rho_N(n_j) \chi_{N_j}(n_j) \Big) \widehat{V}(n_{12})      \\ 
& \times \widehat{\chi}\big( T (\lambda-\varphi(n_1,n_2,n_3))\big)\cI_3[\pm_j,n_j\colon 1 \leq j \leq 3] \bigg] . 
\end{aligned}
\end{equation}
Using the orthogonality of the multiple stochastic integrals and the decay of $\widehat{\chi}$, we obtain that 
\begin{align*}
 & \Big\| \mathcal{F}_{t,x}\Big( \chi(t/T) \<3N> [\pm_j,N_j \colon 1\leq j \leq 3](t,x) \Big)(\lambda \mp_{123} \langle n \rangle, n) \Big\|_{L^2_\omega L_\lambda^2 \ell_n^2 ( \Omega \times \bR \times \bT^3)}^2 \\
 &\lesssim T^2 N_1^{-2} N_2^{-2} N_3^{-2}  \sum_{n_1,n_2,n_3 \in \bZ^3}  \bigg[  \Big( \prod_{j=1}^{3} \chi_{N_j}(n_j) \Big) \langle n_{123}\rangle^{2(s-1)} |\widehat{V}(n_{12})|^2  \\
 &\times 
  \int_{\bR} \dlambda \, \langle \lambda \rangle^{2(b_--1)}  |\widehat{\chi}\big( T (\lambda-\varphi(n_1,n_2,n_3))\big)|^2 \bigg] \\
  &\lesssim T^2 N_1^{-2} N_2^{-2} N_3^{-2}  \sum_{n_1,n_2,n_3 \in \bZ^3}   \Big( \prod_{j=1}^{3} \chi_{N_j}(n_j) \Big) \langle n_{123}\rangle^{2(s-1)} |\widehat{V}(n_{12})|^2  \langle \varphi(n_1,n_2,n_3) \rangle^{2(b_--1)}  \\
  &\lesssim  T^2 N_1^{-2} N_2^{-2} N_3^{-2} \sup_{m\in \bZ^3}  \sum_{n_1,n_2,n_3 \in \bZ^3}   \Big( \prod_{j=1}^{3} \chi_{N_j}(n_j) \Big) \langle n_{123}\rangle^{2(s-1)} |\widehat{V}(n_{12})|^2 1\big\{ |\varphi-m|\leq 1\big\}. 
\end{align*}
Combining this with \eqref{so3:eq_p6} and using the cubic sum estimate (Proposition \ref{tools:prop_cubic_sum}), we obtain that 
\begin{equation*}
 \Big\|    \Big\|  \<3N> [\pm_j,N_j \colon 1\leq j \leq 3]  \Big\|_{\X{s-1}{b_--1}([0,T])}  \Big\|_{L^p_\omega} \lesssim T p^{\frac{3}{2}} N_{\max}^{s-\beta}.  
\end{equation*}

Since there are at most $\lesssim \log(10+N_{\max})$ non-trivial choices for $N$, we obtain from Lemma \ref{tools:eq_moments_sups} that 
\begin{equation}\label{so3:eq_p1}
\begin{aligned}
& \Big\| \sup_{N\geq 1}   \Big\|  \<3N> [\pm_j,N_j \colon 1\leq j \leq 3]  \Big\|_{\X{s-1}{b_--1}([0,T])}  \Big\|_{L^p_\omega} \\
&\lesssim T \log\log(10+N_{\max})^2  N_{\max}^{s-\beta} p^{\frac{3}{2}}. 
\end{aligned}
\end{equation}
After summing over the dyadic scales, \eqref{so3:eq_p1} almost implies  \eqref{so3:eq_estimate_1} except that $b_-$ needs to be replaced by $b_+$. To achieve this, we utilize the room of the estimate \eqref{so3:eq_p1} in the maximal frequency scale. Using Plancherel's theorem, Minkowski's integral inequality, and Gaussian hypercontractivity, we have that 
\begin{align*}
&\Big\| \sup_{N\geq 1}   \Big\|  \<3N> [\pm_j,N_j \colon 1\leq j \leq 3]  \Big\|_{\X{0}{0}([0,T])}  \Big\|_{L^p_\omega} \\
&\lesssim  \log\log(10+N_{\max})^2  \sup_{N} \Big\|  1\big\{ 0 \leq t \leq T\big\}  \<3N> [\pm_j,N_j \colon 1\leq j \leq 3] \|_{L^p_\omega L_t^2 L_x^2} \\
&\lesssim  T^{\frac{1}{2}}  \log\log(10+N_{\max})^2  p^{\frac{3}{2}} \Big( \sum_{n_1,n_2,n_3 \in \bZ^3}    \prod_{j=1}^{3} \big(  \chi_{N_j}(n_j) \langle n_j \rangle^{-2} \big) \Big)^{\frac{1}{2}} \\
&\lesssim T^{\frac{1}{2}}  \log\log(10+N_{\max})^2  N_{\max}^{\frac{3}{2}} p^{\frac{3}{2}}. 
\end{align*}
By interpolating this estimate with \eqref{so3:eq_p1}, we obtain that 
\begin{equation}\label{so3:eq_p2}
\begin{aligned}
& \Big\| \sup_{N\geq 1}   \Big\|  \<3N> [\pm_j,N_j \colon 1\leq j \leq 3]  \Big\|_{\X{s-1}{b_+-1}([0,T])}  \Big\|_{L^p_\omega} \\
&\lesssim T  \log\log(10+N_{\max})^2   N_{\max}^{s-\beta+4(b_+-b_-)} p^{\frac{3}{2}} \\
&\lesssim T N_{\max}^{s-\beta+5(b_+-b_-)} p^{\frac{3}{2}}. 
\end{aligned}
\end{equation}
After summing over the dyadic scales, this finally yields \eqref{so3:eq_estimate_1}. We prove the second estimate \eqref{so3:eq_estimate_2} using the (frequency-localized version of the) first estimate. We present the details of the (standard) argument, but skip similar steps in subsequent proofs. Using the energy estimate (Lemma \ref{tools:lem_energy}) and the (frequency-localized version of the) first estimate \eqref{so3:eq_estimate_1}, we obtain that
\begin{equation}\label{so3:eq_p3}
 \Big\| \sup_{N\geq 1}   \Big\|  \<3DN> [\pm_j,N_j \colon 1\leq j \leq 3]  \Big\|_{L_t^\infty H_x^s}  \Big\|_{L^p_\omega} 
\lesssim (1+T)    N_{\max}^{s-\beta+5(b_+-b_-)} p^{\frac{3}{2}}. 
\end{equation}
For any $2\leq q \leq p$, we have from Sobolev embedding (in space-time), Minkowski's integral inequality, and Gaussian hypercontractivity that 
\begin{align}
& \Big\|  \<3DN> [\pm_j,N_j \colon 1\leq j \leq 3]  \Big\|_{L^p_\omega L_t^\infty \cC_x^s} \notag \\
&\lesssim   N_{\max}^{\frac{4}{q}}  \Big\|  \langle \nabla \rangle^s \<3DN> [\pm_j,N_j \colon 1\leq j \leq 3]  \Big\|_{L^p_\omega L_t^q L_x^q} \notag \\
&\lesssim   N_{\max}^{\frac{4}{q}}  \Big\|  \langle \nabla \rangle^s \<3DN> [\pm_j,N_j \colon 1\leq j \leq 3]  \Big\|_{L_t^q L_x^q L^p_\omega} \notag \\
&\lesssim   N_{\max}^{\frac{4}{q}}  p^{\frac{3}{2}} \Big\|  \langle \nabla \rangle^s \<3DN> [\pm_j,N_j \colon 1\leq j \leq 3]  \Big\|_{L_t^q L_x^q L^2_\omega}. \label{so3:eq_p4}
\end{align}
For a fixed $t\in\bR$, the distribution of $ \langle \nabla \rangle^s \<3DN> [\pm_j,N_j \colon 1\leq j \leq 3](t,x)$ is translation invariant. Thus, we can replace the $L_x^q$-norm in \eqref{so3:eq_p4} by the $L_x^2$-norm. Using Minkowski's integral inequality and \eqref{so3:eq_p3} then yields 
\begin{align*}
 &\Big\|  \<3DN> [\pm_j,N_j \colon 1\leq j \leq 3]  \Big\|_{L^p_\omega L_t^\infty \cC_x^s} 
 \lesssim   N_{\max}^{\frac{4}{q}}  p^{\frac{3}{2}} \Big\|  \langle \nabla \rangle^s \<3DN> [\pm_j,N_j \colon 1\leq j \leq 3]  \Big\|_{L_\omega^2 L_t^q L_x^2 } \\
& \lesssim T^{1+\frac{1}{q}}  N_{\max}^{s-\beta+5(b_+-b_-)+\frac{4}{q}}  p^{\frac{3}{2}} . 
\end{align*}
By choosing $q=q(b_+,b_-)$ sufficiently large and then summing over dyadic scales, this proves \eqref{so3:eq_estimate_2} for $p \gtrsim_{b_+,b_-} 1$. The smaller values of $p$ can be handled by using H\"{o}lder's inequality in $\omega$. \\

Finally, the statement for $\<3DN>$ replaced by $\<3DNtau>$ follows from the boundedness of $1_{[0,\tau]}(t)$ on $\X{s_2-1}{b_+-1}$, which was proven in Lemma \ref{tools:lem_restricted_continuity}.
\end{proof}

\subsection{Quartic stochastic objects}\label{section:quartic} 
The expansion  $u_N= \<1b>+ \<3DN> +w_N$ or the explicit stochastic objects in $\So$ only contain linear, cubic, quintic, or septic stochastic objects. However, the physical terms $\Phy$ contain terms such as 
\begin{equation*}
   V \ast \Big( P_{\leq N} \<1b> \cdot  P_{\leq N}  \<3DN>\Big)  P_{\leq N} w_N 
\qquad \text{or} \qquad 
  V \ast \Big( P_{\leq N} \<1b> \paraneq  P_{\leq N} w_N\Big)  P_{\leq N} \<3DN>. 
\end{equation*}
Since we treat $w_N \in \X{s_1}{b}$ using deterministic methods, they can be viewed as quartic expressions in the random initial data $\bluedot\,$. Furthermore, due to the convolution with the interaction potential $V$ in the second term, we also have to understand the product of $\<1b>$ and $\<3DN>$ at two different spatial points. 

\begin{proposition}\label{so4:prop}
Let $N_{123},N_4\geq 1$. Then, we have for all $s< -1/2-\eta$ and all $T\geq 1$ that 
\begin{equation}\label{so4:eq_estimate_1}
\begin{aligned}
&\Big\| \sup_{N\geq 1} \sup_{y \in \bT^3} \Big\| \Big( P_{N_{123}} P_{\leq N} \<3DN>(t,x-y) \Big) \cdot P_{N_4} P_{\leq N} \<1b>(t,x) \Big\|_{C_t^0 \cC_x^{s}([0,T]\times \bT^3)} \Big\|_{L^p_\omega(\bP)} \\
& \lesssim T^3 p^2 \max(N_{123},N_4)^{-\frac{\eta}{2}} N_4^\kappa. 
\end{aligned}
\end{equation}
If $N_{123}\sim N_4$, we have for all $s< -1/2+\beta-2\eta$ that 
\begin{equation}\label{so4:eq_estimate_2}
\begin{aligned}
&\Big\|   \sup_{N\geq 1} \sup_{y \in \bT^3} \Big\| \Big( P_{N_{123}} P_{\leq N} \<3DN>(t,x-y) \Big) \cdot P_{N_4} P_{\leq N} \<1b>(t,x) \Big\|_{C_t^0 \cC_x^{s}([0,T]\times \bT^3)} \Big\|_{L^p_\omega(\bP)} \\
&\lesssim T^3 p^2  N_4^{\kappa}. 
\end{aligned}
\end{equation}
Finally, without the shift in $y\in \bT^3$, we have for $s<-1/2-\eta$ that
\begin{equation}\label{so4:eq_estimate_3}
\begin{aligned}
&\Big\|  \sup_{N\geq 1}  \Big\| \Big( P_{N_{123}}  P_{\leq N} \<3DN>(t,x) \Big) \cdot P_{N_4} P_{\leq N} \<1b>(t,x) \Big\|_{C_t^0 \cC_x^{s}([0,T]\times \bT^3)} \Big\|_{L^p_\omega(\bP)} \\
& \lesssim T^3 p^2 \max(N_{123},N_4)^{-\frac{\eta}{10}}. 
\end{aligned}
\end{equation}
\end{proposition}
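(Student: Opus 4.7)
\emph{Proof proposal.} The starting point is the stochastic representation from Proposition \ref{tools:prop_stochastic_representation}, which writes $P_{\leq N}\<1b>$ as a single stochastic integral and $P_{\leq N}\<3DN>$ as a triple stochastic integral with an additional deterministic Duhamel factor that contributes one derivative of smoothing. Applying the product formula (a variant of Lemma \ref{tools:lemma_23product} adapted to our situation, in which both processes coincide and the correlation $\mathfrak{C}$ equals $\rho_N^2$ on the relevant support) to $P_{N_{123}} P_{\leq N} \<3DN>(t,x-y)$ and $P_{N_4} P_{\leq N} \<1b>(t,x)$, we decompose the product into a non-resonant fourth-order Wiener chaos and three resonant second-order chaoses, one for each pairing $n_4 = -n_j$ with $j\in\{1,2,3\}$. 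The spatial shift by $y$ enters only through unimodular phase factors (namely $e^{-i\langle n_{123},y\rangle}$ in the non-resonant piece and $e^{-i\langle n_j,y\rangle}$ in the resonant pieces), so the supremum over $y\in \bT^3$ will be controlled by Sobolev embedding in $y$ at the cost of a factor $\max(N_{123},N_4)^{\kappa}$, which is absorbed into the $N_4^{\kappa}$ loss in \eqref{so4:eq_estimate_1} and \eqref{so4:eq_estimate_2}.

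\emph{Non-resonant component.} Frequency-localized in $n_1,n_2,n_3,n_4$, we compute the space-time Fourier transform exactly as in the proof of Proposition \ref{so3:prop}, apply Gaussian hypercontractivity (Proposition \ref{tools:prop_Gaussian_hypercontractivity}) to reduce from $L^p_\omega$ to $L^2_\omega$, and use the non-resonant quartic sum estimate (Lemma \ref{tools:lemma_nonresonant_quartic}) with phase $\varphi(n_1,n_2,n_3)= \pm_{123}\langle n_{123}\rangle \pm_1 \langle n_1\rangle \pm_2\langle n_2\rangle \pm_3\langle n_3\rangle$. This yields an $\X{s-1}{b_--1}([0,T])$ bound with decay $\max(N_1,N_2,N_3)^{-\beta+\eta}\, N_4^{-\eta}$ per dyadic block. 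Interpolating against the crude $L_t^2 L_x^2$ bound (as in \eqref{so3:eq_p2}) upgrades $b_-$ to $b_+$ while losing a small power of $N_{\max}$, and a Sobolev-in-space-and-time meta-argument identical to \eqref{so3:eq_p3}--\eqref{so3:eq_p4} converts the resulting $\X{s}{b_+}$ bound into the desired $L^p_\omega C_t^0 \cC_x^s$ bound. Since the product has frequency at scale $\max(N_{123},N_4)$ and there are only logarithmically many ways to achieve this, the dyadic sum is comfortably summable for $s<-1/2-\eta$ with the gain $\max(N_{123},N_4)^{-\eta/2}$ or $-\eta/10$.

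\emph{Resonant components.} Consider the pairing $n_4=-n_3$; the other two are symmetric. The product formula produces a coefficient given by the sum over the paired variable $n_3$ of the Duhamel-in-$t'$ integral against $\rho_N(n_3)^2 \langle n_3\rangle^{-2}$. Combining Lemma \ref{tools:lem_estimate_fouriertransform} with the basic resonance estimate (Lemma \ref{tools:lem_basic_resonance}), this sum is bounded by $\log(2+N_3)\langle n_{12}\rangle^{-1}$, so the pairing reduces to a second-order Wiener chaos of the same shape as in Lemma \ref{tools:lemma_resonant_quartic}. Another application of Gaussian hypercontractivity together with Lemma \ref{tools:lemma_resonant_quartic} yields a $C_t^0 \cC_x^s$ bound for any $s<-1/2$ (up to logarithms), controlled by $\max(N_1,N_2)^{s}$. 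The pairing forces $N_3\sim N_4$, so whenever $N_{123}\not\sim N_4$ one of $N_1,N_2$ must be $\sim \max(N_{123},N_4)$; the dispersive gain then transfers into the desired decay $\max(N_{123},N_4)^{-\eta/2}$ of \eqref{so4:eq_estimate_1} and (without shift) $\max(N_{123},N_4)^{-\eta/10}$ of \eqref{so4:eq_estimate_3}.

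\emph{Main obstacle and the diagonal case.} The main obstacle is precisely the resonant part in the diagonal regime $N_{123}\sim N_4$. There, no dyadic gap is available, and the best regularity one can extract from Lemma \ref{tools:lemma_resonant_quartic} is $s<-1/2$ up to logarithms in $N_3 \sim N_4$, which only sums once we afford ourselves an extra $\beta$ of regularity coming from the cubic sum estimate (Proposition \ref{tools:prop_cubic_sum}) applied inside $\<3DN>$; this is why \eqref{so4:eq_estimate_2} is restricted to $s<-1/2+\beta-2\eta$. Assembling the non-resonant and resonant contributions, taking the supremum over $N\geq 1$ via Lemma \ref{tools:eq_moments_sups} (using the $\eta^{\prime}$-room in every dispersive gain), and handling the supremum in $y$ by a Bernstein/Sobolev embedding in $y$, we obtain the three claimed bounds.
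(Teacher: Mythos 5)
Your treatment of the non-resonant chaos and the resonant chaos for \eqref{so4:eq_estimate_1} and \eqref{so4:eq_estimate_2} tracks the paper's argument closely (product formula, Gaussian hypercontractivity, Lemma~\ref{tools:lemma_nonresonant_quartic} for the quartic part, Lemma~\ref{tools:lemma_resonant_quartic} for the quadratic part), and your reading of the diagonal case $N_{123}\sim N_4$ as the source of the $\beta$-gain in \eqref{so4:eq_estimate_2} is correct. However, the proposal has a genuine gap in the proof of \eqref{so4:eq_estimate_3}, and it misidentifies the origin of the $N_4^\kappa$ loss.

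For \eqref{so4:eq_estimate_3} you never invoke the $\operatorname{sine}$-cancellation lemma (Lemma~\ref{tools:lem_sin_cancellation}), which is the essential ingredient. Your argument for the resonant chaos obtains decay in $\max(N_{123},N_4)$ only from a dyadic gap, i.e., only when $N_{123}\not\sim N_4$, since then one of $N_1,N_2$ is forced up to $\sim\max(N_{123},N_4)$ and the regularity $s<-1/2$ in Lemma~\ref{tools:lemma_resonant_quartic} supplies the gain. But \eqref{so4:eq_estimate_3} requires $\max(N_{123},N_4)^{-\eta/10}$ with \emph{no} $N_4^\kappa$ loss even in the diagonal case $N_{123}\sim N_4$ with $N_1,N_2$ small. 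In that regime the only dyadic gap is $N_3\sim N_4$ itself, and Lemma~\ref{tools:lemma_resonant_quartic} alone gives at best $\log(2+N_4)^2\max(N_1,N_2)^{2s}$, which may be $\sim\log(2+N_4)^2$ when $N_1,N_2\sim 1$ -- a divergence. The paper's resolution is to observe that when $y=0$ the parity symmetry $n_3\leftrightarrow -n_3$ is intact, so the $\operatorname{sine}$-cancellation lemma applies to the inner $n_3$-sum and produces a second, complementary estimate with polynomial decay $N_3^{-1}$ (paying polynomial factors in $\max(N_1,N_2)$). Interpolating this against the logarithmic bound both removes the logarithm and supplies the genuine $\max(N_{123},N_4)^{-\eta/10}$ decay. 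Without this step, the proof of \eqref{so4:eq_estimate_3} fails.

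Relatedly, your claim that the $N_4^\kappa$ loss in \eqref{so4:eq_estimate_1} and \eqref{so4:eq_estimate_2} arises from a Sobolev embedding in $y$ is not the mechanism in the paper; it arises from the $\log(2+N_4)^2$ factor in Lemma~\ref{tools:lemma_resonant_quartic}, which is unavoidable for $y\neq 0$ because the phase $e^{-i\langle n_{123},y\rangle}$ (respectively $e^{-i\langle n_3,y\rangle}$ in the paired sum) breaks the parity needed by the $\operatorname{sine}$-cancellation lemma. This is exactly the content of the remark following the proposition: the $N_4^\kappa$ loss is necessary in the shifted estimates precisely because the cancellation mechanism is unavailable there.
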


\begin{remark}\label{so4:rem}
In the fully frequency-localized version of Proposition \ref{so4:prop}, which is detailed in the proof, we gain an $\eta^\prime$-power of the maximal frequency-scale. As in Proposition \ref{so3:prop}, we may also replace $\<3DN>$ by  $\<3DNtau>= \Duh \big[ 1_{[0,\tau]} \<3N>\big]$.
\end{remark}

\begin{remark}
We recall that $\eta$ is much smaller than $\kappa$ and hence the right-hand sides of \eqref{so4:eq_estimate_1} and \eqref{so4:eq_estimate_2} diverge as $N_4\rightarrow \infty$. The third estimate \eqref{so4:eq_estimate_3} is quite delicate and requires the $\operatorname{sine}$-cancellation lemma. A similar estimate is not available for the partially shifted process and it is likely that at least a logarithmic loss is necessary in \eqref{so4:eq_estimate_1} and \eqref{so4:eq_estimate_2} as $N_4$ tends to infinity. 
\end{remark}

\begin{proof}
We prove \eqref{so4:eq_estimate_1} and \eqref{so4:eq_estimate_2} simultaneously. The third estimate \eqref{so4:eq_estimate_3} will mainly utilize the same estimates, but also requires the $\operatorname{sine}$-cancellation lemma (Lemma \ref{tools:lem_sin_cancellation}). Using the representation based on multiple stochastic integrals (Proposition \ref{tools:prop_stochastic_representation}), we have that 
\begin{align*}
 &\Big( P_{N_{123}} P_{\leq N} \<3DN>(t,x-y) \Big) \cdot P_{N_4} P_{\leq N} \<1b>(t,x) \\
&= \sum_{N_1,N_2,N_3 \geq 1} \sum_{n_1,n_2,n_3,n_4\in \bZ^3} \rho_{N}^2(n_{123}) \chi_{N_{123}}(n_{123}) \Big( \prod_{j=1}^{4} \rho_{ N}(n_j)  \chi_{N_j}(n_j) \Big) \widehat{V}_S(n_1,n_2,n_3)   \\ 
&\times \langle n_{123} \rangle^{-1} \exp\Big( i \langle n_{1234}, x \rangle - i \langle n_{123} , y \rangle\Big) 
 \Big( \int_0^t \sin((t-t^\prime) \langle n_{123} \rangle) ~ \cI_3[t^\prime;n_1,n_2,n_3] \cdot \cI_1[t;n_4] \, \mathrm{d}t^\prime \Big).  
\end{align*}
Using the product formula for multiple stochastic integrals, we obtain that 
\begin{align*}
 &\Big( P_{N_{123}} P_{\leq N} \<3DN>(t,x-y) \Big) \cdot P_{N_4} P_{\leq N} \<1b>(t,x) \\
&= \sum_{N_1,N_2,N_3 \geq 1} \cG^{(4)}(t,x,y;N_\ast) +  \sum_{N_1,N_2,N_3 \geq 1} \cG^{(2)}(t,x,y;N_\ast),
\end{align*}
where the dependence on $N_{123},N_1,N_2,N_3,N_4$ is indicated by $N_\ast$ and the quartic and quadratic Gaussian chaoses are given by 
\begin{align*}
&\cG^{(4)}(t,x,y;N_\ast) \\
&=  \sum_{\pm_1,\pm_2,\pm_3,\pm_4} \sum_{n_1,n_2,n_3,n_4\in \bZ^3} \bigg[ c(\pm_j\colon 1 \leq j \leq 4) \rho_{N}^2(n_{123}) \rho_N(n_4) \chi_{N_{123}}(n_{123}) \\
&\times  \big( \prod_{j=1}^{4} \rho_{\leq N}(n_j)  \chi_{N_j}(n_j) \big) \widehat{V}_S(n_1,n_2,n_3)   
\langle n_{123} \rangle^{-1} \exp\Big( i \langle n_{1234}, x \rangle - i \langle n_{123} , y \rangle\Big) \\
& \times \exp(\pm_4 i t \langle n_4 \rangle)  \Big( \int_0^t \sin((t-t^\prime) \langle n_{123} \rangle) \big( \prod_{j=1}^3 \exp(\pm_j i t^\prime \langle n_j \rangle) \big)  \, \mathrm{d}t^\prime \Big) 
\cI_4(\pm_j, n_j \colon 1 \leq j \leq 4 )\bigg]
\end{align*}
and 
\begin{align*}
&\cG^{(2)}(t,x,y;N_\ast) \\
&= 3 \sum_{\pm_1,\pm_2} \sum_{n_1,n_2\in \bZ^3}  \bigg[   c(\pm_1,\pm_2) \Big( \prod_{j=1}^{2} \rho_{N}(n_j)\chi_{N_j}(n_j) \Big) \exp\Big(  i \langle n_{12}, x \rangle\Big)   \\ 
&\times \bigg( \sum_{n_3 \in \bZ^3} \Big[ \rho_{N}^2(n_{123}) \rho_N^2(n_3) \chi_{N_{123}}(n_{123}) \chi_{N_3}(n_3) \chi_{N_4}(n_3) \langle n_{123} \rangle^{-1}  \langle n_3 \rangle^{-2} \widehat{V}_S(n_1,n_2,n_3)  \\
 &\times  \exp\big(- i \langle n_{123} , y \rangle\big) \int_0^t \sin((t-t^\prime) \langle n_{123} \rangle) \cos( (t-t^\prime) \langle n_3 \rangle) \prod_{j=1}^2 \exp( \pm_j  i t^\prime \langle n_ j \rangle \rangle) \, \mathrm{d}t^\prime \Big] \bigg) \\
&\times \cI_{2}(\pm_j,n_j\colon j=1,2) \bigg]. 
\end{align*}
The quartic Gaussian chaos $\cG^{(4)}$  and quadratic Gaussian chaoses $\cG^{(2)}$ contain the resonant and non-resonant terms of the product, respectively. We estimate both terms separately. \\

\emph{The non-resonant term $\cG^{(4)}$:} 
We first let $ s < -1/2 - \eta$. Using Gaussian hypercontractivity and standard reductions  (see e.g. the proof of Proposition \ref{so3:prop}), it suffices to estimate the $L_t^\infty L_\omega^2 H_x^s$-norm instead of the $L^p_\omega L_t^\infty \cC_x^s$-norm. Let the phase-function $\varphi$ be as in \eqref{tools:eq_phase_varphi}. Using the orthogonality of the multiple stochastic integrals, we have for a fixed $t\in [0,T]$ that 
\begin{align*}
&\big\| \cG^{(4)}(t,x,y;N_\ast)) \big\|_{L_\omega^2 H_x^s}^2 \\
&\lesssim \sum_{\pm_1,\pm_2,\pm_3} \sum_{n_1,n_2,n_3,n_4 \in \bZ^3}\bigg[ \chi_{N_{123}}(n_{123})   \Big( \prod_{j=1}^4 \chi_{N_j}(n_j) \Big) |\widehat{V}_S(n_1,n_2,n_3)|^2 \langle n_{1234} \rangle^{2s} \langle n_{123} \rangle^{-2} \Big( \prod_{j=1}^4 \langle n_j \rangle^{-2} \Big) \allowdisplaybreaks[3] \\
&\times \Big|  \int_0^t \sin((t-t^\prime) \langle n_{123} \rangle) \big( \prod_{j=1}^3 \exp(\pm_j i t^\prime \langle n_j \rangle) \big)  \, \mathrm{d}t^\prime \Big|^2 \bigg] \\
&\lesssim (1+T)^2  \sum_{\pm_1,\pm_2,\pm_3} \sum_{n_1,n_2,n_3,n_4 \in \bZ^3} \sum_{m\in \bZ}  \bigg[ \langle m \rangle^{-2} \chi_{N_{123}}(n_{123})   \Big( \prod_{j=1}^4 \chi_{N_j}(n_j) \Big) |\widehat{V}_S(n_1,n_2,n_3)|^2 \langle n_{1234} \rangle^{2s} \\
&\times   \langle n_{123} \rangle^{-2}\Big( \prod_{j=1}^4 \langle n_j \rangle^{-2} \Big) 1\big\{ |\varphi-m|\leq 1 \big\} \bigg] \allowdisplaybreaks[3] \\
&\lesssim T^2 \sup_{m\in \bZ}   \sum_{\pm_1,\pm_2,\pm_3} \sum_{n_1,n_2,n_3,n_4 \in \bZ^3}   \bigg[  \chi_{N_{123}}(n_{123})   \Big( \prod_{j=1}^4 \chi_{N_j}(n_j) \Big) |\widehat{V}_S(n_1,n_2,n_3)|^2
\langle n_{1234} \rangle^{2s} \\
&\times   \langle n_{123} \rangle^{-2}  \Big( \prod_{j=1}^4 \langle n_j \rangle^{-2} \Big) 1\big\{ |\varphi-m|\leq 1 \big\} \bigg]. 
\end{align*}
Using the non-resonant quartic sum estimate (Lemma \ref{tools:lemma_nonresonant_quartic}), it follows that 
\begin{equation*}
\big\| \cG^{(4)}(t,x,y;N_{123},N_1,N_2,N_3,N_4)) \big\|_{L_\omega^2 H_x^s}^2 \\
\lesssim T^2 \max(N_1,N_2,N_3)^{-2\beta+2\eta} N_4^{-2\eta}. 
\end{equation*}
This yields \eqref{so4:eq_estimate_1} for the non-resonant component. If $N_{123} \sim N_4$, then $\max(N_1,N_2,N_3) \gtrsim N_4$, and hence we can raise the value of $s$ by $\beta - \eta$. Thus, we also obtain \eqref{so4:eq_estimate_2} for the non-resonant component. Even when $y \neq 0$, our estimate for the non-resonant component does not exhibit any growth in $N_4$, and hence it also yields \eqref{so4:eq_estimate_3} for the non-resonant component. \\

\emph{The resonant term $\cG^{(2)}$:} This term exhibits a higher spatial regularity and we let $-1/2<s<0$. Using Gaussian hypercontractivity and standard reductions  (see e.g. the proof of Proposition \ref{so3:prop}), it suffices to estimate the $L_t^\infty L_\omega^2 H_x^s$-norm instead of the $L^p_\omega L_t^\infty \cC_x^s$-norm. Using the orthogonality of the multiple stochastic integrals, we have that 
\begin{equation}\label{so4:eq_resonant_p1}
\begin{aligned}
&\big\| \cG^{(2)}(t,x,y;N_\ast) \big\|_{L_\omega^2 H_x^s}^2 \\
&\lesssim \sum_{\pm_1,\pm_2} \sum_{n_1,n_2 \in \bZ^3} \bigg[  \Big( \prod_{j=1}^2 \chi_{N_j}(n_j) \Big) \langle n_{12} \rangle^{2s} \langle n_1 \rangle^{-2} \langle n_2 \rangle^{-2} \\
&\times \bigg| \sum_{n_3 \in \bZ^3} \Big[ \rho_{N}^2(n_{123}) \rho_N^2(n_3) \chi_{N_{123}}(n_{123}) \chi_{N_3}(n_3) \chi_{N_4}(n_3) \langle n_{123} \rangle^{-1}  \langle n_3 \rangle^{-2} \widehat{V}_S(n_1,n_2,n_3)  \\
 &\times  \exp\big(- i \langle n_{123} , y \rangle\big) \int_0^t \sin((t-t^\prime) \langle n_{123} \rangle) \cos( (t-t^\prime) \langle n_3 \rangle) \prod_{j=1}^2 \exp( \pm_j  i t^\prime \langle n_ j \rangle \rangle) \, \mathrm{d}t^\prime \Big] \bigg|^2 \bigg]. 
\end{aligned}
\end{equation}
We now present two estimates of \eqref{so4:eq_resonant_p1}. The first estimate will yield \eqref{so4:eq_estimate_1} and \eqref{so4:eq_estimate_2}. The second estimate is restricted to the case $y=0$ and yields, combined with the first estimate, \eqref{so4:eq_estimate_3}. After computing the integral in $t^\prime$ and decomposing according to the dispersive symbol, we obtain from Cauchy-Schwarz that 
\begin{align*}
\eqref{so4:eq_resonant_p1} &\lesssim
T^2 1\big\{ N_3 \sim N_4 \big\} \sum_{n_1,n_2\in \bZ^3}  \bigg[  \Big( \prod_{j=1}^2 \chi_{N_j}(n_j) \Big) \langle n_{12} \rangle^{2s} \langle n_1 \rangle^{-2} \langle n_2 \rangle^{-2} \\
&\times \bigg( \sum_{m\in \bZ} \sum_{n_3\in \bZ^3} \langle m\rangle^{-1} \chi_{N_3}(n_3) |\widehat{V}(n_1,n_2,n_3)| \langle n_{123} \rangle^{-1} \langle n_3 \rangle^{-2} 1\big\{ |\varphi-m|\leq 1\big\} \bigg)^2 \bigg]. 
\end{align*}
Using the resonant quartic sum estimate (Lemma \ref{tools:lemma_resonant_quartic}), this implies that
\begin{equation*}
\eqref{so4:eq_resonant_p1} \lesssim   T^2 1 \big\{ N_3 \sim N_4 \big\} \log(2+N_4)^2 \max(N_1,N_2)^{2s}. 
\end{equation*}
This clearly implies \eqref{so4:eq_estimate_1} and \eqref{so4:eq_estimate_2}. Except for the logarithmic divergence in $N_4$ (and hence $N_3$), it also implies \eqref{so4:eq_estimate_3}. We now need to restrict to $y=0$ and we may assume that $N_1,N_2 \ll N_3$. For fixed $n_1,n_2 \in \bZ^3$, we can apply the $\operatorname{sine}$-cancellation lemma
(Lemma \ref{tools:lem_sin_cancellation}) with $A=\max(N_1,N_2)$ and 
\begin{equation*}
\begin{aligned}
&f(t,t^\prime,n_3) \\
&\defe \rho_{N}^2(n_{123}) \rho_N^2(n_3) \chi_{N_{123}}(n_{123}) \chi_{N_3}(n_3) \chi_{N_4}(n_3) \langle n_{123} \rangle^{-1}  \langle n_3 \rangle^{-2} \widehat{V}_S(n_1,n_2,n_3) 
 \prod_{j=1}^2 \exp( \pm_j  i t^\prime \langle n_ j \rangle \rangle). 
\end{aligned}
\end{equation*} 
This yields
\begin{equation*}
\begin{aligned}
&\eqref{so4:eq_resonant_p1}\big|_{y=0} \\
& \lesssim T^4 1\big\{ N_3 \sim N_4 \big\} \max(N_1,N_2)^8 N_3^{-2} \sum_{n_1,n_2 \in \bZ^3} \langle n_{12} \rangle^{2s} 
\Big( \prod_{j=1}^2 1\big\{ |n_j| \sim N_j \big\}  \langle n_j \rangle^{-2} \Big)  \\
&\lesssim T^4 \max(N_1,N_2)^{10} N_3^{-2}. 
\end{aligned}
\end{equation*}
By combining our two estimates of $\eqref{so4:eq_resonant_p1}\big|_{y=0}$ we arrive at \eqref{so4:eq_estimate_3}. 

\end{proof}

\begin{remark}\label{quartic:rem_resonant_regularity}
As we have seen in the proof of Proposition \ref{so4:prop}, the (probabilistic) resonant portion of $P_{\leq N} \<3DN>  \cdot  P_{\leq N} \<1b>$ has spatial regularity $0-$, which is better than the sum of the individual spatial regularities. As a result, the probabilistic resonances between linear and cubic stochastic objects in Section \ref{section:septic} are relatively harmless. 
\end{remark}

\subsection{Quintic stochastic objects}

In this subsection, we control the quintic stochastic objects in $\So$, i.e., 
\begin{equation*}
  \nparaboxld \<131N> \qquad \text{and} \qquad \<113N>. 
\end{equation*}
Since $\So$ is part of the evolution equation for the smoother nonlinear remainder $Y_N$, the quintic stochastic objects have to be controlled at regularity $s_2-1$. 

\begin{proposition}[First quintic stochastic object]\label{so5i:prop}
For any $T \geq 1$ and any $p \geq 2$, it holds that 
\begin{equation}
\Big\| \sup_{N \geq 1} \Big\| \nparaboxld \<131N> \Big\|_{\X{s_2-1}{b_+-1}([0,T])} \Big\|_{L^p_\omega(\Omega)} \lesssim T^2 p^{\frac{5}{2}}. 
\end{equation}
\end{proposition}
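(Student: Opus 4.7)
The plan is to decompose $\nparaboxld \<131N>$ into components of homogeneous Wiener chaos and estimate each using Gaussian hypercontractivity together with the quintic counting estimates of Section \ref{section:counting}.

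Following the stochastic representation of Proposition \ref{tools:prop_stochastic_representation}, we first write $\<1b>$ and $\<3DN>$ in terms of the multiple stochastic integrals $\cI_1$ and $\cI_3$ and then apply the product formula of Lemma \ref{tools:lemma_23product} (combined with its analog for $\cI_1 \cdot \cI_3$) to expand the fivefold product
\begin{equation*}
V \ast \big(P_{\leq N} \<1b> \cdot P_{\leq N} \<3DN>\big) \cdot P_{\leq N} \<1b> = \cG^{(5)} + \cG_{\text{self}}^{(3)} + \cG_{\text{cross}}^{(3)} + \cG^{(1)},
\end{equation*}
where $\cG^{(5)}$ is the fully non-resonant quintic piece, $\cG_{\text{self}}^{(3)}$ comes from the single contraction that pairs the two $\<1b>$ factors with each other, $\cG_{\text{cross}}^{(3)}$ collects the six contractions of one $\<1b>$ factor with one of the three factors inside $\<3DN>$, and $\cG^{(1)}$ arises from the double contractions (each of the two $\<1b>$ factors pairs with a distinct factor inside $\<3DN>$). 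A short calculation using the evenness of $V$, the truncation $\rho_N$, and the definition \eqref{intro:eq_MN} shows that $\cG_{\text{self}}^{(3)} = \MN P_{\leq N} \<3DN>$; this is precisely the role of the renormalization multiplier, so after subtraction we are left with $\<131N> = \cG^{(5)} + \cG_{\text{cross}}^{(3)} + \cG^{(1)}$ (each still subject to $\nparaboxld$).

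For each chaos $\cG^{(k)}$, $k \in \{1,3,5\}$, we repeat the scheme already used in the proof of Proposition \ref{so3:prop}. Gaussian hypercontractivity (Proposition \ref{tools:prop_Gaussian_hypercontractivity}) reduces the $L^p_\omega$ norm to the $L^2_\omega$ norm at the cost of $p^{k/2} \leq p^{5/2}$. After a dyadic decomposition in $N_1,\ldots,N_5$ and $N_{345}$ and $N_{12345}$, the It\^o isometry together with the decay of $\widehat{\chi}$ and the integration of the Duhamel kernel converts the space-time Fourier transform of $\chi(t/T)\, \cG^{(k)}$ (evaluated at $\lambda \mp_{12345} \langle n_{12345}\rangle$) into a sum over admissible frequencies weighted by
\begin{equation*}
\langle n_{12345}\rangle^{2(s_2-1)} \langle n_{1345} \rangle^{-2\beta} \langle n_{345} \rangle^{-2} \langle n_{34}\rangle^{-2\beta} \prod_{j=1}^5 \langle n_j\rangle^{-2},
\end{equation*}
constrained by the two modulation inequalities $|\psi - m'|\leq 1$ (from the Duhamel integration inside $\<3DN>$) and $|\varphi - m|\leq 1$ or $|\widetilde{\varphi} - m|\leq 1$ (from the outer modulation, the second case arising when $t$-integration of $\sin((t-t')\langle n_{345}\rangle)$ produces the minus-sign version). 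The counting estimates of Section \ref{section:counting} then close the argument: for $\cG^{(5)}$ we invoke Lemma \ref{tools:lem_counting_quintic} directly, for $\cG^{(3)}_{\text{cross}}$ we first absorb the single resonant sum via Lemma \ref{tools:lem_counting_single_resonance_quintic} and estimate the remaining cubic sum by the cubic sum estimate (Proposition \ref{tools:prop_cubic_sum}), and for the chaos-one piece $\cG^{(1)}$ the surviving deterministic coefficient on the single remaining $\cI_1$-factor is bounded by Lemma \ref{tools:lem_counting_double_resonance_quintic}. Each estimate produces a gain $\max(N_1,N_3,N_4,N_5)^{-2\beta + O(\eta)}$ in the maximal frequency, which is enough to (i) sum over all dyadic decompositions, (ii) trade a small portion of this gain for the upgrade from $b_-$ to $b_+$ through the $\X{0}{0}$-interpolation argument of \eqref{so3:eq_p2}, and (iii) absorb the supremum over $N$ via Lemma \ref{tools:eq_moments_sups}. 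The projection $\nparaboxld$ enters only to restrict the dyadic sums to $\max(N_2, N_{345}) > N_1^\epsilon$ and is essentially for safety: the counting estimates already supply the required decay.

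The main obstacle I expect is the careful identification $\cG_{\text{self}}^{(3)} = \MN P_{\leq N} \<3DN>$, since this is what makes the renormalization work and requires matching Fourier-side conventions, and the treatment of $\cG^{(1)}$, where two contractions collapse the chaos down to a single Gaussian but the underlying phase structure (double resonance on top of one unresonant Gaussian) has to be controlled by a nested application of Lemma \ref{tools:lem_counting_double_resonance_quintic}; the other pieces follow the same template as Proposition \ref{so3:prop}.
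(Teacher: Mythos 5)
Your chaos decomposition is essentially the one the paper uses (the paper simply applies Proposition \ref{tools:prop_stochastic_representation} to represent the renormalized pair of $\<1b>$ factors directly as $\cI_2$, so there is no separate $\cG^{(3)}_{\text{self}}$ to cancel, but the net result is the same), and your identification of the self-contraction with $\MN P_{\leq N}\<3DN>$ is correct. However, there are two genuine gaps.

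First, your treatment of the resonant pieces $\cG^{(3)}_{\text{cross}}$ and $\cG^{(1)}$ omits the $\operatorname{sine}$-cancellation lemma (Lemma \ref{tools:lem_sin_cancellation}), and the argument fails without it. The counting estimates you cite (Lemma \ref{tools:lem_counting_single_resonance_quintic}, Proposition \ref{tools:prop_cubic_sum}, Lemma \ref{tools:lem_counting_double_resonance_quintic}) produce, after the $m$-sum in the modulation, a factor $\log(2+\max(N_3,N_4,N_5))$ times decay in $\max(N_4,N_5)$ only — there is \emph{no} decay in the paired-resonance scale $N_3$ (and hence no decay in $N_{345}\sim N_3$ when $N_4,N_5$ are small). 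The dyadic sum over $N_3$ therefore diverges logarithmically. The paper resolves this by a second estimate of $\mathscr{S}(n_5;t,N_\ast)$ using the sine-cancellation lemma (giving $N_3^{-1}$ decay at the price of polynomial growth in $N_4,N_5$) and then interpolating the two bounds; without that second bound the dyadic summation cannot be closed.

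Second, your remark that $\nparaboxld$ "enters only ... essentially for safety: the counting estimates already supply the required decay" is false and conceals a real regularity deficit in $\cG^{(5)}$. Lemma \ref{tools:lem_counting_quintic} only holds for $s\leq 1/2-2\eta$, while the proposition needs the output in $\X{s_2-1}{b_+-1}$ with $s_2 = 1/2+\delta_2 > 1/2$. The counting estimate also supplies only $\max(N_1,N_3,N_4,N_5)^{-2\beta+4\eta} N_2^{-2\eta}$, which fails to sum when $N_2$ is the dominant scale. The constraint enforced by $\nparaboxld$, namely $\max(N_1, N_{345}) > N_2^{\epsilon}$ (you have also swapped the roles of $N_1$ and $N_2$), yields $\max(N_1,N_3,N_4,N_5)\gtrsim \max(N_1,\ldots,N_5)^{\epsilon}$; only then can the $\max(N_1,N_3,N_4,N_5)^{-\beta+\eta}$ gain be converted into the extra $\delta_2+3\eta$ derivatives needed to pass from $s=1/2-\eta$ to $s_2$, using the parameter relation $\delta_2+3\eta<\epsilon\beta$. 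Without this constraint the $\cG^{(5)}$ estimate does not close at regularity $s_2$.
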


\begin{proposition}[Second quintic stochastic object]\label{so5ii:prop}
For any $T \geq 1$ and any $p \geq 2$, it holds that 
\begin{equation}
\Big\| \sup_{N \geq 1} \Big\| \<113N> \Big\|_{\X{s_2-1}{b_+-1}([0,T])} \Big\|_{L^p_\omega(\Omega)} \lesssim T^2 p^{\frac{5}{2}}. 
\end{equation}
\end{proposition}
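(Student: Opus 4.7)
The plan is to adapt the template of Proposition \ref{so3:prop} to the quintic level, where the novel feature is the two-time structure coming from the Duhamel integral defining $\<3DN>$. Writing
\begin{equation*}
\<113N>(t,x) = \bigl(V \ast \<2N>(t)\bigr)(x) \int_0^t \frac{\sin((t-t')\langle\nabla\rangle)}{\langle\nabla\rangle} \<3N>(t',\cdot)(x)\, \dtprime,
\end{equation*}
and representing both $\<2N>$ and $\<3N>$ as multiple stochastic integrals via Proposition \ref{tools:prop_stochastic_representation}, the product formula for multiple stochastic integrals (Lemma \ref{tools:lem_product}) decomposes
\begin{equation*}
\<113N> = \cG^{(5)} + \cG^{(3)} + \cG^{(1)},
\end{equation*}
where $\cG^{(k)}$ denotes a $k$-th order Wiener chaos corresponding respectively to the non-resonant ($r=0$), single-resonance ($r=1$), and double-resonance ($r=2$) portions of the product.

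For each $\cG^{(k)}$ I would proceed as follows. Perform a Littlewood-Paley decomposition of the five inner frequencies $n_1,\hdots,n_5$ together with the combined frequencies $n_{12}, n_{345}, n_{1345}, n_{12345}$ to isolate dyadic blocks. Use Lemma \ref{tools:lem_estimate_fouriertransform} to control the Fourier transform in $t$ of the $\dtprime$-integral by a sum of two terms weighted by $\langle n_{345}\rangle^{-1}$ and involving shifts by the Duhamel phase $\psi = \pm_{345}\langle n_{345}\rangle \pm_3 \langle n_3\rangle \pm_4 \langle n_4\rangle \pm_5 \langle n_5\rangle$. Apply Gaussian hypercontractivity (Proposition \ref{tools:prop_Gaussian_hypercontractivity}) to reduce the $L^p_\omega \X{s_2-1}{b_--1}$-norm to an $L^2_\omega$-norm, paying a factor $p^{5/2}$, and then use the It\^o-isometry (Lemma \ref{tools:lem_basic_integral}) together with the second phase $\varphi = \pm_{12345}\langle n_{12345}\rangle \pm_{345}\langle n_{345}\rangle \pm_1 \langle n_1\rangle \pm_2 \langle n_2\rangle$ coming from the $\X{s}{b}$-norm to arrive at a deterministic frequency sum. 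The three sums are then controlled by the appropriate counting estimates: Lemma \ref{tools:lem_counting_quintic} for $\cG^{(5)}$, Lemma \ref{tools:lem_counting_single_resonance_quintic} combined with a cubic sum estimate (Proposition \ref{tools:prop_cubic_sum}) for $\cG^{(3)}$, and Lemma \ref{tools:lem_counting_double_resonance_quintic} for $\cG^{(1)}$. Each dyadic block will come with an extra factor $N_{\max}^{-\eta^\prime}$ to spare. Finally, upgrade $b_-$ to $b_+$ by interpolating against the trivial $\X{0}{0}$ bound, in which Plancherel plus hypercontractivity lose only a polynomial factor of $N_{\max}$, as in the last step of Proposition \ref{so3:prop}; then sum over dyadic scales and absorb the supremum over $N$ via Lemma \ref{tools:eq_moments_sups}.

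The main obstacle is the double-resonance term $\cG^{(1)}$. Both $\<1b>$-factors in $\<2N>$ are contracted with two of the three $\<1b>$-factors in $\<3N>$, leaving a linear chaos whose deterministic coefficient is a double sum over the two contracted indices of products of $\widehat{V}$ with the dispersive denominators $\langle n_{345}\rangle^{-1}$, $\langle n_3\rangle^{-2}$, $\langle n_4\rangle^{-2}$. Naively this sum can be logarithmically divergent, and one must exploit both the decay of $\widehat{V}$ and the dispersive constraint $|\psi-m|\leq 1$ in order to extract the crucial gain $\max(N_4,N_5)^{-\beta+\eta}$ furnished by Lemma \ref{tools:lem_counting_double_resonance_quintic}. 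With this gain in hand, the total regularity budget $s_2 - 1 + (-\beta + \eta)$ lies strictly below $-1/2-\kappa$ for a suitable choice of the parameters from \eqref{intro:eq_parameter_condition}, which matches (with a small surplus) the regularity of the linear evolution $\<1b>$ and ensures that the sum over dyadic scales converges.
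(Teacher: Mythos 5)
Your overall strategy is correct and parallels the paper's proof of Proposition~\ref{so5i:prop}: write $\<113N>$ via multiple stochastic integrals using Proposition~\ref{tools:prop_stochastic_representation}, apply the quadratic--cubic product formula (Lemma~\ref{tools:lemma_23product}) to decompose into chaoses, use Gaussian hypercontractivity and the It\^{o} isometry to reduce to frequency sums, and upgrade $b_-$ to $b_+$ by interpolation. However, the proposal misses the single ingredient that the paper singles out as the whole point of Proposition~\ref{so5ii:prop}, and it misdiagnoses where the difficulty lies.

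The crucial difference between $\<113N>$ and $\<131N>$ is the placement of the interaction potential and the outer factor. In $\<131N>$ the potential sits on $n_{1345}$ and the outer factor is a single copy of $\<1b>$ with frequency $n_2$, and the $\nparaboxld$ condition $\max(N_1,N_{345})>N_2^\epsilon$ provides the decisive extra decay in $N_2$. In $\<113N>$ the potential sits on $n_{12}$, the outer factor is $\<3DN>$ with frequency $n_{345}$, and there is no paradifferential operator. So if you plug the weight that actually arises,
\begin{equation*}
\langle n_{12345}\rangle^{2(s_2-1)}\,\langle n_{12}\rangle^{-2\beta}\,\langle n_{345}\rangle^{-2}\,\langle n_{34}\rangle^{-2\beta}\,\prod_{j=1}^{5}\langle n_j\rangle^{-2},
\end{equation*}
into Lemma~\ref{tools:lem_counting_quintic}, the hypotheses simply do not match: that lemma expects $\langle n_{1345}\rangle^{-2\beta}$ and a regularity level $s\leq \tfrac{1}{2}-2\eta$, and without the paradifferential gain you cannot reach $s_2>\tfrac12$. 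The mechanism replacing $\nparaboxld$ here is the crude inequality
\begin{equation*}
\langle n_{12}\rangle^{-2\beta}\lesssim \langle n_{12}\rangle^{-2\gamma}\lesssim \langle n_{12345}\rangle^{-2\gamma}\langle n_{345}\rangle^{2\gamma},\qquad 0<\gamma<\beta,
\end{equation*}
which trades potential decay in $n_{12}$ for decay in the output frequency $n_{12345}$ (lowering the effective Sobolev index from $s_2$ to $s_2-\gamma$, back into the range covered by the existing counting lemma) and absorbs the $\langle n_{345}\rangle^{2\gamma}$ loss into the $\langle n_{345}\rangle^{-2}$ factor from the Duhamel multiplier. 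You should identify this trick explicitly; as written, your plan to ``use Lemma~\ref{tools:lem_counting_quintic} for $\cG^{(5)}$'' is not yet a valid step.

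Two secondary issues: First, your claim that the logarithmic divergence in the double-resonance coefficient is removed by ``the decay of $\widehat V$ and the dispersive constraint'' is not accurate. In Proposition~\ref{so5i:prop} the counting estimate Lemma~\ref{tools:lem_counting_double_resonance_quintic} still carries a $\log(2+\max(N_3,N_4,N_5))$ factor; removing it requires the independent bound $\mathscr{S}\lesssim T^2 N_3^{-1}N_4^5 N_5^2$ coming from the $\operatorname{sine}$-cancellation lemma (Lemma~\ref{tools:lem_sin_cancellation}), and then interpolating the two. The analogous argument is needed here, and your proposal never invokes the $\operatorname{sine}$-cancellation. Second, the main non-trivial estimate is the non-resonant quintic chaos $\cG^{(5)}$ (where the regularity budget is genuinely tight and the crude inequality is needed), not $\cG^{(1)}$ as you assert; the double-resonance term is delicate only because of the logarithm, and that was already resolved for $\<131N>$.
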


\begin{remark}
In the frequency-localized versions of Proposition \ref{so5i:prop} and Proposition \ref{so5ii:prop}, which are detailed in the proof, we gain an $\eta^\prime$-power of the maximal frequency-scale. As in Proposition \ref{so3:prop}, we may also replace $\<3DN>$ by  $\<3DNtau>= \Duh \big[ 1_{[0,\tau]} \<3N>\big]$. We will not further comment on these minor modifications.
\end{remark}

\begin{proof}[Proof of Proposition \ref{so5i:prop}:]
Throughout the proof, we ignore the supremum in $N\geq 1$ and only prove a uniform estimate for a fixed $N$. Using the frequency-localized estimates below and the same argument as in the proof of Proposition \ref{so3:prop}, we can insert the supremum in $N$ at the end of the proof. \\ 

We first obtain a representation of the quintic stochastic object using multiple stochastic integral. Using \eqref{local:eq_so_term2} and Proposition \ref{tools:prop_stochastic_representation}, we have that 
\begin{align*}
 &\nparaboxld \<131N> (t,x) \\
 &= \sum_{\substack{N_{345},N_1,\hdots,N_5 \colon \\ \max(N_1,N_{345}) > N_2^\epsilon} } \sum_{n_1,\hdots,n_5 \in \bZ^3}
  \bigg[ \rho_N^2(n_{345}) \chi_{N_{345}}(n_{345}) \Big( \prod_{j=1}^5 \rho_N(n_j) \chi_{N_j}(n_j) \Big) 
  \widehat{V}(n_{1345}) \widehat{V}_S(n_3,n_4,n_5) \\
  &\times \langle n_{345} \rangle^{-1} \exp\big( i \langle n_{12345} , x \rangle\big)  
 \cI_2[t,n_1,n_2] \Big( \int_0^t \sin\big( (t-t^\prime) \langle n_{123} \rangle \big) \cI_3[t^\prime,n_3,n_4,n_5] \dtprime \Big) \bigg]. 
\end{align*}
Using the product formula for mixed multiple stochastic integrals (Proposition \ref{tools:prop_stochastic_representation} and Lemma \ref{tools:lemma_23product}), we obtain that 
\begin{equation}\label{so5i:eq_chaos_decomposition}
 \nparaboxld \<131N> (t,x) 
 = \sum_{\substack{N_{345},N_1,\hdots,N_5 \colon \\  \max(N_1,N_{345}) > N_2^\epsilon} } \Big( \cG^{(5)} + \cG^{(3)} + \widetilde{\cG}^{(3)} + \cG^{(1)} \Big)(t,x;N_\ast),
\end{equation}
where the dependence on $N_{345},N_1,\hdots,N_5$ is indicated by $N_\ast$ and the quintic, cubic, and linear Gaussian chaoses are defined as follows. The quintic chaos is given by 
\begin{align*}
 &\cG^{(5)}(t,x;N_\ast) \\
 &\defe \sum_{\pm_1,\hdots,\pm_5} c(\pm_j \colon 1 \leq j \leq 5) \sum_{n_1,\hdots,n_5 \in \bZ^3}
  \bigg[ \rho_N^2(n_{345}) \chi_{N_{345}}(n_{345}) \Big( \prod_{j=1}^5 \rho_N(n_j) \chi_{N_j}(n_j) \Big) 
  \widehat{V}(n_{1345})  \\
  &\times \widehat{V}_S(n_3,n_4,n_5)  \langle n_{345} \rangle^{-1} \exp\big( i \langle n_{12345} , x \rangle\big)  
\Big( \prod_{j=1}^2 \exp\big( \pm_j i t \langle n_j \rangle \big) \Big)  \\
& \times \Big( \int_0^t \sin\big( (t-t^\prime) \langle n_{123} \rangle \big)   \prod_{j=3}^5 \exp\big( \pm_j i t^\prime \langle n_j \rangle \big)  \dtprime \Big)  \cI_5[ \pm_j , n_j \colon 1 \leq  j \leq 5] \bigg].
\end{align*}
The two cubic Gaussian chaoses are given by 
\begin{align*} 
&\cG^{(3)}(t,x;N_\ast) \allowdisplaybreaks[3]\\
&\defe \sum_{\pm_2,\pm_4,\pm_5} c(\pm_2,\pm_4,\pm_5) \sum_{n_2,n_4,n_5 \in \mathbb{Z}^3} \bigg[  \Big( \prod_{j=2,4,5} \rho_N(n_j) \chi_{N_j}(n_j) \Big) 
\widehat{V}(n_{45})  \exp\big( i \langle n_{245} , x \rangle \big) \\
& \times \sum_{n_3 \in \bZ^3} \bigg( \rho_{N}^2(n_3) \rho_N^2(n_{345}) \chi_{N_{345}}(n_{345}) \chi_{N_1}(n_3) \chi_{N_3}(n_3)
 \widehat{V}_S(n_3,n_4,n_5) \langle n_{345} \rangle^{-1} \langle n_3 \rangle^{-2} \exp\big( \pm_2 i t \langle n_2 \rangle\big)  \\
&\times \int_0^t \sin\big((t-t^\prime) \langle n_{345} \rangle) \cos\big((t-t^\prime) \langle n_{3} \rangle) \prod_{j=4,5} \exp\big( \pm_j i t^\prime \langle n_j \rangle \big) \dtprime \bigg) \cI_{3}[\pm_j,n_j \colon j=2,4,5] \bigg] 
\end{align*}
and 
\begin{align*} 
&\widetilde{\cG}^{(3)}(t,x;N_\ast) \allowdisplaybreaks[3] \\
&\defe \sum_{\pm_1,\pm_4,\pm_5} c(\pm_1,\pm_4,\pm_5) \sum_{n_1,n_4,n_5 \in \mathbb{Z}^3} \bigg[  \Big( \prod_{j=1,4,5} \rho_N(n_j) \chi_{N_j}(n_j) \Big) 
  \exp\big( i \langle n_{145} , x \rangle \big) \sum_{n_3 \in \bZ^3} \bigg( \rho_{N}^2(n_3) \\
& \times  \rho_N^2(n_{345}) \chi_{N_{345}}(n_{345}) \chi_{N_1}(n_3) \chi_{N_3}(n_3)\widehat{V}_S(n_3,n_4,n_5) \widehat{V}(n_{1345})   \langle n_{345} \rangle^{-1} \langle n_3 \rangle^{-2} \exp\big( \pm_1 i t \langle n_1 \rangle\big)  \\
&\times \int_0^t \sin\big((t-t^\prime) \langle n_{345} \rangle) \cos\big((t-t^\prime) \langle n_{3} \rangle) \prod_{j=4,5} \exp\big( \pm_j i t^\prime \langle n_j \rangle \big) \dtprime \bigg) \cI_{3}[\pm_j,n_j \colon j=1,4,5] \bigg]. 
\end{align*}
Finally, the linear Gaussian chaos (or simply Gaussian) is given by
\begin{align*}
&\cG^{(1)}(t,x;N_\ast) \allowdisplaybreaks[3]\\
&\defe \sum_{\pm_5} c(\pm_5)  \sum_{n_5\in \bZ^3} \rho_N(n_5) \chi_{N_5}(n_5) \exp\big( i \langle n_5, x \rangle \big)  \sum_{n_3,n_4 \in \bZ^4}  \bigg[ \rho_N^2(n_{345})  \rho_N^2(n_3) \rho_N^2(n_4)\chi_{N_{345}}(n_{345})  \\
&\times \chi_{N_1}(n_3) \chi_{N_3}(n_3) \chi_{N_2}(n_4) \chi_{N_4}(n_4) \widehat{V}_S(n_3,n_4,n_5) \widehat{V}(n_{45}) 
 \langle n_{345} \rangle^{-1}\langle n_3 \rangle^{-2} \langle n_4 \rangle^{-2} \\
 &\times \int_0^t \sin\big( (t-t^\prime) \langle n_{345} \rangle\big) \cos\big( (t-t^\prime) \langle n_3 \rangle\big)  \cos\big( (t-t^\prime) \langle n_4 \rangle\big) \exp\big( \pm_5 i t^\prime \langle n_5 \rangle\big) \dtprime \bigg] \cI_{1}[\pm_5,n_5] . 
\end{align*}

Each of the frequency-localized Gaussian chaoses in \eqref{so5i:eq_chaos_decomposition} is now estimated separately. We encourage the reader to concentrate on the estimates for $\cG^{(5)}$ and $\cG^{(1)}$, which already contain all ideas and ingredients. \\

\emph{The non-resonant term $\cG^{(5)}$:} \\
Let $s=1/2-\eta$. We will first estimate the $\X{s-1}{b_--1}$-norm of a dyadic piece and then use the condition $\max(N_1,N_{345})>N_2^\epsilon$ to increase the value of $s$.   Using Gaussian hypercontractivity (Proposition \ref{tools:prop_Gaussian_hypercontractivity}), the orthogonality of multiple stochastic integrals, and Lemma \ref{tools:lem_estimate_fouriertransform}, we obtain that
\begin{align}
&\Big\| \| \cG^{(5)}(t,x;N_\ast) \|_{\X{s-1}{b_--1}([0,T])} \Big\|_{L^p_\omega}^2 \notag \\
&\lesssim \max_{\pm_{12345}} \Big\| \| \langle \lambda \rangle^{b_--1} \langle n \rangle^{s-1}  \mathcal{F}_{t,x} \big( \chi(t/T) \cG^{(5)} (t,x;N_\ast) \big)(\lambda \mp_{12345} \langle n \rangle, n) \|_{L_\lambda^2 \ell_n^2(\bR\times \bZ^3)} \Big\|_{L^p_\omega}^2  \notag \\
&\lesssim   p^5   \max_{\pm_{12345}} \Big\| \| \langle \lambda \rangle^{b_--1} \langle n \rangle^{s-1} \mathcal{F}_{t,x} \big( \chi(t/T) \cG^{(5)} (t,x;N_\ast) \big)(\lambda \mp_{12345} \langle n \rangle, n) \|_{L_\lambda^2 \ell_n^2(\bR\times \bZ^3)} \Big\|_{L^2_\omega}^2   \notag \\
&\lesssim T^2 p^5   \max_{\substack{\pm_{12345},\pm_{345},\\ \pm_1, \hdots,\pm_5} } \sum_{n_1,\hdots,n_5 \in \bZ^3} \bigg[   \chi_{N_{345}}(n_{345})  \Big( \prod_{j=1}^5  \chi_{N_j}(n_j) \Big) \langle n_{12345} \rangle^{2(s-1)}   \langle n_{345} \rangle^{-2}  \notag \\
&  \times  |\widehat{V}(n_{1345})|^{2}    |\widehat{V}_S(n_3,n_4,n_5)|^2  \Big( \prod_{j=1}^5 \langle n_j \rangle^{-2} \Big) 
\big(1+|\pm_{345} \langle n_{345} \rangle \pm_3 \langle n_3 \rangle \pm_4 \langle n_4 \rangle \pm_5 \langle n_5 \rangle| \big)^{-2} 
 \notag \\ 
& \times 
\int_\bR \langle \lambda \rangle^{2(b_--1)} \bigg(    1+ \min\Big( \Big|\lambda - (\pm_{12345} \langle n_{12345}\rangle \pm_{345} \langle n_{345} \rangle \pm_1 \langle n_1 \rangle \pm_2 \langle n_2 \rangle  )\Big|,  \notag \\
& \hspace{17ex} \Big|\lambda - (\pm_{12345} \langle n_{12345} \rangle \mp_{345} \langle n_{345} \rangle +\sum_{j=1}^5 (\pm_j ) \langle n_j \rangle  \Big|\Big) \bigg)^{-2} 
\dlambda \bigg]. \label{so5i:eq_p1}
\end{align}
To break down this long formula, we define the phase-functions
\begin{align*}
\psi(n_3,n_4,n_5) &\defe \pm_{345} \langle n_{345} \rangle \pm_3 \langle n_3 \rangle \pm_4 \langle n_4 \rangle \pm_5 \langle n_5 \rangle, \\ 
\varphi(n_1,\hdots,n_5)
 &\defe \pm_{12345} \langle n_{12345}\rangle \pm_{345} \langle n_{345} \rangle \pm_1 \langle n_1 \rangle \pm_2 \langle n_2 \rangle,\\
\widetilde{\varphi}(n_1,\hdots,n_5) &\defe \pm_{12345} \langle n_{12345} \rangle \mp_{345} \langle n_{345} \rangle +\sum_{j=1}^5 (\pm_j ) \langle n_j \rangle . 
\end{align*}
Integrating in $\lambda$ and decomposing according to the value of the phases, we obtain that 
\begin{align*}
\eqref{so5i:eq_p1}   
&\lesssim  T^2 p^5 \log(2+\max(N_1,\hdots,N_5))     \max_{\substack{\pm_{12345},\pm_{345},\\ \pm_1, \hdots,\pm_5} } \sup_{m,m^\prime \in \bZ} \sum_{n_1,\hdots,n_5 \in \bZ^3} \bigg[   \chi_{N_{345}}(n_{345})   \\
&  \times \Big( \prod_{j=1}^5  \chi_{N_j}(n_j) \Big) \langle n_{12345} \rangle^{2(s-1)} \langle n_{345} \rangle^{-2} |\widehat{V}(n_{1345})|^{2} 
  |\widehat{V}_S(n_3,n_4,n_5)|^2  \Big( \prod_{j=1}^5 \langle n_j \rangle^{-2} \Big) \\
  &\times \, 1\big\{ |\psi-m|\leq 1\big\} \, \Big( 1\big\{ |\varphi-m^\prime|\leq1 \big\} + 1\big\{ |\widetilde{\varphi}-m^\prime|\leq1 \big\}\Big)  \bigg]. 
\end{align*}
Using the non-resonant quintic sum estimate (Lemma  \ref{tools:lem_counting_quintic}),  we finally obtain that
\begin{equation}\label{so5i:eq_p2}
\Big\| \| \cG^{(5)}(t,x;N_\ast) \|_{\X{s-1}{b_--1}([0,T])} \Big\|_{L^p_\omega} 
\lesssim T p^{\frac{5}{2}} \max(N_1,N_3,N_4,N_5)^{-\beta+\eta} N_2^{-\eta}. 
\end{equation}
Due to the operator $\nparaboxld$, we have that 
\begin{equation*}
 \max(N_1,N_3,N_4,N_5) \gtrsim \max(N_1,N_2,N_3,N_4,N_5)^\epsilon. 
\end{equation*}
Thus, \eqref{so5i:eq_p2} implies 
\begin{equation*}
\Big\| \| \cG^{(5)}(t,x;N_\ast) \|_{\X{s_2-1}{b_--1}([0,T])} \Big\|_{L^p_\omega} 
\lesssim T p^{\frac{5}{2}} \max(N_1,N_2,N_3,N_4,N_5)^{\delta_2 + 3 \eta - \epsilon \beta},
\end{equation*}
which is acceptable. \\

\emph{Single-resonance term $\cG^{(3)}$:}

This term only yields a non-trivial contribution if $N_1\sim N_3$. In particular, $\max(N_1,N_{345}) > N_2^\epsilon$ implies that $\max(N_3,N_4,N_5) \gtrsim N_2^\epsilon$. 
Using the inhomogeneous Strichartz estimate (Lemma \ref{tools:lem_inhomogeneous_strichartz}) and Gaussian hypercontractivity, we have that 
\begin{equation}\label{so5i:eq_p6}
\begin{aligned}
\Big\| \| \cG^{(3)}(t,x;N_\ast) \|_{\X{s_2-1}{b_--1}([0,T])} \Big\|_{L^p_\omega} 
&\lesssim \Big\|   \| \cG^{(3)}(t,x;N_\ast) \|_{L_t^{2b_+} H_x^{s_2-1}([0,T]\times \bT^3)}  \Big\|_{L^p_\omega}  \\
&\lesssim T^{\frac{1}{2}} \Big\|  \| \cG^{(3)}(t,x;N_\ast) \|_{L_t^{2} H_x^{s_2-1}([0,T]\times \bT^3)} \Big\|_{L^p_\omega}  \\
&\lesssim T p^{\frac{3}{2}} \, \sup_{t\in [0,T]}\Big\| \| \cG^{(3)}(t,x;N_\ast) \|_{H_x^{s_2-1}(\bT^3)} \Big\|_{L^2_\omega}. 
\end{aligned}
\end{equation}
Using the orthogonality of the multiple stochastic integrals, we have that 
\begin{equation}\label{so5i:eq_p7}
\begin{aligned}
&\sup_{t\in [0,T]}\Big\| \| \cG^{(3)}(t,x;N_\ast) \|_{H_x^{s_2-1}(\bT^3)} \Big\|_{L^2_\omega}^2 \\
&\lesssim N_{45}^{-2\beta} N_2^{-2} N_4^{-2} N_5^{-2} \hspace{-1ex} \sum_{n_2,n_4,n_5\in \bZ^3} \hspace{-1ex} \chi_{N_{45}}(n_{45}) \Big( \prod_{j=2,4,5} \chi_{N_j}(n_j)\Big)   \langle n_{245} \rangle^{2(s_2-1)} 
\mathscr{S}(n_2,n_4,n_5;t,N_\ast)^2,
\end{aligned}
\end{equation}
where
\begin{align*}
&\mathscr{S}(n_2,n_4,n_5;t,N_\ast) \\
&\defe \bigg|\sum_{n_3\in \bZ^3}   \bigg[  \rho_{N}^2(n_3) \rho_N^2(n_{345}) \chi_{N_{345}}(n_{345}) 
\chi_{N_1}(n_3) \chi_{N_3}(n_3)  \widehat{V}_S(n_3,n_4,n_5) \langle n_{345} \rangle^{-1} \langle n_3 \rangle^{-2}  \\
 &\times \exp\big( \pm_2 i t \langle n_2 \rangle\big)  \int_0^t \sin\big((t-t^\prime) \langle n_{345} \rangle) \cos\big((t-t^\prime) \langle n_{3} \rangle) \prod_{j=4,5} \exp\big( \pm_j i t^\prime \langle n_j \rangle \big) \dtprime \bigg] \bigg|. 
\end{align*}

Define the phase-function $\varphi$ by
\begin{equation}
\varphi(n_3,n_4,n_5) \defe \langle n_{345} \rangle \pm_3 \langle n_3 \rangle \pm_4 \langle n_4 \rangle \pm_5 \langle n_5 \rangle. 
\end{equation}
By performing the integral, using the triangle-inequality, expanding the square, and using Lemma \ref{tools:lem_basic_resonance}, we obtain that 
\begin{align*}
&\mathscr{S}(n_2,n_4,n_5;t,N_\ast)^2 \\
&\lesssim T^2 \max_{\pm_3,\pm_4,\pm_5} \bigg(\sum_{m\in \bZ} \sum_{n_3\in \bZ^3} \langle m \rangle^{-1} \chi_{N_{3}}(n_3) \langle n_{345} \rangle^{-1} \langle n_3 \rangle^{-2} 1\big\{ |\varphi-m| \leq 1 \big\} \bigg)^2  \\
&\lesssim  T^2 \log(2+\max(N_3,N_4,N_5)) \bigg( \max_{\pm_3,\pm_4,\pm_5} \sup_{m\in \bZ} \sum_{n_3\in \bZ^3}  \chi_{N_{3}}(n_3) \langle n_{345} \rangle^{-1} \langle n_3 \rangle^{-2} 1\big\{ |\varphi-m| \leq 1 \big\} \bigg) \\
&\times \bigg( \max_{\pm_3,\pm_4,\pm_5} \sum_{m\in \bZ} \sum_{n_3\in \bZ^3} \langle m \rangle^{-1} \chi_{N_{3}}(n_3) \langle n_{345} \rangle^{-1} \langle n_3 \rangle^{-2} 1\big\{ |\varphi-m| \leq 1 \big\} \bigg) \\
&\lesssim T^2 \log(2+\max(N_3,N_4,N_5))  \langle n_{45} \rangle^{-1} \\
&\times \max_{\pm_3,\pm_4,\pm_5} \sum_{m\in \bZ} \sum_{n_3\in \bZ^3} \langle m \rangle^{-1} \chi_{N_{3}}(n_3) \langle n_{345} \rangle^{-1} \langle n_3 \rangle^{-2} 1\big\{ |\varphi-m| \leq 1 \big\}. 
\end{align*}
By inserting this into \eqref{so5i:eq_p7} and summing in $n_2\in \bZ^3$ first, we obtain 
\begin{align*}
&\sup_{t\in [0,T]}\Big\| \| \cG^{(3)}(t,x;N_\ast) \|_{H_x^{s_2-1}(\bT^3)} \Big\|_{L^2_\omega}^2\\
&\lesssim T^2  \log(2+\max(N_3,N_4,N_5)) \Big( \prod_{j=2}^5 N_j^{-2} \Big)    \\
&\times \max_{\pm_3,\pm_4,\pm_5} \sum_{m\in \bZ} \sum_{n_2,n_3,n_4,n_5\in \bZ^3} \bigg[ \langle m \rangle^{-1} \Big( \prod_{j=2}^5 \chi_{N_j}(n_j)\Big) \langle n_{245} \rangle^{2(s_2-1)} \langle n_{345} \rangle^{-1} \langle n_{45} \rangle^{-1-2\beta} 1\big\{ |\varphi-m|\leq 1\big\} \bigg] \\
& \lesssim T^2  \log(2+\max(N_3,N_4,N_5))  N_2^{2s_2-1} \Big( \prod_{j=3}^5 N_j^{-2} \Big)    \\
 &\times \max_{\pm_3,\pm_4,\pm_5} \sum_{m\in \bZ} \sum_{n_3,n_4,n_5\in \bZ^3} \bigg[ \langle m \rangle^{-1} \Big( \prod_{j=3}^5 \chi_{N_j}(n_j)\Big) \langle n_{345} \rangle^{-1} \langle n_{45} \rangle^{-1-2\beta} 1\big\{ |\varphi-m|\leq 1\big\} \bigg] \\
 &\lesssim  T^2  \log(2+\max(N_3,N_4,N_5))  N_2^{2s_2-1} \max(N_4,N_5)^{-2\beta}.
\end{align*}
In the last line, we have used the cubic sum estimate (Proposition \ref{tools:prop_cubic_sum}). In total, this yields
\begin{equation}\label{so5i:eq_p8}
\sup_{t\in [0,T]}\Big\| \| \cG^{(3)}(t,x;N_\ast) \|_{H_x^{s_2-1}(\bT^3)} \Big\|_{L^2_\omega} \\
\lesssim T \log(2+\max(N_3,N_4,N_5)) N_2^{s_2-\frac{1}{2}} \max(N_4,N_5)^{-\beta}. 
\end{equation} 
Recalling that $\max(N_3,N_4,N_5)>N_2^\epsilon$, we are only missing decay in $N_3$. By using the $\operatorname{sine}$-cancellation lemma
 (Lemma \ref{tools:lem_sin_cancellation}) to estimate $\mathscr{S}(n_2,n_4,n_5;t,N_\ast)$, we easily obtain that 
 \begin{equation}\label{so5i:eq_p9}
\sup_{t\in [0,T]}\Big\| \| \cG^{(3)}(t,x;N_\ast) \|_{H_x^{s_2-1}(\bT^3)} \Big\|_{L^2_\omega} \\
\lesssim T^2 N_2^{s_2-\frac{1}{2}} \max(N_4,N_5)^5 N_3^{-1}.  
\end{equation} 
After combining \eqref{so5i:eq_p8}, \eqref{so5i:eq_p9}, and the condition $\max(N_3,N_4,N_5)>N_2^\epsilon$, we obtain an acceptable estimate. \\

\emph{Single-resonance term $\widetilde{\cG}^{(3)}$:} This term can be controlled through similar (or simpler) arguments than $\cG^{(3)}$ and we omit the details.\\

\emph{Double-resonance term $\cG^{(1)}$:} \\
This term only yields a non-trivial contribution when $N_1 \sim N_3$ and $N_2 \sim N_4$. We note that the sum in $n_3 \in \bZ^3$ may appear to diverge logarithmically (once the dyadic localization is removed). However, the $\operatorname{sine}$-function in the Duhamel integral yields additional cancellation, which was first observed by Gubinelli, Koch, and Oh in \cite{GKO18a} and generalized slightly in Lemma \ref{tools:lem_sin_cancellation}.   \\

Using the inhomogeneous Strichartz estimate (Lemma \ref{tools:lem_inhomogeneous_strichartz}), it follows that 
\begin{align*}
\| \cG^{(1)}(t,x;N_\ast) \|_{\X{s_2-1}{b_--1}([0,T])} &\lesssim \| \cG^{(1)}(t,x;N_\ast) \|_{L_t^{2b_+} H_x^{s_2-1}([0,T]\times \bT^3)} \\
&\lesssim T^{\frac{1}{2}} \| \cG^{(1)}(t,x;N_\ast) \|_{L_t^{2} H_x^{s_2-1}([0,T]\times \bT^3)}. 
\end{align*}
Using Gaussian hypercontractivity (Proposition \ref{tools:prop_Gaussian_hypercontractivity}) and the orthogonality of multiple stochastic integrals, we obtain that 
\begin{equation}\label{so5i:eq_p5}
\begin{aligned}
&T \, \Big\| \| \cG^{(1)}(t,x;N_\ast) \|_{L_t^{2} H_x^{s_2-1}([0,T]\times \bT^3)} \Big\|_{L^p_\omega}^2 \\
&\lesssim T p \, \Big\| \| \cG^{(1)}(t,x;N_\ast) \|_{L_t^{2} H_x^{s_2-1}([0,T]\times \bT^3)} \Big\|_{L^2_\omega}^2 \\
&\lesssim T^2 p \sup_{t\in [0,T]} \sum_{n_5\in \bZ^3}   \chi_{N_5}(n_5) \langle n_5 \rangle^{2(s_2-1)-2} \mathscr{S}(n_5;t,N_\ast)^2
\end{aligned}
\end{equation}
where
\begin{align*}
\mathscr{S}(n_5;t,N_\ast) 
&\defe \bigg| \sum_{n_3,n_4 \in \bZ^4}  \bigg[ \rho_N^2(n_{345})  \rho_N^2(n_3) \rho_N^2(n_4)\chi_{N_{345}}(n_{345}) \chi_{N_1}(n_3) \chi_{N_3}(n_3) \chi_{N_2}(n_4) \chi_{N_4}(n_4)  \\
&\times  \widehat{V}_S(n_3,n_4,n_5) \widehat{V}(n_{45}) 
 \langle n_{345} \rangle^{-1}\langle n_3 \rangle^{-2} \langle n_4 \rangle^{-2} \\
 &\times \int_0^t \sin\big( (t-t^\prime) \langle n_{345} \rangle\big) \cos\big( (t-t^\prime) \langle n_3 \rangle\big)  \cos\big( (t-t^\prime) \langle n_4 \rangle\big) \exp\big( \pm_5 i t^\prime \langle n_5 \rangle\big) \dtprime \bigg] \bigg|. 
\end{align*}
We now present two different estimates of $\mathscr{S}(n_5;t,N_\ast)$. The first (and main) estimates almost yields control over $\cG^{(1)}$, but exhibits a logarithmic divergence in $N_3$. The second estimates exhibits polynomial growth in $N_4$ and $N_5$, but yields the desired decay in $N_3$. \\

Using that $|\widehat{V}(n_{45})|\lesssim \langle n_{45} \rangle^{-\beta}$ and the crude estimate $|\widehat{V}_S(n_3,n_4,n_5)|\lesssim 1$, we obtain that 
\begin{align*}
\mathscr{S}(n_5;t,N_\ast) 
&\lesssim N_{345}^{-1} N_3^{-2} N_4^{-2} \sum_{n_3,n_4\in \bZ^3} \bigg[ 1 \big\{ |n_3|\sim N_3,\, |n_4|\sim N_4,\, |n_{345}|\sim N_{345} \big\}\langle n_{45} \rangle^{-\beta} \\
&\times \Big| \int_0^t \sin\big((t-t^\prime) \langle n_{345} \rangle\big) \cos\big((t-t^\prime) \langle n_{3} \rangle\big) 
\cos\big((t-t^\prime) \langle n_{4} \rangle\big)  \exp\big( \pm_5 i t^\prime \langle n_5 \rangle\big) \dtprime \Big| \bigg] \\
&\lesssim T \log(2+\max(N_3,N_4,N_5)) N_{345}^{-1} N_3^{-2} N_4^{-2} \\
&\times \max_{\pm_3,\pm_4,\pm_5} \sup_{m\in \bZ}  \sum_{n_3,n_4\in \bZ^3} \bigg[ 1 \big\{ |n_3|\sim N_3,\, |n_4|\sim N_4,\, |n_{345}|\sim N_{345} \big\}\langle n_{45} \rangle^{-\beta} 1\big\{ |\varphi-m|\leq 1\big\}\bigg], 
\end{align*}
where the phase-function $\varphi$ is given by
\begin{equation*}
\varphi(n_3,n_4,n_5) \defe \langle n_{345} \rangle \pm_3 \langle n_3 \rangle \pm_4 \langle n_4 \rangle \pm_5 \langle n_5 \rangle.  
\end{equation*}
Using the counting estimate from Lemma \ref{tools:lem_counting_double_resonance_quintic}, it follows that
\begin{equation}\label{so5i:eq_p3}
\mathscr{S}(n_5;t,N_\ast) \lesssim T \log(2+\max(N_3,N_4,N_5))  \max(N_4,N_5)^{-\beta+\eta}. 
\end{equation}

Alternatively, it follows from the $\operatorname{sine}$-cancellation lemma (Lemma \ref{tools:lem_sin_cancellation}) with $A=N_4^2 N_5^2$, say, that 
\begin{equation}\label{so5i:eq_p4}
\mathscr{S}(n_5;t,N_\ast) \lesssim T^2   N_3^{-1} N_4^5 N_5^2. 
\end{equation}
By combining \eqref{so5i:eq_p5}, \eqref{so5i:eq_p3}, and \eqref{so5i:eq_p4}, it follows that 
\begin{align*}
&T^{\frac{1}{2}} \, \Big\| \| \cG^{(1)}(t,x;N_\ast) \|_{L_t^{2} H_x^{s_2-1}([0,T]\times \bT^3)} \Big\|_{L^p_\omega}  \\
&\lesssim T^3 p^{\frac{1}{2}}  \log(2+\max(N_3,N_4,N_5))  N_5^{s_2-\frac{1}{2}}
\min\Big( N_4^{-\beta}, N_5^{-\beta}, N_3^{-1} N_4^{5} N_5^5\Big) \\
&\lesssim  T^3 p^{\frac{1}{2}} N_5^{s_2-\frac{1}{2}-\beta+20\eta} \max(N_3,N_4,N_5)^{-\eta}\\
&\lesssim  T^3 p^{\frac{1}{2}} \max(N_3,N_4,N_5)^{-\eta}. 
\end{align*}
This contribution is acceptable.

\end{proof}

\begin{proof}[Proof of Proposition \ref{so5ii:prop}:]
This estimate is similar (but easier) than Proposition \ref{so5i:prop} and we therefore omit the details. Instead of gaining additional regularity through the para-differential operator as in Proposition \ref{so5ii:prop}, we simply use interaction potential $V$ and the crude inequality 
\begin{equation*}
\langle n_{12} \rangle^{-2\beta} \lesssim \langle n_{12} \rangle^{-2\gamma} \lesssim \langle n_{12345} \rangle^{-2\gamma} \langle n_{345} \rangle^{2\gamma}
\end{equation*}
for $0<\gamma<\beta$. 
\end{proof}

\subsection{Septic stochastic objects}\label{section:septic}

The next proposition controls the third and fourth term in $\So$, i.e., in \eqref{lwp:eq_so}. 

\begin{proposition}[Septic stochastic objects]\label{so7:prop}
Let $T\geq 1$ and $p \geq 1$. Then, it holds that 
\begin{align}
\bigg\| \sup_{N\geq 1}  \bigg\| \, \<313N> \, \bigg\|_{\X{s_2-1}{b_+-1}([0,T])}    \bigg\|_{L^p_\omega(\bP)}
&\lesssim T^4 p^{7/2}, \label{so7:eq_1} \\
\bigg\| \sup_{N\geq 1}  \bigg\| \,  \nparald \<331N>\, \bigg\|_{\X{s_2-1}{b_+-1}([0,T])}    \bigg\|_{L^p_\omega(\bP)}
&\lesssim T^4 p^{7/2}. \label{so7:eq_2} 
\end{align}
\end{proposition}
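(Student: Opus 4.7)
The plan is to proceed in the same spirit as for the quintic objects (Proposition \ref{so5i:prop} and Proposition \ref{so5ii:prop}), but exploiting the key simplification noted after Lemma \ref{tools:lem_counting_septic}: one does \emph{not} need to extract dispersive cancellation at the full septic level, only inside each cubic sub-object. First I would use Proposition \ref{tools:prop_stochastic_representation} to write each of $\<313N>$ and $\nparald \<331N>$ as a sum over dyadic frequency scales $N_1,\dots,N_7,N_{1234},N_{567},\dots$ of a product of two cubic multiple stochastic integrals and one linear multiple stochastic integral, integrated against $\sin((t-t')\langle n_{1234}\rangle)\langle n_{1234}\rangle^{-1}$ in the variable coming from the outer Duhamel operator. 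The convolution in $V$ contributes the factor $\widehat{V}(n_{1234})$ (in both objects) and the factor $\widehat{V}_S(n_1,n_2,n_3)$, $\widehat{V}_S(n_5,n_6,n_7)$ inside each cubic piece. In the case of $\nparald \<331N>$ the constraint $\nparald$ becomes the dyadic restriction $\max(N_1,\dots,N_{1234}) > N_4^\epsilon$ (with $n_4$ the frequency of the outer linear factor), which will ultimately provide the gain needed to sum over dyadic scales.

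Next I would apply the product formula for mixed multiple stochastic integrals (iterating Lemma \ref{tools:lemma_23product}) to decompose each septic object into chaoses of order $7,5,3,1$, indexed by a pairing $\scrP$ between the three groups $\{1,2,3\}$, $\{4\}$, $\{5,6,7\}$ respecting the partition in the sense of Definition \ref{tools:def_pairings}. By construction the resulting coefficient is, up to signs and explicit oscillatory factors, a product of $\widehat{V}(n_{1234})$, $\Phi(n_1,n_2,n_3)$, $\langle n_4\rangle^{-1}$ and $\Phi(n_5,n_6,n_7)$ in the notation of Lemma \ref{tools:lem_counting_septic}, where the $\Phi$'s package the smoothing coming from the inner Duhamel integrals and from integration in $\lambda$ of $\langle\lambda - (\text{phase})\rangle^{b_+-1}$ against the decay of $\widehat{\chi}(T(\,\cdot\,))$ (as in the proof of Proposition \ref{so3:prop}). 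For the full septic chaos ($\scrP = \emptyset$), the transference step to the $X^{s_2-1,b_+-1}$-norm is done exactly as in the quintic case: use Lemma \ref{tools:lem_xsb_characterization} to reduce to an $L^2_{\lambda,n}$-estimate on the space-time Fourier transform of $\chi(t/T)$ times the chaos, apply Gaussian hypercontractivity (Proposition \ref{tools:prop_Gaussian_hypercontractivity}) to pass from $L^p_\omega$ to $L^2_\omega$ (losing $p^{7/2}$), and use orthogonality of multiple stochastic integrals together with $\int \langle\lambda\rangle^{2(b_+-1)}|\widehat\chi(T(\lambda-\psi))|^2\,d\lambda \lesssim \langle\psi\rangle^{2(b_- -1)}$ applied in $b_-$ first and then interpolated with the crude $\X{s_2-1}{0}$-bound to recover $b_+$ (as in the end of the proof of Proposition \ref{so3:prop}, to promote $b_-$ to $b_+$ at the cost of a tiny power of $N_{\max}$).

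The desired $L^2_\omega L^2_{\lambda,n}$-bound is then exactly the left-hand side of the septic sum estimate (Lemma \ref{tools:lem_counting_septic}) with $N_{1234567} = \max(N_{1234},N_{567})$, $s=s_2$, and summed over $m\in\bZ$ against $\langle m\rangle^{2(b_--1)}$ (one logarithm absorbed as usual). Lemma \ref{tools:lem_counting_septic} then produces a gain of $N_{1234567}^{-(1-s_2-\eta)}$ or $N_{567}^{-2(\beta-\eta)}N_{1234567}^{2(s_2-1/2)}$ together with $N_{1234}^{-2\beta}$, which, combined with either the gain $N_4^{\epsilon}$ from $\nparald$ in the second object or the $N_4^{-1}$ factor already present in the first object (via the inner cubic structure), is enough to sum over all dyadic scales with a small power $\eta'$ of $N_{\max}$ to spare for the frequency-localized version. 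For the lower-order chaoses produced by nontrivial pairings $\scrP$, the effect of pairing is exactly to replace some of the inner Gaussian factors by deterministic $\langle n\rangle^{-2}\mathfrak C(n)$ weights, which decouples the corresponding $n_j$'s and collapses a cubic sub-object into a quadratic or linear object; the bookkeeping here mirrors the $\cG^{(3)}$, $\widetilde{\cG}^{(3)}$, $\cG^{(1)}$ cases in the proof of Proposition \ref{so5i:prop}. The key point, anticipated in Remark \ref{quartic:rem_resonant_regularity}, is that every resonance strictly \emph{improves} the effective regularity of the involved sub-object (a cubic-to-linear resonance improves from $\beta-$ to $0-$, and a linear-to-linear resonance contributes a bounded constant), so the lower-order chaoses are never worse than the fully non-resonant septic chaos and admit acceptable bounds by Proposition \ref{tools:prop_cubic_sum} together with trivial volume estimates.

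The main obstacle I expect is purely bookkeeping: the septic object $\<313N>$ has three distinct ways of pairing a factor in $\{1,2,3\}$ with a factor in $\{5,6,7\}$ via the pairing $\scrP$ (and further internal pairings when $n_4$ is also paired), and each case produces a slightly different combination of dispersive symbols $\langle n_{1234}\rangle^{-1}$, modified phase functions, and $\operatorname{sine}$-type time integrals over $[0,t]$. The technically delicate piece is, as in the analysis of $\cG^{(1)}$ for the quintic object, the fully resonant chaos of order $1$ for $\<313N>$: there the double inner resonance threatens a logarithmic divergence in the pairing variable, but (just as in Lemma \ref{tools:lem_sin_cancellation}, used in Proposition \ref{so5i:prop}) the oddness of the outer $\sin((t-t')\langle n_{1234}\rangle)$ provides the required extra cancellation. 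Interpolating this $\operatorname{sine}$-gain with the crude counting bound will give the required $N_{\max}^{-\eta'}$ decay. The supremum over $N\geq 1$ and the passage from $\<3DN>$ to $\<3DNtau>$ are then handled verbatim as in Proposition \ref{so3:prop}, using Lemma \ref{tools:eq_moments_sups} to bring the supremum inside the $L^p_\omega$-norm and Lemma \ref{tools:lem_restricted_continuity} to absorb the indicator $1_{[0,\tau]}$ in $\X{s_2-1}{b_+-1}$.
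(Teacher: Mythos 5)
Your proposal is conceptually correct: it identifies the right ingredients (stochastic representation, product formula over pairings, the septic counting estimate Lemma \ref{tools:lem_counting_septic}, and the need to kill the logarithmic divergence in $N_4$), and it makes the crucial observation that no dispersive cancellation is needed at the full septic level. However, the paper's proof carries that observation further than you do, and ends up being quite a bit shorter.

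Where the two routes diverge: you propose transferring to $\X{s_2-1}{b_+-1}$ ``exactly as in the quintic case,'' i.e.\ via Lemma \ref{tools:lem_xsb_characterization}, the space-time Fourier transform of $\chi(t/T)$ times the chaos, $b_-\to b_+$ interpolation, and a chaos-by-chaos decomposition into $\cG^{(7)},\cG^{(5)},\cG^{(3)},\cG^{(1)}$. Since the septic level requires no outer dispersive gain, the paper instead applies the inhomogeneous Strichartz estimate (Lemma \ref{tools:lem_inhomogeneous_strichartz}) right away to reduce to a pointwise-in-time bound $\sup_{t\in[0,T]}\|\cdot\|_{L^2_\omega H_x^{s_2-1}}$, then bounds the two inner Duhamel integrals in sup-norm via $|\Phi(t,\cdot)|\lesssim T\,\Phi(\cdot)$ and uses orthogonality of multiple stochastic integrals directly, so that the sum over pairings $\scrP$ is handled \emph{inside} Lemma \ref{tools:lem_counting_septic} rather than by a separate case analysis of the lower chaoses. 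This avoids the Fourier-side bookkeeping entirely; note that if you did pursue the Fourier route, the weights you would produce after integrating out $\lambda$ (powers of $\langle\lambda\rangle^{2(b_--1)}$ evaluated at the full phase) are not literally the $\langle m\rangle^{-1}$ weights baked into the $\Phi$ of Lemma \ref{tools:lem_counting_septic}, so you would need a slightly modified counting lemma --- not a fatal problem, but avoidable. Finally, for the $\log(2+N_4)$ divergence, you propose invoking the $\operatorname{sine}$-cancellation lemma (Lemma \ref{tools:lem_sin_cancellation}) directly on the fully resonant order-1 chaos; the paper instead bootstraps from the quartic estimate \eqref{so4:eq_estimate_3} of Proposition \ref{so4:prop}, which already encodes that cancellation, and interpolates the resulting polynomial decay in $N_4$ against the main bound. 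Both fixes work, but the paper's is shorter because it reuses the quartic estimate as a black box. In short: your plan would succeed, but it redoes work the paper avoids by committing more fully to the ``no dispersion at septic level'' simplification.
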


\begin{remark}
In the frequency-localized version of Proposition \ref{so7:prop},  we gain an $\eta^\prime$-power of the maximal frequency-scale. As in Proposition \ref{so3:prop}, we may also replace $\<3DN>$ by  $\<3DNtau>= \Duh \big[ 1_{[0,\tau]} \<3N>\big]$. We will not further comment on these minor modifications.
\end{remark}

\begin{proof}
We only prove \eqref{so7:eq_1}. The second estimate \eqref{so7:eq_2} follows from similar (but slightly simpler) arguments. To simplify the notation, we formally set $N=\infty$. The same argument also yields the estimate for the supremum over $N$. Using the inhomogeneous Strichartz estimate (Lemma \ref{tools:lem_inhomogeneous_strichartz}) and Gaussian hypercontractivity (Proposition \ref{tools:prop_Gaussian_hypercontractivity}), it suffices to prove that 
\begin{equation}\label{so7:eq_p1}
 \sup_{t\in [0,T]} \bigg\| \bigg\| \, \<313> \, \bigg\|_{H_x^{s_2-1}(\bT^3)}    \bigg\|_{L^2_\omega(\bP)} \lesssim T^3. 
\end{equation}
Using a Littlewood-Paley decomposition, we write 
\begin{equation*}
\<313> = \sum_{N_{1234567}, N_{1234},N_4,N_{567}} \<313>[N_{1234567}, N_{1234},N_4,N_{567}], 
\end{equation*}
where 
\begin{equation}\label{so7:eq_p2}
\<313>[N_{1234567}, N_{1234},N_4,N_{567}] 
\defe P_{N_{1234567}} \bigg[ (P_{N_{1234}} \widehat{V}) \ast \Big( \<3D> \cdot P_{N_4} \<1b> \Big) \, P_{N_{567}} \<3D>\bigg]. 
\end{equation}
We now present two separate estimates of \eqref{so7:eq_p2}. The first estimate, which is the main part of the argument, almost yields \eqref{so7:eq_p1}, but contains a logarithmic divergence in $N_4$. The second (short) estimate exhibits polynomial decay in $N_4$, and is only used to remove this logarithmic divergence. \\

\emph{Main estimate:} Using the stochastic representation of the cubic nonlinearity (Proposition \ref{tools:prop_stochastic_representation}) and \eqref{tools:eq_normalized_integrals}, we obtain that 
\begin{equation}\label{so7:eq_p3}
\begin{aligned}
&\, \<313>[N_{1234567}, N_{1234},N_4,N_{567}] \\
&= \sum_{n_1,\hdots,n_7\in \bZ^3} \sum_{\pm_1,\hdots,\pm_7} \bigg[ \chi_{N_{1234567}}(n_{1234567}) \chi_{N_{1234}}(n_{1234}) \chi_{N_4}(n_4)  \chi_{N_{567}}(n_{567})  \widehat{V}(n_{1234}) \\
& \times \Phi(t,n_j, \pm_j\colon 1 \leq j \leq 3) e^{\pm i t \langle n_4 \rangle} \frac{1}{\langle n_4 \rangle} \Phi(t,n_j, \pm_j\colon 5 \leq j \leq 7) \exp\big({i \langle n_{1234567} , x \rangle}\big) \\
&\times \widetilde{\cI}_{3}[n_j,\pm_j\colon 1\leq j \leq 3] \widetilde{\cI}_{1}[n_4,\pm_4] \widetilde{\cI}_{3}[n_j,\pm_j\colon 5\leq j \leq 7] \bigg]. 
\end{aligned}
\end{equation}
Here, the amplitude $\Phi$ is given by 
\begin{align*}
 &\Phi(t,n_j, \pm_j\colon 1 \leq j \leq 3)  \\
 &\defe \langle n_{123}  \rangle^{-1} \widehat{V}_S(n_1,n_2,n_3)  \Big( \prod_{j=1}^3 \langle n_j \rangle^{-1} \Big) \Big( \int_0^t \sin\big((t-t^\prime) \langle n_{123} \rangle \prod_{j=1}^3 \exp\big( \pm_j i t^\prime \langle n_j \rangle\big) \dtprime \Big). 
\end{align*}
Comparing with $\Phi(n_1,n_2,n_3)$ as in Lemma \ref{tools:lem_counting_septic}, we have that
\begin{equation}\label{so7:eq_p4}
\sup_{t\in [0,T]} |\Phi(t,n_j, \pm_j\colon 1 \leq j \leq 3)| \lesssim T \Phi(n_1,n_2,n_3). 
\end{equation}
We now rely on the notation from Definition \ref{tools:def_pairings} and Lemma \ref{tools:lem_counting_septic}. Using the product formula for multiple stochastic integrals twice (Lemma \ref{tools:lem_product}), the orthogonality of multiple stochastic integrals, and \eqref{so7:eq_p4}, we obtain that 
\begin{align*}
 &\sup_{t\in [0,T]} \bigg\| \, \<313>[N_{1234567}, N_{1234},N_4,N_{567}]  \, \bigg\|_{L_\omega^2 H_x^{s_2-1}(\Omega \times \bT^3)}^2 \\
&\lesssim T^4 \sum_{\scrP} \sum_{(n_j)_{j\not \in \scrP}} \hspace{-1ex} \langle n_{\text{nr}} \rangle^{2(s_2-1)}
 \bigg(  \hspace{-0.2ex} \sum_{\substack{(n_j)_{j \in \scrP}}}^{\ast}  \hspace{-1ex}1\big\{ |n_{1234567}|\sim N_{1234567}\big\} 1\big\{ |n_{1234}|\sim N_{1234}\big\} 1\big\{|n_{567}| \sim N_{567} \big\}  \\ 
  &\times 1\big\{|n_4|\sim N_4\big\} |\widehat{V}(n_{1234})| \Phi(n_1,n_2,n_3) \langle n_4 \rangle^{-1} \Phi(n_5,n_6,n_7) \bigg)^2. \\
\end{align*}
The sum in $\scrP$ is taken over all pairings  which respect the partition $\{1,2,3\},\{4\},\{5,6,7\}$. For a similar argument, we refer the reader to \cite[Lemma 4.1]{DNY19}. Using Lemma \ref{tools:lem_counting_septic}, it follows that 
\begin{equation}
\begin{aligned}
 &\sup_{t\in [0,T]} \bigg\| \, \<313>[N_{1234567}, N_{1234},N_4,N_{567}]  \, \bigg\|_{L_\omega^2 H_x^{s_2-1}(\Omega \times \bT^3)} \\
 &\lesssim T^2  \log(2+N_4)  \Big( N_{1234567}^{(s_2-\frac{1}{2})}   N_{567}^{-(\beta-\eta)} + N_{1234567}^{- (1-s_2-\eta)} \Big) N_{1234}^{-\beta}. 
 \end{aligned}
 \end{equation}
 Since $N_{1234567}\lesssim \max(N_{1234},N_{567})$ and $N_{1234567} \sim N_{567}$ if $N_{1234}\ll N_{567}$, we obtain that 
 \begin{equation}\label{so7:eq_p5}
\begin{aligned}
 &\sup_{t\in [0,T]} \bigg\| \, \<313>[N_{1234567}, N_{1234},N_4,N_{567}]  \, \bigg\|_{L_\omega^2 H_x^{s_2-1}(\Omega \times \bT^3)} \\
 &\lesssim T^2 \log(2+N_4) \max(N_{1234567},N_{1234},N_{567})^{-(\beta-\eta-\delta_2)}. 
\end{aligned}
\end{equation}

\emph{Removing the logarithmic divergence in $N_4$:} 
Using Proposition \ref{so3:prop} and \eqref{so4:eq_estimate_3} from Proposition \ref{so4:prop}, we obtain that 

\begin{equation}\label{so7:eq_p6}
\begin{aligned}
 &\sup_{t\in [0,T]} \bigg\| \, \<313>[N_{1234567}, N_{1234},N_4,N_{567}]  \, \bigg\|_{L_\omega^2 H_x^{s_2-1}(\Omega \times \bT^3)} \\
 &\lesssim \Big\| P_{N_{1234}} \Big[ \<3D> \cdot P_{N_4} \<1b> \Big]  \Big\|_{L^4_\omega L_t^\infty L_x^2(\Omega \times [0,T] \times \bT^3)}  \Big\| P_{N_{567}} \<3D> \Big\|_{L_\omega^4 L_t^\infty L_x^\infty(\Omega\times [0,T]\times \bT^3)} \\
 &\lesssim T^5 N_{1234} N_4^{-\frac{\eta}{10}}. 
 \end{aligned}
\end{equation}
By combing \eqref{so7:eq_p5} and \eqref{so7:eq_p6}, we obtain that 
 \begin{equation}\label{so7:eq_p7}
\begin{aligned}
 &\sup_{t\in [0,T]} \bigg\| \, \<313>[N_{1234567}, N_{1234},N_4,N_{567}]  \, \bigg\|_{L_\omega^2 H_x^{s_2-1}(\Omega \times \bT^3)} \\
 &\lesssim T^3  N_{4}^{-\eta^2} \max(N_{1234567},N_{1234},N_{567})^{-(\beta-\eta-2\delta_2)}. 
 \end{aligned}
 \end{equation}
 After summing over the dyadic scales, this yields \eqref{so7:eq_1}. 
\end{proof}

\section{Random matrix theory estimates}\label{section:rmt}

In this section, we control the random matrix terms $\RMT$. 
Techniques from random matrix theory, such as the moment method, were first applied to dispersive equations in Bourgain's seminal paper \cite{Bourgain96}. Over the last decade, they have become an indispensable tool in the study of dispersive PDE and we refer the interested reader to \cite{Bourgain97,CG19,DH19,DNY19,FOSW19,GKO18a,Richards16}. Very recently, Deng, Nahmod, and Yue \cite[Proposition 2.8]{DNY20} obtained an easy-to-use, general, and essentially sharp random matrix estimate, which is proved using the moment method. We have previously recalled their estimate in Proposition \ref{tools:prop_moment_method}. The proofs of Proposition \ref{rmt:prop1} and Proposition \ref{rmt:prop2} combine their random matrix estimate with the counting estimates in Section \ref{section:counting}. 

\begin{proposition}[First RMT estimate]\label{rmt:prop1}
Let $T\geq 1$ and let $p\geq 1$. Then, it holds that 
\begin{equation}\label{rmt:eq_rmt1}
\Big\| \sup_{N\geq 1} \sup_{\cJ\subseteq [0,T]}  \sup_{\| w \|_{\X{s_1}{b}(\cJ)}\leq1} \Big \| 
(V\ast \<2N>) \cdot  P_{\leq N}w 
\Big\|_{\X{s_2-1}{b_+-1}(\cJ)} \Big\|_{L^p_\omega(\bP)} \lesssim Tp. 
\end{equation}
\end{proposition}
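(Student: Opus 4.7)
The plan is to view the map $w\mapsto (V\ast\<2N>)\cdot P_{\leq N}w$ as a random linear operator and to estimate its operator norm from $\X{s_1}{b}$ to $\X{s_2-1}{b_+-1}$ by combining the moment method of Proposition \ref{tools:prop_moment_method} with the first deterministic tensor estimate (Lemma \ref{tools:lem_first_tensor}). Standard reductions come first: the supremum over subintervals $\cJ\subseteq[0,T]$ is absorbed by extending $w$ to $[0,T]$ with comparable $\X{s_1}{b}$-norm and using Lemma \ref{tools:lem_restricted_continuity}, while the supremum over $N\geq 1$ is added at the end via Lemma \ref{tools:eq_moments_sups}, provided that the frequency-localized bound we prove exhibits an $N_{\max}^{-\eta^\prime}$ gain in the largest dyadic scale. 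For fixed $N$, dyadically decompose $\<2N>=\sum_{N_1,N_2\leq N}\<2N>[N_1,N_2]$, $P_{\leq N}w=\sum_{N_3\leq N}P_{N_3}w$, and also in the $n_{12}$-scale $N_{12}$ and output scale $N_{123}$.

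For the tensor setup, use Lemma \ref{tools:lem_xsb_characterization} to write $w=\sum_{\pm_3}e^{\mp_3 it\langle\nabla\rangle}g^{\pm_3}$ with $\|\langle\nabla\rangle^{s_1}g^{\pm_3}\|_{L^2_x H^b_t}\lesssim\|w\|_{\X{s_1}{b}}$, and use Proposition \ref{tools:prop_stochastic_representation} together with \eqref{tools:eq_normalized_integrals} to separate the time dependence of $\<2N>$ into factors $e^{\pm_1 it\langle n_1\rangle+\pm_2 it\langle n_2\rangle}$ multiplying the normalized multiple stochastic integral $\widetilde{\cI}_2[\pm_j,n_j]/(\langle n_1\rangle\langle n_2\rangle)$. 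A space-time Fourier computation of $\chi(t/T)(V\ast\<2N>)\cdot P_{\leq N}w$ shows that (up to a factor of $T\widehat{\chi}$ enforcing the modulation constraint $|\sigma-\mu-\varphi|\lesssim T^{-1}$) the Fourier coefficient at output frequency $n$ and output modulation $\sigma$ is obtained from $\widehat{g^{\pm_3}}(\mu,n_3)$ by contracting against the random tensor of Lemma \ref{tools:lem_first_tensor}, with dispersive phase $\varphi=\pm_{123}\langle n_{123}\rangle\pm_1\langle n_1\rangle\pm_2\langle n_2\rangle\pm_3\langle n_3\rangle$ and $m\in\bZ$ encoding a further dyadic decomposition of $\varphi$.

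Third, apply the moment method: invoke Proposition \ref{tools:prop_moment_method} with $\mathcal{S}=\{n_1,n_2\}$, $\mathcal{A}=\{n_3\}$, $\mathcal{B}=\{n\}$ to the contracted random tensor $h_c(n,n_3)=\sum_{n_1,n_2}h_{nn_1n_2n_3}\,\widetilde{\cI}_2[\pm_j,n_j]$, obtaining
\begin{equation*}
\bigl\|\,\|h_c\|_{n_3\to n}\bigr\|_{L^p_\omega}\lesssim_\theta N_{\max}^\theta\, p\,\max\bigl(\|h\|_{n_1n_2n_3\to n},\|h\|_{n_3\to nn_1n_2},\|h\|_{n_1n_3\to nn_2},\|h\|_{n_2n_3\to nn_1}\bigr)\lesssim N_{\max}^{-\eta/2}\,p,
\end{equation*}
where the second inequality is Lemma \ref{tools:lem_first_tensor}. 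A modulation decomposition argument (Cauchy--Schwarz in the input modulation variable $\mu$, affordable since $b>1/2$, together with an integration in the output modulation variable $\sigma$, affordable since $1-b_+>0$) converts this $\ell^2_{n_3}\to\ell^2_n$ bound into the sought frequency-localized $\X{s_1}{b}\to\X{s_2-1}{b_+-1}$ estimate. Summing over all dyadic scales against the surplus $N_{\max}^{-\eta/3}$, taking the supremum in $N$, and tracking the factor of $T$ coming from the space-time localization $\chi(t/T)$ completes the proof and yields the bound $\lesssim Tp$.

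The main obstacle is the precise reduction from the $\X{s_1}{b}\to\X{s_2-1}{b_+-1}$ operator norm to the discrete tensor norm $\|h_c\|_{n_3\to n}$, which must be executed without losing any power of $N_{\max}$ and with summable weights in the modulation variables. This relies on the half-wave decomposition for the input and a dyadic modulation decomposition coupled with the indicator $1\{|\varphi-m|\leq 1\}$ on the output side, so that the dispersive phase is isolated and Lemma \ref{tools:lem_first_tensor} can be invoked at full strength.
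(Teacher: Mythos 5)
Your overall strategy is the same as the paper's: view $w\mapsto(V\ast\<2N>)\cdot P_{\leq N}w$ as a random linear operator, split $w$ into half-waves, use the stochastic representation of $\<2N>$ to isolate the time dependence, take space-time Fourier transforms to reduce to a contracted random tensor, and combine Proposition \ref{tools:prop_moment_method} (moment method) with Lemma \ref{tools:lem_first_tensor} (first deterministic tensor estimate). The essential ingredients match.

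There is, however, one genuine gap: your claim that the integration in the output modulation variable $\sigma$ is ``affordable since $1-b_+>0$.'' To close the modulation factors in $L^2$ you need $\int_{\bR}\langle\sigma\rangle^{2(b_+-1)}\,\mathrm{d}\sigma<\infty$, which requires $b_+<1/2$ --- but $b_+>1/2$ by \eqref{intro:eq_parameter_condition}. The paper's proof circumvents this by first establishing the estimate with the exponent $b_-$ in place of $b_+$ (then $2(1-b_-)>1$ does give integrability), and \emph{then} upgrading $b_-$ to $b_+$ by exploiting the $N_{\max}^{-\eta/2}$ surplus produced by Lemma \ref{tools:lem_first_tensor}, along the lines of the proof of Proposition \ref{so3:prop}. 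In your sketch the frequency surplus $N_{\max}^{-\eta/3}$ is spent only on the dyadic summation; you need to reserve part of it for this $b_-\to b_+$ upgrade, otherwise the output modulation integral diverges. Once this is corrected, the argument matches the paper's two-step proof (random-matrix form, then moment method with the tensor estimate after a further dyadic decomposition in the phase $\varphi$).
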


\begin{remark}
This proposition controls the first term in $\RMT$, i.e., in \eqref{lwp:eq_RMT}. In the frequency-localized version of \eqref{rmt:eq_rmt1}, which is detailed in the proof, we gain an $\eta^\prime$-power in the maximal frequency-scale. 
\end{remark}

\begin{proof} The arguments splits into two steps: First, we bring \eqref{rmt:eq_rmt1} into a random matrix form. Then, we prove a random matrix estimate using the moment method (Proposition \ref{tools:prop_moment_method}).  \\

\emph{Step 1: The random matrix form.} 
By definition of the restricted norms, it holds that 
\begin{equation}\label{rmt:eq_rmt2_p1}
\begin{aligned}
&\sup_{\cJ\subseteq [0,T]}  \sup_{\| w \|_{\X{s_1}{b}(\cJ)}\leq1} 
\Big \|  (V\ast \<2N>) \cdot  P_{\leq N}w \Big\|_{\X{s_2-1}{b_+-1}(\cJ)} \\
&\leq   \sup_{\| w \|_{\X{s_1}{b}(\bR)}\leq1} 
\Big \| \chi(t/T) (V\ast \<2N>) \cdot  P_{\leq N}w \Big\|_{\X{s_2-1}{b_+-1}(\bR)}. 
\end{aligned}
\end{equation}
We bound the right-hand side of \eqref{rmt:eq_rmt2_p1} with $b_+$ replaced by $b_-$. Using the frequency-localized estimate in the arguments below and a similar reduction as in the proof of Proposition \ref{so3:prop}, we can then upgrade the value from $b_-$ to $b_+$. Let $w\in \X{s_1}{b}(\bR)$ satisfy 
$\| w \|_{\X{s_1}{b}(\bR)}\leq 1 $. We define $w_{\pm}\in \X{s_1}{b}(\bR)$ by
\begin{equation*}
\widehat{w}_{\pm}(\lambda,n) \defe 1 \big \{ \pm \lambda \geq 0 \big\} \widehat{w}(\lambda,n). 
\end{equation*} 
Then, it holds that $w=w_+ + w_-$ and 
\begin{equation*}
\| w \|_{\X{s_1}{b}(\bR)} \sim \max_{\pm} \| \langle n \rangle^{s_1} \langle \lambda \rangle^{b} \widehat{w}_{\pm}(\lambda \pm \langle n \rangle, n) \|_{L_\lambda^2 \ell_n^2(\bR \times \bT^3)}. 
\end{equation*}
Using this decomposition of $w$ and the stochastic representation of the renormalized square, we  obtain that the nonlinearity is given by
\begin{align*}
&(V\ast \<2N>) \cdot  P_{\leq N}w  \\
&= \sum_{\pm_1,\pm_2,\pm_3} \sum_{N_1,N_2,N_3} \sum_{n_1,n_2,n_3 \in \bZ^3} \bigg[c(\pm_1,\pm_2) \Big( \prod_{j=1}^3 \rho_N(n_j) \chi_{N_j}(n_j) \Big) 
\widehat{V}(n_{12}) \cI_2[ \pm_j, n_j \colon j=1,2] \\
&\times \Big( \prod_{j=1}^2 \exp\big(\pm_j i t \langle n_j \rangle\big) \Big) \widehat{w}_{\pm_3}(t,n_3) \exp\big(i \langle n_{123}, x \rangle \big) \bigg] \allowdisplaybreaks[3] \\
&=  \sum_{\pm_1,\pm_2,\pm_3} \sum_{N_1,N_2,N_3}
 \int_{\bR} \dlambda_3 \sum_{n_1,n_2,n_3 \in \bZ^3} \bigg[c(\pm_1,\pm_2) \Big( \prod_{j=1}^3 \rho_N(n_j) \chi_{N_j}(n_j) \Big) 
\widehat{V}(n_{12}) \cI_2[ \pm_j, n_j \colon j=1,2] \\
&\times  \exp\big( it \lambda_3 \big) \Big( \prod_{j=1}^3 \exp\big( \pm_j  i t  \langle n_j \rangle \big) \Big) \widehat{w}_{\pm_3}(\lambda_3\pm_3 \langle n_3 \rangle,n_3) \exp\big(i \langle n_{123}, x \rangle \big) \bigg].
\end{align*}
To simplify the notation, we define the phase-function $\varphi\colon (\bZ^3)^3 \rightarrow \bR$  by 
\begin{equation}
\varphi(n_1,n_2,n_3) = \pm_{123} \langle n_{123} \rangle  \pm_1 \langle n_1 \rangle \pm_2 \langle n_2 \rangle \pm_3 \langle n_3 \rangle.  
\end{equation}
The space-time Fourier transform of the time-truncated nonlinearity is therefore given by
\begin{equation}\label{rmt:eq_rmt2_p4}
\begin{aligned}
&\mathcal{F}\Big( \chi(\cdot/T) (V\ast \<2N>) \cdot  P_{\leq N}w\Big)(\lambda\pm_{123} \langle n \rangle, n)  \\
&=  T \sum_{\pm_1,\pm_2,\pm_3} \sum_{N_1,N_2,N_3}
 \int_{\bR} \dlambda_3 \sum_{n_1,n_2,n_3 \in \bZ^3} \bigg[c(\pm_1,\pm_2)  1 \big\{ n=n_{123} \big\} 
\widehat{\chi}\big( T ( \lambda - \lambda_3  - \varphi(n_1,n_2,n_3)) \big) \\ 
&\times \Big( \prod_{j=1}^3 \rho_N(n_j) \chi_{N_j}(n_j) \Big)  \widehat{V}(n_{12}) \cI_2[ \pm_j, n_j \colon j=1,2]   \widehat{w}_{\pm_3}(\lambda_3\pm_3 \langle n_3 \rangle,n_3) \bigg]. 
\end{aligned}
\end{equation}
To simplify the following notation, we emphasize the dependence on the frequency-scales $N_1,N_2,N_3$ by writing $N_\ast$ and omit the dependence on $\pm_{123},\pm_1,\pm_2,\pm_3$, and $T$ from our notation. We define the tensor $h(n,n_1,n_2,n_3;\lambda,\lambda_3,N_\ast)$ by 
\begin{equation}\label{rmt:eq_rmt2_p5}
\begin{aligned}
h(n,n_1,n_2,n_3;\lambda,\lambda_3,N_\ast) \defe&
T c(\pm_1,\pm_2)  1 \big\{ n=n_{123} \big\} 
\widehat{\chi}\big( T ( \lambda - \lambda_3 - \varphi(n_1,n_2,n_3)) \big) \\
&\times \Big( \prod_{j=1}^3 \rho_N(n_j) \chi_{N_j}(n_j) \Big)  \widehat{V}(n_{12}) \langle n \rangle^{s_2-1} \langle n_1 \rangle^{-1} \langle n_2 \rangle^{-1} \langle n_3 \rangle^{-s_1}. 
\end{aligned}
\end{equation}
Furthermore, we define the contracted random tensor $h_c(n,n_3;\lambda,\lambda_3)$ by 
\begin{equation}\label{rmt:eq_rmt2_p3}
h_c(n,n_3;\lambda,\lambda_3,N_\ast) = \sum_{n_1,n_2 \in \bZ^3} h(n,n_1,n_2,n_3;\lambda,\lambda_3,N_\ast) \cdot \widetilde{\cI}_2[\pm_j, n_j \colon j=1,2]. 
\end{equation}
By combining our previous expression of the nonlinearity \eqref{rmt:eq_rmt2_p4} with the definition \eqref{rmt:eq_rmt2_p3}, we obtain that 
\begin{equation*}
\begin{aligned}
&\mathcal{F}\Big( \chi(\cdot/T) (V\ast \<2N>) \cdot  P_{\leq N}w\Big)(\lambda\pm_{123} \langle n \rangle, n)  \\
&= \langle n \rangle^{-(s_2-1)} \sum_{\pm_1,\pm_2,\pm_3} \sum_{N_1,N_2,N_3} \int_{\bR} \dlambda_3  \sum_{n_3\in \bZ^3} h_c(n,n_3;\lambda,\lambda_3,N_\ast) \langle n_3 \rangle^{s_1} \widehat{w}_{\pm_3}(\lambda_3\pm_3 \langle n_3 \rangle,n_3). 
\end{aligned}
\end{equation*}
We estimate each combination of signs and each dyadic block separately. Using the tensor norms from Definition \ref{tool:def_tensor}, the contribution to the $\X{s_2-1}{b_--1}$-norm is bounded by 
\begin{equation*}
\begin{aligned}
& \Big\|  \langle \lambda \rangle^{b_- -1}  \int_{\bR} \dlambda_3  \sum_{n_3\in \bZ^3} h_c(n,n_3;\lambda,\lambda_3,N_\ast) \langle n_3 \rangle^{s_1} \widehat{w}_{\pm_3}(\lambda_3\pm_3 \langle n_3 \rangle,n_3) \Big\|_{L_\lambda^2 \ell_n^2(\bR \times \bT^3)} \\
& \lesssim
 \Big\|   \langle \lambda \rangle^{b_- -1}  \langle \lambda_3 \rangle^{-b}  \|  h_c(n,n_3;\lambda,\lambda_3,N_\ast)\|_{n_3 \rightarrow n} \Big \|_{L_\lambda^2 L_{\lambda_3}^2(\bR\times \bR)}
\cdot \| w \|_{\X{s_1}{b}(\bR)}. 
\end{aligned}
\end{equation*}
In order to control the operator norm in \eqref{rmt:eq_rmt2_p1}, it therefore remains to prove that 
\begin{equation}\label{rmt:eq_rmt2_p2}
\Big \| \Big\|   \langle \lambda \rangle^{b_- -1}  \langle \lambda_3 \rangle^{-b}  \|  h_c(n,n_3;\lambda,\lambda_3,N_\ast)\|_{n_3 \rightarrow n} \Big\|_{L_\lambda^2 L_{\lambda_3}^2(\bR\times \bR)} \Big\|_{L^p_\omega(\bP)} \lesssim T \max(N_1,N_2,N_3)^{-\frac{\eta}{2}} p. 
\end{equation}

\emph{Step 2: Proof of the random matrix estimate \eqref{rmt:eq_rmt2_p2}.} 
Using Minkowski's integral inequality, we have that 
\begin{align*}
&\Big \| \Big\|   \langle \lambda \rangle^{b_- -1}  \langle \lambda_3 \rangle^{-b}  \|  h_c(n,n_3;\lambda,\lambda_3,N_\ast)\|_{n_3 \rightarrow n} \Big\|_{L_\lambda^2 L_{\lambda_3}^2(\bR\times \bR)} \Big\|_{L^p_\omega(\bP)} \\
&\leq \Big\|   \langle \lambda \rangle^{b_- -1}  \langle \lambda_3 \rangle^{-b} \Big \|  \|  h_c(n,n_3;\lambda,\lambda_3,N_\ast)\|_{n_3 \rightarrow n}   \Big\|_{L^p_\omega(\bP)} \Big\|_{L_\lambda^2 L_{\lambda_3}^2(\bR\times \bR)} \\
&\leq \Big\|   \langle \lambda \rangle^{b_- -1}  \langle \lambda_3 \rangle^{-b}  \Big\|_{L_\lambda^2 L_{\lambda_3}^2(\bR\times \bR)} 
\cdot \sup_{\lambda,\lambda_3 \in\bR} \Big \|  \|  h_c(n,n_3;\lambda,\lambda_3,N_\ast)\|_{n_3 \rightarrow n}   \Big\|_{L^p_\omega(\bP)} \\
&\lesssim  \sup_{\lambda,\lambda_3 \in\bR} \Big \|  \|  h_c(n,n_3;\lambda,\lambda_3,N_\ast)\|_{n_3 \rightarrow n}   \Big\|_{L^p_\omega(\bP)}.
\end{align*}
We emphasize that the supremum over $\lambda,\lambda_3 \in \bR$ is outside of the $L^p_\omega(\bP)$-norm. Using the moment method (Proposition \ref{tools:prop_moment_method}), it holds that 
\begin{align*}
&\sup_{\lambda,\lambda_3 \in\bR} \Big \|  \|  h_c(n,n_3;\lambda,\lambda_3,N_\ast)\|_{n_3 \rightarrow n}   \Big\|_{L^p_\omega(\bP)} \\
&\lesssim \max(N_1,N_2,N_3)^{\frac{\eta}{2}}  \sup_{\lambda,\lambda_3 \in \bR} 
\max\Big( \| h(\cdot;\lambda,\lambda_3,N_\ast)\|_{n_1n_2n_3\rightarrow n}, \| h(\cdot;\lambda,\lambda_3,N_\ast) \|_{n_3 \rightarrow nn_1 n_2}, \\
&\qquad\|h(\cdot;\lambda,\lambda_3,N_\ast) \|_{n_1 n_3\rightarrow nn_2} , \| h(\cdot;\lambda,\lambda_3,N_\ast) \|_{n_2n_3\rightarrow nn_1} \Big) p. 
\end{align*}
In order to estimate the tensor norms of $h(\cdot;\lambda,\lambda_3,N_\ast)$, we further decompose it according to the value of the phase-function $\varphi$. For any $m \in \bZ$, we define
\begin{equation*}
\begin{aligned}
\widetilde{h}(n,n_1,n_2,n_3;m,N_\ast) \defe&
T   1 \big\{ n=n_{123} \big\} 
1\big\{ |\varphi(n_1,n_2,n_3)) -m | \leq 1 \big\} \Big( \prod_{j=1}^3 \rho_N(n_j) \chi_{N_j}(n_j) \Big)  \\
&\times  |\widehat{V}(n_{12})| \langle n \rangle^{s_2-1} \langle n_1 \rangle^{-1} \langle n_2 \rangle^{-1} \langle n_3 \rangle^{-s_1}. 
\end{aligned}
\end{equation*}
Using the definition of $h$ in \eqref{rmt:eq_rmt2_p5} and the decay of $\widehat{\chi}$, we obtain that 
\begin{align*}
&|h(n,n_1,n_2,n_3;\lambda,\lambda_3,N_\ast) | \\
&\lesssim
 \sum_{m\in \bZ} | \widehat{\chi}\big( T (\lambda-\lambda_3 - \varphi(n_1,n_2,n_3) ) \big)| \, 1\big\{ | \varphi(n_1,n_2,n_3)-m| \leq 1\big\} \, \widetilde{h}(n,n_1,n_2,n_3;m,N_\ast) \\
 &\lesssim \sum_{m\in \bZ} \langle \lambda_3 - \lambda - m\rangle^{-2} \, \widetilde{h}(n,n_1,n_2,n_3;m,N_\ast). 
\end{align*}
Using the triangle inequality for the tensor norms and the first deterministic tensor estimate (Lemma \ref{tools:lem_first_tensor}), it follows that 
\begin{align*}
 &\max(N_1,N_2,N_3)^{\frac{\eta}{2}}  \sup_{\lambda,\lambda_3 \in \bR} 
\max\Big( \| h(\cdot;\lambda,\lambda_3,N_\ast)\|_{n_1n_2n_3\rightarrow n}, \| h(\cdot;\lambda,\lambda_3,N_\ast) \|_{n_3 \rightarrow nn_1 n_2}, \\
&\quad\|h(\cdot;\lambda,\lambda_3,N_\ast) \|_{n_1 n_3\rightarrow nn_2} , \| h(\cdot;\lambda,\lambda_3,N_\ast) \|_{n_2n_3\rightarrow nn_1} \Big) \\
&\lesssim \max(N_1,N_2,N_3)^{\frac{\eta}{2}}  \sup_{m \in \bZ} 
\max\Big( \| \widetilde{h}(\cdot;m,N_\ast) \|_{n_1n_2n_3\rightarrow n}, \| \widetilde{h}(\cdot;m,N_\ast)\|_{n_3 \rightarrow nn_1 n_2}, \\
&\quad\| \widetilde{h}(\cdot;m,N_\ast)\|_{n_1 n_3\rightarrow nn_2} , \| \widetilde{h}(\cdot;m,N_\ast) \|_{n_2n_3\rightarrow nn_1} \Big) \\
&\lesssim T \max(N_1,N_2,N_3)^{-\frac{\eta}{2}}. 
\end{align*}

\end{proof}

\begin{proposition}[Second RMT estimate]\label{rmt:prop2}
Let $T\geq 1$ and let $p\geq 1$. Then, it holds that 
\begin{equation}\label{rmt:eq_rmt2}
\Big\| \sup_{N\geq 1}  \sup_{\cJ\subseteq [0,T]} \sup_{\| Y \|_{\X{s_2}{b}(\cJ)}\leq1} \Big \| \lcol 
V \ast \big( P_{\leq N} \<1b> \cdot P_{\leq N} Y \big) \nparald P_{\leq N} \<1b> \rcol 
\Big\|_{\X{s_2-1}{b_+-1}(\cJ)} \Big\|_{L^p_\omega(\bP)} \lesssim Tp. 
\end{equation}
\end{proposition}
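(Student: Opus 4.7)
The plan is to follow the same two-step strategy as in Proposition \ref{rmt:prop1}: first reduce the estimate to a bound on the operator norm of a contracted random tensor, and then apply the moment method (Proposition \ref{tools:prop_moment_method}) together with the deterministic tensor estimate from Lemma \ref{tools:lem_second_tensor}. As before, I will prove the estimate with $b_+$ replaced by $b_-$ on a fixed dyadic block and then recover the $b_+$-gain from an $\eta^\prime$-power loss in the maximal frequency-scale, exactly as in the proof of Proposition \ref{so3:prop}.

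For the random matrix reduction, take $Y \in \X{s_2}{b}(\bR)$ with $\| Y \|_{\X{s_2}{b}(\bR)} \leq 1$, decompose $Y = Y_+ + Y_-$ into half-wave components and pass to the profiles. Using the stochastic representation (Proposition \ref{tools:prop_stochastic_representation}) for $P_{\leq N} \<1b> \cdot P_{\leq N} \<1b>$ in terms of the length-$2$ multiple stochastic integral (which is exactly where the $\MN$-renormalization of the $\lcol \cdots \rcol$ nonlinearity is absorbed), together with the space-time Fourier transform and the phase function
\[
\varphi(n_1,n_2,n_3)=\pm_{123}\langle n_{123}\rangle\pm_1\langle n_1\rangle\pm_2\langle n_2\rangle\pm_3\langle n_3\rangle,
\]
one writes the frequency-localized nonlinearity (with the $\<1b>$-factors at frequencies $N_1, N_3$, the $Y$-factor at $n_2 \sim N_2$) as an integration against the tensor
\[
h(n,n_1,n_2,n_3;\lambda,\lambda_2,N_\ast) \defe T\, 1\{n=n_{123}\}\,\widehat{\chi}\bigl(T(\lambda-\lambda_2-\varphi)\bigr)\, \prod_{j=1,3}\rho_N(n_j)\chi_{N_j}(n_j)\,\chi_{N_2}(n_2)\,\widehat{V}(n_{12})\langle n\rangle^{s_2-1}\langle n_1\rangle^{-1}\langle n_2\rangle^{-s_2}\langle n_3\rangle^{-1},
\]
where the $\nparald$-constraint restricts the support by imposing $N_3 \leq N_{\mathrm{conv}}^{\epsilon}$ being excluded (with $N_{\mathrm{conv}}$ the scale of the convolution factor), and the renormalization $\MN$ cancels exactly the pairing $n_1=-n_3$. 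This yields the contracted random tensor
\[
h_c(n,n_2;\lambda,\lambda_2,N_\ast)=\sum_{n_1,n_3\in\bZ^3} h(n,n_1,n_2,n_3;\lambda,\lambda_2,N_\ast)\,\widetilde{\cI}_2[\pm_j,n_j\colon j=1,3],
\]
and, as in Step~1 of the proof of Proposition \ref{rmt:prop1}, the $\X{s_2-1}{b_--1}$-norm of the frequency-localized nonlinearity is controlled by $\| Y\|_{\X{s_2}{b}(\bR)} \cdot \| \langle \lambda\rangle^{b_- -1}\langle \lambda_2\rangle^{-b}\,\|h_c\|_{n_2\to n}\|_{L^2_\lambda L^2_{\lambda_2}}$.

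For the random matrix estimate, Minkowski's inequality moves the $L^p_\omega$-norm past the $L^2_{\lambda,\lambda_2}$-integration at the cost of taking a supremum in $\lambda,\lambda_2$. Then the moment method (Proposition \ref{tools:prop_moment_method}) yields
\[
\bigl\|\|h_c(\cdot;\lambda,\lambda_2,N_\ast)\|_{n_2\to n}\bigr\|_{L^p_\omega}\lesssim \max(N_1,N_2,N_3)^{\eta/2}\, p\,\max\bigl(\|h\|_{n_1n_2n_3\to n},\|h\|_{n_2\to nn_1n_3},\|h\|_{n_2n_3\to nn_1},\|h\|_{n_1n_2\to nn_3}\bigr).
\]
Decomposing $h$ with respect to the dyadic value of $\varphi$ (so that the $\widehat{\chi}$-decay yields an $\ell^1_m$-summable kernel in the dual frequency variables), Lemma \ref{tools:lem_second_tensor} gives the bound $N_{12}^{-\beta}\max(N_1,N_2,N_3)^{-\eta}$ on the four tensor norms appearing above. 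Summing over dyadic blocks, combining signs $\pm_1,\pm_2,\pm_3,\pm_{123}$, and upgrading from $b_-$ to $b_+$ via interpolation with a trivial $\X{0}{0}$-bound (exactly as in \eqref{so3:eq_p2}) produces the claimed estimate with the extra $\eta^\prime$-gain in the maximal frequency-scale needed for the remark after Proposition \ref{local:prop_master}.

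The main difficulty is not in the PDE analysis but in the bookkeeping required to set up the tensor $h$ correctly: one must verify that the $\MN$-counter-term inside $\lcol\,\cdots\,\rcol$ exactly removes the full $n_1=-n_3$ contraction produced by the product formula for the two $\<1b>$-factors, and that the $\nparald$-paraproduct is compatible with the support constraints used in Lemma \ref{tools:lem_second_tensor}. Once these algebraic points are dealt with, the analytic estimates are identical in spirit to those of Proposition \ref{rmt:prop1}, with Lemma \ref{tools:lem_second_tensor} playing the role of Lemma \ref{tools:lem_first_tensor}—the improved weight $\langle n_2\rangle^{-s_2}$ on the $Y$-factor (instead of $\langle n_3\rangle^{-s_1}$ on the $w$-factor) being what allows the same $\max(N_1,N_2,N_3)^{-\eta}$-gain.
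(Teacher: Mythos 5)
Your high-level plan---reduce to a contracted random tensor and then apply Proposition~\ref{tools:prop_moment_method} with Lemma~\ref{tools:lem_second_tensor} playing the role that Lemma~\ref{tools:lem_first_tensor} played in Proposition~\ref{rmt:prop1}---is sound and matches the paper's Step~2. But the point you flag as something one ``must verify'' is exactly where the argument, as you have written it, breaks: the $\MN$-counter-term does \emph{not} exactly cancel the pairing $n_1=-n_3$ produced by the product formula, and the mismatch cannot be absorbed into the random tensor $h_c$.

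The reason is that the counter-term $\MN P_{\leq N} Y$ inside $\lcol\,\cdots\,\rcol$ is the full, unrestricted multiplier, whereas the pairing coming out of the product formula applied to the two factors of $\<1b>$ in
\[
V \ast\big(P_{\leq N}\<1b>\cdot P_{\leq N}Y\big)\nparald P_{\leq N}\<1b>
\]
carries the $\nparald$-paraproduct restriction. Concretely, if the paired $\<1b>$-factors sit at frequencies $k$ and $-k$, the convolution output is at frequency $n_2+k$ (with $n_2$ the frequency of $Y$), and $\nparald$ keeps only those $k$ with $|n_2+k|$ \emph{not} much smaller than $|k|$ (i.e.\ excluding the $\parald$-regime $|n_2+k|\lesssim |k|^\epsilon$). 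So the resonant contribution is $\MNnparald P_{\leq N}Y$, not $\MN P_{\leq N}Y$, and subtracting the full $\MN P_{\leq N}Y$ leaves a deterministic residual $-\MNparald P_{\leq N}Y$ that is invisible to your tensor $h_c$. If you instead try to fold the full subtraction into the tensor, the result is no longer a genuine (non-resonant) random tensor and Proposition~\ref{tools:prop_moment_method} does not apply.

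The paper's fix is to decompose $\MN = \MNparald + \MNnparald$. The genuinely random-matrix piece
\[
V \ast\big(P_{\leq N}\<1b>\cdot P_{\leq N}Y\big)\nparald P_{\leq N}\<1b> - \MNnparald P_{\leq N}Y
\]
is then estimated exactly as you propose, with Lemma~\ref{tools:lem_second_tensor} in place of Lemma~\ref{tools:lem_first_tensor}. The residual $\MNparald P_{\leq N}Y$ is estimated separately by noting that its symbol $\mNparald(n)$ is uniformly bounded: for each dyadic scale $K$ of the paired variable $k$, the constraint $|n+k|\lesssim K^\epsilon$ restricts $k$ to a set of size $\lesssim K^{3\epsilon}$, so the $K^{-2}$-weighted sum over $k$ converges. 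A crude inhomogeneous Strichartz/energy bound then gives $\|\MNparald P_{\leq N}Y\|_{\X{s_2-1}{b_+-1}(\cJ)}\lesssim T\|Y\|_{\X{s_2}{b}(\cJ)}$. Adding this decomposition to your write-up would close the gap; the rest of the argument is correct.
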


\begin{remark}
This proposition controls the second term in $\RMT$, i.e., in \eqref{lwp:eq_RMT}. In the frequency-localized version of \eqref{rmt:eq_rmt2}, which is detailed in the proof, we gain an $\eta^\prime$-power in the maximal frequency-scale. 
\end{remark}

\begin{proof}
Due to the operator $\nparald$, the renormalization $\MN P_{\leq N} Y$ does not just cancel the probabilistic resonances between the two factors of $\,\<1b>\,$ in 
\begin{equation*}
V \ast \big( P_{\leq N} \<1b> \cdot P_{\leq N} Y \big) \nparald P_{\leq N} \<1b>.
\end{equation*}
As a result, we need to decompose $\MN =\MNparald + \MNnparald$, where the symbols corresponding to the multipliers are given by
\begin{align*}
\mNparald(n) &\defe \sum_{\substack{L,K\colon L \leq K^\epsilon}} \frac{\widehat{V}(n+k)}{\langle k \rangle^2} \chi_L(n+k) \chi_K(k) \rho_N^2(k), \\
\mNnparald(n) &\defe \sum_{\substack{L,K\colon L > K^\epsilon}} \frac{\widehat{V}(n+k)}{\langle k \rangle^2} \chi_L(n+k) \chi_K(k) \rho_N^2(k). 
\end{align*}
The random operator 
\begin{equation*}
V \ast \big( P_{\leq N} \<1b> \cdot P_{\leq N} Y \big) \nparald P_{\leq N} \<1b> - \MNnparald P_{\leq N} Y
\end{equation*}
can then be controlled using the same argument as in the proof of Proposition \ref{rmt:prop1}, except that we use Lemma \ref{tools:lem_second_tensor} instead of Lemma \ref{tools:lem_first_tensor}. Thus, it only remains to show that 
\begin{equation}\label{rmt:eq_second_p1}
\| \MNparald P_{\leq N} Y \|_{\X{s_2-1}{b_+-1}(\cJ)} \lesssim  T\| Y \|_{\X{s_2}{b}(\cJ)}. 
\end{equation}
The estimate \eqref{rmt:eq_second_p1} has a lot of room and can be established through the following simple argument. On the support of the summand in the definition of $\mNparald$, it holds that $|n+k| \lesssim |k|^\epsilon$. Using only that $\widehat{V}$ is bounded, this implies that 
\begin{equation*}
|\mNparald(n)| \lesssim \sum_{K\geq 1} \sum_{k \in \bZ^3} K^{-2} 1\big\{ |n+k| \lesssim K^\epsilon \big\} \lesssim \sum_{K\geq 1} K^{-2+3\epsilon} \lesssim 1. 
\end{equation*}
Thus, the symbol $\mNparald(n)$ is uniformly bounded and hence the corresponding multiplier $\MNparald$ is bounded on each Sobolev-space $H_x^s(\bT^3)$. Using the Strichartz estimates (Corollary  \ref{tools:cor_strichartz} and Lemma \ref{tools:lem_inhomogeneous_strichartz}), we obtain that 
\begin{gather*}
\| \MNparald P_{\leq N} Y \|_{\X{s_2-1}{b_+-1}(\cJ)}  \lesssim \| \MNparald P_{\leq N} Y \|_{L_t^{2b_+} H_x^{s_2-1}(\cJ \times \bT^3)} \\
\lesssim (1+|\cJ|)  \| Y \|_{L_t^{\infty} H_x^{s_2-1}(\cJ \times \bT^3)} 
\lesssim  (1+|\cJ|) \| Y \|_{\X{s_2}{b}(\cJ)}. 
\end{gather*}
\end{proof}

\section{Para-controlled estimates}\label{section:paracontrolled}

The main goal of this section is to estimate the terms in $\CPara$. We remind the reader that the para-controlled approach to stochastic partial differential equations was introduced in the seminal paper of Gubinelli, Imkeller, and Perkowski \cite{GIP15} and first applied to dispersive equations by Gubinelli, Koch, and Oh in \cite{GKO18a}. \\

The following definitions of the low-frequency modulation space $\modulation$ and the para-controlled structure $\Para$ are following similar ideas as the framework in \cite{GKO18a}. 

\begin{definition}[Low-frequency modulation space]\label{para:def_LM}
Let $H= \{ H(t,x;K)\}_{K\geq 1}\}$ be a family of space-time functions from $\bR \times \bT^3$ into $\bC$ satisfying
\begin{equation}\label{para:eq_fourier_support}
\supp( \widehat{H}(t,x;K) ) \subseteq \{ k \in \bZ^3 \colon |k| \leq 8 K^\epsilon \}.
\end{equation}
We define the low-frequency modulation norm by 
\begin{equation}\label{para:eq_modulation_norm}
\| H \|_{\modulation(\bR)} \defe \sup_{K\geq 1} K^{-4\epsilon} \| \widehat{H}(\lambda,k;K) \|_{\ell_k^\infty L_\lambda^1(\bZ^3 \times \bR)} .
\end{equation}
We define the corresponding low-frequency modulation space $\modulation(\bR)$ by 
\begin{equation}\label{para:eq_modulation_space}
\modulation(\bR) = \big\{ H \colon \| H \|_{\modulation(\bR)} < \infty \big\}. 
\end{equation}
Furthermore, let $\cJ \subseteq \bR$ be a time-interval and let $H= \{ H(t,x;K)\}_{K\geq 1}\}$ be a family of space-time functions from $\cJ \times \bT^3$ into $\bR$ satisfying \eqref{para:eq_fourier_support}. Similar as in the definition of $\X{s}{b}$-spaces, we define the restricted norm by 
\begin{equation}
\| H \|_{\modulation(\cJ)} = \inf \big \{ \| H^\prime \|_{\modulation(\bR)} \colon H^\prime(t) = H(t) \quad \text{for all } t \in \cJ \big\}. 
\end{equation}
The corresponding time-restricted low-frequency modulation space $\modulation(\cJ)$ can then be defined as in \eqref{para:eq_modulation_norm} after replacing the norm. 
\end{definition}

\begin{definition}[Para-controlled]
Let $\cJ \subseteq \bR$ be an interval, let $\phi\colon \cJ \times \bT^3 \rightarrow  \bC$ be a distribution, and let $H$ be as in Definition \ref{para:def_LM}. Then, we define 
\begin{equation}\label{para:eq_para}
\Para(H,\phi)(t,x) = \sum_{K\geq 1} H(t,x;K) (P_K \phi)(t,x). 
\end{equation}
\end{definition}
If $H \in \modulation(\bR)$, we have that 
\begin{equation}\label{para:eq_para_expansion}
\begin{aligned}
&\Para(H,\phi)(t,x) \\
&= \sum_{K\geq 1} \sum_{k_1 \in \bZ^3} \int_\bR \dlambda_1 \widehat{H}(\lambda_1,k_1;K) 
\Big( \exp\big( i \lambda_1 t  \big) \sum_{k_2 \in \bZ^3} \chi_K(k_2) \widehat{\phi}(t,k_2) \exp\big( i \langle k_{12}, x \rangle\big) \Big). 
\end{aligned}
\end{equation}
The expression \eqref{para:eq_para_expansion} will be used in all of our estimates involving $\Para$. The sum in $k_1$, the integral in $\lambda_1$, and the pre-factor $ \widehat{H}(\lambda_1,k_1;K) $ will be inessential. The main step will consist of estimates for 
\begin{equation*}
 \exp\big( i \lambda_1 t  \big) \sum_{k_2 \in \bZ^3} \chi_K(k_2) \widehat{\phi}(t,k_2) \exp\big( i \langle k_{12}, x \rangle\big) , 
\end{equation*}
which essentially behaves like $P_K \phi(t,x)$. For most purposes, the reader may simply think of $\Para(H,\phi)$ as $\phi$.

\begin{lemma}[Basic mapping properties of $\Para$]\label{para:lem_basic}
For any $s\in \bR$, any interval $\cJ\subseteq \bR$, any $\phi \in L_t^\infty H_x^s(\cJ \times \bT^3)$, and any $H \in \modulation(\cJ)$, we have
\begin{equation}
\| \Para(H,\phi) \|_{L_t^\infty H_x^{s-8\epsilon}(\cJ\times \bT^3)} \lesssim \| H \|_{\LM(\cJ)} \| \phi \|_{L_t^\infty H_x^{s}(\cJ\times \bT^3)} . 
\end{equation}
\end{lemma}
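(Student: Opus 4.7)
The plan is to reduce to $\cJ = \bR$ via an extension (using the infimum definition of the restricted norm), and then to establish the estimate pointwise in $t$. Working with the Fourier expansion \eqref{para:eq_para_expansion}, I will treat each dyadic piece $H(\cdot,x;K) \cdot P_K\phi(t,x)$ separately. The two key structural facts are that $\widehat{H}(t,\cdot;K)$ is supported on the low-frequency ball $\{|k_1|\leq 8K^\epsilon\}$, while $P_K\phi(t)$ is Fourier-localized at frequencies $\sim K$; consequently the product $H(\cdot;K) P_K\phi(t)$ has spatial Fourier support in a set of the form $|n| \sim K$ (once $K$ is large enough that $8K^\epsilon \leq K/2$; the finitely many remaining $K$ are trivial).

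First I would establish the pointwise bound
\[
\|H(t,\cdot;K)\|_{L_x^\infty} \;\leq\; \sum_{|k_1|\leq 8K^\epsilon} |\widehat{H}(t,k_1;K)| \;\leq\; \sum_{|k_1|\leq 8K^\epsilon} \|\widehat{H}(\cdot,k_1;K)\|_{L_\lambda^1} \;\lesssim\; K^{3\epsilon}\cdot K^{4\epsilon}\,\|H\|_{\LM} \;=\; K^{7\epsilon}\,\|H\|_{\LM},
\]
where the penultimate step uses the cardinality bound $\#\{k_1 \in \bZ^3 : |k_1| \leq 8K^\epsilon\} \lesssim K^{3\epsilon}$ together with the definition \eqref{para:eq_modulation_norm} of the $\LM$-norm. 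This pointwise bound immediately yields
\[
\|H(\cdot;K)\,P_K\phi(t)\|_{L_x^2} \;\lesssim\; K^{7\epsilon}\,\|H\|_{\LM}\,\|P_K\phi(t)\|_{L_x^2}.
\]

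Next, I would combine the dyadic pieces using the Fourier localization $|n|\sim K$. For each $n$, only $O(1)$ values of $K$ contribute to the Fourier coefficient $\widehat{\Para(H,\phi)}(t,n)$, so a trivial Cauchy--Schwarz in $K$ costs only a constant. Using Plancherel and the bound above:
\[
\|\Para(H,\phi)(t)\|_{H_x^{s-8\epsilon}}^2 \;\lesssim\; \sum_K K^{2(s-8\epsilon)} \|H(\cdot;K)\,P_K\phi(t)\|_{L_x^2}^2 \;\lesssim\; \|H\|_{\LM}^2 \sum_K K^{2s - 2\epsilon}\,\|P_K\phi(t)\|_{L_x^2}^2.
\]
Since $K^{-2\epsilon}\leq 1$ and $\sum_K K^{2s}\|P_K\phi(t)\|_{L_x^2}^2 \lesssim \|\phi(t)\|_{H_x^s}^2$, the right-hand side is bounded by $\|H\|_{\LM}^2\,\|\phi(t)\|_{H_x^s}^2$. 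Taking $\sup$ over $t\in\cJ$ gives the desired inequality.

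This is a soft estimate and I do not anticipate any serious obstacle; the slight loss of $8\epsilon$ derivatives is exactly what is needed to absorb the $K^{7\epsilon}$ factor from controlling $H(\cdot;K)$ in $L_x^\infty$. The only bookkeeping point requiring care is the orthogonality reduction at the start: strictly speaking the products $H(\cdot;K)\,P_K\phi$ for different $K$ are not fully orthogonal in $L^2$, only up to an $O(1)$ overlap of their spatial Fourier supports, which is why Cauchy--Schwarz in $K$ is invoked before applying Plancherel.
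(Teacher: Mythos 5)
Your proof is correct and follows essentially the same approach as the paper: bound $H(t,\cdot;K)$ via the $\ell^1$-norm of its Fourier coefficients (paying a factor $K^{7\epsilon}=K^{3\epsilon}\cdot K^{4\epsilon}$), exploit the Fourier localization of $H(\cdot;K)\,P_K\phi$ near $|n|\sim K$ to gain $K^{-8\epsilon}$, and sum over dyadic $K$. The only cosmetic difference is the final assembly: the paper simply applies the triangle inequality in $K$ and uses the geometric decay $\sum_K K^{-\epsilon}<\infty$, whereas you invoke almost-orthogonality and Plancherel to sum squares — both are valid, with yours being marginally tighter in its dependence on $\phi$.
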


\begin{proof}
We treat each dyadic piece in $\PCtrl$ separately.  Using the Fourier support condition \eqref{para:eq_fourier_support}, we have that 
\begin{align*}
\|  H(t,x;K) (P_K \phi)(t,x) \|_{H_x^{s-8\epsilon}(\bT^3)} 
&= \Big\| \sum_{k_1,k_2 \in \bZ^3} \chi_K(k_2) \widehat{H}(t,k_1;K) \widehat{\phi}(t,k_2) \exp\big( i \langle k_{12}, x \rangle\big) \Big \|_{H_x^{s-8\epsilon}(\bT^3)} \\
&\lesssim \sum_{k_1 \in \bZ^3} | \widehat{H}(t,k_1;K)| 
 \Big\| \sum_{k_2 \in \bZ^3} \chi_K(k_2) \widehat{\phi}(t,k_2) \exp\big( i \langle k_{12}, x \rangle\big) \Big \|_{H_x^{s-8\epsilon}(\bT^3)} \\
 &\lesssim  K^{-8\epsilon} \Big( \sum_{k_1 \in \bZ^3} | \widehat{H}(t,k_1;K)| \Big) \| \phi(t) \|_{H_x^s(\bT^3)} \\
 &\lesssim K^{-\epsilon} \| H \|_{\LM(\cJ)}  \| \phi(t) \|_{H_x^s(\bT^3)} . 
\end{align*}
The desired estimate follows after summing in $K$. 
\end{proof}

In the next two lemmas, we show that the terms appearing in the evolution equation \eqref{lwp:eq_XN} for $X_N$ fit into our para-controlled framework. 

\begin{lemma}\label{para:lemma_obj_1}
Let $\cJ \subseteq \bR$ be an interval and let $f,g \in \X{-1}{b}(\cJ)$. Then, there exists a (canonical) $H \in \LM(\cJ)$ satisfying
\begin{equation}
\paraboxld \Big( V \ast (f \, g) \phi \Big) = \PCtrl(H,\phi)
\end{equation}
for all space-time distributions $\phi \colon \cJ \times \bT^3 \rightarrow \C$. Furthermore, it holds that 
\begin{equation}\label{para:lemma_obj_1_est}
\| H \|_{\LM(\cJ)} \lesssim \| f \|_{\X{-1}{b}(\cJ)} \cdot \| g \|_{\X{-1}{b}(\cJ)} . 
\end{equation}
\end{lemma}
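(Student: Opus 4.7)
The natural candidate, dictated by matching the definitions \eqref{lwp:eq_paraboxld} and \eqref{para:eq_para}, is
\[
H(t,x;K) \defe \sum_{N_1,N_2 \leq K^\epsilon} V * \big( P_{N_1} f \cdot P_{N_2} g \big)(t,x),
\]
with the outer index $K$ playing the role of $N_3$. The identity $\paraboxld(V*(fg)\phi) = \PCtrl(H,\phi)$ then holds by inspection after relabeling $N_3 \to K$. The Fourier-support condition \eqref{para:eq_fourier_support} is immediate since the spatial Fourier support of $P_{N_1}f \cdot P_{N_2}g$ lies in $\{|k| \leq N_1 + N_2 + O(1) \leq 4K^\epsilon\}$. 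Extending $f,g$ to $\X{-1}{b}(\bR)$ near-isometrically reduces matters to $\cJ = \bR$, so the remaining task is to estimate the LM-norm \eqref{para:eq_modulation_norm} of $H$.

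The space-time Fourier transform factorizes as
\[
\widehat{H}(\lambda, k; K) = \widehat{V}(k) \sum_{N_1,N_2 \leq K^\epsilon}  \sum_{k_1 + k_2 = k} \int_\bR \widehat{P_{N_1} f}(\lambda_1, k_1) \, \widehat{P_{N_2} g}(\lambda - \lambda_1, k_2) \, \dlambda_1.
\]
Taking $L^1_\lambda$ collapses the $\lambda_1$-convolution into a product of $L^1_\lambda$-norms of each factor. The key tool (the same one behind $\X{s}{b} \hookrightarrow C_t^0 H_x^s$ when $b > 1/2$) is that for each fixed spatial frequency $k_1$, Cauchy--Schwarz in $\lambda$ together with $b > 1/2$ yields
\[
\|\widehat{f}(\cdot, k_1)\|_{L^1_\lambda} \lesssim \big\| \langle |\lambda| - \langle k_1 \rangle \rangle^{b} \widehat{f}(\cdot, k_1) \big\|_{L^2_\lambda} = \langle k_1 \rangle \cdot F(k_1),
\]
where $F \in \ell^2_{k_1}$ satisfies $\|F\|_{\ell^2} = \|f\|_{\X{-1}{b}}$, and analogously $G$ for $g$.

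Inserting these bounds and using $|k_j| \lesssim K^\epsilon$ on the support, the problem reduces to estimating a discrete convolution:
\[
\sup_k \|\widehat{H}(\cdot, k; K)\|_{L^1_\lambda} \lesssim |\widehat V(k)| \cdot K^{2\epsilon} (\log K)^2 \sup_k (F * G)(k),
\]
and Young's inequality $\|F * G\|_{\ell^\infty} \leq \|F\|_{\ell^2} \|G\|_{\ell^2}$ finishes the job. The logarithmic factor comes from summing over the $O((\log K)^2)$ dyadic pairs $(N_1, N_2)$ with $N_1, N_2 \leq K^\epsilon$. Multiplying by the weight $K^{-4\epsilon}$ in \eqref{para:eq_modulation_norm} and using the uniform bound $|\widehat V| \lesssim 1$, we obtain $K^{-4\epsilon} \cdot K^{2\epsilon} (\log K)^2 \lesssim 1$ uniformly in $K$, which is \eqref{para:lemma_obj_1_est}.

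There is no genuine obstacle here; the mildly delicate point is bookkeeping the $\epsilon$-budget, ensuring that the $2\epsilon$-loss generated by bounding $\langle k_j \rangle \lesssim K^\epsilon$ from the paradifferential truncation is comfortably absorbed by the $4\epsilon$-gain built into the definition of the $\LM$-norm. This is why the definition \eqref{para:eq_modulation_norm} uses the exponent $4\epsilon$ (rather than $2\epsilon$) -- it provides the room needed precisely for trilinear inputs of the form $V * (fg)$.
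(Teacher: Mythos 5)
Your proof is correct and follows essentially the same route as the paper: identify the canonical low-frequency modulation $H$ by matching the definitions, then bound $\|\widehat{H}(\cdot,k;K)\|_{L^1_\lambda}$ by combining Young's convolution inequality in $\lambda$, Cauchy--Schwarz in $\lambda$ using $b>1/2$ to land in the weighted $L^2_\lambda$ norm, and Cauchy--Schwarz (equivalently, discrete Young) in the spatial-frequency convolution $n_{12}=k_1$. One cosmetic remark: your $(\log K)^2$ factor is avoidable, since the Littlewood--Paley multipliers form an almost-orthogonal partition and the paper simply collapses $\sum_{N_1,N_2\leq K^\epsilon}\chi_{N_1}(n_1)\chi_{N_2}(n_2)\lesssim 1\{|n_1|,|n_2|\lesssim K^\epsilon\}$ before estimating; in any event it is absorbed by the same $K^{4\epsilon}$ slack you correctly identified.
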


\begin{remark}
Due to the overlaps in the support of the Littlewood-Paley multipliers $\chi_K$, the low-frequency modulation $H \in \LM(\cJ)$ is not quite unique. As will be clear from the proof, however, there is a canonical choice. This canonical choice is also bilinear in $f$ and $g$. 
\end{remark}

\begin{proof}
Using the definition of the restricted norms, it suffices to treat the case $\cJ=\bR$. We have that 
\begin{align*}
&\paraboxld \Big( V \ast (f \, g) \phi \Big)(t,x) \\
&= \sum_{\substack{N_1,N_2,K\colon \\ N_1,N_2 \leq K^\epsilon}} \sum_{n_1,n_2,k\in \bZ^3} \chi_{N_1}(n_1) \chi_{N_2}(n_2) \chi_K(k) \widehat{V}(n_{12}) \widehat{f}(t,n_1) \widehat{g}(t,n_2) \widehat{\phi}(t,k) \exp\big( i \langle n_{12} + k , x \rangle\big) \\
&= \PCtrl(H,\phi)(t,x),
\end{align*}
where
\begin{equation}\label{para:lemma_obj_1_p1}
\widehat{H}(t,k_1;K) 
= \sum_{\substack{N_1,N_2\colon \\ N_1,N_2 \leq K^\epsilon}} \sum_{\substack{n_1,n_2\in \bZ^3\colon\\ n_{12} =k_1 }}
\chi_{N_1}(n_1) \chi_{N_2}(n_2)\widehat{V}(n_{12}) \widehat{f}(t,n_1) \widehat{g}(t,n_2)
\end{equation}
It therefore remains to show  $H\in \LM(\bR)$ and the estimate \eqref{para:lemma_obj_1_est}. The Fourier support condition \eqref{para:eq_fourier_support} is a consequence of the multiplier $\chi_{N_1}(n_1) \chi_{N_2}(n_2)$ in \eqref{para:lemma_obj_1_p1}. 
To see the estimate \eqref{para:lemma_obj_1_est}, we first note that 
\begin{equation*}
\widehat{H}(\lambda,k_1;K) 
= \sum_{\substack{N_1,N_2\colon \\ N_1,N_2 \leq K^\epsilon}} \sum_{\substack{n_1,n_2\in \bZ^3\colon\\ n_{12} =k_1 }}
\chi_{N_1}(n_1) \chi_{N_2}(n_2)\widehat{V}(n_{12}) \Big( \widehat{f}(\cdot,n_1) \ast \widehat{g}(\cdot,n_2)\Big)(\lambda).
\end{equation*}
Using Young's convolution inequality and Cauchy-Schwarz, we obtain that 
\begin{align*}
&\| \widehat{H}(\lambda,k_1;K)  \|_{L_\lambda^1(\bR)} \\
&\lesssim  \sum_{\substack{N_1,N_2\colon \\ N_1,N_2 \leq K^\epsilon}} \sum_{\substack{n_1,n_2\in \bZ^3\colon\\ n_{12} =k_1 }}
\chi_{N_1}(n_1) \chi_{N_2}(n_2) |\widehat{V}(n_{12})| 
\|  \widehat{f}(\lambda,n_1) \|_{L_\lambda^1(\bR)}
\|  \widehat{g}(\lambda,n_2) \|_{L_\lambda^1(\bR)} \\
&\lesssim \sum_{\substack{n_1,n_2\in \bZ^3\colon\\ n_{12} =k_1 }} 1\big\{ |n_1|,|n_2| \lesssim K^\epsilon \big\} 
\|  \langle |\lambda| - \langle n_1 \rangle \rangle^b \widehat{f}(\lambda,n_1) \|_{L_\lambda^2(\bR)}
\|  \langle |\lambda| - \langle n_2 \rangle \rangle^b \widehat{g}(\lambda,n_2) \|_{L_\lambda^2(\bR)} \\
&\lesssim \Big( \sum_{n_1 \in \bZ^3}  1\big\{ |n_1| \lesssim K^\epsilon \big\} \|  \langle |\lambda| - \langle n_1 \rangle \rangle^b \widehat{f}(\lambda,n_1) \|_{L_\lambda^2(\bR)}^2 \Big)^{\frac{1}{2}} \\
&\quad \times \Big( \sum_{n_2 \in \bZ^3}  1\big\{ |n_2| \lesssim K^\epsilon \big\} \|  \langle |\lambda| - \langle n_2 \rangle \rangle^b \widehat{g}(\lambda,n_2) \|_{L_\lambda^2(\bR)}^2 \Big)^{\frac{1}{2}} \\
&\lesssim K^{-2\epsilon} \| f \|_{\X{-1}{b}(\cJ)} \cdot \| g \|_{\X{-1}{b}(\cJ)}. 
\end{align*}
The desired estimate \eqref{para:lemma_obj_1_est} now follows after taking the supremum in $K\geq1$ and $k_1\in \bZ^3$. 
\end{proof}

\begin{lemma}\label{para:lemma_obj_2}
Let $\cJ \subseteq \bR$ be an interval, let $s\in [-1,1]$, let $f\in \X{-s}{b}(\cJ)$, and let $g\in \X{s}{b}$. Then, there exists a (canonical) $H\in \LM(\cJ)$ satisfying 
\begin{equation}\label{para:eq_obj_2}
 V \ast (f \, g) \parald \phi  = \PCtrl(H,\phi)
\end{equation}
for all space-time distributions $\phi\colon J \times \bT^3 \rightarrow \bC$. Furthermore, it holds that 
\begin{equation}\label{para:lemma_obj_2_est}
\| H \|_{\LM(\cJ)} \lesssim \| f \|_{\X{-s}{b}(\cJ)} \cdot \| g \|_{\X{s}{b}(\cJ)} . 
\end{equation}
\end{lemma}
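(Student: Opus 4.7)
The plan closely parallels Lemma \ref{para:lemma_obj_1}. As there, it suffices to treat $\cJ = \bR$ and invoke the definition of the restricted norm. Expanding the low-frequency paraproduct, we set
\begin{equation*}
H(t, x; K) \defe \sum_{M \leq K^\epsilon} P_M\big( V \ast (fg) \big)(t, x),
\end{equation*}
so that $V \ast (fg) \parald \phi = \sum_{K \geq 1} H(t, x; K) (P_K \phi)(t, x) = \PCtrl(H, \phi)$ for every space-time distribution $\phi$. In spacetime Fourier this reads $\widehat{H}(\lambda, k; K) = \widetilde{\rho}_K(k) \widehat{V}(k) \widehat{fg}(\lambda, k)$, where $\widetilde{\rho}_K \defe \sum_{M \leq K^\epsilon} \chi_M$ is supported in $|k| \lesssim K^\epsilon$; this verifies the Fourier support condition \eqref{para:eq_fourier_support}.

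For the bound \eqref{para:lemma_obj_2_est}, Young's inequality in $\lambda$ applied to the spacetime convolution $\widehat{fg} = \widehat{f} \ast \widehat{g}$ gives
\begin{equation*}
\|\widehat{H}(\lambda, k; K)\|_{L^1_\lambda} \leq \widetilde{\rho}_K(k) |\widehat{V}(k)| \sum_{n_1 + n_2 = k} \|\widehat{f}(\cdot, n_1)\|_{L^1_\lambda} \|\widehat{g}(\cdot, n_2)\|_{L^1_\lambda}.
\end{equation*}
Since $b > 1/2$, Cauchy-Schwarz in $\lambda$ against the $\X{s}{b}$-weight yields $\|\widehat{f}(\cdot, n)\|_{L^1_\lambda} \lesssim A_n$ with $A_n \defe \|\langle |\lambda| - \langle n \rangle \rangle^b \widehat{f}(\lambda, n)\|_{L^2_\lambda}$, and analogously a profile $B_n$ for $g$; it then remains to control $\sum_{n_1 + n_2 = k} A_{n_1} B_{n_2}$.

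The main step where the argument deviates from Lemma \ref{para:lemma_obj_1} is the redistribution of the weight mismatch: writing $A_{n_1} B_{n_2} = \langle n_1 \rangle^{s} \langle n_2 \rangle^{-s} \cdot (\langle n_1 \rangle^{-s} A_{n_1}) (\langle n_2 \rangle^{s} B_{n_2})$, the ratio $\langle n_1 \rangle^{s} \langle n_2 \rangle^{-s}$ is not uniformly bounded on $(\bZ^3)^2$. However, under the constraint $|n_1 + n_2| = |k| \lesssim K^\epsilon$ imposed by $\widetilde{\rho}_K$, this ratio is at most $\lesssim K^{2 \epsilon |s|} \leq K^{2\epsilon}$: indeed, if $|n_1| \geq 16 K^\epsilon$ then $|n_2| \sim |n_1|$ and the ratio is $O(1)$, while if $|n_1| < 16 K^\epsilon$ then also $|n_2| \lesssim K^\epsilon$ and each of $\langle n_1 \rangle^{\pm s}, \langle n_2 \rangle^{\pm s}$ contributes at most $K^{\epsilon |s|}$, with $|s| \leq 1$ used here. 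Pulling this supremum out and applying Cauchy-Schwarz in $n_1$ (with $n_2 = k - n_1$) bounds $\sum_{n_1 + n_2 = k} A_{n_1} B_{n_2}$ by $K^{2\epsilon} \|f\|_{\X{-s}{b}} \|g\|_{\X{s}{b}}$. Combined with $|\widehat{V}(k)| \lesssim 1$ and the factor $K^{-4\epsilon}$ in \eqref{para:eq_modulation_norm}, taking the supremum over $k$ and $K$ absorbs the $K^{2\epsilon}$ loss and yields $\|H\|_{\LM(\bR)} \lesssim \|f\|_{\X{-s}{b}} \|g\|_{\X{s}{b}}$. This weight-redistribution observation is the sole mildly delicate point; everything else is routine Fourier bookkeeping.
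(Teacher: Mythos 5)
Your proof is correct and takes essentially the same route as the paper's: same choice of $H$ (the $\parald$-paraproduct collected into a low-frequency modulator), Young plus Cauchy--Schwarz in $\lambda$, and then a weight redistribution between $\langle n_1\rangle^{\pm s}$ and $\langle n_2\rangle^{\mp s}$ controlled by the constraint $|n_1+n_2|\lesssim K^\epsilon$, absorbed into the $K^{-4\epsilon}$ in the $\LM$-norm. The only difference is cosmetic: the paper does Cauchy--Schwarz first (using $n_2-k_1=-n_1$ so the weights cancel exactly) and then converts $\langle n_2-k_1\rangle^{2s}$ to $\langle n_2\rangle^{2s}$ at cost $K^{O(\epsilon)}$, whereas you redistribute the weights before applying Cauchy--Schwarz; both are the same observation in a different order.
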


\begin{remark}
We emphasize that Lemma \ref{para:lemma_obj_2} fails if we replace the assumptions by $f,g\in \X{-1}{b}(\cJ)$ as in Lemma \ref{para:lemma_obj_1}. The reason is that the product $f\cdot g$ inside the convolution with the interaction potential $V$ is not even well-defined. 
\end{remark}

\begin{proof}
The argument is similar to the proof of Lemma \ref{para:lemma_obj_1}. As before, it suffices to treat the case $\cJ=\bR$. A direct calculation yields the identity \eqref{para:eq_obj_2} with
\begin{equation}
H(t,k_1;K) 
=\sum_{K_1 \leq K^\epsilon} \chi_{K_1}(k_1) \widehat{V}(k_1)  \sum_{\substack{n_1,n_2\in \bZ^3\colon\\ n_{12} =k_1 }}
\widehat{f}(t,n_1) \widehat{g}(t,n_2). 
\end{equation}
Using Young's convolution inequality and Cauchy-Schwarz, we obtain that 
\begin{align*}
&\| \widehat{H}(\lambda,k_1;K) \|_{L_\lambda^1(\bR)} \\
&\lesssim \sum_{\substack{n_1,n_2\in \bZ^3\colon\\ n_{12} =k_1 }}
 \| \widehat{f}(\lambda,n_1) \|_{L_\lambda^1(\bR)} \| \widehat{g}(\lambda,n_2) \|_{L_\lambda^1(\bR)} \\
&\lesssim 
\Big( \sum_{n_1 \in \bZ^3} \langle n_1 \rangle^{-2s}  \| \langle |\lambda| - \langle n_1 \rangle \rangle^{b} \widehat{f}(\lambda,n_1) \|_{L_\lambda^2(\bR)}^2 \Big)^{\frac{1}{2}} 
\Big( \sum_{n_2 \in \bZ^3} \langle n_2 - k_1 \rangle^{2s}  \| \langle |\lambda| - \langle n_2 \rangle \rangle^{b} \widehat{g}(\lambda,n_2) \|_{L_\lambda^2(\bR)}^2 \Big)^{\frac{1}{2}}. 
\end{align*}
Using that $\langle n_2 - k_1 \rangle \lesssim \langle k_1 \rangle + \langle n_2 \rangle \lesssim K^\epsilon \langle n_2 \rangle$, we obtain the estimate \eqref{para:lemma_obj_2_est}.
\end{proof}

\subsection{Quadratic para-controlled estimate}

In this subsection, we show that $P_{\leq N} X_N \paraeq P_{\leq N} \<1b>$ is well-defined uniformly in $N$ even though the sum of the individual spatial regularities is negative. Together with Lemma \ref{phy:lem_bilinear_tool}, this will control the second and third term in $\Phy$, i.e., 
\begin{equation*}
V \ast \Big( P_{\leq N} X_N \paraeq P_{\leq N} \<1b> \Big) \cdot P_{\leq N} \<3DN> \quad \text{and} \quad 
V \ast \Big( P_{\leq N} X_N \paraeq P_{\leq N} \<1b> \Big) \cdot P_{\leq N} w_N. 
\end{equation*}

\begin{proposition}[Quadratic para-controlled object]\label{para:prop_quadratic_object}
Let  $T\geq 1$. For any $s<-2\eta - 10 \epsilon$ and $p \geq 2$, we have that 
\begin{align*}
&\sum_{L_1 \sim L_2} L_1^{2\eta} \Big\|  \sup_{N\geq 1} \sup_{\cJ \subseteq [0,T]} \sup_{\| H \|_{\modulation(\cJ)}\leq 1} \Big\|  \big( P_{L_1} P_{\leq N} \Duh \big) \Big[   1_{\cJ} \Para(H, P_{\leq N} \<1b>)  \Big]  \cdot P_{L_2} \<1b> \Big\|_{L_t^\infty \cC_x^s([0,T]\times \bT^3)} \Big\|_{L_\omega^p(\bP)} \\ &\lesssim T^3 p,
\end{align*}
where the supremum in $\cJ$ is taken only over intervals.
\end{proposition}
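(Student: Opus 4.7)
The plan is to make the structure of $(P_{L_1} P_{\leq N}\Duh)[1_\cJ \PCtrl(H, P_{\leq N}\<1b>)]$ explicit and then decompose its product with $P_{L_2}\<1b>$ into resonant and non-resonant parts, mirroring the strategy from Proposition \ref{so4:prop}. Using the Fourier representation from Definition \ref{para:def_LM} and the Duhamel operator, the first factor equals
\begin{equation*}
\sum_K \sum_{k_1,k_2 \in \bZ^3}\int_\bR \dlambda_1\, \widehat{H}(\lambda_1,k_1;K)\,\chi_K(k_2)\,\rho_{\leq N}(k_2)\,\chi_{L_1}(k_1+k_2)\,e^{i\langle k_1+k_2,x\rangle} \, G(t,k_1,k_2,\lambda_1),
\end{equation*}
where $G(t,k_1,k_2,\lambda_1)=\int_{t_0}^t 1_\cJ(t')\frac{\sin((t-t')\langle k_1+k_2\rangle)}{\langle k_1+k_2\rangle}e^{i\lambda_1 t'} \widehat{\<1b>}(t',k_2)\dtprime$, with $t_0=\min\cJ$. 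Multiplying by $P_{L_2}\<1b>$ and invoking the stochastic representation (Proposition \ref{tools:prop_stochastic_representation}) for $\widehat{\<1b>}(t',k_2)\cdot\widehat{\<1b>}(t,k_3)$, we split the product into a quadratic Gaussian chaos $\cG^{(2)}$ (non-resonant, $k_3\neq -k_2$) plus a deterministic piece $\cG^{(0)}$ coming from the Gaussian contraction $k_3=-k_2$.

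For the non-resonant piece $\cG^{(2)}$, I would follow the standard route used in Proposition \ref{so4:prop}: Gaussian hypercontractivity (Proposition \ref{tools:prop_Gaussian_hypercontractivity}), translation-invariance/Sobolev to upgrade $H_x^s$ to $\cC_x^s$, and orthogonality of the multiple stochastic integrals to reduce to an $L^2_\omega H_x^s$ estimate. Decomposing in the dispersive weight via $|\widehat\chi(T(\lambda-\varphi))|^2$, applying Lemma \ref{tools:lem_estimate_fouriertransform} to the time integral in $G$, and using the $\modulation$ norm bound $\|\widehat H(\lambda_1,k_1;K)\|_{\ell_{k_1}^\infty L^1_{\lambda_1}}\lesssim K^{4\epsilon}$ reduces the question to a basic counting estimate: with $|k_1|\lesssim K^\epsilon$, $|k_2|\sim K$, $|k_3|\sim L_2$, $|k_1+k_2|\sim L_1$, and $L_1\sim L_2$, a variant of the two-ball counting estimate (Lemma \ref{tools:lem_two_balls}) applied with $a=k_1$ gives a bound comfortably of size $L_1^{-3\eta-100\epsilon}\|H\|_{\LM}$, which is summable against $\sum_{L_1\sim L_2}L_1^{2\eta}$.

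The delicate piece is $\cG^{(0)}$. After contracting $k_3=-k_2$, the deterministic sum has the form
\begin{equation*}
\sum_K\int \dlambda_1 \sum_{k_1,k_2} \widehat H(\lambda_1,k_1;K)\chi_K(k_2)\rho_{\leq N}^2(k_2)\chi_{L_2}(k_2)\chi_{L_1}(k_1+k_2) e^{i\langle k_1,x\rangle+i\lambda_1 t} \langle k_2\rangle^{-2}\,\mathscr{K}(t;k_1,k_2,\lambda_1),
\end{equation*}
where $\mathscr{K}$ contains $\int_{t_0}^t 1_\cJ(t')\sin((t-t')\langle k_1+k_2\rangle)\cos((t-t')\langle k_2\rangle)e^{i\lambda_1 t'}dt'$. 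Because $|k_1|\lesssim K^\epsilon\ll L_2$ and $\chi_{L_1}(k_1+k_2)\chi_K(k_2)\chi_{L_2}(k_2)$ forces $L_1\sim L_2\sim K$, a na\"ive estimate of the $k_2$-sum would diverge logarithmically in $L_2$. This is exactly the setting of the $\operatorname{sine}$-cancellation lemma (Lemma \ref{tools:lem_sin_cancellation}) with $a=k_1$ (so $|a|\lesssim K^\epsilon$) and $N=L_2$: applied after passing the $k_1,\lambda_1,K$ summation outside via the $\LM$-bound, it yields a gain of $T^2 K^{3\epsilon}\log(2+L_2)L_2^{-1}$, pointwise in $(t,x)$. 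This beats any $L_1^{2\eta}$ loss with room to spare, and establishes $\|\cG^{(0)}\|_{L_t^\infty L_x^\infty}\lesssim T^2 L_1^{-1+O(\epsilon)}\|H\|_{\LM}$.

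Finally, I would absorb the suprema by standard means: the supremum over $N$ is handled by the quantitative $L_1^{-\eta}$-type decay in both pieces (as in the proof of Proposition \ref{so3:prop}); the supremum over $\cJ\subseteq[0,T]$ is reduced to a countable family by Lemma \ref{tools:lem_restricted_continuity} and the second bound in Lemma \ref{tools:lem_estimate_fouriertransform}, which provides uniform-in-$\cJ$ Fourier decay; and the supremum over $\|H\|_{\LM}\le 1$ is removed by the fact that the final bound is linear in $\|H\|_{\LM}$. After Gaussian hypercontractivity and Sobolev embedding in space and time to pass from $L^2_\omega L^2_x$ to $L^p_\omega L_t^\infty \cC_x^s$ at the small price of $\max(N,L_1,L_2)^{O(\eta)}$, summing the dyadic bounds over $L_1\sim L_2$ against $L_1^{2\eta}$ yields the claimed estimate. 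The only genuine obstacle is the $\operatorname{sine}$-cancellation step for $\cG^{(0)}$; everything else is bookkeeping of the kind already carried out in Sections \ref{section:stochastic_object} and \ref{section:rmt}.
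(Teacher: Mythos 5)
Your decomposition into a non-resonant quadratic chaos $\cG^{(2)}$ and a resonant deterministic piece $\cG^{(0)}$ is exactly what the paper does, and you correctly identify the $\operatorname{sine}$-cancellation lemma (with $a=k_1$, $|a|\lesssim K^\epsilon$) as the key mechanism for $\cG^{(0)}$; your estimate $T^2 K^{3\epsilon}\log(2+L_2)L_2^{-1}$ for that piece matches the paper's $T^4 N_1^{-2+6\epsilon}$ (at the level of the squared $L^2_\omega$-norm). The reduction of the suprema over $N$, $\cJ$, $k_1$, $\lambda_1$ to a countable family and then outside the moment is also in the spirit of the paper, though the paper handles the $\cJ$-supremum via a grid in its two endpoints and Lemma \ref{tools:eq_moments_sups} rather than Lemma \ref{tools:lem_restricted_continuity}.

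Where your argument deviates is the treatment of $\cG^{(2)}$. You propose decomposing in the dispersive weight, invoking Lemma \ref{tools:lem_estimate_fouriertransform}, and then applying ``a variant of the two-ball counting estimate with $a=k_1$''. This would not produce the decay you claim: Lemma \ref{tools:lem_two_balls} gains a factor $\min(|a|,\cdot)^{-1}$, and here $|a|\lesssim K^\epsilon$ is \emph{small}, so the counting step alone yields essentially no gain in $L_1$. The paper bypasses dispersive analysis for $\cG^{(2)}$ entirely: it takes absolute values inside the $t'$-integral (costing only a factor $T$), uses the elliptic weight $\langle k_1+n_1\rangle^{-1}$ from the Duhamel propagator together with the two Gaussian weights $\langle n_1\rangle^{-1}\langle n_2\rangle^{-1}$, and sums directly, obtaining
\[
\|\cG^{(2)}\|_{L^2_\omega H^s_x}^2 \lesssim T^2 N_1^{-6}\sum_{n_1,n_2}\chi_{N_1}(n_1)\chi_{L_2}(n_2)\langle k_1+n_{12}\rangle^{2s}\lesssim T^2 N_1^{2s}\lesssim T^2 N_1^{-4\eta-20\epsilon}.
\]
The decay thus comes from the hypothesis $s<-2\eta-10\epsilon$ and the propagator weight, not from counting. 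Your overall conclusion for $\cG^{(2)}$ is correct, but the justification you give would not carry it.
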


\begin{proof}
The supremum in $N$ can be handled through the decay in the frequency-localized version below and we omit it throughout the proof. Using the definition of the $\LM(\cJ)$-norm, we may take the supremum over  $H \in \LM(\bR)$ with norm bounded by one. By inserting the expansion \eqref{para:eq_para_expansion}, we obtain that 
\begin{align*}
&  \big( P_{L_1} P_{\leq N} \Duh \big) \Big[   1_{\cJ} \Para(H, P_{\leq N} \<1b>)  \Big](t,x) \cdot P_{L_2} \<1b>(t,x) \\
&= \sum_{N_1} \sum_{k_1\in \bZ^3} \int_{\bR} \dlambda_1 \widehat{H}(\lambda_1,k_1;N_1)  
\sum_{n_1,n_2 \in \bZ^3} \bigg[\rho_N(k_1+n_1) \chi_{L_1}(n_1+k_1) \rho_N(n_1) \chi_{N_1}(n_1) \chi_{L_2}(n_2)  \\
&\times  \widehat{\<1b>}(t,n_2)
\bigg(\int_0^t 1_{\cJ}(t^\prime) \frac{\sin((t-t^\prime)\langle k_1 + n_1 \rangle)}{\langle k_1 + n_1 \rangle} \exp\big( i \lambda_1 t^\prime \big) \widehat{\<1b>}(t^\prime,n_1) \dtprime \bigg) \exp\big(i\langle n_{12} + k_1 , x \rangle\big) \bigg]. 
\end{align*}
Due to the definition of $\LM$, we only obtain a non-trivial contribution if $N_1 \sim L_1 \sim L_2$. Using the triangle inequality, it follows that
\begin{align*}
 &\sup_{\cJ \subseteq [0,T]} \sup_{\| H \|_{\modulation(\bR)}\leq 1} \Big\|    \big( P_{L_1} P_{\leq N} \Duh \big) \Big[   1_{\cJ} \Para(H, P_{\leq N} \<1b>)  \Big] \cdot P_{L_2} \<1b> \,\Big\|_{L_t^\infty \cC_x^s([0,T]\times \bT^3)} \\
 &\lesssim \sum_{N_1}  N_1^{7 \epsilon} \sup_{\cJ \subseteq [0,T]} \sup_{\substack{k_1 \in \bZ^3\colon \\ |k_1|\leq 8 N_1^\epsilon}} \sup_{\lambda_1 \in \bR} \bigg\| \sum_{n_1,n_2 \in \bZ^3} \bigg[ \rho_N(k_1+n_1) \chi_{L_1}(n_1+k_1) \rho_N(n_1) \chi_{N_1}(n_1) \chi_{L_2}(n_2)   \\
&\times  \exp\big(i\langle n_{12} + k_1 , x \rangle\big) \widehat{\<1b>}(t,n_2)
\bigg(\int_0^t  1_{\cJ}(t^\prime) \frac{\sin((t-t^\prime)\langle k_1 + n_1 \rangle)}{\langle k_1 + n_1 \rangle} \exp\big( i \lambda_1 t^\prime \big) \widehat{\<1b>}(t^\prime,n_1) \dtprime \bigg)  \bigg] \bigg\|_{L_t^\infty \cC_x^s([0,T]\times \bT^3)}.
\end{align*}
To obtain the desired estimate, it suffices to prove for all $N_1\sim L_1 \sim L_2$ that 
\begin{equation}\label{para:eq_quadratic_p1}
\begin{aligned}
 &\bigg\| \sup_{\cJ \subseteq [0,T]} \sup_{\substack{k_1 \in \bZ^3\colon \\ |k_1|\leq 8 N_1^\epsilon}} \sup_{\lambda_1 \in \bR} \bigg\| \sum_{n_1,n_2 \in \bZ^3} \bigg[ \rho_N(k_1+n_1)  \chi_{L_1}(n_1+k_1)      \\
&\times  
 \rho_N(n_1) \chi_{N_1}(n_1) \chi_{L_2}(n_2)  \exp\big(i\langle n_{12} + k_1 , x \rangle\big)  \widehat{\<1b>}(t,n_2) \\
 &\times     \bigg(\int_0^t   1_{\cJ}(t^\prime)\frac{\sin((t-t^\prime)\langle k_1 + n_1 \rangle)}{\langle k_1 + n_1 \rangle}  \exp\big( i \lambda_1 t^\prime \big) \widehat{\<1b>}(t^\prime,n_1) \dtprime \bigg)  \bigg] \bigg\|_{L_t^\infty \cC_x^s([0,T]\times \bT^3)} 
 \bigg\|_{L^p_\omega(\Omega)} \allowdisplaybreaks[3] \\ 
 &\lesssim T^3 N_1^{-2\eta-9\epsilon}. 
\end{aligned}
\end{equation}
We claim that instead of \eqref{para:eq_quadratic_p1}, it suffices to prove the simpler estimate
\begin{equation}\label{para:eq_quadratic_p2}
\begin{aligned}
 & \sup_{t\in [0,T]}  \sup_{\cJ \subseteq [0,T]} \sup_{\substack{k_1 \in \bZ^3\colon \\ |k_1|\leq 8 N_1^\epsilon}} \sup_{\lambda_1 \in \bR} \bigg\|   \sum_{n_1,n_2 \in \bZ^3} \bigg[ \rho_N(k_1+n_1) \chi_{L_1}(n_1+k_1) \\ 
&\times  
 \rho_N(n_1) \chi_{N_1}(n_1) \chi_{L_2}(n_2)  \exp\big(i\langle n_{12} + k_1 , x \rangle\big) \widehat{\<1b>}(t,n_2)   \\
 &\times \bigg(\int_0^t  1_{\cJ}(t^\prime) \frac{\sin((t-t^\prime)\langle k_1 + n_1 \rangle)}{\langle k_1 + n_1 \rangle}  \exp\big( i \lambda_1 t^\prime \big) \widehat{\<1b>}(t^\prime,n_1) \dtprime \bigg)  \bigg] \bigg\|_{ L^2_\omega H_x^s} \allowdisplaybreaks[3]
 \\
 &\lesssim T^2 N_1^{-2\eta-10\epsilon} p. 
\end{aligned}
\end{equation}
The reduction of \eqref{para:eq_quadratic_p1} to \eqref{para:eq_quadratic_p2} is standard and we only sketch the argument. The supremum in $k_1$ can easily be moved outside the moment by using Lemma \ref{tools:eq_moments_sups} and accepting a logarithmic loss in $N_1$. To deal with the supremum in $\lambda_1 \in \bR$, we treat two separate cases. Using the Lipschitz estimate $|\exp(i \lambda_1 t^\prime) - \exp(i\widetilde{\lambda}_1 t^\prime)| \lesssim |t^\prime| |\lambda_1 - \widetilde{\lambda}_1|$, the supremum over $|\lambda_1|\lesssim N_1^{10}$ can easily be replaced by the supremum over a grid on $[-N_1^{10},N_1^{10}]$ with mesh size $\sim N_1^{-10}$. The discrete supremum can then be moved outside the probabilistic moment using Lemma \ref{tools:eq_moments_sups}. For $|\lambda_1| \gtrsim N_1^{10}$, a simple integration by parts gains a factor of $|\lambda_1|^{-1}$ and we can proceed using crude estimates. The supremum over $t\in [0,T]$ and $\cJ\subseteq [0,T]$, which is parametrized by its two endpoints, can be moved outside of the probabilistic moment using the first part of the argument for $\lambda_1$.  Finally, Gaussian hypercontractivity allows us to replace $L^p_\omega \cC_x^s$ by $L^2_\omega H_x^s$. \\

We now turn to the proof of the simpler estimate \eqref{para:eq_quadratic_p2}. Using the product formula for multiple stochastic integrals, we have that 
\begin{align*}
& \sum_{n_1,n_2 \in \bZ^3} \bigg[ \rho_N(k_1+n_1) \chi_{L_1}(n_1+k_1) \rho_N(n_1) \chi_{N_1}(n_1) \chi_{L_2}(n_2)   \exp\big(i\langle n_{12} + k_1 , x \rangle\big)  \\
\times  
&\widehat{\<1b>}(t,n_2)   \bigg(\int_0^t  1_{\cJ}(t^\prime) \frac{\sin((t-t^\prime)\langle k_1 + n_1 \rangle)}{\langle k_1 + n_1 \rangle}  \exp\big( i \lambda_1 t^\prime \big) \widehat{\<1b>}(t^\prime,n_1) \dtprime \bigg) \bigg] \\
&= \cG^{(2)}(t,x) + \cG^{(0)}(t,x), 
\end{align*}
where the Gaussian chaoses $\cG^{(2)}$ and $\cG^{(0)}$  are given by 
\begin{align*}
 \cG^{(2)}(t,x) \defe& \sum_{\pm_1,\pm_2} \sum_{n_1,n_2 \in \bZ^3} \bigg[ c(\pm_1,\pm_2)\rho_N(k_1+n_1) \chi_{L_1}(n_1+k_1) \rho_N(n_1) \chi_{N_1}(n_1) \chi_{L_2}(n_2)   \\
 & \times \bigg( \int_0^t   1_{\cJ}(t^\prime)\frac{\sin((t-t^\prime)\langle k_1 + n_1 \rangle)}{\langle k_1 + n_1 \rangle}  \exp\big( i \lambda_1 t^\prime \pm_1 i t^\prime \langle n_1 \rangle \pm_2 i t \langle n_2 \rangle \big)  \dtprime \bigg)  \\
 &\exp\big(i\langle n_{12} + k_1 , x \rangle\big)   \cI_2[\pm_j,n_j\colon j=1,2] \bigg], \\
  \cG^{(0)}(t,x) \defe& \exp(i \langle k_1 , x \rangle) \sum_{n_1\in \bZ^3} \bigg[ 
  \rho_N(k_1+n_1) \chi_{L_1}(n_1+k_1) \rho_N(n_1) \chi_{N_1}(n_1) \chi_{L_2}(n_1) 
   \frac{1}{\langle n_1 + k_1 \rangle \langle n_1 \rangle^2} \\
   &\times  \bigg( \int_0^t  1_{\cJ}(t^\prime) \sin((t-t^\prime)\langle k_1 + n_1 \rangle)
   \cos((t-t^\prime)\langle n_1 \rangle) \exp\big( i\lambda_1 t^\prime\big) \dtprime \bigg) \bigg] .  
\end{align*}
The quadratic Gaussian chaos $\cG^{(2)}$ is the non-resonant part and the constant ``Gaussian chaos'' $\cG^{(0)}$ is the resonant part. We now treat both components separately. \\

\emph{Contribution of the quadratic Gaussian chaos  $\cG^{(2)}$:} Using the orthogonality of the multiple stochastic integrals and taking absolute values inside the $t^\prime$-integral, we have that 
\begin{align*}
&\|  \cG^{(2)}(t,x) \|_{L^2_\omega H_x^s(\Omega\times\bT^3)}^2 \\
&\lesssim T^2 \sum_{n_1,n_2\in \bZ^3} \chi_{N_1}(n_1) \chi_{L_2}(n_2) \langle k_1 + n_{12} \rangle^{2s} \langle k_1 + n_1 \rangle^{-2} \langle n_1 \rangle^{-2} \langle n_2 \rangle^{-2} \\
&\lesssim T^2 N_1^{-6} \sum_{n_1\in \bZ^3}   \chi_{N_1}(n_1) \chi_{L_2}(n_2) \langle k_1 + n_{12} \rangle^{2s}  \\
&\lesssim T^2  N_1^{-4\eta-20\epsilon},
\end{align*}
which is acceptable. 

\emph{Contribution of the constant ``Gaussian chaos''  $\cG^{(0)}$:} 
Using the $\operatorname{sine}$-cancellation lemma (Lemma \ref{tools:lem_sin_cancellation}), we have that 
\begin{align*}
&\|  \cG^{(0)}(t,x) \|_{H_x^s(\bT^3)} \\
&\lesssim  \bigg| \sum_{n_1\in \bZ^3} \bigg[ 
 \rho_N(k_1+n_1) \chi_{L_1}(n_1+k_1) \rho_N(n_1) \chi_{N_1}(n_1) \chi_{L_2}(n_1) 
   \frac{1}{\langle n_1 + k_1 \rangle \langle n_1 \rangle^2}  \\
   &\times  \bigg( \int_0^t   1_{\cJ}(t^\prime)\sin((t-t^\prime)\langle k_1 + n_1 \rangle)
   \cos((t-t^\prime)\langle n_1 \rangle) \exp\big( i\lambda_1 t^\prime\big) \dtprime \bigg) \bigg] \bigg| \\
   &\lesssim  N_1^{-1+3\epsilon},
\end{align*}
which is also acceptable. 
\end{proof}

\subsection{Cubic para-controlled estimate}
In this subsection, we control the cubic para-controlled object, i.e., the first summand in the definition of $\CPara$ in \eqref{lwp:eq_CPara}.

\begin{proposition}\label{para:prop_cubic}
Let $T\geq 1$. For any interval $\cJ \subseteq [0,T]$, any $\phi \colon [0,T] \times \bT^3 \rightarrow \bC$, and $H\in \LM(\cJ)$, we define
\begin{align*}
&\Para^{(3)}_N(H,\phi;\cJ) \\
&\defe   \nparaboxld \Big( V \ast \Big( \big(P_{\leq N}^2\Duh \big) \Big[ 1_{\cJ}  \Para(H, \phi)  \Big]  \cdot \phi \Big) \cdot \phi \Big)
- \MN P_{\leq N}^2   \Duh \Big[ 1_\cJ \Para(H, \phi) \Big]. 
\end{align*}
Then, it holds that for all $p \geq 2$ that 
\begin{equation*}
\Big\|  \sup_{N\geq 1} \sup_{\cJ \subseteq [0,T]} \sup_{\substack{\| H\|_{\modulation(\cJ)}\leq 1}} 
\Big\|  \Para^{(3)}_N(H, P_{\leq N} \<1b>;\cJ) \Big\|_{\X{s_2-1}{b_+-1}([0,T])} \Big\|_{L^p_\omega(\bP)} \lesssim T^3 p^{\frac{3}{2}},
\end{equation*}
where the supremum in $\cJ$ is only taken over intervals.
\end{proposition}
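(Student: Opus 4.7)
The plan is to reduce the estimate to a controlled cubic stochastic object parametrized by the low-frequency modulation $H$, and then apply Gaussian hypercontractivity together with the para-controlled cubic counting estimate (Lemma~\ref{tools:lem_paracontrolled_counting}). The supremum over $N\geq 1$, over the interval $\cJ\subseteq [0,T]$, and over $H$ with $\| H\|_{\LM(\cJ)}\leq 1$ will be handled at the end by dyadic summation and a standard grid/continuity argument as in the proof of Proposition~\ref{para:prop_quadratic_object}; accordingly, we concentrate below on establishing the frequency-localized estimate for fixed $N$, fixed $\cJ$, and fixed low-modulation frequency scale $K$ with $|k_1|\leq 8K^\epsilon$ and fixed temporal modulation $\lambda_1$.

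First, I would unpack $\Para(H,P_{\leq N}\<1b>)$ using the expansion \eqref{para:eq_para_expansion}, so that $Y_N\defe P_{\leq N}^2 \Duh[\, 1_\cJ \Para(H,P_{\leq N}\<1b>)\,]$ becomes an integral in the modulation parameters $(K,k_1,\lambda_1)$ of the double Duhamel structure
\begin{equation*}
\widetilde{Y}_N(t,x;K,k_1,\lambda_1) = \sum_{n_0\in\bZ^3} \rho_N^2(k_1+n_0)\chi_K(n_0)\exp(i\langle k_1+n_0,x\rangle) \int_0^t 1_\cJ(t^\prime) \frac{\sin((t-t^\prime)\langle k_1+n_0\rangle)}{\langle k_1+n_0\rangle} e^{i\lambda_1 t^\prime} \widehat{\<1b>}(t^\prime,n_0) \, \dtprime .
\end{equation*}
Inserting $\widetilde{Y}_N$ into $\nparaboxld(V\ast(\widetilde Y_N\cdot P_{\leq N}\<1b>)\cdot P_{\leq N}\<1b>)$, the para\-differential operator $\nparaboxld$ enforces that $\max(K,N_{n_0+k_1},N_{\text{inner}\,\<1b>})\gtrsim N_{\text{outer}\,\<1b>}^\epsilon$; in particular, thanks to the condition $K\leq N_0^\epsilon$ (with $N_0$ the frequency scale of $n_0$) and the restriction $|k_1|\lesssim K^\epsilon$, one of the two $\<1b>$ frequencies must be comparable to a positive power of the maximal frequency. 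This is exactly the room that the counting estimate in Lemma~\ref{tools:lem_paracontrolled_counting} exploits.

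Next, I would apply the product formula for multiple stochastic integrals (Lemma~\ref{tools:lem_product}) to the cubic expression in $\<1b>$ obtained after the previous step. This decomposes the resulting object into a third-order Gaussian chaos $\cG^{(3)}$ (the fully non-resonant contribution) and a first-order Gaussian chaos $\cG^{(1)}$ (the contribution where one of the two outer $\<1b>$ contracts with the $\<1b>$ inside $\widetilde Y_N$). The companion term $-\MN P_{\leq N}^2 \Duh[1_\cJ\Para(H,P_{\leq N}\<1b>)]$ is designed to exactly cancel the self-pairing of the two outer $\<1b>$'s (the usual Wick renormalization), which would otherwise produce an instantaneous resonance coinciding with $\MN \widetilde Y_N$ up to the Duhamel integration; checking this algebraic cancellation carefully is the key identity of the argument and parallels the cancellation already encountered in Proposition~\ref{so5i:prop}. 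After this cancellation, what remains at the 1-chaos level is a resonance where the contracted $\<1b>$ sits at time $t$ and the one in $\widetilde Y_N$ sits at time $t^\prime$; this is the single-resonance term and it is handled by the sine-cancellation lemma (Lemma~\ref{tools:lem_sin_cancellation}) in the same spirit as the treatment of $\cG^{(3)}$ and $\cG^{(1)}$ in the proof of Proposition~\ref{so5i:prop}, gaining a factor $N_{\max}^{-1+}$ that compensates the $N_1^{2\gamma}$ loss in Lemma~\ref{tools:lem_paracontrolled_counting}.

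For the non-resonant 3-chaos $\cG^{(3)}$, I would estimate the $\X{s_2-1}{b_--1}$-norm by passing to $L_\omega^2 L_\lambda^2 \ell_n^2$ via Gaussian hypercontractivity (Proposition~\ref{tools:prop_Gaussian_hypercontractivity}) and Minkowski, localize the temporal modulation with Lemma~\ref{tools:lem_estimate_fouriertransform}, and decompose according to $|\varphi-m|\leq 1$ to reduce matters to
\begin{equation*}
\sup_{n_2}\sum_{n_1,n_3} \langle n_{123}\rangle^{2(s_2-1)} \langle n_{12}\rangle^{-2\beta} \langle n_1\rangle^{-2}\langle n_3\rangle^{-2} \mathbf 1\{|\varphi-m|\leq 1\},
\end{equation*}
which is precisely the sum controlled by Lemma~\ref{tools:lem_paracontrolled_counting}; here the role of $n_2$ is played by the frequency $n_0$ inside $\widetilde Y_N$, which is the one sitting inside the low-modulation wrapper and whose $L^2$-size is dictated by $H$. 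Choosing $0<\gamma<\beta$ with $\gamma$ close to $\beta$, the gain $N_1^{-2\gamma}$ (where $N_1$ is the frequency of one outer $\<1b>$) absorbs the loss $N_2^{2\gamma}$ up to a small positive power, which combined with the $\nparaboxld$ constraint $N_1\gtrsim N_{\max}^\epsilon$ yields the decay $N_{\max}^{-\eta^\prime}$ that is needed to upgrade from $b_-$ to $b_+$ and to sum over dyadic scales, exactly as in the proof of Proposition~\ref{so3:prop}. Finally, to upgrade from $b_-$ to $b_+$, I would interpolate the $\X{s_2-1}{b_--1}$ bound against a trivial $\X{0}{0}$ bound obtained by crude $L^2_{t,x}$ estimates, precisely as in the upgrade step \eqref{so3:eq_p2} for the cubic stochastic object.

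The hard part of the argument will be the bookkeeping around the renormalization: showing that the $\MN Y_N$ counterterm indeed kills the would-be divergent single-pairing contribution uniformly in $N$ and uniformly in the parameters $(K,k_1,\lambda_1)$ of the low-frequency modulation. This requires matching the inner sine-kernel of the Duhamel integral against the outer propagator and verifying that the coefficient extracted by the pairing agrees with the symbol \eqref{intro:eq_MN} of $\MN$ up to a remainder that is summable in the frequency variable. Once this algebraic identity is in place, all remaining terms are either amenable to Lemma~\ref{tools:lem_sin_cancellation} or directly to Lemma~\ref{tools:lem_paracontrolled_counting}, and the rest of the proof is analogous to the treatment of the quintic stochastic objects in Section~\ref{section:stochastic_object}.
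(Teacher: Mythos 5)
Your high-level outline matches the paper's: expand $\Para$ via \eqref{para:eq_para_expansion}, freeze $(K,k_1,\lambda_1)$, apply the product formula to split into a non-resonant $3$-chaos $\cG^{(3)}$ and a resonant $1$-chaos, identify the cancellation with $\MN Y_N$ as the $n_1\leftrightarrow n_3$ pairing absorbed by $\cI_2[t,n_1,n_3]$, handle the resonant piece via the sine-cancellation lemma, and handle $\cG^{(3)}$ via Gaussian hypercontractivity, Lemma~\ref{tools:lem_estimate_fouriertransform}, and Lemma~\ref{tools:lem_paracontrolled_counting}, with the $b_-\to b_+$ upgrade by interpolation. So far so good. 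However, there is a substantive gap in your treatment of $\cG^{(3)}$.

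You reduce the $L^2_\omega$ bound to
\begin{equation*}
\sup_{|n_2|\sim N_2}\ \sum_{n_1,n_3}\langle n_{123}\rangle^{2(s_2-1)}\langle n_{12}\rangle^{-2\beta}\langle n_1\rangle^{-2}\langle n_3\rangle^{-2}\,\mathbf 1\{|\varphi-m|\leq 1\},
\end{equation*}
and then claim that with $\gamma$ near $\beta$ the factor $N_1^{-2\gamma}$ from Lemma~\ref{tools:lem_paracontrolled_counting} absorbs the $N_2^{2\gamma}$ loss, invoking an ``$\nparaboxld$ constraint $N_1\gtrsim N_{\max}^\epsilon$.'' Both steps are wrong. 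First, the actual constraint from $\nparaboxld$ is $\max(N_1,N_2)>N_3^\epsilon$, not $N_1\gtrsim N_{\max}^\epsilon$. Second, in the regime $N_2=N_{\max}\gg N_1,N_3$ (perfectly allowed by the constraint, since it is trivially satisfied when $N_2$ is large), one has $N_1^{-2\gamma}N_2^{2\gamma}\gtrsim N_{\max}^{2\gamma(1-\epsilon)}$, which grows without bound — your estimate does not close, regardless of the choice of $\gamma<\beta$. The missing ingredient is precisely the heart of the double-Duhamel trick: the para-controlled factor $P_{\leq N}^2\Duh[\,1_{\cJ}\Para(H,P_{\leq N}\<1b>)]$ carries an extra multiplier $\langle n_2+k_2\rangle^{-1}$ from the inner Duhamel integral. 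Combined with the $\langle n_2\rangle^{-2}$ normalization inside $\cI_3$, the $L^2_\omega$ estimate contains a sum $\sum_{|n_2|\sim N_2}\langle n_2\rangle^{-4}\sim N_2^{-1}$ sitting in front of the $\sup_{n_2}$ you pass to, and it is this $N_2^{-1+O(\epsilon)}$ gain that defeats the $N_2^{2\gamma}$ loss. Once one has this factor, a very small $\gamma$ ($=\epsilon$ in the paper) already suffices; taking $\gamma$ close to $\beta$ buys nothing and does not fix the argument without the Duhamel smoothing. Relatedly, your remark that the sine-cancellation gain ``compensates the $N_1^{2\gamma}$ loss in Lemma~\ref{tools:lem_paracontrolled_counting}'' conflates the two regimes: the resonant $1$-chaos (where sine-cancellation applies and the pairing forces $N_1\sim N_2$) and the non-resonant $3$-chaos (where the counting lemma applies) are estimated separately and do not interact in that way.

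Two minor omissions: the product formula produces two $1$-chaos terms (pairing $n_2$ with $n_1$, and pairing $n_2$ with $n_3$), the second of which is slightly easier because of the $\widehat{V}(n_{12}+k_2)$ factor; and the bookkeeping around $\MN$ is actually cleaner than you suggest, since passing to $\cI_2[t,n_1,n_3]$ for the two explicit $\<1b>$'s removes the $n_1=-n_3$ pairing at once, so there is no separate ``algebraic identity'' to verify beyond the decomposition $\MN=\sum\MN[N_1,N_2',N_3]$ needed to do Littlewood–Paley bookkeeping.
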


\begin{remark} 
The notation $\Para^{(3)}_N(H, P_{\leq N}\<1b>;\cJ) $ will only be used in Proposition \ref{para:prop_cubic} and its proof.  The frequency-localized version of Proposition \ref{para:prop_cubic} also gains an $\eta^\prime$-power in the maximal frequency-scale. 
\end{remark}

\begin{proof}
As before, we ignore the supremum in $N$, which can be easily handled through the decay in the frequency-localized version below. Using the decay in the frequency-localized version and a crude estimate, we can also replace the $\X{s_2}{b_+-1}$-norm by the $\X{s_2}{b_--1}$-norm.  Using the definition of the restricted norms, it suffices to consider  $H\in \LM(\bR)$ with $\| H \|_{\LM(\bR)} \leq 1$. In order to use a Littlewood-Paley decomposition, we need to break up the multiplier $\MN$. We define $\MN[N_1,N_2,N_3]$ as the multiplier with the symbol
\begin{equation}
\mathscr{m}_N[N_1,N_2,N_3](n_2) = \sum_{k \in \bZ^3} \frac{\widehat{V}(k+n_2)}{\langle k \rangle^2} \rho_N^2(k) \chi_{N_1}(k) \chi_{N_2}(n_2) \chi_{N_3}(k). 
\end{equation}
We note that  $\MN[N_1,N_2,N_3]$ is only non-zero when $N_1 \sim N_3$, and hence, in particular, when $N_1 > N_3^\epsilon$. We now face a  notational nuisance; namely, that both $\PCtrl$ and $\paraboxld$ contain frequency-projections. To this end, we use $N_2$ and $N_2^\prime$ for the respective frequency-scales, but encourage the reader to mentally set $N_2 = N_2^\prime$. It then follows that 
\begin{equation}
\begin{aligned}
&\Para^{(3)}_N(H, P_{\leq N} \<1b>;\cJ) \\
&= \sum_{\substack{ N_1,N_2^\prime,N_3 \colon \\ \max(N_1,N_2^\prime) > N_3^\epsilon}} \bigg[ V \ast \Big( P_{N_1} P_{\leq N} \<1b> \cdot P_{N_2^\prime} P_{\leq N}^2 \Duh \Big[ 1_{\cJ} \PCtrl(H,P_{\leq N} \<1b>)\Big] \Big) \cdot P_{N_3} P_{\leq N} \<1b>  \\
&\quad- \MN[N_1,N_2^\prime,N_3] P_{\leq N}^2 \Duh \Big[ 1_{\cJ} \PCtrl(H,P_{\leq N} \<1b>)\Big] \bigg] . 
\end{aligned}
\end{equation}
Using the stochastic representation formula \eqref{tools:eq_stochastic_representation_1}  in Proposition \ref{tools:prop_stochastic_representation} and the expansion \eqref{para:eq_para_expansion}, we obtain that 
\begin{align*}
 &\Para^{(3)}_N(H, P_{\leq N} \<1b>;\cJ)(t,x) \\
 &= \sum_{\substack{N_1,N_2,N_2^\prime,N_3 \colon \\ \max(N_1,N_2^\prime)> N_3, \\  N_2 \sim N_2^\prime}} \sum_{k_2\in \bZ^3} \int_{\bR} \dlambda_2 ~ \widehat{H}(\lambda_2,k_2;N_2) \sum_{n_1,n_2,n_3\in \bZ^3} \bigg[ \rho_N^2(n_2+k_2) \chi_{N_2^\prime}(n_2+k_2) \\ 
 &\times \Big( \prod_{j=1}^3 \rho_N(n_j) \chi_{N_j}(n_j) \Big) \widehat{V}(n_{12}+k_2)   \Big( \int_0^t  1_{\cJ}(t^\prime) \frac{\sin((t-t^\prime)\langle n_2 + k_2 \rangle)}{\langle n_2 + k_2 \rangle} \exp\big( i t^\prime \lambda_2) \cI_1[t^\prime,n_2] \dtprime \Big) \\
 &\times \exp\big( i \langle n_{123} + k_2 ,x\rangle\big) \cI_2[t,n_1,n_3] \bigg]. 
\end{align*}
Using the product formula for multiple stochastic integrals, we can decompose the inner sum in $n_1,n_2$, and $n_3$ as 
\begin{align*}
&\sum_{n_1,n_2,n_3\in \bZ^3} \bigg[ \rho_N^2(n_2+k_2) \chi_{N_2^\prime}(n_2+k_2)  \Big( \prod_{j=1}^3 \rho_N(n_j) \chi_{N_j}(n_j) \Big) \widehat{V}(n_{12}+k_2) \cI_2[t,n_1,n_3] \\ 
 &\times  \Big( \int_0^t 1_{\cJ}(t^\prime) \frac{\sin((t-t^\prime)\langle n_2 + k_2 \rangle)}{\langle n_2 + k_2 \rangle} \exp\big( i t^\prime \lambda_2) \cI_1[t^\prime,n_2] \dtprime \Big)  \exp\big( i \langle n_{123} + k_2 ,x\rangle\big) \bigg] \\
 &= \cG^{(3)}(t,x;\lambda_2,k_2,\cJ,N_\ast)+ \cG^{(1)}(t,x;\lambda_2,k_2,\cJ,N_\ast) + \widetilde{\cG}^{(1)}(t,x;\lambda_2,k_2,\cJ,N_\ast),
\end{align*}
where the cubic and linear Gaussian chaoses are given by 
\begin{align*}
\cG^{(3)}(t,x) &= \hspace{-1ex} \sum_{\pm_1,\pm_2,\pm_3} c(\pm_j\colon  1\leq j \leq 3) \sum_{n_1,n_2,n_3\in \bZ^3} \bigg[  \rho_N^2(n_2+k_2) \chi_{N_2^\prime}(n_2+k_2)  \Big( \prod_{j=1}^3 \rho_N(n_j) \chi_{N_j}(n_j) \Big) \\ 
 &\times        \widehat{V}(n_{12}+k_2)  \Big(\int_0^t 1_{\cJ}(t^\prime) \frac{\sin((t-t^\prime)\langle n_2 + k_2 \rangle)}{\langle n_2 + k_2 \rangle} \exp\big( i t^\prime \lambda_2\pm_2 i t^\prime \langle n_2 \rangle) \dtprime \Big) \\
 &\times   \exp(\pm_1 i t \langle n_1 \rangle \pm_3 i t \langle n_3 \rangle)   \exp\big( i \langle n_{123} + k_2 ,x\rangle\big)  \cI_3[\pm_j,n_j \colon 1 \leq j \leq 3] \bigg], \allowdisplaybreaks[3]  \\ 
 \cG^{(1)}(t,x) &=\sum_{n_3\in \bZ^3}  \rho_N(n_3) \chi_{N_3}(n_3) \exp\big( \langle n_3 + k_2 , x \rangle\big) \sum_{n_1\in \bZ^3} \bigg[ \rho_N^2(n_2+k_2) \chi_{N_2^\prime}(n_2+k_2)  \rho_N^2(n_2)   \\ 
 &\times  \chi_{N_1}(n_2) \chi_{N_2}(n_2)  \Big( \int_0^t 1_{\cJ}(t^\prime) \sin((t-t^\prime)\langle n_2 + k_2 \rangle) \cos((t-t^\prime)\langle n_2  \rangle) \exp\big( i t^\prime \lambda_2) \dtprime \Big)  \\
 &\times   \widehat{V}(k_2)  \langle n_2 + k_2 \rangle^{-1} \langle n_2 \rangle^{-2}  \bigg] ~ \cI_1[t;n_3] ,\allowdisplaybreaks[3]  \\
  \widetilde{\cG}^{(1)}(t,x) &=  \sum_{n_1\in \bZ^3}  \rho_N(n_1) \chi_{N_1}(n_1) \exp\big( \langle n_1 + k_2 , x \rangle\big) \sum_{n_1\in \bZ^3} \bigg[ \rho_N^2(n_2+k_2) \chi_{N_2^\prime}(n_2+k_2)  \rho_N^2(n_2)   \\ 
 &\times  \chi_{N_2}(n_2) \chi_{N_3}(n_2)  \Big( \int_0^t 1_{\cJ}(t^\prime) \sin((t-t^\prime)\langle n_2 + k_2 \rangle) \cos((t-t^\prime)\langle n_2  \rangle) \exp\big( i t^\prime \lambda_2) \dtprime \Big)  \\
 &\times  \widehat{V}(n_{12}+k_2) \langle n_2 + k_2 \rangle^{-1} \langle n_2 \rangle^{-2}  \bigg] ~ \cI_1[t;n_1] . 
\end{align*}
We refer to $\cG^{(3)}$ as the non-resonant term and to $ \cG^{(1)}$ and $ \widetilde{\cG}^{(1)}$ as the resonant terms. Using the triangle inequality and $\| H \|_{\LM(\bR)} \leq 1$, we obtain that 
\begin{align*}
&\Big\|  \Para^{(3)}_N(H, P_{\leq N} \<1b>;\cJ) \Big\|_{\X{s_2-1}{b_+-1}(\cJ)} \\
&\lesssim  \sum_{\substack{N_1,N_2,N_2^\prime,N_3 \colon \\ \max(N_1,N_2^\prime)> N_3, \\  N_2 \sim N_2^\prime}}  N_2^{7\epsilon} ~ \sup_{\lambda_2 \in \bR} \sup_{\substack{ k_2 \in \bZ^3 \colon \\ |k_2| \lesssim N_2^\epsilon}}
\Big( \| \cG^{(3)}(\cdot;\lambda_2,k_2,\cJ,N_\ast)\|_{\X{s_2-1}{b_--1}([0,T])}   \\
&+ \| \cG^{(1)}(\cdot;\lambda_2,k_2,\cJ,N_\ast)\|_{\X{s_2-1}{b_--1}([0,T])}  + \| \widetilde{\cG}^{(1)}(\cdot;\lambda_2,k_2,\cJ,N_\ast)\|_{\X{s_2-1}{b_--1}([0,T])}  \Big). 
\end{align*}
We now use Gaussian hypercontractivity and a similar reduction as in the proof of Proposition \ref{para:prop_quadratic_object} to move the supremum outside the probabilistic moments. Then, it remains to show for all frequency scales $N_1,N_2$, and $N_3$ satisfying $\max(N_1,N_2)>N_3^\epsilon$ that  
\begin{align*}
& \sup_{\lambda_2 \in \bR} \sup_{\substack{ k_2 \in \bZ^3 \colon \\ |k_2| \lesssim N_2^\epsilon}}\Big\|  \| \cG^{(3)}(\cdot;\lambda_2,k_2,\cJ,N_\ast)\|_{\X{s_2-1}{b_--1}([0,T])}   
+ \| \cG^{(1)}(\cdot;\lambda_2,k_2,\cJ,N_\ast)\|_{\X{s_2-1}{b_--1}([0,T])}   \\
&+ \| \widetilde{\cG}^{(1)}(\cdot;\lambda_2,k_2,\cJ,N_\ast)\|_{\X{s_2-1}{b_--1}([0,T])}  \Big\|_{L^2_\omega} \\
&\lesssim T^2 \max(N_1,N_2,N_3)^{-\eta}. 
\end{align*}
We treat the estimates for the non-resonant and resonant components separately. \\

\emph{Contribution of the non-resonant terms:} To estimate the $\X{s_2-1}{b_--1}$-norm, we calculate the space-time Fourier transform of $\chi(t/T) \cG^{(3)}(t,x;\lambda_2,k_2,\cJ,N_\ast)$. We have that 
\begin{align*}
&\mathcal{F}_{t,x}\Big( \chi(t/T) \cG^{(3)}(t,x;\lambda_2,k_2,\cJ,N_\ast) \Big)(\lambda \mp \langle n \rangle, n) \\
&=  \hspace{-1ex} \sum_{\pm_1,\pm_2,\pm_3} c(\pm_j\colon  1\leq j \leq 3) \sum_{n_1,n_2,n_3\in \bZ^3} \bigg[  1\big\{ n= n_{123} + k_2 \big\} \rho_N^2(n_2+k_2)  \Big( \prod_{j=1}^3 \rho_N(n_j) \chi_{N_j}(n_j) \Big) \\ 
 &\times      \chi_{N_2^\prime}(n_2+k_2)   \widehat{V}(n_{12}+k_2)  \cI_3[\pm_j,n_j \colon 1 \leq j \leq 3] \\
 &\times   \mathcal{F}_t \bigg( \exp(\pm_1 i t \langle n_1 \rangle \pm_3 i t \langle n_3 \rangle)  \int_0^t 1_{\cJ}(t^\prime) \frac{\sin((t-t^\prime)\langle n_2 + k_2 \rangle)}{\langle n_2 + k_2 \rangle} \exp\big( i t^\prime \lambda_2\pm_2 i t^\prime \langle n_2 \rangle) \dtprime \bigg)  (\lambda\mp \langle n \rangle)\bigg]   . 
\end{align*}
Using the orthogonality of the multiple stochastic integrals and Lemma \ref{tools:lem_estimate_fouriertransform} to estimate the Fourier transform of the time-integral, we obtain that 
\begin{align*}
&\big\| \| \cG^{(3)} \|_{\X{s_2-1}{b_--1}([0,T])} \big\|_{L_\omega^2}^2 \\
&\lesssim \max_{\pm} \big\| \langle \lambda \rangle^{b_--1} \langle n \rangle^{s_2-1} \mathcal{F}_{t,x}\Big( \chi(t/T) \cG^{(3)}(t,x;\lambda_2,k_2,\cJ,N_\ast) \Big)(\lambda \mp \langle n \rangle, n) \|_{L_\lambda^2 \ell_n^2 ( \bR \times \bZ^3)} \big\|_{L_\omega^2}^2 \allowdisplaybreaks[3] \\
&\lesssim T^4 \max_{\pm,\pm_1,\pm_2,\pm_3} \max_{\iota_2= -1,0,1} \int_{\bR} \, \dlambda \, \langle \lambda  \rangle^{2(b_--1)} \sum_{n_1,n_2,n_3\in \bZ^3} \bigg[   \Big( \prod_{j=1}^3  \chi_{N_j}(n_j) \Big) 
\langle n_{123} + k_2 \rangle^{2(s_2-1)} \langle n_{12} +k_2 \rangle^{-2\beta}  \\
&\times \langle n_1 \rangle^{-2} \langle n_2 \rangle^{-4} \langle n_3 \rangle^{-2} \Big( 1+ \big| \lambda - \lambda_3 - \big( \pm \langle n_{123} + k_2 \rangle \pm_1 \langle n_1 \rangle \pm_2 \langle n_2 \rangle + \iota_2 \langle n_2 + k_2 \rangle \pm_3 \langle n_3 \rangle \big| \Big)^{-2} \bigg] \allowdisplaybreaks[3] \\
&\lesssim  T^4 N_2^{-1+5\epsilon}  \max_{\pm,\pm_1,\pm_3} \sup_{\substack{ n_2 \in \bZ^3 \colon \\ |n_2| \sim N_2 }} \sup_{m\in \bZ^3} \sum_{n_1,n_3 \in \bZ^3}\bigg[  \Big( \prod_{j=1,3}  \chi_{N_j}(n_j) \Big) \langle n_{123} \rangle^{2(s_2-1)} \langle n_{12} \rangle^{-2\beta} \langle n_1 \rangle^{-2} \langle n_3 \rangle^{-2}
 \\
&\times 1\big\{ \pm \langle n_{123} \rangle \pm_1 \langle n_1 \rangle \pm_3 \langle n_3 \rangle \in [m,m+1) \big\}  \bigg]\\
&\lesssim T^4 \max(N_1,N_2,N_3)^{2\delta_2} N_1^{-2\epsilon} N_2^{-1+7\epsilon}.
\end{align*}
In the last line, we used Lemma \ref{tools:lem_paracontrolled_counting} with $\gamma=\epsilon$.
Since $\max(N_1,N_2) >N_3^\epsilon$ and $\delta_2$ is much smaller than $\epsilon^2$, this contribution is acceptable. \\

\emph{Contribution of the resonant terms:} We only estimate $\cG^{(1)}$. Due to the factor $\widehat{V}(n_{12}+k_2)$, a simpler but similar argument also controls $\widetilde{\cG}^{(1)}$. \\

Using the inhomogeneous Strichartz estimate (Lemma \ref{tools:lem_inhomogeneous_strichartz}), we have that 
\begin{align*}
\| \cG^{(1)} \|_{\X{s_2-1}{b_--1}([0,T])} \lesssim \| \cG^{(1)} \|_{L_t^{2b_+} H_x^{s_2-1}([0,T]\times \bT^3)} \lesssim T^{\frac{1}{2}} \| \cG^{(1)} \|_{L_t^{2} H_x^{s_2-1}([0,T]\times \bT^3)}. 
\end{align*}
Using Fubini's theorem and the $\operatorname{sine}$-cancellation lemma (Lemma \ref{tools:lem_sin_cancellation}), this yields
\begin{align*}
&\big\| \| \cG^{(1)} \|_{\X{s_2-1}{b_--1}([0,T])} \big\|_{L_\omega^2}^2 \\
&\lesssim  T^2 \sup_{t\in [0,T]} \big\| \| \cG^{(1)} \|_{H_x^{s_2-1}(\bT^3)} \big\|_{L_\omega^2}^2 \\
&\lesssim T^2 \sum_{n_3\in \bZ^3} \chi_{N_3}(n_3) \langle n_3 + k_2 \rangle^{2(s_2-1)} \langle n_3 \rangle^{-2} \bigg| \sum_{n_2\in \bZ^3} \rho_N^2(n_2+k_2) \rho_N^2(n_2) \chi_{N_1}(n_2) \chi_{N_2}(n_2) \chi_{N_2^\prime}(n_2+k_2)  \\
&\times  \langle n_2 + k_2 \rangle^{-1} \langle n_2 \rangle^{-2}  \Big( \int_0^t 1_{\cJ}(t^\prime) \sin((t-t^\prime)\langle n_2 + k_2 \rangle) \cos((t-t^\prime)\langle n_2  \rangle) \exp\big( i t^\prime \lambda_2) \dtprime \Big) \bigg|^2 \\
&\lesssim T^4 \,  1\big\{N_1\sim N_2 \big\} N_1^{-2+6\epsilon}   \langle k_2 \rangle^{2(1-s_2)} \sum_{n_3\in \bZ^3} \chi_{N_3}(n_3) \langle n_3  \rangle^{2(s_2-1)} \langle n_3 \rangle^{-2} \\
&\lesssim T^4 \,  1\big\{N_1\sim N_2 \big\} N_1^{-2+8\epsilon} N_3^{2\delta_2}. 
\end{align*}
Since $\max(N_1,N_2) \gtrsim N_3^\epsilon$ and $\delta_2$ is much smaller than  $\epsilon$, this contribution is acceptable.

\end{proof}

\section{Physical-space methods}\label{section:physical}

In this section, we estimate the terms in $\Phy$. The main ingredients are para-product decompositions and Strichartz estimates. In Section \ref{section:kt}, we recall the refined Strichartz estimates for the wave equation by Klainerman and Tataru \cite{KT99}. In Section \ref{section:physical_sub}, we use the Klainerman-Tataru-Strichartz estimate to control several terms in $\Phy$. The remaining terms in $\Phy$ are estimated in Section \ref{section:hybrid}, which also requires estimates on the quartic stochastic object from Section \ref{section:quartic}.

\subsection{Klainerman-Tataru-Strichartz estimates}\label{section:kt}

We first recall the refined (linear) Strichartz estimate from  \cite[(A.59)]{KT99}. 

\begin{lemma}[Klainerman-Tataru-Strichartz estimates]\label{phy:lem_KT}
Let $\cJ$ be a compact interval. Let $Q$ be a box of sidelength $\sim M$ at a distance $\sim N$ from the origin. Let $P_Q$ be the corresponding Fourier truncation operator and let $2\leq p,q <\infty$ satisfy the sharp wave-admissibility condition $1/q+1/p=1/2$. Then,
\begin{equation}\label{phy:eq_KT}
\| P_Q u \|_{L_t^q L_x^p(\cJ\times \bT^3)} 
\lesssim (1+|\cJ|)^{\frac{1}{q}} \Big( \frac{M}{N} \Big)^{\frac{1}{2}-\frac{1}{p}} N^{\frac{3}{2}-\frac{1}{q}-\frac{3}{p}} 
\| P_Q u \|_{\X{0}{b}(\cJ)}. 
\end{equation}
\end{lemma}
\begin{remark}\label{phy:rem_lorentzian_vs_galilean}
The factor $N^{\frac{3}{2}-\frac{1}{q}-\frac{3}{p}}$ is the same as in the standard deterministic Strichartz estimate. The gain from the stronger localization in frequency space is described by the factor $ ( M/N )^{\frac{1}{2}-\frac{1}{p}}$. Naturally, there is no gain when $p=2$. \\
We emphasize that \eqref{phy:eq_KT} has a more complicated dependence on $M$ and $N$ than the corresponding result for the Schr\"{o}dinger equation. In the Schr\"{o}dinger setting, the frequency-localized Strichartz estimates for the operator $P_{Q}$ and the standard Littlewood-Paley operators $P_{\leq M}$ are equivalent, which follows from the Galilean symmetry. This difference between the Schr\"{o}dinger and wave equation already played a role in our counting estimates (Section \ref{section:counting}). 
\end{remark}

\begin{corollary}\label{phsy:cor_kt}
Let $\cJ$ be a compact interval. Let $Q$ be a box of sidelength $\sim M$ at a distance $\sim N$ from the origin. Let $P_Q$ be the corresponding Fourier truncation operator and let $q \geq 4$. Then, it holds that 
\begin{equation}
\| P_Q u \|_{L_t^q L_x^q(\cJ\times \bT^3)} 
\lesssim (1+|\cJ|)^{\frac{1}{q}}M^{\frac{3}{2}-\frac{5}{q}} N^{\frac{1}{q}} 
\| P_Q u \|_{\X{0}{b}(\cJ)}. 
\end{equation}
\end{corollary}

\begin{proof}
This follows by combining Lemma \ref{phy:lem_KT} (with $q=p=4$) and the Bernstein inequality
 \begin{equation*}
\| P_Q u \|_{L_t^\infty L_x^\infty(\cJ\times \bT^3)}\lesssim M^{\frac{3}{2}} \| P_Q u \|_{\X{0}{b}(\cJ)}.
\end{equation*}
\end{proof}
We now state a bilinear version of the Klainerman-Tataru-Strichartz estimate, which is a consequence of Lemma \ref{phy:lem_KT} (cf. \cite[Theorem 4 and 5]{KT99}). However, since we only require a special case, we provide a self-contained proof. 

\begin{lemma}[Bilinear Klainerman-Tataru-Strichartz estimate]\label{phy:lem_bilinear_kt}
Let $T\geq 1$, $q \geq 4$, let $ \gamma < 3 -10/q$ and let $N_1,N_2 \geq 1$. Then, it holds that
\begin{equation*}
\| \langle \nabla \rangle^{-\gamma} \big( P_{N_1} f \cdot P_{N_2} g\big) \|_{L_t^{\frac{q}{2}} L_x^{\frac{q}{2}}([0,T]\times \bT^3)} \lesssim T^{\frac{2}{q}} \max(N_1,N_2)^{3-2s_1 - \frac{8}{q} - \gamma} \| f \|_{\X{s_1}{b}([0,T])}  \| g \|_{\X{s_1}{b}([0,T])}.
\end{equation*}
In particular, 
\begin{equation*}
\sum_{N_1,N_2} \|   P_{N_1} f \cdot P_{N_2} g \|_{L_t^2 H_x^{-4\delta_1}([0,T]\times \bT^3)} 
\lesssim T^{\frac{1}{2}} \| f \|_{\X{s_1}{b}([0,T])}  \| g \|_{\X{s_1}{b}([0,T])}.
\end{equation*}
Furthermore, if $N_{12} \geq 1$, then 
\begin{align*}
&\| (P_{N_{12}} V) \ast \big( P_{N_1} f \cdot P_{N_2} g\big) \|_{L_t^{2} L_x^{2}([0,T]\times \bT^3)}\\
&\lesssim  T^{\frac{1}{2}} N_{12}^{\frac{1}{2}-\beta-2\delta_1}  \max(N_1,N_2)^{-\frac{1}{2}+4\delta_1} \| f \|_{\X{s_1}{b}([0,T])}  \| g \|_{\X{s_1}{b}([0,T])}.
\end{align*}

\end{lemma}

\begin{remark}
Bilinear Strichartz estimates are also important in the random data theory for nonlinear Schr\"{o}dinger equations in \cite{BOP15,BOP19}.  
In the proof of Proposition \ref{phy:prop_3b} below, we will only require the case $q=4+$ and the reader may simply think of $q$ as four. 
\end{remark}

\begin{proof}
We begin with the first estimate, which is the main part of the argument. Using the definition of the restricted $\X{s}{b}$-spaces, we may replace  $\| f \|_{\X{s_1}{b}([0,T])}$ and $\| g \|_{\X{s_1}{b}([0,T])}$ by   $\| f \|_{\X{s_1}{b}(\bR)}$ and $\| g \|_{\X{s_1}{b}(\bR)}$, respectively. The proof relies on the linear Klainerman-Tataru-Strichartz estimate (Corollary \ref{phsy:cor_kt}) and box localization. We decompose
\begin{equation*}
\| \langle \nabla \rangle^{-\gamma} \big( P_{N_1} f \cdot P_{N_2} g\big) \|_{L_t^{\frac{q}{2}} L_x^{\frac{q}{2}}([0,T]\times \bT^3)}  \lesssim \sum_{ \substack{N_{12}\colon \\ N_{12} \lesssim \max(N_1,N_2)}} 
N_{12}^{-\gamma} \| P_{N_{12}}  \big( P_{N_1} f \cdot P_{N_2} g\big) \|_{L_t^{\frac{q}{2}} L_x^{\frac{q}{2}}([0,T]\times \bT^3)}. 
\end{equation*}
If $N_1 \not \sim N_2$, then $N_{12}\sim \max(N_1,N_2)$ and the desired estimate follows from H\"{o}lder's inequality and the $L_t^q L_x^q$-estimate from Corollary \ref{phsy:cor_kt} with $M\sim N$. Thus, it remains to treat the case $N_1 \sim N_2$. Let $\cQ=\cQ(N_1,N_{12})$ be a cover of the dyadic annulus at distance $\sim N_1$ by finitely overlapping cubes of diameter $\sim N_{12}$. From Fourier support considerations and  Lemma \ref{phy:lem_KT}, it follows that 
\begin{align*}
&\| P_{N_{12}}  \big( P_{N_1} f \cdot P_{N_2} g\big) \|_{L_t^{\frac{q}{2}} L_x^{\frac{q}{2}}([0,T]\times \bT^3)} \\
&\lesssim \sum_{ \substack{Q_1, Q_2 \in \cQ\colon \\ d(Q_1,Q_2)\lesssim N_{12}}}  \| P_{Q_1} P_{N_1} f \cdot P_{Q_2} P_{N_2} g \|_{L_t^{\frac{q}{2}} L_x^{\frac{q}{2}}([0,T]\times \bT^3)} \\
& \lesssim \sum_{ \substack{Q_1, Q_2 \in \cQ\colon \\  d(Q_1,Q_2)\lesssim N_{12}}}  \| P_{Q_1} P_{N_1} f \|_{L_t^q L_x^q([0,T]\times \bT^3)}  \|P_{Q_2} P_{N_2} g \|_{L_t^q L_x^q([0,T]\times \bT^3)} \\
& \lesssim T^{\frac{2}{q}} N_{12}^{3-\frac{10}{q}} N_1^{\frac{2}{q}-2s_1} \sum_{ \substack{Q_1, Q_2 \in \cQ\colon \\  d(Q_1,Q_2)\lesssim N_{12}}}  \| P_{Q_1} P_{N_1} f \|_{\X{s_1}{b}(\bR)}  \|P_{Q_2} P_{N_2} g \|_{\X{s_1}{b}(\bR)} \\
& \lesssim T^{\frac{2}{q}} N_{12}^{3-\frac{10}{q}} N_1^{\frac{2}{q}-2s_1}
 \Big( \sum_{ \substack{Q_1, Q_2 \in \cQ\colon \\  d(Q_1,Q_2)\lesssim N_{12}}} \hspace{-2ex} \| P_{Q_1} P_{N_1} f \|_{\X{s_1}{b}(\bR)}^2 \Big)^{\frac{1}{2}}   
 \Big( \sum_{ \substack{Q_1, Q_2 \in \cQ\colon \\  d(Q_1,Q_2)\lesssim N_{12}}} \hspace{-2ex}  \| P_{Q_2} P_{N_2} g \|_{\X{s_1}{b}(\bR)}^2 \Big)^{\frac{1}{2}}   \\
& \lesssim T^{\frac{2}{q}}  N_{12}^{3-\frac{10}{q}} N_1^{\frac{2}{q}-2s_1}\| f \|_{\X{s_1}{b}(\bR)}  \| g \|_{\X{s_1}{b}(\bR)}.
\end{align*}
The desired result then follows by using the upper bound $\gamma<3-\frac{10}{q}$ and summing in $N_{12}$.\\
We now turn to the second estimate. After estimating
\begin{equation*}
\| (P_{N_{12}} V) \ast \big( P_{N_1} f \cdot P_{N_2} g\big) \|_{L_t^{2} L_x^{2}([0,T]\times \bT^3)}\lesssim N_{12}^{\frac{1}{2}-\beta-2\delta_1}
\| \langle \nabla \rangle^{-\frac{1}{2}+2\delta_1}  \big( P_{N_1} f \cdot P_{N_2} g\big) \|_{L_t^{2} L_x^{2}([0,T]\times \bT^3)},
\end{equation*}
the result follows from the first estimate.
\end{proof}

\subsection{Physical terms}\label{section:physical_sub}

In this subsection, we use the Klainerman-Tataru-Strichartz estimate and a para-product decomposition to control several terms in $\Phy$. 

\begin{proposition}\label{phy:prop_1b}
Let $\cJ$ be a bounded interval and let $f,g \in \X{s_1}{b}(\cJ)$. Then, it holds that 
\begin{align*}
& \sup_{N\geq 1} \Big\|  V\ast \big( P_{\leq N} f \cdot  P_{\leq N} g \big) \nparald P_{\leq N} \<1b> \,\Big\|_{\X{s_{2}-1}{b_+-1}(\cJ)} \\
&\lesssim (1+|\cJ|)^2 
\| f \|_{\X{s_1}{b}(\cJ)} 
\| g \|_{\X{s_1}{b}(\cJ)} 
\| \, \<1b> \, \|_{L_t^\infty \cC_x^{-1/2-\kappa}(\cJ\times \bT^3)}
\end{align*}
and 
\begin{align*}
&\sup_{N\geq 1} \Big\|  \nparaboxld \Big( V\ast \big( P_{\leq N} f \cdot  P_{\leq N} g \big) ~  P_{\leq N} \<1b> \, \Big) \Big\|_{\X{s_{2}-1}{b_+-1}(\cJ)} \\
&\lesssim  (1+|\cJ|)^2 
\| f \|_{\X{s_1}{b}(\cJ)} 
\| g \|_{\X{s_1}{b}(\cJ)} 
\| \, \<1b> \, \|_{L_t^\infty \cC_x^{-1/2-\kappa}(\cJ\times \bT^3)}
\end{align*}
In the frequency-localized versions of the two estimates, which are detailed in the proof, we gain an $\eta^\prime$-power in the maximal frequency-scale. 
\end{proposition}

\begin{proof}
After using a Littlewood-Paley decomposition, we obtain
\begin{align*}
 &\Big\|  V\ast \big( P_{\leq N} f \cdot  P_{\leq N} g \big) \nparald P_{\leq N} \<1b> \,\Big\|_{\X{s_{2}-1}{b_+-1}(\cJ)} \\
  +&\Big\|  \nparaboxld \Big( V\ast \big( P_{\leq N} f \cdot  P_{\leq N} g \big) ~  P_{\leq N} \<1b> \, \Big) \Big\|_{\X{s_{2}-1}{b_+-1}(\cJ)} \\
 \lesssim&\sum_{\substack{N_1,N_2,N_3,N_{12}\colon \\ \max(N_1,N_2)\gtrsim  N_3^\epsilon}}  \Big\|  (P_{N_{12}} V ) \ast \big( P_{\leq N} P_{N_1} f \cdot  P_{\leq N} P_{N_2} g \big)  P_{\leq N} P_{N_3} \<1b> \,\Big\|_{\X{s_{2}-1}{b_+-1}(\cJ)} ,
\end{align*}
where we also used that $N_{12}\lesssim \max(N_1,N_2)$. 
We estimate each dyadic piece separately and distinguish two cases:\\

\emph{Case 1: $N_{12} \not \sim N_3$.} Using the inhomogeneous Strichartz estimate (Lemma \ref{tools:lem_inhomogeneous_strichartz}) and Lemma \ref{phy:lem_bilinear_kt}, we obtain that 
\begin{align*}
&\Big\|  (P_{N_{12}} V ) \ast \big( P_{\leq N} P_{N_1} f \cdot  P_{\leq N} P_{N_2} g \big) P_{\leq N} P_{N_3} \<1b> \,\Big\|_{\X{s_{2}-1}{b_+-1}(\cJ)} \\
&\lesssim \Big\|  (P_{N_{12}} V ) \ast \big( P_{\leq N} P_{N_1} f \cdot  P_{\leq N} P_{N_2} g \big)  P_{\leq N} P_{N_3} \<1b> \,\Big\|_{L_t^{2b_+} H_x^{s_2-1}(\cJ\times \bT^3)} \\ 
&\lesssim (1+|\cJ|)^{\frac{1}{2}}  \max(N_{12},N_3)^{s_2-1} \Big\|  (P_{N_{12}} V ) \ast \big( P_{\leq N} P_{N_1} f \cdot  P_{\leq N} P_{N_2} g \big) \Big\|_{L_t^2 L_x^2(\cJ\times \bT^3)} \\
&\times \Big\|   P_{\leq N} P_{N_3} \<1b> \,\Big\|_{L_t^\infty L_x^\infty(\cJ\times \bT^3)} \\ 
&\lesssim  (1+|\cJ|) \max(N_{12},N_3)^{s_2-1}  N_{12}^{\frac{1}{2}-\beta+2\delta_1} \max(N_1,N_2)^{-\frac{1}{2}+4 \delta_1} N_3^{\frac{1}{2}+\kappa} 
\\
&\times  
\| f \|_{\X{s_1}{b}(\cJ)} 
\| g \|_{\X{s_1}{b}(\cJ)} 
\| \, \<1b> \, \|_{L_t^\infty \cC_x^{-1/2-\kappa}(\cJ\times \bT^3)}.
\end{align*}
Since $\max(N_1,N_2) \geq N_3^\epsilon$, we can bound the pre-factor by 
\begin{align*}
 &\max(N_{12},N_3)^{s_2-1}  N_{12}^{\frac{1}{2}-\beta+2\delta_1} \max(N_1,N_2)^{-\frac{1}{2}+4 \delta_1} N_3^{\frac{1}{2}+\kappa} 
\lesssim \max(N_{1},N_{2})^{-\beta+6\delta_1} N_3^{\delta_2+\kappa} \\
&\lesssim \max(N_1,N_2,N_3)^{-2\eta}. 
\end{align*}

\emph{Case 2: $N_{12} \sim N_3$.} By symmetry, we can assume that $N_1 \geq N_2$. Furthermore, we have that $N_3 \sim N_{12} \lesssim N_1$. Using the inhomogeneous Strichartz estimate 
(Lemma \ref{tools:lem_inhomogeneous_strichartz}), we obtain that 
\begin{align*}
&\Big\|  (P_{N_{12}} V ) \ast \big( P_{\leq N} P_{N_1} f \cdot  P_{\leq N} P_{N_2} g \big) P_{\leq N} P_{N_3} \<1b> \,\Big\|_{\X{s_{2}-1}{b_+-1}(\cJ)} \\
&\lesssim  (1+|\cJ|) \Big\| \langle \nabla  \rangle^{s_2 - \frac{1}{2} + 4 (b_+-\frac{1}{2})} \Big(  (P_{N_{12}} V ) \ast \big( P_{\leq N} P_{N_1} f \cdot  P_{\leq N} P_{N_2} g \big) P_{\leq N} P_{N_3} \<1b> \, \Big)\Big\|_{L_t^{\frac{4}{3}} L_x^{\frac{4}{3}}(\cJ\times \bT^3)} \\
&\lesssim (1+|\cJ|)^{\frac{3}{2}} N_3^{s_2 - \frac{1}{2} + 4 (b_+-\frac{1}{2})-\beta} \Big\| P_{N_1} f \Big\|_{L_t^\infty L_x^2(\cJ\times \bT^3)}  \Big\| P_{N_2} g \Big\|_{L_t^4 L_x^4(\cJ\times \bT^3)}   \Big\| P_{N_3} \, \<1b> \Big\|_{L_t^\infty L_x^\infty(\cJ\times \bT^3)} \\
&\lesssim  (1+|\cJ|)^2  N_1^{-s_1} N_2^{\frac{1}{2}-s_1}  N_3^{s_2 - \frac{1}{2} + 4 (b_+-\frac{1}{2})-\beta+ \frac{1}{2} +\kappa}  
\| f \|_{\X{s_1}{b}(\cJ)} 
\| g \|_{\X{s_1}{b}(\cJ)} 
\| \, \<1b> \, \|_{L_t^\infty \cC_x^{-1/2-\kappa}(\cJ\times \bT^3)}.
\end{align*}
Since $N_2,N_3\geq 1$, the pre-factor can be bounded by 
\begin{align*}
N_1^{-s_1} N_2^{\frac{1}{2}-s_1}  N_3^{s_2 - \frac{1}{2} + 4 (b_+-\frac{1}{2})-\beta+ \frac{1}{2} +\kappa} 
\lesssim N_1^{1-2 s_1 + s_2 - \frac{1}{2} + 4 (b_+-\frac{1}{2})-\beta +\kappa} 
= N_1^{2 \delta_1 + \delta_2 + 4 (b_+-\frac{1}{2}) +\kappa -\beta},
\end{align*}
which is acceptable. 
\end{proof}

\begin{proposition}\label{phy:prop_neq}
Let $T\geq 1$, let $J\subseteq [0,T]$ be an interval, and let $f,g\colon J \times \bT^3 \rightarrow \bR$. Then, it holds that 
\begin{align*}
 \sup_{N\geq 1} \Big\| V \ast \Big( P_{\leq N} \<1b> \paraneq  P_{\leq N}  f  \Big)  P_{\leq N} g \Big\|_{\X{s_2-1}{b_+-1}(\cJ)} 
&\lesssim (1+|\cJ|)^2  \| \, \<1b> \|_{L_t^\infty \cC^{-\frac{1}{2}-\kappa}(\cJ\times \bT^3)} \| f \|_{\X{s_1}{b}(\cJ)}  \| g \|_{\X{s_1}{b}(\cJ)}. 
\end{align*}
In the frequency-localized version of this estimate, which is detailed in the proof,  we gain an $\eta^\prime$-power in the maximal frequency-scale. 
\end{proposition}

\begin{proof}
By using a Littlewood-Paley decomposition and the definitions of $\paraneq$, we have that 
\begin{align*}
 V \ast \Big( P_{\leq N} \<1b> \paraneq  P_{\leq N}  f  \Big)  P_{\leq N} g = \sum_{\substack{N_1,N_2,N_3\colon \\ N_1 \not \sim N_2}}  V \ast \Big( P_{\leq N} P_{N_1}  \<1b>  \, \cdot P_{\leq N} P_{N_2}  f  \Big)  P_{\leq N} P_{N_3} g.
\end{align*}
We treat each dyadic block separately and distinguish two cases.

\emph{Case 1: $N_1 \gg N_2,N_3$}. 
Using the inhomogeneous Strichartz estimate (Lemma \ref{tools:lem_inhomogeneous_strichartz}), we have that 
\begin{align*}
&\Big\| V \ast \Big( P_{\leq N} P_{N_1}  \<1b>  \, \cdot P_{\leq N} P_{N_2}  f  \Big)  P_{\leq N} P_{N_3} g\Big\|_{\X{s_2-1}{b_+-1}(\cJ)} \\
&\lesssim \Big\| V \ast \Big( P_{\leq N} P_{N_1}  \<1b>  \, \cdot P_{\leq N} P_{N_2}  f  \Big)  P_{\leq N} P_{N_3} g\Big\|_{L_t^{2b_+} H_x^{s_2-1}(\cJ\times \bT^3)} \\ 
&\lesssim  (1+|\cJ|)^{\frac{1}{2}}  N_1^{s_2-1-\beta} \| P_{N_1} \<1b> \, \|_{L_t^\infty L_x^\infty(\cJ\times \bT^3)} \| P_{N_1} f \|_{L_t^4 L_x^4(\cJ\times \bT^3)} \| P_{N_2} g \|_{L_t^4 L_x^4(\cJ\times \bT^3)} \\
&\lesssim   (1+|\cJ|)  N_1^{s_2-1-\beta+ \frac{1}{2} + \kappa} N_2^{\frac{1}{2}-s_1} N_3^{\frac{1}{2}-s_1}  
\| \<1b> \, \|_{L_t^\infty \cC_x^{-\frac{1}{2}-\kappa}(\cJ\times \bT^3)}
\| f \|_{\X{s_1}{b}(\cJ)} \| g \|_{\X{s_1}{b}(\cJ)}.  
\end{align*}
Since $N_2,N_3 \ll N_1$, the pre-factor can  be bounded by 
\begin{align*}
 N_1^{s_2-1-\beta+ \frac{1}{2} + \kappa} N_2^{\frac{1}{2}-s_1} N_3^{\frac{1}{2}-s_1}   \lesssim N_1^{2 \delta_1 + \delta_2 + \kappa - \beta}, 
\end{align*}
which is acceptable. \\

\emph{Case 2.a: $N_1 \ll N_2$, $N_3 \lesssim N_2$.}
Using the inhomogeneous Strichartz estimate (Lemma \ref{tools:lem_inhomogeneous_strichartz}), we have that 
\begin{align*}
&\Big\| V \ast \Big( P_{\leq N} P_{N_1}  \<1b>  \, \cdot P_{\leq N} P_{N_2}  f  \Big)  P_{\leq N} P_{N_3} g\Big\|_{\X{s_2-1}{b_+-1}(\cJ)} \\
&\lesssim  (1+|\cJ|) \Big\| \langle \nabla \rangle^{s_2 -\frac{1}{2} + 4 (b_+-1)} \Big( V \ast \Big( P_{\leq N} P_{N_1}  \<1b>  \, \cdot P_{\leq N} P_{N_2}  f  \Big)  P_{\leq N} P_{N_3} g \Big)
\Big\|_{L_t^{\frac{4}{3}} L_x^{\frac{4}{3}}(\cJ\times \bT^3)} \\
&\lesssim  (1+|\cJ|) N_2^{s_2 -\frac{1}{2} + 4 (b_+-1)}   \| V \ast \big( P_{\leq N} P_{N_1}  \<1b>  \, \cdot P_{\leq N} P_{N_2}  f  \big) \|_{L_t^2 L_x^2(\cJ\times \bT^3)}  
\|  P_{\leq N} P_{N_3} g \|_{L_t^4 L_x^4(\cJ\times \bT^3)} \\
& \lesssim  (1+|\cJ|) N_2^{s_2 -\frac{1}{2} + 4 (b_+-1)-\beta} 
 \| \<1b> \, \|_{L_t^\infty L_x^\infty(\cJ\times \bT^3)}
  \| P_{N_2}  f  \|_{L_t^2 L_x^2(\cJ\times \bT^3)}  
\|  P_{N_3} g \|_{L_t^4 L_x^4(\cJ\times \bT^3)} \\
&\lesssim  (1+|\cJ|)^2 
N_1^{\frac{1}{2}+\kappa} N_2^{s_2 -\frac{1}{2} + 4 (b_+-1)-\beta-s_1} N_3^{\frac{1}{2}-s_1} 
\| \<1b> \, \|_{L_t^\infty \cC_x^{-\frac{1}{2}-\kappa}(\cJ\times \bT^3)}
\| f \|_{\X{s_1}{b}(\cJ)} \| g \|_{\X{s_1}{b}(\cJ)}.  
\end{align*}
The pre-factor can now be bounded as before. \\

\emph{Case 2.b: $N_1 \ll N_2$, $N_2\ll N_3$. }
Using the inhomogeneous Strichartz estimate (Lemma \ref{tools:lem_inhomogeneous_strichartz}), we have that 
\begin{align*}
&\Big\| V \ast \Big( P_{\leq N} P_{N_1}  \<1b>  \, \cdot P_{\leq N} P_{N_2}  f  \Big)  P_{\leq N} P_{N_3} g\Big\|_{\X{s_2-1}{b_+-1}(\cJ)} \\
&\lesssim  (1+|\cJ|) \Big\| \langle \nabla \rangle^{s_2 -\frac{1}{2} + 4 (b_+-1)} \Big( V \ast \Big( P_{\leq N} P_{N_1}  \<1b>  \, \cdot P_{\leq N} P_{N_2}  f  \Big)  P_{\leq N} P_{N_3} g \Big)
\Big\|_{L_t^{\frac{4}{3}} L_x^{\frac{4}{3}}(\cJ\times \bT^3)} \\
&\lesssim  (1+|\cJ|) \max(N_1,N_2)^{-\beta} N_3^{s_2 -\frac{1}{2} + 4 (b_+-1)}  \| \<1b> \, \|_{L_t^\infty L_x^\infty(\cJ\times \bT^3)}
  \| P_{N_2}  f  \|_{L_t^4 L_x^4(\cJ\times \bT^3)}  
\|  P_{N_3} g \|_{L_t^2 L_x^2(\cJ\times \bT^3)} \\
&\lesssim  (1+|\cJ|)^2   \max(N_1,N_2)^{-\beta} N_1^{\frac{1}{2}+\kappa} N_2^{\frac{1}{2}-s_1} N_3^{s_2 -\frac{1}{2} + 4 (b_+-1)-s_1}
\| \<1b> \, \|_{L_t^\infty \cC_x^{-\frac{1}{2}-\kappa}(\cJ\times \bT^3)}
\| f \|_{\X{s_1}{b}(\cJ)} \| g \|_{\X{s_1}{b}(\cJ)}.  
\end{align*}
The pre-factor can now be bounded by 
\begin{equation*}
\max(N_1,N_2)^{-\beta} N_1^{\frac{1}{2}+\kappa} N_2^{\frac{1}{2}-s_1} N_3^{s_2 -\frac{1}{2} + 4 (b_+-1)-s_1} 
\lesssim N_1^{\frac{1}{2}+\kappa-\beta} N_2^{\delta_1} N_3^{-\frac{1}{2}+\delta_1+\delta_2 + 4 (b_+-1)} 
\lesssim N_3^{2 \delta_1 + \delta_2 + \kappa - \beta},
\end{equation*}
which is acceptable. \\
\end{proof}

\begin{lemma}[Bilinear physical estimate]\label{phy:lem_bilinear_tool}
Let $\cJ \subseteq \bR$ be a bounded interval. If $\Psi, f \colon \cJ \times \bT^3 \rightarrow \bC$, then
\begin{equation*}
\| \big( V \ast \Psi \big) f \|_{\X{s_2-1}{b_+-1}(\cJ\times \bT^3)} \lesssim (1+|\cJ|)^{\frac{3}{2}} \| \Psi \|_{L_t^2 H_x^{-4\delta_1}(\cJ\times \bT^3)}
 \min\big( \| f \|_{L_t^\infty \cC_x^{\beta-\kappa}(\cJ \times \bT^3)}, \| f \|_{\X{s_1}{b}(\cJ)} \big). 
\end{equation*}
In the frequency-localized version of this estimate we also gain an $\eta^\prime$-power in the maximal frequency-scale. 
\end{lemma}

Lemma \ref{phy:lem_bilinear_tool} can be combined with our bound on $\<1p> \paraeq w_N$ in the stability theory (see Section \ref{section:stability}). In the local theory, its primary application is isolated in the following corollary. 

\begin{corollary}\label{phy:cor_bilinear}
Let $\cJ \subseteq \bR$ be a bounded interval and let $w,Y\colon \cJ \times \bT^3 \rightarrow \bR$. Then, we have uniformly in $N\geq 1$ that  
\begin{align*}
& \Big \| V \ast \Big( P_{\leq N} \<1b> \paraeq  P_{\leq N} Y\Big)  P_{\leq N} \<3DN> \Big\|_{\X{s_2-1}{b_+-1}(\cJ)} \\
& \lesssim  (1+|\cJ|)^2  \| \, \<1b>\, \|_{L_t^\infty \cC_x^{-\frac{1}{2}+\kappa}(\cJ\times \bT^3)}\| Y \|_{\X{s_2}{b}(\cJ)} \Big\|    \<3DN> \Big\|_{L_t^\infty \cC_x^{\beta-\kappa}(\cJ\times \bT^3)}, \\
&\Big \| V \ast \Big( P_{\leq N} \<1b> \paraeq  P_{\leq N}  Y   \Big)  P_{\leq N} w  \Big\|_{\X{s_2-1}{b_+-1}(\cJ)}  \\
&\lesssim  (1+|\cJ|)^2  \| \, \<1b> \,  \|_{L_t^\infty \cC_x^{-\frac{1}{2}+\kappa}(\cJ\times \bT^3)} \| Y \|_{\X{s_2}{b}(\cJ)} \| w\|_{\X{s_1}{b}(\cJ)}. 
\end{align*}
\end{corollary}

\begin{proof}[Proof of Corollary \ref{phy:cor_bilinear}:]
We have that 
\begin{align*}
\|  P_{\leq N} \<1b> \paraeq  P_{\leq N} Y\|_{L_t^2 H_x^{-4 \delta_1}(\cJ\times \bT^3)} 
&\lesssim  |\cJ|^{\frac{1}{2}} \| \, \<1b>\,\|_{L_t^\infty \cC_x^{-\frac{1}{2}-\kappa}(\cJ \times \bT^3)} \| Y\|_{L_t^\infty H_x^{s_2}(\cJ \times \bT^3)}  \\
&\lesssim   |\cJ|^{\frac{1}{2}} \| \, \<1b>\,\|_{L_t^\infty \cC_x^{-\frac{1}{2}-\kappa}(\cJ \times \bT^3)} \| Y \|_{\X{s_2}{b}(\cJ)}. 
\end{align*}
Together with Lemma \ref{phy:cor_bilinear}, this implies the corollary. 
\end{proof}

\begin{proof}[Proof of Lemma \ref{phy:lem_bilinear_tool}:] 
Let $0 \leq \theta \ll \beta$ remain to be chosen. Using the inhomogeneous Strichartz estimate and (a weaker version of) the fractional product rule, we have that 
\begin{align*}
&\| \big( V \ast \Psi \big) f \|_{\X{s_2-1}{b_+-1}(\cJ\times \bT^3)} \\
&\lesssim  (1+|\cJ|) \| \langle \nabla \rangle^{s_2 - \frac{1}{2} + 4 (b_+-\frac{1}{2})} \big(  \big( V \ast \Psi \big) f \big) \|_{L_t^{\frac{4}{3}} L_x^{\frac{4}{3}}(\cJ\times \bT^3) } \\
&\lesssim  (1+|\cJ|)  \| \langle \nabla \rangle^{s_2 - \frac{1}{2} + 4 (b_+-\frac{1}{2})} \big( V \ast \Psi \big) \|_{L_t^{2} L_x^{\frac{4}{2-\theta}}(\cJ\times \bT^3) }
\| \langle \nabla \rangle^{s_2 - \frac{1}{2} + 4 (b_+-\frac{1}{2})} f \|_{L_t^4 L_x^{\frac{4}{1+\theta}}(\cJ \times \bT^3)}. 
\end{align*}
Using Sobolev embedding, the first factor is bounded by 
\begin{align*}
 \| \langle \nabla \rangle^{s_2 - \frac{1}{2} + 4 (b_+-\frac{1}{2})} \big( V \ast \Psi \big) \|_{L_t^{2} L_x^{\frac{4}{2-\theta}}(\cJ\times \bT^3) } 
&\lesssim  \| \langle \nabla \rangle^{s_2 - \frac{1}{2} + 4 (b_+-\frac{1}{2})+\frac{3\theta}{4}-\beta} \Psi  \|_{L_t^{2} L_x^{2}(\cJ\times \bT^3) } \\
&\lesssim \| \Psi \|_{L_t^{2} H_x^{-4 \delta_1}(\cJ\times \bT^3)}.
\end{align*}
Thus, it remains to present two different estimates of the second factor. By simply choosing $\theta=0$, we see that 
\begin{equation*}
\| \langle \nabla \rangle^{s_2 - \frac{1}{2} + 4 (b_+-\frac{1}{2})} f \|_{L_t^4 L_x^{4}(\cJ \times \bT^3)} \lesssim (1+|\cJ|)^{\frac{1}{4}} \| f \|_{L_t^\infty \cC_x^{\beta-\kappa}(\cJ \times \bT^3)},
\end{equation*}
which yields the first term in the minimum. Using H\"{o}lder's inequality in time and Strichartz estimates, we also have that 
\begin{align*}
\| \langle \nabla \rangle^{s_2 - \frac{1}{2} + 4 (b_+-\frac{1}{2})} f \|_{L_t^4 L_x^{\frac{4}{1+\theta}}(\cJ \times \bT^3)}
&\lesssim (1+|\cJ|)^{\frac{\theta}{4}} \| \langle \nabla \rangle^{s_2 - \frac{1}{2} + 4 (b_+-\frac{1}{2})} f \|_{L_t^{\frac{4}{1-\theta}} L_x^{\frac{4}{1+\theta}}(\cJ \times \bT^3)} \\
&\lesssim (1+|\cJ|)^{\frac{1}{4}} \| f \|_{\X{s_1}{b}(\cJ)}, 
\end{align*}
provided that 
\begin{equation*}
s_2 - \frac{1}{2} + 4 (b_+-\frac{1}{2}) + \frac{3}{2} - \frac{1-\theta}{4} - 3 \frac{1+\theta}{4} \leq s_1. 
\end{equation*}
The last condition can be satisfied by choosing $\theta=4\delta_1$, which also satisfies $\theta \ll \beta$. 
\end{proof}

\begin{proposition}\label{phy:prop_3b}
Let $\cJ \subseteq \bR$ be a bounded interval and let $f,g,h\colon J \times \bT^3$. Then, it holds that 
\begin{equation}
\begin{aligned}
&\sup_{N\geq 1} \bigg\|  V\ast \Big( P_{\leq N}f  \cdot P_{\leq N}  g \Big) 
 P_{\leq N} h \,\bigg\|_{\X{s_{2}-1}{b_+-1}(\cJ)} \\
&\lesssim  (1+|\cJ|)^2
\prod_{\varphi=f,g,h}\min\Big( \| \varphi \|_{L_t^\infty \cC_x^{\beta-\kappa}(\cJ\times \bT^3)}, \| \varphi \|_{\X{s_1}{b}(\cJ)}\Big). 
\end{aligned}
\end{equation}
In the frequency-localized version of this estimate  we also gain an $\eta^\prime$-power in the maximal frequency-scale. 
\end{proposition}

\begin{remark}
In applications of Lemma \ref{phy:prop_3b}, we will choose $f,g$, and $h$ as either $\<3DN>$, which is contained in $L_t^\infty \cC_x^{\beta-\kappa}$, or $w_N$, which is contained in $\X{s_1}{b}$. 
\end{remark}
\begin{proof}
Since the proof is relatively standard, we only present the argument when all functions $f,g$, and $h$ are placed in the same space. The intermediate cases follow from a combination of our arguments below. \\

\emph{Estimate for $L_t^\infty \cC_x^{\beta-\kappa}$:} Using the inhomogeneous Strichartz estimate (Lemma \ref{tools:lem_inhomogeneous_strichartz}) and $s_2 \leq 1$, we have that
\begin{align*}
& \bigg\|  V\ast \Big( P_{\leq N}f  \cdot P_{\leq N}  g \Big)  P_{\leq N} h \,\bigg\|_{\X{s_{2}-1}{b_+-1}(\cJ)} 
\lesssim \Big \|  V\ast \Big( P_{\leq N}f  \cdot P_{\leq N}  g \Big)  P_{\leq N} h \,\Big\|_{L_t^{2b_+} L_x^2(\cJ\times \bT^3)}  \allowdisplaybreaks[3] \\
&\lesssim  (1+|\cJ|)  \prod_{\varphi=f,g,h} \| \varphi \|_{L_t^\infty L_x^\infty (\cJ\times \bT^3)} 
\lesssim (1+|\cJ|) \prod_{\varphi=f,g,h} \| \varphi \|_{L_t^\infty \cC_x^{\beta-\kappa}(\cJ\times \bT^3)}. 
\end{align*}

\emph{Estimate for $\X{s_1}{b}(\cJ)$:} Let $0<\theta\ll 1$ remain to be chosen. Using the inhomogeneous Strichartz estimate (Lemma \ref{tools:lem_inhomogeneous_strichartz}), we have that 
\begin{align*}
& \Big\|  V\ast \Big( P_{\leq N}f  \cdot P_{\leq N}  g \Big)  P_{\leq N} h \,\Big\|_{\X{s_{2}-1}{b_+-1}(\cJ)} \\
&\lesssim  (1+|\cJ|)  \Big\|  \langle \nabla \rangle^{s_2-\frac{1}{2} +4 (b_+-\frac{1}{2})} \Big(  V\ast \Big( P_{\leq N}f  \cdot P_{\leq N}  g \Big)  P_{\leq N} h \Big) \,\Big\|_{L_t^{\frac{4}{3}} L_x^{\frac{4}{3}}(\cJ\times \bT^3)}
\\
&\lesssim   (1+|\cJ|) 
\Big\| \langle \nabla \rangle^{s_2-\frac{1}{2} +4 (b_+-\frac{1}{2})}  \Big(  V\ast \Big( P_{\leq N}f  \cdot P_{\leq N}  g \Big)  \Big) \Big\|_{L_t^{\frac{4}{2-\theta}}L_x^{\frac{4}{2-\theta}}(\cJ\times \bT^3)} \\
&\times 
\big\| \langle \nabla \rangle^{s_2-\frac{1}{2} +4 (b_+-\frac{1}{2})}   h \big\|_{L_t^{\frac{4}{1+\theta}}L_x^{\frac{4}{1+\theta}}(\cJ\times \bT^3)} . 
\end{align*}
Using Lemma \ref{phy:lem_bilinear_kt}, the first term is bounded by $ (1+|\cJ|) ^{\frac{2-\theta}{4}} \| f\|_{\X{s_1}{b}(\cJ)}\| g\|_{\X{s_1}{b}(\cJ)}$ as long as
\begin{equation}\label{phy:eq_cond_1}
2 \delta_1 + \delta_2 + 4 \big( b_+ -\frac{1}{2} \big) + \theta < \beta. 
\end{equation}
Using H\"{o}lder's inequality in the time-variable and the linear Strichartz estimate, we have that 
\begin{align*}
\big\| \langle \nabla \rangle^{s_2-\frac{1}{2} +4 (b_+-\frac{1}{2})}   h \big \|_{L_t^{\frac{4}{1+\theta}}L_x^{\frac{4}{1+\theta}}(\cJ\times \bT^3)} 
&\lesssim  (1+|\cJ|)^{\frac{\theta}{2}} \big\| \langle \nabla \rangle^{s_2-\frac{1}{2} +4 (b_+-\frac{1}{2})}   h \big \|_{L_t^{\frac{4}{1-\theta}}L_x^{\frac{4}{1+\theta}}(\cJ\times \bT^3)} \\
&\lesssim  (1+|\cJ|)^{\frac{1+\theta}{4}} \| h \|_{\X{s_1}{b}(\cJ)},
\end{align*}
provided that 
\begin{equation}\label{phy:eq_cond_2}
\frac{\theta}{2}> \delta_1 + \delta_2 + 4 \big( b_+ - \frac{1}{2} \big). 
\end{equation}
In order to satisfy both conditions \eqref{phy:eq_cond_1} and \eqref{phy:eq_cond_2}, we can choose $\theta=4 \delta_1$. 
\end{proof}

\subsection{Hybrid physical-RMT terms}\label{section:hybrid}

In this subsection, we estimate the remaining terms in $\Phy$.
Our estimates will be phrased as bounds on the operator norm of certain random operators.  In contrast to Proposition \ref{rmt:prop1} and Proposition \ref{rmt:prop2}, however, we will not need the moment method (from \cite{DNY20}). Instead, we will rely on Strichartz estimates and the estimates for the quartic stochastic object from Section \ref{section:quartic}.

\begin{proposition}\label{phy:prop_hybrid}
Let $T\geq 1$ and $p\geq 1$. Then, we have the following three estimates: 
\begin{align}\label{phy:estimate_hybrid_1}
&\Big\| \sup_{N\geq 1} \sup_{\cJ\subseteq [0,T]} \sup_{\| w\|_{\X{s_1}{b}(\cJ)}\leq 1} \Big\|  V \ast \Big( P_{\leq N} \<1b> \cdot P_{\leq N}\<3DN> \Big) P_{\leq N} w \Big\|_{\X{s_2-1}{b_+-1}(\cJ)} \Big\|_{L^p_\omega(\bP)} 
\lesssim T^3 p^2, \\
&\Big\| \sup_{N\geq 1} \sup_{\cJ\subseteq [0,T]} \sup_{\| w\|_{\X{s_1}{b}(\cJ)}\leq 1} \Big\|  V \ast \Big( P_{\leq N} \<1b> \paraneq P_{\leq N} w \Big) P_{\leq N}\<3DN>  \Big\|_{\X{s_2-1}{b_+-1}(\cJ)} \Big\|_{L^p_\omega(\bP)} 
\lesssim T^3 p^2,  \label{phy:estimate_hybrid_2}\\
&\Big\| \sup_{N\geq 1} \sup_{\cJ\subseteq [0,T]} \sup_{\| w\|_{\X{s_1}{b}(\cJ)}\leq 1} \Big\| V \ast \Big( P_{\leq N}\<3DN> \cdot P_{\leq N} w  \Big) \nparald P_{\leq N} \<1b> \Big\|_{\X{s_2-1}{b_+-1}(\cJ)}  \Big\|_{L^p_\omega(\bP)} \label{phy:estimate_hybrid_3} \\
&\lesssim T p^2. \notag  
\end{align}
\end{proposition}

\begin{remark}
In the frequency-localized versions of \eqref{phy:estimate_hybrid_1}, \eqref{phy:estimate_hybrid_2}, and \eqref{phy:estimate_hybrid_3}, we also gain an $\eta^\prime$-power of the maximal frequency-scale. Similar as in Proposition \ref{so4:prop} and Remark \ref{so4:rem}, we may also replace $\<3DN> $ by $\<3DNtau>$.  
\end{remark}

\begin{proof}
We first prove \eqref{phy:estimate_hybrid_1}, which is the easiest part. Using the inhomogeneous Strichartz estimate (Lemma \ref{tools:lem_inhomogeneous_strichartz}), $s_2-1<-s_1$, and the (dual of) the fractional product rule, we have that 
\begin{align*}
&\Big\|  V \ast \Big( P_{\leq N} \<1b> \cdot P_{\leq N}\<3DN> \Big) P_{\leq N} w \Big\|_{\X{s_2-1}{b_+-1}(\cJ)} \\
&\lesssim \Big\|  V \ast \Big( P_{\leq N} \<1b> \cdot P_{\leq N}\<3DN> \Big) P_{\leq N} w \Big\|_{L_t^{2b_+} H_x^{s_2-1}(\cJ)} \\
&\lesssim \Big\|  V \ast \Big( P_{\leq N} \<1b> \cdot P_{\leq N}\<3DN> \Big) P_{\leq N} w \Big\|_{L_t^{2b_+} H_x^{-s_1}(\cJ)} \\
&\lesssim T \Big\|  V \ast \Big( P_{\leq N} \<1b> \cdot P_{\leq N}\<3DN> \Big) \Big\|_{L_t^\infty \cC_x^{-s_1+\eta}([0,T] \times \bT^3)} \| w \|_{L_t^\infty H_x^{s_1}(\cJ\times \bT^3)}. 
\end{align*}
Using \eqref{so4:eq_estimate_3} in Proposition \ref{so4:prop}, this implies \eqref{phy:estimate_hybrid_1}. \\

We now turn to \eqref{phy:estimate_hybrid_2} and \eqref{phy:estimate_hybrid_3}, which are more difficult. The main step consists of the following estimate: For any  $M_1,N_1,K_1,K_2 \geq 1$, we have that 
\begin{equation}\label{phy:eq_hybrid_p1}
\begin{aligned}
&\bigg\| \sup_{N\geq1} \sup_{t\in [0,T]} \sup_{\| f\|_{H_x^{s_1}}, \| g \|_{H_x^{s_1}} \leq 1} 
\Big| \int_{\bT^3} V \ast \big( P_{M_1} P_{\leq N} \<3DN> \cdot P_{K_1} P_{\leq N} f \big)  P_{N_1} P_{\leq N} \<1b> \cdot P_{K_2} P_{\leq N} g \dx \Big| \bigg\|_{L^p_\omega(\bP)} \\
&\lesssim T^3  \max(K_1,K_2,N_1,M_1)^{-\eta} \Big(1 + 1\big\{ N_1 \sim K_2 \big\} M_1^{-\beta+\kappa+\eta} K_1^{-s_1+\eta} N_1^{\frac{1}{2}+\kappa-s_1} \Big) p^2. 
\end{aligned}
\end{equation}
For notational convenience, we now omit the multiplier $P_{\leq N}$. As will be evident from the proof, the same argument applies (uniformly in $N$) with the multiplier. The proof of \eqref{phy:eq_hybrid_p1} splits into two cases. The impatient reader may wish to skim ahead to Case 2.b, which contains the most interesting part of the argument. \\ 

\emph{Case 1: $M_1 \not \sim N_1$.} From Fourier support considerations, it follows that $\max(K_1,K_2) \gtrsim \max(N_1,M_1)$. Then, we estimate the integral in \eqref{phy:eq_hybrid_p1} by 
\begin{align}
&\Big| \int_{\bT^3} V \ast \big( P_{M_1}  \<3DN> \cdot P_{K_1}  f \big)  P_{N_1}  \<1b> \cdot P_{K_2}  g \dx \Big|  \notag \\
&\lesssim \sum_{L \lesssim \max(N_1,K_2)}\Big| \int_{\bT^3} (P_L V) \ast \big( P_{M_1}  \<3DN> \cdot P_{K_1}  f \big)  \cdot \widetilde{P}_L \big(  P_{N_1}  \<1b> \cdot P_{K_2}  g \big) \dx \Big|  \notag \\
&\lesssim \sum_{L \lesssim \max(N_1,K_2)} \| P_L V \|_{L_x^1}  \Big\| P_{M_1}  \<3DN> \cdot P_{K_1}  f  \Big\|_{L_x^2}  \Big\| \widetilde{P}_L \big(  P_{N_1}  \<1b> \cdot P_{K_2}  g \big) \dx \Big\|_{L_x^2}  \notag \\
&\lesssim M_1^{-\beta+\kappa} K_1^{-s_1} \Big\|   \<3DN> \Big\|_{\cC_x^{\beta-\kappa}}
\sum_{L \lesssim \max(N_1,K_2)} L^{-\beta}  \Big\| \widetilde{P}_L \big(  P_{N_1}  \<1b> \cdot P_{K_2}  g \big) \dx \Big\|_{L_x^2}. \label{phy:eq_hybrid_p2}
\end{align}
We now further split the argument into two subcases. \\

\emph{Case 1.a: $M_1 \not \sim N_1$, $K_2 \not \sim N_1$.} Then, we only obtain a non-trivial contribution if $L\sim \max(N_1,K_2)$. Using $\max(K_1,K_2) \gtrsim \max(M_1,N_1) \geq N_1$, we obtain that 
\begin{align*}
\eqref{phy:eq_hybrid_p2} &\lesssim  M_1^{-\beta+\kappa} K_1^{-s_1}  \max(K_2,N_1)^{-\beta}  K_2^{-s_1} N_1^{\frac{1}{2}+\kappa} 
\Big\|   \<3DN> \Big\|_{\cC_x^{\beta-\kappa}} \big\| \, \<1b> \, \big\|_{\cC_x^{-\frac{1}{2}-\kappa}} \\
&\lesssim  M_1^{-\beta+\kappa} K_1^{-\eta} K_2^{-\eta} N_1^{\frac{1}{2}+\kappa+\eta-\beta-s_1} 
\Big\|   \<3DN> \Big\|_{\cC_x^{\beta-\kappa}} \big\| \, \<1b> \, \big\|_{\cC_x^{-\frac{1}{2}-\kappa}}.
\end{align*}
The pre-factor is bounded by $(M_1 K_1 K_2 N_1)^{-\eta}$, which is acceptable. \\

\emph{Case 1.b: $M_1 \not \sim N_1$, $K_2 \sim N_1$.} In this case, the worst case corresponds to $L \sim 1$. Using only H\"{o}lder's inequality, we obtain that 
\begin{align*}
\eqref{phy:eq_hybrid_p2} &\lesssim 1\big\{ K_2 \sim N_1 \big\} M_1^{-\beta+\kappa} K_1^{-s_1} N_1^{\frac{1}{2}+\kappa-s_1}  \Big\|   \<3DN> \Big\|_{\cC_x^{\beta-\kappa}} \big\| \, \<1b> \, \big\|_{\cC_x^{-\frac{1}{2}-\kappa}}.
\end{align*}
This case is responsible for the second summand in \eqref{phy:eq_hybrid_p1}. \\

\emph{Case 2: $M_1 \sim N_1$}. This case is more delicate and requires the estimates on the quartic stochastic objects from Section \ref{section:quartic}. Inspired by the uncertainty principle, we decompose 
\begin{align*}
&\Big| \int_{\bT^3} V \ast \big( P_{M_1}  \<3DN> \cdot P_{K_1}  f \big)  P_{N_1}  \<1b> \cdot P_{K_2}  g \dx \Big|   \\
&\leq \Big| \int_{\bT^3} (P_{\ll N_1} V) \ast \big( P_{M_1}  \<3DN> \cdot P_{K_1}  f \big)  P_{N_1}  \<1b> \cdot P_{K_2}  g \dx \Big|   \\
&+\Big| \int_{\bT^3} (P_{\gtrsim N_1} V) \ast \big( P_{M_1}  \<3DN> \cdot P_{K_1}  f \big)  P_{N_1}  \<1b> \cdot P_{K_2}  g \dx \Big| . 
\end{align*}
We estimate both terms separately and hence divide the argument into two subcases. \\

\emph{Case 2.a: $M_1 \sim N_1$, contribution of $P_{\ll N_1} V$.} For this term, we only obtain a non-trivial contribution if $K_1 \sim K_2 \sim N_1$. Using H\"{o}lder's inequality and Young's convolution inequality, we obtain that 
\begin{align*}
 &\Big| \int_{\bT^3} (P_{\ll N_1} V) \ast \big( P_{M_1}  \<3DN> \cdot P_{K_1}  f \big)  P_{N_1}  \<1b> \cdot P_{K_2}  g \dx \Big|   \\
&\lesssim  1\big\{ K_1 \sim K_2 \sim M_1 \sim N_1 \big\} \| P_{\ll N_1} V \|_{L_x^1} \Big\|  P_{M_1}  \<3DN>  \Big \|_{L_x^\infty}  \| P_{K_1}  f  \|_{L_x^2} 
\big \|   P_{N_1}  \<1b> \big\|_{L_x^\infty} \|  P_{K_2}  g  \|_{L_x^2} \\
&\lesssim  1\big\{ K_1 \sim K_2 \sim M_1 \sim N_1 \big\} N_1^{\frac{1}{2} + 2 \kappa - \beta - 2 s_1} 
\Big\|   \<3DN> \Big\|_{\cC_x^{\beta-\kappa}} \big\| \, \<1b> \, \big\|_{\cC_x^{-\frac{1}{2}-\kappa}}.
\end{align*}
The pre-factor is easily bounded by (and generally much smaller than)  $(M_1 K_1 K_2 N_1)^{-\eta}$. \\

\emph{Case 2.b: $M_1 \sim N_1$, contribution of $P_{\gtrsim N_1} V$.}
By expanding the convolution with the interaction potential, we obtain that 
\begin{align*}
 &\Big| \int_{\bT^3} (P_{\gtrsim  N_1} V) \ast \big( P_{M_1}  \<3DN> \cdot P_{K_1}  f \big)  P_{N_1}  \<1b> \cdot P_{K_2}  g \dx \Big|   \\
 &\leq \int_{\bT^3} |P_{\gtrsim N_1} V(y)|   \bigg|  \int_{\bT^3} \big( P_{K_1}  f (x-y)  \cdot P_{K_2}  g(x)  \big) \cdot \Big( P_{M_1}  \<3DN> (t,x-y) \cdot  P_{N_1}  \<1b> (t,x) \Big) \dx \bigg|  \dy  \\
&\lesssim \| P_{\gtrsim N_1} V(y) \|_{L_y^1}  
 \cdot  \sup_{y \in \bT^3} \| \langle \nabla_x \rangle^{\frac{1}{2}-\beta+2 \kappa} \big(  P_{K_1}  f (x-y)  \cdot P_{K_2}  g(x) \big) \|_{L_x^1}  \\
&\times \sup_{y \in \bT^3} \Big \| P_{M_1}  \<3DN> (t,x-y) \cdot  P_{N_1}  \<1b> (t,x) \Big\|_{\cC_x^{-\frac{1}{2}+\beta-\kappa}} \\
&\lesssim N_1^{-\beta} K_1^{-\eta} K_2^{-\eta}  \sup_{y \in \bT^3} \Big \| P_{M_1}  \<3DN> (t,x-y) \cdot  P_{N_1}  \<1b> (t,x) \Big\|_{\cC_x^{-\frac{1}{2}+\beta-\kappa}}  . 
\end{align*}
Using Proposition \ref{so4:prop}, this contribution is acceptable. We note that the pre-factor $N_1^{-\beta}$ is essential, since Proposition \ref{so4:prop} is not uniformly bounded over all frequency scales. \\

By combining Case 1 and Case 2, we have finished the proof of \eqref{phy:eq_hybrid_p1}. It remains to show that \eqref{phy:eq_hybrid_p1} implies \eqref{phy:estimate_hybrid_2} and \eqref{phy:estimate_hybrid_3}.
To simplify the notation, we denote the expression inside the $L_\omega^p$-norm in \eqref{phy:eq_hybrid_p1} by 
\begin{equation}
\mathcal{A}(K_1,K_2,M_1,N_1) \defe \sup_{t\in [0,T]} \sup_{\| f\|_{H_x^{s_1}}, \| g \|_{H_x^{s_1}} \leq 1} 
\Big| \int_{\bT^3} V \ast \big( P_{M_1}   \<3DN> \cdot P_{K_1} f \big)  P_{N_1} \<1b> \cdot P_{K_2}g \dx \Big|. 
\end{equation} 

To see  \eqref{phy:estimate_hybrid_2}, we use the self-adjointness of $V$, duality, and $ s_1 < 1- s_2$,  which leads to  
\begin{align*}
& \Big\|  V \ast \Big(  \<1b> \paraneq   w \Big) \<3DN>  \Big\|_{H_x^{s_2-1}} \\
&\leq \sum_{K_1,K_2,M_1,N_1} 1\big\{ K_2 \not \sim N_1 \big\}  \Big\|  P_{K_1} \Big( V \ast \big(  P_{N_1} \<1b> \cdot  P_{K_2}  w \big) P_{M_1} \<3DN> \Big)  \Big\|_{H_x^{s_2-1}} \\
&\lesssim \Big( \sum_{K_1,K_2,M_1,N_1} 1\big\{ K_2 \not \sim N_1 \big\} \mathcal{A}(K_1,K_2,M_1,N_1) \Big) \| w \|_{H_x^{s_1}}. 
\end{align*}
After using the inhomogeneous Strichartz estimate and \eqref{phy:eq_hybrid_p1}, this completes the argument. 

Finally, we turn to \eqref{phy:estimate_hybrid_3}. Using duality, we have that
\begin{align*}
&\Big\| V \ast \Big(  \<3DN> \cdot  w  \Big) \nparald  \<1b> \Big\|_{H_x^{s_2-1}} \\
&\leq \sum_{K_1,K_2,M_1,N_1} 1\big\{ \max(M_1,K_1) \geq N_1^\epsilon \big\} \Big\| P_{K_2} \Big( V \ast \Big(  P_{M_1} \<3DN> \cdot P_{K_1} w  \Big)    P_{N_1} \<1b> \Big) \Big\|_{H_x^{s_2-1}} \\
&\lesssim \sum_{K_1,K_2,M_1,N_1} 1\big\{ \max(M_1,K_1) \geq N_1^\epsilon \big\} K_2^{s_1+s_2-1} \Big\| P_{K_2} \Big( V \ast \Big(  P_{M_1} \<3DN> \cdot P_{K_1} w  \Big)    P_{N_1} \<1b> \Big) \Big\|_{H_x^{-s_1}} \\
&\lesssim  \sum_{K_1,K_2,M_1,N_1} 1\big\{ \max(M_1,K_1) \geq N_1^\epsilon \big\} K_2^{s_1+s_2-1}   \mathcal{A}(K_1,K_2,M_1,N_1) \| w\|_{H_x^{s_1}}.  
\end{align*}
We now note that $ \max(M_1,K_1) \geq N_1^\epsilon $ implies 
\begin{equation*}
1\big\{ N_1 \sim K_2 \big\} M_1^{-\beta+\kappa+\eta} K_1^{-s_1+\eta} K_2^{s_1+s_2-1} N_1^{\frac{1}{2}+\kappa-s_1}  
 \lesssim N_1^{-\epsilon \min( \beta- \kappa -\eta, 1/2-\delta_1-\eta)} N_1^{\kappa+\delta_2} \lesssim 1. 
\end{equation*}
In the last inequality, we used the parameter conditions \eqref{intro:eq_parameter_condition}. We also emphasize that the factor $K_2^{s_1+s_2-1}$ is essential for this inequality. Using inhomogeneous Strichartz estimate and \eqref{phy:eq_hybrid_p1}, we then obtain the desired estimate.
\end{proof}

\section{From free to Gibbsian random structures}\label{section:ftg}

In the previous four sections, we proved several estimates for stochastic objects, random matrices, and para-controlled structures based on $\,\bluedot\,$. In Section \ref{section:local}, these estimates were used to prove the local convergence of the truncated dynamics as $N$ tends to infinity. Unfortunately, the object $\,\bluedot\,$ only exists on the ambient probability space and the global theory requires (intrinsic) estimates for $\purpledot$ with respect to the Gibbs measure. If the desired estimate does not rely on the invariance of $\mup_M$ under the nonlinear flow, however, we can use Theorem \ref{theorem:measures} to replace the Gibbs measure $\mup_M$ by the reference measure $\nup_M$. In particular, this works for stochastic objects only depending on the linear evolution of $\purpledot$, such as $\<1p>$ or $\<3Np>$. Once we are working with the reference measure $\nup_M$, we can then use that 
\begin{equation*}
\nup_M = \Law_{\bP}\big( \, \bluedot + \reddotM\,\big). 
\end{equation*}
Since $\reddotM$ has spatial regularity $1/2+\beta-$, we expect that our estimates for $\bluedot$ will imply the same estimates for $\purpledot\,$. As a result, this section contains no inherently new estimates and only combines our previous bounds. \\

\subsection{The Gibbsian cubic stochastic object}\label{section:cubic_Gibbs}

This subsection should be seen as a warm-up for Section \ref{section:comparing} below. We explore the relationship between the two cubic stochastic objects
\begin{equation*}
\<3DNp> \qquad \text{and} \qquad \<3DN>. 
\end{equation*}
This is already sufficient for the structured local well-posedness in Proposition \ref{global:prop_lwp} on the support of the Gibbs measure. It will also be needed in the proof of several propositions and lemmas in Section \ref{section:multilinear_Gibbs} below. 

\begin{proposition}\label{ftg:prop_cubic}
Let $A\geq 1$, let $T\geq 1$, and let $\zeta=\zeta(\epsilon,s_1,s_2,\kappa,\eta,\eta^\prime,b_+,b)>0$ be sufficiently small. There exist two Borel sets $\Theta_{\blue}^{\cub}(A,T), \Theta_{\red}^{\cub}(A,T) \subseteq \cH_x^{-1/2-\kappa}(\bT^3)$ satisfying
\begin{equation*}
\bP\Big( \, \bluedot \in \Theta_{\blue}^{\cub}(A,T) \text{ and } \reddotM \Theta_{\red}^{\cub}(A,T)\Big) \geq 1 - \zeta^{-1} \exp(\zeta A^\zeta) 
\end{equation*}
for all $M\geq 1$ and such that the following holds for all $ \bluedot \in \Theta_{\blue}^{\cub}(A,T) \text{ and } \reddotM \Theta_{\red}^{\cub}(A,T)$: 
For all $N\geq 1$, there exist $ \HNpb, \HNbp \in \LM([0,T])$ and $\YNpb,\YNbp \in \X{s_2}{b}([0,T])$ satisfying the identities
\begin{align*}
\<3DNp> &= \<3DN> + P_{\leq N} \Duh \Big[ \PCtrl\Big( \HNpb, P_{\leq N} \<1b>\Big)\Big] + \YNpb, \\
\<3DN> &= \<3DNp> + P_{\leq N} \Duh \Big[ \PCtrl\Big( \HNbp, P_{\leq N} \<1p>\Big)\Big] + \YNbp.
\end{align*}
and the estimates 
\begin{align*}
 \| \HNbp \|_{\LM([0,T])},  \| \HNpb \|_{\LM([0,T])}\leq T^2 A 
 \end{align*}
 and 
 \begin{align*}
  \| \YNbp \|_{\X{s_2}{b}([0,T])}, \| \YNpb \|_{\X{s_2}{b}([0,T])}\leq T^3 A. 
\end{align*}
Furthermore, in the frequency-localized version of this estimate, we gain an $\eta^\prime$-power of the maximal frequency-scale. 
\end{proposition}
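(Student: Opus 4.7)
The plan is to derive both identities through an algebraic expansion of the cubic nonlinearity using $P_{\leq N}\<1p> = P_{\leq N}\<1b> + P_{\leq N}\<1r>$, then to re-organize each mixed term into a para-controlled contribution (captured by $\PCtrl$) plus a higher-regularity remainder (absorbed into $Y_N$).

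First, expanding the trinomial $(P_{\leq N}\<1b>+P_{\leq N}\<1r>)^2(P_{\leq N}\<1b>+P_{\leq N}\<1r>)$ inside the Hartree nonlinearity and distributing the renormalization constants $a_N\widehat{V}(0)$ and $2\MN$ across the $\<1b>$ and $\<1r>$ factors, I obtain (after applying $\Duh$) the algebraic identity
\begin{equation*}
\<3DNp> - \<3DN> = 2\,\<3DNbrb> + \<3DNrrb> + \<3DNbbr> + 2\,\<3DNbrr> + \<3DNrrr},
\end{equation*}
where the $bbr$- and $brb$-objects inherit the appropriate pieces of the renormalization (as $\<2N>\cdot \<1r>$ and $V\ast(\<1b>\<1r>)\cdot\<1b> - \MN\<1r>$, respectively). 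The fully smooth terms $\<3DNrrb>,\, \<3DNbrr>,\, \<3DNrrr}$ will be placed directly into $Y_N^{p \to b}$ using that $\<1r>$ has spatial regularity $1/2+\beta-\kappa > s_2$ under $\bP$ by Theorem \ref{theorem:measures}; crude Strichartz and the inhomogeneous bound in Lemma \ref{tools:lem_inhomogeneous_strichartz} yield the $\X{s_2}{b}$-control with moments $\lesssim p^{k/2}$ in $\bluedot$ and $\reddotM$.

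Second, the two remaining terms $2\<3DNbrb>$ and $\<3DNbbr>$ contain a high-frequency $P_{\leq N}\<1b>$-factor and therefore admit a genuinely para-controlled decomposition. Following the pattern of the definition of $X_N$ in \eqref{lwp:eq_XN}, I split each according to the operators $\paraboxld / \nparaboxld$ (for $\<3DNbrb>$) and $\parald/\nparald$ (for $\<3DNbbr>$). The $\paraboxld$- and $\parald$-portions are in the image of $\PCtrl(\cdot, P_{\leq N}\<1b>)$ by Lemma \ref{para:lemma_obj_1} and Lemma \ref{para:lemma_obj_2}: these lemmas produce canonical kernels $H_N^{p\to b} \in \LM([0,T])$ with bounds that factorize through $\|\<1r>\|_{\X{s_2}{b}}$ and $\|\<1b>\|_{\X{-s_2}{b}}$, each of which has Gaussian moments. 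The $\nparaboxld$- and $\nparald$-complements are exactly of the form appearing in case \eqref{local:item_master_1} and \eqref{local:item_master_2} of Proposition \ref{local:prop_master} (with $\<1r>$ playing the role of a type-$Y$ function after normalization, as justified by an analogue of Lemma \ref{local:lem_type_conversion} for $\<1r>$), so they sit in $\X{s_2-1}{b_+-1}$ on a set of probability $\geq 1-\zeta^{-1}\exp(-\zeta A^\zeta)$. The energy estimate (Lemma \ref{tools:lem_energy}) then absorbs the Duhamel into $\X{s_2}{b}$ and places the result into $Y_N^{p\to b}$.

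Third, the reverse identity for $\<3DN>$ in terms of $\<3DNp>$ follows from the first by writing $P_{\leq N}\<1b> = P_{\leq N}\<1p> - P_{\leq N}\<1r>$ and re-running exactly the same expansion, with $\<1p>$ now serving as the high-frequency reference inside $\PCtrl$; the algebraic identity is symmetric, only the sign pattern changes. The frequency-localized refinement with an $\eta'$-gain in the maximal frequency is inherited directly from the corresponding refinements of Lemma \ref{para:lemma_obj_1}, Lemma \ref{para:lemma_obj_2}, and Proposition \ref{local:prop_master} (see Remark \ref{local:remark_master}), since every step is a finite sum of multilinear operations carrying such a gain.

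The main obstacle is the bookkeeping: five mixed cubic stochastic objects, each splitting into a paradifferential piece and its complement, must be routed either into $\PCtrl(H_N, P_{\leq N}\<1b>)$ (respectively $\PCtrl(H_N, P_{\leq N}\<1p>)$) or into the remainder $Y_N$, and then the corresponding norms must be controlled uniformly in $N$ and $M$ with the stated probabilistic decay. No genuinely new multilinear estimate is required beyond those in Sections \ref{section:stochastic_object}--\ref{section:physical}; the delicate point is verifying that the renormalization distributes correctly so that the $\MN$-counterterm built into $\<3DNp>$ exactly cancels the $\<1b>\cdot\<1b>\to \text{low}$ resonances of $2\<3DNbrb>$, leaving a finite expression, and that the $\<1r>$-content of each term falls above the threshold $s_2$ so that the remainder $Y_N$ has the claimed regularity.
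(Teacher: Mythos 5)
Your algebraic expansion matches the paper exactly, and the overall plan of routing each mixed object into a $\PCtrl(\cdot, P_{\leq N}\<1b>)$ piece or into $Y_N$ is correct. But there are two genuine mistakes in how you sort the terms.

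First, $\<3DNrrb>$ is \emph{not} fully smooth and cannot be placed directly into $Y_N^{\scriptp\to\scriptb}$. Its rightmost factor is $P_{\leq N}\<1b>$, which has regularity $-1/2-\kappa$; after convolving $V$ with the two red factors and then applying $\Duh$, the best one can expect for the raw object is roughly $\X{1/2-\kappa}{b}$, which is strictly below the threshold $s_2 = 1/2+\delta_2$. The paper therefore splits $\<3DNrrb>$ into a $\parald$ part (absorbed into $\HNpb$ via Lemma~\ref{para:lemma_obj_2}) and a $\nparald$ part (absorbed into $\YNpb$ via Proposition~\ref{phy:prop_1b}, since the convolution output lives at regularity $\leq N_3^\epsilon$ only in the $\parald$ case). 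Your placement would leave the low$\times$high piece of $\<3DNrrb>$ unaccounted for.

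Second, you propose splitting $\<3DNbrb>$ using $\paraboxld/\nparaboxld$ and invoking case~\eqref{local:item_master_1} of Proposition~\ref{local:prop_master}. But that case is designed for $(\,\<1b>\,, \<3D>; \,\<1b>\,)$ and $(\,\<1b>\,, X; \,\<1b>\,)$, where the inner argument carries a stochastic or para-controlled structure exploited by the double-Duhamel trick. Here the inner factor is $\<1r>$, which has no such structure; it is naturally of type $Y$, and the $(\,\<1b>\,, Y; \,\<1b>\,)$ combination falls under case~\eqref{local:item_master_2} of the master estimate, which is phrased with $\nparald$ rather than $\nparaboxld$. This is why the paper uses $\parald/\nparald$ for $\<3DNbrb>$, producing $H_N$ via Lemma~\ref{para:lemma_obj_2} (which handles a $-s_2 \times s_2 \to$ low-frequency product) and controlling $\nparald\<3DNbrb>$ via Proposition~\ref{rmt:prop2}. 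With $\paraboxld$ the complement $\nparaboxld\<3DNbrb>$ would contain interactions in which $\<1r>$ sits at high frequency while $V\ast(\<1b>\cdot\<1r>)$ is still at low frequency, and none of the cited estimates directly covers that configuration.

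Once these two sorting errors are corrected — split both $\<3DNbrb>$ and $\<3DNrrb>$ via $\parald/\nparald$ with the outer $\<1b>$ as the high-frequency factor — the rest of your proposal lines up with the paper: Lemma~\ref{para:lemma_obj_2} produces the kernels, Theorem~\ref{theorem:measures} provides the moment bound for $\|\<1r>\|_{\X{s_2}{b}}$, and the five $\nparald$/plain remainders are bounded in $\X{s_2}{b}$ by the RMT and physical estimates (or equivalently by the master estimate after converting $\<1r>$ to type $Y$). The reverse identity by swapping $\<1b> \leftrightarrow \<1p>$ and the $\eta'$-gain in the frequency-localized version go through as you describe.
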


\begin{remark}
The results in Proposition \ref{ftg:prop_cubic} do not yield a bound on $\<3DNp>$ in $L_t^\infty \cC_x^{\beta-\kappa}$, since $\X{s_2}{b}$ does not embed into $L_t^\infty \cC_x^{\beta-\kappa}$ and we do not state any additional information on $Y_N$. However, such an estimate is possible and only requires the translation invariance of the law of $(\,\bluedot,\reddotM)$, which is a consequence of \cite[Theorem 1.4]{BB20a}. 
\end{remark}

Before we start with the proof of Proposition \ref{ftg:prop_cubic}, we record and prove the following corollary. 

\begin{corollary} \label{ftg:cor_cubic}
Let $A\geq 1$, let $T\geq 1$, let $\alpha>0$ be a large absolute constant, and let  $\zeta=\zeta(\epsilon,s_1,s_2,\kappa,\eta,\eta^\prime,b_+,b)>0$ be sufficiently small. Then, there exists a Borel set $\Theta_{\purple}^{\bil}(A,T) \subseteq \cH_x^{-1/2-\kappa}(\bT^3)$ satisfying
\begin{equation}\label{ftg:eq_bilinear_probabilistic}
\mup_M\Big( \Theta_{\purple}^{\bil}(A,T) \Big), \nup_M\Big( \Theta_{\purple}^{\bil}(A,T) \Big)\geq 1 - \zeta^{-1} \exp( \zeta A^{\zeta}) 
\end{equation}
for all $M\geq 1$ and such that the following holds for all $\purpledot \in \Theta_{\purple}^{\bil}(A,T)$:

For all intervals $\cJ\subseteq [0,T]$ and $w\in \X{s_1}{b}(\cJ)$, it holds that 
\begin{equation}\label{ftg:eq_bilinear}
\sum_{L_1,L_2} \Big\| P_{L_1} \<3DNp> \cdot P_{L_2} w \Big\|_{L_t^2 H_x^{-4\delta_1}(\cJ \times \bT^3)} \leq T^{\alpha} A \| w \|_{\X{s_1}{b}(\cJ)}. 
\end{equation}
\end{corollary}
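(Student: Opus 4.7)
The plan is to reduce the probabilistic bound to the ambient space $\bP$, apply the structural decomposition of Proposition \ref{ftg:prop_cubic} to $\<3DNp>$, and control each resulting piece with tools already established. First, I would use the quantitative absolute continuity $\mup_M \ll \nup_M$ from Theorem \ref{theorem:measures}, adjusting $\zeta$, to reduce \eqref{ftg:eq_bilinear_probabilistic} to the analogous estimate under $\nup_M$. The representation $\nup_M = \Law_\bP(\, \bluedot + \reddotM)$ then lets me work intrinsically on $(\Omega,\mathcal{F},\bP)$. I would restrict to a large-probability event on which $\,\bluedot \in \Theta_\blue^{\cub}(A,T)$ and $\,\reddotM \in \Theta_\red^{\cub}(A,T)$ from Proposition \ref{ftg:prop_cubic}, together with the standard tail bounds $\| \, \<1b>\, \|_{L^\infty_t \cC_x^{-1/2-\kappa}} \leq T A$, $\|\, \<3DN>\,\|_{L^\infty_t \cC_x^{\beta-\kappa}} \leq T^2 A$ (Proposition \ref{so3:prop}), and $\|\, \reddotM\,\|_{\cH_x^{1/2+\beta-\kappa}} \leq T A$ (Theorem \ref{theorem:measures}).

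On this event, Proposition \ref{ftg:prop_cubic} yields the decomposition
\[
\<3DNp> \,=\, \<3DN> \,+\, P_{\leq N} \Duh\bigl[\PCtrl(\HNpb,\, P_{\leq N}\,\<1b>)\bigr] \,+\, \YNpb,
\]
with $\|\HNpb\|_{\LM([0,T])} \leq T^2 A$ and $\|\YNpb\|_{\X{s_2}{b}([0,T])} \leq T^3 A$. Writing the three summands as $Z^{(1)}, Z^{(2)}, Z^{(3)}$, I would split the bilinear sum in \eqref{ftg:eq_bilinear} into three contributions, each further split into the resonant regime $L_1 \sim L_2$ and the non-resonant regime $L_1 \not\sim L_2$. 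The $Z^{(1)}$-contribution is soft because $\<3DN>$ has spatial regularity $\beta - \kappa \gg 4\delta_1$: Bernstein on the lower-frequency factor combined with the embedding $\X{s_1}{b}(\cJ) \hookrightarrow L_t^\infty H_x^{s_1}$ and H\"older in time yields dyadic summability with only a polynomial loss in $T$. The $Z^{(3)}$-contribution is equally soft, since $s_1 + s_2 > 1$, so a straightforward Sobolev product estimate (as in the type-$Y$ portion of the proof of Lemma \ref{local:lem_type_conversion}) closes both regimes.

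The main obstacle will be $Z^{(2)}$, where both factors sit near regularity $1/2$ and one must truly exploit the para-controlled structure. Here I would invoke Lemma \ref{local:lem_type_conversion}: after normalization by a power of $TA$, the function $Z^{(2)}$ is of type $w$ with respect to $\,\bluedot\,$, and the type-$w$ bilinear estimate built into Definition \ref{local:def_types} directly supplies the resonant bound with $\<1b>$ in place of $\<1p>$. To convert from $\<1b>$ to $\<1p>$, I would decompose $\<1p> = \<1b> + \<1r>$: the $\,\<1b>\,$-piece has just been handled, while the $\,\<1r>\,$-piece is controlled through the explicit $\cH_x^{1/2+\beta-\kappa}$-bound on $\,\reddotM\,$ by a direct Sobolev product estimate. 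The non-resonant regime for $Z^{(2)}$ follows from the inhomogeneous Strichartz estimate (Lemma \ref{tools:lem_inhomogeneous_strichartz}) and Lemma \ref{para:lem_basic} applied to the PCtrl structure, exploiting the frequency gap between the two factors. Summing the frequency-localized bounds, with the $\eta^\prime$-gain from the frequency-localized version of Proposition \ref{ftg:prop_cubic} absorbing the logarithmic losses from the dyadic summation, delivers \eqref{ftg:eq_bilinear} with constant $T^\alpha A$ for a sufficiently large absolute $\alpha$.
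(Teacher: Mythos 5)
Your skeleton — reduce to $\bP$ via absolute continuity, decompose $\<3DNp>$ through Proposition~\ref{ftg:prop_cubic}, and bound each summand's product with $w$ — is the right structure and matches the paper's proof, which is a one-line citation of Proposition~\ref{so3:prop}, Lemma~\ref{phy:lem_bilinear_kt}, and Proposition~\ref{ftg:prop_cubic}. However, the treatment of the para-controlled piece $Z^{(2)}$ and the remainder $Z^{(3)}$ invokes the wrong bilinear estimate. The type-$w$ condition in Definition~\ref{local:def_types}, and likewise the type-$Y$ computation inside the proof of Lemma~\ref{local:lem_type_conversion}, bounds $\sum_{L_1 \sim L_2} \| P_{L_1} \<1b> \cdot P_{L_2} \varphi \|_{L_t^2 H_x^{-4\delta_1}}$, i.e.~the product of the \emph{rough Gaussian evolution} $\<1b>$ with $\varphi$. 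Knowing that $Z^{(2)}$ is of type $w$ therefore controls $\<1b> \cdot Z^{(2)}$, but what you actually need is $Z^{(2)} \cdot w$ for an arbitrary $w \in \X{s_1}{b}(\cJ)$; there is no $\<1b>$-factor in the product, and the two bilinear forms are not the same object. Your remark about obtaining the bound ``with $\<1b>$ in place of $\<1p>$'' points to the same confusion: the quantity in \eqref{ftg:eq_bilinear} is the cubic object $\<3DNp>$ times $w$, not $\<1p>$ times $w$, so the decomposition $\<1p> = \<1b> + \<1r>$ plays no role at this stage.

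There is also a numerical error for $Z^{(3)} = \YNpb$: you assert $s_1 + s_2 > 1$, but $s_1 + s_2 = 1 - \delta_1 + \delta_2 < 1$ since $\delta_2 \ll \delta_1$ in \eqref{intro:eq_parameter_condition}; and in any case a Sobolev product bound into $H_x^{-4\delta_1}$ in three dimensions would require roughly $s_1 + s_2 \geq 3/2 - 4\delta_1$, which fails by about $1/2$, so the resonant regime $L_1 \sim L_2$ cannot be closed by a naive product estimate. The missing ingredient in both places is Lemma~\ref{phy:lem_bilinear_kt}, the bilinear Klainerman--Tataru-type estimate from Section~\ref{section:physical}: it furnishes the dispersive high$\times$high$\rightarrow$low gain for products of two general $\X{s}{b}$-functions, which neither a Sobolev product nor the type-$w$/type-$Y$ conditions supply, and it handles all three summands of $\<3DNp>$ uniformly once Proposition~\ref{so3:prop} and Proposition~\ref{ftg:prop_cubic} place them in the appropriate $\X{s}{b}$-spaces.
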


\begin{proof}[Proof of Corollary \ref{ftg:cor_cubic}:]
We simply define $\Theta_{\purple}^{\bil}(A,T) $ as set the of initial data $\, \purpledot \in \cH_x^{-1/2-\kappa}(\bT^3)$ where \eqref{ftg:eq_bilinear} holds for a countable but dense subset of $\X{s_1}{b}(\bR)$, which is Borel measurable, and it remains to prove the probabilistic estimate \eqref{ftg:eq_bilinear_probabilistic}. Using Theorem \ref{theorem:measures}, it suffices to prove that 
\begin{equation*}
\bP( \, \bluedot + \reddotM \in  \Theta_{\purple}^{\bil}(A,T) \Big) \geq 1 - \zeta^{-1} \exp( \zeta A^\zeta). 
\end{equation*}
This follows directly from Proposition \ref{so3:prop}, Lemma \ref{phy:lem_bilinear_kt}, and Proposition \ref{ftg:prop_cubic}. 
\end{proof}

We now turn to the proof of Proposition \ref{ftg:prop_cubic}. The argument relies on the multi-linearity of the stochastic objects in the initial data. In order to use the decomposition of $\, \purpledot\,$, we define mixed cubic stochastic objects.  In Section \ref{section:gwp}, we defined stochastic objects in $\purpledot$ instead of $\bluedot$, which had the exact same renormalization constants and multipliers.  In the proof of Proposition \ref{ftg:prop_cubic}, we also work with stochastic objects that contain a mixture of both $\bluedot$ and $\reddot=\reddotM$. In this case, only factors of $\bluedot$ require a renormalization. The renormalized mixed stochastic objects are then defined by 
\begin{align*}
\<3Nbrb> &	\defe	P_{\leq N} \bigg[ V \ast \Big( P_{\leq N} \<1b> \cdot P_{\leq N} \<1r> \Big) \cdot P_{\leq N} \<1b> - \MN P_{\leq N} \<1r>\bigg]		,\\
\<3Nbbr> &	\defe	P_{\leq N} \bigg[ \Big( V \ast \<2N> \Big) \cdot P_{\leq N} \<1r> \bigg],\\	
\<3Nrrb> &	\defe	P_{\leq N} \bigg[ V \ast \Big( P_{\leq N} \<1r> \cdot P_{\leq N} \<1r> \Big) \cdot P_{\leq N} \<1b> \bigg]		,\\
\<3Nbrr> &	\defe	P_{\leq N} \bigg[ V \ast \Big( P_{\leq N} \<1b> \cdot P_{\leq N} \<1r> \Big) \cdot P_{\leq N} \<1r> \bigg]	,\\
\<3Nrrr> &	\defe	P_{\leq N} \bigg[ V \ast \Big( P_{\leq N} \<1r> \cdot P_{\leq N} \<1r> \Big) \cdot P_{\leq N} \<1r> \bigg]	.
\end{align*}
Furthermore, we define the solution to the nonlinear wave equation with forcing term $\<3Nbrb> $ by
\begin{equation*}
(-\partial_t^2 -1 + \Delta) \<3DNbrb> = \<3Nbrb>, \qquad  \<3DNbrb>[0]=0. 
\end{equation*}
The solutions for the other forcing terms above are defined similarly. Using these definitions, we obtain that identity 
\begin{equation}\label{gibbs:eq_cubic_identity}
\<3DNp>  = \<3DN>  +   2 \<3DNbrb> + \<3DNrrb> + \<3DNbbr> + 2 \<3DNbrr> + \<3DNrrr>. 
\end{equation}
Using this identity, the proof of Proposition \ref{ftg:prop_cubic} is now split into two lemmas. 

\begin{lemma}\label{ftg:lem_cubic_1}
Let $A\geq 1$, let $T\geq 1$, and let  $\zeta=\zeta(\epsilon,s_1,s_2,\kappa,\eta,\eta^\prime,b_+,b)>0$ be sufficiently small. Then, there exists two Borel sets $\Theta_{\blue}^{\cub,(1)}(A,T) , \Theta_{\red}^{\cub,(1)}(A,T) \subseteq \cH_x^{-1/2-\kappa}(\bT^3)$ satisfying
\begin{equation}
\bP\Big( \, \bluedot \in \Theta_{\blue}^{\cub,(1)}(A,T) \text{ and } \reddotM \in  \Theta_{\red}^{\cub,(1)}(A,T)\Big) \geq 1 - \zeta^{-1} \exp(\zeta A^\zeta) 
\end{equation}
for all $M\geq 1$ and such that the following holds for all  $\, \bluedot \in \Theta_{\blue}^{\cub,(1)}(A,T) \text{ and } \reddotM \in  \Theta_{\red}^{\cub,(1)}(A,T)$: 

For all $N\geq 1$, there exists a $H_N\in \LM([0,T])$ satisfying the identity 
\begin{equation}\label{ftg:eq_cubic_1}
2 ~  ( \parald ) \<3DNbrb> + ~  ( \parald ) \<3DNrrb> = P_{\leq N} \Duh[ \Para(H_N, P_{\leq N} \<1b>)]  
\end{equation}
and the estimate
\begin{equation*}
 \| H_N \|_{\modulation([0,T])}   \leq T^2 A.  
\end{equation*}
Furthermore, the difference $H_N-H_K$ gains an $\eta^\prime$-power of $\min(N,K)$. 
\end{lemma}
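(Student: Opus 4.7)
The approach is to interpret the notation $(\parald)\<3DNbrb>$ and $(\parald)\<3DNrrb>$ as the Duhamel integrals of the low-high paraproduct portions of their defining nonlinearities, apply Lemma \ref{para:lemma_obj_2} to recognize each as a para-controlled expression in $P_{\leq N}\<1b>$, and control the resulting low-frequency modulations by deterministic bilinear estimates applied to the random objects $\<1b>$ and $\<1r>$. Concretely, I would set
\begin{align*}
(\parald)\<3DNbrb> &\defe P_{\leq N} \Duh\Bigl[ V \ast \bigl(P_{\leq N} \<1b> \cdot P_{\leq N} \<1r>\bigr) \parald P_{\leq N} \<1b> \Bigr], \\
(\parald)\<3DNrrb> &\defe P_{\leq N} \Duh\Bigl[ V \ast \bigl(P_{\leq N} \<1r> \cdot P_{\leq N} \<1r>\bigr) \parald P_{\leq N} \<1b> \Bigr].
\end{align*}
The renormalization $\MN P_{\leq N}\<1r>$ appearing in $\<3Nbrb>$ is assigned in its entirety to the $(\nparald)$ portion (which will be handled in the companion lemma producing $\YNpb$), not to $(\parald)\<3DNbrb>$. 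This assignment is legitimate because Lemma \ref{para:lemma_obj_2} yields a purely deterministic $\LM$ bound that does not require probabilistic cancellation; no Wick-style centering is needed at the level of the $\LM$-norm.

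Next, I apply Lemma \ref{para:lemma_obj_2} twice to construct canonical $H_{N,1}, H_{N,2} \in \LM([0,T])$ satisfying
\begin{align*}
V \ast \bigl(P_{\leq N} \<1b> \cdot P_{\leq N} \<1r>\bigr) \parald P_{\leq N} \<1b> &= \PCtrl(H_{N,1}, P_{\leq N} \<1b>), \\
V \ast \bigl(P_{\leq N} \<1r> \cdot P_{\leq N} \<1r>\bigr) \parald P_{\leq N} \<1b> &= \PCtrl(H_{N,2}, P_{\leq N} \<1b>),
\end{align*}
together with the bounds
\begin{equation*}
\| H_{N,1}\|_{\LM([0,T])} \lesssim \| P_{\leq N}\<1b>\|_{\X{-s}{b}([0,T])} \| P_{\leq N}\<1r>\|_{\X{s}{b}([0,T])}, \quad \| H_{N,2}\|_{\LM([0,T])} \lesssim \| P_{\leq N}\<1r>\|_{\X{0}{b}([0,T])}^2,
\end{equation*}
where $s$ can be taken anywhere in the non-empty interval $[1/2+\kappa,\, 1/2+\beta-\kappa]$ (non-empty under \eqref{intro:eq_parameter_condition}). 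Setting $H_N \defe 2 H_{N,1} + H_{N,2}$ and using bilinearity of $\PCtrl(\cdot,\phi)$ gives the desired identity \eqref{ftg:eq_cubic_1}. To produce the quantitative bound, I define $\Theta_{\blue}^{\cub,(1)}(A,T)$ and $\Theta_{\red}^{\cub,(1)}(A,T)$ by requiring the linear evolutions to satisfy $\|\<1b>\|_{\X{-s}{b}([0,T])} \leq T A^{1/2}$ and $\|\<1r>\|_{\X{s}{b}([0,T])} \leq T A^{1/2}$. These reduce via Lemma \ref{tools:lem_xsb_characterization} and a Schwartz cutoff in time to bounds on $\|\bluedot\|_{\cH_x^{-1/2-\kappa}}$ and $\|\reddotM\|_{\cH_x^{1/2+\beta-\kappa}}$: the first is controlled by Gaussian hypercontractivity (Proposition \ref{tools:prop_Gaussian_hypercontractivity}) applied to the representation in Section \ref{section:gaussian_processes}, and the second by the $L^p$-estimate on $\reddotM$ in Theorem \ref{theorem:measures} combined with Lemma \ref{tools:lem_moments_and_tails}. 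Both tail bounds are at least $1 - \zeta^{-1}\exp(-\zeta A^\zeta)$ after adjusting $\zeta$, and together give $\| H_N\|_{\LM([0,T])} \leq T^2 A$ on the intersection.

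The main obstacle, and the only non-routine part of the argument, is the frequency-localized refinement that yields the $\min(N,K)^{-\eta^\prime}$ decay in the difference $H_N - H_K$. I would insert dyadic cutoffs $P_{M_1}$, $P_{M_2}$, $P_{M_3}$ on the three factors at the very start and track the frequency support through Lemma \ref{para:lemma_obj_2}: the difference $P_{\leq N} - P_{\leq K}$ localizes at least one factor to frequencies $\gtrsim K$, and the additional regularity of $\<1r>$ (namely $\beta - 2\kappa > 0$) permits interpolating a small gain of the form $\min(N,K)^{-(\beta-2\kappa-\eta^\prime)/2}$ against the required $\LM$ bound. The parameter condition $\eta^\prime \ll \kappa \ll \beta$ from \eqref{intro:eq_parameter_condition} provides the necessary slack. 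This is dyadic bookkeeping rather than a conceptual difficulty, but it requires attention in matching each dyadic piece of $H_N - H_K$ against the $\X{\pm s}{b}$-norms of the projected factors, which in turn absorb the extra powers of the maximal frequency.
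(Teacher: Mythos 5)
Your proposal is correct and takes essentially the same route as the paper: apply Lemma \ref{para:lemma_obj_2} to each of the two $(\parald)$-terms (the paper does this with the fixed choice $s=s_2$ rather than a flexible $s$ in an interval, but this is cosmetic), deduce the identity \eqref{ftg:eq_cubic_1} with a canonical $H_N$ that is bilinear in the linear evolutions, bound $\|H_N\|_{\LM}$ by $\big(\|\<1b>\|_{\X{-s_2}{b}}+\|\<1r>\|_{\X{-s_2}{b}}\big)\|\<1r>\|_{\X{s_2}{b}}$, and then control the two factors probabilistically via Gaussian hypercontractivity for $\bluedot$ and the high-regularity moment bound for $\reddotM$ from Theorem \ref{theorem:measures}. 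Your explicit observation that the renormalization $\MN P_{\leq N}\<1r>$ is entirely absorbed into the $\nparald$ portion and is therefore irrelevant here is correct (it follows from Definition \ref{local:def_renormalization}, since the two convolution arguments are not both of type $\<1b>$), and the paper leaves this implicit; your sketch of the $H_N-H_K$ decay via frequency localization in Lemma \ref{para:lemma_obj_2} also matches what the paper intends while leaving unproven.
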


\begin{proof}
From Lemma \ref{para:lemma_obj_2}, it follows that there exists a (canonical) random variable $H_N\colon \Omega \rightarrow \LM([0,T])$ such that 
\begin{equation*}
2 ~  ( \parald ) \<3DNbrb> + ~  ( \parald ) \<3DNrrb> =  P_{\leq N} \Duh[ \Para(H_N, P_{\leq N} \<1b>)] 
\end{equation*}
and 
\begin{align*}
\| H_N \|_{\LM{[0,T]}} &\lesssim 
\Big( \| \,\<1b>\, \|_{\X{-s_2}{b}([0,T])} +	 \| \,\<1r> \,\|_{\X{-s_2}{b}([0,T])} \Big)  \|\, \<1r> \, \|_{\X{s_2}{b}([0,T])} \\
&\lesssim  T^2 \Big( \| \, \bluedot \, \|_{\cH_x^{-s_2}(\bT^3)} +  \| \, \reddot \, \|_{\cH_x^{s_2}(\bT^3)}  \Big) \cdot \| \, \reddot \, \|_{\cH_x^{s_2}(\bT^3)}
\end{align*}
The estimate for $H_N$ then follows from elementary properties of $\bluedot$ and the high-regularity bound for $\reddot$ in Theorem \ref{theorem:measures}. 
\end{proof}

\begin{lemma}\label{ftg:lem_cubic_2}
Let $A\geq 1$, let $T\geq 1$, and let  $\zeta=\zeta(\epsilon,s_1,s_2,\kappa,\eta,\eta^\prime,b_+,b)>0$ be sufficiently small. Then, there exists two Borel sets $\Theta_{\blue}^{\cub,(2)}(A,T) , \Theta_{\red}^{\cub,(2)}(A,T) \subseteq \cH_x^{-1/2-\kappa}(\bT^3)$ satisfying
\begin{equation}
\bP\Big( \, \bluedot \in \Theta_{\blue}^{\cub,(2)}(A,T) \text{ and } \reddotM \in  \Theta_{\red}^{\cub,(2)}(A,T)\Big) \geq 1 - \zeta^{-1} \exp(\zeta A^\zeta) 
\end{equation}
for all $M\geq 1$ and such that the following holds for all  $\, \bluedot \in \Theta_{\blue}^{\cub,(2)}(A,T) \text{ and } \reddotM \in  \Theta_{\red}^{\cub,(2)}(A,T)$: 

For all $N\geq 1$, we have that 
\begin{align*}
\max \bigg( &\Big\| \nparald \<3DNbrb> \Big\|_{\X{s_2}{b}([0,T])}  , \Big\|  \nparald \<3DNrrb> \Big\|_{\X{s_2}{b}([0,T])}, 
\Big\|  \<3DNbbr> \Big\|_{\X{s_2}{b}([0,T])} , \Big\|  \<3DNbrr> \Big\|_{\X{s_2}{b}([0,T])}  ,  \\
& \Big\|   \<3DNrrr>\Big\|_{\X{s_2}{b}([0,T])} \bigg) \leq T^3 A. 
\end{align*}
Furthermore, the difference of the cubic stochastic objects with two parameters $N$ and $K$ gains an $\eta^\prime$-power of $\min(N,K)$. 
\end{lemma}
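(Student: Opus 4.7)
The plan is to identify each mixed stochastic object as an instance of the multi-linear master estimate (Proposition \ref{local:prop_master}), treating the smoother component $\,\<1r>\,$ as a function of type $Y$ in the sense of Definition \ref{local:def_types}. First, I would restrict to the event $\{\bluedot \in \Theta_{\blue}^{\ms}(A,T)\} \cap \{\|\reddotM\|_{\cH_x^{s_2}(\bT^3)} \leq A\}$, which has the required probability by Proposition \ref{local:prop_master} and by applying Chebyshev's inequality to the high-regularity bound in Theorem \ref{theorem:measures} (noting that $s_2 = 1/2+\delta_2 < \gamma-\kappa$ for our choice of parameters). Since $\<1r>$ is a free wave with initial data $\reddotM \in \cH_x^{s_2}(\bT^3)$, Lemma \ref{tools:lem_xsb_characterization} combined with a standard time-localization argument yields $\|\<1r>\|_{\X{s_2}{b}([0,T])} \lesssim T^{c_0} \|\reddotM\|_{\cH_x^{s_2}} \lesssim T^{c_0} A$ for some absolute constant $c_0$. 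In particular, $(T^{c_0}A)^{-1}\<1r>$ is of type $Y$.

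Next, I would read off the type of each trilinear object and invoke the appropriate case of Proposition \ref{local:prop_master}. The object $\nparald\<3Nbrb>$ has factors of type $(\,\<1b>\,, Y; \,\<1b>\,)$, and the subtracted counterterm $M_N P_{\leq N} \<1r>$ is precisely the renormalization matching Definition \ref{local:def_renormalization}, so item (ii) of the master estimate applies. Similarly, $\nparald\<3Nrrb>$ has type $(Y,Y;\,\<1b>\,)$, which falls into the second alternative of item (ii). The object $\<3Nbbr>$ is of type $(\,\<1b>\,,\,\<1b>\,;Y)$, where the two blue factors are absorbed into $\<2N>$ through the renormalization in Definition \ref{local:def_renormalization}; this is item (iii). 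The term $\<3Nbrr>$ of type $(\,\<1b>\,,Y;Y)$ is also covered by item (iii), and $\<3Nrrr>$ of type $(Y,Y;Y)$ is the purely deterministic case, handled as in the proof of item (iii) of Proposition \ref{local:prop_master} via Proposition \ref{phy:prop_3b}. Each application yields a bound of the form $T^{30} \cdot (T^{c_0}A)^{\leq 3}$ on the forcing in $\X{s_2-1}{b_+-1}([0,T])$.

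To finish, I would convert the forcing bound into the desired $\X{s_2}{b}$-bound on the Duhamel integral by applying the time-localization lemma (Lemma \ref{tools:lem_localization}) to pass from $b_+-1$ to $b-1$, followed by the energy estimate (Lemma \ref{tools:lem_energy}), which costs at most a factor of $(1+T)^2$. After collecting polynomial factors in $T$ and $A$ and adjusting $\zeta$ and the power to which $A$ is raised (as is standard in arguments based on Proposition \ref{local:prop_master}), this yields the claimed bound $T^3 A$. For the difference estimate, I would appeal to the frequency-localized version of the master estimate (Remark \ref{local:remark_master}): at least one factor in each dyadic piece of a difference like $\<3DNbrb>-\<3DKbrb>$ carries a truncation of the form $P_{\leq N} - P_{\leq K}$, whose frequency-localized norm vanishes outside dyadic scales $\gtrsim \min(N,K)$, so the $\eta^\prime$-gain in the maximal frequency-scale is inherited as an $\eta^\prime$-gain in $\min(N,K)$ after summation.

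The only mild obstacle is the bookkeeping required to verify that $\<1r>$ genuinely qualifies as type $Y$ on the restricted time interval with a bound polynomial in $T$ and $A$; once this is established, each of the five objects reduces to a direct application of an already-proved case of Proposition \ref{local:prop_master}, with no new multilinear estimate needed.
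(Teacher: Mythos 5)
Your proof takes essentially the same route as the paper: the paper dispatches the five mixed stochastic objects directly to the individual multilinear estimates (Propositions \ref{rmt:prop2}, \ref{phy:prop_1b}, \ref{rmt:prop1}, \ref{phy:prop_neq} with \ref{phy:cor_bilinear}, and \ref{phy:prop_3b}), while you go through the consolidated multilinear master estimate (Proposition \ref{local:prop_master}), whose proof is itself a dispatch to exactly those same propositions. Your type assignments $(\,\<1b>\,,Y;\,\<1b>\,)$, $(Y,Y;\,\<1b>\,)$, $(\,\<1b>\,,\,\<1b>\,;Y)$, $(\,\<1b>\,,Y;Y)$, $(Y,Y;Y)$ are all correct and match the case split the paper uses implicitly, and treating $\<1r>$ as a function of type $Y$ after restricting to the high-regularity event from Theorem \ref{theorem:measures} is precisely the right move.

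One point worth flagging: the master estimate carries a factor $T^{30}$, and after the Duhamel/energy step you pick up another $(1+T)^2$, so your route lands at roughly $T^{\alpha}A$ with $\alpha\approx 33$ rather than the stated $T^{3}A$. Your closing remark that this can be repaired by ``adjusting $\zeta$ and the power to which $A$ is raised'' is not quite right --- the moment-to-tail conversion lets you trade powers of $A$ against $\zeta$, but the deterministic factor $T^{\alpha}$ is fixed and cannot be absorbed that way. The paper obtains $T^{3}A$ by citing the individual propositions, whose $T$-powers are sharper ($T$, $T^{2}$, $T^{3}$). This is a benign discrepancy: every downstream use of this lemma (e.g.\ in Proposition \ref{ftg:prop_cubic} and Proposition \ref{global:prop_lwp}) only requires a polynomial $T$-power, so either version works, but if you insist on matching the stated $T^{3}A$ you should cite the sharper individual propositions rather than the master estimate.
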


\begin{proof}
This follows from our previous estimates for $\,\bluedot\,$ from Section \ref{section:stochastic_object}-\ref{section:physical} and the high-regularity bound for $\reddot$ in Theorem \ref{theorem:measures}. More precisely, we estimate the $L^p_\omega \X{s_2}{b}$-norm of 
\begin{itemize}
\item[$\bullet$] $ \nparald \<3DNbrb>$ by $T^2 p^{\frac{2+k}{2}}$ through Proposition \ref{rmt:prop2}, 
\item[$\bullet$] $ \nparald \<3DNrrb> $ by $T^3 p^{\frac{1+2k}{2}}$ through Proposition \ref{phy:prop_1b}, 
\item[$\bullet$] $ \<3DNbbr>$ by $T^2 p^{\frac{2+k}{2}}$ through Proposition \ref{rmt:prop1}, 
\item[$\bullet$] $  \<3DNbrr>$ by $T^3 p^{\frac{1+2k}{2}}$ through Proposition \ref{phy:prop_neq} and Corollary \ref{phy:cor_bilinear}, 
\item[$\bullet$] $  \<3DNrrr>$ by $T p^{\frac{3k}{2}}$ through Proposition \ref{phy:prop_3b}
\end{itemize}
\end{proof}

\begin{proof}[Proof of Proposition \ref{ftg:prop_cubic}:] 
The first algebraic identity and related estimates follow directly from \eqref{gibbs:eq_cubic_identity}, Lemma \ref{ftg:lem_cubic_1} and Lemma \ref{ftg:lem_cubic_2}. By using $\<1p>-\<1b>=\<1r>$ and the high regularity bound for $\reddot$, we obtain the second identity and the related estimates from the first identity. 
\end{proof}

\subsection{Comparing random structures in Gibbsian and Gaussian initial data}\label{section:comparing}

In Definition \ref{local:def_types}, we introduced the types of functions occurring in our multi-linear master estimate for $\,\bluedot\,$ (Proposition \ref{local:prop_master}). The types $w$ and $X$ in Definition \ref{local:def_types} implicitly depend on $\,\bluedot\,$ and, as already mentioned in Remark \ref{local:rem_types}, we now refer to type $w$ and $X$ as type $\wblue$ and $\Xblue$, respectively. We now introduce a similar notation for the generic initial data $\, \purpledot\,$. In order to orient the reader, we include an overview of the different types and their relationship in Figure \ref{figure:Gibbsian}.

\tikzset{types/.style={rectangle, rounded corners, minimum width=1cm, minimum
height=1cm,text centered,align=center, draw=black, fill=white!10},
arrow/.style={thick,->,>=stealth}}

\begin{figure}
\begin{center}
\begin{tikzpicture}[node distance=2cm]
\node (1b)[types]  at (-2,0) {$\<1b>$};
\node (1p)[types]  at (2,0) {\usebox\ponebox};
\node (3b)[types] at (-2,-1.5) {\usebox\bthreebox};
\node (3p)[types] at (2,-1.5) {\usebox\pthreebox};
\node (wb)[types]  at (-2,-3) {$\wblue$};
\node (wp)[types]  at (2,-3) {$w^{\hspace{-0.5ex}\usebox\pdot}$};
\node (Xb)[types]  at (-4,-5) {$\Xblue$};
\node (Xp)[types]  at (4,-5) {$X^{\hspace{-0.5ex}\usebox\pdot}$};
\node (Y)[types] at (0,-5) {$Y$};

\draw[thick,<->] (1b) -- (1p);
\draw[thick,<->] (3b) -- (3p);
\draw[thick,<->] (wb) -- (wp);
\draw[thick,->] (Xb) -- (wb);
\draw[thick,->] (Xp) -- (wp);
\draw[thick,->] (Y) -- (wb);
\draw[thick,->] (Y) -- (wp);
\draw[thick,<->] (Xb) to[out=315,in=225,looseness=0.75] (Xp);

\node[above] at (0,0) {\small{Thm. \ref{theorem:measures}}};
\node[above] at (0,-1.5) {\small{Prop. \ref{ftg:prop_cubic}}};
\node[above] at (0,-3) {\small{Lem. \ref{ftg:lem_type_conversion_blue_purple}}};
\node[above] at (0,-6.5) {\small{Lem. \ref{ftg:lem_type_conversion_blue_purple}}};
\node at (2,-4) {\small{Lem. \ref{ftg:lem_type_conversion}}};
\node at (-2,-4) {\small{Lem. \ref{local:lem_type_conversion}}};
\end{tikzpicture}
\end{center}
\caption{\small{We display the relationship between the different types of functions used in this paper. The equivalence ``$\leftrightarrow$'' means that both types agree modulo scalar multiples and/or terms further down in the hierarchy. The implication ``$\rightarrow$'' means that, up to scalar multiples, the left type forms a sub-class of the right type. }} \label{figure:Gibbsian}
\end{figure}

\begin{definition}[Purple types]\label{ftg:def_types}
Let $\cJ \subseteq [0,\infty)$ be a bounded interval and let $\varphi \colon J \times \bT^3 \rightarrow \bR$. We say that $\varphi$ is of type
\begin{itemize}
\setlength\itemsep{1ex}
\item $\,\<1p>\,$ if $\varphi=\<1p>\,$, 
\item $\, \<3Dp>\,$ if $\varphi=\<3DNp>\,$ for some $N\geq 1$, 
\item $\wpurple$ if $\| \varphi \|_{\X{s_1}{b}(\cJ)}\leq 1$ and  $ \sum_{L_1\sim L_2} \| P_{L_1} \<1p> \, \cdot P_{L_2} w \|_{L_t^2 H_x^{-4\delta_1}(\cJ\times \bT^3)} \leq 1$ for all $N\geq 1$, 
\item $\Xpurple$ if $\varphi = P_{\leq N} \Duh \big[ 1_{\cJ_0} \PCtrl(H, P_{\leq N} \, \<1p>\, )\big] $ for a dyadic integer $N\geq 1$, a subinterval $\cJ_0\subseteq \cJ$,  and a function $H\in \LM(\cJ_0)$ satisfying $\| H \|_{\LM(\cJ_0)}\leq 1$.
\end{itemize}
\end{definition}
Since the type $Y$ in Definition \ref{local:def_types} does not depend on the stochastic object, its meaning remains unchanged. In Proposition \ref{ftg:prop_cubic}, we have already seen that the types $\<3D>$ and $\<3Dp>$ only differ by functions of type $\Xblue$ and $Y$ (or $\Xpurple$ and $Y$). In the next lemma, we clarify the relationship between the types $\wblue$ and $\wpurple$ as well as $\Xblue$ and $\Xpurple$. 

\begin{lemma}[The equivalences $\wblue\leftrightarrow \wpurple$ and $\Xblue \leftrightarrow \Xpurple$] \label{ftg:lem_type_conversion_blue_purple}
Let $A\geq 1$, let $T\geq 1$, and let $\zeta=\zeta(\epsilon,s_1,s_2,\kappa,\eta,\eta^\prime,b_+,b)>0$ be sufficiently small. Then, there exists a Borel set $\Theta_{\red}^{\type}(A,T)\subseteq \cH_x^{-1/2-\kappa}(\bT^3)$  such that 
\begin{equation*}
\bP(\, \reddotM\in \Theta_{\red}^{\type}(A,T)) \geq 1 - \zeta^{-1} \exp(-\zeta A^{\zeta})
\end{equation*}
and such that the following holds for  $\purpledot=\bluedot+\reddotM$:
\begin{itemize}
\item The types $\wblue$ and $\wpurple$ are equivalent up to multiplication by a scalar $\lambda\in \bR_{>0}$ satisfying $\lambda,\lambda^{-1}\leq T^2 A$. 
\item The types $\Xblue$ and $\Xpurple$ are equivalent up to addition/subtraction of a function in $\X{s_2}{b}$ with norm $\leq TA$. 
\end{itemize}
\end{lemma}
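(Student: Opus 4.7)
The plan is to define $\Theta_{\red}^{\type}(A,T)$ as the event where $\reddotM$ obeys a high-regularity bound compatible with Theorem \ref{theorem:measures}, and then use the splitting $\<1p> = \<1b> + \<1r>$ to pass between random structures. Concretely, set
\[
\Theta_{\red}^{\type}(A,T) \defe \Big\{ \reddot \in \cH_x^{-1/2-\kappa}(\bT^3) \colon \| \reddot \|_{\cH_x^{\gamma-\kappa}(\bT^3)} \leq A^{1/2} \Big\},
\]
where $\gamma = \min(1/2+\beta,1)$. By the moment bound $(\bE\|\reddotM\|_{\cH_x^{\gamma-\kappa}}^p)^{1/p} \leq p^{k/2}$ from Theorem \ref{theorem:measures} combined with Chebyshev's inequality (optimizing in $p$), the measure of the complement is $\lesssim \exp(-c A^{\zeta})$ for $\zeta>0$ sufficiently small. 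On this event, the linear propagation of $\reddotM$ satisfies $\| \<1r> \|_{L_t^\infty \cH_x^{\gamma-\kappa}([0,T]\times \bT^3)} \lesssim A^{1/2}$ and, via the linear transference principle (Lemma \ref{tools:lem_linear_transference}), $\| \<1r> \|_{\X{\gamma-\kappa}{b}([0,T])} \lesssim T A^{1/2}$.

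For the equivalence $\wblue \leftrightarrow \wpurple$, the $\X{s_1}{b}$-condition is symmetric in the two types, so only the bilinear condition requires work. If $\varphi$ is of type $\wblue$, I would write
\[
\sum_{L_1\sim L_2} \| P_{L_1} \<1p> \cdot P_{L_2} \varphi \|_{L_t^2 H_x^{-4\delta_1}} \leq 1 + \sum_{L_1\sim L_2} \| P_{L_1} \<1r> \cdot P_{L_2} \varphi \|_{L_t^2 H_x^{-4\delta_1}}
\]
and estimate the error term by H\"older's inequality in time-space combined with the $L_t^4 L_x^4$-Strichartz estimate (Corollary \ref{tools:cor_strichartz}) applied to both $\<1r>$ (at regularity $\gamma-\kappa$) and $\varphi$ (at regularity $s_1$). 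This produces a bound of the form $\sum_{L_1} L_1^{-\beta + \kappa + \delta_1} T^{3/2} A^{1/2}$, which is summable by the parameter condition \eqref{intro:eq_parameter_condition} and yields a total $\lesssim T^{3/2} A^{1/2} \leq T^2 A$. One then takes $\lambda = 1 + C T^{3/2} A^{1/2}$. The reverse direction is verbatim the same after swapping the roles of $\<1b>$ and $\<1p>$, since the cross-term is the same bilinear object in $\<1r>$ and $\varphi$.

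For the equivalence $\Xblue \leftrightarrow \Xpurple$, I would simply decompose the linear evolution inside the paraproduct. If $\varphi$ is of type $\Xblue$, there exist $N, \cJ_0, H$ with $\|H\|_{\LM(\cJ_0)}\le 1$ and
\[
\varphi = P_{\leq N} \Duh\big[ 1_{\cJ_0}\PCtrl(H, P_{\leq N} \<1p>) \big] - P_{\leq N} \Duh\big[ 1_{\cJ_0} \PCtrl(H, P_{\leq N} \<1r>)\big].
\]
The first summand has exactly the form required for type $\Xpurple$, and the second is the error term $\psi$ to be placed in $\X{s_2}{b}$. To bound it, I would chain the energy estimate (Lemma \ref{tools:lem_energy}), the inhomogeneous Strichartz estimate (Lemma \ref{tools:lem_inhomogeneous_strichartz}), and the basic mapping property of $\PCtrl$ (Lemma \ref{para:lem_basic}) to get
\[
\| \psi \|_{\X{s_2}{b}} \lesssim T \| \PCtrl(H, P_{\leq N} \<1r>) \|_{L_t^{2b} H_x^{s_2-1}} \lesssim T \| H \|_{\LM} \| \<1r> \|_{L_t^\infty H_x^{s_2-1+8\epsilon}} \lesssim T A^{1/2}.
\]
The regularity indices match because $s_2 - 1 + 8\epsilon = -1/2 + \delta_2 + 8\epsilon < \gamma - \kappa$ thanks to \eqref{intro:eq_parameter_condition} and $\gamma \geq 1/2 + \beta$. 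The reverse direction follows symmetrically. Measurability of $\Theta_{\red}^{\type}(A,T)$ is automatic since it is a sublevel set of a Borel-measurable norm.

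The main obstacle is purely bookkeeping, namely ensuring that the parameter hierarchy $1/2 - b_- \ll b - 1/2 \ll b_+ - 1/2 \ll \eta^\prime \ll \eta \ll \kappa \ll \delta_2 \ll \epsilon \ll \delta_1 \ll \beta$ provides: (i) summability of the geometric series $\sum_{L_1} L_1^{-\beta+\kappa+\delta_1}$ in the $w$-equivalence, and (ii) the strict inequality $s_2 - 1 + 8\epsilon < \gamma - \kappa$ in the $X$-equivalence. Both constraints are comfortably satisfied by \eqref{intro:eq_parameter_condition} after noting that $\delta_1$ must be small relative to $\beta$, which is consistent with the usage of $\delta_1$ as a small corrective exponent at the deterministic regularity threshold.
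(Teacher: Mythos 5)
Your proposal is essentially correct and follows the same skeleton as the paper's proof: restrict $\reddotM$ to a high-regularity ball, then split $\<1p>=\<1b>+\<1r>$ and show the cross terms involving $\<1r>$ are perturbative. Two small remarks.

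For the bilinear error term $\sum_{L_1\sim L_2}\|P_{L_1}\<1r>\cdot P_{L_2}\varphi\|_{L_t^2 H_x^{-4\delta_1}}$, the paper invokes Lemma~\ref{phy:lem_bilinear_kt} (a Klainerman--Tataru-type bilinear estimate) rather than the $L_t^4L_x^4\times L_t^4L_x^4\to L_t^2L_x^2$ H\"older/Strichartz argument you propose. Running the numbers in your approach gives the exponent $\frac12-(\gamma-\kappa)+\frac12-s_1=\delta_1+\kappa-\beta$, which is the same exponent as in the paper's displayed sum, so both are sharp enough here; you lose nothing by avoiding the refined bilinear estimate for this particular interaction (both proofs implicitly use that $\delta_1+\kappa<\beta$, i.e.\ $\delta_1$ is chosen small relative to the fixed parameter $\beta$). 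The $\paraboxld$/$\Xblue\leftrightarrow\Xpurple$ argument you give is essentially verbatim the paper's chain of Lemma~\ref{tools:lem_energy}, Lemma~\ref{tools:lem_inhomogeneous_strichartz}, and Lemma~\ref{para:lem_basic}.

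One citation is off: to bound $\|\,\<1r>\,\|_{\X{\gamma-\kappa}{b}([0,T])}$ in terms of $\|\,\reddotM\,\|_{\cH_x^{\gamma-\kappa}}$ you should not cite the linear transference principle (Lemma~\ref{tools:lem_linear_transference}), which goes in the opposite direction (from estimates on free evolutions to estimates on general $\X{s}{b}$ functions). Since $\<1r>$ solves the homogeneous linear wave equation, the correct tool is the energy estimate (Lemma~\ref{tools:lem_energy}) with $F=0$, or the direct calculation of the $\X{s}{b}$ norm of a time-cutoff free wave. The conclusion you state is correct, only the attribution is wrong; the resulting power of $T$ is loose in both your write-up and the paper's, but this is harmless since the lemma's constants are only used inside exponentials of $T$ downstream.
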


\begin{proof}
We will prove the desired statement on the event
\begin{equation*}
\Theta_{\red}^{\type}(A,T) = \big\{ \phi \in \cH_x^{-1/2-\kappa}(\bT^3) \colon \| \phi \|_{\cH_x^{\frac{1}{2}+\beta-\kappa}(\bT^3)}\leq cA\big\},
\end{equation*}
where $c=c(\epsilon,s_1,s_2,b)$ is a small constant. Based on Theorem \ref{theorem:measures}, this event has an acceptable probability. 

We start with the statement regarding the types $\wblue$ and $\wpurple$. Let $\varphi \in \X{s_1}{b}(\cJ)$ satisfy $\| \varphi\|_{\X{s_1}{b}(\cJ)} \leq 1$, which holds for $\varphi$ of type either $\wblue$ or $\wpurple$. For any $L\geq 1$, we have that  
\begin{align*}
&\big| \sum_{L_1 \sim L_2} \| P_{L_1} \<1p> \, \cdot P_{L_2} \varphi \|_{L_t^2 H_x^{-4\delta_1}(\cJ\times \bT^3)} - 
  \sum_{L_1 \sim L_2} \| P_{L_1} \<1b> \, \cdot P_{L_2}  \varphi \|_{L_t^2 H_x^{-4\delta_1}(\cJ\times \bT^3)}  \big| \\
 &\leq  \sum_{L_1 \sim L_2} \| P_{L_1} \<1r> \, \cdot P_{L_2}  \varphi \|_{L_t^2 H_x^{-4\delta_1}(\cJ\times \bT^3)} 
 \end{align*}
 Using Lemma  \ref{phy:lem_bilinear_kt}, it follows that 
 \begin{align*}
 &  \sum_{L_1 \sim L_2} \| P_{L_1} \<1r> \, \cdot P_{L_2}  \varphi \|_{L_t^2 H_x^{-4\delta_1}(\cJ\times \bT^3)} 
 \lesssim T^{\frac{1}{2}} \Big( \sum_{L_1} L_1^{3-s_1- \big( \frac{1}{2}+\beta-\kappa\big) - 2} \Big)
  \| \, \<1r>\, \|_{\X{\frac{1}{2}+\beta-\kappa}{b}(\cJ)} \| \varphi \|_{\X{s_1}{b}(\cJ)} \\
  &\lesssim  T^{\frac{3}{2}} \| \, \reddotM\|_{\cH_x^{\frac{1}{2}+\beta-\kappa}(\bT^3)} \| \varphi \|_{\X{s_1}{b}(\cJ)} 
  \leq \frac{1}{2} T^{\frac{3}{2}} A  \| \varphi \|_{\X{s_1}{b}(\cJ)} . 
\end{align*}
This yields the stated equivalence of the types $\wblue$ and $\wpurple$.

We now turn to the statement regarding the types $\Xblue$ and $\Xpurple$. For any $H\in \LM(\cJ)$, it holds that 
\begin{align*}
&\big \| P_{\leq N} \Duh \big[ 1_{\cJ_0} \PCtrl(H, P_{\leq N} \, \<1p>\, )\big] -P_{\leq N} \Duh \big[ 1_{\cJ_0}\PCtrl(H, P_{\leq N} \, \<1b>\, )\big]  \big \|_{\X{s_2}{b}(\cJ)} \\
&\lesssim 
\big \| P_{\leq N} \Duh \big [ 1_{\cJ_0} \PCtrl(H, P_{\leq N} \, \<1r>\, )\big]   \big \|_{\X{s_2}{b}(\cJ)}. 
\end{align*}
Using Lemma \ref{tools:lem_energy}, Lemma \ref{tools:lem_inhomogeneous_strichartz}, and Lemma \ref{para:lem_basic}, we have that 
\begin{align*}
&\| P_{\leq N} \Duh \big[  1_{\cJ_0} \PCtrl(H, P_{\leq N} \, \<1r>\, )\big]  \|_{\X{s_2}{b}(\cJ)} 
\lesssim T \| PCtrl(H, P_{\leq N} \, \<1r>\, )  \|_{L_t^\infty H_x^{s_2-1}(\cJ_0\times\bT^3)} \\
&\lesssim T \| H \|_{\LM(\cJ_0)} \|  \, \<1r>\, \|_{L_t^\infty H_x^{s_2-1+8\epsilon}(\cJ\times \bT^3)} 
\lesssim T  \| \, \reddotM\|_{\cH_x^{\frac{1}{2}+\beta-\kappa}(\bT^3)}  \leq \frac{1}{2} T A. 
\end{align*}
This yields the desired estimate. 
\end{proof}

\begin{lemma}[The implication $\Xpurple, Y \rightarrow \wpurple$]\label{ftg:lem_type_conversion}
Let  $\zeta=\zeta(\epsilon,s_1,s_2,\kappa,\eta,\eta^\prime,b_+,b)>0$ be sufficiently small, let $A\geq 1$, and let $T\geq 1$. Then, there exists a Borel set $\Theta_{\purple}^{\type}(A,T) \subseteq \cH_x^{-1/2-\kappa}(\bT^3)$ satisfying
\begin{equation}
\mup_M\Big( \Theta_{\purple}^{\type}(A,T) \Big), \nup_M\Big( \Theta_{\purple}^{\type}(A,T) \Big)\geq 1 - \zeta^{-1} \exp( \zeta A^{\zeta}) 
\end{equation}
for all $M\geq 1$ and such that the following holds for all $\purpledot \in \Theta_{\purple}^{\type}(A,T)$:
If $\varphi$ is of type $\Xpurple$ or $Y$, the scalar multiple $T^{-7} A^{-1} \varphi$ is of type $\wpurple$. 
\end{lemma}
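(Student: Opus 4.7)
The plan is to model the argument directly on Lemma \ref{local:lem_type_conversion}, using Lemma \ref{ftg:lem_type_conversion_blue_purple} as a translation device between the two random structures. I define $\Theta^{\type}_\purple(A,T)$ intrinsically as the set of $\purpledot \in \cH_x^{-1/2-\kappa}(\bT^3)$ for which the implication in the lemma holds for every compact interval $\cJ \subseteq [0,T]$ and every admissible $\varphi$; Borel measurability follows from the same separability reduction that appears in the proofs of Proposition \ref{global:prop_structure_time} and Proposition \ref{global:prop_structure_cubic}. By Theorem \ref{theorem:measures} it then suffices to bound $\bP(\,\bluedot+\reddotM \in \Theta^{\type}_\purple(A,T))$, and I restrict to the intersection of the Gaussian event $\Theta^{\type}_\blue(cA,T)$ from Lemma \ref{local:lem_type_conversion}, the high-regularity event $\Theta^{\type}_\red(cA,T)$ from Lemma \ref{ftg:lem_type_conversion_blue_purple}, and a standard tail bound $\|\,\<1p>\,\|_{L_t^\infty \cC_x^{-1/2-\kappa}([0,T]\times \bT^3)} \leq cA$, each of which has acceptable probability.

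On this event I verify the implication for the two types separately. For $\varphi$ of type $Y$, the $\X{s_1}{b}$-bound is free from $\X{s_2}{b} \hookrightarrow \X{s_1}{b}$, and the bilinear sum is controlled by H\"older in time together with the regularity gap $\delta_2 \gg \kappa$:
\begin{equation*}
\sum_{L_1\sim L_2} \|P_{L_1}\,\<1p>\,\cdot P_{L_2} \varphi\|_{L_t^2 H_x^{-4\delta_1}(\cJ\times \bT^3)} \lesssim T^{\frac{1}{2}}\,\|\,\<1p>\,\|_{L_t^\infty \cC_x^{-1/2-\kappa}(\cJ\times \bT^3)}\,\|\varphi\|_{L_t^\infty H_x^{s_2}(\cJ\times \bT^3)} \lesssim T^{\frac{1}{2}} c A,
\end{equation*}
so $T^{-1}A^{-1}\varphi$ is of type $\wpurple$ when $c$ is small.

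For $\varphi$ of type $\Xpurple$, I write $\varphi = P_{\leq N}\Duh[1_{\cJ_0}\PCtrl(H,P_{\leq N}\<1p>)]$ with $\|H\|_{\LM(\cJ_0)}\leq 1$ and split $\<1p>=\<1b>+\<1r>$ inside the para-controlled structure to obtain $\varphi = \varphi_\blue + \varphi_\red$ with $\varphi_\blue \defe P_{\leq N}\Duh[1_{\cJ_0}\PCtrl(H,P_{\leq N}\<1b>)]$ and $\varphi_\red$ defined analogously. The summand $\varphi_\blue$ is manifestly of type $\Xblue$, hence by Lemma \ref{local:lem_type_conversion} the rescaling $T^{-4}(cA)^{-1}\varphi_\blue$ is of type $\wblue$, and Lemma \ref{ftg:lem_type_conversion_blue_purple} then converts it (after losing a further factor of $T^2 cA$) into a multiple of type $\wpurple$. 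The summand $\varphi_\red$ is instead smooth: the inhomogeneous Strichartz estimate (Lemma \ref{tools:lem_inhomogeneous_strichartz}), Lemma \ref{para:lem_basic}, and the high-regularity bound on $\reddotM$ give
\begin{equation*}
\|\varphi_\red\|_{\X{s_2}{b}(\cJ)} \lesssim T \,\|H\|_{\LM(\cJ_0)}\,\|\,\<1r>\,\|_{L_t^\infty H_x^{s_2-1+8\epsilon}(\cJ\times \bT^3)} \lesssim T\,\|\,\reddotM\,\|_{\cH_x^{1/2+\beta-\kappa}} \lesssim T c A,
\end{equation*}
so $(TcA)^{-1}\varphi_\red$ is of type $Y$ and the previous case applies. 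Summing both contributions and choosing $c$ sufficiently small yields the required $T^{-7}A^{-1}$ scaling.

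There is no real analytic obstacle here; the argument is essentially a bookkeeping exercise combining the two conversion lemmas already at our disposal. The only mild nuisance is tracking the accumulated powers of $T$ and $A$ through the chain $\Xpurple \rightsquigarrow \Xblue \rightsquigarrow \wblue \rightsquigarrow \wpurple$ (plus the parallel $Y$-branch), and the exponent $7$ in the statement has been chosen with enough slack to absorb them.
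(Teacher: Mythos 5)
Your strategy is the same as the paper's: chain $\Xpurple \rightsquigarrow \Xblue \rightsquigarrow \wblue \rightsquigarrow \wpurple$ (and, in parallel, $Y \rightsquigarrow \wpurple$), using the decomposition $\<1p> = \<1b> + \<1r>$ inside the para-controlled structure and then passing through Lemma \ref{local:lem_type_conversion} and Lemma \ref{ftg:lem_type_conversion_blue_purple}. The splitting $\varphi = \varphi_\blue + \varphi_\red$ you perform by hand is literally the content of the $\Xblue \leftrightarrow \Xpurple$ item in Lemma \ref{ftg:lem_type_conversion_blue_purple}, so this is not a different route, just a re-derivation.

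There is, however, a concrete bookkeeping gap in the $\Xpurple$ case, and the fix is not the one you offer. Tracing the scalars: on $\Theta^{\type}_\blue(cA,T)$ the rescaling $T^{-4}(cA)^{-1}\varphi_\blue$ is of type $\wblue$, and Lemma \ref{ftg:lem_type_conversion_blue_purple} costs a further factor $T^2(cA)$, giving that $T^{-6}(cA)^{-2}\varphi_\blue$ is of type $\wpurple$. You want to conclude that $T^{-7}A^{-1}\varphi$ is of type $\wpurple$, which since the type-$w$ conditions are monotone in the scalar requires $T^{-7}A^{-1} \leq T^{-6}(cA)^{-2}$, i.e.\ $c^2 A \leq T$. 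Taking $c$ small does \emph{not} achieve this: the deficit is a full power of $A$, and $A \geq 1$ is arbitrary while $c$ is a fixed constant. The paper handles exactly this issue with the innocuous-looking opening remark ``after adjusting $\zeta$, we can replace $A^{-1}$ in the conclusion by $A^{-3}$'': one proves the statement with a worse power $A^{-m}$, then applies it with $A$ replaced by $A^{1/m}$, and the stretched-exponential form $\zeta^{-1}\exp(-\zeta A^{\zeta})$ of the probability bound is stable under the re-parametrization $A \mapsto A^{1/m}$ up to adjusting $\zeta$. You should add this re-parametrization step explicitly; without it the last sentence of your proof (``choosing $c$ sufficiently small yields the required $T^{-7}A^{-1}$ scaling'') is false, since the obstacle is a missing power of $A$, not a missing constant.
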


\begin{proof}
Using a separability argument, we can define $\Theta_{\purple}^{\type}(A,T)$ through countably many bounds of the same form as in the definition of the type $\wpurple$. We first note that, after adjusting $\zeta$, we can replace $A^{-1}$ in the conclusion by $A^{-3}$. Using Theorem \ref{theorem:measures}, it suffices to prove that 
\begin{equation*}
\bP(\, \bluedot + \reddotM \in  \Theta_{\purple}^{\type}(A,T) \Big) \geq 1 - \zeta^{-1} \exp(\zeta A^\zeta). 
\end{equation*}
Thus, we may restrict both $\, \bluedot\,$ and $\, \reddotM$ to sets with acceptable probabilities under $\bP$. After these preparations, we now start with the main part of the argument. 

First, we let $w$ be of type $Y$. Using Lemma \ref{local:lem_type_conversion}, it follows that $T^{-4} A \varphi$ is of type $\wblue$. Using Lemma \ref{ftg:lem_type_conversion_blue_purple}, it follows that $T^{-6} A^{-2} \varphi$ is of type $\wpurple$. \\
Now, let $\varphi$ be of type $\Xpurple$. Using Lemma \ref{ftg:lem_type_conversion_blue_purple} and the first step in this proof, we can assume that $\varphi$ is of type $\Xblue$. Using Lemma \ref{local:lem_type_conversion}, $T^{-4} A^{-1} \varphi$ is of type $\wblue$. Finally, using Lemma \ref{ftg:lem_type_conversion_blue_purple} again, we obtain that $T^{-6} A^{-2} \varphi$ is of type $\wpurple$. 
\end{proof}

In Definition \ref{global:definition_structured} above, we introduced the function $\Z$-norms, which are used to quantify structured perturbations of the initial data. We now prove the equivalence of the $\mathscr{Z}([0,T], \scriptp;t_0,N,K)$ and $\mathscr{Z}([0,T], \scriptb;t_0,N,K)$-norms, which is similar to the statements in Lemma \ref{ftg:lem_type_conversion_blue_purple} and Lemma \ref{ftg:lem_type_conversion}. 

\begin{lemma}[Equivalence of the blue and purple structured perturbations]\label{ftg:lem_structured_perturbation}
Let $A\geq 1$, let $\alpha>0$ be a sufficiently large absolute constant, and let $\zeta=\zeta(\epsilon,s_1,s_2,\kappa,\eta,\eta^\prime,b_+,b)>0$ be sufficiently small. Then, there exist Borel sets $\Theta_{\blue}^{\tsp}(A),\Theta_{\red}^{\tsp}(A)\subseteq \cH_x^{-1/2-\kappa}(\bT^3)$ satisfying
\begin{equation}\label{ftg:eq_sp_equivalence_prop}
\bP\big( \, \bluedot \in \Theta_{\blue}^{\tsp}(A), \,\reddotM \in \Theta_{\red}^{\tsp}(A)\big)\geq 1 - \zeta^{-1} \exp(- \zeta A^\zeta).  
\end{equation}
and such that the following holds on this event: 

For all $T\geq 1$, $t_0\in [0,T]$, $N,K\geq 1$, and $Z[t_0]\in \cH_x^{s_1}(\bT^3)$, we have that 
\begin{equation}\label{ftg:eq_sp_equivalence_est}
T^{-\alpha} A^{-1} \| Z[t_0] \|_{\mathscr{Z}([0,T], \scriptb;t_0,N,K)}  \leq \| Z[t_0] \|_{\mathscr{Z}([0,T], \scriptp;t_0,N,K)} \leq T^\alpha A \| Z[t_0] \|_{\mathscr{Z}([0,T], \scriptb;t_0,N,K)} 
\end{equation}
\end{lemma}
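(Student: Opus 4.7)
The plan is to use the decomposition $\<1p> = \<1b> + \<1r>$ and exploit the fact that $\<1r>$, on a high-probability event, has spatial regularity $\tfrac{1}{2}+\beta-\kappa$ (so in particular lies at regularity $s_2$), while $\<1b>$ and its derived objects admit the estimates already proved in Section \ref{section:stochastic_object}--\ref{section:physical}. We take $\Theta_{\blue}^{\tsp}(A)$ to be the intersection of the events $\Theta_{\blue}^{\ms}(A,T)$, $\Theta_{\blue}^{\type}(A,T)$ (over a dense countable set of $T$'s via Lemma \ref{tools:eq_moments_sups}) together with a uniform bound $\|\,\<1b>\,\|_{L_t^\infty \cC_x^{-1/2-\kappa}}\leq A$, and analogously take $\Theta_{\red}^{\tsp}(A)=\{\|\reddotM\|_{\cH_x^{1/2+\beta-\kappa}}\leq A\}$; by Theorem \ref{theorem:measures}, Lemma \ref{local:lem_type_conversion}, and Proposition \ref{local:prop_master}, this event satisfies \eqref{ftg:eq_sp_equivalence_prop}. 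By symmetry, it suffices to establish the inequality $\| Z[t_0] \|_{\Z([0,T],\scriptp;t_0,N,K)}\leq T^\alpha A \| Z[t_0]\|_{\Z([0,T],\scriptb;t_0,N,K)}$; the reverse inequality is identical after swapping the roles of $\,\bluedot\,$ and $\,\purpledot\,$ and using $\<1b>=\<1p>-\<1r>$.

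Fix a decomposition $Z[t_0]=\Zbox[t_0]+\Zcirc[t_0]$ realizing (up to a factor $2$) the infimum in the blue norm, with the same frequency-support constraint on $\Zbox[t_0]$. We use the \emph{same} decomposition on the purple side and bound each of the five constituents of the $\Z$-norm in turn. The Hilbert-space pieces $\|\Zbox[t_0]\|_{\cH_x^{s_1}}$ and $\|\Zcirc[t_0]\|_{\cH_x^{s_2}}$ are unaffected by the change of random data. For the bilinear term, we write $\<1p>=\<1b>+\<1r>$ and use the triangle inequality, reducing the remaining contribution to $\sum_{L_1\sim L_2}\|P_{L_1}\<1r>\,\cdot P_{L_2}Z\|_{L_t^2 H_x^{-4\delta_1}}$, which is controlled by $T^{3/2}\|\reddotM\|_{\cH_x^{1/2+\beta-\kappa}}\|Z\|_{\X{s_1}{b}}$ via Lemma \ref{phy:lem_bilinear_kt} (exactly as in the proof of Lemma \ref{ftg:lem_type_conversion_blue_purple}), and hence by $T^{3/2}A$ times the blue bilinear term, which itself bounds $\|Z\|_{\X{s_1}{b}}$ after a Strichartz/energy argument.

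For the two $\X{s_2-1}{b_+-1}$-terms the plan is a simple multilinear expansion. Writing $\<1p>=\<1b>+\<1r>$ in both slots and expanding produces, apart from the ``purely blue'' term (which is exactly the corresponding blue norm), three mixed terms in each case. Each mixed term contains at least one factor of $\<1r>$, which on $\Theta_{\red}^{\tsp}(A)$ is of type $Y$ (after scaling by $A^{-1}$), while the remaining factors are of types $\<1b>$, $\Xblue$, or $Y$. Crucially, the full renormalization constants $\MN$ cancel identically between the purple and blue versions, because the renormalization acts on $\Zcirc$ (or is absent in the $\paraboxld$-term) and does not depend on whether the neighbouring factors are $\<1b>$ or $\<1p>$. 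Each mixed term therefore falls into one of the cases covered by the multilinear master estimate (Proposition \ref{local:prop_master}), which gives the $\X{s_2-1}{b_+-1}$-bound $T^{30}A$ times a factor accounting for the type $Y$ ``$\<1r>$'' and the other factors' norms. Combining the pieces and absorbing constants into a single power $T^\alpha A$ yields \eqref{ftg:eq_sp_equivalence_est}.

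The main technical point, and the only step requiring any care, is the handling of the $\paraboxld$-term when $\Zbox$ is paired with one $\<1b>$ and one $\<1r>$: here the type of $\Zbox$ is $w$-like (once multiplied by the implicit norm) rather than one of the ``clean'' types $\<1b>$, $\<3D>$, $X$, $Y$, so one has to verify that the relevant case of Proposition \ref{local:prop_master} applies. This is exactly the case $(\,\<1b>\,,w;\,\<1b>\,)$ resp.\ $(\,\<1b>\,,w;Y)$ in Proposition \ref{local:prop_master}\eqref{local:item_master_1}-\eqref{local:item_master_3}, after invoking Lemma \ref{local:lem_type_conversion} to convert $\Zbox$ (which is structured as a para-controlled function by the very definition of the blue $\Z$-norm) into a function of type $\wblue$. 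Taking $\alpha$ a sufficiently large absolute constant to absorb all the polynomial powers of $T$ and $A$ appearing above, together with the supremum over $N,K$ gained from the $\eta^\prime$-power in the frequency-localized master estimate, completes the proof.
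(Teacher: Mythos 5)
There is a genuine gap in how you treat the fourth and fifth constituents of the $\Z$-norm, specifically the $\nparaboxld$-term. First, $\Zbox$ is \emph{not} ``structured as a para-controlled function by the very definition of the blue $\Z$-norm'': Definition~\ref{global:definition_structured} only requires $\Zbox[t_0]\in\cH_x^{s_1}$ with a frequency-support condition and a bound on certain multilinear expressions; it does not endow $\Zbox$ with the para-controlled form $P_{\leq N}\Duh[\PCtrl(H,P_{\leq N}\<1b>)]$. Consequently Lemma~\ref{local:lem_type_conversion} cannot be invoked (it only converts types $X$ and $Y$ to type $w$, and $\Zbox$ is neither). You can, with a separate argument, show that $\Zbox=Z-\Zcirc$ is of type $w$ up to scaling, using the bilinear bound on $Z$ from the $\Z$-norm and Lemma~\ref{local:lem_type_conversion} for $\Zcirc$; but that is not what you wrote.

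Second, and more seriously, your invocation of the multi-linear master estimate does not produce $\nparaboxld$-bounds. You write that the mixed terms are ``exactly the case $(\,\<1b>\,,w;\,\<1b>\,)$'' of Proposition~\ref{local:prop_master}; but that type is \emph{explicitly excluded} from its hypotheses. The mixed terms from expanding $\<1p>=\<1b>+\<1r>$ in $\nparaboxld(\lcol V\ast(P_{\leq N}\<1p>\cdot P_{\leq N}\Zbox)P_{\leq N}\<1p>\rcol)$ have types $(Y,w;\,\<1b>\,)$, $(\,\<1b>\,,w;Y)$, and $(Y,w;Y)$ (after treating $\<1r>$ as type $Y$), and for $(Y,w;\,\<1b>\,)$ part~\eqref{local:item_master_2} of Proposition~\ref{local:prop_master} only gives a bound on the $\nparald$-piece, not on the $\nparaboxld$-piece that the $\Z$-norm requires. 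Since $\paraboxld$ restricts both individual input frequencies while $\parald$ only restricts the output frequency of the convolution, $\nparaboxld-\nparald=\parald-\paraboxld$ is a nontrivial high$\times$high$\to$low piece that the master estimate does not address. The paper avoids this issue entirely by going directly to the physical-space estimates Proposition~\ref{phy:prop_1b}, Proposition~\ref{phy:prop_neq}, Lemma~\ref{phy:lem_bilinear_tool}, and Proposition~\ref{phy:prop_3b} (and for the $\parald$-term, Corollary~\ref{phy:cor_bilinear}), which are stated in a form that covers the $\nparaboxld$-objects with $\Zbox$ merely in $\X{s_1}{b}$. To repair your argument you would need to supply separate bounds for the $\parald-\paraboxld$ residue, or replace the master-estimate step by the physical estimates as the paper does.
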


\begin{proof}
It suffices to prove the estimate \eqref{ftg:eq_sp_equivalence_est} for events  $\Theta_{\blue}^{\tsp}(A,T)$ and $\Theta_{\red}^{\tsp}(A,T)$ satisfying the probabilistic estimate \eqref{ftg:eq_sp_equivalence_prop}, as long as the lower bound in \eqref{ftg:eq_sp_equivalence_prop} does not depend on $T$.  We can then simply take the intersection of $\Theta_{\blue}^{\tsp}(T\cdot A,T)$ and $\Theta_{\red}^{\tsp}(T \cdot A,T)$ over all integer times and increase $\alpha$ by one. \\
After using Lemma \ref{ftg:lem_type_conversion_blue_purple} to compare the high$\times$high-interaction terms (involving $L_1 \sim L_2$), it remains to prove that 
 \begin{align*}
&\Big| \| \nparaboxld \Big( \lcol  V \ast \big( P_{\leq N} \<1p>  \cdot P_{\leq N} \Zbox_N \big) \, P_{\leq N} \<1p>  \rcol  \|_{\X{s_2-1}{b_+-1}([0,T])} \Big) \\
&-  \| \nparaboxld \Big( \lcol  V \ast \big( P_{\leq N} \<1b>  \cdot P_{\leq N} \Zbox_N \big) \, P_{\leq N} \<1b>  \rcol \Big) \|_{\X{s_2-1}{b_+-1}([0,T])} \Big)\Big|\\
&\lesssim  T^\alpha A\Big( \| \| \Zbox[t_0] \|_{\cH_x^{s_1}} + \sum_{L_1\sim L_2}  \| P_{L_1} \<1b> \, \cdot  P_{L_2} Z \|_{L_t^2 H_x^{-4\delta_1}([0,T]\times \bT^3)}\Big)
\end{align*}
and
\begin{align*}
&\Big| \| \lcol V \ast \big( P_{\leq N} \<1p>  \cdot P_{\leq N} \Zcirc_N \big) \nparald P_{\leq N} \<1p> \rcol  \|_{\X{s_2-1}{b_+-1}([0,T])} \\
 &- \| \lcol V \ast \big( P_{\leq N} \<1b>  \cdot P_{\leq N} \Zcirc_N \big) \nparald P_{\leq N} \<1b> \rcol  \|_{\X{s_2-1}{b_+-1}([0,T])} \Big|\\
 &\lesssim  T^\alpha A \| \Zcirc[t_0] \|_{\cH_x^{s_2}}. 
 \end{align*}
 Regarding the first estimate, we have that 
 
  \begin{align}
&\Big| \| \nparaboxld \Big( \lcol  V \ast \big( P_{\leq N} \<1p>  \cdot P_{\leq N} \Zbox_N \big) \, P_{\leq N} \<1p>  \rcol\Big)  \|_{\X{s_2-1}{b_+-1}([0,T])}  \notag \\
&-  \| \nparaboxld \Big( \lcol  V \ast \big( P_{\leq N} \<1b>  \cdot P_{\leq N} \Zbox_N \big) \, P_{\leq N} \<1b>  \rcol \Big) \|_{\X{s_2-1}{b_+-1}([0,T])} \Big| \notag \\
&\lesssim  \| \nparaboxld \Big( V \ast \big( P_{\leq N} \<1r>  \cdot P_{\leq N} \Zbox_N \big) \, P_{\leq N} \<1b>   \Big) \|_{\X{s_2-1}{b_+-1}([0,T])}  \label{ftg:eq_sp_p1} \\
&+\| \nparaboxld \Big( V \ast \big( P_{\leq N} \<1b>  \cdot P_{\leq N} \Zbox_N \big) \, P_{\leq N} \<1r>  \Big) \|_{\X{s_2-1}{b_+-1}([0,T])}  \label{ftg:eq_sp_p2} \\ 
&+\| \nparaboxld \Big( V \ast \big( P_{\leq N} \<1r>  \cdot P_{\leq N} \Zbox_N \big) \, P_{\leq N} \<1r>   \Big) \|_{\X{s_2-1}{b_+-1}([0,T])} . 
\label{ftg:eq_sp_p3}
\end{align}
We can then control 
\begin{itemize}
\item \eqref{ftg:eq_sp_p1} through Proposition \ref{phy:prop_1b}, 
\item \eqref{ftg:eq_sp_p2} through Proposition \ref{phy:prop_neq} and Lemma \ref{phy:lem_bilinear_tool}, 
\item \eqref{ftg:eq_sp_p2} through Proposition \ref{phy:prop_3b}. 
\end{itemize}
The proof of the second estimate is similar, except that we use Corollary \ref{phy:cor_bilinear} instead of Lemma \ref{phy:lem_bilinear_tool}. 

\end{proof}

\subsection{Multi-linear master estimate for Gibbsian initial data}\label{section:multilinear_Gibbs}
In this subsection, we prove a version of the multi-linear master estimate for Gaussian data (Proposition \ref{local:prop_master}) for the purple types (Definition \ref{ftg:def_types}) instead of the blue types (Definition \ref{local:def_types}). Since we will only need this estimate in Proposition \ref{global:prop_structure_time} and Proposition \ref{global:prop_structure_cubic}, which do not involve contraction or continuity arguments, we can be less precise than in the multi-linear master estimate for Gaussian data and simply capture the size of the forcing term in the following norm. 

\begin{definition}
Let $N \geq 1$, let $\cJ \subseteq \bR$ be a compact interval, and let $R,\varphi\colon \cJ \times \bT^3 \rightarrow \bR$. Then, we define
\begin{equation*}
\| R \|_{\NL_N(J,\varphi)} \defe \inf\Big\{ \| H \|_{\LM(\cJ)} + \| F \|_{\X{s_2-1}{b_+-1}(\cJ)} \colon R = P_{\leq N} \PCtrl[ H, P_{\leq N}\varphi ] + F
\text{ on } \cJ \times \bT^3 \Big\}.
\end{equation*}
\end{definition}

\begin{remark}[Drawback of $\paraboxld$]\label{ftg:rem_drawback}
As mentioned above, the $\NL_N(J,\varphi)$-norm is less precise than our estimates in Section \ref{section:ansatz}, since it does not give an explicit description of the low-frequency modulation $H$. This allows us to circumvent a technical problem which the author was unable to resolve. In Proposition \ref{so5i:prop}, we proved that 
\begin{equation*}
\nparaboxld \Big( \lcol  V \ast \Big( P_{\leq N} \<1b> \cdot P_{\leq N} \<3DN>\Big)~ P_{\leq N} \<1b> \rcol \Big) 
\end{equation*}
lives in $\X{s_2-1}{b_+-1}$. One may therefore expect that 
\begin{equation*}
\nparaboxld \Big( \lcol  V \ast \Big( P_{\leq N} \<1p> \cdot P_{\leq N} \<3DNp>\Big)~ P_{\leq N} \<1p> \rcol \Big) 
\end{equation*}
also lives in $\X{s_2-1}{b_+-1}$. However, after using Proposition \ref{ftg:prop_cubic}, we would need an estimate for 
\begin{equation*}
\nparaboxld \Big( \lcol  V \ast \Big( P_{\leq N} \<1b> \cdot P_{\leq N} Y_N\Big)~ P_{\leq N} \<1b> \rcol \Big) 
\end{equation*}
in $\X{s_2-1}{b_+-1}$. Unfortunately, this is not covered by Proposition \ref{rmt:prop2}. In fact, without any additional assumptions on $Y_N$ other than bounds in $\X{s_2}{b}$, the high$\times$high$\rightarrow$low-interactions in $P_{\leq N} \<1b> \cdot P_{\leq N} Y_N$ rule out this estimate. 
\end{remark}

Equipped with the $\NL$-norm, we now turn to the master estimate for Gibbsian initial data. 
\begin{proposition}[Multi-linear master estimate for Gibbsian initial data]\label{ftg:prop_master}
Let $A\geq 1$, let $T\geq 1$, let $\alpha>0$ be a sufficiently large absolute constant, and let $\zeta=\zeta(\epsilon,s_1,s_2,\kappa,\eta,\eta^\prime,b_+,b)>0$ be sufficiently small. Then, there exists a Borel set $\Theta_{\purple}^{\ms}(A,T)\subseteq \cH_x^{-1/2-\kappa}$ satisfying
\begin{equation}\label{ftg:eq_master_prob}
\mup_M(\purpledot \in \Theta_{\purple}^{\ms}(A,T))\geq 1 - \zeta^{-1} \exp(-\zeta A^\zeta)
\end{equation}
for all $M\geq 1$ and such that the following estimates hold for all $\, \purpledot \in \Theta_{\purple}^{\ms}(A,T)$:\\
Let $\cJ \subseteq [0,T]$ be an interval and let $N\geq 1$. Let $\varphi_1,\varphi_2,\varphi_3 \colon \cJ \times \bT^3 \rightarrow \bR$ be as in Definition \ref{ftg:def_types} and let
\begin{equation*}
 (\varphi_1,\varphi_2;\varphi_3) \netype \big(\, \<1p>\,, \,\<1p>\,;\,\<1p>\, \big), ~  \big(\, \<1p>\,  ,\wpurple;\,\<1p>\, \big).
\end{equation*}
\begin{enumerate}[(i)]
\item \label{ftg:item_master_1} If $\varphi_3 \etype \, \<1p>$, then 
\begin{equation*}
\Big\| P_{\leq N} \Big( \lcol  V \ast \big( P_{\leq N} \varphi_1 \cdot P_{\leq N} \varphi_2 \big)  P_{\leq N} \varphi_3  \rcol \Big)  \Big\|_{\NL_N\big(\cJ,\, P_{\leq N} \, \<1p>\big)} \leq T^{\alpha} A. 
\end{equation*}
\item \label{ftg:item_master_2} In all other cases,
\begin{equation*}
\Big\|\lcol  V \ast \big( P_{\leq N} \varphi_1 \cdot P_{\leq N} \varphi_2 \big)  P_{\leq N} \varphi_3  \rcol  \Big\|_{\X{s_2-1}{b_+-1}(\cJ)} \leq T^{\alpha} A. 
\end{equation*}
\end{enumerate}
\end{proposition}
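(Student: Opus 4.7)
The overall strategy is to reduce everything to the blue master estimate (Proposition \ref{local:prop_master}) via the decomposition $\purpledot = \bluedot + \reddotM$. Using Theorem \ref{theorem:measures} and the absolute continuity of $\mup_M$ with respect to $\nup_M$, it suffices to construct a Borel subset of the ambient space such that for all $(\bluedot,\reddotM)$ in this set the conclusions hold with $\purpledot = \bluedot + \reddotM$; Borel measurability of $\Theta_{\purple}^{\ms}(A,T)$ then follows via a separability argument identical to the one used for $\cS^{\stime}$ in the proof of Proposition \ref{global:prop_structure_time}. The base event is defined as the intersection of $\{\bluedot \in \Theta_{\blue}^{\ms}(A,T) \cap \Theta_{\blue}^{\type}(A,T) \cap \Theta_{\blue}^{\cub}(A,T)\}$, $\{\reddotM \in \Theta_{\red}^{\type}(A,T) \cap \Theta_{\red}^{\cub}(A,T)\}$, and the high-regularity event $\{\|\reddotM\|_{\cH_x^{1/2+\beta-\kappa}} \leq A\}$; each factor has acceptable probability by Theorem \ref{theorem:measures}, Proposition \ref{local:prop_master}, Proposition \ref{ftg:prop_cubic}, Lemma \ref{local:lem_type_conversion}, and Lemma \ref{ftg:lem_type_conversion_blue_purple}.

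On this event, I expand each purple-type factor into blue-type pieces. Specifically: $\<1p> = \<1b> + \<1r>$ where $\<1r>$ has type $Y$ by the high-regularity bound on $\reddot$; $\<3DNp> = \<3DN> + P_{\leq N}\Duh[\PCtrl(H_N^{\purplescript\to\bluescript},P_{\leq N}\<1b>)] + Y_N^{\purplescript\to\bluescript}$ by Proposition \ref{ftg:prop_cubic}, so that up to scalar multiples $\<3DNp>$ decomposes into terms of blue types $\<3D>$, $\Xblue$, and $Y$; functions of type $\wpurple$ become, up to a scalar of size $T^2 A$, functions of type $\wblue$ by Lemma \ref{ftg:lem_type_conversion_blue_purple}; and functions of type $\Xpurple$ become functions of type $\Xblue$ modulo a $Y$-error of size $TA$ by the same lemma. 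Trilinearity of $V\ast(\varphi_1\varphi_2)\varphi_3$ then expresses the left-hand side as a finite sum of trilinear expressions in blue-type inputs.

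For case (ii), where $\varphi_3 \netype \<1p>$, I simply apply Proposition \ref{local:prop_master} to every term in the expansion. The exclusion hypotheses $(\varphi_1,\varphi_2;\varphi_3) \netype (\<1p>,\<1p>;\<1p>)$ and $(\<1p>,\wpurple;\<1p>)$ are only relevant when $\varphi_3 \etype \<1p>$, so they impose no constraint here; the blue exclusions in Proposition \ref{local:prop_master} are trivially inherited, since $\varphi_3$ after expansion is drawn from $\{\<3D>, \wblue, \Xblue, Y\}$ (never $\<1b>$). Each resulting bound contributes at most $T^{30}A$ via case \eqref{local:item_master_3} of Proposition \ref{local:prop_master}, and since the expansion has a bounded number of terms, the total is $\leq T^\alpha A$ for $\alpha$ large enough.

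For case (i), where $\varphi_3 \etype \<1p>$, I first split $V \ast (\varphi_1\varphi_2)\, P_{\leq N}\varphi_3 = V \ast (\varphi_1\varphi_2) \parald P_{\leq N}\varphi_3 + V\ast(\varphi_1\varphi_2)\nparald P_{\leq N}\varphi_3$. The $\nparald$-piece is bounded in $\X{s_2-1}{b_+-1}$ using case \eqref{local:item_master_2} of Proposition \ref{local:prop_master} on each term in the blue expansion; this contributes to $F$ in the definition of $\NL_N$. The $\parald$-piece, after expanding $\varphi_1,\varphi_2$ into blue components, is of the form $\sum V\ast(\psi_1 \psi_2) \parald P_{\leq N}\<1b> + \sum V\ast(\psi_1\psi_2)\parald P_{\leq N}\<1r>$; by Lemma \ref{para:lemma_obj_2} the first sum can be written as $P_{\leq N}\PCtrl[H,P_{\leq N}\<1b>]$ with $\|H\|_{\LM}\leq T^\alpha A$, and after applying Proposition \ref{ftg:prop_cubic} once more to convert $\<1b>$ into $\<1p>$ modulo $Y$-error, this gives the para-controlled piece of the $\NL_N$ decomposition; the second sum is absorbed into $F$ using the high regularity of $\<1r>$ together with Lemma \ref{tools:lem_inhomogeneous_strichartz}. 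The main technical subtlety, and what drives the exclusion $(\<1p>,\wpurple;\<1p>)$, is discussed in Remark \ref{ftg:rem_drawback}: one cannot bound $\nparaboxld(V\ast(\<1b>\cdot Y_N)\<1b>)$ in $\X{s_2-1}{b_+-1}$, but this problematic configuration is precisely the one excluded from the hypothesis, so the argument goes through for all remaining type combinations.
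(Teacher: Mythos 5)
Your overall architecture matches the paper's: reduce to the ambient measure via Theorem~\ref{theorem:measures}, expand purple types into blue pieces, and invoke Proposition~\ref{local:prop_master} term-by-term. The handling of case~(ii) is essentially identical to the paper's. However, your treatment of case~(i) has a genuine gap.

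You propose splitting $V\ast(\varphi_1\varphi_2)\,P_{\leq N}\varphi_3$ \emph{uniformly} into a $\parald$-piece and an $\nparald$-piece, and then controlling the $\parald$-piece by Lemma~\ref{para:lemma_obj_2} after expanding $\varphi_1,\varphi_2$ into blue components $\psi_1,\psi_2$. But Lemma~\ref{para:lemma_obj_2} requires $\psi_1\in\X{-s}{b}$ and $\psi_2\in\X{s}{b}$ for some $s\in[-1,1]$ — i.e.\ the product $\psi_1\psi_2$ must make sense at a dual pair of regularities. When $\varphi_2\etype\<3Dp>$ or $\Xpurple$ (types which are \emph{not} excluded by the hypothesis), the blue expansion produces components $\psi_2$ of type $\<3D>$ (regularity $\beta-\kappa<1/2$) or $\Xblue$ (regularity $s_1<1/2$), while $\psi_1=\<1b>$ has regularity $-1/2-\kappa$. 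Since $\beta<1/2$, the sum of regularities is negative; Lemma~\ref{para:lemma_obj_2} does not apply, and there is no obvious replacement because the inner product $\<1b>\cdot\<3D>$ genuinely fails to be defined pointwise. The paper avoids this by performing a case split first: when $\varphi_2$ contributes a blue component of type $\<3D>$ or $\Xblue$, it uses $\paraboxld$ (so that Lemma~\ref{para:lemma_obj_1}, which only needs $f,g\in\X{-1}{b}$, applies), whereas for the $Y$-component of $\varphi_2$ it uses $\parald$ with Lemma~\ref{para:lemma_obj_2}. The $\nparaboxld$ and $\nparald$ remainders are then estimated by cases~\eqref{local:item_master_1} and~\eqref{local:item_master_2} of Proposition~\ref{local:prop_master} respectively. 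Your final appeal to Remark~\ref{ftg:rem_drawback} actually points toward this: the remark explains exactly why a single paradifferential operator cannot serve all types (the $\nparaboxld$ remainder is not controllable when $\varphi_2\etype Y$), but the offending configuration $(\<1b>,Y;\<1b>)$ arises from the \emph{allowed} type $\<3Dp>$, not from the excluded type $\wpurple$, so the exclusion alone does not rescue the argument. You need the type-dependent choice of $\parald$ versus $\paraboxld$.
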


\begin{proof}
While the proof requires no new ingredients, it relies on several earlier results. For the advantage of the reader, we break up the proof into several steps.\\

\emph{Step 1: Definition of $\Theta_{\purple}^{\ms}(A,T)$ and its Borel measurability.} Using the definition of the time-restricted norms, we see that the statement for all intervals $\cJ\subseteq [0,T]$ is equivalent to the statement for only $\cJ=[0,T]$. Thus, we may simply choose $\Theta_{\purple}^{\ms}(A,T)$ as the set where \eqref{ftg:item_master_1} and \eqref{ftg:item_master_2} hold for all $N\geq 1$. To see that this leads to a Borel measurable set, we note that both $\LM([0,T])$ and $\X{s_2}{b}([0,T])$ are separable. For a fixed $N\geq 1$, we also have that the functions
\begin{equation*}
(\varphi_1,\varphi_2,\varphi_3) \mapsto \Big\| P_{\leq N} \Big( \lcol  V \ast \big( P_{\leq N} \varphi_1 \cdot P_{\leq N} \varphi_2 \big)  P_{\leq N} \varphi_3  \rcol \Big)  \Big\|_{\NL_N\big(\cJ,\, P_{\leq N} \, \<1p>\big)}
\end{equation*}
and 
\begin{equation*}
(\varphi_1,\varphi_2,\varphi_3) \mapsto \Big\|\lcol  V \ast \big( P_{\leq N} \varphi_1 \cdot P_{\leq N} \varphi_2 \big)  P_{\leq N} \varphi_3  \rcol  \Big\|_{\X{s_2-1}{b_+-1}(\cJ)} 
\end{equation*}
are continuous w.r.t. the $C_t^0 H_x^{-1/2-\kappa}([0,T]\times \bT^3)$-norm. Thus, we can represent $\Theta_{\purple}^{\ms}(A,T)$ through countably   many constraints of the same form as in \eqref{ftg:item_master_1} and \eqref{ftg:item_master_2}, and hence as a countable intersection of closed sets. In particular,  $\Theta_{\purple}^{\ms}(A,T)$ is Borel measurable. \\

\emph{Step 2: Reductions.} It therefore remains to show the probabilistic estimate \eqref{ftg:eq_master_prob}. Using the absolute continuity and representation of the reference measures from Theorem \ref{theorem:measures}, it suffices to prove that 
\begin{equation*}
\bP( \, \bluedot + \reddotM \in \Theta_{\purple}^{\ms}(A,T)) \geq 1 - \zeta^{-1} \exp(-\zeta A^\zeta)
\end{equation*}
for all $M\geq 1$. Furthermore, we can replace the upper bound $T^\alpha A$ in \eqref{ftg:item_master_1} and \eqref{ftg:item_master_2} by $CT^\alpha A^C$, where $C=C(\epsilon,s_1,s_2,\kappa,\eta,\eta^\prime,b_+,b)\geq 1$. After the estimate has been proven, this can then be repaired by adjusting $A$ and $\zeta$. Using Lemma \ref{local:lem_type_conversion}, Proposition \ref{local:prop_master}, Corollary \ref{ftg:cor_cubic}, Lemma \ref{ftg:lem_type_conversion_blue_purple}, and Lemma \ref{ftg:lem_type_conversion}, we may restrict to the event
\begin{align*}
&\Big\{ \, \bluedot \in \Theta_{\blue}^{\ms}(A,T) \medcap \Theta_{\blue}^{\type}(A,T)  \medcap \Theta_{\blue}^{\cub}(A,T) \Big\}\, \medcap
\, \Big\{ \reddotM \in  \Theta_{\red}^{\type}(A,T)  \medcap \Theta_{\red}^{\cub}(A,T) \Big\} \\ 
&\medcap \Big\{ \, \bluedot + \reddotM \in \Theta_{\purple}^{\type}(A,T) \Big\}. 
\end{align*}

\emph{Step 3: Multi-linear estimates.} The estimates for $\varphi_3 \netype \, \<1p>$ follow directly from the multi-linear master estimate for $\,\bluedot\,$ and the equivalence of the types in Corollary \ref{ftg:cor_cubic}, Lemma \ref{ftg:lem_type_conversion_blue_purple}, and Lemma \ref{ftg:lem_type_conversion}. It then remains to treat the case $\varphi_3 \etype \, \<1p>$. We further separate the proof of the estimates into two cases.\\

\emph{Step 3.1: $\varphi_1,\varphi_2 \netype \, \<1p>$.} We first remind the reader that in this case the nonlinearity does not require a renormalization. We then decompose 
\begin{align*}
&P_{\leq N} \Big( V \ast \big( P_{\leq N} \varphi_1 \cdot P_{\leq N} \varphi_2 \big)  P_{\leq N} \<1p>  \Big) \\
=& P_{\leq N} \Big(   V \ast \big( P_{\leq N} \varphi_1 \cdot P_{\leq N} \varphi_2 \big)  \parald P_{\leq N} \<1p>  \Big)\\
+& P_{\leq N} \Big(   V \ast \big( P_{\leq N} \varphi_1 \cdot P_{\leq N} \varphi_2 \big)  \nparald P_{\leq N} \<1b>  \Big)\\
+& P_{\leq N} \Big(   V \ast \big( P_{\leq N} \varphi_1 \cdot P_{\leq N} \varphi_2 \big)  \nparald P_{\leq N} \<1r>  \Big). 
\end{align*}
Using Lemma \ref{para:lemma_obj_2}, the first term is of the form $P_{\leq N} \PCtrl(H_N,P_{\leq N} \, \<1p>)$ with $\| H_N \|_{\LM([0,T])} \lesssim T^\alpha A^2$. The second and third term can be controlled through the multi-linear master estimate for Gaussian random data. \\

\emph{Step 3.2: $\varphi_1,\varphi_3\etype \,\<1p>$, $\varphi_2 \netype \, \<1p>$.} Using the equivalence of types (as in Corollary \ref{ftg:cor_cubic} and  Lemma \ref{ftg:lem_type_conversion_blue_purple}) together with the previous cases, it suffices to treat 
\begin{equation*} 
 \varphi_1,\varphi_3\etype \,\<1b>, ~ \varphi_2 \etype \<3D>, ~ \Xblue,~ Y. 
\end{equation*}
We decompose the nonlinearity 
\begin{equation*}
V \ast \big( P_{\leq N} \<1b> \cdot P_{\leq N} \varphi_2 \big)  P_{\leq N} \<1b>  
\end{equation*}
using $\paraboxld$ if $\varphi_2\etype  \<3D>, ~ \Xblue$ and using $\parald$ if $\varphi_2 \etype  Y$. Then, the bound follows from the multi-linear master estimate for Gaussian initial data, Lemma \ref{para:lemma_obj_1}, and Lemma \ref{para:lemma_obj_2}. 
\end{proof}

In Definition \ref{global:definition_structured}, we also introduced a structured perturbation of the initial data, which we briefly examined in Lemma \ref{ftg:lem_structured_perturbation} above. While the multi-linear estimate does not apply to the type $\big(\, \<1p>\,  ,\wpurple;\,\<1p>\, \big)$, we now obtain a multi-linear estimate if the second argument is a linear evolution with initial data as in Definition \ref{global:definition_structured}. Since the definition has been tailored towards this estimate, the prove will be easy and short.

\begin{lemma}[Multi-linear estimate for the structured perturbation]\label{ftg:lem_multilinear_Z}
Let $A\geq 1$, let $T\geq 1$, let $\alpha>0$ be a sufficiently large absolute constant, and let $\zeta=\zeta(\epsilon,s_1,s_2,\kappa,\eta,\eta^\prime,b_+,b)>0$ be sufficiently small. Then, there exists a Borel set $\Theta_{\purple}^{\tsp}(A,T)\subseteq \cH_x^{-1/2-\kappa}$ satisfying
\begin{equation}
\mup_M(\purpledot \in \Theta_{\purple}^{\tsp}(A,T))\geq 1 - \zeta^{-1} \exp(-\zeta A^\zeta)
\end{equation}
for all $M\geq 1$ and such that the following estimates hold for all $\, \purpledot \in \Theta_{\purple}^{\tsp}(A,T)$:\\
Let $N,K\geq 1$, let $t_0\in [0,T]$, let $Z[t_0]\in \cH_x^{-1/2-\kappa}(\bT^3)$, and let $Z(t)$ be the corresponding solution to the linear wave equation. Then, it holds that 
\begin{equation*}
 \Big\|P_{\leq N} \Big[ \lcol  V \ast \big( P_{\leq N} \<1p> \cdot P_{\leq N} Z \big)  P_{\leq N} \<1p> \rcol \Big]   \Big\|_{\NL_N\big([0,T],\, P_{\leq N} \, \<1p>\big)} \leq T^\alpha A  \| Z[t_0] \|_{\mathscr{Z}([0,T], \scriptp;t_0,N,K)}. 
\end{equation*}
\end{lemma}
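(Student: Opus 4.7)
The plan is to define $\Theta_{\purple}^{\tsp}(A,T)$ as the event on which the conclusion holds; using a separability argument as in the proofs of Proposition \ref{ftg:prop_master} and Lemma \ref{ftg:lem_structured_perturbation}, one only needs to verify the estimate for countably many choices of $Z[t_0]$, which will yield Borel measurability. For the probabilistic bound, it suffices (using Theorem \ref{theorem:measures}) to control $\bP(\bluedot+\reddotM \in \Theta_{\purple}^{\tsp}(A,T))$, and I will freely restrict to the intersection of the high-probability events appearing in Lemma \ref{ftg:lem_structured_perturbation} (which already supplies an $L^2_t H_x^{-4\delta_1}$-bilinear bound on the high$\times$high interactions between $\<1p>$ and $Z$) together with the trivial tail bound $\|\<1p>\|_{\X{-1}{b}([0,T])} \lesssim T\|\purpledot\|_{\cH_x^{-1/2-\kappa}} \leq T A$.

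The main step is to pick a decomposition $Z[t_0] = \Zbox[t_0] + \Zcirc[t_0]$ achieving (up to a factor of two) the infimum in the $\mathscr{Z}$-norm, and then to split the renormalized nonlinearity as
\begin{align*}
&\lcol V\ast(P_{\leq N}\<1p> \cdot P_{\leq N}Z) \, P_{\leq N}\<1p>\rcol \\
&= \paraboxld\Big(V\ast(P_{\leq N}\<1p> \cdot P_{\leq N}\Zbox) P_{\leq N}\<1p>\Big) + \nparaboxld\Big(\lcol V\ast(P_{\leq N}\<1p> \cdot P_{\leq N}\Zbox) P_{\leq N}\<1p>\rcol\Big) \\
&\quad + V\ast(P_{\leq N}\<1p> \cdot P_{\leq N}\Zcirc) \parald P_{\leq N}\<1p> + \lcol V\ast(P_{\leq N}\<1p> \cdot P_{\leq N}\Zcirc) \nparald P_{\leq N}\<1p>\rcol.
\end{align*}
The two summands containing the $\nparaboxld$ and $\nparald$ (renormalized) pieces are controlled directly in $\X{s_2-1}{b_+-1}([0,T])$ by $\|Z[t_0]\|_{\mathscr{Z}([0,T],\scriptp;t_0,N,K)}$, which is precisely what these norms were designed for in Definition \ref{global:definition_structured}, and hence contribute to the $F_N$ component of the $\NL_N$-norm.

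For the two remaining (non-resonant) pieces, I would apply Lemma \ref{para:lemma_obj_1} to the $\paraboxld$ term with $f=P_{\leq N}\<1p>$, $g=P_{\leq N}\Zbox$, and $\phi=P_{\leq N}\<1p>$, and Lemma \ref{para:lemma_obj_2} to the $\parald$ term with $f=P_{\leq N}\<1p>$, $g=P_{\leq N}\Zcirc$ (taking $s$ close to $s_2$), and $\phi=P_{\leq N}\<1p>$. Both outputs are of the form $P_{\leq N}\PCtrl(H_N, P_{\leq N}\<1p>)$, and their $\LM([0,T])$-norms are bounded, via the energy estimate (Lemma \ref{tools:lem_energy}) and the inhomogeneous Strichartz estimate, by
\[
T^2 \|\<1p>\|_{\X{-1}{b}}\bigl(\|\Zbox\|_{\X{-1}{b}} + \|\Zcirc\|_{\X{s_2}{b}}\bigr) \lesssim T^\alpha A \bigl(\|\Zbox[t_0]\|_{\cH_x^{s_1}} + \|\Zcirc[t_0]\|_{\cH_x^{s_2}}\bigr),
\]
which is again controlled by $\|Z[t_0]\|_{\mathscr{Z}([0,T],\scriptp;t_0,N,K)}$. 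Adding the contributions yields the stated $\NL_N$-bound.

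The hard part is purely conceptual rather than technical: one must verify that the splitting above is compatible with the renormalization convention of Definition \ref{local:def_renormalization}, that is, that the renormalization constant $\MN P_{\leq N}Z$ is entirely absorbed into the two resonant $\nparaboxld/\nparald$ pieces and does not touch the non-resonant ones where the para-controlled machinery (which would not respect the renormalization) is applied. Once this bookkeeping is set up correctly, the estimate reduces to the routine combination of the two obj-lemmas and the definition of $\mathscr{Z}$, and no new multilinear analysis is required.
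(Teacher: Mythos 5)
Your decomposition of $Z = \Zbox + \Zcirc$ with the $\paraboxld/\nparaboxld$ and $\parald/\nparald$ split is exactly the paper's, as is the use of Lemma \ref{para:lemma_obj_1} and Lemma \ref{para:lemma_obj_2} for the low-frequency modulation $H_N$ together with Definition \ref{global:definition_structured} for the $\X{s_2-1}{b_+-1}$-piece $F_N$. The detour through Lemma \ref{ftg:lem_structured_perturbation} and the energy/inhomogeneous Strichartz estimates is unnecessary (the bilinear $L_t^2 H_x^{-4\delta_1}$-term in the $\Z$-norm plays no role in this particular estimate, and the $\LM$-bound follows directly from the obj-lemmas once $\<1p>$, $\Zbox$, $\Zcirc$ are controlled in $\X{s}{b}$ as free solutions with the appropriate regularities), but these are harmless redundancies rather than errors.
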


\begin{proof}
Let $\Zbox[t_0]$ and $\Zcirc[t_0]$ be as in Definition \ref{global:definition_structured}. Then, we can decompose  
\begin{align*}
&P_{\leq N} \Big[ \lcol  V \ast \big( P_{\leq N} \<1p> \cdot P_{\leq N} Z \big)  P_{\leq N} \<1p> \rcol \Big]  \\
=& \paraboxld \Big( P_{\leq N} \Big[  V \ast \big( P_{\leq N} \<1p> \cdot P_{\leq N} \Zbox \big)  P_{\leq N} \<1p>  \Big] \Big)  \\
+& \nparaboxld \Big( P_{\leq N} \Big[ \lcol  V \ast \big( P_{\leq N} \<1p> \cdot P_{\leq N} \Zbox \big)  P_{\leq N} \<1p> \rcol \Big] \Big)  \\
+&P_{\leq N} \Big[ \lcol  V \ast \big( P_{\leq N} \<1p> \cdot P_{\leq N} \Zcirc \big) \parald  P_{\leq N} \<1p> \rcol \Big]  \\
+&P_{\leq N} \Big[  V \ast \big( P_{\leq N} \<1p> \cdot P_{\leq N} \Zcirc \big)  \nparald P_{\leq N} \<1p>  \Big].  
\end{align*}
The estimate then directly follows from Definition \ref{global:definition_structured}, Lemma \ref{para:lemma_obj_1}, and Lemma \ref{para:lemma_obj_2}. 
\end{proof}

\begin{appendix}
\addtocontents{toc}{\protect\setcounter{tocdepth}{1}} 

\section{Proofs of counting estimates}

\subsection{Cubic counting estimate}

We start with the proof of the cubic counting estimate. 

\begin{proof}[Proof of Proposition \ref{tools:prop_cubic_counting}:]
We separately prove the four counting estimates \eqref{tools:item_cubic_1}-\eqref{tools:item_cubic_4}. \\

\emph{Proof of \eqref{tools:item_cubic_1}:} By symmetry, we can assume that $N_1 \geq N_2 \geq N_3$. Using the basic counting estimate to perform the sum in $n_2 \in \bZ^3$, we obtain that 
\begin{align*}
&\# \{ (n_1,n_2,n_3) \colon |n_1|\sim N_1,|n_2|\sim N_2, |n_3|\sim N_3, |\varphi-m|\leq 1\} \\
&\lesssim \sum_{n_1,n_3\in \bZ^3} \Big( \prod_{j=1,3} 1\big\{ |n_j|\sim N_j \big\} \Big) \min\big( \langle n_{13} \rangle, N_2 \big)^{-1}  N_2^3 \\
&\lesssim N_1^2 N_2^3 N_3^3 +  N_1^3 N_2^2  N_3^3  \\
&\lesssim  N_2^{-1} (N_1 N_2 N_3)^3  , 
\end{align*}
which is acceptable. \\

\emph{Proof of \eqref{tools:item_cubic_2}:} We emphasize that $n_{123}$ is viewed as a free variable. In the variables $(n_{123},n_1,n_2)$, the phase takes the form 
\begin{equation*}
\varphi = \pm_{123} \langle n_{123} \rangle \pm_1 \langle n_1 \rangle  \pm_2 \langle n_2 \rangle  \pm_3 \langle n_{123} - n_1 - n_2  \rangle.
\end{equation*}
After changing $(n_1,n_2) \rightarrow (-n_1,-n_2)$, we obtain the same form as in \eqref{tools:item_cubic_1} and hence the desired estimate.\\

\emph{Proof of \eqref{tools:item_cubic_3}:} In the variables $(n_{123},n_{12},n_1)$, the phase takes the form
\begin{equation*}
\varphi = \pm_{123} \langle n_{123} \rangle \pm_1 \langle n_1 \rangle  \pm_2 \langle n_{12}-n_1 \rangle  \pm_3 \langle n_{123} - n_{12}  \rangle.
\end{equation*}
By first summing in $n_1$ and using the basic counting lemma, we gain a factor of $\min(N_{1},N_{12})$. Alternatively, by first summing in $n_{123}$ and using the basic counting lemma, we gain a factor of $\min(N_{123},N_{12})$. By combining both estimates, we gain a factor of 
\begin{equation*}
\max\big( \min(N_{1},N_{12}), \min(N_{123},N_{12})\big)=  \min\big( N_{12}, \max(N_{123},N_1)\big). 
\end{equation*}
While not part of the proof, we also remark that 
\begin{equation*}
\Big| \langle n_{123} \rangle   + \langle n_1 \rangle  - \langle n_{12}-n_1 \rangle -  \langle n_{123} - n_{12}  \rangle| \lesssim N_{12}. 
\end{equation*}
This shows that we cannot gain a factor of the form $\med(N_{123},N_{12},N_1)$. \\

\emph{Proof of \eqref{tools:item_cubic_4}:} In the variables $(n_{12},n_1,n_3)$, the phase takes the form 
\begin{equation*}
\varphi = \pm_{123} \langle n_{12} + n_3  \rangle \pm_1 \langle n_1 \rangle  \pm_2 \langle n_{12}-n_1 \rangle  \pm_3 \langle n_{3}  \rangle.
\end{equation*}
By first summing in $n_1$ and using the basic counting lemma, we gain a factor of $\min(N_{12},N_1)$. Alternatively, by first summing in $n_3$ and using the basic counting lemma, we gain a factor of $\min(N_{12},N_3)$. By combining both estimates, this completes the argument. The same obstruction as described in  \eqref{tools:item_cubic_3} shows that the estimate is sharp. 
\end{proof}

We now use the cubic counting estimate to prove the cubic sum estimate. 

\begin{proof}[Proof of Proposition \ref{tools:prop_cubic_sum}:]
Due to the symmetry $n_1 \leftrightarrow n_2$, we may assume that $N_1 \geq N_2$. To simplify the notation, we set 
\begin{equation*}
\begin{aligned}
\cC(m) &= \cC(N_1,N_2,N_3,N_{12},N_{123},m) \\
&= \Big\{ (n_1,n_2,n_3) \in (\bZ^3)^3\colon |n_j| \sim N_j, 1 \leq j \leq 3, |n_{12}| \sim N_{12}, |n_{123}| \sim N_{123}, |\varphi - m| \leq 1 \Big\}. 
\end{aligned}
\end{equation*}
We then have that 
\begin{equation}
\begin{aligned}
& \sum_{n_1,n_2,n_3\in \bZ^3} \bigg[ \Big( \prod_{j=1}^{3} \chi_{N_j}(n_j) \Big) \langle n_{123} \rangle^{2(s-1)} \langle n_{12} \rangle^{-2\gamma} \Big( \prod_{j=1}^{3} \langle n_j \rangle^{-2} \Big) 1\big\{ |\varphi-m|\leq 1\big\}\bigg] \\
&\lesssim \sum_{N_{123},N_{12}} N_{123}^{2(s-1)} N_{12}^{-2\gamma} \Big( \prod_{j=1}^{3} N_j^{-2} \Big) \# \cC(m). 
\end{aligned}
\end{equation}
To obtain the optimal estimate, we unfortunately need to distinguish five cases, which we listed in Figure \ref{figure:cases}. Case 1 and 2 distinguish between the high$\times$high and high$\times$low-interactions in the first two factors. This distinction is necessary to utilize the gain in $N_{12}$. The subcases mostly deal with the relation between $N_{12}$ and $N_3$, which is important to use the gain in $N_{123}$. \\

\emph{Case 1.a: $N_1\sim N_2$, $N_1 \ll N_3$.} In this case, $N_{123}\sim N_3$. Using \eqref{tools:item_cubic_4} in Proposition \ref{tools:prop_cubic_counting}, the contribution is bounded by
\begin{align*}
\sum_{ \substack{N_{12}\colon \\ N_{12} \lesssim N_1 }} N_{12}^{-2\gamma} N_1^{-4} N_3^{2s-4} \# \cC(m) 
\lesssim \sum_{ \substack{N_{12}\colon \\ N_{12} \lesssim N_1 }} N_{12}^{2-2\gamma} N_1^{-1} N_3^{2s-1} 
\lesssim  N_1^{1-2\gamma} N_3^{2s-1}, 
\end{align*} 
which is acceptable. In performing the sum, we used that $\gamma < 1$. \\

\emph{Case 1.b.i: $N_1\sim N_2$, $N_1 \gtrsim N_3$, $N_3 \ll N_{12}$.} In this case, $N_{123} \sim N_{12}$.  Using \eqref{tools:item_cubic_4} in Proposition \ref{tools:prop_cubic_counting}, the contribution is bounded by 
\begin{align*}
\sum_{ \substack{N_{12}\colon \\ N_3 \ll  N_{12} \lesssim N_1 }} \hspace{-2ex} N_{12}^{2s -2 -2\gamma} N_1^{-4} N_3^{-2} \# \cC(m)  
\lesssim \sum_{ \substack{N_{12}\colon \\ N_3 \ll  N_{12} \lesssim N_1 }} \hspace{-2ex}   N_{12}^{2s-2\gamma} N_1^{-1} N_3 
\lesssim  \sum_{ \substack{N_{12}\colon \\  N_{12} \lesssim N_1 }}  N_{12}^{2s-2\gamma+1} N_1^{-1} 
\lesssim N_1^{2(s-\gamma)},
\end{align*}
which is acceptable. In performing the sum, we used that $\gamma < s +1/2$. \\

\emph{Case 1.b.i: $N_1\sim N_2$, $N_1 \gtrsim N_3$, $N_3 \gtrsim N_{12}$.}
We note that $N_{123}\lesssim \max(N_{12},N_3) \lesssim N_3$.  Using \eqref{tools:item_cubic_3} in Proposition \ref{tools:prop_cubic_counting}, the contribution is bounded by
\begin{align*}
&\sum_{ \substack{N_{12},N_{123}\colon \\ N_{12}, N_{123} \lesssim N_3 }} \hspace{-2ex} N_{123}^{2s-2} N_{12}^{-2\gamma} N_1^{-4} N_3^{-2} \# \cC(m)  
\lesssim \sum_{ \substack{N_{12},N_{123}\colon \\ N_{12}, N_{123} \lesssim N_3 }} \hspace{-2ex} \min(N_{123},N_{12})^{-1} N_{123}^{2s+1} N_{12}^{3-2\gamma} N_1^{-1} N_3^{-2} \\
&\lesssim N_{1}^{-1} N_3^{2s -2 \gamma+1} \lesssim N_1^{2(s-\gamma)},
\end{align*}
which is acceptable. In the last inequality, we used again that $\gamma < s+1/2$. \\

\emph{Case 2.a: $N_1\gg N_2$, $N_1 \not\sim N_3$.} In this case, $N_{12}\sim N_1$ and $N_{123}\sim \max(N_1,N_3)$. 
 Using \eqref{tools:item_cubic_1} in Proposition \ref{tools:prop_cubic_counting}, the contribution is bounded by
\begin{align*}
&\max(N_1,N_3)^{2s-2} N_1^{-2-2\gamma} N_2^{-2} N_3^{-2} \# \cC(m) \lesssim \max(N_1,N_3)^{2s-2} \min(N_1,N_3)^{-1} N_1^{1-2\gamma} N_2 N_3 \\
&\lesssim \max(N_1,N_3)^{2s-2} \min(N_1,N_3)^{-1} N_1^{2-2\gamma}  N_3 =\max(N_1,N_3)^{2s-1} N_1^{1-2\gamma}. 
\end{align*}
The restriction $s\leq 1/2$ is not strictly necessary for the statement of the proposition, but ensures that the first factor does not grow in $N_1$ or $N_3$, which is essential in applications. \\

\emph{Case 2.a: $N_1\gg N_2$, $N_1 \sim N_3$.} In this case, $N_{12}\sim N_1$. Using \eqref{tools:item_cubic_2} in Proposition \ref{tools:prop_cubic_counting}, the contribution is bounded by
\begin{align*}
\sum_{\substack{N_{123}\colon \\ N_{123}\lesssim N_1}} N_{123}^{2s-2} N_1^{-4-2\gamma} N_2^{-2} \# \cC(m) 
\lesssim \sum_{\substack{N_{123}\colon \\ N_{123}\lesssim N_1}} N_{123}^{2s} N_1^{-1-2\gamma} N_2  
\lesssim N_1^{2s-2\gamma},
\end{align*}
which is acceptable. In performing the sum, we used that $s>0$. 
\end{proof}

\begin{figure}[t]
\begin{tabular}{l|l|l|l|c}
Case & $N_1 \leftrightarrow N_2$ & $N_1 \leftrightarrow N_3$ & $N_{3} \leftrightarrow N_{12}$ & Basic counting estimate \\\hline
1.a    & $N_1 \sim N_2$ &  $N_1 \ll N_3 $    & & \eqref{tools:item_cubic_4} \\ 
1.b.i    & $N_1 \sim N_2$ &  $ N_1 \gtrsim N_3  $    &  $N_3 \ll N_{12}$ & \eqref{tools:item_cubic_4} \\ 
1.b.ii  & $N_1 \sim N_2$ &  $ N_1 \gtrsim N_3 $    & $N_3 \gtrsim N_{12}$ & \eqref{tools:item_cubic_3} \\ 
2.a   & $N_1 \gg N_2$ &  $ N_1 \not \sim N_3 $    & & \eqref{tools:item_cubic_1} \\ 
2.b    & $N_1 \gg N_2$ &  $ N_1 \sim N_3 $    & & \eqref{tools:item_cubic_2} \\ 
\end{tabular}
\caption{Case distinction in the proof of Proposition \ref{tools:prop_cubic_sum}.}
\label{figure:cases}
\end{figure}

\subsection{Cubic sup-counting estimates}

\begin{proof}[Proof of Lemma \ref{tools:lem_sup}:]
We prove the four estimates separately. 

\emph{Proof of \eqref{tools:item_sup_1}:} By symmetry, we can assume without loss of generality that $N_1 \geq N_2 \geq N_3$. Using the basic counting estimate in $n_2\in \bZ^3$, we have that 
\begin{align*}
& \# \Big\{ (n_1,n_2,n_3)\colon |n_1|\sim N_1, |n_2|\sim N_2, |n_3|\sim N_3, n=n_{123}, |\varphi-m|\leq 1\Big\}\\
&\lesssim  \# \Big\{ (n_2,n_3)\colon  |n_2|\sim N_2, |n_3|\sim N_3, 
|\pm_{123} \langle n \rangle \pm_1 \langle n - n_{23} \rangle \pm_2 \langle n_2 \rangle \pm_3 \langle n_3 \rangle -m|\leq 1\Big\} \\
&\lesssim \sum_{n_3 \in \bZ^3} 1\big\{ |n_3| \sim N_3\big\} \min\big( \langle n - n_3 \rangle, N_2 \big)^{-1} N_2^3 \\
&\lesssim N_2^3 N_3^2. 
\end{align*}

\emph{Proof of \eqref{tools:item_sup_2}:} The proof is essentially the same as the proof of \eqref{tools:item_sup_1} and we omit the details. \\

\emph{Proof of \eqref{tools:item_sup_3}:} Using the basic counting estimate in $n_2\in \bZ^3$, we have that
\begin{align*}
& \# \Big\{ (n_{12},n_2,n_3)\colon |n_{12}|\sim N_{12}, |n_2|\sim N_2, |n_3|\sim N_3, n=n_{123}, |\varphi-m|\leq 1\Big\} \\
&\lesssim \# \Big\{ (n_{12},n_2) \colon |n_{12}|\sim N_{12}, |n_2|\sim N_2,  
|\pm_{123} \langle n \rangle \pm_1 \langle n_{12} - n_{2} \rangle \pm_2 \langle n_2 \rangle \pm_3 \langle n - n_{12} \rangle -m|\leq 1\Big\} \\
&\lesssim \sum_{n_{12} \in \bZ^3}  1\big\{ |n_{12}| \sim N_{12}\big\} \min(N_{12},N_2)^{-1} N_2^3 \\
&\lesssim \min(N_{12},N_2)^{-1} N_{12}^3 N_2^3.
\end{align*}

\emph{Proof of \eqref{tools:item_sup_4}:}  The proof is essentially the same as the proof of \eqref{tools:item_sup_3} and we omit the details.

\end{proof}

\subsection{Para-controlled cubic counting estimates}

\begin{proof}[Proof of Lemma \ref{tools:lem_paracontrolled_counting}:]

To simplify the notation, we set $N_{\max}=\max(N_1,N_2,N_3)$. For $0<\gamma<\beta$, we have that
\begin{equation*}
\langle n_{12} \rangle^{-2\beta} \lesssim \langle n_{12} \rangle^{-2\gamma} \lesssim \langle n_1 \rangle^{-2\gamma} \langle n_2 \rangle^{2\gamma}. 
\end{equation*}

Together with \eqref{tools:item_sup_2} from Lemma \ref{tools:lem_sup}, this yields 
\begin{align*}
&\sum_{n_1,n_3\in \bZ^3} \Big( \prod_{j=1,3} 1\big\{ |n_j| \sim N_j \big\} \Big) \langle n_{123} \rangle^{2(s_2-1)} \langle n_{12} \rangle^{-2\beta} \langle n_1 \rangle^{-2} \langle n_3 \rangle^{-2} \, 1\big\{ |\varphi-m|\leq 1\big\} \\
&\lesssim N_1^{-2-2\gamma} N_2^{2\gamma} N_3^{-2} \sum_{N_{123}} N_{123}^{2(s_2-1)} 
\# \big\{ (n_1,n_3) \colon |n_{123}| \sim N_{123}, |n_1|\sim N_1, |n_3|\sim N_3, |\varphi-m|\leq 1\big\} \\
&\lesssim N_1^{-2-2\gamma} N_2^{2\gamma} N_3^{-2} \sum_{\substack{N_{123}\colon  \\ |N_{123}| \lesssim N_{\max} } }  N_{123}^{2(s_2-1)}  \med\big(N_{123},N_1,N_3\big)^3\min\big(N_{123},N_1,N_3\big)^2. 
\end{align*}
Using that $\med\big(N_{123},N_1,N_3\big)^3\min\big(N_{123},N_1,N_3\big)^2 \lesssim N_{123} N_1^2 N_3^2$, we obtain that 
\begin{align*}
&N_1^{-2-2\gamma} N_2^{2\gamma} N_3^{-2} \sum_{\substack{N_{123}\colon  \\ |N_{123}| \lesssim N_{\max} } }  N_{123}^{2(s_2-1)}  \med\big(N_{123},N_1,N_3\big)^3\min\big(N_{123},N_1,N_3\big)^2 \\
&\lesssim N_1^{-2\gamma} N_2^{2\gamma}\sum_{\substack{N_{123}\colon  \\ |N_{123}| \lesssim N_{\max} } } N_{123}^{2s_2-1} 
\lesssim N_{\max}^{2\delta_2}  N_1^{-2\gamma} N_2^{2\gamma} . 
\end{align*}
\end{proof}

\subsection{Quartic counting estimate}

\begin{proof}[Proof of Lemma \ref{tools:lemma_nonresonant_quartic}:]
Using the upper bound on $s$, we can first sum in $n_4\in \bZ^3$ and obtain that 
\begin{align*}
&\sum_{n_1,n_2,n_3,n_4\in \bZ^3} \Big( \prod_{j=1}^{4}1\big\{ |n_j|\sim N_j \big\} \Big) \langle n_{1234} \rangle^{2s} \langle n_{123} \rangle^{-2} 
|\widehat{V}_S(n_1,n_2,n_3)|^2 
\Big(\prod_{j=1}^4 \langle n_j \rangle^{-2}\Big) 1\big\{ |\varphi-m|\leq 1\big\} \\
&\lesssim N_4^{-2\eta} \sum_{n_1,n_2,n_3\in \bZ^3} \Big( \prod_{j=1}^{3}1\big\{ |n_j|\sim N_j \big\} \Big)  \langle n_{123} \rangle^{-2} 
|\widehat{V}_S(n_1,n_2,n_3)|^2 
\Big(\prod_{j=1}^3 \langle n_j \rangle^{-2}\Big) 1\big\{ |\varphi-m|\leq 1\big\}  . 
\end{align*}
The remaining sum in $n_1,n_2$, and $n_3$ can then be estimated using Proposition \ref{tools:prop_cubic_sum}, which yields the desired estimate. 
\end{proof}

After the proof of the non-resonant quartic sum estimate (Lemma \ref{tools:lemma_nonresonant_quartic}), we now turn to the resonant quartic sum estimate. We begin with the basic resonance estimate (Lemma \ref{tools:lem_basic_resonance}), which forms the main part of the proof.

\begin{proof}[Proof of Lemma \ref{tools:lem_basic_resonance}:]
Since $n_1,n_2 \in \bZ^3$ are fixed and the phase $\varphi$ is globally Lipschitz, there are at most $\sim N_1$ non-trivial choices of $m\in \bZ$. Due to the $\log$-factor in \eqref{tools:eq_basic_resonance}, it suffices to prove 

\begin{equation*}
\sup_{m\in \bZ} \sum_{n_3 \in \bZ^3} 1\big\{ |n_3|\sim N_3\big\} 
 \langle n_{123} \rangle^{-1} \langle n_{3} \rangle^{-2} 1\big\{ |\varphi-m| \leq 1 \big\} \lesssim \langle n_{12} \rangle^{-1}. 
\end{equation*}
By inserting an additional dyadic localization, we obtain that 
\begin{equation}\label{appendix:eq_basic_resonance_p1}
\begin{aligned}
& \sum_{n_3 \in \bZ^3} 1\big\{ |n_3|\sim N_3\big\} 
 \langle n_{123} \rangle^{-1} \langle n_{3} \rangle^{-2} 1\big\{ |\varphi-m| \leq 1 \big\} \\
 &\leq N_3^{-2} \sum_{N_{123}\geq 1}  N_{123}^{-1}  \sum_{n_3 \in \bZ^3}  1\big\{ |n_{123}|\sim N_{123}\big\} 1\big\{ |n_3|\sim N_3\big\} 
1\big\{ |\varphi-m| \leq 1 \big\}. 
\end{aligned}
\end{equation}

To simplify the notation, we write $N_{12}$ for the dyadic scale of $n_{12}\in \bZ^3$. Using Lemma \ref{tools:lem_two_balls}, we have that 
\begin{align*}
&N_{123}^{-1} N_3^{-2} \sum_{n_3 \in \bZ^3} \ 1\big\{ |n_{123}|\sim N_{123}\big\} 1\big\{ |n_3|\sim N_3\big\} 
 1\big\{ |\varphi-m| \leq 1 \big\} \\
 &\lesssim N_{123}^{-1} N_3^{-2} \min(N_{123},N_{12},N_3)^{-1} \min(N_{123},N_3)^3. 
\end{align*}
We now separate the contributions of the three cases $N_{123} \ll N_3$, $N_{123} \sim N_3$, $N_{123} \gg N_3$. In the following, we implicitly restrict the sum over $N_{123}$ to values which are consistent with $|n_{123}|\sim N_{123}$, $|n_{12}| \sim N_{12}$, and $|n_{3}| \sim N_3$ for some $n_1,n_2,n_3\in \bZ^3$. 

If $N_{123} \ll N_3$, then $N_{12} \sim N_3$. Thus, 
\begin{equation*}
 \sum_{N_{123} \ll N_3} N_{123}^{-1} N_3^{-2} \min(N_{123},N_{12},N_3)^{-1} \min(N_{123},N_3)^3 \lesssim 1\big\{ N_{12} \sim N_3\big\} \sum_{N_{123} \ll N_3}  N_{123} N_3^{-2} \lesssim N_{12}^{-1}. 
\end{equation*}
If $N_{123} \sim N_3$, then $N_{12} \lesssim N_{123} \sim N_3 $. Thus, 
\begin{equation*}
\sum_{N_{123} \sim N_3} N_{123}^{-1} N_3^{-2} \min(N_{123},N_{12},N_3)^{-1} \min(N_{123},N_3)^3 \sim N_{12}^{-1}. 
\end{equation*}
Finally, if $N_{123} \gg N_3$, then $N_{123}\sim N_{12} \gg N_3$. Thus, 
\begin{equation*}
 \sum_{N_{123} \gg N_3} N_{123}^{-1} N_3^{-2} \min(N_{123},N_{12},N_3)^{-1} \min(N_{123},N_3)^3 = N_{12}^{-1} N_3^{-2} N_3^{-1} N_3^3 \sim N_{12}^{-1}. 
\end{equation*}
This completes the proof. 
\end{proof}

The resonant quartic sum estimate (Lemma \ref{tools:lemma_resonant_quartic}) is now an easy consequence of the basic resonance estimate (Lemma \ref{tools:lem_basic_resonance}). 

\begin{proof}[Proof of Lemma \ref{tools:lemma_resonant_quartic}:] 
Using Lemma \ref{tools:lem_basic_resonance}, we have that 
\begin{align*}
&\sum_{n_1,n_2 \in \bZ^3} \bigg[  \Big( \prod_{j=1}^2 1 \big\{ |n_j| \sim N_j \big\} \langle n_{12} \rangle^{2s} \langle n_1 \rangle^{-2} \langle n_2 \rangle^{-2} \\
&\times \Big( \sum_{m\in \bZ} \sum_{n_3 \in \bZ^3} \langle m \rangle^{-1} 1\big\{ |n_3|\sim N_3\big\} 
 \langle n_{123} \rangle^{-1} \langle n_{3} \rangle^{-2} 1\big\{ |\varphi-m| \leq 1 \big\} \Big)^2  \bigg] \\
 &\lesssim \log(2+N_3)^2 \sum_{n_1,n_2 \in \bZ^3} \bigg[  \Big( \prod_{j=1}^2 1 \big\{ |n_j| \sim N_j \big\} \langle n_{12} \rangle^{2s-2} \langle n_1 \rangle^{-2} \langle n_2 \rangle^{-2} \bigg] \\
 &\lesssim \log(2+N_3)^2  \max(N_1,N_2)^{2s}. 
\end{align*}
\end{proof}

\subsection{Quintic counting estimates}
Before we turn to the proof of the non-resonant quintic counting estimate, we isolate a helpful auxiliary lemma. 

\begin{lemma}[Frequency-scale estimate] \label{appendix:lem_freq_scale}
Let $N_1,N_2,N_{1345},N_{12345}$ be frequency scales which can be achieved by frequencies $n_1,\hdots,n_5\in \bZ^3$, i.e., satisfying
\begin{equation*}
1\big\{ |n_1|\sim N_1 \big\} \cdot 1\big\{ |n_2|\sim N_2 \big\} \cdot 1\big\{ |n_{1345}|\sim N_{1345} \big\}\cdot 1\big\{ |n_{12345}|\sim N_{12345} \big\}\not \equiv 0. 
\end{equation*}
Then, it holds that
\begin{equation*}
 \frac{\min(N_2,N_{12345})^2  \min(N_1,N_{1345})}{\min(N_{12345},N_{1345},N_2) } \lesssim N_2 \cdot N_{12345}. 
\end{equation*}
\end{lemma}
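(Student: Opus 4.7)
The plan is to reduce everything to the triangle inequality constraint coming from the relation $n_{12345} = n_{1345} + n_2$, which forces $N_{12345} \lesssim N_{1345} + N_2$ and $N_{1345} \lesssim N_{12345} + N_2$. To lighten notation I will write $A = N_2$, $B = N_{12345}$, $C = N_{1345}$, so the task is to show
\begin{equation*}
\frac{\min(A,B)^2 \cdot \min(N_1, C)}{\min(A,B,C)} \lesssim A \cdot B.
\end{equation*}

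My plan is to split into three cases according to which of $A$, $B$, $C$ realizes the minimum in the denominator. The easy case is when $\min(A,B,C) = C$: then $\min(N_1,C) \leq C$ cancels with the $C$ in the denominator, leaving $\min(A,B)^2 \leq A \cdot B$, which is trivially the desired bound. The two remaining cases are symmetric in spirit and will both rely on the triangle inequalities above.

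When $\min(A,B,C) = B$ (so $B \leq A$ and $B \leq C$), the triangle inequality $C \lesssim B + A$ combined with $B \leq A$ gives $C \lesssim A$. Hence $\min(N_1,C) \leq C \lesssim A$, and the left-hand side becomes $B \cdot \min(N_1,C) \lesssim A \cdot B$. Symmetrically, when $\min(A,B,C) = A$ (so $A \leq B$ and $A \leq C$), the inequality $C \lesssim B + A$ and $A \leq B$ yield $C \lesssim B$, so $\min(N_1,C) \lesssim B$ and the left-hand side becomes $A \cdot \min(N_1,C) \lesssim A \cdot B$. In all three cases we obtain the claimed estimate.

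I do not anticipate any genuine obstacle here: the estimate is purely a book-keeping inequality, and the only nontrivial structural input is the triangle inequality between $N_2$, $N_{12345}$, and $N_{1345}$ forced by $n_{12345} - n_{1345} = n_2$. The role of $N_1$ is essentially inert, entering only through the bound $\min(N_1,C) \leq C$, which is why the factor $\min(N_1, N_{1345})$ never needs to be analyzed in more detail than taking the trivial upper bound by $N_{1345}$.
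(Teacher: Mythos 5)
Your proof is correct and rests on the same essential ingredients as the paper's: the trivial bound $\min(N_1,N_{1345})\leq N_{1345}$ and the triangle inequality $N_{1345}\lesssim N_2 + N_{12345}$. The paper packages this more compactly by invoking the identity $xy/\min(x,y)=\max(x,y)$ (with $x=\min(N_2,N_{12345})$, $y=N_{1345}$) instead of casing on which of $A,B,C$ achieves the minimum, but the underlying argument is the same.
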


\begin{proof}
By using the properties of $\min$ and $\max$, we have that 
\begin{align*}
 \frac{\min(N_2,N_{12345}) \min(N_1,N_{1345})}{\min(N_{12345},N_{1345},N_2) } 
\lesssim \frac{\min(N_2,N_{12345}) N_{1345}}{\min(N_{12345},N_{1345},N_2) } 
\lesssim \max\big( \min(N_2,N_{12345}), N_{1345} \big). 
\end{align*}
Since $N_{1345}\lesssim \max(N_2,N_{12345})$, this yields 
\begin{align*}
 \frac{\min(N_2,N_{12345})^2  \min(N_1,N_{1345})}{\min(N_{12345},N_{1345},N_2) } 
\lesssim \min(N_2,N_{12345}) \cdot \max(N_2,N_{12345}) 
=N_2 N_{12345}. 
\end{align*}

\end{proof}

\begin{proof}[Proof of Lemma \ref{tools:lem_counting_quintic}:]
Let $m,m^\prime \in \bZ$ be arbitrary. We introduce $N_{12345}$ and $N_{1345}$ to further decompose according to the size of $n_{12345}$ and $n_{1345}$. Using the two-ball basic counting lemma (Lemma \ref{tools:lem_two_balls}) for the sum in $n_2 \in \bZ^3$ and summing in $n_1 \in \bZ^3$ directly, we obtain that 
\begin{align*}
& \sum_{n_1,\hdots,n_5\in \bZ^3} \bigg[ \Big( \prod_{j=1}^{5} 1\big\{|n_j|\sim N_j\} \big) 1\big\{ |n_{12345} | \sim N_{12345} \big\} 1\big\{ |n_{1345}| \sim N_{1345} \big\} \\
&\hspace{2ex}\times \langle n_{12345} \rangle^{2(s-1)} \langle n_{1345} \rangle^{-2\beta} 
\langle n_{345} \rangle^{-2} \langle n_{34} \rangle^{-2\beta} \Big( \prod_{j=1}^5 \langle n_j \rangle^{-2} \Big) \\
&\hspace{2ex}\times  1\big\{ |\psi-m|\leq 1 \big\} \cdot \Big( 1\big\{ |\varphi-m^\prime|\leq 1\} +  1\big\{ |\widetilde{\varphi}-m^\prime|\leq 1\} \Big)  \bigg] \allowdisplaybreaks[3] \\
&\lesssim N_{12345}^{2(s-1)} N_{1345}^{-2\beta}  \min(N_{12345},N_{1345},N_2)^{-1} \min(N_2,N_{12345})^3 \prod_{j=1}^5 N_j^{-2} \\
&\times \sum_{n_1,n_3,n_4,n_5\in \bZ^3}  \Big( \prod_{j=1,3,4,5} 1\big\{|n_j|\sim N_j\} \Big) 1\big\{ |n_{1345}| \sim N_{1345} \big\} 
\langle n_{345} \rangle^{-2} \langle n_{34} \rangle^{-2\beta}    1\big\{ |\psi-m|\leq 1 \big\}\allowdisplaybreaks[3] \\
&\lesssim  N_{12345}^{2(s-1)} N_{1345}^{-2\beta}  \min(N_{12345},N_{1345},N_2)^{-1} \min(N_2,N_{12345})^3  \min(N_1,N_{1345})^3 \prod_{j=1}^5 N_j^{-2} \\
 &\times \sum_{n_3,n_4,n_5\in \bZ^3}  \Big( \prod_{j=3}^5 1\big\{|n_j|\sim N_j\} \Big) 
\langle n_{345} \rangle^{-2} \langle n_{34} \rangle^{-2\beta} 1\big\{ |\psi-m|\leq 1 \big\}. 
\end{align*}
Using Proposition \ref{tools:prop_cubic_sum} with $s=0$ and $\gamma=\beta$ to bound the remaining sum in $n_3,n_4$, and $n_5$, we obtain a bound of the total contribution by
\begin{equation*}
 N_{12345}^{2(s-1)}(N_1 N_2)^{-2}  \frac{\min(N_2,N_{12345})^3  \min(N_1,N_{1345})^3}{\min(N_{12345},N_{1345},N_2) }   (N_{1345} \max(N_3,N_4,N_5))^{-2\beta}. 
\end{equation*}
As long as the contribution is non-trivial, it holds that $N_{1345} \max(N_3,N_4,N_5)\gtrsim \max(N_1,N_3,N_4,N_5)$. Thus, it remains to prove that 
\begin{equation*}
 N_{12345}^{2(s-1)}(N_1 N_2)^{-2}  \frac{\min(N_2,N_{12345})^3  \min(N_1,N_{1345})^3}{\min(N_{12345},N_{1345},N_2) } \lesssim N_2^{-2\eta},
\end{equation*}
which follows from a short calculation. Indeed, using Lemma \ref{appendix:lem_freq_scale}, we can estimate the left-hand side by 
\begin{align*}
& N_{12345}^{2(s-1)}(N_1 N_2)^{-2}  \frac{\min(N_2,N_{12345})^3  \min(N_1,N_{1345})^3}{\min(N_{12345},N_{1345},N_2) }\\
&\lesssim N_{12345}^{2s-1} \min(N_2,N_{12345}) \min(N_1,N_{1345})^2 N_1^{-2} N_2^{-1} \\
&\lesssim N_{12345}^{2s-1+2\eta} N_2^{-2\eta}. 
\end{align*}
Due to our condition on $s$, this is acceptable. 
\end{proof}

We now prove the double-resonance quintic counting estimate.

\begin{proof}[Proof of Lemma \ref{tools:lem_counting_double_resonance_quintic}:]
We also use a dyadic localization to $|n_{345}| \sim N_{345}$ and $|n_{45}|\sim N_{45}$. By paying a factor of $\log(2+\max(N_4,N_5))^2$, it suffices to estimate the maximum over   $N_{345},N_{45}$ instead of the sum.  We do not require a logarithmic loss in $N_3$, since $N_3 \gg N_4,N_5$ implies that there are only $\sim 1$ non-trivial choices for $N_{345}$. We first sum in $n_3\in \bZ^3$ using the two-ball basic counting lemma (Lemma \ref{tools:lem_two_balls}). We then sum in $n_4\in \bZ^3$ using only the dyadic constraint. This yields
\begin{align*}
&N_3^{-2} N_4^{-2}  \sup_{m\in \bZ^3} \sup_{|n_5|\sim N_5} \sum_{n_3,n_4\in \bZ^3} \bigg[ \Big( \prod_{j=3}^4 1\big\{ |n_j|\sim N_j \big \}\Big) 1\big\{ |n_{345}|\sim N_{345}\big\} 
 1\big\{ |n_{45}|\sim N_{45}\big\} \langle n_{345} \rangle^{-1} \langle n_{45} \rangle^{-\beta} \\
&\times  1\big\{ \langle n_{345} \rangle \pm_3 \langle n_3 \rangle \pm_4 \langle n_4 \rangle \pm_5 \langle n_5 \rangle \in [m,m+1) \big\} \bigg] \\
&\lesssim \min(N_{345},N_{45},N_3)^{-1} \min(N_3,N_{345})^3 N_{345}^{-1} N_{45}^{-\beta} N_3^{-2} N_4^{-2} \sum_{n_4 \in \bZ^3} 1\big\{ |n_4|\sim N_4 \big \}1\big\{ |n_{345}|\sim N_{345}\big\}\\
&\lesssim  \min(N_{345},N_{45},N_3)^{-1} \min(N_3,N_{345})^3 \min(N_{4},N_{45})^3  N_{345}^{-1} N_{45}^{-\beta}  N_3^{-2} N_4^{-2}. 
\end{align*}
Using a minor variant of Lemma \ref{appendix:lem_freq_scale}, this contribution is bounded by 
\begin{align*}
N_{45}^{-\beta} N_3^{-1} N_4^{-2} \min(N_3,N_{345})  \min(N_4,N_{45})^2 \lesssim \max(N_4,N_{45})^{-\beta} \lesssim \max(N_4,N_5)^{-\beta}. 
\end{align*}
\end{proof}
\subsection{Septic counting estimates}
\begin{proof}[Proof of Lemma \ref{tools:lem_counting_septic}:]
Using the decay of $\widehat{V}$, it suffices to prove 
\begin{equation}\label{appendix:eq_septic_p0}
\begin{aligned}
&\sum_{(n_j)_{j\not \in \scrP}}  \langle n_{\text{nr}} \rangle^{2(s-1)}
 \bigg(  \sum_{(n_j)_{j \in \scrP}}^{\ast}  1\big\{ |n_{1234567}|\sim N_{1234567}\big\} 1\big\{|n_{567}| \sim N_{567} \big\} 1\big\{|n_4|\sim N_4\big\}  \\
 &\times  \Phi(n_1,n_2,n_3) \langle n_4 \rangle^{-1} \Phi(n_5,n_6,n_7) \bigg)^2 \\
 &\lesssim \log(2+N_4)^2 \Big( N_{1234567}^{2(s-\frac{1}{2})}   N_{567}^{-2(\beta-\eta)} + N_{1234567}^{-2 (1-s+\eta)} \Big). 
\end{aligned}
\end{equation}
The argument relies on two of our previous estimates. Using the cubic sum estimate (Proposition \ref{tools:prop_cubic_sum}), we have that for all $N_{123}\geq 1$ that 
\begin{equation}\label{appendix:eq_septic_p1}
\sum_{n_1,n_2,n_3 \in \bZ^3} 1\big\{ |n_{123}| \sim N_{123} \big\} \Big( \prod_{j=1}^3 \langle n_j \rangle^{\frac{\eta}{3}} \Big) \Phi^2(n_1,n_2,n_3) \lesssim N_{123}^{-2(\beta-\eta)}. 
\end{equation}
Using the basic resonance estimate (Lemma \ref{tools:lem_basic_resonance}), we have for all $N_3 \geq 1$ that 
\begin{equation}\label{appendix:eq_septic_p2}
\sum_{n_3\in \bZ^3} 1\big\{ |n_3|\sim N_3\big\} \langle n_3 \rangle^{-1} \Phi(n_1,n_2,n_3) \lesssim \log(2+N_3) \langle n_{12} \rangle^{-1} \langle n_1\rangle^{-1} \langle n_2 \rangle^{-1}. 
\end{equation}
Using the symmetry of $\Phi$, it remains to consider the following three cases.

\emph{Case 1: $j=4$ is unpaired.}  By first using Cauchy-Schwarz, summing in $n_4$, and then using \eqref{appendix:eq_septic_p1}, we obtain that  
\begin{align*}
&\sum_{(n_j)_{j\not \in \scrP}} \langle n_{\text{nr}} \rangle^{2(s-1)}
 \bigg(  \sum_{(n_j)_{j \in \scrP}}^{\ast}  1\big\{ |n_{1234567}|\sim N_{1234567}\big\} 1\big\{|n_{567}| \sim N_{567} \big\} 1\big\{|n_4|\sim N_4\big\}  \\
 &\times  \Phi(n_1,n_2,n_3) \langle n_4 \rangle^{-1} \Phi(n_5,n_6,n_7) \bigg)^2 \\
 &\lesssim \sum_{(n_j)_{j\not \in \scrP}} \bigg[ 1\big\{ |n_{\text{nr}}|\sim N_{1234567}\big\} \langle n_{\text{nr}} \rangle^{2(s-1)} \langle n_4 \rangle^{-2} 
 \Big( \sum_{(n_j)_{j \in \scrP}}^{\ast}   \Phi(n_1,n_2,n_3)^2 \Big)     \\
 &\times   \Big( \sum_{(n_j)_{j \in \scrP}}^{\ast} 1\big\{ n_{567} \sim N_{567} \big\}\Phi(n_5,n_6,n_7)^2 \Big) \bigg]  \\
 &\lesssim N_{1234567}^{2(s-\frac{1}{2})}  \sum_{(n_j)_{ \substack{j\not \in \scrP \wedge j \neq 4}}} 
  \Big( \sum_{(n_j)_{j \in \scrP}}^{\ast}   \Phi(n_1,n_2,n_3)^2 \Big)     
  \Big( \sum_{(n_j)_{j \in \scrP}}^{\ast} 1\big\{ n_{567} \sim N_{567} \big\}\Phi(n_5,n_6,n_7)^2 \Big) \bigg]  \\
  &= N_{1234567}^{2(s-\frac{1}{2})}  \Big( \sum_{n_1,n_2,n_3\in \bZ^3}   \Phi(n_1,n_2,n_3)^2 \Big)     
  \Big( \sum_{n_5,n_6,n_7\in \bZ^3} 1\big\{ n_{567} \sim N_{567} \big\}\Phi(n_5,n_6,n_7)^2 \Big) \bigg]  \\
  &\lesssim  N_{1234567}^{2(s-\frac{1}{2})} N_{567}^{-2(\beta-\eta)}. 
 \end{align*}
 This contribution is acceptable.
 
 \emph{Case 2: $(3,4)\in \scrP$.}  We let $\scrP^\prime$ be the pairing on $\{1,2,5,6,7\}$ obtained by removing the pair $(3,4)$ from $\scrP$. We also understand the condition $j \not\in \scrP^\prime$ as a subset of $\{1,2,5,6,7\}$. By first using \eqref{appendix:eq_septic_p2} and then Cauchy-Schwarz, we have that 
 \begin{align*}
&\sum_{(n_j)_{j\not \in \scrP}} \langle n_{\text{nr}} \rangle^{2(s-1)}
 \bigg(  \sum_{(n_j)_{j \in \scrP}}^{\ast}  1\big\{ |n_{1234567}|\sim N_{1234567}\big\} 1\big\{|n_{567}| \sim N_{567} \big\} 1\big\{|n_4|\sim N_4\big\}  \\
 &\times  \Phi(n_1,n_2,n_3) \langle n_4 \rangle^{-1} \Phi(n_5,n_6,n_7) \bigg)^2 \\
 &\lesssim \log(2+N_4)^2 N_{1234567}^{2(s-1+\eta)} \\
 &\times  \sum_{(n_j)_{j\not \in \scrP^\prime}} \langle n_{\text{nr}} \rangle^{-2\eta} \Big( \sum_{(n_j)_{j \in \scrP^\prime}}^{\ast}  1\big\{|n_{567}| \sim N_{567} \big\} \langle n_{12} \rangle^{-1} \langle n_1 \rangle^{-1} \langle n_2 \rangle^{-1} \Phi(n_5,n_6,n_7) \Big)^2 \\
 &\lesssim \log(2+N_4)^2 N_{1234567}^{2(s-1+\eta)}  \sum_{(n_j)_{j\not \in \scrP^\prime}} \bigg[  
 \Big(  \sum_{(n_j)_{j \in \scrP^\prime}}^{\ast} \langle n_{\text{nr}} \rangle^{-2\eta}  \Big( \prod_{j\in \scrP^\prime} \langle n_j \rangle^{-\frac{\eta}{6}} \Big) \langle n_{12} \rangle^{-2} \langle n_1 \rangle^{-2} \langle n_2 \rangle^{-2} \Big) \\
 &\times \Big(  \sum_{(n_j)_{j \in \scrP^\prime}}^{\ast} 1\big\{|n_{567}| \sim N_{567} \big\}  \Big( \prod_{j\in \scrP^\prime} \langle n_j \rangle^{\frac{\eta}{6}} \Big) \Phi(n_5,n_6,n_7)^2 \Big) \bigg]. 
 \end{align*}
 We then use a direct calculation to bound the first inner factor and to estimate the sum in $n_5,n_6$, and $n_7$. The total contribution is bounded by $\log(2+N_4)^2 N_{1234567}^{2(s-1+\eta)} N_{567}^{-2(\beta-\eta)} \lesssim \log(2+N_4)^2 N_{1234567}^{2(s-1+\eta)}$, which is acceptable. 
 
 \emph{Case 3: $(4,5)\in \scrP$.}  We let $\scrP^\prime$ be the pairing on $\{1,2,3,6,7\}$ obtained by removing the pair $(4,5)$ from $\scrP$. We also understand the condition $j \not \in \scrP^\prime$ as a subset of $\{1,2,3,6,7\}$. By first using \eqref{appendix:eq_septic_p2} and then Cauchy-Schwarz, we have that 
 \begin{align*}
&\sum_{(n_j)_{j\not \in \scrP}} \langle n_{\text{nr}} \rangle^{2(s-1)}
 \bigg(  \sum_{(n_j)_{j \in \scrP}}^{\ast}  1\big\{ |n_{1234567}|\sim N_{1234567}\big\} 1\big\{|n_{567}| \sim N_{567} \big\} 1\big\{|n_4|\sim N_4\big\}  \\
 &\times  \Phi(n_1,n_2,n_3) \langle n_4 \rangle^{-1} \Phi(n_5,n_6,n_7) \bigg)^2 \\
 &\lesssim \log(2+N_4)^2 N_{1234567}^{2(s-1+\eta)} \sum_{(n_j)_{j\not \in \scrP^\prime}} \langle n_{\text{nr}} \rangle^{-2\eta}
 \Big( \sum_{(n_j)_{j \in \scrP^\prime}}^{\ast}  \Phi(n_1,n_2,n_3) \langle n_{67} \rangle^{-1} \langle n_6\rangle^{-1} \langle n_7 \rangle^{-1} \Big)^2. 
 \end{align*}
 Arguing similarly as in Case 2, we obtain an upper bound by $\log(2+N_4)^2 N_{1234567}^{2(s-1+\eta)}$. While this bound does not contain the gain in $N_{567}$, it is still acceptable. 
\end{proof}

\end{appendix}

\bibliography{invariant_library}

\newcommand{\etalchar}[1]{$^{#1}$}
\begin{thebibliography}{NORBS12}

\bibitem[BB14a]{BB14b}
J. Bourgain and A. Bulut.
\newblock Almost sure global well-posedness for the radial nonlinear
  {S}chr\"{o}dinger equation on the unit ball {II}: the 3d case.
\newblock {\em J. Eur. Math. Soc. (JEMS)}, 16(6):1289--1325, 2014.

\bibitem[BB14b]{BB14}
J. Bourgain and A. Bulut.
\newblock Invariant {G}ibbs measure evolution for the radial nonlinear wave
  equation on the 3d ball.
\newblock {\em J. Funct. Anal.}, 266(4):2319--2340, 2014.

\bibitem[BG18]{BG18}
N. {Barashkov} and M. {Gubinelli}.
\newblock {A variational method for $\Phi^4_3$}.
\newblock arXiv:1805.10814, May 2018.

\bibitem[BGHS19]{BGHS19}
T. {Buckmaster}, P. {Germain}, Z. {Hani}, and J. {Shatah}.
\newblock {Onset of the wave turbulence description of the longtime behavior of
  the nonlinear Schr{\"o}dinger equation}.
\newblock arXiv:1907.03667, July 2019.

\bibitem[BOP15]{BOP15}
A. B\'{e}nyi, T. Oh, and O. Pocovnicu.
\newblock On the probabilistic {C}auchy theory of the cubic nonlinear
  {S}chr\"{o}dinger equation on {$\Bbb{R}^d$}, {$d\geq3$}.
\newblock {\em Trans. Amer. Math. Soc. Ser. B}, 2:1--50, 2015.

\bibitem[BOP19]{BOP19}
A. B\'{e}nyi, T. Oh, and O. Pocovnicu.
\newblock Higher order expansions for the probabilistic local {C}auchy theory
  of the cubic nonlinear {S}chr\"{o}dinger equation on {$\Bbb{R}^3$}.
\newblock {\em Trans. Amer. Math. Soc. Ser. B}, 6:114--160, 2019.

\bibitem[Bou94]{Bourgain94}
J. Bourgain.
\newblock Periodic nonlinear {S}chr\"{o}dinger equation and invariant measures.
\newblock {\em Comm. Math. Phys.}, 166(1):1--26, 1994.

\bibitem[Bou96]{Bourgain96}
J. Bourgain.
\newblock Invariant measures for the {$2$}{D}-defocusing nonlinear
  {S}chr\"{o}dinger equation.
\newblock {\em Comm. Math. Phys.}, 176(2):421--445, 1996.

\bibitem[Bou97]{Bourgain97}
J. Bourgain.
\newblock Invariant measures for the {G}ross-{P}iatevskii equation.
\newblock {\em J. Math. Pures Appl. (9)}, 76(8):649--702, 1997.

\bibitem[{Bri}18a]{Bringmann18}
B. {Bringmann}.
\newblock {Almost sure local well-posedness for a derivative nonlinear wave
  equation}.
\newblock arXiv:1809.00220, September 2018.

\bibitem[{Bri}18b]{Bringmann18b}
B. {Bringmann}.
\newblock {Almost sure scattering for the energy critical nonlinear wave
  equation}.
\newblock arXiv:1812.10187, December 2018.

\bibitem[{Bri}20a]{Bringmann20}
B. {Bringmann}.
\newblock {Stable blowup for the focusing energy critical nonlinear wave
  equation under random perturbations}.
\newblock arXiv:2002.07352, February 2020.

\bibitem[{Bri}20b]{BB20a}
B. {Bringmann}.
\newblock {Invariant Gibbs measures for the three-dimensional wave equation
  with a Hartree nonlinearity I: Measures}.
\newblock arXiv:2009.04609, September 2020.

\bibitem[CC18]{CC18}
R. Catellier and K. Chouk.
\newblock Paracontrolled distributions and the 3-dimensional stochastic
  quantization equation.
\newblock {\em Ann. Probab.}, 46(5):2621--2679, 2018.

\bibitem[CCT03]{CCT03}
M. {Christ}, J. {Colliander}, and T. {Tao}.
\newblock {Ill-posedness for nonlinear Schrodinger and wave equations}.
\newblock arXiv:math/0311048, November 2003.

\bibitem[CG19]{CG19}
C. {Collot} and P. {Germain}.
\newblock {On the derivation of the homogeneous kinetic wave equation}.
\newblock arXiv:1912.10368, December 2019.

\bibitem[CG20]{CG20}
C. {Collot} and P. {Germain}.
\newblock {Derivation of the homogeneous kinetic wave equation: longer time
  scales}.
\newblock arXiv:2007.03508, July 2020.

\bibitem[CKS{\etalchar{+}}02]{CKSTT02}
J. Colliander, M. Keel, G. Staffilani, H. Takaoka, and T. Tao.
\newblock Almost conservation laws and global rough solutions to a nonlinear
  {S}chr\"{o}dinger equation.
\newblock {\em Math. Res. Lett.}, 9(5-6):659--682, 2002.

\bibitem[CLS19]{CLS19}
E. {Compaan}, R. {Luc{\`a}}, and G. {Staffilani}.
\newblock {Pointwise Convergence of the Schr{\"o}dinger Flow}.
\newblock arXiv:1907.11192, July 2019.

\bibitem[dBD99]{BD99}
A. de~Bouard and A. Debussche.
\newblock A stochastic nonlinear {S}chr\"{o}dinger equation with multiplicative
  noise.
\newblock {\em Comm. Math. Phys.}, 205(1):161--181, 1999.

\bibitem[dBD03]{BD03}
A. de~Bouard and A. Debussche.
\newblock The stochastic nonlinear {S}chr\"{o}dinger equation in {$H^1$}.
\newblock {\em Stochastic Anal. Appl.}, 21(1):97--126, 2003.

\bibitem[Dem20]{Demeter20}
C. Demeter.
\newblock {\em Fourier restriction, decoupling, and applications}, volume 184
  of {\em Cambridge Studies in Advanced Mathematics}.
\newblock Cambridge University Press, Cambridge, 2020.

\bibitem[Den12]{Deng12}
Y. Deng.
\newblock Two-dimensional nonlinear {S}chr\"{o}dinger equation with random
  radial data.
\newblock {\em Anal. PDE}, 5(5):913--960, 2012.

\bibitem[DH19]{DH19}
Y. {Deng} and Z. {Hani}.
\newblock {On the derivation of the wave kinetic equation for NLS}.
\newblock arXiv:1912.09518, December 2019.

\bibitem[DLM17]{DLM17}
B. {Dodson}, J. {Luhrmann}, and D. {Mendelson}.
\newblock {Almost sure scattering for the 4D energy-critical defocusing
  nonlinear wave equation with radial data}.
\newblock arXiv:1703.09655, March 2017.

\bibitem[DNY19]{DNY19}
Y. {Deng}, A.~R. {Nahmod}, and H. {Yue}.
\newblock {Invariant Gibbs measures and global strong solutions for nonlinear
  Schr{\"o}dinger equations in dimension two}.
\newblock arXiv:1910.08492, October 2019.

\bibitem[DNY20]{DNY20}
Y. {Deng}, A.~R. {Nahmod}, and H. {Yue}.
\newblock {Random tensors, propagation of randomness, and nonlinear dispersive
  equations}.
\newblock arXiv:2006.09285, June 2020.

\bibitem[DNY21]{DNY21}
Y. Deng, A.~R. Nahmod, and H. Yue.
\newblock Invariant {G}ibbs measure and global strong solutions for the
  {H}artree {NLS} equation in dimension three.
\newblock {\em J. Math. Phys.}, 62(3):031514, 39, 2021.

\bibitem[DPD03]{DPD03}
G. Da~Prato and A. Debussche.
\newblock Strong solutions to the stochastic quantization equations.
\newblock {\em Ann. Probab.}, 31(4):1900--1916, 2003.

\bibitem[dS11]{S11}
A. de~Suzzoni.
\newblock Invariant measure for the cubic wave equation on the unit ball of
  {$\Bbb R^3$}.
\newblock {\em Dyn. Partial Differ. Equ.}, 8(2):127--147, 2011.

\bibitem[DTV15]{DTV15}
Y. Deng, N. Tzvetkov, and N. Visciglia.
\newblock Invariant measures and long time behaviour for the {B}enjamin-{O}no
  equation {III}.
\newblock {\em Comm. Math. Phys.}, 339(3):815--857, 2015.

\bibitem[DW18]{DW18}
A. Debussche and H. Weber.
\newblock The {S}chr\"{o}dinger equation with spatial white noise potential.
\newblock {\em Electron. J. Probab.}, 23:Paper No. 28, 16, 2018.

\bibitem[ET16]{ET16}
M.~B. Erdo\u{g}an and N. Tzirakis.
\newblock {\em Dispersive partial differential equations}, volume~86 of {\em
  London Mathematical Society Student Texts}.
\newblock Cambridge University Press, Cambridge, 2016.
\newblock Wellposedness and applications.

\bibitem[FOSW19]{FOSW19}
C. {Fan}, Y. {Ou}, G. {Staffilani}, and H. {Wang}.
\newblock {2D-Defocusing Nonlinear Schr{\"o}dinger Equation with Random Data on
  Irrational Tori}.
\newblock arXiv:1910.03199, October 2019.

\bibitem[Fri85]{F85}
L. Friedlander.
\newblock An invariant measure for the equation {$u_{tt}-u_{xx}+u^3=0$}.
\newblock {\em Comm. Math. Phys.}, 98(1):1--16, 1985.

\bibitem[GIP15]{GIP15}
M. Gubinelli, P. Imkeller, and N. Perkowski.
\newblock Paracontrolled distributions and singular {PDE}s.
\newblock {\em Forum Math. Pi}, 3:e6, 75, 2015.

\bibitem[GKO18a]{GKO18a}
M. Gubinelli, H. Koch, and T. Oh.
\newblock Paracontrolled approach to the three-dimensional stochastic nonlinear
  wave equation with quadratic nonlinearity.
\newblock arXiv:1811.07808, 2018.

\bibitem[GKO18b]{GKO18}
M. Gubinelli, H. Koch, and T. Oh.
\newblock Renormalization of the two-dimensional stochastic nonlinear wave
  equations.
\newblock {\em Trans. Amer. Math. Soc.}, 370(10):7335--7359, 2018.

\bibitem[GOTW18]{GOTW18}
T.~S. {Gunaratnam}, T. {Oh}, N. {Tzvetkov}, and H. {Weber}.
\newblock {Quasi-invariant Gaussian measures for the nonlinear wave equation in
  three dimensions}.
\newblock arXiv:1808.03158, August 2018.

\bibitem[Hai14]{Hairer14}
M. Hairer.
\newblock A theory of regularity structures.
\newblock {\em Invent. Math.}, 198(2):269--504, 2014.

\bibitem[Hai16]{Hairer16}
M. Hairer.
\newblock Regularity structures and the dynamical {$\Phi^4_3$} model.
\newblock In {\em Current developments in mathematics 2014}, pages 1--49. Int.
  Press, Somerville, MA, 2016.

\bibitem[HM18]{HM18}
M. Hairer and K. Matetski.
\newblock Discretisations of rough stochastic {PDE}s.
\newblock {\em Ann. Probab.}, 46(3):1651--1709, 2018.

\bibitem[KM19]{KM19}
C. {Kenig} and D. {Mendelson}.
\newblock {The focusing energy-critical nonlinear wave equation with random
  initial data}.
\newblock arXiv:1903.07246, March 2019.

\bibitem[KMV19]{KMV19}
R. {Killip}, J. {Murphy}, and M. {Visan}.
\newblock {Invariance of white noise for KdV on the line}.
\newblock arXiv:1904.11910, April 2019.

\bibitem[KT99]{KT99}
S. Klainerman and D. Tataru.
\newblock On the optimal local regularity for {Y}ang-{M}ills equations in
  {${\bf R}^{4+1}$}.
\newblock {\em J. Amer. Math. Soc.}, 12(1):93--116, 1999.

\bibitem[LRS88]{LRS88}
J.~L. Lebowitz, H.~A. Rose, and E.~R. Speer.
\newblock Statistical mechanics of the nonlinear {S}chr\"{o}dinger equation.
\newblock {\em J. Statist. Phys.}, 50(3-4):657--687, 1988.

\bibitem[Maj14]{M14}
P. Major.
\newblock {\em Multiple {W}iener-{I}t\^{o} integrals}, volume 849 of {\em
  Lecture Notes in Mathematics}.
\newblock Springer, Cham, second edition, 2014.
\newblock With applications to limit theorems.

\bibitem[MWX17]{MWX17}
J. Mourrat, H. Weber, and W. Xu.
\newblock Construction of {$\Phi^4_3$} diagrams for pedestrians.
\newblock In {\em From particle systems to partial differential equations},
  volume 209 of {\em Springer Proc. Math. Stat.}, pages 1--46. Springer, Cham,
  2017.

\bibitem[Nah16]{Nahmod16}
A.~R. Nahmod.
\newblock The nonlinear {S}chr\"{o}dinger equation on tori: integrating
  harmonic analysis, geometry, and probability.
\newblock {\em Bull. Amer. Math. Soc. (N.S.)}, 53(1):57--91, 2016.

\bibitem[NORBS12]{NORS12}
A.~R. Nahmod, T. Oh, L. Rey-Bellet, and G. Staffilani.
\newblock Invariant weighted {W}iener measures and almost sure global
  well-posedness for the periodic derivative {NLS}.
\newblock {\em J. Eur. Math. Soc. (JEMS)}, 14(4):1275--1330, 2012.

\bibitem[Nua06]{Nualart06}
D. Nualart.
\newblock {\em The {M}alliavin calculus and related topics}.
\newblock Probability and its Applications (New York). Springer-Verlag, Berlin,
  second edition, 2006.

\bibitem[Oh09]{Oh09}
T. Oh.
\newblock Invariance of the white noise for {K}d{V}.
\newblock {\em Comm. Math. Phys.}, 292(1):217--236, 2009.

\bibitem[OO19]{OO19}
T. {Oh} and M. {Okamoto}.
\newblock {Comparing the stochastic nonlinear wave and heat equations: a case
  study}.
\newblock arXiv:1908.03490, August 2019.

\bibitem[OOR20]{OOR20}
T. Oh, M. Okamoto, and T. Robert.
\newblock A remark on triviality for the two-dimensional stochastic nonlinear
  wave equation.
\newblock {\em Stochastic Process. Appl.}, 130(9):5838--5864, 2020.

\bibitem[OOT20]{OOT20}
T. {Oh}, M. {Okamoto}, and L. {Tolomeo}.
\newblock {Focusing $\Phi^4_3$-model with a Hartree-type nonlinearity}.
\newblock arXiv:2009.03251, September 2020.

\bibitem[OPT19]{OPT19}
T. {Oh}, O. {Pocovnicu}, and N. {Tzvetkov}.
\newblock {Probabilistic local well-posedness of the cubic nonlinear wave
  equation in negative Sobolev spaces}.
\newblock arXiv:1904.06792, April 2019.

\bibitem[ORSW21]{ORSW21}
T. Oh, T. Robert, P. Sosoe, and Y. Wang.
\newblock On the two-dimensional hyperbolic stochastic sine-{G}ordon equation.
\newblock {\em Stoch. Partial Differ. Equ. Anal. Comput.}, 9(1):1--32, 2021.

\bibitem[OT20a]{OT20}
T. Oh and N. Tzvetkov.
\newblock Quasi-invariant {G}aussian measures for the two-dimensional
  defocusing cubic nonlinear wave equation.
\newblock {\em J. Eur. Math. Soc. (JEMS)}, 22(6):1785--1826, 2020.

\bibitem[OT20b]{OT18}
T. Oh and L. Thomann.
\newblock Invariant {G}ibbs measures for the {$2$}-{$d$} defocusing nonlinear
  wave equations.
\newblock {\em Ann. Fac. Sci. Toulouse Math. (6)}, 29(1):1--26, 2020.

\bibitem[OWZ21]{OWZ21}
T. {Oh}, Y. {Wang}, and Y. {Zine}.
\newblock {Three-dimensional stochastic cubic nonlinear wave equation with
  almost space-time white noise}.
\newblock arXiv:2106.11803, June 2021.

\bibitem[Poc17]{Pocovnicu17}
O. Pocovnicu.
\newblock Almost sure global well-posedness for the energy-critical defocusing
  nonlinear wave equation on {$\Bbb{R}^d$}, {$d=4$} and {$5$}.
\newblock {\em J. Eur. Math. Soc. (JEMS)}, 19(8):2521--2575, 2017.

\bibitem[QV08]{QV08}
J. Quastel and B. Valk\'{o}.
\newblock Kd{V} preserves white noise.
\newblock {\em Comm. Math. Phys.}, 277(3):707--714, 2008.

\bibitem[Ric16]{Richards16}
G. Richards.
\newblock Invariance of the {G}ibbs measure for the periodic quartic g{K}d{V}.
\newblock {\em Ann. Inst. H. Poincar\'{e} Anal. Non Lin\'{e}aire},
  33(3):699--766, 2016.

\bibitem[Sim74]{Simon74}
B. Simon.
\newblock {\em The {$P(\phi )_{2}$} {E}uclidean (quantum) field theory}.
\newblock Princeton University Press, Princeton, N.J., 1974.
\newblock Princeton Series in Physics.

\bibitem[ST19]{ST19}
C. {Sun} and N. {Tzvetkov}.
\newblock {Gibbs measure dynamics for the fractional NLS}.
\newblock arXiv:1912.07303, December 2019.

\bibitem[Tao01]{Tao01}
T. Tao.
\newblock Global regularity of wave maps. {II}. {S}mall energy in two
  dimensions.
\newblock {\em Comm. Math. Phys.}, 224(2):443--544, 2001.

\bibitem[Tao06]{Tao06}
T. Tao.
\newblock {\em Nonlinear dispersive equations}, volume 106 of {\em CBMS
  Regional Conference Series in Mathematics}.
\newblock Published for the Conference Board of the Mathematical Sciences,
  Washington, DC; by the American Mathematical Society, Providence, RI, 2006.
\newblock Local and global analysis.

\bibitem[TV14]{TV14}
N. Tzvetkov and N. Visciglia.
\newblock Invariant measures and long-time behavior for the {B}enjamin-{O}no
  equation.
\newblock {\em Int. Math. Res. Not. IMRN}, (17):4679--4714, 2014.

\bibitem[TV15]{TV15}
N. Tzvetkov and N. Visciglia.
\newblock Invariant measures and long time behaviour for the {B}enjamin-{O}no
  equation {II}.
\newblock {\em J. Math. Pures Appl. (9)}, 103(1):102--141, 2015.

\bibitem[Tzv06]{Tzvetkov06}
N. Tzvetkov.
\newblock Invariant measures for the nonlinear {S}chr\"{o}dinger equation on
  the disc.
\newblock {\em Dyn. Partial Differ. Equ.}, 3(2):111--160, 2006.

\bibitem[Tzv08]{Tzvetkov08}
N. Tzvetkov.
\newblock Invariant measures for the defocusing nonlinear {S}chr\"{o}dinger
  equation.
\newblock {\em Ann. Inst. Fourier (Grenoble)}, 58(7):2543--2604, 2008.

\bibitem[Tzv15]{T15}
N. Tzvetkov.
\newblock Quasi-invariant {G}aussian measures for one-dimensional {H}amiltonian
  partial differential equations.
\newblock {\em Forum Math. Sigma}, 3:Paper No. e28, 35, 2015.

\bibitem[Ver18]{Vershynin18}
R. Vershynin.
\newblock {\em High-dimensional probability: An introduction with applications
  in data science}, volume~47 of {\em Cambridge Series in Statistical and
  Probabilistic Mathematics}.
\newblock Cambridge University Press, Cambridge, 2018.

\bibitem[{Xu}14]{Xu14}
S. {Xu}.
\newblock {Invariant Gibbs Measure for 3D NLW in Infinite Volume}.
\newblock arXiv:1405.3856, May 2014.

\bibitem[Zhi94]{Zhidkov94}
P.~E. Zhidkov.
\newblock An invariant measure for a nonlinear wave equation.
\newblock {\em Nonlinear Anal.}, 22(3):319--325, 1994.

\end{thebibliography}
\bibliographystyle{myalpha}

\end{document}